\theoremstyle{plain}
\newtheorem{thm}{Theorem}[section]
\newtheorem{lem}[thm]{Lemma}
\newtheorem{pro}[thm]{Proposition}
\newtheorem{cor}[thm]{Corollary}
\theoremstyle{definition}
\newtheorem{defn}[thm]{Definition}
\newtheorem{rem}[thm]{Remark}
\newtheorem{que}[thm]{Question}
\newcommand{\cartbig}
{
	\begin{picture}(0,0)
		\put(0,-2){\framebox(10,10){}}
	\end{picture}
	\hspace{0.4cm}
}
\newcommand{\pr} {\mathrm{pr}}
\begin{document}

\title{
       Peano partial cubes}
\author{Norbert Polat\\
        I.A.E., Universit\'{e} Jean Moulin (Lyon 3)\\
         6 cours Albert Thomas\\
         69355 Lyon Cedex 08, France\\
         \texttt{norbert.polat@univ-lyon3.fr}}
\date{}
\maketitle

\begin{abstract}
Peano partial cubes are the partial cubes that have the Pash and Peano properties, and thus they are the bipartite graphs whose geodesic interval spaces are (closed) join spaces.  These graphs are the partial cubes all of whose finite convex subgraphs have a pre-hull number which is at most $1$.  Special Peano partial cubes are median graphs, cellular bipartite graphs and netlike partial cubes.  Analogous properties of these graphs are satisfied by Peano partial cubes.  In particular the convex hull of any isometric cycle of such a graph is a gated quasi-hypertori (i.e., the Cartesian product of copies of $K_2$ and even cycles).  The finite quasi-hypertori are the finite regular Peano partial cubes, and they turn out to be the Peano partial cubes that are antipodal.  Moreover, for any Peano partial cubes $G$ that contains no isometric rays, there exists a finite qasi-hypertorus which is fixed by all automorphisms of $G$, and any self-contraction of $G$ fixes some finite quasi-hypertorus.  A Peano partial cube $G$ is called a hyper-median partial cube if any triple of vertices of $G$ has either a median or a hyper-median, that is, a quasi-median whose convex-hull induces a hypertorus (i.e., the Cartesian product of even cycles such that at least one of them has length greater than $4$).  These graphs have several properties similar to that of median graphs.  In particular a graph is a hyper-median partial cube if and only if all its finite convex subgraphs are obtained by a sequence of gated amalgamations from finite quasi-hypertori.  Also a finite graph is a hyper-median partial cube if and only if it can be obtained from $K_1$ by a sequence of special expansions.  The class of Peano partial cubes and that of hyper-median partial cubes are closed under convex subgraphs, retracts, Cartesian products and gated amalgamations.  We study two convex invariants: the Helly number of a Peano partial cube, and the depth of a hyper-median partial cube that contains no isometric rays.  Finally, for a finite Peano partial cube $G$, we prove an Euler-type formula, and a similar formula giving the isometric dimension of $G$.\\

\emph{Keywords:} Bipartite graph; Partial cube; Peano partial cube; Ph-homogeneous partial cube; Hyper-median partial cube; Netlike partial cube; Median graph; Hypercube; Hypertorus; Prism; Cartesian product; Gated amalgam; Retract; Geodesic convexity; Weak geodesic topology; Helly number; Depth; Euler-type property; Fixed subgraph property.\\

\emph{ 2010 Mathematics Subject Classification:} Primary(05C75, 05C63, 05C12, 05C30, 05C76);Secondary(52A37)
\end{abstract}

\section{Introduction}\label{S:introd.}

Median graphs are certainly the most extensively studied and
characterized partial cubes (i.e., isometric subgraphs of hypercubes).
Several classes of partial cubes containing median graphs as special
instances have already been defined and studied, often with emphasis
on properties generalizing some well-known properties of median
graphs.  In particular the class of \emph{netlike partial cubes} was introduced and studied in a series of papers \cite{P05-1,P05-2,P07-3,P08-4,P07-5} as a class of partial cubes containing, in addition to median graphs, even cycles, \emph{cellular bipartite graphs}~\cite{BC96} and benzenoid graphs.  Median graphs and netlike partial cubes have the common property that any of their finite convex subgraphs has a pre-hull number~\cite{PS09} which is at most $1$.  A graph that has this property is said to be \emph{ph-homogeneous}.

These graphs were introduced in~\cite{P16} as the bipartite graphs whose geodesic interval spaces are (closed) join spaces, i.e., which share a number of geometrical properties with Euclidean spaces.  It was proved, that ph-homogeneous partial cubes are the Pash-Peano partial cubes, i.e., the partial cubes that satisfy the Pash and the Peano Properties.  In fact, for partial cubes, the Peano Property is stronger than the Pash Property.  This is why we call them \emph{Peano partial cubes}.

The class of Peano partial cubes is closed under convex subgraphs, Cartesian products,  gated amalgams and retracts, as is the class of netlike partial cubes.  It follows that the class of Peano partial cubes is a variety.  We recall that, in graph theory, varieties are classes of graphs closed under retracts and products; the choice of products is not unique: it could be either the strong products or the Cartesian products.

This paper is organized as follows.  After the introduction of the basic concepts of Peano partial cubes and characterizations by convexity properties, several different structural characterizations of Peano partial cubes are given in Section~\ref{S:charac.prop.}.  Particular properties follows from these characterizations.  A Peano partial cube $G$ is a median graph (resp. a netlike partial cube) if the convex hull of any isometric cycle of $G$ is a hypercube (resp. this cycle itself or a hypercube).  Any Peano partial cube $G$ also satisfies an analogous but more general property, namely the convex hull of any isometric cycle of $G$ is a gated quasi-hypertorus, that is, the Cartesian product of copies of $K_2$ and even cycles.  The finite quasi-hypertori, which turn out to be the finite regular Peano partial cubes, are also the Peano partial cubes that are antipodal (a graph $G$ is antipodal if for any vertex $x$ of $G$ there is a  vertex $\bar{x}$ such that the interval between $x$ and $\bar{x}$ is equal to the vertex set of $G$).

In Section~\ref{S:decompos.}, by analogy with the fact that median graphs are the particular Peano partial cubes for which any triple of vertices has a unique median, we define a \emph{hyper-median partial cube} as a Peano partial cube $G$ such that any triple of vertices of $G$ has either a median or a \emph{hyper-median}, that is, a quasi-median whose convex-hull induces a hypertorus.  Only the existence of a median or a hyper-median for each triple of vertices is necessary in the definition, the uniqueness comes as a consequence.  We show in particular that a partial cube is a hyper-median partial cube if and only if all its finite convex subgraphs are obtained by a sequence of gated amalgamations from finite quasi-hypertori.  This generalizes an analogous property of finite median graph \cite{Isb80,V83} stating that any finite median graph can be obtained by a sequence of gated amalgamations from finite hypercubes.

Fairly recently we found out the existence of the paper of Chepoi, Knauer and Marc \cite{CKM19} entitled \emph{Partial cubes without $Q_3^-$ minors}.  This paper and the first version of ours were independently and almost simultaneously written, and it turns out that the graphs studied in~\cite{CKM19}, also called \emph{hypercellular partial cubes}, coincide with our hyper-median partial cubes (Subsection~\ref{SS:hypercell.}).  It follows that some of our results or parts thereof are also proved in~\cite{CKM19}.  Several other properties of these graphs are given in~\cite{CKM19}.

Since Mulder~\cite{M78} and Chepoi~\cite{Che88} introduced the
expansion procedure for median graphs and partial cubes, different
kinds of finite partial cubes have already been constructed from
$K_{1}$ by sequences of special expansions (see~\cite{IK98}).  In
\cite[Subsection 6]{P07-3} we showed that, if one can obtain all
partial cubes by Chepoi's theorem, not all graphs in the middle of
expansion are netlike, and even ph-homogeneous.  More precisely, there exist infinitely many
finite netlike partial cubes, in particular some benzenoid graphs,
which are not the expansion of any Peano partial cubes.  In Subsection~\ref{SS:expans.proc.} we
proved that there exists a particular kind of expansion that enables
to construct all finite hyper-median partial cubes from
$K_{1}$.

In Section~\ref{S:FSP} we focus on fixed subgraph properties, 
generalizing some results on netlike partial cubes~\cite{P08-4}, which
themselves are generalizations of three results of Tardif~\cite{T96}
on median graphs.  For infinite partial cubes we use the topological
concepts and results of~\cite{P09-1,P09-2}.  We prove three main
results for any compact Peano partial cube $G$ (i.e., containing no isometric rays).  The first deals with the existence of a
finite gated quasi-hypertorus that is fixed by all
automorphisms of $G$.  The second deals with the fact that any
self-contraction (i.e., non-expansive self-map) of $G$ fixes
a finite quasi-hypertorus.  The third extends
the second result to any commuting family of
self-contractions of $G$, and is of the same kind as two well-known
theorems stating that commuting families of endomorphisms of certain
structures have a common fixed point: the Markov-Kakutani
Theorem~\cite{Ma36,Ka38} for compact convex sets of locally convex
linear topological spaces, and the Tarski's theorem~\cite{Ta95} for
complete lattices.

The proofs of these properties require some preliminary results, the most important of which also has another interesting consequence: the closure of the class of Peano partial cubes under retracts.  This property is one of the main results of Section~\ref{S:retr./convex} which is devoted to the study of retracts and hom-retracts and their relation with convex subgraphs and more generally with the so-called ‘‘strongly faithful’’ subgraphs of Peano partial cubes, that is isometric subgraphs that are stable under median and hyper-median.  We show that any convex subgraph is a retract, and that any retract is strongly faithful.  However the characterization of the strongly faithful subgraphs of a Peano partial cube that are retracts (or hom-retracts) of this graph is still an open problem.

Section~\ref{S:conv.inv.} deals with two convex invariants.  First, a classical invariant: the Helly number, that is, the smallest integer $h$, if it exists, such that any finite family of $h$-wise non-disjoint convex sets has a non-empty intersection.  We show that the Helly number of a Peano partial cube $G$ is at most $3$ with the equality if and only if $G$ is not a median graph.  Secondly, a less classical invariant: the depth, that is, the supremum of the lengths of chains of half-spaces.  This invariant was introduced in \cite{BV91} by Bandelt and van de Vel in order to study the structure of finite median graphs.  Following Bandelt and van de Vel, we prove a recursive description of the 
compact hyper-median partial cubes with finite depth.  This result generalizes  \cite[Theorem 2.4]{BV91} and \cite[Theorem 6.4]{P07-5} on finite median graphs and finite tricycle-free Peano partial cubes, respectively.  We also show that the depth of the gated hull of any finite set of vertices of a  compact hyper-median partial cube is finite.  This is an interesting piece of information on gated hulls because, contrary to the property that the convex hull of any finite set of vertices of an infinite partial cube is always finite, we generally have no such property for the gated hull of a finite set: it may be finite or infinite.

Several Euler-type properties generalizing the well-known equality for trees “$n-m=1$” have already been obtained for finite median graphs.  Some of them concern special median graphs such as those that are cube-free or $Q_{4}$-free (Klav\v{z}ar and \v{S}krekovski~\cite{KS00}).  More generally Soltan and Chepoi~\cite{SoChe87} and independently \v{S}krekovski~\cite{Sk01} proved that the Euler characteristic of a median graph $G$ is $1$, that is, $\sum_{n \in \mathbb{N}}(-1)^{n}\beta_{n}(G) = 1$, where $\beta_{n}(G)$ denotes the number of $n$-cubes of $G$.  Some Euler-type properties were also proved for some special partial cubes that are not necessarily median, such as cellular bipartite graphs by Bandelt and Chepoi \cite[Corollary 1]{BC96} and cube-free netlike partial cubes \cite[Proposition 7.2]{P07-3}.  Moreover, in the paper mentioned above~\cite{Sk01}, \v{S}krekovski proved a formula giving the isometric dimension of a finite median graph (i.e., the least $n$ such
that $G$ is an isometric subgraph of an $n$-cube).  In Section~\ref{S:Euler} we prove analogous formulas for finite Peano partial cubes.

\section{Preliminaries} \label{S:prelim.}

\subsection{Graphs}\label{SS:gr.}

The graphs we consider are undirected, without loops or multiple
edges, and may be finite or infinite.  Let $G$ be a graph.  If $x \in
V(G)$, the set $N_{G}(x) := \{y \in V(G) : xy \in E(G)\}$ is the
\emph{neighborhood} of $x$ in $G$, $N_{G}[x] := \{x \} \cup N_{G}(x)$
is the \emph{closed neighborhood} of $x$ in $G$ and $\delta_{G}(x) :=
 |N_{G}(x)|$ is the \emph{degree} of $x$ in $G$.  For a set $X$ of vertices of  
$G$
we put $N_{G}[X] := \bigcup_{x \in X}N_{G}[x]$ and $N_{G}(X) :=
N_{G}[X] - X$, and we denote by $\partial_{G}(X)$ the
\emph{edge-boundary} of $X$ in $G$, that is, the set of all edges of
$G$ having exactly one endvertex in $X$.  Moreover, we denote by $G[X]$
the subgraph of $G$ induced by $X$, and we set $G-X:= G[V(G)-X]$.  A subgraph of $G$ is said to be \emph{non-trivial} if it has at least two vertices.

A \emph{path} $P = \langle x_{0},\dots,x_{n}\rangle$ is a graph with
$V(P) = \{x_{0},\dots,x_{n}\}$, $x_{i} \neq x_{j}$ if $i \neq j$, and
$ E(P) = \{x_{i}x_{i+1} : 0 \leq i < n\}$.  A path $P = \langle
x_{0},\dots,x_{n}\rangle$ is called an \emph{$(x_{0},x_{n})$-path},
$x_{0}$ and $ x_{n}$ are its \emph{endvertices}, while the other
vertices are called its \emph{inner} vertices, $n = |E(P)|$ is the
\emph{length} of $P$.  If $x$ and $y$ are two vertices of a path $P$,
then we denote by $P[x,y]$ the subpath of $P$ whose endvertices are
$x$ and $y$.  A \emph{ray} is a one-way
infinite path, and a graph is \emph{rayless} if it
contains no rays.

A \emph{cycle} $C$ with $V(C) = \{x_{1},\dots,x_{n}\}$, $x_{i} \neq
x_{j}$ if $i \neq j$, and $ E(C) = \{x_{i}x_{i+1} : 1 \leq i < n\}
\cup \{x_{n}x_{1} \}$, will be denoted by $\langle
x_{1},\dots,x_{n},x_{1}\rangle$.  The non-negative integer $n =
|E(C)|$ is the \emph{length} of $C$, and a cycle of length $n$ is
called a \emph{$n$-cycle} and is often denoted by $C_{n}$.

Let $G$ be a connected graph.  The usual \emph{distance} between two
vertices $x$ and $y$, that is, the length of any
\emph{$(x,y)$-geodesic} (= shortest $(x,y)$-path) in $G$, is denoted
by $d_{G}(x,y)$.  A connected subgraph $H$ of $ G $ is
\emph{isometric} in $G$ if $d_{H}(x,y) = d_{G}(x,y)$ for all vertices
$x$ and $y$ of $H$.  The \emph{(geodesic) interval} $I_{G}(x,y)$
between two vertices $x$ and $y$ of $G$ is the set of vertices of all
$(x,y)$-geodesics in $G$.

\subsection{Convexity}\label{SS:convex.}

A \emph{convexity} on a set $X$ is an algebraic closure system
$\mathcal{C}$ on $X$.  The elements of $\mathcal{C}$ are the
\emph{convex sets} and the pair $(X,\mathcal{C})$ is called a
\emph{convex structure}.  See van de Vel~\cite{V93} for a detailed
study of abstract convex structures.  Several kinds of graph
convexities, that is, convexities on the vertex set of a graph $G$,
have already been investigated.  We will principally work with the
\emph{geodesic convexity}, that is, the convexity on $V(G)$ which is
induced by the geodesic interval operator $I_{G}$.  In this convexity,
a subset $C$ of $V(G)$ is convex provided it contains the geodesic
interval $I_{G}(x,y)$ for all $x, y \in C$.  The \emph{convex hull}
$co_{G}(A)$ of a subset $A$ of $V(G)$ is the smallest convex
set that contains $A$.  The convex hull of a finite set is called a
\emph{polytope}.  A subset $A$ of $V(G)$ is a \emph{half-space} if $A$
and $V(G)-A$ are convex.

A \emph{copoint} at a point $x \in V(G)$ is a
convex set $C$ which is maximal with respect to the property that $x
\notin C$; $x$ is an \emph{attaching point} of $C$.  Note that
$co_G(C \cup \{x \}) = co_G(C \cup
\{y \})$ for any two attaching points $x, y$ of $C$.  We denote by $\mathrm{Att}(C)$ the set of all attaching
points of $C$, i.e., 
\begin{equation*}\label{E:Att}
\mathrm{Att}(C) := co_G(C \cup \{x \})-C.
\end{equation*}

We denote by $\mathcal{I}_{G}$ the pre-hull operator of
the geodesic convex structure of $G$, i.e., the self-map of
$\mathcal{P}(V(G))$ such that $$\mathcal{I}_{G}(A) := \bigcup_{x, y \in
A}I_{G}(x,y)$$ for each $A \subseteq V(G)$.  The convex hull of a set
$A \subseteq V(G)$ is then $co_{G}(A) = \bigcup_{n \in
\mathbb{N}}\mathcal{I}_{G}^{n}(A)$.  Furthermore we will say that a
subgraph of a graph $G$ is \emph{convex} if its vertex set is convex,
and by the \emph{convex hull} $co_{G}(H)$ of a subgraph $H$ of $G$ we
will mean the smallest convex subgraph of $G$ containing
$H$ as a subgraph, that is, $$co_{G}(H) := G[co_{G}(V(H))].$$  A graph is said to be \emph{interval monotone} if all its intervals are convex.

\subsection{Cartesian product}\label{SS:cart.prod.}

The \emph{Cartesian product} of a family of graphs
$\left(G_{i}\right)_{i \in I}$ is the graph denoted by $\cartbig_{i
\in I}G_{i}$ (or simply by $G_{1} \Box G_{2}$ if $\vert I \vert = 2$)
with $\prod_{i \in I}V\left(G_{i}\right)$ as vertex set and such that,
for every vertices $u$ and $v$, $uv$ is an edge whenever there exists
a unique $j \in I$ with $pr_{j}(u)\pr_{j}(v) \in E\left(G_{j}\right)$
and $\pr_{i}(u)=\pr_{i}(v)$ for every $i \in I-\{j\}$, where $\pr_{i}$
is the \emph{$i$-th projection} of $\prod_{i \in I}V(G_{i})$ onto
V($G_{i})$.

If $I$ is infinite, then the connected components of a Cartesian product of connected
graphs are called \emph{weak Cartesian products} (see \cite{HIK11}).  More precisely, the component of $\cartbig_{i
\in I}G_{i}$ containing some vertex $a$ is called the \emph{weak Cartesian product at $a$}, and is denoted by $$\cartbig_{i
\in I}^{a}G_i.$$  
Clearly, the Cartesian product coincides with the weak Cartesian
product provided that $I$ is finite and the factors are connected.

In
particular, \emph{hypercubes} are the weak Cartesian powers of
$K_{2}$.  For every non-negative integer $n$, a $n$-cube, that is, a hypercube of dimension $n$, is often denoted by $Q_{n}$.  In particular $Q_{0} = K_{1}$, $Q_{2} = C_{4}$, and a $3$-cube is generally called a \emph{cube}.  Given a vertex $a$ of a Cartesian product $G = \cartbig_{i \in I}G_{i}$, the \emph{$G_i$-layer through $a$} is the subgraph of $G$ induced by the set $\{x \in V(G) : p_j(x) = p_j(a) \text{\;for}\; j \neq i\}$.  We list below the properties, in part well-known, of the
Cartesian product that we will use in this paper (see \cite{IKR08} for
the main properties of the Cartesian product).  

\begin{pro}\label{P:pro.Cartes.prod.}
Let $G = G_{0} \Box G_{1}$ be a Cartesian product of two connected 
graphs.  We have the following properties:

\textnormal{(i)}\; $d_{G}(x,y) = d_{G_{0}}(pr_{0}(x),pr_{0}(y)) + 
d_{G_{1}}(pr_{1}(x),pr_{1}(y))$ for any $x, y \in V(G)$ 
(\textbf{Distance Property}).

\textnormal{(ii)}\; $I_{G}(x,y) = I_{G_{0}}(pr_{0}(x),pr_{0}(y))
\times I_{G_{1}}(pr_{1}(x),pr_{1}(y))$ for any $x, y \in V(G)$ 
(\textbf{Interval Property}).

\textnormal{(iii)}\; $pr_{i}(I_{G}(x,y)) =
I_{G_{i}}(pr_{i}(x),pr_{i}(y))$ for any $x, y \in V(G)$ and $i = 0,
1$.

\textnormal{(iv)}\; Let $e$ and $f$ be two adjacent edges of $G$ which are in
different layers.  Then there exists exactly one convex $4$-cycle in
$G$ that contains both $e$ and $f$ (\textbf{$\mathbf{4}$-Cycle
Property}).

\textnormal{(v)}\; A subgraph $F$ of $G$ is convex if and 
only if $F = pr_{0}(F) \Box pr_{1}(F)$, where both $pr_{0}(F)$ and 
$pr_{1}(F)$ are convex (\textbf{Convex Subgraph Property}).

\textnormal{(vi)}\; $pr_{i}(\mathcal{I}_{G_{i}}^{n}(A)) = \mathcal{I}_{G}^{n}(pr_{i}(A))$ for each $A \subseteq V(G)$, $i = 0, 1$ and any 
non-negative integer $n$.

\textnormal{(vii)}\; $pr_{i}(co_{G}(A)) = co_{G_{i}}(pr_{i}(A))$ for each $A \subseteq V(G)$ and $i = 0, 1$.
\end{pro}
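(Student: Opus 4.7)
The plan is to prove the seven parts in sequence, deriving each from its predecessors. I would begin with the Distance Property~(i), which is the foundation for everything else. Given any $(x,y)$-walk $W$ in $G$, each edge of $W$ changes exactly one coordinate, so $W$ decomposes into projected walks $W_0$ in $G_0$ and $W_1$ in $G_1$ with $|W| = |W_0| + |W_1| \geq d_{G_0}(pr_0(x), pr_0(y)) + d_{G_1}(pr_1(x), pr_1(y))$. Conversely, one can lift a $(pr_0(x), pr_0(y))$-geodesic into the $G_0$-layer through $x$ and concatenate it with a lifted $(pr_1(x), pr_1(y))$-geodesic in the $G_1$-layer through the resulting endpoint, producing an $(x,y)$-path whose length realizes the sum on the right and establishes equality.

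Next, (ii) follows from (i) via the standard characterization $v \in I_G(x,y) \Leftrightarrow d_G(x,v) + d_G(v,y) = d_G(x,y)$. Applying (i) to all three distances and using the triangle inequality in each factor, we have $d_{G_i}(pr_i(x), pr_i(v)) + d_{G_i}(pr_i(v), pr_i(y)) \geq d_{G_i}(pr_i(x), pr_i(y))$ for $i = 0, 1$, and equality of the sums forces equality in each factor, i.e., $pr_i(v) \in I_{G_i}(pr_i(x), pr_i(y))$. Part (iii) is then immediate from (ii) by projecting the product on the right-hand side.

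For (iv), if adjacent edges $e = uu'$ and $f = uu''$ lie in different layers, then $e$ changes only the $G_0$-coordinate while $f$ changes only the $G_1$-coordinate; the unique vertex $w$ with $pr_0(w) = pr_0(u')$ and $pr_1(w) = pr_1(u'')$ together with $u, u', u''$ forms a $4$-cycle, and by (ii) this cycle coincides with $I_{G_0}(pr_0(u), pr_0(w)) \times I_{G_1}(pr_1(u), pr_1(w))$, hence is convex. Any other $4$-cycle containing $e$ and $f$ would force its fourth vertex to share the coordinates of $w$, giving uniqueness. For (v), if $F = pr_0(F) \Box pr_1(F)$ with both projections convex, then (ii) gives $I_G(x,y) \subseteq V(F)$ for all $x, y \in V(F)$. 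Conversely, assume $F$ convex; for $x, y \in V(F)$ and any $v$ whose projections satisfy $pr_i(v) \in I_{G_i}(pr_i(x), pr_i(y))$, part (ii) places $v$ in $I_G(x,y) \subseteq V(F)$, which yields $V(F) = pr_0(F) \times pr_1(F)$, and convexity of $pr_i(F)$ then follows by projecting intervals of $F$ via (iii).

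Finally, (vi) is proved by induction on $n$: the case $n = 0$ is trivial, and for the inductive step one expands $\mathcal{I}_G^{n+1}(A) = \bigcup_{x, y \in \mathcal{I}_G^n(A)} I_G(x,y)$, applies (iii) to each interval, and invokes the inductive hypothesis. Part (vii) then follows by taking the union over $n \in \mathbb{N}$, since $co_G(A) = \bigcup_n \mathcal{I}_G^n(A)$ and projection commutes with arbitrary unions. There is no serious obstacle here beyond careful bookkeeping of coordinates and layers; the only place where a small argument is really needed is (ii), where one must combine the triangle inequality in each factor with the sum equality to peel the identity apart coordinatewise.
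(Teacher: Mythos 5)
Your proof is correct, and there is nothing in the paper to compare it against: the author states this proposition without proof, as a list of properties ``in part well-known'' with a citation to the Handbook of Product Graphs \cite{IKR08}. Your development is the standard one and the logical ordering (everything flows from the Distance Property, then the Interval Property does most of the work) is exactly right. Two small points deserve one extra line each. In (iv), ``this cycle coincides with an interval, hence is convex'' is not a valid inference in arbitrary connected graphs (intervals need not be convex outside of partial cubes); what you actually need, and what your computation already gives, is that for \emph{every} pair of vertices of the $4$-cycle the interval between them is contained in the $4$-cycle, which follows from (ii) because each factor interval between adjacent vertices is just the pair of endpoints. In (v), the passage from ``$I_{G_0}(pr_0(x),pr_0(y))\times I_{G_1}(pr_1(x),pr_1(y))\subseteq V(F)$ for all $x,y\in V(F)$'' to ``$V(F)=pr_0(F)\times pr_1(F)$'' should be made explicit by taking $a\in pr_0(F)$ witnessed by $x$ and $b\in pr_1(F)$ witnessed by $y$ and observing that $(a,b)=(pr_0(x),pr_1(y))$ lies in that box. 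Finally, note that the paper's statement of (vi) contains an apparent typo ($\mathcal{I}_{G_i}^{n}(A)$ for $A\subseteq V(G)$ is not well formed); you have silently read it in the intended way, $pr_i(\mathcal{I}_{G}^{n}(A))=\mathcal{I}_{G_i}^{n}(pr_i(A))$, and your induction via (iii) proves exactly that.
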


\subsection{Partial cubes}\label{SS:part.cub.}

First we will recall some properties of \emph{partial cubes}, that is,
of isometric subgraphs of hypercubes.  Partial cubes are particular
connected bipartite graphs.

For an edge $ab$ of a graph $G$, let
\begin{align*}
	W_{ab}^{G}& := \{x \in V(G): d_{G}(a,x) < d_{G}(b,x) \},\\
	U_{ab}^{G}& := \{x \in W_{ab}: x \text{\;has a neighbor in}\; W_{ba}\}.
\end{align*}

If no confusion is likely, we will simply denote $W_{ab}^{G}$ and
$U_{ab}^{G}$ by $W_{ab}$ and $U_{ab}$, respectively.  Note that the
sets $W_{ab}$ and $W_{ba}$ are disjoint and that $V(G) = W_{ab} \cup
W_{ab}$ if $G$ is bipartite and connected.

Two edges $xy$ and $uv$ are in the Djokovi\'{c}-Winkler
relation $\Theta$ if

$$d_{G}(x,u)+d_{G}(y,v) \neq d_{G}(x,v)+d_{G}(y,u).$$

If $G$ is bipartite, the edges $xy$ and $uv$ are in
relation $\Theta$ if and only if $d_{G}(x,u) = d_{G}(y,v)$ and
$d_{G}(x,v) = d_{G}(y,u)$.  The relation $\Theta$ is clearly reflexive 
and symmetric.

\begin{lem}\label{L:bip.gr./convex}
Let $C$ be a convex set of a bipartite graph $G$.  Then $C \subseteq W_{ab}$ for any edge $ab \in \partial_{G}(C)$ with $a \in C$.
\end{lem}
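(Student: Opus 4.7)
The plan is to argue by contradiction, exploiting the bipartiteness of $G$ together with the convexity of $C$. Fix an arbitrary $x \in C$; I want to show $d_G(a,x) < d_G(b,x)$.

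First I would observe that since $ab$ is an edge of the bipartite graph $G$, the vertices $a$ and $b$ lie in different parts of the bipartition, so for any vertex $x$ the distances $d_G(a,x)$ and $d_G(b,x)$ have different parities. In particular they cannot be equal, and since adjacent vertices have distances differing by at most $1$, we must have $|d_G(a,x) - d_G(b,x)| = 1$. Hence either $x \in W_{ab}$ or $x \in W_{ba}$.

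Now suppose for contradiction that $x \in W_{ba}$, i.e., $d_G(b,x) < d_G(a,x)$, so that $d_G(a,x) = d_G(b,x) + 1$. Concatenating the edge $ab$ with any $(b,x)$-geodesic then produces an $(a,x)$-geodesic passing through $b$, so $b \in I_G(a,x)$. Since $a, x \in C$ and $C$ is convex, we get $I_G(a,x) \subseteq C$, whence $b \in C$. This contradicts the hypothesis that $ab \in \partial_G(C)$ with $a \in C$, which forces $b \notin C$.

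There is no real obstacle here; the only thing to be careful about is the parity argument ensuring $d_G(a,x) \neq d_G(b,x)$, which is exactly where bipartiteness is used and which makes the dichotomy $x \in W_{ab} \cup W_{ba}$ into an exhaustive alternative. Once that is in place, convexity of $C$ immediately rules out the wrong side.
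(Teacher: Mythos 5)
Your proof is correct and follows essentially the same route as the paper: assume $x \notin W_{ab}$, deduce $b \in I_G(a,x)$, and contradict $b \notin C$ via convexity. The only difference is that you spell out the parity/connectedness argument showing $V(G) = W_{ab} \cup W_{ba}$, which the paper records separately in its preliminaries and uses implicitly.
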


\begin{proof}
Let $x \in C$ and $ab \in \partial_{G}(C)$ with $a \in C$.  Suppose that $x \notin W_{ab}$.  Then $b \in I_{G}(x,a)$, and thus $b \in C$ by the convexity of $C$, contrary to the fact that $ab \in \partial_{G}(C)$.
\end{proof}

\begin{rem}\label{R:Theta/notation}
If $G$ is bipartite, then, by~\cite[Lemma 11.2]{HIK11}, \emph{the notation can be chosen so that the edges $xy$ and $uv$ are in
relation $\Theta$ if and only if $$d_{G}(x,u) = d_{G}(y,v) = d_{G}(x,v)-1 = d_{G}(y,u)-1,$$ or equivalently if and only if $$y \in I_G(x,v) \text{\; and}\quad x \in I_G(y,u).$$}
From now on, we will always use this way of defining the relation $\Theta$.  Note that, in this way, the edges $xy$ and $yx$ are not in relation $\Theta$ because $y \notin I_G(x,x)$ and $x \notin I_G(y,y)$.  In other word, each time the relation $\Theta$ is used, the notation of an edge induces an orientation of this edge.
\end{rem}

\begin{thm}\label{T:Djokovic-Winkler}\textnormal{(Djokovi\'{c}~\cite[Theorem
1]{D73} and Winkler~\cite{W84})} A connected bipartite graph $G$ is a
partial cube if and only if it has one of the following properties:

\textnormal{(i)}\; For every edge $ab$ of $G$, the sets
$W_{ab}$ and $W_{ba}$ are convex.

\textnormal{(ii)}\;  The relation $\Theta$ is transitive.
\end{thm}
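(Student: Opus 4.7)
The plan is to prove the chain of implications (partial cube) $\Rightarrow$ (i) $\Rightarrow$ (ii) $\Rightarrow$ (partial cube), which suffices since (i) and (ii) are offered as alternative characterizations.

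For the first implication, I would fix an isometric embedding $\varphi: G \hookrightarrow Q$ into a (possibly weak) hypercube. Given any edge $ab$ of $G$, its image changes exactly one coordinate, say the $k$-th. Using the Interval Property of the Cartesian product applied to the factorization $Q = K_2 \Box Q'$ isolating that coordinate, together with the fact that distances in $G$ equal distances in $Q$ by isometry, I would show $W_{ab}^G = \{x \in V(G) : \varphi(x)_k = \varphi(a)_k\}$. The set on the right is the intersection of $V(G)$ with a half-space of $Q$, which is convex in $Q$; since every geodesic of $G$ is a geodesic of $Q$, this intersection is convex in $G$ as well. Hence (i) holds.

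For (i) $\Rightarrow$ (ii), suppose $e_1 \mathrel{\Theta} e_2$ and $e_2 \mathrel{\Theta} e_3$ for edges $e_j = x_j y_j$ written with the orientation convention of Remark~\ref{R:Theta/notation}. The distance equalities defining $\Theta$ combined with bipartiteness force $x_2 \in W_{x_1 y_1}$ and $y_2 \in W_{y_1 x_1}$, so both $e_1$ and $e_2$ cross $\partial_G(W_{x_1 y_1})$. Assuming (i), the sets $W_{x_1 y_1}$ and $W_{y_1 x_1}$ are convex, and Lemma~\ref{L:bip.gr./convex} then constrains every edge crossing this bipartition to behave uniformly: since $e_2$ and $e_3$ are related in the same way, $e_3$ also has one endpoint in $W_{x_1 y_1}$ and one in $W_{y_1 x_1}$. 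A short distance calculation then yields $e_1 \mathrel{\Theta} e_3$.

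For (ii) $\Rightarrow$ (partial cube), I would construct the hypercube embedding directly from the $\Theta$-classes. Since $\Theta$ is reflexive, symmetric, and now transitive, it is an equivalence relation on $E(G)$; let $(F_k)_{k \in K}$ be its classes. Fix a base vertex $v_0$ and, for each vertex $v$, define $\varphi(v) \in \{0,1\}^K$ by letting $\varphi(v)_k$ be the parity of the number of edges of $F_k$ on a fixed $(v_0,v)$-geodesic. The crux is to establish two facts: (a) no shortest path contains two edges of the same $\Theta$-class, so each class is crossed at most once; and (b) consequently $\varphi(v)$ does not depend on the chosen geodesic, and $d_G(u,v) = |\{k : \varphi(u)_k \neq \varphi(v)_k\}|$. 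This last equality provides the desired isometric embedding into the hypercube on the index set $K$.

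The main obstacle is the sub-lemma (a). An induction on path length, exploiting bipartiteness together with the transitivity of $\Theta$, should show that if a shortest path $\langle z_0, \dots, z_m \rangle$ contained two $\Theta$-equivalent edges $z_{i-1} z_i$ and $z_{j-1} z_j$ with $i < j$, then chaining the distance identities defining $\Theta$ along the segment between them would force $d_G(z_0, z_m) < m$, contradicting the geodesic assumption. Keeping the orientation bookkeeping of Remark~\ref{R:Theta/notation} consistent along that chain, so that each successive $\Theta$-relation is applied in the correct direction, will be the delicate step.
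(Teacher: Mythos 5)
The paper does not prove this theorem at all: it is quoted as a classical result of Djokovi\'{c} and Winkler, with the proof left to the cited references. So there is no ``paper proof'' to compare against; your outline has to stand on its own, and it does --- it is essentially the standard textbook argument, and each of the three implications is sound. Two remarks on the places where your sketch is compressed. In (i) $\Rightarrow$ (ii), the phrase ``constrains every edge crossing this bipartition to behave uniformly'' hides the one real step: apply Lemma~\ref{L:bip.gr./convex} to the convex set $W_{x_1y_1}$ and the boundary edge $x_2y_2$ to get $W_{x_1y_1} \subseteq W_{x_2y_2}$, do the same for the complements, and conclude $W_{x_1y_1} = W_{x_2y_2}$ since the two pairs partition $V(G)$; then $e_3$ crosses the cut of $e_1$, and the bipartite distance computation (an edge $uv$ with $u \in W_{ab}$, $v \in W_{ba}$ satisfies $d(a,u)+d(b,v) = d(a,v)+d(b,u)-2$) finishes it. In (ii) $\Rightarrow$ (partial cube), your ``main obstacle'' (a) is in fact immediate and needs no induction or orientation bookkeeping: for edges $z_{i-1}z_i$ and $z_{j-1}z_j$ on a geodesic with $i<j$, all four relevant distances are determined by the indices, and one checks $d(z_{i-1},z_{j-1})+d(z_i,z_j) = 2(j-i) = d(z_{i-1},z_j)+d(z_i,z_{j-1})$, so the two edges are never $\Theta$-related. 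The genuinely delicate point is rather well-definedness of $\varphi$ and the distance formula, for which you need that every closed walk crosses each $\Theta$-class an even number of times; this is where bipartiteness enters, via the fact that $w \mapsto d(a,w)-d(b,w)$ takes values in $\{-1,+1\}$ and changes exactly on edges $\Theta$-related to $ab$.
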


It follows in particular that \emph{the half-spaces of a partial cube
$G$ are the sets $W_{ab}$, $ab \in E(G)$}.  Furthermore we can easily 
prove that the copoints of a partial cube are its half-spaces.

We recall that the geodesic convexity of a partial cube $G$ has the
separation property $\mathrm{S}_{3}$: if a vertex $x$ does not
belong to a convex set $C \subseteq V(G)$, then there is a half-space
$H$ which separates $x$ form $C$, that is, $x \notin H$ and $C
\subseteq H$.  As a matter of fact, the geodesic convexity of a 
bipartite graph $G$ has property $\mathrm{S}_{3}$ if and only if 
$G$ is a partial cube (see~\cite[Proposition 2.2]{B89}).

We also recall that, if $u_{0}, u_{1}, u_{2}$ are three vertices of a graph
$G$, then a \emph{median} of the triple $(u_{0},u_{1},u_{2})$ is any
element of the intersection $I_{G}(u_{0},u_{1}) \cap
I_{G}(u_{1},u_{2}) \cap I_{G}(u_{2},u_{0})$.  Moreover a graph $G$ is a \emph{median graph} if any triple of its vertices has a unique median.  Median graphs are particular partial cubes.  Actually they are the retracts of hypercubes (see Bandelt~\cite{B84}).

We say that a subgraph $H$ of a partial cube $G$ is \emph{median-stable} if, for any
triple $(x,y,z)$ of vertices of $H$, if $(x,y,z)$ has a median $m$ in
$G$, then $m \in V(H)$.  Note that, if $H$ is isometric in $G$, then
$m$ is the median of $(x,y,z)$ in $H$.  A median-stable isometric
subgraph of $G$ is called a \emph{faithful} subgraph of $G$, or is
said to be \emph{faithful} in $G$.  Clearly any faithful subgraph of a
faithful subgraph of $G$ is itself a faithful subgraph of $G$, and
moreover any convex subgraph of $G$ is faithful.

In the following lemma we list some well-known properties of partial 
cubes.

\begin{lem}\label{L:gen.propert.}
Let $G$ be a partial cube.  We have the following properties:

\textnormal{(i)}\; If a triple of vertices of $G$ has a median, then
this median is unique.

\textnormal{(ii)}\; Each interval of $G$ is finite and convex.

\textnormal{(iii)}\; Each polytope of $G$ is finite.

\textnormal{(iv)}\; Let $x,y$ be two vertices of $G$, $P$ an
$(x,y)$-geodesic and $W$ an $(x,y)$-path of $G$.  Then each edge of
$P$ is $\Theta$-equivalent to some edge of $W$.

\textnormal{(v)}\; A path $P$ in $G$ is a geodesic if and only if no two distinct edges of $P$ are $\Theta$-equivalent.

\textnormal{(vi)}\; Any edge of a cycle $C$ is $\Theta$-equivalent to another edge of $C$.

\textnormal{(vii)}\; A cycle $C$ of $G$ is isometric if and only if all pairs of antipodal edges in $C$ are the only pairs of distinct edges of $C$ which are $\Theta$-equivalent.

\textnormal{(viii)}\; Any shortest cycle of $G$ is convex.

\textnormal{(ix)}\; If $F$ is a convex subgraph of $G$, then no edge of $\partial_G(F)$ is $\Theta$-equivalent to an edge of $F$.
\end{lem}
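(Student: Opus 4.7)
The plan is to dispatch each of the nine items using the Djokovi\'{c}-Winkler characterization (Theorem~\ref{T:Djokovic-Winkler}) and the $\mathrm{S}_3$ separation property of the geodesic convexity of a partial cube, together with the isometric embedding $G \hookrightarrow Q$ into an enveloping hypercube $Q$.

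First I would handle the ``hypercube-inherited'' items (i)--(iii) and (ix).  For (i), any half-space $W_{ab}$ contains at least two of the three vertices $u_0, u_1, u_2$ (by pigeonhole, since $V(G) = W_{ab} \cup W_{ba}$), so any median of the triple lies in some $I_G(u_i, u_j) \subseteq W_{ab}$ by convexity of $W_{ab}$; two distinct medians would then contradict $\mathrm{S}_3$.  For (ii), $I_G(x,y) = \bigcap\{W_{ab} : x, y \in W_{ab}\}$ is convex by Theorem~\ref{T:Djokovic-Winkler} and sits inside the finite subcube $I_Q(x,y)$.  For (iii), the iterates $\mathcal{I}_G^n(A)$ all lie in the finite polytope $co_Q(A)$, so the ascending union $co_G(A)$ is finite.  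For (ix), if $ab \in \partial_G(F)$ with $a \in F$, then $F \subseteq W_{ab}$ by Lemma~\ref{L:bip.gr./convex}, so any $cd \in E(F)$ with $cd\,\Theta\,ab$ (oriented as in Remark~\ref{R:Theta/notation}) would put $d \in W_{ba} \cap F$, which is empty.

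For the $\Theta$/geodesic items (iv)--(vi), I would argue as follows.  For (iv), fix $ab \in E(P)$ with $x \in W_{ab}$ and $y \in W_{ba}$ (orientation chosen so that $P$ crosses the cut from $W_{ab}$ to $W_{ba}$ at $ab$); convexity of $W_{ab}$ forces the alternative path $W$ to contain some edge $a'b' \in \partial_G(W_{ab})$, and a direct check of the four distances yields $a'b'\,\Theta\,ab$ via Remark~\ref{R:Theta/notation}.  For the ``only if'' in (v), two $\Theta$-equivalent edges on $P$ would force $P$ to cross a cut $\partial_G(W_{ab})$ twice, violating the fact that a geodesic cannot leave a convex set and reenter it; for the ``if'' direction, (iv) gives $d_G(x,y) \leq |P|$ by a $\Theta$-class counting argument.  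Item (vi) is the usual parity: any cycle, as a closed walk, must cross each cut an even number of times.

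For the cycle items (vii) and (viii), I would combine the previous items.  For (vii), in an isometric cycle $C$ of length $2k$ the two antipodal $C$-arcs between antipodal vertices $x,y$ are $(x,y)$-geodesics, so by (v) no two edges within one arc are $\Theta$-equivalent; combined with (vi), the $2k$ edges partition into exactly $k$ $\Theta$-classes of size $2$, with one edge per arc, and these are necessarily the antipodal pairs.  For (viii), I would first show a shortest cycle $C$ of girth $g$ is isometric by extracting a shorter cycle from any would-be shortcut; then, assuming non-convexity, pick $x, y \in V(C)$ and $z \in I_G(x,y) \setminus V(C)$ minimizing $d_G(x,z)$, which reduces to $z$ being a neighbor of some $x \in V(C)$, and split on whether the $\Theta$-class of $xz$ is represented in $C$ or not.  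The main obstacle is (viii): the ``fresh $\Theta$-class'' case combines an $(z,y)$-geodesic with a $C$-arc to produce a cycle of length less than $g$, while in the ``already represented'' case one must locate the antipodal $C$-edge via (vii) and handle possible self-intersections of the resulting walk in order to recover an actual cycle shorter than $g$.  The remaining items reduce to routine applications of the Djokovi\'{c}-Winkler cuts.
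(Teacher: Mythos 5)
The paper does not prove this lemma at all: it is introduced with the sentence ``In the following lemma we list some well-known properties of partial cubes'' and left as standard background, so there is no argument of the author's to compare yours against. Your reconstruction via the Djokovi\'{c}--Winkler cuts and the isometric embedding into a hypercube is the standard route, and items (i)--(iv), (vi), (ix) and the overall plan for (viii) are sound as sketched.

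Three points need tightening. In the ``if'' direction of (v) the inequality you name is the trivial one; what the $\Theta$-class count actually gives is $|P| \le d_G(x,y)$: since no two edges of $P$ are $\Theta$-equivalent, $P$ crosses $|P|$ distinct cuts, each exactly once, so each of these cuts separates $x$ from $y$ and every $(x,y)$-path must cross all of them. In (vii) you only argue one direction of the equivalence; the converse (if the antipodal pairs are the only $\Theta$-equivalent pairs of distinct edges, then $C$ is isometric) should be recorded --- it follows because the shorter arc between any two vertices of $C$ then contains no two $\Theta$-equivalent edges and hence is a geodesic by (v). Moreover, within the forward direction, knowing that each edge of $C$ has exactly one $\Theta$-partner lying in the opposite arc does not by itself identify that partner as the \emph{antipodal} edge; you need to vary the antipodal splitting (apply the two-geodesic-arc argument at $(v_0,v_k)$ and again at $(v_1,v_{k+1})$) to force the partner of $v_0v_1$ to lie in both residual arcs simultaneously, i.e.\ to be $v_kv_{k+1}$. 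Finally, in (viii) your ``fresh $\Theta$-class'' case is vacuous: $x$ and $y$ both lie on $C$ and are separated by the cut of $xz$, so that cut must meet $E(C)$ by (iv) applied to the arc of $C$ joining them. The remaining case closes as you indicate: the cut of $xz$ meets $C$ in an antipodal pair $u_1v_1,u_2v_2$ by (vii), the $\Theta$-relation gives $d_G(z,v_1)+d_G(z,v_2)=d_G(x,u_1)+d_G(x,u_2)=k-1$, and the closed walk $z \to v_1 \to v_2 \to z$ (geodesics plus the $(v_1,v_2)$-arc) has length $2k-2$ and traverses some edge an odd number of times (otherwise $z$ would lie on $C$), hence contains a cycle shorter than the girth.
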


\begin{lem}\label{L:inf.p.c.}\textnormal{(Polat~\cite[Lemma 3.12]{P12})}
A bipartite graph $G$ is a partial cube if and only if every polytope of $G$ induces a partial cube.
\end{lem}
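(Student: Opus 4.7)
I would prove this via the Djokovi\'{c}-Winkler characterization (Theorem~\ref{T:Djokovic-Winkler}(i)), reducing the convexity of each $W_{ab}^G$ to the same convexity already known inside a suitably chosen polytope.

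\textbf{Forward direction.} Suppose $G$ is a partial cube and let $P$ be a polytope of $G$. By Lemma~\ref{L:gen.propert.}(iii), $V(P)$ is finite. Since $P$ is convex in the connected graph $G$, it is isometric in $G$. As $G$ itself is isometric in some hypercube $Q$, the composition shows that $P$ is isometric in $Q$, so $P$ is a (finite) partial cube.

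\textbf{Backward direction.} Assume every polytope of $G$ induces a partial cube. First, note that for any two vertices $x,y$ of $G$, the polytope $co_G(\{x,y\})$ is a partial cube, hence connected, so $G$ itself is connected. Now fix any edge $ab$ of $G$; by Theorem~\ref{T:Djokovic-Winkler}(i) it suffices to show $W_{ab}^G$ is convex. Take $x,y \in W_{ab}^G$ and set
\begin{equation*}
  P := G[co_G(\{a,b,x,y\})].
\end{equation*}
By hypothesis $P$ is a partial cube. Since $co_G(\{a,b,x,y\})$ is convex in $G$, the subgraph $P$ is isometric in $G$; in particular $d_P(u,v)=d_G(u,v)$ for all $u,v \in V(P)$, and $ab \in E(P)$.

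Because distances to $a$ and $b$ are preserved, the assumption $x,y \in W_{ab}^G$ gives $x,y \in W_{ab}^P$. Applying Theorem~\ref{T:Djokovic-Winkler}(i) inside the partial cube $P$, the set $W_{ab}^P$ is convex in $P$, so
\begin{equation*}
  I_G(x,y) \;=\; I_P(x,y) \;\subseteq\; W_{ab}^P.
\end{equation*}
For any $z \in W_{ab}^P$ we have $d_G(z,a)=d_P(z,a)<d_P(z,b)=d_G(z,b)$, hence $z \in W_{ab}^G$. Thus $I_G(x,y) \subseteq W_{ab}^G$, establishing convexity of $W_{ab}^G$ and concluding that $G$ is a partial cube.

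\textbf{Main obstacle.} The only delicate point is the bookkeeping on distances: one must observe that enlarging the four-point set $\{a,b,x,y\}$ to its full convex hull is exactly what guarantees $P$ is isometric in $G$, so that the local relation $W_{ab}^P$ inside the partial cube $P$ agrees with the restriction of $W_{ab}^G$ to $V(P)$. Without this isometry, the hypothesis ``every polytope is a partial cube'' would give information about distances internal to $P$ that need not match those of $G$. Once convexity of $V(P)$ is invoked, the rest is routine.
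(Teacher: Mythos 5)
Your proof is correct and takes the natural route: the paper itself does not prove this lemma (it is imported from~\cite{P12}), and the argument is essentially yours---the forward direction from convex $\Rightarrow$ isometric $\Rightarrow$ isometric in a hypercube, and the backward direction by verifying Theorem~\ref{T:Djokovic-Winkler}(i) for $G$ inside the polytope $P := G[co_G(\{a,b,x,y\})]$, whose convexity guarantees $W_{ab}^P = W_{ab}^G \cap V(P)$ and $I_G(x,y) = I_P(x,y)$. The preliminary observation that two-vertex polytopes force $G$ to be connected is a detail that Theorem~\ref{T:Djokovic-Winkler} genuinely requires, and you handle it correctly.
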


\begin{lem}\label{L:edge/co(H)}
Let $H$ be a subgraph of a partial cube $G$.  We have the following two properties:

\textnormal{(i)}\; If $H$ is connected, then any edge 
of $co_{G}(H)$ is $\Theta$-equivalent to an edge of $H$.

\textnormal{(ii)}\;  If any edge of $G$ is $\Theta$-equivalent to an edge of $H$, then $co_G(H) = G$.
\end{lem}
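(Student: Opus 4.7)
The plan for both parts hinges on the fundamental fact that, in a partial cube, two edges are $\Theta$-equivalent if and only if they cross the same cut $(W_{ab}, W_{ba})$, together with the convexity of each half-space $W_{ab}$ supplied by Theorem~\ref{T:Djokovic-Winkler}. This reduces everything to a cut-separation argument.

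For (i), let $xy$ be an edge of $co_G(H)$. I would first check that both half-spaces $W_{xy}$ and $W_{yx}$ meet $V(H)$: if, say, $V(H)\subseteq W_{xy}$, then convexity of $W_{xy}$ would force $co_G(V(H))\subseteq W_{xy}$, contradicting $y\in co_G(V(H))\setminus W_{xy}$; symmetrically with $W_{yx}$. Since $H$ is connected, any path in $H$ joining a vertex of $V(H)\cap W_{xy}$ to a vertex of $V(H)\cap W_{yx}$ must contain an edge $uv$ with $u\in W_{xy}$ and $v\in W_{yx}$, and such an edge is $\Theta$-equivalent to $xy$.

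For (ii), I would argue by contradiction. If $co_G(H)\neq G$, then by connectedness of $G$ the edge-boundary $\partial_G(co_G(V(H)))$ contains some edge $ab$. By Lemma~\ref{L:gen.propert.}(ix) applied to the convex subgraph $co_G(H)$, $ab$ is not $\Theta$-equivalent to any edge of $co_G(H)$. Since $V(H)\subseteq co_G(V(H))$, every edge of $H$ is also an edge of $co_G(H)=G[co_G(V(H))]$, so $ab$ fails to be $\Theta$-equivalent to any edge of $H$, contradicting the hypothesis.

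The only subtle point to keep in mind is that $co_G(H)$ is defined as the induced subgraph $G[co_G(V(H))]$ and may therefore contain edges that are not in $H$. This matters in (i), where the given edge $xy$ need not itself lie in $H$ and we must produce an edge of $H$ (not merely of $co_G(H)$) in its $\Theta$-class; connectedness of $H$ is precisely what lets a path inside $H$ cross the $xy$-cut. No substantial obstacle is expected beyond this bookkeeping: both parts reduce to one application of half-space convexity plus the cut description of $\Theta$.
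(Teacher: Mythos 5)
Your proof is correct. Part (ii) is essentially the paper's own argument: take a boundary edge of the convex subgraph $co_G(H)$, note by hypothesis it is $\Theta$-equivalent to an edge of $H$ and hence of $co_G(H)$, and contradict Lemma~\ref{L:gen.propert.}(ix). For part (i), however, you take a genuinely different route. The paper proves (i) by induction on $n$ over the sets $\mathcal{I}_G^n(V(H))$ whose union is $co_G(V(H))$: a new edge lies on an $(x,y)$-geodesic with $x,y$ in the previous level, Lemma~\ref{L:gen.propert.}(iv) transfers its $\Theta$-class to an edge of an $(x,y)$-path inside the previous level (which is connected), and transitivity of $\Theta$ closes the induction. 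You instead argue in one step: the half-spaces $W_{xy}$ and $W_{yx}$ are convex, so if $V(H)$ avoided one of them its convex hull would too, which is impossible since both endpoints of the edge lie in $co_G(V(H))$; then connectedness of $H$ forces a path of $H$ to cross the cut, and any crossing edge is $\Theta$-equivalent to $xy$ by the standard cut description of $\Theta$-classes in a bipartite graph with convex half-spaces (Theorem~\ref{T:Djokovic-Winkler}). Your version is shorter and avoids the induction entirely, at the cost of invoking the identification of the $\Theta$-class of $xy$ with $\partial_G(W_{xy})$, which the paper uses implicitly throughout but does not isolate as a lemma; the paper's inductive argument buys the same conclusion using only Lemma~\ref{L:gen.propert.}(iv) and transitivity of $\Theta$. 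Both are sound.
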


\begin{proof}
(i)\; We recall that $co_{G}(H) = G[\bigcup_{N \in
\mathbb{N}}\mathcal{I}_{G}^{n}(V(H))]$.  We prove by induction on $n$
that any edge of $G[\mathcal{I}_{G}^{n}(V(H))]$ is $\Theta$-equivalent
to an edge of $H$.  This is trivial if $n = 0$.  Suppose that this is
true for some $n \geq 0$.  Let $e$ be an edge of
$G[\mathcal{I}_{G}^{n+1}(V(H))]$ that is not an edge of
$G[\mathcal{I}_{G}^{n}(V(H))]$.  Then $e$ is an edge of an
$(x,y)$-geodesic $P$, for some $x, y \in \mathcal{I}_{G}^{n}(V(H))$.
Let $W$ be an $(x,y)$-path of $G[\mathcal{I}_{G}^{n}(V(H))]$, note
that this graph is connected.  Then $e$ is $\Theta$-equivalent to some
edge $e'$ of $W$ by Lemma~\ref{L:gen.propert.}(iv).  By the induction
hypothesis, $e'$ is $\Theta$-equivalent to an edge $e''$ of $H$.
Hence $e$ and $e''$ are $\Theta$-equivalent by transitivity of the
relation $\Theta$.

(ii)\; Suppose that $H' := co_G(H) \neq G$.  Because $G$ is connected, there is a vertex $x$ of $G-H'$ which is adjacent to some vertex $y$ of $H'$.  By the properties of $H$, the edge $xy$ is $\Theta$-equivalent to some edge $ab$ of $H$, and thus of $H'$, contrary to Lemma~\ref{L:gen.propert.}(ix).  Therefore $co_G(H) = G$.
\end{proof}

\begin{lem}\label{L:E(G[Wab])}
Let $ab$ be an edge of a partial cube $G$.  Then an edge of
$G[W_{ab}]$ is $\Theta$-equivalent to an edge of $G[W_{ba}]$ if and
only if it is $\Theta$-equivalent to an edge of
$G[\mathcal{I}_{G}(U_{ab})]$.
\end{lem}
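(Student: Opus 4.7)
My plan is to prove each direction by constructing a suitable walk and applying Lemma~\ref{L:gen.propert.}(iv), which guarantees that each edge of a geodesic is $\Theta$-equivalent to some edge of any other path with the same endpoints. The key technical ingredients are the convexity of the half-spaces $W_{ab}$, $W_{ba}$, and of the half-spaces induced by the $\Theta$-class under consideration, together with the fact that the $\Theta$-class of $ab$ induces a bijection between $U_{ab}$ and $U_{ba}$ preserving distances.

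For the \emph{only if} direction, suppose $e = xy \in E(G[W_{ab}])$ is $\Theta$-equivalent to some edge $f = x'y' \in E(G[W_{ba}])$. I would first select an $(x,x')$-geodesic $P_{0}$ and a $(y,y')$-geodesic $P_{1}$. Since $x \in W_{ab}$, $x' \in W_{ba}$, the geodesic $P_{0}$ crosses the $\Theta$-class of $ab$ at a unique edge, producing a boundary vertex $p \in U_{ab}$; likewise $P_{1}$ yields a vertex $q \in U_{ab}$. I would next pick any $(p,q)$-geodesic $Q$, whose vertex set lies in $I_{G}(p,q) \subseteq \mathcal{I}_{G}(U_{ab})$ by definition, so all its edges belong to $E(G[\mathcal{I}_{G}(U_{ab})])$. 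The concatenation $P_{0}[x,p] \cup Q \cup P_{1}[q,y]$ is an $(x,y)$-walk, and Lemma~\ref{L:gen.propert.}(iv) furnishes an edge of this walk $\Theta$-equivalent to the trivial geodesic $e$. The decisive observation, obtained from the orientation convention of Remark~\ref{R:Theta/notation}, is that $x, x'$ lie on the same side $W_{xy}$ of the $\Theta$-class of $e$ while $y, y'$ lie on the opposite side $W_{yx}$; by convexity of these half-spaces, $P_{0} \subseteq G[W_{xy}]$ and $P_{1} \subseteq G[W_{yx}]$, so no edge of $P_{0}$ or $P_{1}$ can be $\Theta$-equivalent to $e$. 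This forces the witnessing edge to lie in $Q \subseteq G[\mathcal{I}_{G}(U_{ab})]$.

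For the \emph{if} direction, suppose $e \in E(G[W_{ab}])$ is $\Theta$-equivalent to $e' = u_{1}u_{2} \in E(G[\mathcal{I}_{G}(U_{ab})])$, and let $\theta$ denote their common $\Theta$-class. Since $U_{ab} \subseteq W_{ab}$ and $W_{ab}$ is convex, we have $\mathcal{I}_{G}(U_{ab}) \subseteq W_{ab}$, so $\theta$ is distinct from the class of $ab$, and the endpoints $u_{1}, u_{2}$ of $e'$ lie on opposite sides $H^{+}, H^{-}$ of the $\theta$-cut. The crucial preliminary step is to show that $U_{ab}$ intersects both $H^{+}$ and $H^{-}$: if instead $U_{ab} \subseteq H^{+}$, then convexity of the half-space $H^{+}$ would force $\mathcal{I}_{G}(U_{ab}) \subseteq H^{+}$, contradicting $u_{2} \in H^{-}$. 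Once I pick $p \in U_{ab} \cap H^{+}$ and $q \in U_{ab} \cap H^{-}$, with their $\Theta$-partners $p^{*}, q^{*} \in U_{ba}$, a $(p,q)$-geodesic $R$ must cross $\theta$ at some edge $\alpha\beta$ lying in $G[\mathcal{I}_{G}(U_{ab})]$, while a $(p^{*},q^{*})$-geodesic $R^{*}$ can be chosen inside $W_{ba}$ by convexity and has the same length as $R$. Concatenating $pp^{*}$, $R^{*}$, and $q^{*}q$ yields a $(p,q)$-walk; applying Lemma~\ref{L:gen.propert.}(iv) to the geodesic $R$ shows that $\alpha\beta$ is $\Theta$-equivalent to some edge of this walk, and since $pp^{*}, qq^{*}$ belong to the $ab$-class while $\alpha\beta$ does not, the witnessing edge must sit in $R^{*} \subseteq G[W_{ba}]$ and is thus $\Theta$-equivalent to $e$.

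The main obstacle is the \emph{if} direction, and within it the step that $U_{ab}$ splits across the $\theta$-cut: this is precisely where the definition of $\mathcal{I}_{G}(U_{ab})$ as a single pre-hull iteration interacts with the convexity of the $\theta$-half-spaces to rule out a one-sided configuration. After this splitting is secured, both directions reduce to the same reflection argument which matches an edge of an inner geodesic in $W_{ab}$ with an edge of its mirror geodesic in $W_{ba}$ via the $U_{ab}$-$U_{ba}$ partnership.
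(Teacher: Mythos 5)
Your proof is correct and follows essentially the same route as the paper's: in both directions you reflect a geodesic across the $ab$-cut via the $U_{ab}$--$U_{ba}$ pairing and use Lemma~\ref{L:gen.propert.}(iv) to locate the $\Theta$-partner of the given edge, ruling out the connecting segments exactly as the paper does. The only difference is that your explicit argument that $U_{ab}$ must meet both $\theta$-half-spaces (via their convexity and the definition of $\mathcal{I}_{G}$) spells out a step the paper asserts without justification, namely that an edge of $G[\mathcal{I}_{G}(U_{ab})]$ is $\Theta$-equivalent to an edge lying on a geodesic between two vertices of $U_{ab}$.
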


\begin{proof}
Let $uv$ be an edge of $G[W_{ab}]$ which is $\Theta$-equivalent to an edge $u'v'$ of $G[W_{ba}]$.  Let $P_u$ and $P_v$ be a $(u,u')$-geodesic and a $(v,v')$-geodesic, respectively.  Let $u_0$ and $v_0$ be the only vertices of $P_u$ and $P_v$ in $U_{ab}$, respectively, and let $W$ be a $(u_0,v_0)$-geodesic.  Because $P_u \cup \langle u,v\rangle$ and $P_v \cup \langle u,v\rangle$ are $(u',v)$-geodesic and  $(v',u)$-geodesic, respectively, it follows that the edge $uv$ is $\Theta$-equivalent to no edge of $P_u$ and $P_v$.  Therefore, by Lemma~\ref{L:gen.propert.}(vi), $uv$ is $\Theta$-equivalent to an edge of $W$, and thus of $G[\mathcal{I}_{G}(U_{ab})]$.

Conversely, suppose that an edge $cd$ of $G[W_{ab}]$ is
$\Theta$-equivalent to an edge of $G[\mathcal{I}_{G}(U_{ab})]$, and thus to an edge $uv$ of an $(x,y)$-geodesic $P$
for some $x, y \in U_{ab}$.  Let $x'$ and $y'$ be the neighbors of $x$
and $y$ in $U_{ba}$, respectively, and let $P'$ be an
$(x',y')$-geodesic.  Then $C =\langle x',x\rangle \cup P \cup \langle
y,y'\rangle \cup P'$ is a cycle of $G$.  Hence, $uv$ is
$\Theta$-equivalent to an edge $u'v'$ of $C$.  Clearly $u'v'$ is an
edge of $P'$ because both $xx'$ and $yy'$ are $\Theta$-equivalent to
$ab$, while $uv$ is an edge of $G[W_{ab}]$.  It follows, by
transitivity, that the edge $cd$ is $\Theta$-equivalent to $u'v'$,
which is an edge of $G[\mathcal{I}_{G}(U_{ab})]$, and thus of $G[W_{ba}]$.
\end{proof}

\emph{Throughout this paper we will use the following notations.  Let $ab$ be an edge of a partial cube $G$:
\begin{align*}
G_{ab}& := G[co_G(U_{ab})]\\
G_{ \overrightarrow{ab}}& := G[co_G(U_{ab}) \cup W_{ba}]\\
G_{\overline{ab}}& := G_{ \overrightarrow{ab}} \cap G_{ \overrightarrow{ba}} = G[co_G(U_{ab}) \cup co_G(U_{ba})] = G_{\overline{ba}}.
\end{align*}}

\subsection{Weak geodesic topology of a partial cube}\label{SS:wg.top./part.cube}

We recall (see~\cite{P09-1}) that the \emph{weak geodesic topology} of
a graph $G$ is the finest weak topology on $V(G)$ endowed with the
geodesic convexity, that is, the topology (in terms of closes sets)
generated by all convex subsets of $V(G)$ as a subbase.

\emph{Throughout this paper, unless stated otherwise, we will always suppose
that that the vertex set of any partial cube $G$ is endowed with the
weak geodesic topology}.  Because the geodesic convexity on the vertex
set of a partial cube $G$ has property $\mathrm{S}_{3}$, it
follows that each convex set of $G$ is an intersection of half-spaces,
hence the family of half-spaces of $G$ is a subbase of the weak
geodesic topology on $V(G)$.

\begin{pro}\label{P:compactness}\textnormal{(Polat~\cite[Theorem 
    3.9]{P09-1})}
Let $G$ be a partial cube.  The following assertions are equivalent:

\textnormal{(i)}\; $V(G)$ is compact.

\textnormal{(ii)}\; $V(G)$ is weakly countably compact (i.e., every 
infinite subset of $V(G)$ has a limit point).

\textnormal{(iii)}\; The vertex set of any ray of $G$ has a limit point.

\textnormal{(iv)}\; The vertex set of any ray of $G$ has a 
finite positive number of limit points.
\end{pro}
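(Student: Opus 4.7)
The plan is to prove the implications (i) $\Rightarrow$ (ii) $\Rightarrow$ (iii) $\Rightarrow$ (iv) $\Rightarrow$ (i), exploiting that in a partial cube the half-spaces $W_{ab}$ are both closed (as convex sets) and open (as complements of their convex partners $W_{ba}$), that they form a subbase of the weak geodesic topology, and that this topology is $T_1$ since singletons are convex. The implications (i) $\Rightarrow$ (ii), (ii) $\Rightarrow$ (iii), and (iv) $\Rightarrow$ (iii) are routine: in a $T_1$ compact space every infinite subset has a limit point, the vertex set of a ray is infinite, and a positive number of limit points certainly means at least one.

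The heart of (iii) $\Rightarrow$ (iv) is the observation that in a partial cube a ray in fact never has a limit point, so (iii) is equivalent to $G$ being rayless and (iv) then holds vacuously. Let $R = \langle x_0, x_1, \ldots \rangle$ be a ray and suppose for contradiction that $p \in V(G)$ is a limit point of $V(R)$. In a $T_1$ space every neighborhood of $p$ must intersect $V(R)$ in an infinite set, so the open half-space $W_{x_N x_{N+1}}$, whose intersection with $V(R)$ is the finite set $\{x_0, \ldots, x_N\}$, cannot contain $p$. Hence $p \in W_{x_{N+1} x_N}$ for every $N \geq 0$, which gives the strictly decreasing sequence of non-negative integers $d_G(x_N, p) > d_G(x_{N+1}, p)$ for all $N$, a contradiction.

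For (iv) $\Rightarrow$ (i), I would prove the equivalent statement that every rayless partial cube has compact vertex set. By the Alexander subbase theorem it suffices to show that any family $\mathcal{F}$ of half-spaces with the finite intersection property has non-empty intersection. Arguing by contraposition, suppose $\bigcap \mathcal{F} = \emptyset$, and extend $\mathcal{F}$ by Zorn's lemma to a maximal FIP family $\mathcal{F}^{\ast}$ of half-spaces; maximality forces $\mathcal{F}^{\ast}$ to contain exactly one half-space from each $\Theta$-class. One then constructs a ray by successive approximation: starting from any vertex $v_0$, since $v_0 \notin \bigcap \mathcal{F}^{\ast}$ one picks $W_{a_0 b_0} \in \mathcal{F}^{\ast}$ with $v_0 \notin W_{a_0 b_0}$ and travels from $v_0$ along a shortest path into $W_{a_0 b_0}$, then iterates from the new endpoint using a fresh half-space of $\mathcal{F}^{\ast}$ not yet containing the current vertex. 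Each new segment strictly enters previously unentered half-spaces of $\mathcal{F}^{\ast}$, so by Lemma 2.6(v) no two edges of the concatenated walk are $\Theta$-equivalent; the walk is therefore a geodesic ray, contradicting raylessness.

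The main obstacle will be this ray construction: one must show that from any current endpoint one can always find a target half-space in $\mathcal{F}^{\ast}$ together with a shortest path into it whose edges each enter a fresh half-space of $\mathcal{F}^{\ast}$. This requires careful use of the sets $U_{ab}$ and the convex-monotonicity of partial cubes to avoid revisiting any $\Theta$-class, so that concatenating the local geodesic segments yields a genuine one-way infinite geodesic rather than a walk that doubles back.
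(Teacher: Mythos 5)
This proposition is quoted from~\cite{P09-1} and the paper contains no proof of it, so the only question is whether your argument stands on its own; it does not. The fatal step is the claim, on which your (iii)~$\Rightarrow$~(iv) rests, that in a partial cube the vertex set of a ray never has a limit point, so that (iii) would force $G$ to be rayless. Both the claim and its proof are wrong. A ray need not be a geodesic, so $V(R)\cap W_{x_Nx_{N+1}}$ need not be the finite set $\{x_0,\dots,x_N\}$: a later vertex $x_k$ with $k>N+1$ can satisfy $d_G(x_N,x_k)<d_G(x_{N+1},x_k)$, and then $p$ need not lie in $W_{x_{N+1}x_N}$ for every $N$, so the decreasing-distance contradiction evaporates. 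Your argument is valid only for \emph{isometric} rays, for which the finiteness claim does hold. The conclusion itself is false: let $G$ be the subgraph of the hypercube on $\mathbb{N}$ induced by $\emptyset$ together with the sets $\{i\}$ and $\{i,i+1\}$, $i\geq 1$ (an infinite fan of $4$-cycles $\langle \emptyset,\{i\},\{i,i+1\},\{i+1\},\emptyset\rangle$ pairwise sharing edges at $\emptyset$). This is an isometric subgraph of the hypercube, indeed a median graph, and $R=\langle \{1\},\{1,2\},\{2\},\{2,3\},\{3\},\dots\rangle$ is a ray in it. Every half-space containing $\emptyset$ is one of the cofinite sets $W_{\emptyset\{i\}}=\{x: i\notin x\}$, so every basic neighbourhood of $\emptyset$ contains cofinitely many vertices of $R$, and $\emptyset$ is a limit point of $V(R)$; moreover Alexander's subbase lemma shows $V(G)$ is compact. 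Thus a compact partial cube may contain rays; what it cannot contain are isometric rays, and even the equivalence ``compact iff no isometric rays'' is stated in the paper only under geodesic consistency (Corollary~\ref{C:comp.hyp.=no.isom.rays}).

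The same confusion breaks (iv)~$\Rightarrow$~(i): you replace it by the ``equivalent'' statement that every rayless partial cube is compact, but (iv) does not imply raylessness --- in the example above the ray $R$ has exactly one limit point, so $G$ satisfies (iv) and still contains a ray. A correct proof must \emph{use} the hypothesis that every ray has a (finite, positive number of) limit point(s), rather than conclude that rays do not exist. Your Zorn/maximal-FIP scheme is a sensible skeleton for extracting, from a witness of non-compactness, a ray that violates (iii) or (iv), but as written it only produces some ray and then invokes raylessness, so at best it proves the strictly weaker implication ``rayless $\Rightarrow$ compact''. The parts of your plan that are sound are the routine implications (i)~$\Rightarrow$~(ii)~$\Rightarrow$~(iii) and (iv)~$\Rightarrow$~(iii), together with the observations that half-spaces are clopen, form a subbase, and that the topology is $T_1$.
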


From now on, by a \emph{compact partial cube}, we will mean a partial
cube whose vertex space is compact.  According to the above proposition it follows that
a compact partial cube contains no isometric rays \cite[Corollary 3.15]{P09-1}, and moreover that any rayless partial cube is compact.

\subsection{Geodesically consistent partial cubes}\label{SS:geod.consist.p.c.}

A vertex $x$ of a connected graph $G$ \emph{geodesically dominates} a
subset $A$ of $V(G)$ if, for every finite $S \subseteq V(G-x)$, there
exists an $a \in (A - \{x \})$ such that $S \cap I_{G}(x,a) =
\emptyset$.  The \emph{geodesic topology} on the vertex set of a graph
$G$ is the topology for which a subset $A$ of $V(G)$ is closed if and
only if every vertex that geodesically dominates $A$ belongs to $A$.
From \cite[Theorem 3.9]{P98} we have:

\begin{pro}\label{P:geod.dom/isom.ray}\textnormal{(Polat~\cite[Theorem 
3.9]{P98})}
Let $G$ be a graph.  The following assertions are equivalent:

\textnormal{(i)}\; The geodesic space $V(G)$ is compact.

\textnormal{(ii)}\;  $G$ contains no isometric rays.

\textnormal{(iii)}\; The vertex set of every ray of $G$ is
geodesically dominated.
\end{pro}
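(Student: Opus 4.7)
The plan is to prove the equivalences via the cycle (i) $\Rightarrow$ (iii) $\Rightarrow$ (ii) $\Rightarrow$ (i), preceded by a topological reformulation. I first observe that the closure of $A \subseteq V(G)$ in the geodesic topology is $A$ together with its geodesic dominators, and that this is a Kuratowski closure operator: the only nontrivial axiom, additivity, follows by pigeonholing finite blockers (if $x$ dominates $A \cup B$, the domination witnesses lie cofinally in $A$ or in $B$, so $x$ dominates one of the two). Consequently $x$ is an accumulation point of $V(R)$ iff $x$ geodesically dominates $V(R)$, and (i) $\Rightarrow$ (iii) then follows because in a compact space the infinite set $V(R)$ must have an accumulation point.

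For (iii) $\Rightarrow$ (ii) I argue contrapositively. Let $R = \langle r_0, r_1, \ldots\rangle$ be an isometric ray and fix $x \in V(G)$. The sequence $f(n) := d_G(x, r_n)$ satisfies that $f(n) - n$ is non-increasing (by the triangle inequality) and bounded below by $-d_G(x, r_0)$, hence stabilizes at some value $-c$ from some index $N$ onward. For any $k \geq N$ with $r_k \neq x$ and any $n \geq k$, the isometry of $R$ yields
\[
d_G(x, r_k) + d_G(r_k, r_n) = (k - c) + (n - k) = n - c = d_G(x, r_n),
\]
so $r_k \in I_G(x, r_n)$. Fixing such a $k$ and setting $S := \{r_0, \ldots, r_k\} \setminus \{x\}$, one has $S \cap I_G(x, r_n) \neq \emptyset$ for every $r_n \in V(R) \setminus \{x\}$ (via $r_n$ itself if $n \leq k$, via $r_k$ if $n > k$), so $x$ does not dominate $V(R)$; since $x$ was arbitrary, $V(R)$ has no geodesic dominator.

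For (ii) $\Rightarrow$ (i), the main obstacle, I would argue that non-compactness produces an isometric ray. If $V(G)$ fails to be compact, the closed-set criterion furnishes a family of closed sets with the finite intersection property but empty intersection; refining, one extracts an infinite descending chain whose terms ``shrink to infinity,'' giving vertices $r_0, r_1, \ldots$ with $d_G(r_0, r_n) \to \infty$ and no common geodesic dominator. Applying König's lemma to the tree of finite prefixes of $(r_0, r_n)$-geodesics: if some prefix vertex had infinite out-degree in this tree, one would pigeonhole among its children to harvest a geodesic dominator for $\{r_n\}$ (contradicting our extraction), so the tree is locally finite; König's lemma then furnishes an infinite branch which is, by construction, a geodesic, hence an isometric ray, contradicting (ii).

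The principal difficulty lies in (ii) $\Rightarrow$ (i): converting the purely topological failure of compactness into an explicit isometric ray requires carefully arranging the extraction so that the absence of a dominator forces local finiteness of the geodesic-prefix tree, thereby making König's lemma applicable and delivering the sought isometric ray.
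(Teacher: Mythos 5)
First, a point of reference: the paper does not prove this statement at all --- it is imported verbatim from \cite[Theorem 3.9]{P98} (``From \cite[Theorem 3.9]{P98} we have:''), so there is no in-paper argument to compare against. Judged on its own terms, your proposal is partly solid: the implication (iii)\,$\Rightarrow$\,(ii) is correct and complete (the stabilization of $d_G(x,r_n)-n$ and the blocker $S=\{r_0,\dots,r_k\}\setminus\{x\}$ work exactly as you say). The implication (i)\,$\Rightarrow$\,(iii), however, silently uses that the geodesic closure of $A$ is exactly $A$ together with its dominators, i.e.\ that ``limit point of $V(R)$'' implies ``dominator of $V(R)$''. You verify additivity of $A\mapsto A\cup D(A)$, but the axiom you actually need here is idempotency (that $A\cup D(A)$ is itself closed), which amounts to a transitivity-type statement for geodesic domination; since $I_G(y,a)\not\subseteq I_G(y,d)\cup I_G(d,a)$ in general, this is not trivial and you do not address it.

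The genuine gap is in (ii)\,$\Rightarrow$\,(i), which you yourself flag as the crux but then only sketch. Two steps would fail as written. First, ``refining'' a family of closed sets with the finite intersection property into a sequence $r_0,r_1,\dots$ with $d_G(r_0,r_n)\to\infty$ and no common geodesic dominator is unjustified: failure of compactness of an arbitrary topological space does not reduce to a countable witness, and a set with no dominator is essentially an infinite closed discrete set, so what you are implicitly invoking is ``limit-point compact $\Rightarrow$ compact'' for this topology --- which is a substantial part of what the theorem asserts (it is the content of the analogous equivalence (i)$\Leftrightarrow$(ii) in Proposition~\ref{P:compactness}) and cannot be waved through; nor is it clear why such a sequence must escape to infinity rather than, say, accumulate in a ball around a vertex of infinite degree. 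Second, the K\"onig's lemma step is broken at the key point: if a node $p_k$ of the geodesic-prefix tree has infinitely many children, you claim to ``harvest a geodesic dominator'', but geodesic domination of $\{r_n\}$ by $p_k$ requires, for each finite $S$, some $r_n$ with $S\cap I_G(p_k,r_n)=\emptyset$, where $I_G(p_k,r_n)$ is the union of \emph{all} $(p_k,r_n)$-geodesics. Exhibiting one chosen geodesic that branches away from $S$ at level $k+1$ does not make the whole interval avoid $S$, so local finiteness of the tree does not follow from the absence of a dominator by the argument you give. These two steps need to be replaced by an actual construction (for instance, iteratively using the failure of domination at each already-constructed vertex, together with pigeonholing over the finite blocking sets, to build a geodesic ray), and that is where the real work of the theorem lies.
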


Any limit point of a set $A$ of vertices of a partial cube $G$
geodesically dominates $A$.  On the other hand, a vertex $x$ which
geodesically dominates a set $A$ is not necessarily a limit point of
$A$.  The geodesic topology is compatible with the geodesic convexity,
that is, all polytopes are geodesically closed (i.e., closed for the
geodesic topology). 

\begin{defn}\label{D:geod.consistent}
A graph $G$ is said to be \emph{geodesically consistent} if the
geodesic topology on $V(G)$ coincides with the weak geodesic topology.
\end{defn}

In other words, $G$ is geodesically consistent if the limit points of
any set $A \subseteq V(G)$ are the vertices of $G$ that geodesically dominate
$A$.
 
We say that a set $A$ of vertices of a graph $G$ is \emph{finitely geodesically dominated} if the set of vertices which geodesically dominate $A$ is finite and non-empty.  From Propositions~\ref{P:compactness} and \ref{P:geod.dom/isom.ray}, we infer the following result.

\begin{cor}\label{C:geod.cons./comp.}
A geodesically consistent partial cube $G$ is compact (i.e., contains no isometric rays) if and only if the vertex set of every ray of $G$ is finitely geodesically dominated.
\end{cor}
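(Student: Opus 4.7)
The plan is to derive the corollary by combining Propositions \ref{P:compactness} and \ref{P:geod.dom/isom.ray} through the geodesic-consistency hypothesis, which provides the bridge translating ``limit points'' into ``vertices that geodesically dominate''.

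First I would record the key translation: by definition, $G$ being geodesically consistent means that for every $A \subseteq V(G)$, the limit points of $A$ in the weak geodesic topology are exactly the vertices of $G$ that geodesically dominate $A$. (Recall from the discussion preceding Definition \ref{D:geod.consistent} that in an arbitrary partial cube one inclusion is automatic; consistency supplies the reverse inclusion.) Applying this to $A = V(R)$ for a ray $R$ of $G$, the set of limit points of $V(R)$ coincides with the set of vertices that geodesically dominate $V(R)$. Consequently, ``$V(R)$ has a finite positive number of limit points'' and ``$V(R)$ is finitely geodesically dominated'' are the same statement.

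For the forward direction, assume $G$ is compact. Proposition \ref{P:compactness}, in particular the implication (i)$\Rightarrow$(iv), yields that the vertex set of every ray of $G$ has a finite positive number of limit points in $V(G)$. By the translation above, the vertex set of every ray is finitely geodesically dominated.

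Conversely, assume that the vertex set of every ray of $G$ is finitely geodesically dominated. By the same translation, the vertex set of every ray has a finite positive number of limit points in $V(G)$. Then (iv)$\Rightarrow$(i) of Proposition \ref{P:compactness} gives that $V(G)$ is compact, i.e.\ $G$ is a compact partial cube, which by the remark following Proposition \ref{P:compactness} amounts to $G$ containing no isometric rays (a fact that also follows directly from Proposition \ref{P:geod.dom/isom.ray}). There is no genuine obstacle here; the only thing to watch carefully is the distinction between the two topologies on $V(G)$, which is precisely what geodesic consistency collapses.
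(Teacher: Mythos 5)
Your proof is correct and follows exactly the route the paper intends: the paper gives no explicit argument, merely citing Propositions~\ref{P:compactness} and~\ref{P:geod.dom/isom.ray}, and your write-up supplies the intended details, using the reformulation of geodesic consistency (limit points $=$ geodesically dominating vertices) to match condition (iv) of Proposition~\ref{P:compactness} with ``finitely geodesically dominated.'' Nothing is missing.
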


From~\cite[Theorem 4.8]{P09-1} we have:

\begin{pro}\label{P:charact.geod.consistent.p.c.}\textnormal{(Polat~\cite[Theorem 
4.8]{P09-1})}
Let $G$ be a partial cube.  $G$ is geodesically consistent if and only
if, for every edge $ab$ of $G$, each vertex in $co_{G}(U_{ab})$ which
geodesically dominates $U_{ab}$ belongs to $U_{ab}$.
\end{pro}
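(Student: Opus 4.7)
The key observation is that on any partial cube $G$, the weak geodesic topology is at least as fine as the geodesic topology, since (as noted just before the proposition) every weak-limit point of a set $A$ geodesically dominates $A$. Hence $G$ is geodesically consistent if and only if every vertex geodesically dominating a set $A$ is a weak-limit point of $A$. Using that the half-spaces $W_{cd}$ form a subbase for the weak geodesic topology (by property $\mathrm{S}_{3}$), this reduces to a statement about covering $A \setminus \{x\}$ by finitely many half-spaces missing $x$.

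For the forward direction, let $x \in co_{G}(U_{ab})$ geodesically dominate $U_{ab}$. By consistency, $x$ is a weak-limit point of $U_{ab}$, and since $W_{ab}$ is weak-closed as a half-space, $x \in W_{ab}$. I would argue $x \in U_{ab}$ by contradiction: if every neighbor of $x$ lay in $W_{ab}$, then each geodesic from $x$ to a vertex $u \in U_{ab}$ would start with an edge $xy$, placing $u \in W_{yx}$, and hence $U_{ab} \subseteq \bigcup_{y \in N_{G}(x)} W_{yx}$. When $x$ has finite degree, this finite union of half-spaces avoiding $x$ furnishes a weak-open neighborhood of $x$ disjoint from $U_{ab}$, contradicting the weak-limit property. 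The infinite-degree case is the delicate step and likely requires a further application of geodesic consistency together with the hull hypothesis $x \in co_{G}(U_{ab})$ to extract a finite subcover, or to reduce to the finite-degree case via the local $\Theta$-class structure.

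For the converse, I would argue contrapositively. Suppose $x$ geodesically dominates $A$ but is not a weak-limit point of $A$. Then there are finitely many half-spaces $W_{b_{1} a_{1}}, \ldots, W_{b_{k} a_{k}}$ avoiding $x$ whose union contains $A \setminus \{x\}$. By a pigeonhole on the defining quantifier of geodesic domination (unioning finitely many witness sets), some index $j$ satisfies: $x$ geodesically dominates $A \cap W_{b_{j} a_{j}}$. Since every geodesic from $x \in W_{a_{j} b_{j}}$ to a vertex of $W_{b_{j} a_{j}}$ must enter through a vertex of $U_{b_{j} a_{j}}$, truncation shows $x$ geodesically dominates $U_{b_{j} a_{j}}$, and pairing across the $\Theta$-class of $a_{j} b_{j}$ then yields that $x$ geodesically dominates $U_{a_{j} b_{j}}$. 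Once $x \in co_{G}(U_{a_{j} b_{j}})$ is verified, the hypothesis at edge $a_{j} b_{j}$ forces $x \in U_{a_{j} b_{j}}$, giving $x$ a neighbor $z$ in $W_{b_{j} a_{j}}$; with this $z$ one constructs a finite set $S$ exhibiting that $x$ does not geodesically dominate $A$, the desired contradiction.

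The principal obstacles are: in the forward direction, handling vertices of infinite degree, where the naive finite cover by $\{W_{yx} : y \in N_{G}(x)\}$ no longer suffices and a more delicate use of the hull hypothesis $x \in co_{G}(U_{ab})$ is needed; and in the converse, verifying the hull membership $x \in co_{G}(U_{a_{j} b_{j}})$ before the hypothesis at edge $a_{j} b_{j}$ can be invoked. Both likely require an iteration tailored to the $\Theta$-class structure of partial cubes.
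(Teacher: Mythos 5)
The paper offers no proof of this statement: it is imported verbatim from \cite[Theorem 4.8]{P09-1}, so there is no in-paper argument to compare yours against. Your global framework (consistency holds iff every geodesically dominating vertex is a weak-limit point, half-spaces as subbase) is the right one, but both directions contain genuine gaps.

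In the forward direction your finite-degree argument proves nothing, because a vertex $x$ of finite degree geodesically dominates no set at all: taking $S = N_G(x)$ in the definition, every $I_G(x,a)$ with $a \neq x$ meets $S$. So the whole content lies in the infinite-degree case, which you leave open; moreover the target you aim at there (a finite family of half-spaces missing $x$ and covering $U_{ab}$, i.e.\ $x$ not a weak-limit point of $U_{ab}$) does not appear reachable. The workable route changes the witnessing set: show that $x$ geodesically dominates $W_{ba}$ and observe that the weak-open set $W_{ab}$ separates $x$ from $W_{ba}$, contradicting consistency immediately. The real work is passing from ``dominates $U_{ab}$'' to ``dominates $U_{ba}$'': given a finite $S$, enlarge it to $S^* := (S \cap W_{ab}) \cup \bigcup_{s \in S \cap W_{ba}} \bigl(I_G(x,s) \cap U_{ab}\bigr)$, pick $u \in U_{ab}$ with $I_G(x,u) \cap S^* = \emptyset$, and use $I_G(x,u') \cap W_{ab} = I_G(x,u)$ (where $u'$ is the neighbour of $u$ in $U_{ba}$) to check $I_G(x,u') \cap S = \emptyset$. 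This uses $x \notin U_{ab}$ but not the hull hypothesis.

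In the converse, the pigeonhole, truncation and pairing steps are sound, but the hull membership $x \in co_G(U_{a_jb_j})$, which you rightly flag, is not a mere verification: it can fail for the index $j$ the pigeonhole produces, and without it the hypothesis cannot be invoked. The missing idea is to replace $W_{b_ja_j}$ by a copoint at $x$ containing it. Since the copoints of a partial cube are half-spaces (Subsection 2.4), this yields an edge $cd$ with $W_{b_ja_j} \subseteq W_{dc}$ and $x \in W_{cd}$; maximality of the copoint applied to the convex set $W_{dc} \cup co_G(U_{cd})$ forces $x \in co_G(U_{cd})$. Then $x$ dominates $W_{dc}$, hence $U_{cd}$, the hypothesis gives $x \in U_{cd}$, and $S = \{z\}$ with $z$ the neighbour of $x$ in $U_{dc}$ contradicts domination, since $z \in I_G(x,w)$ for every $w \in W_{dc}$. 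With these two repairs your outline closes up into a proof.
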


Median graphs and more generally netlike partial cubes are
geodesically consistent (cf. \cite[Proposition 4.15]{P09-1}).  Moreover any convex subgraph of a geodesically consistent partial cube is geodesically consistent.

\subsection{Gated sets}\label{SS:gated}

A set $A$ of vertices of a graph $G$ is said to be \emph{gated} if,
for each $x \in V(G)$, there exists a vertex $y$ (the \emph{gate} of
$x$) in $A$ such that $y \in I_{G}(x,z)$ for every $z \in A$.  Any
gated set is convex.  Note that conversely, any convex set of a median graph is gated.
Moreover the set of gated sets of a graph with the addition of the empty set is a
convexity, and thus is closed under any intersections.   We say that a
subgraph $F$ of a graph $G$ is \emph{gated} if its vertex set is gated, and the function of $G$ onto $F$ which assigns to each vertex of $G$ its gate in $F$ is called the \emph{gate map} of $F$.

\begin{pro}\label{P:gated.sets/Helly}
The gated sets of a compact partial cube $G$ have the strong Helly
property, that is, any family of gated sets of $G$ that pairwise
intersect have a non-empty intersection, and this intersection is gated.
\end{pro}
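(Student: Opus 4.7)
The plan is to argue in two stages: first establish the Helly property for \emph{finite} families of pairwise intersecting gated sets, then pass to arbitrary families via the compactness of $V(G)$.

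For the finite case, I would first treat three gated sets $A$, $B$, $C$ with $A\cap B$, $B\cap C$, $A\cap C$ all nonempty. Pick $x\in A\cap B$ and $z\in B\cap C$, and let $z_A$ be the gate of $z$ in $A$. Then $z_A\in I_G(z,x)$ since $x\in A$, so convexity of $B$ (gated sets are convex) together with $x,z\in B$ gives $z_A\in B$; similarly, using any point $y\in A\cap C$, $z_A\in I_G(z,y)$ forces $z_A\in C$. Hence $z_A\in A\cap B\cap C$. Next I would promote this to arbitrary finite families by induction: given $A_1,\dots,A_n$ pairwise intersecting and gated, the three-set case applied to $A_1,A_2,A_i$ (for each $i\geq 3$) shows that the gated set $A_1\cap A_2$ meets every $A_i$; then since $A_1\cap A_2$ is itself gated (gated sets are closed under intersection, as recalled in the paper), the induction hypothesis applied to $A_1\cap A_2,A_3,\dots,A_n$ yields a common point.

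For an arbitrary family $(A_i)_{i\in I}$ of pairwise intersecting gated sets, I would invoke the fact that each $A_i$ is convex, hence closed in the weak geodesic topology of $V(G)$, since the family of convex sets is a subbase for the closed sets of that topology. The finite Helly property established above shows that every finite subfamily of $(A_i)_{i\in I}$ has nonempty intersection, i.e., the family has the finite intersection property. Since $V(G)$ is compact by hypothesis, any collection of closed sets with the finite intersection property has nonempty intersection, so $\bigcap_{i\in I}A_i\neq\emptyset$.

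Finally, gatedness of the intersection is immediate: as the paper states, the gated sets together with $\emptyset$ form a convexity, hence are closed under arbitrary intersections; since we have just shown $\bigcap_{i\in I}A_i$ is nonempty, it is a gated set. The only delicate point is really the three-set Helly argument, since everything else is either a direct induction or a standard compactness plus finite-intersection-property application; the whole proof should be quite short.
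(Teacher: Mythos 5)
Your proof is correct and follows the same overall strategy as the paper's: establish the Helly property for finite families of gated sets, then pass to arbitrary families by combining the finite intersection property with the compactness of $V(G)$, gated sets being convex and hence closed in the weak geodesic topology. Two local differences are worth noting. First, you prove the finite Helly property from scratch via the three-set gate argument and induction, where the paper simply cites Bandelt's result; your argument (the gate $z_A$ of $z\in B\cap C$ in $A$ lands in $B$ via $I_G(z,x)$ and in $C$ via $I_G(z,y)$) is the standard one and is correct. Second, and more substantively, for the gatedness of the intersection you invoke the blanket statement that the gated sets with $\emptyset$ adjoined form a convexity and are therefore closed under arbitrary intersections. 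The paper does assert this in the preliminaries, but in the proof of this very proposition it does not lean on that assertion for infinite families: it proves gatedness of the infinite intersection directly, and the argument genuinely uses that intervals of a partial cube are finite (Lemma~\ref{L:gen.propert.}(ii)). Concretely, for $x\in V(G)$ and $a$ in the intersection $A$, the gates of $x$ in the finite sub-intersections all lie in the finite set $I_G(x,a)$ and move monotonically away from $x$, hence stabilize at some $y$ which is then the gate of $x$ in the full intersection. If you want your proof to be self-contained on this point, you should include this stabilization argument rather than defer entirely to the closure-under-arbitrary-intersections claim, since for infinite families that closure is precisely the non-trivial, partial-cube-specific content; everything else in your write-up is sound.
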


\begin{proof}
This is a consequence of the compactness of $V(G)$ and of the fact
that, by \cite[Proposition 2.4]{B89}, the gated sets of $G$ have
the Helly property, that is, any finite family $(A_i)_{i \in I}$ of gated sets of
$G$ that pairwise intersect have a non-empty intersection $A$.  The set $A$ is gated if $I$ is finite (see~\cite[Corollary 16.3]{HIK11}.  Assume that $I$ is infinite, and let $x \in V(G)$ and $a \in A$.  The gate of $x$ in $A$, if it exists is an element of $I_G(x,a)$.  Then, because, by Lemma~\ref{L:gen.propert.}(ii), every interval in a partial cube is finite, there exists a finite $J \subseteq I$ and a $y \in I_G(x,a)$ which is the gate of $x$ in $\bigcap_{i \in J'}A_i$ for every finite subset $J'$ of $I$ that contains $J$.  Therefore $y$ is the gate of $x$ in $A$.  Hence $A$ is gated.
\end{proof}

It follows that, for any set $A$ of vertices of a compact partial cube $G$, there exists a smallest gated set that contains $A$.  This set is called the \emph{gated hull} of $A$ in $G$.

The following lemma is clear.

\begin{lem}\label{L:gated.inter.convex}
Let $G$ be a partial cube.  Then we have the following properties:

\textnormal{(i)}\; If $H$ is a gated subgraph of $G$, and $F$ a convex 
subgraph of $G$, then $H \cap F$ is gated in $F$.

\textnormal{(ii)}\; If a finite subgraph $H$ of $G$ is gated in any 
finite convex subgraph of $G$ that contains $H$, then $H$ is gated 
in $G$.
\end{lem}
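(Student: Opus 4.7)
My plan for both parts is to exploit the standard fact that a convex subgraph of a partial cube is automatically isometric in it, so that geodesic intervals computed in the subgraph agree with those in the ambient graph. This makes gates behave well under passage between $G$ and its convex subgraphs.

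For (i), I would fix an arbitrary $x \in V(F)$ and let $y$ be the gate of $x$ in the gated set $V(H)$, which exists because $H$ is gated in $G$. If $V(H) \cap V(F) = \emptyset$ the claim is vacuous, so pick any $z_0 \in V(H \cap F)$. The gate property gives $y \in I_G(x,z_0)$, and convexity of $F$ forces the whole interval into $V(F)$, so $y \in V(H \cap F)$. For any other $z \in V(H \cap F)$ one then has $y \in I_G(x,z) = I_F(x,z)$ by the isometry of $F$ in $G$, so $y$ is the gate of $x$ in $H \cap F$ inside $F$.

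For (ii), I would fix $x \in V(G)$ and set $F := G[co_G(V(H) \cup \{x\})]$. Since $V(H) \cup \{x\}$ is finite, Lemma~\ref{L:gen.propert.}(iii) ensures that $F$ is a finite convex subgraph of $G$ containing both $H$ and $x$, and the hypothesis provides a gate $y \in V(H)$ of $x$ in $H$ inside $F$. Invoking once more $I_F(x,z) = I_G(x,z)$ for every $z \in V(H)$, one concludes that $y \in I_G(x,z)$ for all $z \in V(H)$, so $y$ is a gate of $x$ in $H$ in $G$. Since $x$ was arbitrary, $H$ is gated in $G$.

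There is no genuine obstacle beyond noticing that both items reduce to the isometry of convex subgraphs; the only external ingredient is the finiteness of polytopes in a partial cube (Lemma~\ref{L:gen.propert.}(iii)), which is needed in (ii) to legitimately apply the hypothesis to the finite convex hull $F$.
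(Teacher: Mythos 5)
Your proof is correct. The paper gives no argument for this lemma (it is simply declared "clear"), and the argument you supply — using that a convex subgraph is isometric, so the gate of $x$ in $H$ is forced into $F$ by lying on a geodesic to a point of $H\cap F$ for part (i), and taking $F=G[co_G(V(H)\cup\{x\})]$, finite by Lemma~\ref{L:gen.propert.}(iii), for part (ii) — is exactly the intended routine verification.
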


A graph $G$ is the \emph{gated amalgam} of two graphs
$G_{0}$ and $G_{1}$ if $G_{0}$ and $G_{1}$ are isomorphic to two
intersecting gated subgraphs $G'_{0}$ and $G'_{1}$ of $G$ whose union
is $G$.  More precisely we also say that $G$ is the gated amalgam of
$G_{0}$ and $G_{1}$ \emph{along} $G'_{0} \cap G'_{1}$.  The gated amalgam of two partial cubes is clearly a partial cube.

\subsection{Semi-peripheries}\label{SS:semi-periph.}

We recall that if $ab$ is an edge of a partial cube $G$ such that
$W_{ab} = U_{ab}$, then $W_{ab}$ is called a \emph{periphery} of $G$.
A partial cube has generally no periphery.  We generalize this concept
as follows.

\begin{defn}\label{D:semi-periphery}
Let $G$ be a partial cube.  If $W_{ab} = co_{G}(U_{ab})$ for some edge
$ab$ of $G$, then we say that $W_{ab}$ is a \emph{semi-periphery} of
$G$.
\end{defn}

Clearly any finite partial cube has a semi-periphery.  We will see that this is also true for any compact partial cube.

\begin{pro}\label{P:min.half-space/semi-periph.}
A set $A$ of vertices of a partial cube $G$ is a semi-periphery of $G$
if and only if it is a minimal non-empty half-space of $G$.
\end{pro}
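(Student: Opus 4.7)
The plan is to prove both implications by exploiting the correspondence between half-spaces of a partial cube and $\Theta$-classes, together with the facts that every convex subgraph is connected (being isometric) and that $U_{ab}$ is exactly the set of vertices of $W_{ab}$ adjacent to $W_{ba}$.

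For the forward direction, I assume $W_{ab}=co_G(U_{ab})$; this set is non-empty since $a\in U_{ab}\subseteq W_{ab}$. Let $W_{cd}$ be any non-empty half-space with $W_{cd}\subseteq W_{ab}$; the goal is to show that the edges $ab$ and $cd$ are $\Theta$-equivalent, which together with the inclusion $W_{cd}\subseteq W_{ab}$ will force $W_{cd}=W_{ab}$. The key step is to prove $U_{ab}\cap W_{cd}\neq\emptyset$: otherwise $U_{ab}\subseteq W_{dc}$, and convexity of $W_{dc}$ gives $W_{ab}=co_G(U_{ab})\subseteq W_{dc}$; combined with $W_{cd}\subseteq W_{ab}$ this would force $W_{cd}=\emptyset$. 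Picking $p\in U_{ab}\cap W_{cd}$ and its neighbor $p'\in U_{ba}$, the edge $pp'$ is by definition $\Theta$-equivalent to $ab$; since $W_{cd}\subseteq W_{ab}$ implies $W_{ba}\subseteq W_{dc}$, we have $p'\in W_{dc}$, so $pp'$ has its endpoints on opposite sides of the $cd$-cut and is therefore also $\Theta$-equivalent to $cd$.

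For the backward direction, I assume $W_{ab}$ is a minimal non-empty half-space and suppose for contradiction that $co_G(U_{ab})\subsetneq W_{ab}$. Picking $v\in W_{ab}\setminus co_G(U_{ab})$ and invoking the separation property $\mathrm{S}_{3}$ yields a half-space $W_{cd}$ with $co_G(U_{ab})\subseteq W_{cd}$ and $v\in W_{dc}$. I then claim $W_{dc}\subseteq W_{ab}$; granting this, $W_{dc}$ is a non-empty half-space (it contains $v$) strictly inside $W_{ab}$ (since $a\in U_{ab}\subseteq W_{cd}$ gives $a\in W_{ab}\setminus W_{dc}$), contradicting minimality. To prove the claim, suppose otherwise that $W_{dc}\cap W_{ba}\neq\emptyset$. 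Since $W_{dc}$ is convex, the subgraph $G[W_{dc}]$ is isometric in $G$ and hence connected; it contains $v\in W_{ab}$ and some vertex of $W_{ba}$, so it must contain an edge $pq$ with $p\in W_{ab}$ and $q\in W_{ba}$. Such an edge is $\Theta$-equivalent to $ab$, so $p\in U_{ab}\subseteq W_{cd}$, contradicting $p\in W_{dc}$.

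The main subtlety lies in the backward direction, where \emph{a priori} the $\Theta$-classes of $ab$ and $cd$ could be nested in either direction or could cross; my argument avoids an explicit trichotomy by directly proving $W_{dc}\subseteq W_{ab}$ via the connectedness of $G[W_{dc}]$ and the defining property of $U_{ab}$. The forward direction, by contrast, hinges on the observation that a single edge $pp'$ can simultaneously witness $\Theta$-equivalence of $ab$ and $cd$, which becomes possible precisely because the semi-periphery condition forces $W_{cd}$ to meet $U_{ab}$.
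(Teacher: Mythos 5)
Your proof is correct. Both directions check out: in the forward direction, the observation that $W_{cd}\subseteq W_{ab}=co_G(U_{ab})$ forces $U_{ab}\cap W_{cd}\neq\emptyset$ (else $co_G(U_{ab})\subseteq W_{dc}$ would swallow $W_{cd}$), and the single edge $pp'$ then witnesses, via transitivity of $\Theta$, that the two cuts coincide; in the backward direction, the connectivity of the convex set $W_{dc}$ correctly rules out a crossing edge, since its $W_{ab}$-endpoint would lie in $U_{ab}\subseteq W_{cd}$. Your route differs from the paper's in packaging and in the tools invoked. The paper argues both implications by contrapositive: for ``minimal implies semi-periphery'' it picks an edge in $\partial_G(W_{ba}\cup co_G(U_{ab}))$ and uses Lemma~\ref{L:gen.propert.}(ix) on that convex set to exhibit a strictly smaller half-space; for ``semi-periphery implies minimal'' it invokes Lemma~\ref{L:E(G[Wab])} to show that an edge $cd$ with $W_{cd}\subset W_{ab}$ must live in $W_{ab}\setminus co_G(U_{ab})$. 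You instead argue both directions directly, replacing the boundary-edge construction by the separation property $\mathrm{S}_3$ (which produces the separating half-space abstractly rather than from an explicit edge) and replacing Lemma~\ref{L:E(G[Wab])} by the elementary crossing-edge argument. What your approach buys is self-containedness --- you need only $\mathrm{S}_3$, transitivity of $\Theta$, and connectivity of convex sets, all of which are recalled before the proposition --- at the cost of not reusing the machinery of Lemma~\ref{L:E(G[Wab])} that the paper exploits again elsewhere.
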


\begin{proof}
Let $ab$ be an edge of $G$ such that $W_{ab}$ is not a 
semi-periphery.  Let $xy \in \partial_{G}(W_{ba} \cup co_{G}(U_{ab}))$ with
$x \in co_{G}(U_{ab})$.  The set $W_{ba} \cup co_{G}(U_{ab})$ is convex.  Hence the edge $xy$ is not 
$\Theta$-equivalent to any edge of 
$G[W_{ba} \cup co_{G}(U_{ab})]$ by Lemma~\ref{L:gen.propert.}(ix).  Hence 
$W_{yx} \subset W_{ab}$, and thus $W_{ab}$ is not a minimal 
half-space.

Conversely, suppose that the half-space $W_{ab}$ is not minimal for 
some edge $ab$ of $G$.  Then there is an edge $cd$ of $G$ such that 
$W_{cd} \subset W_{ab}$.  It follows that $cd$ is not 
$\Theta$-equivalent to an edge of $G[W_{ba}]$, and thus to an edge of 
$G[co_{G}(U_{ab})]$ by Lemma~\ref{L:E(G[Wab])}.  Hence $cd$ is an 
edge of $G[W_{ab}]$ which is not an edge of $G[co_{G}(U_{ab})]$.  
Therefore $W_{ab}$ is not a semi-periphery of $G$.
\end{proof}

\begin{pro}\label{P:half-spaces/compact}
Let $G$ be a compact partial cube.  Then any chain of half-spaces of
$G$ is finite.
\end{pro}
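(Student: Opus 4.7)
The plan is to argue by contradiction using compactness of $V(G)$. Suppose that $G$ admits an infinite chain $\mathcal{C}$ of half-spaces. Since a totally ordered set with neither an infinite strictly ascending nor an infinite strictly descending sequence is necessarily finite (such a chain is well-ordered by each direction, hence has finite ordinal type), $\mathcal{C}$ contains a countably infinite strictly monotone subchain. Passing to complements if necessary (the complement of a half-space is a half-space), one may assume this subchain is strictly descending, say $W_{a_1b_1} \supsetneq W_{a_2b_2} \supsetneq \cdots$.

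A first observation is that the edges $a_nb_n$ lie in pairwise distinct $\Theta$-classes: if $a_nb_n$ and $a_mb_m$ were $\Theta$-equivalent for some $n \neq m$, then $W_{a_nb_n}$ would either coincide with $W_{a_mb_m}$ or be its complement, both of which contradict the strict inclusion. The core step then uses compactness: each $W_{a_nb_n}$ is convex, hence closed in the weak geodesic topology, so $(W_{a_nb_n})_{n \geq 1}$ is a decreasing family of nonempty closed subsets of the compact space $V(G)$, which by the finite intersection property has nonempty intersection. Fix any $x \in \bigcap_{n \geq 1} W_{a_nb_n}$, and any $y \in V(G) \setminus W_{a_1b_1}$ (nonempty because $W_{a_1b_1}$ is a proper half-space).

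The contradiction comes from the geometry of geodesics. Let $P$ be an $(x,y)$-geodesic. For every $n$, $x \in W_{a_nb_n}$ and $y \in W_{b_na_n}$, so $P$ must contain an edge $\Theta$-equivalent to $a_nb_n$. Because the $\Theta$-classes of the edges $a_nb_n$ are pairwise distinct and, by Lemma~\ref{L:gen.propert.}(v), no two distinct edges of a geodesic are $\Theta$-equivalent, $P$ would contain at least $n$ edges for every $n$, contradicting Lemma~\ref{L:gen.propert.}(ii), which ensures that the interval $I_G(x,y)$, and hence $P$, is finite. The main subtlety is the initial order-theoretic reduction to a countable strictly monotone subchain; once that is handled, compactness of $V(G)$ combined with the correspondence between $\Theta$-classes and pairs of complementary half-spaces makes the rest essentially automatic.
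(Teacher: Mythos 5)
Your proof is correct and follows essentially the same route as the paper: half-spaces are closed in the weak geodesic topology, so a strictly decreasing sequence of them would be a decreasing family of nonempty closed sets, and compactness forces a nonempty intersection, which is then contradicted via the crossing of infinitely many distinct $\Theta$-classes by a single (finite) geodesic. In fact you make explicit two steps the paper leaves implicit --- the order-theoretic reduction to a countable strictly descending subchain via complementation, and the reason the intersection of the chain must be empty --- so no gap here.
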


\begin{proof}
Suppose that there is an infinite chain of half-spaces of $G$.  Then, by the definition of a half-space,
there exists an infinite sequence $(C_{n})_{n \in \mathbb{N}}$ of
half-spaces of $G$ such that $C_{n} \supset C_{n+1}$ for every $n \in
\mathbb{N}$.  Therefore $(C_{n})_{n \in \mathbb{N}}$ is a sequence of non-empty half-spaces, and thus of non-empty closed sets, of $V(G)$ whose
intersection is empty, contrary to the fact that $G$ is compact.
Consequently any chain of half-spaces of $G$ is finite.
\end{proof}

We obtain immediately:

\begin{cor}\label{C:compact=>min.half-spaces}
Let $G$ be a compact partial cube.  Then there exists a minimal
non-empty half-space in $G$, and moreover any non-empty half-space of
$G$ contains a minimal non-empty half-space.
\end{cor}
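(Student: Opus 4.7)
The plan is to derive both assertions directly from Proposition~\ref{P:half-spaces/compact}, which guarantees that every chain of half-spaces of $G$ is finite. The idea is to order the non-empty half-spaces of $G$ by reverse inclusion and apply Zorn's lemma; because every chain is finite, it automatically has a smallest element in the inclusion order, hence a largest element in the reverse-inclusion order, which supplies the upper bound needed to invoke Zorn.

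More concretely, for the first assertion I would first note that the family of non-empty half-spaces of $G$ is non-empty, since for any edge $ab$ of $G$ the set $W_{ab}$ is a non-empty half-space. Let $(H_i)_{i \in I}$ be a non-empty chain of non-empty half-spaces under inclusion. By Proposition~\ref{P:half-spaces/compact} it is finite, so it has a smallest element $H_{i_{0}}$, which is non-empty since it coincides with some $H_i$. This $H_{i_{0}}$ is an upper bound for the chain in the reverse-inclusion order, so Zorn's lemma yields a maximal element in that order, i.e., a minimal non-empty half-space of $G$.

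For the second assertion, given any non-empty half-space $H$ of $G$, I would apply the same reasoning to the sub-collection of non-empty half-spaces contained in $H$; this collection is non-empty (it contains $H$) and chains in it remain finite by Proposition~\ref{P:half-spaces/compact}, so Zorn again produces a minimal element, which is a minimal non-empty half-space of $G$ contained in $H$.

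There is essentially no obstacle: the whole content is packaged in Proposition~\ref{P:half-spaces/compact}, and the rest is a routine application of Zorn's lemma. Equivalently, one could argue by Dependent Choice: if some non-empty half-space $H$ contained no minimal non-empty half-space, one could recursively extract an infinite strictly descending chain of non-empty half-spaces starting at $H$, contradicting the finiteness of chains. I would favour the Zorn formulation for uniformity with the rest of the paper's convex-geometric style.
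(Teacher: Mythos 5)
Your proof is correct and follows the same route as the paper, which simply presents the corollary as an immediate consequence of Proposition~\ref{P:half-spaces/compact} (finiteness of chains of half-spaces); you have just spelled out the routine Zorn/Dependent-Choice argument that the paper leaves implicit.
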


From Proposition~\ref{P:min.half-space/semi-periph.} and
Corollary~\ref{C:compact=>min.half-spaces} we have:

\begin{pro}\label{P:compact=>semi-periphery}
Any compact partial cube has a semi-periphery.
\end{pro}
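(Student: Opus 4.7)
The plan is essentially to observe that all the work has already been done in the two immediately preceding results, so the proof is a one-line combination. Given a compact partial cube $G$, I would first invoke Corollary~\ref{C:compact=>min.half-spaces} to produce a minimal non-empty half-space $H$ of $G$; the existence of such an $H$ is what the corollary delivers, via the finiteness of chains of half-spaces established in Proposition~\ref{P:half-spaces/compact}, which in turn rests on compactness applied to a decreasing sequence of non-empty closed (because half-spaces are closed for the weak geodesic topology) sets.

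Next I would apply Proposition~\ref{P:min.half-space/semi-periph.}, which characterizes semi-peripheries of a partial cube as precisely the minimal non-empty half-spaces. Since $H$ is a minimal non-empty half-space, this proposition immediately yields that $H$ is a semi-periphery of $G$, so $G$ has a semi-periphery, as required.

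There is no real obstacle here: the statement is a direct corollary of the previous two results, and the preamble to the proposition (‘‘From Proposition~\ref{P:min.half-space/semi-periph.} and Corollary~\ref{C:compact=>min.half-spaces} we have:'') already signals this. The only point worth double-checking would be that the minimal half-space produced by the corollary is of the form $W_{ab}$ for some edge $ab$, but this is automatic since every half-space of a partial cube is of that form, as noted after Theorem~\ref{T:Djokovic-Winkler}. Hence the proof reduces to citing the two results in sequence.
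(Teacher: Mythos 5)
Your proof is correct and is exactly the paper's argument: the paper gives no separate proof, deriving the proposition directly from Corollary~\ref{C:compact=>min.half-spaces} (existence of a minimal non-empty half-space in a compact partial cube) combined with Proposition~\ref{P:min.half-space/semi-periph.} (minimal non-empty half-spaces are precisely the semi-peripheries). Your added remark that every half-space is of the form $W_{ab}$ is a harmless extra check that matches the paper's earlier observation following Theorem~\ref{T:Djokovic-Winkler}.
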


\begin{pro}\label{P:med.gr./loc.periph.}    
Let $G$ be a partial cube.  The following assertions are equivalent:

\textnormal{(i)}\; $G$ is a median graph.

\textnormal{(ii)}\; $U_{ab}$ is convex for any edge $ab$ of $G$.

\textnormal{(iii)}\; For every convex subgraph $F$ of $G$, any
semi-periphery of $F$ is a periphery.
\end{pro}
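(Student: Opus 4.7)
The plan is to close the cycle (i) $\Rightarrow$ (ii) $\Rightarrow$ (iii) $\Rightarrow$ (ii), with (ii) $\Rightarrow$ (i) handled by the classical Mulder--Bandelt-type characterization of median graphs among partial cubes. For (i) $\Rightarrow$ (ii), in a median graph both half-spaces $W_{ab}$ and $W_{ba}$ are gated; the gate map of $W_{ab}$ sends $W_{ba}$ onto $U_{ab}$, and since gate maps in median graphs carry gated sets to gated sets, $U_{ab}$ is gated, hence convex. For (ii) $\Rightarrow$ (i), one constructs the median of each triple $(u,v,w)$ by induction on $d(u,v)+d(v,w)+d(w,u)$, the convexity of $U_{ab}$ being exactly what allows the shadow of a vertex onto a half-space to behave like a gate step.

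For (ii) $\Rightarrow$ (iii), fix a convex subgraph $F$ of $G$ and an edge $ab \in E(F)$. Convexity of $F$ yields $W_{ab}^{F} = W_{ab}^{G} \cap V(F)$ and $W_{ba}^{F} = W_{ba}^{G} \cap V(F)$; and if $x \in W_{ab}^{F}$ has a neighbor $y \in W_{ba}^{G}$, then $y \in I_{G}(x,b) \subseteq V(F)$ by convexity of $F$, so $U_{ab}^{F} = U_{ab}^{G} \cap V(F)$. By (ii) this is an intersection of two convex sets, hence convex in $F$, so any semi-periphery $W_{ab}^{F} = co_{F}(U_{ab}^{F})$ coincides with $U_{ab}^{F}$, i.e.\ is a periphery.

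The substantive step is (iii) $\Rightarrow$ (ii). Given an edge $ab$ of $G$, I would work inside $F := G_{\overrightarrow{ab}} = G[co_{G}(U_{ab}) \cup W_{ba}]$. First I would verify that $F$ is convex: for $x \in co_{G}(U_{ab})$, $y \in W_{ba}$ and $z \in I_{G}(x,y)$, either $z \in W_{ba} \subseteq V(F)$, or $z \in W_{ab}$, in which case a $(z,y)$-geodesic crosses the $\Theta$-class of $ab$ at an edge $u'v'$ with $u' \in U_{ab}$, and the equality $d_{G}(x,y) = d_{G}(x,z)+d_{G}(z,u')+d_{G}(u',y)$ forces $z \in I_{G}(x,u') \subseteq co_{G}(U_{ab}) \subseteq V(F)$ by convexity of $co_{G}(U_{ab})$. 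Then $W_{ab}^{F} = co_{G}(U_{ab})$, $W_{ba}^{F} = W_{ba}$, and since $U_{ab}^{G} \subseteq co_{G}(U_{ab})$ the argument of the previous paragraph gives $U_{ab}^{F} = U_{ab}^{G}$. Consequently $co_{F}(U_{ab}^{F}) = co_{G}(U_{ab}) = W_{ab}^{F}$, so $W_{ab}^{F}$ is a semi-periphery of the convex subgraph $F$. Hypothesis (iii) then forces $W_{ab}^{F} = U_{ab}^{F}$, that is $co_{G}(U_{ab}) = U_{ab}$, which is the desired convexity of $U_{ab}$.

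The main obstacle is the convexity check for $F = G_{\overrightarrow{ab}}$; it rests on the structural fact that every geodesic between $W_{ab}$ and $W_{ba}$ crosses the $\Theta$-class of $ab$ through an edge joining $U_{ab}$ to $U_{ba}$, which together with the identification $U_{ab}^{F} = U_{ab}^{G} \cap V(F)$ reduces everything else to bookkeeping with half-spaces.
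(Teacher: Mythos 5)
Your proposal is correct and follows essentially the same route as the paper: the equivalence (i)$\Leftrightarrow$(ii) is the classical result of Bandelt (which the paper simply cites), and your key step (iii)$\Rightarrow$(ii) is exactly the paper's argument, namely that $co_{G}(U_{ab})$ is a semi-periphery of the convex subgraph $G_{\overrightarrow{ab}}$, so (iii) forces it to equal $U_{ab}$. Your (ii)$\Rightarrow$(iii) via $U_{ab}^{F}=U_{ab}^{G}\cap V(F)$ is only a trivial rerouting of the paper's (i)$\Rightarrow$(iii), and your explicit convexity check for $G_{\overrightarrow{ab}}$ fills in a step the paper takes for granted.
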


\begin{proof}
Conditions (i) and (ii) are equivalent by a result of Bandelt~\cite{B82}.

(i) $\Rightarrow$ (iii):\; Assume that $G$ is a median graph, and let
$F$ be a convex subgraph of $G$.  Then $F$ is also a median graph, and
thus, by (ii), any semi-periphery of a
median graph is a periphery.

(iii) $\Rightarrow$ (ii):\; Assume Condition (iii).  Let $ab \in E(G)$.
The subgraph $G_{ \overrightarrow{ab}}$ of $G$ induced by the set $W_{ba} \cup
co_{G}(U_{ab})$ is convex, and $co_{G}(U_{ab})$ is a semi-periphery of
$G_{ \overrightarrow{ab}}$.  Hence it is a periphery of this subgraph by (iii).  It
follows that $U_{ab}$ is convex.
\end{proof}

\begin{defn}\label{D:semi-periph./periph.}
A partial cube $G$ is said to be \emph{semi-peripheral} (resp. \emph{peripheral}) if each vertex of $G$ belongs to a semi-periphery (resp. periphery).
\end{defn}

We will characterize the peripheral partial cubes.  Note that the
Cartesian product of any partial cube with $K_{2}$ is clearly
peripheral.  We will see that the converse of this property is true
for finite partial cubes.

We need the following fact: if, for some edge $ab$ of a partial cube
$G$, the set $W_{ab}$ is a periphery, then $W_{ba}$ is gated.  Indeed,
for any $x \in W_{ab}$, the neighbor of $x$ in $W_{ba}$ belongs to
$I_{G}(x,y)$ for every $y \in W_{ba}$.

\begin{pro}\label{P:periph.p.c.}
Let $G$ be a finite partial cube with more than one vertex.  The
following assertions are equivalent:

\textnormal{(i)}\; $G$ is peripheral.

\textnormal{(ii)}\; $W_{ab}$ and $W_{ba}$ are peripheries for some 
edge $ab$ of $G$.

\textnormal{(iii)}\; $G$ is isomorphic to the Cartesian product of a
partial cube with $K_{2}$.
\end{pro}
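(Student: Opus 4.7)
The plan is to establish the equivalences via four implications: (iii) $\Rightarrow$ (ii), (ii) $\Rightarrow$ (i), (ii) $\Rightarrow$ (iii), and (i) $\Rightarrow$ (ii), with the last being the main work.

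The implications (iii) $\Rightarrow$ (ii) and (ii) $\Rightarrow$ (i) are direct.  For the first, if $G \cong H \Box K_2$, then for any $K_2$-edge $ab$ the halves $W_{ab}$ and $W_{ba}$ are the two $H$-layers, each vertex in one layer being joined by the $K_2$-edge to its twin in the other; hence $W_{ab} = U_{ab}$ and $W_{ba} = U_{ba}$, so both are peripheries.  For the second, $V(G) = W_{ab} \cup W_{ba}$ is already covered by two peripheries.

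For (ii) $\Rightarrow$ (iii), the $\Theta$-class of $ab$ is a perfect matching whose associated bijection $\phi \colon W_{ab} \to W_{ba}$ sends each $u$ to its $\Theta$-$ab$-neighbor.  I show that $\phi$ is a graph isomorphism from $G[W_{ab}]$ onto $G[W_{ba}]$.  Given $u, v \in W_{ab}$, concatenating any $(u,v)$-geodesic inside the convex subgraph $G[W_{ab}]$ with the two $\Theta$-$ab$-edges $u\phi(u)$ and $v\phi(v)$ yields a $(\phi(u),\phi(v))$-walk of length $d_G(u,v) + 2$ containing two $\Theta$-equivalent edges; by Lemma~\ref{L:gen.propert.}(v) it is not a geodesic, and bipartite parity forces $d_G(\phi(u),\phi(v)) \leq d_G(u,v)$, the symmetric argument giving equality.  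Thus $\phi$ is an isometry and hence a graph isomorphism, and the identification $(u,0) \leftrightarrow u$, $(u,1) \leftrightarrow \phi(u)$ exhibits $G \cong G[W_{ab}] \Box K_2$, with $G[W_{ab}]$ a partial cube as a convex subgraph of one.

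For (i) $\Rightarrow$ (ii) the approach is an extremal argument.  Among all edges $ab$ with $W_{ab}$ a periphery, select one maximizing $|W_{ab}|$ (which exists because $G$ is finite and peripheral).  Suppose $W_{ba}$ is not a periphery and pick $v \in W_{ba} - U_{ba}$.  By hypothesis $v$ belongs to some periphery $W_{cd}$, whose $\Theta$-class must differ from that of $ab$ (else $W_{cd} = W_{ba}$, making $W_{ba}$ a periphery).  The half-spaces $W_{ab}$ and $W_{cd}$ cannot be nested: $W_{cd} \subseteq W_{ab}$ contradicts $v \in W_{cd} \cap W_{ba}$, and $W_{ab} \subseteq W_{cd}$ forces the $\Theta$-$cd$-neighbor $u^{*}$ of each $u \in W_{ab}$ to lie in $W_{dc} \subseteq W_{ba}$, making $uu^{*}$ simultaneously a $\Theta$-$ab$-edge and a $\Theta$-$cd$-edge, contradicting uniqueness of the $\Theta$-class of an edge.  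Hence $W_{ab}$ and $W_{cd}$ cross.  Writing $a = |W_{ab} \cap W_{cd}|$, $b = |W_{ab} \cap W_{dc}|$, $c = |W_{ba} \cap W_{cd}|$, $d = |W_{ba} \cap W_{dc}|$, the matching $\phi$ of $\Theta$-$ab$ and the analogous matching $\psi$ of $\Theta$-$cd$ restrict between the corresponding quadrants (each preserves the orthogonal $\Theta$-class side), yielding $a \leq c$, $b \leq d$, $a \leq b$, $c \leq d$; the witness $v \in W_{ba} \cap W_{cd}$ lying outside $\phi(W_{ab})$ upgrades $a \leq c$ to $a < c$.

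The main obstacle is to close the argument.  Maximality of $|W_{ab}|$ forces $|W_{cd}| \leq |W_{ab}|$, i.e., $c \leq b$, which combined with $a < c$ is not immediately contradictory.  To extract the needed strict inequality I would refine the extremal choice lexicographically (first maximize $|W_{ab}|$, then minimize the defect $|W_{ba}| - |U_{ba}|$), or iterate by replacing $W_{ab}$ with $W_{cd}$ to produce a sequence of peripheries whose defect strictly decreases; since $G$ has only finitely many $\Theta$-classes this descent must terminate in a $\Theta$-class with both halves peripheries.
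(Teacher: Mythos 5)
Your treatment of (iii)~$\Rightarrow$~(ii), (ii)~$\Rightarrow$~(i) and (ii)~$\Rightarrow$~(iii) is correct (the paper dismisses these as clear; your isometry argument for $\phi$ via Lemma~\ref{L:gen.propert.}(v) and parity is a reasonable way to make (ii)~$\Leftrightarrow$~(iii) explicit). The problem is (i)~$\Rightarrow$~(ii), which is the only substantive implication, and there your argument does not close --- as you yourself concede. The counting you extract from the crossing configuration ($a\leq c$, $b\leq d$, $a\leq b$, $c\leq d$, $a<c$, together with $c\leq b$ from maximality of $|W_{ab}|$) is all consistent with $a<c\leq b\leq d$, so no contradiction is available at this point. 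The two repairs you sketch are not arguments: for the lexicographic refinement you give no reason why minimizing the defect $|W_{ba}|-|U_{ba}|$ among maximum-size peripheries should force the defect to zero, and for the iteration you give no proof that passing from $W_{ab}$ to $W_{cd}$ strictly decreases any well-founded quantity (indeed the inequalities above give $|W_{dc}|\geq |W_{ba}|$, so the ``defect side'' does not shrink in size). Finiteness of the number of $\Theta$-classes only guarantees that a strictly decreasing sequence terminates; you have not produced one. So the main implication remains unproved.

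The paper's proof of (i)~$\Rightarrow$~(ii) uses a different and much shorter mechanism that you are missing: if $W_{ab}$ is a periphery then its complement $W_{ba}$ is \emph{gated} (each $x\in W_{ab}=U_{ab}$ has its $\Theta$-neighbour as gate). Arguing by contraposition, if no $\Theta$-class has both halves peripheral, then the complements of the peripheries are gated half-spaces that pairwise intersect (two of them being disjoint would force one periphery to be contained in the other's complement's complement and hence make that complement a periphery, contradicting the assumption). By the Helly property of gated sets (Proposition~\ref{P:gated.sets/Helly}) these complements have a common vertex, which then lies in no periphery, so $G$ is not peripheral. If you want to salvage your extremal scheme you would essentially need to rediscover this gatedness/Helly input; the pure cardinality bookkeeping from the crossing picture is provably insufficient on its own.
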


\begin{proof}
(ii) and (iii) are clearly equivalent, and (ii) $\Rightarrow$ (i) is 
obvious.  It remains to prove (i) $\Rightarrow$ (ii).  

Assume that (ii) is not true, that is, for any $ab \in E(G)$, at most
one of the sets $W_{ab}$ and $W_{ba}$ is a periphery.  In other words,
the complement of any periphery of $G$ is not a periphery, and moreover
it is gated by the above remark.  Hence the complements of the
peripheries of $G$ are pairwise non-disjoint gated half-spaces.
Therefore, by Proposition~\ref{P:gated.sets/Helly}, they have a
non-empty intersection, say $A$.  It follows that the union of all
peripheries of $G$ is equal to $V(G)-A$, which is then distinct from
$V(G)$.  Hence $G$ is not peripheral.
\end{proof}

\begin{defn}\label{D:strong.periph.}
A partial cube $G$ is said to be \emph{strongly semi-peripheral} (resp. \emph{strongly peripheral}) if $W_{ab}$ and $W_{ba}$ are semi-peripheries (resp. peripheries) for every edge $ab$ of $G$.
\end{defn}

\begin{pro}\label{P:strong.-periph./hypercube}
A finite partial cube is strongly peripheral if and only if it is a hypercube.
\end{pro}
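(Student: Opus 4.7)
The forward direction is immediate: in $Q_n$, each edge $ab$ corresponds to a single coordinate flip, and every vertex in $W_{ab}$ has its unique neighbor across this flip lying in $W_{ba}$; hence $W_{ab} = U_{ab}$, and $W_{ba} = U_{ba}$ by symmetry.

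For the converse I would induct on $|V(G)|$. If $|V(G)| = 1$, then $G = K_1 = Q_0$, so assume $|V(G)| \geq 2$. Then $G$ has an edge $ab$, and by strong peripherality both $W_{ab}$ and $W_{ba}$ are peripheries. Proposition~\ref{P:periph.p.c.} therefore gives $G \cong G_0 \Box K_2$, where $G_0 := G[W_{ab}]$. The key step is to show that $G_0$ is itself strongly peripheral; the inductive hypothesis will then yield $G_0 \cong Q_k$ for some $k \geq 0$, so that $G \cong Q_{k+1}$.

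To verify strong peripherality of $G_0$, fix an edge $cd \in E(G_0)$. Since $W_{ab}$ is convex (Theorem~\ref{T:Djokovic-Winkler}) and hence isometric in $G$, we have $W_{cd}^{G_0} = W_{cd}^G \cap W_{ab}$. Given $x \in W_{cd}^{G_0}$, strong peripherality of $G$ produces a neighbor $y$ of $x$ in $W_{dc}^G$ with $xy$ $\Theta$-equivalent to $cd$. Now $ab \in \partial_G(V(G_0))$, so by Lemma~\ref{L:gen.propert.}(ix) applied to the convex subgraph $G_0$, the edge $ab$ is not $\Theta$-equivalent to $cd$, and therefore not $\Theta$-equivalent to $xy$ either. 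Hence $x$ and $y$ lie on the same side of the cut defined by $ab$, giving $y \in W_{dc}^G \cap W_{ab} = W_{dc}^{G_0}$ and thus $x \in U_{cd}^{G_0}$. This shows $W_{cd}^{G_0} = U_{cd}^{G_0}$; the argument for $W_{dc}^{G_0}$ is symmetric.

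The only delicate point is maintaining the $\Theta$-bookkeeping between $G$ and $G_0$, which is handled by Lemma~\ref{L:gen.propert.}(ix): an edge of a convex subgraph cannot be $\Theta$-equivalent to a boundary edge. The rest is a clean induction bootstrapped from Proposition~\ref{P:periph.p.c.}.
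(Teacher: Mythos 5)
Your proof is correct and follows the same route as the paper: the forward direction is immediate, and the converse is an induction on the order using Proposition~\ref{P:periph.p.c.} to split off a $K_2$ factor and then checking that strong peripherality passes to the factor $G[W_{ab}]$. The only difference is that you spell out the $\Theta$-bookkeeping (via Lemma~\ref{L:gen.propert.}(ix) and $W_{cd}^{G_0}=W_{cd}^G\cap W_{ab}$) that the paper dismisses with ``it clearly follows''.
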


\begin{proof}
A finite hypercube is clearly strongly peripheral.  Conversely we will prove that any finite strongly peripheral partial cube $G$ is a hypercube by induction on its order.  This is obvious if $G = K_{1}$.  Suppose that this holds if the order of $G$ is $n$ for some positive integer $n$.  Let $G$ be a finite strongly peripheral partial cube of order $n+1$, 
 and let $ab \in E(G)$.  Then $G = G' \Box K_{2}$ since $W_{ab}$ and $W_{ba}$ are peripheries.  Let $uv \in E(G')$.  Then $W_{uv}$ and $W_{vu}$ are peripheries of $G$ since $G$ is strongly peripheral.  It clearly follows that $W_{uv}^{G'}$ and $W_{vu}^{G'}$ are peripheries of $G'$.  Hence $G'$ is a strongly peripheral partial cube of order $n$.  By the induction hypothesis, $G'$ is a hypercube.  Therefore so is $G = G' \Box K_{2}$.
 \end{proof}

We conclude these preliminaries by pointing out that, 
\emph{throughout this paper, we do not distinguish between isomorphic graphs}.  Thus, 
when we say that a graph $G$
is equal to some Cartesian product or to some special graph, such as $K_{2}$ for
example, we usually mean that $G$ is isomorphic to this Cartesian product or to
this special graph.

\section{Fundamental properties}\label{S:fund.prop.}

\subsection{Partial cubes with pre-hull number at most $1$}\label{SS:p.c.ph.at most1}

We begin by recalling some definitions and results from~\cite{PS09}.  In that paper we introduced and studied the concept of pre-hull
number of a convexity.  The (geodesic) pre-hull number $ph(G)$ of a graph $G$ is a parameter which measures the intrinsic non-convexity of $V(G)$ in terms of the number of iterations of the pre-hull operator associated with the interval operator $I_G$ which are necessary, in the worst case, to reach the canonical minimal convex extension of copoints of $V(G)$ when they are extended by the adjunction of an attaching point.

\begin{defn}\label{D:ph(G)}
Let $G$ be a graph.  The least non-negative integer $n$ (if it exists) such that
$co_{G}(C \cup \{x \}) = \mathcal{I}_{G}^{n}(C \cup \{x \})$ for each
vertex $x$ of $G$ and each copoint $C$ at $x$, is called the \emph{pre-hull number} of $G$ and is denoted by
$ph(G)$.  If there is no such
$n$ we put $ph(G) := \infty$.
\end{defn}

\begin{pro}\label{P:ph(bipart.gr.)=0/trees}\textnormal{(Polat and Sabidussi~\cite[Corollary 3.8]{PS09})}
The pre-hull number of a connected bipartite graph $G$ is zero if and
only if $G$ is a tree.
\end{pro}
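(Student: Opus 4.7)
The plan is to treat the two implications separately. For the ``if'' direction, suppose $G$ is a tree. Then the convex subsets of $V(G)$ are exactly the vertex sets of subtrees, since in a tree the interval $I_G(x,y)$ is the unique $(x,y)$-path. If $C$ is a copoint at a vertex $x$, then $C$ must be contained in a single connected component of $G-x$: otherwise the unique path joining two vertices of $C$ lying in distinct components would pass through $x$, violating convexity. Maximality forces $C$ to equal that whole component, and $C \cup \{x\}$ is again a subtree, hence convex. Thus $co_G(C \cup \{x\}) = C \cup \{x\} = \mathcal{I}_G^0(C \cup \{x\})$ for every copoint, so $ph(G) = 0$.

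For the ``only if'' direction I would argue by contrapositive: assuming $G$ is connected bipartite and contains a cycle, exhibit a copoint whose union with its attaching point is not convex. Let $D = \langle v_0, v_1, \dots, v_{2k-1}, v_0 \rangle$ be a shortest cycle of $G$; bipartiteness forces the length $2k$ to be even with $2k \geq 4$, and a shortest cycle is automatically isometric in $G$, so both arcs $\langle v_0, v_1, \dots, v_k \rangle$ and $\langle v_0, v_{2k-1}, \dots, v_k \rangle$ are $G$-geodesics, yielding $V(D) \subseteq I_G(v_0, v_k)$. The convex singleton $\{v_k\}$ does not contain $v_0$, so Zorn's lemma---applicable because the geodesic convex hull operator is finitary, hence unions of ascending chains of convex sets are convex---produces a copoint $C$ at $v_0$ with $v_k \in C$.

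To conclude, suppose for contradiction that $ph(G) = 0$. Then $C \cup \{v_0\}$ is convex, and $I_G(v_0, v_k) \subseteq C \cup \{v_0\}$ gives $v_1, v_{2k-1} \in C$. A second application of the convexity of $C$ then yields $I_G(v_1, v_{2k-1}) \subseteq C$. Now $v_1$ and $v_{2k-1}$ are two distinct common neighbors of $v_0$, hence lie in the same partite class of $G$; this rules out $v_1 v_{2k-1} \in E(G)$ (which would create a triangle), so the path $\langle v_1, v_0, v_{2k-1} \rangle$ is a geodesic and $v_0 \in I_G(v_1, v_{2k-1}) \subseteq C$, contradicting the fact that $v_0 \notin C$.

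The step I expect to require the most care is the Zorn argument needed to obtain a copoint $C$ at $v_0$ containing the prescribed vertex $v_k$ when $G$ is infinite; once that is in place, the remaining reasoning is a quick double application of convexity together with basic bipartite-graph combinatorics (distinct common neighbors of $v_0$ are at distance $2$ with $v_0$ on the geodesic between them).
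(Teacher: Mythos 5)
Your proof is correct. Note that the paper itself supplies no argument for this statement: it is imported verbatim as \cite[Corollary 3.8]{PS09}, so there is no internal proof to compare against. Your two directions are both sound. For the ``if'' direction, the identification of copoints at $x$ in a tree with the components of $G-x$ is right, and $C\cup\{x\}$ being a subtree gives convexity immediately, so $\mathcal{I}_G^0$ already reaches the convex hull. For the ``only if'' direction, the key facts you rely on all hold: a shortest cycle is isometric, so $V(D)\subseteq I_G(v_0,v_k)$; the geodesic convexity is finitary (it is defined in the paper as an algebraic closure system induced by the interval operator), so the union of an ascending chain of convex sets avoiding $v_0$ is again convex and avoids $v_0$, and Zorn produces a copoint $C$ at $v_0$ containing $v_k$; and since $ph(G)=0$ would force $C\cup\{v_0\}$ to be convex, the chain $v_1,v_{2k-1}\in C$ followed by $v_0\in I_G(v_1,v_{2k-1})\subseteq C$ (using that two distinct, nonadjacent common neighbours of $v_0$ are at distance exactly $2$ through $v_0$) gives the desired contradiction. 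The only steps deserving explicit justification in a written-out version are the isometricity of a shortest cycle and the applicability of Zorn's lemma, and you have flagged both; the argument is a clean, elementary, self-contained replacement for the external citation.
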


\begin{defn}\label{D:ph-stable}\textnormal{(Polat and Sabidussi~\cite[Definition 7.1]{PS09})}
Call a set $A$ of vertices of a graph $G$ \emph{ph-stable} if any two
vertices $u,v \in \mathcal{I}_{G}(A)$ lie on a geodesic joining two
vertices in $A$.
\end{defn}

The condition of Definition~\ref{D:ph-stable}, which is
symmetric in $u$ and $v$, can be replaced by the formally
\textquotedblleft one-sided\textquotedblright condition: \emph{for any
two vertices $u,v \in \mathcal{I}_{G}(A)$ there is a $w \in A$
such that $v \subseteq I_{G}(u,w)$}.

\begin{lem}\label{L:ph-stable}\textnormal{(Polat~\cite[Proposition
2.4]{P05-1})}
If a set $A$ of vertices of a graph $G$ is ph-stable, then, for all
$u, v \in \mathcal{I}_{G}(A)$, $I_{G}(u,v) \subseteq I_{G}(a,b)$ for
some $a, b \in A$.  In particular, each edge of
$G[\mathcal{I}_{G}(A)]$ belongs to an $(a,b)$-geodesic for some $a, b
\in A$, and moreover $co_{G}(A) = \mathcal{I}_{G}(A)$.
\end{lem}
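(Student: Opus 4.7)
The plan is to unpack the definition of ph-stability directly and then apply the triangle inequality to show that every $(u,v)$-geodesic sits inside a single interval $I_G(a,b)$ with $a,b \in A$.

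Given $u, v \in \mathcal{I}_G(A)$ (the case $u = v$ being trivial, since any $a,b \in A$ with $u \in I_G(a,b)$ will do), ph-stability furnishes $a, b \in A$ and an $(a,b)$-geodesic $P$ passing through \emph{both} $u$ and $v$. After possibly swapping $a$ and $b$, I may assume $u$ precedes $v$ along $P$, so that
\[
d_G(a,b) = d_G(a,u) + d_G(u,v) + d_G(v,b).
\]
Then, for any $w \in I_G(u,v)$, the triangle inequality gives
\[
d_G(a,b) \leq d_G(a,w) + d_G(w,b) \leq d_G(a,u) + d_G(u,w) + d_G(w,v) + d_G(v,b) = d_G(a,b),
\]
so every inequality is an equality and $w \in I_G(a,b)$. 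This establishes $I_G(u,v) \subseteq I_G(a,b)$.

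The two ``in particular'' statements then drop out at once. If $uv$ is an edge of $G[\mathcal{I}_G(A)]$, applying the main claim to the pair $(u,v)$ yields $a,b \in A$ and a common $(a,b)$-geodesic through $u$ and $v$, which therefore contains the edge $uv$ itself. For the last assertion, the main claim shows $I_G(u,v) \subseteq I_G(a,b) \subseteq \mathcal{I}_G(A)$ for every pair $u,v \in \mathcal{I}_G(A)$, so $\mathcal{I}_G(A)$ is convex; combined with the obvious inclusions $A \subseteq \mathcal{I}_G(A) \subseteq co_G(A)$, this forces $co_G(A) = \mathcal{I}_G(A)$.

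There is no real obstacle here: the crux is simply to use the ``two-sided'' form of ph-stability (a single common geodesic through both $u$ and $v$, rather than two unrelated ones), after which the triangle-inequality calculation is mechanical. The only small subtlety is the reduction to the case where $u$ lies between $a$ and $v$ on $P$, which is a harmless relabeling.
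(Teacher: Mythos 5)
Your proof is correct. Since the paper only cites this lemma from \cite[Proposition 2.4]{P05-1} without reproducing an argument, there is nothing to compare against here, but your argument is exactly the expected one: invoke the two-sided form of ph-stability to place $u$ and $v$ on a single $(a,b)$-geodesic, read off the additive decomposition $d_G(a,b)=d_G(a,u)+d_G(u,v)+d_G(v,b)$, and squeeze any $w\in I_G(u,v)$ into $I_G(a,b)$ by the triangle inequality; the two consequences (an edge of $G[\mathcal{I}_G(A)]$ being a length-one subgeodesic, and convexity of $\mathcal{I}_G(A)$ forcing $co_G(A)=\mathcal{I}_G(A)$) follow as you say.
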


\begin{cor}\label{C:Wab=IGab}
Let $W_{ab}$ be a semi-periphery of some partial cube $G$.  If $U_{ab}$ is ph-stable, then $W_{ab} = \mathcal{I}_{G}(U_{ab})$, and more precisely,
$G[W_{ab}]$ is the union of the $(u,v)$-geodesics for all $u, v \in
U_{ab}$.
\end{cor}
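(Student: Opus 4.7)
The plan is to deduce the corollary almost immediately by combining the definition of a semi-periphery with Lemma~\ref{L:ph-stable}, which is the key technical ingredient about ph-stable sets.

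First I would unpack the hypotheses. By Definition~\ref{D:semi-periphery}, $W_{ab}$ being a semi-periphery means precisely that $W_{ab} = co_G(U_{ab})$. By Lemma~\ref{L:ph-stable}, the ph-stability of $U_{ab}$ yields $co_G(U_{ab}) = \mathcal{I}_G(U_{ab})$. Chaining these two equalities gives the first assertion
\[
W_{ab} = co_G(U_{ab}) = \mathcal{I}_G(U_{ab}).
\]

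For the refined statement about $G[W_{ab}]$, I would argue separately about vertices and edges. On the vertex side, the definition of the pre-hull operator gives $\mathcal{I}_G(U_{ab}) = \bigcup_{u,v \in U_{ab}} I_G(u,v)$, so by the displayed equality every vertex of $W_{ab}$ lies on a $(u,v)$-geodesic with $u, v \in U_{ab}$. On the edge side, the second assertion of Lemma~\ref{L:ph-stable} states that every edge of $G[\mathcal{I}_G(U_{ab})]$ belongs to an $(a',b')$-geodesic for some $a', b' \in U_{ab}$; since $W_{ab} = \mathcal{I}_G(U_{ab})$, this is exactly what is needed. Combining the two observations shows that $G[W_{ab}]$ is the union of the $(u,v)$-geodesics over all $u, v \in U_{ab}$.

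There is no real obstacle here: the whole content has been isolated in Lemma~\ref{L:ph-stable}, and the corollary is essentially a translation of that lemma under the extra equation $W_{ab} = co_G(U_{ab})$ supplied by the semi-periphery hypothesis. The only small thing to be careful about is to invoke both parts of Lemma~\ref{L:ph-stable} (the equality $co_G(A) = \mathcal{I}_G(A)$ for the vertex set, and the edge-covering statement for the ``more precisely'' clause), rather than only the former.
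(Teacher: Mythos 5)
Your proposal is correct and is exactly the derivation the paper intends: the corollary is stated without proof precisely because it is the combination of Definition~\ref{D:semi-periphery} ($W_{ab}=co_G(U_{ab})$) with both conclusions of Lemma~\ref{L:ph-stable} ($co_G(U_{ab})=\mathcal{I}_G(U_{ab})$ and the edge-covering statement), as you do. The only point left implicit, and it is immediate from the convexity of the half-space $W_{ab}$, is that the $(u,v)$-geodesics for $u,v\in U_{ab}$ are themselves contained in $G[W_{ab}]$, giving the reverse inclusion for the ``more precisely'' clause.
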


\begin{pro}\label{P:bip.gr./ph=1}\textnormal{(Polat and Sabidussi~\cite[Theorem 7.4]{PS09})}
Let $G$ be a bipartite graph.  Then $ph(G) \leq 1$ if and only if, for
every copoint $K$ of $G$, the set $\mathrm{Att}(K)$ is convex and
$N_G(K) \cap \mathrm{Att}(K)$ is ph-stable.
\end{pro}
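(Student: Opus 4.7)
The plan is to treat the two directions of the biconditional separately. For the forward direction, I unwind $ph(G) \le 1$ directly on a fixed copoint $K$ to obtain both required properties. For the reverse direction, I establish a Key Lemma coupling convexity of $\mathrm{Att}(K)$ with bipartiteness, and then combine it with ph-stability of $N_G(K) \cap \mathrm{Att}(K)$ in a delicate combinatorial step.

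For the forward direction, fix a copoint $K$, set $A := \mathrm{Att}(K)$ and $N := N_G(K) \cap A$, and observe that every $y \in A$ is an attaching point of $K$, so $ph(G) \le 1$ applies with any such $y$ as attaching point. To show $A$ is convex, take $u, v \in A$ and $w \in I_G(u, v)$. Convexity of $K \cup A = co_G(K \cup \{v\})$ forces $w \in K \cup A$, and the point is to rule out $w \in K$: applying $ph(G) \le 1$ with $v$ as attaching point yields $u \in I_G(v, c)$ for some $c \in K$, and combining $d(v, c) = d(v, u) + d(u, c)$ with $d(u, v) = d(u, w) + d(w, v)$ via the triangle inequality shows $u \in I_G(w, c)$; convexity of $K$ then forces $u \in K$, contradicting $u \in A$. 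For ph-stability of $N$, take $u, v \in \mathcal{I}_G(N) \subseteq A$ (using convexity of $A$). Applying $ph(G) \le 1$ with $u$ as attaching point gives $v \in I_G(u, c)$ for some $c \in K$; on a $(u, c)$-geodesic through $v$, the last vertex lying in $A$ is a member of $N$ (its successor lies in $K$), and $v$ lies on the sub-geodesic from $u$ to that vertex, establishing the one-sided form of ph-stability.

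For the reverse direction, fix a copoint $K$ with attaching point $x$ and set $A := \mathrm{Att}(K)$, $N := N_G(K) \cap A$. The goal is to exhibit, for each $y \in A$, some $c \in K$ with $y \in I_G(x, c)$, which is equivalent to $co_G(K \cup \{x\}) = \mathcal{I}_G(K \cup \{x\})$. The \textbf{Key Lemma}, using only convexity of $A$ and bipartiteness, reads: for every $w \in N$ and every $c \in K$ adjacent to $w$, $d(x, c) = d(x, w) + 1$, so that $w \in I_G(x, c)$ and $I_G(x, w) \subseteq I_G(x, c)$. Indeed, bipartiteness gives $d(x, c) \in \{d(x, w) \pm 1\}$; if $d(x, c) = d(x, w) - 1$ held, then $c \in I_G(x, w) \subseteq A$ by convexity of $A$ (noting $x, w \in A$), contradicting $c \in K$.

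By the Key Lemma, it suffices to show that every $y \in A \setminus \{x\}$ lies on $I_G(x, w)$ for some $w \in N$. The strategy is to invoke the one-sided form of ph-stability of $N$---for $u, v \in \mathcal{I}_G(N)$ there is $w \in N$ with $v \in I_G(u, w)$---with $u := x$ and $v := y$, which requires showing $\{x, y\} \subseteq \mathcal{I}_G(N) = co_G(N)$. This last inclusion is the main obstacle, and it uses the copoint structure of $K$ essentially: one exhibits, for each $z \in \{x, y\}$, two $N$-vertices $w_1, w_2$ on ``opposite sides'' of $z$---constructed by following $(z, K)$-geodesics in distinct initial directions and using the fact that every attaching point of $K$ generates the same extension $K \cup A$, together with convexity of $A$ and ph-stability of $N$---so that $z \in I_G(w_1, w_2)$. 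Once this is in hand, the one-sided ph-stability of $N$ with $u = x$, $v = y$ delivers $w \in N$ with $y \in I_G(x, w)$, and the Key Lemma promotes this to $y \in I_G(x, c)$ for some $c \in K$ adjacent to $w$, completing the argument.
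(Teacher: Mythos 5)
The paper does not prove this proposition; it quotes it from \cite[Theorem 7.4]{PS09}, so your proposal has to be judged on its own merits. Your forward direction is correct and complete: the observation that every $y\in\mathrm{Att}(K)$ is itself an attaching point (so $ph(G)\le 1$ may be applied at $y$), the triangle-inequality computation forcing $u\in I_G(w,c)$, and the ``last vertex of the geodesic lying in $\mathrm{Att}(K)$ belongs to $N_G(K)\cap\mathrm{Att}(K)$'' argument all check out. In the reverse direction the Key Lemma is also correct, and the reduction of the goal to ``every $y\in A$ lies in $I_G(x,w)$ for some $w\in N$'' is sound.

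The genuine gap is exactly the step you flag as the main obstacle: $\{x,y\}\subseteq\mathcal{I}_G(N)$, i.e.\ $A\subseteq\mathcal{I}_G(N)$. The claim is true, but your sketched mechanism does not prove it and would not survive being written out. A vertex $z\in A\setminus N$ may have all of its $(z,K)$-geodesics leaving through a single neighbour, so there need not be ``two distinct initial directions''; and even when two such geodesics exist, producing $w_1,w_2\in N$ with $w_i\in I_G(z,c_i)$ gives no control on $d_G(w_1,w_2)$, whereas $z\in I_G(w_1,w_2)$ requires the exact equality $d_G(w_1,w_2)=d_G(w_1,z)+d_G(z,w_2)$ --- the two geodesics can reconverge. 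The missing ingredient is not local but global, and it is the one place where the \emph{maximality} of the copoint $K$ must be used. One workable route: first check that $K\cup co_G(N)$ is convex (for $p\in K$, $q\in co_G(N)$ and $z\in I_G(p,q)\cap A$, the last $A$-vertex $w$ of a $(q,p)$-geodesic through $z$ lies in $N$, so $z\in I_G(q,w)\subseteq co_G(N)$; the other cases are immediate from convexity of $K$ and of $co_G(N)$). Since this convex set contains $K$ and $N\neq\emptyset$, maximality of $K$ as a convex set missing $x$ forces $x\in K\cup co_G(N)$, hence $x\in co_G(N)$; consequently $K\cup co_G(N)\supseteq co_G(K\cup\{x\})=K\cup A$, so $A\subseteq co_G(N)=\mathcal{I}_G(N)$, the last equality by ph-stability of $N$ via Lemma~\ref{L:ph-stable}. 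With that in hand, the rest of your argument (one-sided ph-stability applied to $(x,y)$, then the Key Lemma) finishes the proof. As written, however, the decisive step is asserted rather than proved.
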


The following result follows immediately from the above proposition.

\begin{pro}\label{P:charact.p.c.ph}\textnormal{(Polat and Sabidussi~\cite[Theorem 7.5]{PS09})}
Let $G$ be a partial cube.  Then $ph(G) \leq 1$ if and only if
$U_{ab}$ and $U_{ba}$ are ph-stable for every edge $ab$ of $G$.
\end{pro}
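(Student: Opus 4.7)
The statement is asserted to follow immediately from Proposition~\ref{P:bip.gr./ph=1}, so my plan is to specialize that proposition's copoint-based criterion to the partial-cube setting. Since in a partial cube $G$ the copoints are precisely the half-spaces $W_{ab}$ for $ab \in E(G)$ (as noted after Theorem~\ref{T:Djokovic-Winkler}), it suffices to examine the two conditions of Proposition~\ref{P:bip.gr./ph=1} at each $K = W_{ab}$. The first condition, the convexity of $\mathrm{Att}(W_{ab})$, comes for free: by definition $\mathrm{Att}(W_{ab}) = co_G(W_{ab} \cup \{x\}) - W_{ab}$, and using the disjoint decomposition $V(G) = W_{ab} \cup W_{ba}$ this equals $co_G(W_{ab} \cup \{x\}) \cap W_{ba}$, an intersection of two convex sets (invoking Theorem~\ref{T:Djokovic-Winkler}(i) for $W_{ba}$).

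The second condition, ph-stability of $N_G(W_{ab}) \cap \mathrm{Att}(W_{ab})$, requires identifying this intersection. Clearly $N_G(W_{ab}) = U_{ba}$ and $\mathrm{Att}(W_{ab}) \subseteq W_{ba}$, so the intersection is $U_{ba} \cap \mathrm{Att}(W_{ab})$; the target is to show this equals $U_{ba}$, that is, every $y \in U_{ba}$ is itself an attaching point of $W_{ab}$. For this I would use a parity argument. Let $y' \in U_{ab}$ be the neighbor of $y$; then the edge $yy'$ is $\Theta$-equivalent to $ab$, so $y$ and any $z \in W_{ba}$ lie on the same side of the cut induced by $yy'$ while $y' \in W_{ab}$ lies on the opposite side. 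Convexity of $W_{ba}$ forces $y' \notin I_G(y,z)$, while bipartiteness gives $|d_G(y',z) - d_G(y,z)| = 1$, so necessarily $d_G(y',z) = d_G(y,z) + 1$ and hence $y \in I_G(y',z)$. Now if $C$ is any convex set with $W_{ab} \subsetneq C$, picking $z \in C \cap W_{ba}$ together with $y' \in W_{ab} \subseteq C$ gives $y \in I_G(y',z) \subseteq C$; thus $W_{ab}$ is maximal convex avoiding $y$, proving $y \in \mathrm{Att}(W_{ab})$.

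With the identification $N_G(W_{ab}) \cap \mathrm{Att}(W_{ab}) = U_{ba}$ established, Proposition~\ref{P:bip.gr./ph=1} applied at every $K = W_{ab}$ amounts to: $U_{ba}$ is ph-stable for every edge $ab$. By symmetry in the roles of $a$ and $b$, this is equivalent to ph-stability of both $U_{ab}$ and $U_{ba}$ for every $ab \in E(G)$, which is the claim. I expect the main obstacle to be precisely the inclusion $U_{ba} \subseteq \mathrm{Att}(W_{ab})$: the attaching-point set of a half-space can in general be strictly smaller than $W_{ba}$ (for instance in a tree, the vertices of $W_{ba}$ far from $W_{ab}$ are not attaching), so the bipartite parity argument above is genuinely needed to see that all of $U_{ba}$ consists of attaching points.
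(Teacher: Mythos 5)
Your proposal is correct and follows exactly the route the paper intends: the paper states that Proposition~\ref{P:charact.p.c.ph} ``follows immediately'' from Proposition~\ref{P:bip.gr./ph=1} (citing \cite{PS09} for the full argument), and your write-up supplies precisely the missing details — that the copoints are the half-spaces $W_{ab}$, that $\mathrm{Att}(W_{ab}) = co_G(W_{ab}\cup\{x\})\cap W_{ba}$ is automatically convex, and that $N_G(W_{ab})\cap\mathrm{Att}(W_{ab})=U_{ba}$ via the parity argument showing every vertex of $U_{ba}$ is an attaching point. No gaps; the identification $U_{ba}\subseteq\mathrm{Att}(W_{ab})$ is indeed the only step needing real work, and your argument for it is sound.
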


\begin{thm}\label{T:bip.gr.ph1=>p.c.}\textnormal{(Polat~\cite[Theorem 4.6]{P12})}
Any connected bipartite graph $G$ such that $ph(G) \leq 1$ is a 
partial cube.
\end{thm}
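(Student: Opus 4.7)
The plan is to invoke Theorem~\ref{T:Djokovic-Winkler}: it suffices to show that for every edge $ab$ of $G$, the set $W_{ab}$ is convex.  I will argue by contradiction, producing from a failure of convexity of some $W_{ab}$ a copoint violating Proposition~\ref{P:bip.gr./ph=1}.

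Suppose some $W_{ab}$ is not convex.  Then there exist $u, v \in W_{ab}$ and $w \in I_G(u,v) \cap W_{ba}$; pick such a triple with $d_G(u,v)$ minimum.  A short bipartite-parity computation (using that $|d_G(x,z) - d_G(y,z)| = 1$ for every edge $xy$ and every vertex $z$) shows that whenever $x \in W_{ab}$ is adjacent to $y \in W_{ba}$ one has $d_G(x,a) = d_G(y,a) - 1$, i.e.\ $x \in U_{ab} \cap I_G(y,a)$.  Applying this along a $(u,v)$-geodesic through $w$, together with the minimality of $d_G(u,v)$ (which forces every $(u',v')$ with $u', v' \in W_{ab}$ and $d_G(u',v') < d_G(u,v)$ to have $I_G(u',v') \subseteq W_{ab}$), gives $u, v \in I_G(w,a)$.

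Now I exploit the hypothesis $ph(G) \leq 1$.  Since the edge $\{a,b\}$ is convex in the bipartite graph $G$ and $w \notin \{a,b\}$ (the case $w = b$ is excluded by a direct distance check, and $w \neq a$ because $w \in W_{ba}$), Zorn's lemma yields a copoint $K$ at $w$ with $\{a,b\} \subseteq K$.  By Proposition~\ref{P:bip.gr./ph=1}, $\mathrm{Att}(K)$ is convex and $N_G(K) \cap \mathrm{Att}(K)$ is ph-stable, and moreover
\[
co_G(K \cup \{w\}) \;=\; \mathcal{I}_G(K \cup \{w\}) \;=\; K \cup \mathrm{Att}(K).
\]
Since $u, v \in I_G(w,a) \subseteq co_G(\{w\} \cup \{a\}) \subseteq co_G(K \cup \{w\})$, each of $u$ and $v$ lies in $K \cup \mathrm{Att}(K)$.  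Case-analysing whether $u, v$ lie in $K$ or in $\mathrm{Att}(K)$, and using the convexity of $\mathrm{Att}(K)$, one forces $w \in \mathrm{Att}(K)$ as well, with $w$ lying in $\mathcal{I}_G(N_G(K) \cap \mathrm{Att}(K))$.

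The contradiction is then extracted by applying Lemma~\ref{L:ph-stable} to the ph-stable set $N_G(K) \cap \mathrm{Att}(K)$: the vertex $w$ lies on a geodesic between two of its members, which by the bipartite distance relations above must themselves lie in $W_{ab}$.  This produces a pair $(u', v') \in W_{ab}^2$ with $d_G(u',v') < d_G(u,v)$ and $w \in I_G(u',v') \cap W_{ba}$, contradicting the minimal choice of $(u,v,w)$.

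\textbf{The main obstacle} is the final shortening step.  One must carefully arrange the copoint $K$ (and check that $b \in K$) so that the attaching set $\mathrm{Att}(K)$ captures $u, v$ together with $w$, and then turn the abstract ph-stability of $N_G(K) \cap \mathrm{Att}(K)$ into a concrete shorter bad triple that still lies inside $W_{ab}$.  The bipartite-parity calculus, the distance geometry around the edge $ab$, and the maximality of $K$ among convex sets missing $w$ must all be orchestrated together; this is where the combinatorial subtlety of the theorem is concentrated.
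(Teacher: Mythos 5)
First, a point of reference: the paper does not actually prove this statement — it imports it verbatim from \cite[Theorem 4.6]{P12} — so there is no in-paper proof to compare against, and I am judging your argument on its own merits.

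There is a genuine gap at the step where you claim $u,v\in I_G(w,a)$. Your bipartite-parity computation is correct, but it only applies at edges crossing between $W_{ab}$ and $W_{ba}$, and on your geodesic $P=\langle u=p_0,p_1,\dots,p_n=v\rangle$ (whose inner vertices minimality does indeed force into $W_{ba}$) there are exactly two such edges, $up_1$ and $p_{n-1}v$. What you actually obtain is $u\in I_G(p_1,a)$ and $v\in I_G(p_{n-1},a)$. To upgrade this to $u,v\in I_G(w,a)$ you would need the function $i\mapsto d_G(p_i,a)$ to increase monotonically from $p_0$ to $w$ and decrease monotonically from $w$ to $p_n$; neither bipartite parity nor the minimality of $d_G(u,v)$ excludes an interior ``dip''. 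For instance the distance-to-$a$ pattern $5,6,5,6,5$ along a length-$4$ geodesic satisfies every parity constraint you have imposed (the distances to $b$ being $6,5,4,5,6$), and for it no inner vertex $w$ satisfies both $u\in I_G(w,a)$ and $v\in I_G(w,a)$. Since at this stage you have not yet used $ph(G)\le 1$, the deduction would have to be valid in an arbitrary connected bipartite graph, and it is not. This undermines the copoint argument that follows, and the ``final shortening step'' — which you yourself flag as the main obstacle — is only described, not carried out: in particular, you give no reason why the endpoints of the geodesic supplied by the ph-stability of $N_G(K)\cap\mathrm{Att}(K)$ should lie in $W_{ab}$ (the copoint $K$ contains both $a$ and $b$, hence meets both $W_{ab}$ and $W_{ba}$, so adjacency to $K$ does not place a vertex on the $a$-side). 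A more robust route, consistent with the machinery this paper already quotes, is to show from Proposition~\ref{P:bip.gr./ph=1} that every copoint of $G$ is a half-space; this yields the separation property $\mathrm{S}_3$, and the paper records in Subsection~\ref{SS:part.cub.} (citing \cite{B89}) that a connected bipartite graph with property $\mathrm{S}_3$ is a partial cube.
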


Note that a bipartite graph whose pre-hull number is greater than $1$ may or may not be a partial cube.  For example, $2$ is the pre-hull number of both $K_{2,3}$, which is the smallest connected bipartite graph which is not a partial cube, and of the partial cube $Q_3^-$ (i.e., the $3$-cube $Q_3$ minus a vertex).

A lot of well-known partial cubes have a pre-hull number which is at most equal to $1$: median graphs, benzenoid graphs, cellular bipartite graphs and more generally netlike partial cubes, and also antipodal partial cubes.  We recall that a connected graph $G$ is called \emph{antipodal} if for any vertex $x 
\in V(G)$ there is a (necessarily unique) vertex $\bar{x}$ (the 
\emph{antipode} of $x$) such that $I_{G}(x,\bar{x}) = V(G)$.\footnote{Bipartite antipodal graphs were introduced by Kotzig~\cite{Ko68} under the name of \emph{S}-graph.  This 
concept of antipodality is a 
special case of the general concept of antipodality commonly used in 
algebraic graph theory.}  In such a graph one obviously has that $$d_{G}(x,y)+d_{G}(y,\bar{x}) = d_{G}(x,\bar{x})= \mathrm{diam}(G) \textnormal{\; for 
any}\; x, y \in V(G),$$ where $\mathrm{diam}(G)$ is the diameter of $G$.  Next result gives two characterizations of antipodal partial cubes.

\begin{thm}\label{T:antipod.p.c.}
Let $G$ be a bipartite antipodal graph.  The following assertions are equivalent:

\textnormal{(i)}\; $G$ is a partial cube.

\textnormal{(ii)}\; $ph(G) \leq 1$.

\textnormal{(iii)}\; $G$ contains no subdivision of $K_{3,3}$.

\textnormal{(iv)}\; $G$ is interval monotone.
\end{thm}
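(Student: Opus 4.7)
The plan is to establish the four equivalences via a cycle of implications, making essential use of the antipodal structure in the harder directions. The easy implications come first: (ii) $\Rightarrow$ (i) is exactly Theorem~\ref{T:bip.gr.ph1=>p.c.}, which is valid for any connected bipartite graph; and (i) $\Rightarrow$ (iv) is immediate from Lemma~\ref{L:gen.propert.}(ii), since every interval of a partial cube is convex.

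For (iv) $\Rightarrow$ (ii), I would appeal to Proposition~\ref{P:charact.p.c.ph} and show that $U_{ab}$ is ph-stable for every edge $ab \in E(G)$. Given $u, v \in \mathcal{I}_G(U_{ab})$, I intend to use $I_G(u, \bar u) = V(G)$ together with interval monotonicity to produce a $(u, \bar u)$-geodesic passing through $v$ and then continuing into $U_{ab}$, hitting a witness $w \in U_{ab}$ for which $v \in I_G(u, w)$. Antipodality plays a key role here because the involution $\sigma \colon x \mapsto \bar x$ is an automorphism of $G$ that interchanges $W_{ab}$ with $W_{ba}$ (a short computation based on $d_G(a, \bar x) = \mathrm{diam}(G) - d_G(a, x)$), which forces any geodesic to $\bar u$ to cross the $\Theta$-cut of $ab$ at some vertex of $U_{ab}$.

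For (i) $\Rightarrow$ (iii), a $K_{3,3}$-subdivision in $G$ would provide three internally disjoint paths between two branch vertices on the same side, and a careful tracking of $\Theta$-classes of edges along these paths should contradict the transitivity of $\Theta$ (Theorem~\ref{T:Djokovic-Winkler}(ii))---essentially because $K_{2,3}$ embeds isometrically at the ``core'' of any such subdivision and is known, as noted after Theorem~\ref{T:bip.gr.ph1=>p.c.}, not to be a partial cube. The main obstacle is (iii) $\Rightarrow$ (i), which I plan to handle by contraposition: assume $G$ is antipodal bipartite but not a partial cube; by Theorem~\ref{T:Djokovic-Winkler} some half-set $W_{ab}$ fails to be convex, yielding an isometric copy of $K_{2,3}$ in $G$. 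The antipodal involution $\sigma$ carries this copy to an antipodal $K_{2,3}$, and joining corresponding vertex pairs through their mutual geodesics $I_G(y, \bar y)$ should assemble a subdivision of $K_{3,3}$. The delicate step will be ensuring that the three connecting geodesics are pairwise internally disjoint and disjoint from the two $K_{2,3}$ copies; antipodality supplies strong constraints via the uniqueness of antipodes, but a careful case analysis---possibly requiring exchange arguments along the geodesics---will be needed to rule out unwanted overlaps.
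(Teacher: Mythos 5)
There is a genuine gap. First, a point of comparison: the paper does not prove this theorem internally at all --- it records that (i)~$\Rightarrow$~(ii) is \cite[Subsection 8]{PS09}, that (i)~$\Leftrightarrow$~(iii) is Tardif's theorem \cite{T91}, and that (i)~$\Leftrightarrow$~(iv) is \cite[Theorem 4.3]{P18}, with (ii)~$\Rightarrow$~(i) supplied by Theorem~\ref{T:bip.gr.ph1=>p.c.}. You are therefore undertaking to reprove three substantial external results, and the sketches for the hard ones do not go through. The most serious problem is (i)~$\Leftrightarrow$~(iii). For (i)~$\Rightarrow$~(iii), the claim that ``$K_{2,3}$ embeds isometrically at the core of any $K_{3,3}$-subdivision'' is false: a subdivision of $K_{3,3}$ need not contain $K_{2,3}$ as a subgraph at all, let alone isometrically. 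Moreover your argument nowhere uses antipodality, yet the implication genuinely depends on it: $Q_4$ is a non-planar antipodal partial cube, so by Kuratowski it contains a subdivision of $K_5$ or of $K_{3,3}$, and the whole content of Tardif's theorem is that it must be the former. A $\Theta$-transitivity argument that ignores the antipodal map cannot distinguish these cases. For (iii)~$\Rightarrow$~(i), the step ``$W_{ab}$ not convex $\Rightarrow$ $G$ contains an isometric $K_{2,3}$'' is unjustified (compare Figure~\ref{F:mon.bip.gr.}: an interval monotone bipartite non-partial-cube), and the assembly of a $K_{3,3}$-subdivision from two antipodal $K_{2,3}$'s is exactly the part you defer to ``a careful case analysis''; that case analysis is the theorem.

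There is also a circularity in (iv)~$\Rightarrow$~(ii). You appeal to Proposition~\ref{P:charact.p.c.ph}, whose hypothesis is that $G$ is already a partial cube; but in your cycle (iv)~$\Rightarrow$~(ii)~$\Rightarrow$~(i) you may not assume (i) at that stage. You would need either the general bipartite criterion (Proposition~\ref{P:bip.gr./ph=1}, which involves copoints and attaching points and is harder to verify), or to establish (iv)~$\Rightarrow$~(i) first. The same circularity infects the geodesic argument itself: the claim that a $(u,\bar u)$-geodesic through $v$ meets $U_{ab}$ \emph{after} $v$ silently assumes $v \in W_{ab}$, i.e. that $\mathcal{I}_G(U_{ab}) \subseteq W_{ab}$, which is where convexity of $W_{ab}$ --- equivalently, assertion (i) --- is used. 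The correct fix is simply to follow the paper: cite \cite{T91} and \cite{P18} for the two equivalences, and \cite[Subsection 8]{PS09} for (i)~$\Rightarrow$~(ii).
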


\begin{proof}
(i) $\Rightarrow$ (ii) is proved in \cite[Subsection 8]{PS09}.  The equivalence of (i) and (iii) is proved in~\cite{T91}, whereas the equivalence of (i) and (iv) is \cite[Theorem 4.3]{P18}.
\end{proof}

It follows that the bipartite antipodal graph graph $K_{n,n}-M$, where $n > 4$ and $M$ is a perfect matching of $K_{n,n}$, is not a partial cube since it clearly contains a subdivision of $K_{3,3}$.

We will now study some properties of partial cubes whose
pre-hull number is at most~$1$, with in particular the closure of the class of these graphs  under usual operations of partial cubes.

\begin{pro}\label{P:fin.conv./ph1}
Let $G$ be a partial cube such that any finite subgraph of $G$ 
is contained in a finite convex subgraph of $G$ whose pre-hull number 
is at most $1$.  Then $ph(G) \leq 1$.
\end{pro}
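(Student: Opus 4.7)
The plan is to use the characterization of Proposition~\ref{P:charact.p.c.ph} and reduce ph-stability in $G$ to ph-stability in finite convex subgraphs, which the hypothesis furnishes.

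First I would observe that, by Proposition~\ref{P:charact.p.c.ph}, it suffices to show that $U_{ab}^G$ is ph-stable for every edge $ab$ of $G$ (symmetry handles $U_{ba}^G$). Using the ``one-sided'' reformulation of Definition~\ref{D:ph-stable} recalled just after it, what needs to be proved is: for every pair $u,v \in \mathcal{I}_G(U_{ab}^G)$ there exists $w \in U_{ab}^G$ with $v \in I_G(u,w)$.

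So fix such $u$ and $v$. By definition of the pre-hull operator, there exist $u_1,u_2,v_1,v_2 \in U_{ab}^G$ with $u \in I_G(u_1,u_2)$ and $v \in I_G(v_1,v_2)$. Each $u_i$ has a neighbor $u_i' \in W_{ba}^G$ with $u_iu_i' \mathrel{\Theta} ab$, and likewise one picks $v_i' \in W_{ba}^G$ adjacent to $v_i$. Applying the hypothesis to the finite set
\[
S := \{a,b,u,v,u_1,u_1',u_2,u_2',v_1,v_1',v_2,v_2'\},
\]
one obtains a finite convex subgraph $F$ of $G$ with $S \subseteq V(F)$ and $ph(F) \leq 1$.

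Next I would show that everything ``transfers'' to $F$. Because $F$ is convex, it is isometric in $G$, so $W_{ab}^F = W_{ab}^G \cap V(F)$ and, for $x,y \in V(F)$, $I_F(x,y) = I_G(x,y)$. Consequently each $u_i$ (resp.\ $v_i$) lies in $W_{ab}^F$ and has the neighbor $u_i'$ (resp.\ $v_i'$) in $W_{ba}^F$, so $u_1,u_2,v_1,v_2 \in U_{ab}^F$. It follows that $u \in I_F(u_1,u_2) \subseteq \mathcal{I}_F(U_{ab}^F)$ and similarly $v \in \mathcal{I}_F(U_{ab}^F)$. Since $ph(F) \leq 1$, Proposition~\ref{P:charact.p.c.ph} applied in $F$ tells us that $U_{ab}^F$ is ph-stable in $F$, so there exists $w \in U_{ab}^F$ with $v \in I_F(u,w) = I_G(u,w)$. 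Finally, since $w \in W_{ab}^F \subseteq W_{ab}^G$ has a neighbor in $W_{ba}^F \subseteq W_{ba}^G$, we have $w \in U_{ab}^G$, which completes the argument.

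The only real obstacle is bookkeeping: one must make sure that the finite convex subgraph $F$ is chosen large enough so that the witnesses in $U_{ab}^G$ for ``$u,v \in \mathcal{I}_G(U_{ab}^G)$'' actually survive as witnesses for ``$u,v \in \mathcal{I}_F(U_{ab}^F)$'', which is why one throws in not just the $u_i,v_i$ but also their $\Theta$-partners $u_i',v_i'$ in $W_{ba}$. Once the set $S$ is properly inflated, the convexity/isometry of $F$ in $G$ makes the inclusions $W_{ab}^F = W_{ab}^G \cap V(F)$ and $U_{ab}^F \subseteq U_{ab}^G$ automatic, and the rest is a direct application of the hypothesis inside $F$.
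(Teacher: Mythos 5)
Your proposal is correct and follows essentially the same route as the paper: embed the finite configuration witnessing $u,v \in \mathcal{I}_G(U_{ab})$ (the paper uses $\langle a,b\rangle \cup P_u \cup P_v$, you use the endpoint set $S$) into a finite convex subgraph $F$ with $ph(F)\leq 1$, invoke ph-stability of $U_{ab}^F$ via Proposition~\ref{P:charact.p.c.ph}, and transfer back to $G$ using convexity of $F$ and Lemma~\ref{L:prop.isom subgr.}. The only cosmetic differences are that you use the one-sided form of ph-stability where the paper uses the symmetric form, and your explicit inclusion of the $\Theta$-partners $u_i',v_i'$ in $S$ is harmless but already guaranteed by Lemma~\ref{L:prop.isom subgr.} once $ab \in E(F)$ and $F$ is convex.
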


\begin{proof}
Let $ab \in E(G)$ and $u, v \in \mathcal{I}_{G}(U_{ab})$.  Let $P_u$ and $P_v$ be geodesics joining vertices in $U_{ab}$ on which lie $u$ and $v$, respectively.  Then $\langle a,b\rangle \cup P_u \cup P_v$ is contained in a finite convex subgraph $F$ of $G$ such that $ph(G) \leq 1$.  The set $U_{ab}^F$ is ph-stable since $ph(F) \leq 1$, and thus $u, v$ lie on an $(x,y)$-geodesic $R$ for some $x, y \in U_{ab}^F$.  Because $F$ is convex in $G$, it follows that $R$ is a geodesic in $G$, and also that $x, y \in U_{ab}$ since $U_{ab}^{F} = U_{ab} \cap V(F)$ by Lemma~\ref{L:prop.isom subgr.}.  Therefore $U_{ab}$, and analogously $U_{ba}$, are ph-stable.  Hence $ph(G) \leq 1$ by Proposition~\ref{P:charact.p.c.ph}.
\end{proof}

\begin{rem}\label{R:fin.subgr.fin.conv.subgr.ph1}
We now prove that the converse of Proposition~\ref{P:fin.conv./ph1} is not true.

1.\; We first construct a sequence $G_{0}, G_{1}, \ldots$ of graphs
satisfying the following conditions:

(1)\; $G_{n}$ is a finite partial cube.

(2)\; $G_{n}$ is a convex subgraph of $G_{n+1}$.

(3)\; A cycle of $G_{n}$ is isometric if and only if it is a
$6$-cycle, and thus any isometric cycle of $G_{n}$ is convex.

(4)\; For each edge $ab$ of $G_{n}$ and all $u, v \in U_{ab}^{G_{n}}$,
$d_{G_{n}}(u,v)$ is even and each $(u,v)$-geodesic $\langle
u_{0},\ldots,u_{2p} \rangle$ with $u_{0} = u$ and $u_{2p} = v$ is such
that $u_{2i} \in U_{ab}^{G_{n}}$ for $0 \leq i \leq p$.

(5)\; For each edge $ab$ of $G_{n}$, if $u, v \in
\mathcal{I}_{G_{n}}(U_{ab}^{G_{n}})$ are such that $v \notin
I_{G_{n}}(u,w)$ for any $w \in U_{ab}^{G_{n}}$, then $v \in
I_{G_{n+1}}(u,w)$ for some $w \in U_{ab}^{G_{n+1}}$.

(6)\; For each $n \geq 1$, there exist $ab \in E(G_n)$ and two vertices $u, v \in
\mathcal{I}_{G_{n}}(U_{ab}^{G_{n}})$ such that $v \notin
I_{G_{n}}(u,w)$ for any $w \in U_{ab}^{G_{n}}$, i.e., $U_{ab}^{G_{n}}$ is not ph-stable.

\begin{figure}[!h]
    \centering
    {\tt    \setlength{\unitlength}{0.92pt}
\begin{picture}(329,227)
\thinlines    \put(259,22){$G_2$}
              \put(57,22){$G_1 = \overline{G_0}$}
              \put(279,99){\circle*{5}}
              \put(266,89){\circle*{5}}
              \put(290,109){\circle*{5}}
              \put(290,143){\circle*{5}}
              \put(230,150){\circle*{5}}
              \put(254,153){\circle*{5}}
              \put(242,128){\circle*{5}}
              \put(254,99){\circle*{5}}
              \put(266,71){\circle*{5}}
              \put(302,100){\circle*{5}}
              \put(290,127){\circle*{5}}
              \put(302,148){\circle*{5}}
              \put(84,105){$x_4^C$}
              \put(56,105){$x_2^C$}
              \put(82,162){$x_0^C$}
              \put(82,135){$y^C$}
              \put(77,200){\circle*{5}}
              \put(77,132){\circle*{5}}
              \put(28,159){\circle*{5}}
              \put(28,93){\circle*{5}}
              \put(77,52){\circle*{5}}
              \put(126,93){\circle*{5}}
              \put(126,159){\circle*{5}}
              \put(77,163){\circle*{5}}
              \put(53,113){\circle*{5}}
              \put(101,112){\circle*{5}}
              \put(266,198){\circle*{5}}
              \put(217,157){\circle*{5}}
              \put(217,90){\circle*{5}}
              \put(266,50){\circle*{5}}
              \put(315,90){\circle*{5}}
              \put(315,156){\circle*{5}}
              \put(266,130){\circle*{5}}
              \put(266,163){\circle*{5}}
              \put(242,144){\circle*{5}}
              \put(242,110){\circle*{5}}
              \put(279,153){\circle*{5}}
              \put(133,158){$x_5$}
              \put(133,91){$x_4$}
              \put(72,41){$x_3$}
              \put(12,91){$x_2$}
              \put(12,158){$x_1$}
              \put(72,207){$x_0$}
              \put(266,130){\line(-6,-5){49}}
              \put(77,133){\line(-6,-5){49}}
              \put(315,155){\line(-2,-1){25}}
              \put(290,143){\line(0,-1){33}}
              \put(266,164){\line(6,-5){24}}
              \put(266,89){\line(6,5){24}}
              \put(266,89){\line(0,-1){39}}
              \put(242,109){\line(6,-5){25}}
              \put(242,143){\line(6,5){24}}
              \put(242,144){\line(0,-1){34}}
              \put(218,157){\line(2,-1){25}}
              \put(266,198){\line(-6,-5){49}}
              \put(266,197){\line(6,-5){47}}
              \put(266,130){\line(6,-5){49}}
              \put(218,91){\line(6,-5){49}}
              \put(266,49){\line(6,5){49}}
              \put(217,157){\line(0,-1){65}}
              \put(314,156){\line(1,0){1}}
              \put(315,157){\line(0,-1){65}}
              \put(217,157){\line(0,-1){66}}
              \put(266,196){\line(0,-1){68}}
              \put(77,198){\line(0,-1){68}}
              \put(28,159){\line(0,-1){66}}
              \put(126,159){\line(0,-1){65}}
              \put(125,158){\line(1,0){1}}
              \put(28,159){\line(0,-1){65}}
              \put(77,51){\line(6,5){49}}
              \put(29,93){\line(6,-5){49}}
              \put(77,133){\line(6,-5){49}}
              \put(77,199){\line(6,-5){47}}
              \put(77,200){\line(-6,-5){49}}
\end{picture}}
\caption{Construction of $G$.}
\label{F:Rem. 2.24}
\end{figure}
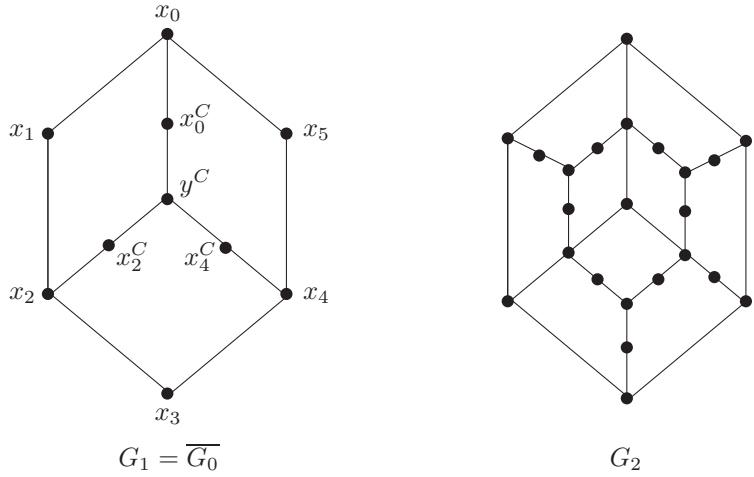

Let $G_{0}$ be a $6$-cycle.  Suppose that $G_{n}$ has already been
constructed for some $n \geq 0$.  We construct $G_{n+1}$ as follows.
Denote by $\mathcal{C}_{n}$ the set of all $6$-cycles of $G_{n}$ which
are not cycles of $G_{n-1}$ if $n > 1$.  Suppose that each cycle $C \in \mathcal{C}$ has exactly two vertices whose degree in $G_n$ is $2$, and that the distance between these two vertices is $2$.

For $C = \langle
x_{0},\ldots,x_{5},x_{0} \rangle \in \mathcal{C}_{n}$ whose vertices of degree $2$ are $x_2$ and $x_4$, let $y^{C}, 
x_{0}^{C}, \ldots, x_{5}^{C}$ be seven vertices which do
not belong to $G_{n}$, and let $$\overline{C} := C \cup \bigcup_{i = 0, 2, 4} \langle y^{C},x_i^{C},x_i\rangle$$ (see Figure~\ref{F:Rem. 2.24}).  Moreover, if $C$ and $C'$ are distinct
elements of $\mathcal{C}_n$, then the new vertices are chosen so that the sets 
$\{y^{C},x_{0}^{C},\ldots,x_{5}^{C} \}$ and
$\{y^{C'},x_{0}^{C'},\ldots,x_{5}^{C'} \}$ are
disjoint.  Now let $$G_{n+1} := G_{n} \cup \bigcup_{C \in
\mathcal{C}_{n}} \overline{C}.$$  For any $C \in \mathcal{C}_{n}$ the following facts are clear:

(a)\; The isometric cycles of $G_{n+1}$ are the $6$-cycles of this graph, and each of these cycles has exactly two vertices whose degree in $G_{n+1}$ is $2$, and the distance between these two vertices is $2$.

(b)\; The set of edges $\{x_{2i}x_{2i}^{C}: 0 \leq i \leq 2 \}$ is a new
$\Theta$-class in $G_{n+1}$.

(c)\; For $0 \leq i \leq 2$, the edges $y^{C}x_{2i}^{C},\;
x_{2i+2}x_{2i+1},\; x_{2i+4}x_{2i+5}$ (the
indices being modulo $6$) are $\Theta$-equivalent in $G_{n+1}$.

(d)\; $U_{x_0x_{0}^{C}}^{G_{n+1}} = \{x_0,x_{2},x_{4}\}$ and $\mathcal{I}_{G_{n+1}}(U_{y_{0}^Cx_{0}^{C}}^{G_{n+1}}) -
U_{y_{0}^Cx_{0}^{C}}^{G_{n+1}} \{x_1,x_3,x_5\}$.

(e)\; For any edge $ab$ of $G_{n}$,
$$\mathcal{I}_{G_{n}}(U_{ab}^{G_{n}}) -
U_{ab}^{G_{n}} \subseteq \mathcal{I}_{G_{n+1}}(U_{ab}^{G_{n+1}}) -
U_{ab}^{G_{n+1}}.$$

(f)\; The vertex $y^C$ has no gate in $C$ since there are three vertices of $C$, namely $x_0, x_2, x_4$, whose distance to $y^C$ is minimal.\\

Now we can check that $G_{n+1}$ satisfies the conditions (1)-(6).

(1):\; The relation $\Theta$ on $E(G_{n+1})$ is clearly transitive, and thus
$G_{n+1}$ is a partial cube.

(2), (3) and (4) follow immediately from the induction hypothesis,
the construction and the preceding facts.

(5):\; Let $ab$ be an edge of $G_{n}$ and two vertices $u, v \in
\mathcal{I}_{G_{n}}(U_{ab}^{G_{n}})$ such that $v \notin
I_{G_{n}}(u,w)$ for any $w \in U_{ab}^{G_{n}}$.  Then, in particular,
$v \notin U_{ab}^{G_{n}}$, and $n \geq 1$.  By (4), we can suppose
without loss of generality that $u \in U_{ab}^{G_{n}}$.  Let $\langle
u_{0},\ldots,u_{2p},v \rangle$ be a $(u,v)$-geodesic with $u_{0} = u$.
Then $u_{2i} \in U_{ab}^{G_{n}}$ for $0 \leq i \leq p$.  Because $v
\in \mathcal{I}_{G_{n}}(U_{ab}^{G_{n}})$, it follows that there is a
vertex $u_{2i+2} \in U_{ab}^{G_{n}}$ such that $\langle
u_{2p},v,u_{2p+2} \rangle$ is a geodesic.  Because $v \notin
I_{G_{n}}(u,u_{2p+2})$, it follows that $d_{G_{n}}(u,u_{2p}) =
d_{G_{n}}(u,u_{2p+2})$.  Then, if $u'_{2p}$ and $u'_{2p+2}$ are the
neighbors in $U_{ba}^{G_{n}}$ of $u_{2p}$ and $u_{2p+2}$,
respectively, and if $P'$ is a $(u'_{2p},u'_{2p+2})$-geodesic, then
$C = \langle u'_{2p},u_{2p},v,u_{2p+2},u'_{2p+2} \rangle \cup P'$ is a
$6$-cycle which is not contained in $G_{n-1}$ by the construction.
Denote $C$ as the cycle $\langle x_{0},\ldots,x_{5},x_{0} \rangle$
with in particular $x_{0} = v$, $x_{5} = u_{2p}$ and $x_{1} =
u_{2p+2}$.  Then, by (c), the edge $x_{0}^{C}y^C$ is
$\Theta$-equivalent to the edge $x_{1}x_{2}$, that is $u_{2p+2}u'_{2p+2}$.
Hence, by transitivity, $x_{0}^{C}y^C$ is $\Theta$-equivalent
to $ab$.  Therefore $v \in I_{G_{n+1}}(u,x_{0}^{C})$ since $G_{n}$
is a convex subgraph of $G_{n+1}$.

(6)\; Let $n \geq 1$ and $C = \langle x_{0},\ldots,x_{5},x_{0} \rangle \in \mathcal{C}_{n-1}$. Then, by (d), $U_{x_0x_{0}^{C}}^{G_n} = \{x_0,x_{2},x_{4}\}$ and $x_3 \in \mathcal{I}_{G_{n}}(U_{x_0x_{0}^{C}}^{G_{n}}) -
U_{x_0x_{0}^{C}}^{G_{n}}$, but $x_3 \notin I_{G_n}(x_0,x_2) \cup I_{G_n}(x_0,x_4)$ because $d_{G_n}(x_0,x_3) = 3$ whereas $d_{G_n}(x_0,x_2) = d_{G_n}(x_0,x_4) = 2$.\\

Now let $G := \bigcup_{n \in \mathbf{N}}G_{n}$.  Then, clearly:

\textbullet\; $G$ is a partial cube by (1) and (2).

\textbullet\; $ph(G) \leq 1$ by (5).

\textbullet\; $G_n$ is a convex subgraph of $G$ by (2) such that $ph(G_n) > 1$ if $n \geq 1$ by (6).

\textbullet\; For each edge $ab$ of $G$, any vertex $u \in \mathcal{I}_G(U_{ab})-U_{ab}$ (resp. $u \in \mathcal{I}_G(U_{ba})-U_{ba}$) lies on a $6$-cycle which belongs to $\mathbf{C}(G,ab)$, by the construction.

\textbullet\; The isometric cycles of $G$ are its $6$-cycles, and thus these cycles are convex by (3), but none of them is gated by (f).\\

Some of these properties are in fact not necessary for our present purpose, but they will be useful later (see Remark~\ref{R:gated/convex}).\\

2.\; Now we show that this graph $G$ is a counterexample of the converse of Proposition~\ref{P:fin.conv./ph1}, i.e., \emph{$ph(G) = 1$ but there exists a finite subgraph of $G$ which is not contained in a finite convex subgraph of $G$ whose pre-hull number is at most $1$}.

\begin{proof}
We already know that $ph(G) = 1$.  Suppose that there exists a finite convex subgraph $F$ of $G$ with $ph(F) = 1$ which contains $G_n$ for some $n \geq 1$.  Let $C = \langle
x_{0},\ldots,x_{5},x_{0} \rangle \in \mathcal{C}_{n-1}$ whose vertices of degree $2$ are $x_2$ and $x_4$, and let $y^{C}, 
x_{0}^{C}, \ldots, x_{5}^{C}$ be defined as in the preceding remark.  Then $\overline{C}$ is a subgraph of $G_n$.  Let $i = 1, 3, 5$.  Denote by $C_i$ the $6$-cycle of $\overline{C}$ which passes through $x_i$.  We know, by (b) and (5) above, that $x_i^{C_i}$ is the only vertex in $U_{x_0x_0^C}^{G_{n+1}}$ which is such that $x_i \in I_{G_{n+1}}(x_0,x_i^{C_i})$.  Moreover, by the construction of $G$, $x_i^{C_i}$ lies on any $(x_i,u)$-geodesic for every vertex $u \in U_{x_0x_0^C}^G$ such that $x_i \in I_{G}(x_0,u)$.  It follows that $x_i^{C_i} \in V(F)$ because $ph(F) = 1$, and consequently $\overline{C_i}$ is a subgraph of $F$ because this subgraph is convex in $G$.  This being true for every $C \in \mathcal{C}_{n-1}$, it follows that every cycle in $\mathcal{C}_n$ is contained in $F$, and hence $G_{n+1}$ is a subgraph of $F$.  We infer by induction that $F$ contains $G_i$ for every $i \geq n$, and thus $F = G$ contrary to the fact that $F$ is finite by hypothesis.
\end{proof}

\end{rem}

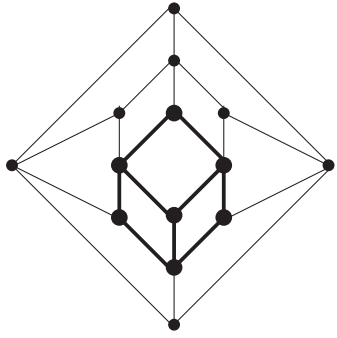
\begin{figure}[!h]
    \centering
   {\tt    \setlength{\unitlength}{0.90pt}
\begin{picture}(233,180)
\thinlines    

              \put(140,66){\line(2,1){43}}
              \put(140,111){\line(2,-1){43}}
              \put(97,66){\line(-2,1){45}}
              \put(96,111){\line(-2,-1){45}}
              \put(51,89){\line(1,-1){68}}
              \put(184,90){\line(-1,1){65}}
              \put(51,89){\line(1,1){66}}
              \put(184,87){\line(-1,-1){65}}
              \put(119,46){\line(0,-1){23}}
              \put(96,88){\line(0,1){26}}
              \put(140,90){\line(0,1){24}}
              \put(119,110){\line(0,1){26}}
              \put(119,134){\line(-1,-1){24}}
              \put(119,134){\line(1,-1){22}}
              \put(119,136){\line(0,1){20}}
              \put(119,133){\circle*{5}}
              \put(184,89){\circle*{5}}
              \put(51,89){\circle*{5}}
              \put(119,155){\circle*{5}}
              \put(119,22){\circle*{5}}
              \put(140,111){\circle*{5}}
              \put(96,111){\circle*{5}}
              \put(140,89){\circle*{7}}
              \put(96,89){\circle*{7}}
              \put(119,111){\circle*{7}}
              \put(119,68){\circle*{7}}
              \put(140,67){\circle*{7}}
              \put(96,67){\circle*{7}}
              \put(119,46){\circle*{7}}
 \linethickness{0,5mm}
              \put(140,90){\line(0,-1){23}}
              \put(96,89){\line(0,-1){22}}
              \put(119,44){\line(0,1){26}}
              \put(120,113){\line(-1,-1){24}}
              \put(119,112){\line(1,-1){22}}
              \put(97,66){\line(1,-1){22}}
              \put(141,68){\line(-1,-1){24}}
              \put(141,90){\line(-1,-1){24}}
              \put(97,88){\line(1,-1){22}}
\end{picture}}    
\caption{$M_{4,1}$ with a copy of $Q_3^-$ as a convex subgraph.}
\label{F:conv.subgr.}
\end{figure}

As was shown in \cite[Remark 8.1]{PS09}, the class of all partial cubes whose
pre-hull number is at most~$1$ is not closed under convex subgraphs.  The graph $M_{n,1}$,\; $n \geq 4$, i.e., the cube $Q_n$ from which a pair of antipodal vertices has been removed, has a pre-hull number equal to $1$ (note that this graph is  an antipodal partial cube).  On the other hand $M_{n,1}$ contains copies of $Q_{n-1}^{-}$ (the cube $Q_{n-1}$ with only one vertex deleted) as convex subgraphs (see Figure~\ref{F:conv.subgr.} for $n = 4$, where $Q_3^-$ is depicted by the big points and the thick lines), and $ph(Q_{n-1}) = 2$ by \cite[Theorem 5.8]{PS09}.   It was also shown in \cite[Remark 3.3]{P05-2} that $Q_3^-$ is a retract of $M_{4,1}$, which proves that the class of all partial cubes whose pre-hull number is at most~$1$ is not closed under retracts.  However, we will see that it is closed under gated subgraphs.

\begin{lem}\label{L:prop.isom subgr.}
Let $G$ be a partial cube, $F$ an isometric subgraph of $G$, and $ab$ an edge of $F$.  Then

\textbullet\; $W_{ab}^{F} = W_{ab}^{G} \cap V(F)$\; and\; $W_{ba}^{F} = W_{ba}^{G} \cap V(F)$

\textbullet\; $U_{ab}^{F} \subseteq U_{ab}^{G} \cap V(F)$.

If moreover $F$ is convex in $G$, then

\textbullet\; $U_{ab}^{F} = U_{ab}^{G} \cap V(F)$.
\end{lem}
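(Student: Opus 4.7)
The plan is to unfold each of the three assertions directly from the definitions, using the bipartiteness of $G$ together with the isometric (and, in the last part, convex) character of $F$.

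For the first assertion, the fact that $F$ is isometric gives $d_F(u,v)=d_G(u,v)$ for all $u,v\in V(F)$, in particular when $v=a$ or $v=b$ (note $a,b\in V(F)$ since $ab\in E(F)$). A vertex $x\in V(F)$ therefore satisfies $d_F(a,x)<d_F(b,x)$ iff $d_G(a,x)<d_G(b,x)$, which yields $W_{ab}^F=W_{ab}^G\cap V(F)$; swapping $a$ and $b$ gives the companion equality.

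For the second assertion, if $x\in U_{ab}^F$, then by the first assertion $x\in W_{ab}^F\subseteq W_{ab}^G$, and $x$ has a neighbour $y$ in $F$ lying in $W_{ba}^F\subseteq W_{ba}^G$; since $xy$ is an edge of $F$ it is a fortiori an edge of $G$, so $x\in U_{ab}^G\cap V(F)$.

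For the third assertion, assume $F$ convex and take $x\in U_{ab}^G\cap V(F)$; pick any neighbour $z$ of $x$ with $z\in W_{ba}^G$. The key step is to verify that $z\in I_G(x,b)$. Since $x\in W_{ab}^G$ and $a,b$ are adjacent in the bipartite graph $G$, we have $d_G(b,x)=d_G(a,x)+1$; similarly $z\in W_{ba}^G$ gives $d_G(a,z)=d_G(b,z)+1$. Combining these with $d_G(a,z)\leq d_G(a,x)+d_G(x,z)=d_G(a,x)+1=d_G(b,x)$ yields $d_G(b,z)\leq d_G(b,x)-1$; parity in the bipartite graph forces $d_G(b,z)=d_G(b,x)-1$, so $d_G(b,x)=d_G(x,z)+d_G(z,b)$, i.e.\ $z\in I_G(x,b)$. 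By convexity of $F$ and the fact that $x,b\in V(F)$, we have $I_G(x,b)\subseteq V(F)$, hence $z\in V(F)$. Since $F$ is isometric and therefore induced (an edge $xz\in E(G)$ with both endpoints in $V(F)$ forces $xz\in E(F)$, as otherwise $d_F(x,z)\geq 2>1=d_G(x,z)$), the edge $xz$ belongs to $F$, and by the first assertion $z\in W_{ba}^F$. Thus $x\in U_{ab}^F$, establishing the reverse inclusion $U_{ab}^G\cap V(F)\subseteq U_{ab}^F$.

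The only delicate point is the third assertion: one must realise that any witness neighbour $z\in W_{ba}^G$ of $x$ automatically lies on an $(x,b)$-geodesic, so that convexity of $F$ immediately places it inside $V(F)$. Once this observation is made, the argument is a short parity computation plus the fact that isometric subgraphs are induced.
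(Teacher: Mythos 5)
Your proof is correct and follows essentially the same route as the paper: the first two assertions from the definitions and isometry, and for the third, the observation that any neighbour $z\in W_{ba}^G$ of $x$ lies in $I_G(x,b)$, so convexity of $F$ captures it. The paper states this membership without the parity computation; your version just fills in that detail (and the remark that isometric subgraphs are induced), which is fine.
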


\begin{proof}
The first assertions are immediate consequences of the definitions of $W_{ab}$ and $U_{ab}$, and of the fact that $F$ is isometric in $G$.  Assume now that $F$ is convex in $G$.  Let $x \in U_{ab}^{G} \cap V(F)$, and let $y$ be the neighbor of $x$ in $U_{ba}^{G}$.  Then $y \in I_{G}(x,b) = I_{F}(x,b)$ since $F$ is convex.  Hence $x \in U_{ab}^{F}$.  Therefore $U_{ab}^{F} \supseteq U_{ab}^{G} \cap V(F)$, and we are done by the above converse inclusion.
\end{proof}

\begin{lem}\label{L:gat.subgr./Uab}
Let $G$ be a partial cube, $F$ a gated subgraph of $G$, and $ab$ an edge of $F$.  Then the gate in $F$ of any $x \in U_{ab}^G$ belongs to $U_{ab}^F$.
\end{lem}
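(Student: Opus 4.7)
The plan is to exhibit a neighbor of the gate $x'$ that lies in $W_{ba}^F$. Since $x \in U_{ab}^G$, there is a neighbor $y$ of $x$ with $y \in W_{ba}^G$ (in fact $y \in U_{ba}^G$). Let $x'$ and $y'$ denote the gates in $F$ of $x$ and $y$, respectively. I would then argue that $x'y'$ is the desired edge.

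First I would check that $x' \in W_{ab}^F$ and $y' \in W_{ba}^F$. Because $ab \in E(F)$, the vertices $a,b$ lie in $F$, and the defining property of the gate gives $x' \in I_G(x,a) \cap I_G(x,b)$, so that $d_G(x,x') + d_G(x',a) = d_G(x,a)$ and $d_G(x,x') + d_G(x',b) = d_G(x,b)$. Subtracting and using $x \in W_{ab}^G$, this yields $d_G(x',a) < d_G(x',b)$, i.e.\ $x' \in W_{ab}^G$. Since $F$ is gated, it is convex, and Lemma~\ref{L:prop.isom subgr.} gives $x' \in W_{ab}^F$. The same reasoning applied to $y$ gives $y' \in W_{ba}^F$.

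Next I would use that the gate map $g \colon V(G)\to V(F)$ is non-expansive, a standard consequence of the definition: for all $v,w$, the relations $g(v)\in I_G(v,g(w))$ and $g(w)\in I_G(w,g(v))$, combined with the triangle inequality, give $d_G(g(v),g(w)) \le d_G(v,w)$. Applied to our adjacent pair $x,y$ and combined with the isometry of $F$ in $G$, this yields $d_F(x',y') = d_G(x',y') \le d_G(x,y) = 1$. Since $W_{ab}^F$ and $W_{ba}^F$ are disjoint, $x' \ne y'$, and hence $x'y' \in E(F)$. Thus $x'$ has a neighbor, namely $y'$, in $W_{ba}^F$, so $x' \in U_{ab}^F$, completing the proof.

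No step is really an obstacle here: the only point requiring any care is verifying that $x'$ lies in the correct half-space $W_{ab}^F$ (rather than $W_{ba}^F$), which is where the orientation convention of Remark~\ref{R:Theta/notation} and the inclusion $U_{ab}^F \subseteq U_{ab}^G \cap V(F)$ from Lemma~\ref{L:prop.isom subgr.} are used implicitly.
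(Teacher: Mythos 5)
Your proof is correct and follows essentially the same route as the paper's: take the neighbor $y$ of $x$ in $U_{ba}^G$, pass to the gates of $x$ and $y$ in $F$, locate them in $W_{ab}^F$ and $W_{ba}^F$ respectively, and show the two gates are adjacent. The only cosmetic difference is that you derive adjacency from the non-expansiveness of the gate map, whereas the paper reads it off directly from the interval relations $y,g(x)\in I_G(x,g(y))$ and $x,g(y)\in I_G(y,g(x))$ — the same computation in different clothing.
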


\begin{proof}
This is trivial if $x \in V(F)$.  Assume that $x \in V(G-F)$, and let $y$ be the neighbor of $x$ in $U_{ba}^G$.  Clearly, by Lemma~\ref{L:prop.isom subgr.},
\begin{gather*}
W_{ab}^F \subseteq W_{ab}^G \textnormal{\, and}\; W_{ba}^F \subseteq W_{ba}^G\\
U_{ab}^F \subseteq U_{ab}^G \textnormal{\, and}\; U_{ba}^F \subseteq U_{ba}^G
\end{gather*}
since $F$ is convex in $G$.

Denote by $g(x)$ and $g(y)$ the gates in $F$ of $x$ and $y$, respectively.  Then $g(x) \in I_G(x,a)$ and $g(y) \in I_G(y,b)$.  Hence $g(x) \in W_{ab}^F$ and $g(y) \in W_{ba}^F$.  On the other hand $y, g(x) \in I_G(x,g(y))$ and $x, g(y) \in I_G(y,g(x))$.  It easily follows that the vertices $g(x)$ and $g(y)$ are adjacent.  Therefore $g(x) \in U_{ab}^F$ and $g(y) \in U_{ba}^F$.
\end{proof}

\begin{thm}\label{T:gat.subg./ph}
Let $F$ be a gated subgraph of a partial cube $G$ such that $ph(G) \leq~1$.  Then $ph(F) \leq 1$.
\end{thm}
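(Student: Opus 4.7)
By Proposition~\ref{P:charact.p.c.ph}, it suffices to show that for every edge $ab$ of $F$, the sets $U_{ab}^F$ and $U_{ba}^F$ are ph-stable in $F$. The plan is to pull the ph-stability up from $F$ to $G$ via the inclusion $U_{ab}^F \subseteq U_{ab}^G$ (Lemma~\ref{L:prop.isom subgr.}), apply the ph-stability of $U_{ab}^G$ granted by $ph(G)\le 1$, and then push the resulting ``witness'' geodesic back down to $F$ via the gate map.

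More precisely, fix $ab \in E(F)$ and $u, v \in \mathcal{I}_F(U_{ab}^F)$. Since $F$ is gated, it is convex and therefore isometric in $G$, and by Lemma~\ref{L:prop.isom subgr.} we have $U_{ab}^F \subseteq U_{ab}^G$, so $u,v \in \mathcal{I}_G(U_{ab}^G)$. Since $ph(G) \le 1$, Proposition~\ref{P:charact.p.c.ph} gives that $U_{ab}^G$ is ph-stable, so there exist $x', y' \in U_{ab}^G$ and an $(x',y')$-geodesic in $G$ passing through both $u$ and $v$ (say with $u$ before $v$). Now let $x := g(x')$ and $y := g(y')$ be the gates of $x'$ and $y'$ in $F$; by Lemma~\ref{L:gat.subgr./Uab}, both $x$ and $y$ lie in $U_{ab}^F$.

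The key step is to show that $u$ and $v$ lie on an $(x,y)$-geodesic in $G$ (and hence in $F$, by convexity). Using the defining property of gates, namely $d_G(x', z) = d_G(x', x) + d_G(x, z)$ for all $z \in V(F)$ and the analogous identity for $y'$, we expand the length of our $(x', y')$-geodesic:
\begin{align*}
d_G(x',y') &= d_G(x',u) + d_G(u,v) + d_G(v,y')\\
           &= d_G(x',x) + d_G(x,u) + d_G(u,v) + d_G(v,y) + d_G(y,y').
\end{align*}
On the other hand, since $y \in V(F)$, another application of the gate property gives $d_G(x',y') \le d_G(x',y) + d_G(y,y') = d_G(x',x) + d_G(x,y) + d_G(y,y')$. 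Combining these estimates yields $d_G(x,u) + d_G(u,v) + d_G(v,y) \le d_G(x,y)$, and the reverse inequality is the triangle inequality. Hence $u$ and $v$ lie on some $(x,y)$-geodesic in $G$; by convexity of $F$ this geodesic lies in $F$, and since $F$ is isometric in $G$, it is also an $(x,y)$-geodesic in $F$.

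The only delicate point is the double application of the gate identity to squeeze the middle segment $x\text{--}u\text{--}v\text{--}y$ inside a geodesic; the rest is bookkeeping. By symmetry the same argument applies to $U_{ba}^F$, and another invocation of Proposition~\ref{P:charact.p.c.ph} concludes $ph(F) \le 1$.
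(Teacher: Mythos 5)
Your proof is correct and follows essentially the same route as the paper: pull ph-stability up to $G$ via $U_{ab}^F\subseteq U_{ab}^G$, then push the witness back into $U_{ab}^F$ through the gate map using Lemma~\ref{L:gat.subgr./Uab} and the gate identity. The only cosmetic difference is that the paper invokes the one-sided formulation of ph-stability, so it keeps one of the two points fixed and needs the gate of only a single endpoint, which shortens the distance bookkeeping.
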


\begin{proof}
Let $ab$ be an edge of $F$.  By Lemma~\ref{L:prop.isom subgr.}, we have $U_{ab}^F \subseteq U_{ab}^G$ and $U_{ba}^F \subseteq U_{ba}^G$ since $F$ is convex in $G$.  We will show that $U_{ab}^{F}$ is ph-stable.

Let $x, y \in \mathcal{I}_{F}(U_{ab}^{F})$.  Because $\mathcal{I}_{F}(U_{ab}^{F}) \subseteq \mathcal{I}_{G}(U_{ab}^{G})$, and since $U_{ab}^G$ is ph-stable by Proposition~\ref{P:charact.p.c.ph}, it follows that $y \in I_G(x,z)$ for some $z \in U_{ab}^G$.  By Lemma~\ref{L:gat.subgr./Uab}, the gate $g(z)$ of $z$ in $F$ belongs to $U_{ab}^F$.  Moreover $y \in I_F(x,g(z))$ since $g(z) \in I_G(y,z)$.  Consequently $U_{ab}^F$ is ph-stable.  

In the same way we can prove that $U_{ba}^F$ is ph-stable.  We infer that $ph(F) \leq 1$ from Proposition~\ref{P:charact.p.c.ph}.
\end{proof}

\begin{thm}\label{T:gat.amalgam}
Let $G$ be the gated amalgam of two partial cubes $G_0$ and $G_1$.  Then $ph(G) \leq 1$ if and only if $ph(G_i) \leq 1$ for $i = 0, 1$.
\end{thm}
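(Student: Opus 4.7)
The forward direction is immediate from Theorem~\ref{T:gat.subg./ph}: by the definition of a gated amalgam, each $G_i$ is (isomorphic to) a gated subgraph of $G$, so $ph(G) \leq 1$ forces $ph(G_i) \leq 1$ for $i = 0, 1$.

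For the converse, I would assume $ph(G_i) \leq 1$ for $i = 0, 1$, set $H := G_0 \cap G_1$ (gated in $G$, $G_0$, and $G_1$, with every edge of $G$ lying in $E(G_0) \cup E(G_1)$), and invoke Proposition~\ref{P:charact.p.c.ph}: it suffices to prove that $U_{ab}^G$ is ph-stable for every edge $ab$ of $G$, and by symmetry I may assume $ab \in E(G_0)$. The first main step is to describe $U_{ab}^G$ in terms of $G_0$ and $G_1$. Using convexity of $G_0$ in $G$, Lemma~\ref{L:prop.isom subgr.} gives $W_{ab}^G \cap V(G_0) = W_{ab}^{G_0}$ and $U_{ab}^G \cap V(G_0) = U_{ab}^{G_0}$; and for $z \in V(G_1) \setminus V(H)$, the gate identity $d_G(v, z) = d_G(v, g_0(z)) + d_G(g_0(z), z)$ (with $g_0(z) \in V(H)$ the gate of $z$ in $V(G_0)$, and $v = a, b$) shows $z \in W_{ab}^G$ iff $g_0(z) \in W_{ab}^{G_0}$. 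Tracking the $\Theta$-class of $ab$ across the amalgam by transitivity, I expect $U_{ab}^G = U_{ab}^{G_0} \cup U_{cd}^{G_1}$, where $cd \in E(H)$ is an edge $\Theta$-equivalent to $ab$ in $G_0$ if such exists (else $U_{cd}^{G_1} := \emptyset$), the two pieces agreeing on $V(H) \cap W_{ab}^G$ by Lemma~\ref{L:gat.subgr./Uab}.

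With that description in hand, I would verify ph-stability by fixing $x, y \in \mathcal{I}_G(U_{ab}^G)$ and producing $w \in U_{ab}^G$ with $y \in I_G(x, w)$. A geodesic joining two vertices of $U_{ab}^G$ restricts on each $V(G_i)$ to a $G_i$-geodesic interval between their gates, so $\mathcal{I}_G(U_{ab}^G) \cap V(G_i) \subseteq \mathcal{I}_{G_i}(U_{ab}^{G_i} \cup U_{cd}^{G_i})$. When $x, y$ lie together in some $V(G_i)$, ph-stability of the relevant set within $G_i$ (guaranteed by the hypothesis and Proposition~\ref{P:charact.p.c.ph}) yields a witness $w$ that remains in $U_{ab}^G$ by the previous description. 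The delicate case is $x \in V(G_0) \setminus V(H)$ and $y \in V(G_1) \setminus V(H)$: every $(x, y)$-geodesic then crosses $V(H)$ at $h := g_0(y)$, giving $d_G(x, y) = d_G(x, h) + d_G(h, y)$; applying ph-stability of $U_{cd}^{G_1}$ inside $G_1$ to the pair $(h, y)$ extracts $w' \in U_{cd}^{G_1} \subseteq U_{ab}^G$ with $y \in I_{G_1}(h, w')$, and then $y \in I_G(x, w')$ follows from the gate-distance decomposition applied to $x$ and $w'$.

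The main obstacle is the structural description above: I must determine precisely when the $\Theta$-class of $ab$ in $G$ extends across $V(H)$ into $G_1$, which depends on whether the convex set $V(H) \subseteq V(G_0)$ is split by the cut $\{W_{ab}^{G_0}, W_{ba}^{G_0}\}$. Once this is correctly handled, the ph-stability verification is essentially routine — though one still needs to confirm that gate projections carry elements of $\mathcal{I}_G(U_{ab}^G)$ into $\mathcal{I}_{G_i}$ of the correct sets, and that ph-stability witnesses produced in $G_i$ genuinely lie in $U_{ab}^G$ rather than merely in $W_{ab}^G$.
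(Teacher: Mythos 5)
Your overall strategy is the paper's: necessity via Theorem~\ref{T:gat.subg./ph}, and for the converse a verification that $U_{ab}^G$ is ph-stable (Proposition~\ref{P:charact.p.c.ph}), split according to whether the $\Theta$-class of $ab$ stays in one constituent or crosses $V(H)$, with gates used to transfer between $G_0$ and $G_1$. The structural description $U_{ab}^G = U_{ab}^{G_0} \cup U_{cd}^{G_1}$ and the reduction of the problem to the mixed case match the paper (which simply normalizes to $ab \in E(G_{01})$).

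There is, however, a genuine flaw in your treatment of the mixed case: you anchor the ph-stability argument at the wrong gate. You apply ph-stability of $U_{cd}^{G_1}$ to the pair $(h,y)$ with $h := g_0(y)$, obtaining $w'$ with $y \in I_{G_1}(h,w')$, and then claim $y \in I_G(x,w')$ "from the gate-distance decomposition." But the decomposition valid for $x \in V(G_0)\setminus V(H)$ and $w' \in V(G_1)$ is $d_G(x,w') = d_G(x,g_1(x)) + d_G(g_1(x),w')$, where $g_1(x)$ is the gate of $x$ in $G_1$; it yields $y \in I_G(x,w')$ only if $y \in I_{G_1}(g_1(x),w')$. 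What you have instead is $y \in I_{G_1}(g_0(y),w')$, and since in general $g_0(y) \neq g_1(x)$, passing from one to the other requires $g_0(y) \in I_{G_1}(g_1(x),w')$ — equivalently, that no $\Theta$-class separating $g_1(x)$ from $g_0(y)$ also separates $y$ from $w'$ — which the gate property of $g_0(y)$ (namely $g_0(y) \in I_G(y,z)$ for $z \in V(G_0)$) does not give you: that property has $y$ as an endpoint, whereas here $y$ is the interior point. The correct move, which is what the paper does, is to first show $g_1(x) \in \mathcal{I}_{G_1}(U_{ab}^{G_1})$ (by reducing to $x \in I_G(p,q)$ with $p \in U_{ab}^{G_0}$, $q \in U_{ab}^{G_1}$, and using Lemma~\ref{L:gat.subgr./Uab} to place the relevant gates in $U_{ab}^{G_{01}}$), and then apply ph-stability in $G_1$ to the pair $(g_1(x),y)$; the resulting witness $w$ satisfies $y \in I_{G_1}(g_1(x),w)$, and the gate decomposition at $g_1(x)$ then gives $y \in I_G(x,w)$ directly. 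Your concluding caveat about "confirming that gate projections carry elements into the correct sets" points at the right neighborhood, but the specific repair is to change the anchor from $g_0(y)$ to $g_1(x)$.
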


\begin{proof}
The necessity is clear by Theorem~\ref{T:gat.subg./ph} since $G_0$ and $G_1$ are isomorphic to two gated subgraphs of $G$.  Conversely, assume that $G = G_{0} \cup G_{1}$ where, for $i = 0, 1$,\; $G_{i}$ is a gated subgraph of $G$ such that $ph(G_i) \leq 1$.  The subgraph $G_{01} := G_{0} \cap G_{1}$ is also gated in $G$ as an intersection of gated subgraphs.  Let $ab$ be an edge of $G$.  We will show that $U_{ab}^G$ is ph-stable.  We distinguish two cases.

\emph{Case 1.}\; $U_{ab}^{G} = U_{ab}^{G_{i}}$ for some $i = 0$ or $1$.

Then $U_{ab}^G$ is ph-stable since so is $U_{ab}^{G_{i}}$ by Proposition~\ref{P:charact.p.c.ph}.

\emph{Case 2.}\; $U_{ab}^{G} \neq U_{ab}^{G_{i}}$ for $i = 0, 1$.

Then, for $i = 0, 1$, $G_{i}$ has an edge which is $\Theta$-equivalent
to $ab$.  Hence $G_{01}$, which is gated in $G$, also has an edge
$\Theta$-equivalent to $ab$.  Then, without loss of generality we can
suppose that $ab \in E(G_{01})$.  For any $x \in V(G)$ and $i = 0, 1$,
we denote by $g_{i}(x)$ the gate of $x$ in $G_{i}$.  Clearly 
\begin{gather}
\label{E:Wab}
W_{ab}^{G} =
W_{ab}^{G_{0}} \cup W_{ab}^{G_{1}} \textnormal{\quad and}\quad W_{ba}^{G} =
W_{ba}^{G_{0}} \cup W_{ba}^{G_{1}}\\
\label{E:Uab}
U_{ab}^{G} =
U_{ab}^{G_{0}} \cup U_{ab}^{G_{1}} \textnormal{\quad and}\quad U_{ba}^{G} = U_{ba}^{G_{0}} \cup U_{ba}^{G_{1}}\\
 \label{E:Iab}
 \mathcal{I}_{G_0}(U_{ab}^{G_0}) \cup \mathcal{I}_{G_1}(U_{ab}^{G_1}) \subseteq \mathcal{I}_{G}(U_{ab}^{G}).
\end{gather}

Let $u, v \in \mathcal{I}_{G}(U_{ab}^{G})$.  If $u, v \in \mathcal{I}_{G}(U_{ab}^{G_i})$ for some $i = 0$ or $1$, then $v \in I_{G_i}(u,w)$ for some $w \in U_{G_i}(ab)$.  Hence we are done because $v \in I_{G}(u,w)$ by (\ref{E:Iab}) and $w \in U_{G}(ab)$ by (\ref{E:Uab}).

Suppose that $u \in V(G_0)-V(G_1)$ and $v \in V(G_1)-V(G_0)$.  We first show that $u \in \mathcal{I}_{G_0}(U_{ab}^{G_0})$.  Because $u \in V(G_0)-V(G_1)$, we can suppose that $u \in I_G(x,y)$ for some $x \in U_{ab}^{G_{0}}$ and $y \in U_{ab}^{G_{1}}$.  Then $g_0(y) \in U_{ab}^{G_{0}}$ by Lemma~\ref{L:gat.subgr./Uab}, and thus $u \in I_{G_0}(x,g_0(y))$ since $g_0(y) \in I_{G_0}(u,y)$.  It follows that $g_1(u) \in I_{G_{01}}(g_1(x),g_0(y)) \subseteq \mathcal{I}_{G_1}(U_{ab}^{G_1})$.  Analogously $v \in \mathcal{I}_{G_1}(U_{ab}^{G_1})$.  Hence $v \in I_{G_1}(g_1(u),w)$ for some $w \in U_{ab}^{G_1}$ because $U_{ab}^{G_1}$ is ph-stable by Proposition~\ref{P:charact.p.c.ph}.  We infer that $v \in I_G(u,w)$, which proves that $U_{ab}^G$ is ph-stable.

In the same way we can prove that $U_{ba}^G$ is ph-stable.  Consequently $ph(G) \leq 1$ by Proposition~\ref{P:charact.p.c.ph}.
\end{proof}

\begin{thm}\label{T:cart.prod.}
Let $G = G_0 \Box G_1$ be the Cartesian product of two partial cubes $G_0$ and $G_1$.  Then $ph(G) \leq 1$ if and only if $ph(G_i) \leq 1$ for $i = 0, 1$.
\end{thm}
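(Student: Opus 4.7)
The plan is to invoke Proposition~\ref{P:charact.p.c.ph} in both directions, using the structural identities of Proposition~\ref{P:pro.Cartes.prod.} to transfer ph-stability between $G$ and its factors.

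For the necessity, I would observe that for any fixed $c \in V(G_1)$ the $G_0$-layer $V(G_0) \times \{c\}$ is a gated subgraph of $G$ isomorphic to $G_0$: by the Distance Property, the gate of $(x,y) \in V(G)$ in that layer is $(x,c)$. Theorem~\ref{T:gat.subg./ph} then yields $ph(G_0) \leq 1$, and symmetrically $ph(G_1) \leq 1$.

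For the sufficiency, assume $ph(G_i) \leq 1$ for $i = 0, 1$, let $ab \in E(G)$, and, without loss of generality, suppose $ab$ lies in a $G_0$-layer, so that $a = (a_0, c)$ and $b = (b_0, c)$ with $a_0 b_0 \in E(G_0)$. The Distance Property immediately gives
\[
W_{ab}^G = W_{a_0 b_0}^{G_0} \times V(G_1), \qquad U_{ab}^G = U_{a_0 b_0}^{G_0} \times V(G_1),
\]
and, since both factors are nonempty and $G_1$ is connected, the Interval Property (Proposition~\ref{P:pro.Cartes.prod.}(ii)) yields
\[
\mathcal{I}_G(U_{ab}^G) = \mathcal{I}_{G_0}(U_{a_0 b_0}^{G_0}) \times V(G_1).
\]

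To verify ph-stability of $U_{ab}^G$, I would pick arbitrary $u = (u_0, u_1)$ and $v = (v_0, v_1)$ in $\mathcal{I}_G(U_{ab}^G)$, so that $u_0, v_0 \in \mathcal{I}_{G_0}(U_{a_0 b_0}^{G_0})$. Since Proposition~\ref{P:charact.p.c.ph} applied to $G_0$ makes $U_{a_0 b_0}^{G_0}$ ph-stable, there exists $w_0 \in U_{a_0 b_0}^{G_0}$ with $v_0 \in I_{G_0}(u_0, w_0)$. Setting $w := (w_0, v_1) \in U_{ab}^G$ and invoking the Interval Property, I get $v \in I_G(u, w)$ because $v_1 \in I_{G_1}(u_1, v_1)$ trivially. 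The same argument yields ph-stability of $U_{ba}^G$, and the case where $ab$ lies in a $G_1$-layer is symmetric. Proposition~\ref{P:charact.p.c.ph} then delivers $ph(G) \leq 1$. I do not foresee a substantive obstacle: the only subtle point is to pick the ``free'' $G_1$-coordinate of $w$ to equal $v_1$, a choice the Interval Property makes natural.
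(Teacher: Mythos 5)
Your proposal is correct and follows essentially the same route as the paper: the necessity via gatedness of layers and Theorem~\ref{T:gat.subg./ph}, and the sufficiency via the identity $U_{ab}^G = U_{a_0b_0}^{G_0}\times V(G_1)$, ph-stability of $U_{a_0b_0}^{G_0}$ from Proposition~\ref{P:charact.p.c.ph}, and the choice $w=(w_0,v_1)$ combined with the Distance/Interval Property. The only cosmetic difference is that you record the product formula for $\mathcal{I}_G(U_{ab}^G)$ explicitly, while the paper just projects $u,v$ onto the first factor.
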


\begin{proof}
Assume that $ph(G) \leq 1$.  Let $F_i$ be a $G_i$-layer of $G$ for some $i = 0$ or $1$.  Then $F_i$ is a gated subgraph of $G$.  Indeed, by the Distance Property of the Cartesian product, the projection onto $F_i$ of any vertex $x$ of $G$ is the gate of $x$ in $F_i$.  Therefore, by Theorem~\ref{T:gat.subg./ph}, $F_i$, and thus $G_i$, has a pre-hull number which is at most $1$.

Conversely, assume that $ph(G_i) \leq 1$ for $i = 0, 1$.  For any $x \in V(G)$, we denote by $x_0$ and $x_1$ the projections of $x$ onto $G_0$ and $G_1$, respectively, i.e., $x = (x_0,x_1)$.  Let $ab \in E(G)$.  Then $a_i = b_i$ for exactly one $i$, say $i = 1$.  We will show that $U_{ab}^G$ is ph-stable.

Clearly, any $cd$ of $G$ is $\Theta$-equivalent to $ab$ if and only if $c_1 = d_1$ and $c_0d_0$ is $\Theta$-equivalent to $a_0b_0$.  Hence
\begin{equation}
\label{E:U'ab}
U_{ab}^G = U_{a_0b_0}^{G_0} \times V(G_1).
\end{equation}

Let $u, v \in \mathcal{I}_G(U_{ab}^G)$.  By the Interval Property of the Cartesian product, $u_0, v_0 \in \mathcal{I}_{G_0}(U_{a_0b_0}^{G_0})$.  Then, because $U_{a_0b_0}^{G_0}$ is ph-stable by Proposition~\ref{P:charact.p.c.ph}, it follows that $v_0 \in I_{G_0}(u_0,w_0)$ for some $w_0 \in U_{a_0b_0}^{G_0}$.  In the case where $u_0 = v_0$, we can choose $w_0$ as any element of $U_{a_0b_0}^{G_0}$.  Let $w := (w_0,v_1)$.  Then $w \in U_{ab}^G$ by (\ref{E:U'ab}), and $v \in I_G(u,w)$ by the Distance Property of the Cartesian product.  This proves that $U_{ab}^G$ is ph-stable.

In the same way we can prove that $U_{ba}^G$ is ph-stable.  Consequently $ph(G) \leq 1$ by Proposition~\ref{P:charact.p.c.ph}.
\end{proof}

According to the above theorems we infer the following result:

\begin{cor}\label{C:closure}
The class of all partial cubes whose pre-hull number is at most~$1$ is closed under gated  subgraphs, gated amalgams and Cartesian products.
\end{cor}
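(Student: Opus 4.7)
The statement is an immediate synthesis of the three closure theorems just proved, so my plan is essentially bookkeeping rather than new argument. Concretely, I would observe that the three operations listed—passing to a gated subgraph, forming a gated amalgam, and taking a Cartesian product—have each been handled individually in Theorems \ref{T:gat.subg./ph}, \ref{T:gat.amalgam}, and \ref{T:cart.prod.}, respectively. In each case the preservation of the property $ph \le 1$ was proved for partial cubes, so nothing further is needed beyond quoting these results.

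My proof would therefore consist of three one-line invocations. First, if $F$ is a gated subgraph of a partial cube $G$ with $ph(G) \le 1$, then $ph(F) \le 1$ by Theorem~\ref{T:gat.subg./ph}. Second, if $G$ is the gated amalgam of two partial cubes $G_0$ and $G_1$ with $ph(G_i) \le 1$ for $i = 0, 1$, then $ph(G) \le 1$ by the sufficiency part of Theorem~\ref{T:gat.amalgam}. Third, if $G = G_0 \Box G_1$ is the Cartesian product of two partial cubes with $ph(G_i) \le 1$, then $ph(G) \le 1$ by the sufficiency part of Theorem~\ref{T:cart.prod.}. One should also note in passing that gated amalgams and Cartesian products of partial cubes are themselves partial cubes (the former is standard and was mentioned in Subsection~\ref{SS:gated}, while the latter is a classical fact about the Cartesian product), so the class is indeed stable under the operations in the category of partial cubes.

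There is no genuine obstacle here; the only thing worth being careful about is making the statement of the corollary consistent with the hypotheses of the cited theorems. In particular, Theorems \ref{T:gat.amalgam} and \ref{T:cart.prod.} are phrased as biconditionals, and it is the sufficiency direction that yields closure, while Theorem~\ref{T:gat.subg./ph} gives both the preservation under gated subgraphs (used here) and a useful necessity in the amalgam statement. No calculation is required.
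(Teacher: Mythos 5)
Your proposal is correct and matches the paper exactly: the corollary is stated there as an immediate consequence of Theorems~\ref{T:gat.subg./ph}, \ref{T:gat.amalgam} and \ref{T:cart.prod.}, with no further argument given. Your additional remarks about using the sufficiency directions of the biconditionals and about the operations staying within the class of partial cubes are accurate but not needed beyond what the paper already assumes.
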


\subsection{Ph-homogeneous partial cubes}\label{SS:ph-homog.}

In this subsection we introduce the main concept of this study.

\begin{defn}\label{D:ph-homog.}
A graph $G$ is said to be \emph{ph-homogeneous} if any finite convex subgraph of $G$ has a pre-hull number which is at most $1$.
\end{defn}

Because, by~\cite[Proposition 4.4]{P05-2}, any convex subgraph of a netlike partial cube is also a netlike partial cube, it follows that netlike partial cubes, and thus median graphs, trees, cellular bipartite graphs, benzenoid graphs are ph-homogeneous partial cubes.

\begin{pro}\label{P:ph-homog.bip.gr.=p.c.}
Any ph-homogeneous bipartite graph is a partial cube.
\end{pro}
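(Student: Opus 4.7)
The plan is to apply Lemma~\ref{L:inf.p.c.}, which says that a bipartite graph is a partial cube if and only if each of its polytopes induces a partial cube. So I would fix an arbitrary polytope $P = co_G(A)$ with $A \subseteq V(G)$ finite, and aim to show that $G[P]$ is a partial cube. Since $P$ is convex in $G$, the induced subgraph $G[P]$ is convex, hence isometric in $G$; it inherits bipartiteness from $G$ and is connected, because any two of its vertices are joined by a geodesic of $G$ that lies inside $P$ by convexity.

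The decisive step is to argue that $P$ is finite. Once this is established, $G[P]$ is a finite convex subgraph of $G$, so the ph-homogeneity hypothesis forces $ph(G[P]) \leq 1$. Since $G[P]$ is a connected bipartite graph whose pre-hull number is at most $1$, Theorem~\ref{T:bip.gr.ph1=>p.c.} promotes it to a partial cube. Applying Lemma~\ref{L:inf.p.c.} then delivers the conclusion that $G$ itself is a partial cube.

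The main obstacle is precisely the finiteness of $P$. I would attempt this by induction on the iterates of the pre-hull operator, using the identity $co_G(A) = \bigcup_{n \geq 0} \mathcal{I}_G^n(A)$: at each step one embeds $\mathcal{I}_G^{n-1}(A)$ into a finite convex subgraph $F$ of $G$, invokes the ph-homogeneity hypothesis together with Theorem~\ref{T:bip.gr.ph1=>p.c.} to promote $F$ to a finite partial cube, and then uses the finiteness of intervals in partial cubes (Lemma~\ref{L:gen.propert.}(ii)) to conclude that $\mathcal{I}_G^n(A)$ remains finite. The genuinely delicate aspect is arguing that this ascending chain of finite iterates stabilizes at a finite stage; this is the technical core of the argument and the only point at which ph-homogeneity is used nontrivially, rather than through the black box of Theorem~\ref{T:bip.gr.ph1=>p.c.}.
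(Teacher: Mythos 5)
Your overall route is exactly the paper's: the published proof consists of the two sentences ``by Theorem~\ref{T:bip.gr.ph1=>p.c.} any finite convex subgraph of $G$ is a partial cube; hence $G$ is a partial cube by Lemma~\ref{L:inf.p.c.}''. You have correctly put your finger on the point this glosses over: Lemma~\ref{L:inf.p.c.} requires \emph{every} polytope to induce a partial cube, a polytope being the convex hull of a finite set, and nothing in the definitions forces such a hull to be finite in a graph not yet known to be a partial cube. So isolating the finiteness of $P = co_G(A)$ as the real content of the argument is the right instinct, and it is a point the paper itself does not address.

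However, your proposed repair does not work, and the gap is genuine. The induction step ``embed $\mathcal{I}_G^{n-1}(A)$ into a finite convex subgraph $F$ of $G$'' is circular: the existence of a finite convex subgraph containing a given finite set is essentially the finiteness statement you are trying to prove, and the finiteness of intervals (Lemma~\ref{L:gen.propert.}(ii)) is a property of partial cubes, which you cannot yet invoke for $G$. Worse, the finiteness claim itself can fail under the stated hypotheses: in $K_{2,\omega}$ (vertices $u_1,u_2$ on one side, infinitely many on the other) any convex set containing two vertices of the same part contains the whole infinite opposite part, so the only finite convex subgraphs are copies of $K_1$ and $K_2$ and the graph is vacuously ph-homogeneous; yet $co_G(\{u_1,u_2\})$ is the entire infinite vertex set, so $\{u_1,u_2\}$ lies in no finite convex subgraph, and the polytope it generates induces $K_{2,\omega}$, which is not a partial cube. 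Thus your stabilization step cannot be carried out from ph-homogeneity alone, and the argument needs an extra ingredient (note that $K_{2,\omega}$ is not interval monotone, so this does not affect Theorem~\ref{T:Peano/ph}, where the statement is actually used under that standing hypothesis).
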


\begin{proof}
Let $G$ be ph-homogeneous bipartite graph. By Theorem~\ref{T:bip.gr.ph1=>p.c.}, any finite convex subgraph of $G$ is a partial cube.  Hence $G$ is itself a partial cube by Lemma~\ref{L:inf.p.c.}.
\end{proof}

Note that, as an immediate consequence of Propositions~\ref{P:charact.p.c.ph} and ~\ref{P:fin.conv./ph1}, we have:

\begin{pro}\label{P:hypernet./ph1}
Let $G$ be a ph-homogeneous partial cube.  Then $ph(G) \leq 1$, and thus $U_{ab}$ and $U_{ba}$ are ph-stable for every edge $ab$ of $G$.
\end{pro}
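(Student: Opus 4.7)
The plan is to reduce directly to the two propositions cited immediately before the statement. The key observation is that a ph-homogeneous partial cube automatically satisfies the hypothesis of Proposition~\ref{P:fin.conv./ph1}: every finite subgraph $H$ of $G$ sits inside a finite convex subgraph, namely the polytope $co_G(V(H))$. By Lemma~\ref{L:gen.propert.}(iii) this polytope is finite, and by ph-homogeneity its pre-hull number is at most $1$.

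Concretely, I would first fix a finite subgraph $H$ of $G$ and take $F := G[co_G(V(H))]$. By construction $F$ is a finite convex subgraph of $G$ containing $H$, and $ph(F) \leq 1$ by the defining property of ph-homogeneity. Thus the premise of Proposition~\ref{P:fin.conv./ph1} holds for $G$, and applying that proposition gives $ph(G) \leq 1$.

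Finally, with $ph(G) \leq 1$ in hand, the implication direction of Proposition~\ref{P:charact.p.c.ph} asserting that ph-stability of $U_{ab}$ and $U_{ba}$ for every edge $ab \in E(G)$ follows from $ph(G) \leq 1$ finishes the proof.

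There is no real obstacle here: the statement is essentially a bookkeeping combination of Proposition~\ref{P:fin.conv./ph1} and Proposition~\ref{P:charact.p.c.ph}, with the finiteness of polytopes (Lemma~\ref{L:gen.propert.}(iii)) as the one extra input needed to pass from "every finite convex subgraph has $ph \leq 1$" (the hypothesis of ph-homogeneity) to "every finite subgraph is contained in a finite convex subgraph with $ph \leq 1$" (the hypothesis of Proposition~\ref{P:fin.conv./ph1}).
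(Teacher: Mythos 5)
Your proof is correct and is precisely the argument the paper intends: the paper states this proposition as "an immediate consequence of Propositions~\ref{P:charact.p.c.ph} and~\ref{P:fin.conv./ph1}", and your use of Lemma~\ref{L:gen.propert.}(iii) to see that the convex hull of a finite subgraph is a finite convex subgraph is exactly the small bridging step that makes that consequence immediate.
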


According to the fact that $ph(G) \leq 1$ and by Lemma~\ref{L:ph-stable}, we deduce that $co_{G}(U_{ab}) = \mathcal{I}_{G}(U_{ab})$ for every edge $ab$ of a ph-homogeneous partial cube $G$.

\begin{thm}\label{T:hypernet./gat.amalg.+cart.prod.}
The class of ph-homogeneous partial cubes is closed under convex subgraphs, gated amalgams and finite or infinite Cartesian products.
\end{thm}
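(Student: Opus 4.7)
The plan is to handle the three closure claims separately, invoking the heavy lifting already done in Theorems~\ref{T:gat.subg./ph}, \ref{T:gat.amalgam} and \ref{T:cart.prod.}. Throughout, let $H$ denote an arbitrary finite convex subgraph of the candidate graph whose ph-homogeneity we are trying to establish; the only thing we ever have to check is $ph(H)\le 1$.

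\textbf{Convex subgraphs.} If $F$ is a convex subgraph of a ph-homogeneous partial cube $G$, then any finite convex subgraph $H$ of $F$ is in fact convex in $G$ (convexity is transitive), hence finite and convex in $G$. By ph-homogeneity of $G$, $ph(H)\le 1$. So $F$ is ph-homogeneous.

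\textbf{Gated amalgams.} Suppose $G=G_0\cup G_1$ is the gated amalgam of two ph-homogeneous partial cubes along the gated subgraph $G_{01}:=G_0\cap G_1$. Given a finite convex subgraph $H$ of $G$, set $H_i:=H\cap G_i$ for $i=0,1$. Each $H_i$ is the intersection of the convex set $H$ with the gated (hence convex) set $G_i$, so $H_i$ is convex in $G$ and gated in $H$ by Lemma~\ref{L:gated.inter.convex}(i); it is also finite. Clearly $H=H_0\cup H_1$. If $H\subseteq G_i$ for some $i$, then $H$ is a finite convex subgraph of the ph-homogeneous graph $G_i$, and we are done. Otherwise $H_0\setminus G_1$ and $H_1\setminus G_0$ are both non-empty; since $H$ is connected and every $G_0$-to-$G_1$ path passes through $G_{01}$, the intersection $H_0\cap H_1=H\cap G_{01}$ is non-empty, so $H$ is the gated amalgam of $H_0$ and $H_1$. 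By ph-homogeneity of the $G_i$ we have $ph(H_i)\le 1$, and Theorem~\ref{T:gat.amalgam} yields $ph(H)\le 1$.

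\textbf{Cartesian products.} Consider first the binary case $G=G_0\Box G_1$ with both factors ph-homogeneous, and let $H$ be a finite convex subgraph of $G$. By the Convex Subgraph Property (Proposition~\ref{P:pro.Cartes.prod.}(v)), $H=\pr_0(H)\Box\pr_1(H)$ with each $\pr_i(H)$ a finite convex subgraph of $G_i$, hence $ph(\pr_i(H))\le 1$ by ph-homogeneity. Theorem~\ref{T:cart.prod.} then gives $ph(H)\le 1$. The finite case follows by induction. For the (possibly infinite) weak Cartesian product $G=\cartbig_{i\in I}^{a}G_i$ with each $G_i$ ph-homogeneous, fix a finite convex subgraph $H$ of $G$. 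The set $J:=\{i\in I:|\pr_i(H)|>1\}$ is finite because $H$ is finite and each vertex of $G$ differs from $a$ in only finitely many coordinates. Iterating Proposition~\ref{P:pro.Cartes.prod.}(v), $H$ is isomorphic to a finite convex subgraph of the finite Cartesian product $\cartbig_{i\in J}\pr_i(H)$, to which the finite case just proved applies, giving $ph(H)\le 1$.

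The main subtlety is the gated amalgam step, specifically verifying that a finite convex $H\subseteq G$ actually decomposes as a gated amalgam of its pieces $H\cap G_i$: one needs both the gatedness of the pieces in $H$ (supplied by Lemma~\ref{L:gated.inter.convex}(i)) and the non-emptiness of $H\cap G_{01}$ whenever $H$ genuinely meets both factors (supplied by the connectivity of $H$). Everything else is a direct reduction to the pre-hull theorems already proved in Subsection~\ref{SS:p.c.ph.at most1}.
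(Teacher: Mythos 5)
Your proof is correct and follows essentially the same route as the paper's: reduce each closure claim for an arbitrary finite convex subgraph to Theorems~\ref{T:gat.amalgam} and~\ref{T:cart.prod.}, after decomposing that subgraph as a gated amalgam of its traces on $G_0$ and $G_1$ (resp.\ as a finite Cartesian product of its projections). The only cosmetic difference is that you invoke Lemma~\ref{L:gated.inter.convex}(i) and connectivity to get gatedness and non-emptiness of $H\cap G_{01}$, where the paper re-derives the gate containment directly; both arguments are equivalent.
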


\begin{proof}
\emph{Convex subgraph}:\; Any convex subgraph of a ph-homogeneous partial cube is obviously a ph-homogeneous partial cube, whence the result.

\emph{Gated amalgam}:\; Assume that $G = G_0 \cup G_1$, where $G_0$ and $G_1$ are ph-homogeneous partial cubes that are gated subgraphs of $G$.  Let $F$ be a finite convex subgraph of $G$. If $F$ is a subgraph of $G_i$ for some $i = 0$ or $1$, then $ph(F) \leq 1$ since $G_i$ is ph-homogeneous.  Suppose that $F_i := F \cap G_i \neq \emptyset$ for $i = 0, 1$.  Then, for any $i = 0$ or $1$, $F_i$ is convex in $G_i$, and moreover the gate $g_{1-i}(x)$ of each $x \in V(F_i)$ in $G_{1-i}$ belongs to $F$, because $g_{1-i}(x) \in I_G(x,y)$ for every $y \in V(F_{1-i})$ and $F$ is convex.  It follows that $F$ is the gated amalgam of $F_0$ and $F_1$.  Therefore $ph(F) \leq 1$ by Theorem~\ref{T:gat.amalgam}, since $ph(F_i) \leq 1$ for $i = 0, 1$ by what we saw above.  Consequently $G$ is a ph-homogeneous partial cube.

\emph{Cartesian product}:\; Let $G = \cartbig_{ \in I}G_i$ be the (weak) Cartesian product of a family of ph-homogeneous partial cubes, and $F$ a finite convex subgraph of $G$.  Because $F$ is finite, the set $J := \{j \in I: \vert pr_i(F)\vert > 1\}$ is finite, $pr_i(F)$ is convex for all $i \in I$, and $F = \cartbig_{i \in I}pr_i(F)$.  It follows that $F$ is isomorphic to $F' := \cartbig_{j \in J}pr_j(F)$.  For all $j \in J$,\; $ph(pr_j(F)) \leq 1$ since $G_j$ is a ph-homogeneous partial cube and $pr_j(F)$ is a finite convex subgraph of $G_j$.  Therefore $ph(F') \leq 1$ by Theorem~\ref{T:cart.prod.}, and thus $ph(F) \leq 1$.  Consequently $G$ is a ph-homogeneous partial cube.
\end{proof}

\begin{cor}\label{C:ph-hom./cart.prod.}
The Cartesian product $G_0 \Box G_1$ of two partial cubes $G_0$ and $G_1$ is ph-homogeneous if and only if so are $G_0$ and $G_1$.
\end{cor}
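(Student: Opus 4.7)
The plan is to derive this corollary directly from Theorem~\ref{T:hypernet./gat.amalg.+cart.prod.}, which already packages the closure properties of the class of ph-homogeneous partial cubes under both convex subgraphs and Cartesian products.

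For the sufficiency (if $G_0$ and $G_1$ are ph-homogeneous, then so is $G_0 \Box G_1$), there is essentially nothing to do: this is exactly the Cartesian-product part of Theorem~\ref{T:hypernet./gat.amalg.+cart.prod.} applied to the two-element index set $I = \{0,1\}$.

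For the necessity, the plan is to exhibit each factor $G_i$ as (isomorphic to) a convex subgraph of $G_0 \Box G_1$ and then invoke the convex-subgraph part of Theorem~\ref{T:hypernet./gat.amalg.+cart.prod.}. Concretely, for any vertex $a \in V(G_0 \Box G_1)$, the $G_i$-layer through $a$ is isomorphic to $G_i$ and is a gated subgraph of $G_0 \Box G_1$ (the gate of any vertex $x$ being its projection onto the layer, as recalled in the proof of Theorem~\ref{T:cart.prod.} via the Distance Property of the Cartesian product). Since gated subgraphs are convex, each $G_i$ is (up to isomorphism) a convex subgraph of the ph-homogeneous partial cube $G_0 \Box G_1$, and so is itself ph-homogeneous by the convex-subgraph closure.

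There is no real obstacle here; the corollary is a one-line consequence of Theorem~\ref{T:hypernet./gat.amalg.+cart.prod.}. The only mild care needed is to observe that factors of a Cartesian product sit inside the product as gated (hence convex) subgraphs via layers, so that the convex-subgraph closure clause applies.
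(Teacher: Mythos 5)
Your proposal is correct and matches the paper's own proof: sufficiency is read off directly from Theorem~\ref{T:hypernet./gat.amalg.+cart.prod.}, and necessity is obtained by observing that each $G_i$-layer is a convex subgraph of $G_0 \Box G_1$ isomorphic to $G_i$, so the convex-subgraph closure applies. Your extra remark that the layers are in fact gated is harmless but not needed, since convexity alone suffices.
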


\begin{proof}
The sufficiency is a consequence of the above theorem.  Conversely suppose that $G := G_0 \Box G_1$ is ph-homogeneous.  Let $F_i$ be a $G_i$-layer of $G$ for some $i = 0$ or $1$.Then $F_i$ is a convex subgraph of $G$.  Hence $F_i$, and thus $G_i$, is ph-homogeneous by Theorem~\ref{T:hypernet./gat.amalg.+cart.prod.}.
\end{proof}

Recall that a partially ordered set $A$ is down-directed if any pair of elements of $A$ has a lower bound.

\begin{thm}\label{T:inters.inf.seq.}
If $\Gamma$ is a set of ph-homogeneous partial cubes which is down-directed for the subgraph relation, then $\bigcap\Gamma$ is ph-homogeneous.
\end{thm}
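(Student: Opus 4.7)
Plan: Let $F$ be a finite convex subgraph of $G := \bigcap\Gamma$; the goal is to show $ph(F) \leq 1$. My strategy is to exhibit some $G^{\ast} \in \Gamma$ in which $F$ is itself a convex subgraph. Once this is achieved, finite convexity of $F$ in the ph-homogeneous partial cube $G^{\ast}$ immediately yields $ph(F) \leq 1$ by the definition of ph-homogeneity, completing the proof.

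Locating such a $G^{\ast}$ is the core of the argument. Since $V(F)$ is finite and $V(F) \subseteq V(G^{\ast})$ for every $G^{\ast} \in \Gamma$, each polytope $co_{G^{\ast}}(V(F))$ is a finite set containing $V(F)$ by Lemma~\ref{L:gen.propert.}(iii). I would show that one can shrink the ambient graph within $\Gamma$ until this convex hull coincides with $V(F)$. For any ``excess'' vertex $y \in co_{G^{\ast}}(V(F)) \setminus V(F)$ encountered in some $G^{\ast}$, one eliminates $y$ by descending in $\Gamma$: if $y \notin V(G)$, then the definition $V(G) = \bigcap_{G^{\ast} \in \Gamma} V(G^{\ast})$ provides some $G^{\ast\ast} \in \Gamma$ with $y \notin V(G^{\ast\ast})$, and trivially $y \notin co_{G^{\ast\ast}}(V(F))$; if instead $y \in V(G) \setminus V(F)$, one must use the convexity of $F$ in $G$ (equivalently $y \notin co_G(V(F))$) together with down-directedness to produce $G^{\ast\ast} \in \Gamma$ for which the iteration $\mathcal{I}_{G^{\ast\ast}}^{n}$ starting from $V(F)$ also fails to reach $y$. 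Iterating this elimination over the finitely many excess vertices of an initial $co_{G^{\ast}_{0}}(V(F))$, and combining the witnesses by down-directedness, produces the desired $G^{\ast}$.

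The main obstacle is the second case, $y \in V(G) \setminus V(F)$. I expect to handle it by a pigeonhole/cofinality analysis on the finite set $V(F)^2$ of endpoint pairs: if $y$ were to lie in $co_{G^{\ast}}(V(F))$ for every $G^{\ast} \in \Gamma$, then some bounded iteration depth $n$ and a cofinal sub-family of $\Gamma$ would jointly witness $y \in \mathcal{I}_{G^{\ast}}^{n}(V(F))$ via the same endpoints and a bounded amount of geodesic data. Passing to a common refinement in $\Gamma$ of these finitely many witnesses, one would obtain the same iterated interval computation inside $G$ itself, contradicting $y \notin co_G(V(F)) = V(F)$. Making this compactness/limit step precise — exploiting the finiteness of intervals in partial cubes (Lemma~\ref{L:gen.propert.}(ii)) to bound the combinatorial data that must be preserved under the down-directed refinement — is the principal technical hurdle.
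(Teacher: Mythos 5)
Your overall strategy is exactly the paper's: produce some $G^{\ast}\in\Gamma$ with $co_{G^{\ast}}(V(F))=V(F)$ by showing that the down-directed family $\{co_{G'}(F):G'\in\Gamma\}$ of finite sets (Lemma~\ref{L:gen.propert.}(iii)) intersects in $F$ and therefore attains $F$, and then invoke ph-homogeneity of $G^{\ast}$. But the step you yourself label ``the principal technical hurdle'' is the entire content of the theorem, and your sketch of it does not close. Two concrete problems. First, by working with the full hull $co_{G^{\ast}}(V(F))=\bigcup_{n}\mathcal{I}^{n}_{G^{\ast}}(V(F))$ you must preserve, under refinement in $\Gamma$, witness data at every depth $\le n$, and that data involves intermediate vertices which vary with the ambient graph and need not even lie in $V(G)$; a pigeonhole on the finite set $V(F)^{2}$ of endpoint pairs does not control them. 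The paper sidesteps this by reducing to a single application of the interval operator, i.e.\ to proving $\bigcap_{G'\in\Gamma}\mathcal{I}_{G'}(F)=F$: there a witness for an excess vertex $x$ is just a geodesic joining two vertices of $F$ and passing through $x$, and the set $L_{G'}$ of such geodesics in a fixed $G'$ is finite by the finiteness of $F$ and of intervals (Lemma~\ref{L:gen.propert.}(ii)).

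Second, you never supply the mechanism by which an individual witness is killed, and your appeal to a ``cofinal sub-family'' is not licensed: a cofinal subfamily of $\Gamma$ need not be finite, so down-directedness alone does not give it a common refinement. The correct argument is per witness: if a single geodesic $P\in L_{G'}$, joining $a,b\in V(F)$ through $x$, remained in $L_{G''}$ for every $G''\in\Gamma$ below $G'$, then $P$ would be an $(a,b)$-path of every member of $\Gamma$, hence of $\bigcap\Gamma$, of length $d_{G''}(a,b)\le d_{\bigcap\Gamma}(a,b)$, hence a geodesic of $\bigcap\Gamma$; this would place $x$ in $\mathcal{I}_{\bigcap\Gamma}(F)\subseteq V(F)$ by the convexity of $F$ in $\bigcap\Gamma$, a contradiction. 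So each of the finitely many elements of $L_{G'}$ is destroyed by some member of $\Gamma$, and finitely many applications of down-directedness yield $G_{x}$ with $x\notin\mathcal{I}_{G_{x}}(F)$; a final application over the finitely many excess vertices of one hull finishes the argument. Until you replace the ``compactness/limit step'' by something of this form, the proposal is a plan rather than a proof.
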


\begin{proof}
Let $F$ be a finite convex subgraph of $\bigcap\Gamma$.  Note that the set $\{co_G(F): G \in \Gamma\}$ is also down-directed for the subgraph relation, since $co_G(F) \subseteq co_{G'}(F)$ if $G \subseteq G'$.

\emph{Claim.  $\bigcap_{G \in \Gamma}co_G(F) = F$.}

This is clear if $\Gamma$ has a minimal element, say $G$, because a down-directed ordered set may have at most one minimal element, and thus $\bigcap\Gamma = G$.

Assume that $\Gamma$ has no minimal element, which in particular implies that $\Gamma$ is infinite.  It suffices to prove that $\bigcap_{G \in \Gamma}\mathcal{I}_G(F) = F$.  Let $x \in V(\bigcap_{G \in \Gamma}\mathcal{I}_G(F))$.  Suppose that $x \notin V(F)$, and let $G \in \Gamma$.  The set $L_G$ of geodesics in $G$ which pass through $x$ and join two vertices of $F$ is finite, because $F$ is finite and each interval of a partial cube is finite.  Let $P \in L_G$.  Because $\Gamma$ has no minimal element and $x \notin V(F)$, there exists $G' \in \Gamma$ such that $P \notin L_{G'}$.  Then, since $\Gamma$ is down-directed, there exists $G_P \in \Gamma$ such that $G_P$ is a subgraph of $G$ and $G'$.  It follows that $P \notin L_{G_P}$.  Because $L_G$ is finite, by repeating the same argument we can show that there exists $G_x \in \Gamma$ which is a subgraph of $G$ such that $L_{G_x}$ contains no element of $L_G$.  Therefore $x \notin \mathcal{I}_{G_x}(F)$, contrary to the hypothesis.  Hence $x \in V(F)$, which proves the claim.\\

According to the claim and the facts that the set $\{co_G(F): G \in \Gamma\}$ is down-directed and that $co_G(F)$ is finite by Lemma~\ref{L:gen.propert.}(iii) for every $G \in \Gamma$, we infer that $F = co_{G}(F)$ for some $G \in \Gamma$.  Hence $F$ is convex in $G$, and thus $ph(F) \leq 1$ since $G$ is ph-homogeneous.  Therefore $\bigcap\Gamma$ is ph-homogeneous.
\end{proof}

\subsection{Convexity properties}\label{SS:conv.prop.}

In order to state a characterization of ph-homogeneous partial cubes given in~\cite{P16}, we recall four specific properties of an abstract interval space $(X,I)$ and of a convex structure $(X,\mathcal{C})$ (see~\cite{V93}):

\emph{Peano Property}:\; For all $u,v,w \in X$, $x \in I(u,w)$ and
$y \in I(v,x)$, there exists a point $z \in I(v,w)$ such that $y \in
I(u,z)$.

\emph{Pash Property}:\; For all $u,v,w \in X$, $v' \in I(u,w)$ and
$w' \in I(u,v)$, the intervals $I(v,v')$ and $I(w,w')$ are 
non-disjoint.

\emph{Join-Hull Commutativity Property}:\; For any convex set $C
\subseteq X$ and any $u \in X$, the convex hull of $\{u\} \cup C$
equals the union of the convex hull of $\{u,v\}$ for all $v \in C$.

\emph{Kakutani Separation Property $\mathrm{S}_{4}$}:\; If $C, D \subseteq X$ are
disjoint convex sets, then there is a half-space $H$ which separates
$C$ from $D$, that is, $C \subseteq H$ and $D \subseteq
X-H$.

By \cite[Theorem 4.11]{V93}, a convex structure $(X,\mathcal{C})$ induced by an interval operator $I$ is join-hull commutative if and only if the interval space $(X,I)$ has the Peano Property.  Moreover, by \cite{Che86-1}, a convex structure of arity $2$ has the Pash Property if and only if it has the separation property $\mathrm{S}_{4}$.  Furthermore, according to Chepoi~\cite{Che86},
the geodesic interval space of a bipartite graph which is join-hull commutative and has convex intervals also have the separation property $\mathrm{S}_{4}$.  An interval space satisfying
the Pash and Peano properties is called a \emph{Pash-Peano space}.

From now on, we will say that a graph has one of the above properties if its geodesic interval space or its geodesic convex structure has this property.

If all intervals of some graph $G$ are convex, which is the case if $G$ is a partial cube, and if $G$ is join-hull commutative, then $co_G(\{u\} \cup C) = \mathcal{I}_G(\{u\} \cup C$ for all vertex $u$ and convex set $C$ of $G$.  This is in particular the case if $C$ is a copoint at $u$.  It follows that such a graph $G$ has a pre-hull number which is at most $1$.

We recall that an interval monotone bipartite graph is not necessarily a partial cube as is shown by the graph in Figure~\ref{F:mon.bip.gr.} (cf.~\cite{BK02}), but that a bipartite graph is a partial cube if its pre-hull number is at most $1$ (Theorem~\ref{T:bip.gr.ph1=>p.c.}).

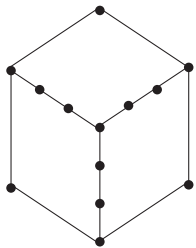
\begin{figure}[!h]
    \centering
    {\tt    \setlength{\unitlength}{1,20pt}
\begin{picture}(173,124)
\thinlines 
              \put(72,18){\circle*{3}}
              \put(72,91){\circle*{3}}
              \put(72,54){\circle*{3}}
              \put(72,30){\circle*{3}}
              \put(72,42){\circle*{3}}
              \put(72,55){\line(0,-1){36}}
              \put(44,71){\line(0,-1){35}}
              \put(44,72){\circle*{3}}
              \put(44,35){\circle*{3}}
              \put(73,91){\line(3,-2){27}}
              \put(72,92){\line(-3,-2){28}}
              \put(44,36){\line(3,-2){27}}
              \put(44,72){\line(3,-2){27}}
              \put(101,74){\line(-3,-2){29}}
              \put(100,72){\line(0,-1){35}}
              \put(100,73){\circle*{3}}
              \put(100,36){\circle*{3}}
              \put(81,61){\circle*{3}}
              \put(90,66){\circle*{3}}
              \put(53,66){\circle*{3}}
              \put(62,60){\circle*{3}}
              \put(100,37){\line(-3,-2){28}}	     
\end{picture}}    
\caption{A monotone bipartite graph which is not a partial cube.}
\label{F:mon.bip.gr.}
\end{figure}

Note that a partial cube may have the separation property $\mathrm{S}_{4}$ but not the Join-Hull Commutativity Property.  This is for example the case of $Q_3^-$, which has a pre-hull number equal to $2$ and clearly has the separation property $\mathrm{S}_{4}$.  On the other hand a partial cube with pre-hull number at most $1$ may not have the separation property $\mathrm{S}_{4}$, and thus not the Join-Hull Commutativity Property.  Take for example $M_{4,1}$ (see Figure~\ref{F:conv.subgr.}).  As we saw, $ph(M_{4,1}) = 1$.  Moreover its edge set contains exactly four $\Theta$-classes, and we can easily find two disjoint convex paths of length $2$ such that each $\Theta$-class has a representative edge in one of these paths.  This implies that the vertex sets of these paths cannot be separated by a half-space, and hence that $M_{4,1}$ has not the separation property $\mathrm{S}_{4}$.  However we have the following result.

\begin{thm}\label{T:Peano/ph}\textnormal{(Polat~\cite[Theorem 3.4]{P16})}
Let $G$ be an interval monotone bipartite graph.  The following assertions are equivalent:

\textnormal{(i)}\; $G$ has the Peano Property.

\textnormal{(ii)}\; $G$ is join-hull commutative.

\textnormal{(iii)}\; $G$ is a ph-homogeneous partial cube.

\textnormal{(iv)}\; $G$ is a partial cube all of whose intervals induce subgraphs with pre-hull number at most $1$.

\textnormal{(v)}\; $G$ has the separation property $\mathrm{S}_{4}$ and $ph(G) \leq 1$.

\textnormal{(vi)}\; $G$ has the Pash Property and $ph(G) \leq 1$.
\end{thm}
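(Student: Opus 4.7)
The plan is to establish the cycle
\[
\text{(i)} \Leftrightarrow \text{(ii)} \Rightarrow \text{(v)} \Leftrightarrow \text{(vi)} \Rightarrow \text{(iii)} \Rightarrow \text{(iv)} \Rightarrow \text{(i)}.
\]
Two of the equivalences come for free from the cited background: (i)$\Leftrightarrow$(ii) is \cite[Theorem~4.11]{V93}, whose hypothesis that intervals are convex is supplied exactly by interval monotonicity, and (v)$\Leftrightarrow$(vi) is \cite{Che86-1} applied to the geodesic convex structure, which is of arity~$2$.

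For (ii)$\Rightarrow$(v) I would assemble the two observations stated in the commentary preceding the theorem: join-hull commutativity together with convex intervals forces $co_G(C\cup\{u\})=\mathcal{I}_G(C\cup\{u\})$ for every vertex $u$ and convex set $C$, hence in particular for every copoint $C$, which gives $ph(G)\leq 1$; and Chepoi's theorem \cite{Che86} for bipartite graphs with convex intervals and join-hull commutativity directly delivers the separation property $\mathrm{S}_4$. For (vi)$\Rightarrow$(iii), the inequality $ph(G)\leq 1$ combined with bipartiteness gives a partial cube via Theorem~\ref{T:bip.gr.ph1=>p.c.}, and the remaining task is to promote $ph(G)\leq 1$ to ph-homogeneity. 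For a finite convex subgraph $H$ and an edge $ab\in E(H)$, Lemma~\ref{L:prop.isom subgr.} yields $U_{ab}^H=U_{ab}^G\cap V(H)$; for $u,v\in\mathcal{I}_H(U_{ab}^H)$ the ph-stability of $U_{ab}^G$ produces a $w\in U_{ab}^G$ with $v\in I_G(u,w)$, and the Pash Property then allows us to replace $w$ by a vertex of $H$ sitting in $U_{ab}^H$, so $U_{ab}^H$ is ph-stable; together with the symmetric argument on $U_{ba}^H$, Proposition~\ref{P:charact.p.c.ph} concludes $ph(H)\leq 1$.

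The implication (iii)$\Rightarrow$(iv) is immediate, since interval monotonicity and Lemma~\ref{L:gen.propert.}(ii) make each interval a finite convex subgraph, and ph-homogeneity gives $ph\leq 1$ on it. This leaves (iv)$\Rightarrow$(i), which I expect to be the main obstacle. Given $u,v,w$ with $x\in I_G(u,w)$ and $y\in I_G(v,x)$, the witness $z\in I_G(v,w)$ with $y\in I_G(u,z)$ must be produced using only the assumption that intervals have pre-hull number at most $1$. If $y\in I_G(u,w)$ one may take $z=w$; otherwise, the plan is to carry the construction out inside the finite convex subgraph $F:=G[I_G(v,w)]$, whose pre-hull number is at most $1$ by (iv), by picking an edge $ab$ on a $(v,y)$-geodesic and exploiting ph-stability of $U_{ab}^F$ together with the transitivity of $\Theta$ (Theorem~\ref{T:Djokovic-Winkler}(ii)) to align the $\Theta$-signature of a candidate $z\in F$ with the requirement $y\in I_G(u,z)$. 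The delicate point is that this last requirement ranges outside $F$, whereas the pre-hull control furnished by (iv) is only internal; bridging the two, presumably by propagating $\Theta$-equivalences from $F$ to the ambient graph, is the heart of the argument.
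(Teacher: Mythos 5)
The paper does not actually prove this theorem: it is imported verbatim from \cite[Theorem 3.4]{P16}, so there is no in-paper argument to measure yours against, and the full proof lives in that reference. That said, your cycle
(i)$\Leftrightarrow$(ii)$\Rightarrow$(v)$\Leftrightarrow$(vi)$\Rightarrow$(iii)$\Rightarrow$(iv)$\Rightarrow$(i)
is a sensible skeleton, and the arcs you delegate to citations or to short computations are sound: (i)$\Leftrightarrow$(ii) via \cite[Theorem 4.11]{V93}, (v)$\Leftrightarrow$(vi) via \cite{Che86-1} for arity-$2$ convexities, (ii)$\Rightarrow$(v) via the copoint computation $co_G(C\cup\{x\})=\bigcup_{v\in C}I_G(x,v)\subseteq\mathcal{I}_G(C\cup\{x\})$ together with Chepoi's $\mathrm{S}_4$ result, and (iii)$\Rightarrow$(iv) from finiteness and convexity of intervals (Lemma~\ref{L:gen.propert.}(ii)).

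The two implications that carry the real content are not established. For (vi)$\Rightarrow$(iii), the sentence ``the Pash Property then allows us to replace $w$ by a vertex of $H$ sitting in $U_{ab}^H$'' is exactly where the whole difficulty sits, and you supply no mechanism: Pash only asserts that two specific intervals meet, and it is not at all clear how to extract from it a witness $z\in U_{ab}^H=U_{ab}^G\cap V(H)$ with $v\in I_G(u,z)$. That this cannot be a routine replacement is demonstrated inside the paper itself: Remark~\ref{R:fin.subgr.fin.conv.subgr.ph1} constructs a partial cube $G$ with $ph(G)=1$ whose convex subgraphs $G_n$ have $ph(G_n)>1$, so $ph(G)\leq 1$ alone never promotes to ph-homogeneity and the Pash/$\mathrm{S}_4$ hypothesis must be used in an essential, quantitative way (that example must, and does, fail Pash). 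For (iv)$\Rightarrow$(i) you explicitly concede the point: your plan ends by identifying the passage from the internal pre-hull control on $F=G[I_G(v,w)]$ to the external requirement $y\in I_G(u,z)$ as ``the heart of the argument'' without carrying it out. As it stands, the proposal is an outline whose two load-bearing arcs are missing, and closing either one requires substantially more than the $\Theta$-transitivity bookkeeping you gesture at.
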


Due to the equivalence of (iii) and (iv) we have the following property:

\begin{cor}\label{C:ph-homog.p.c./interv.}
A partial cube is ph-homogeneous if an only if the pre-hull number of the subgraphs induced by each of its intervals is at most $1$.
\end{cor}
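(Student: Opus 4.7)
The corollary is essentially a reformulation of the equivalence (iii)$\Leftrightarrow$(iv) in Theorem~\ref{T:Peano/ph}. The plan is simply to verify that Theorem~\ref{T:Peano/ph} applies to every partial cube, and then invoke that equivalence.

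Every partial cube $G$ is bipartite by definition, and every interval of $G$ is convex by Lemma~\ref{L:gen.propert.}(ii); hence $G$ is an interval monotone bipartite graph, which is precisely the standing hypothesis of Theorem~\ref{T:Peano/ph}. Consequently conditions (iii) and (iv) of that theorem are equivalent for $G$, and this equivalence is exactly the statement of the corollary.

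One may even observe that the necessity direction is immediate without invoking Theorem~\ref{T:Peano/ph}: since intervals in a partial cube are finite and convex by Lemma~\ref{L:gen.propert.}(ii)--(iii), each interval-induced subgraph is a finite convex subgraph of $G$, so ph-homogeneity forces its pre-hull number to be at most~$1$. The substantive content is therefore concentrated in the sufficiency direction, which is exactly (iv)$\Rightarrow$(iii) of Theorem~\ref{T:Peano/ph}. No genuine obstacle arises: the only thing to check is that the two standing properties of partial cubes (bipartiteness and interval monotonicity) hold, both of which are built into the definition or already recorded in Lemma~\ref{L:gen.propert.}.
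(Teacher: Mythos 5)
Your proof is correct and follows exactly the paper's route: the paper derives this corollary directly from the equivalence of (iii) and (iv) in Theorem~\ref{T:Peano/ph}, and your only addition is the (valid and worthwhile) explicit check that every partial cube is an interval monotone bipartite graph via Lemma~\ref{L:gen.propert.}(ii), so the theorem's standing hypothesis is satisfied.
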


From the above theorem, we infer that the ph-homogeneous partial cubes are the Pash-Peano partial cubes, and moreover that the Peano Property is stronger than the Pash Property for partial cubes.  This is why we choose to call the partial cubes that are ph-homogeneous the \emph{Peano partial cubes}.

\subsection{Median graphs and netlike partial cubes}\label{SS:applications}

Theorem~\ref{T:Peano/ph} is useful to prove particular properties for some special partial cubes with pre-hull number at most $1$, such as median graphs and netlike partial cubes.  We first enlarge the long list of characterizations of median graphs by adding new ones to the result \cite[Theorem 7.7]{PS09}.  We recall that a graph $G$ is \emph{modular} if $\bigcap_{1 \leq i < j \leq 3}I_G(x_i,x_j) \neq \emptyset$ for every triple $(x_1,x_2,x_3)$ of vertices of $G$.  Each element of this intersection is called a \emph{median} of $G$.  A \emph{median graph} is a modular graph in which every triple of vertices has a unique median.

\begin{thm}\label{T:mod.gr./ph=1/Peano}
Let $G$ be a connected modular graph.  Then the following assertions 
are equivalent:

\textnormal{(i)}\; $G$ is a median graph.

\textnormal{(ii)}\; $ph(G) \leq 1$.

\textnormal{(iii)}\; The restriction of the relation $\Theta$ to the
edge-boundary of any convex set of $G$ is transitive.

\textnormal{(iv)}\; $G$ is join-hull commutative.

\textnormal{(v)}\; $G$ has the Peano Property.

\textnormal{(vi)}\; $G$ has the Pash Property.

\textnormal{(vii)}\; $G$ has the separation property $\mathrm{S}_{4}$.

\textnormal{(viii)}\; All intervals of $G$ are convex.
\end{thm}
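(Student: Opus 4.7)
The plan is to reduce most of the equivalences to the already established Theorem~\ref{T:Peano/ph} via the fact that every modular graph is connected and bipartite. I would begin by noting that the equivalences \textnormal{(i)} $\Leftrightarrow$ \textnormal{(ii)} and \textnormal{(i)} $\Leftrightarrow$ \textnormal{(iv)} are already contained in \cite[Theorem 7.7]{PS09}, so only \textnormal{(iii)}, \textnormal{(v)}, \textnormal{(vi)}, \textnormal{(vii)}, \textnormal{(viii)} need new arguments. My first step is to establish the classical Bandelt--Mulder equivalence \textnormal{(i)} $\Leftrightarrow$ \textnormal{(viii)}: a modular graph is a median graph if and only if every interval is convex. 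For the forward direction, Proposition~\ref{P:med.gr./loc.periph.} gives convexity of each $U_{ab}$ in a median graph, and this propagates, by a standard induction along geodesics that uses modularity to furnish medians, to convexity of arbitrary intervals; for the converse, interval monotonicity in a modular graph rules out two distinct medians because two such medians would produce a $4$-cycle one of whose diagonals violates the convexity of an interval between two of $u,v,w$.

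Once \textnormal{(i)} $\Leftrightarrow$ \textnormal{(viii)} is in place, assuming any one of \textnormal{(i)}, \textnormal{(ii)}, \textnormal{(viii)} makes $G$ interval monotone and Theorem~\ref{T:Peano/ph} immediately supplies the equivalence of \textnormal{(iv)} and \textnormal{(v)} with the property ``ph-homogeneous partial cube'', as well as with the conjunctions ``Pash $+\ ph(G)\leq 1$'' and ``$\mathrm{S}_4\ + \ ph(G)\leq 1$''. Since \textnormal{(i)} $\Leftrightarrow$ \textnormal{(ii)} is the condition $ph(G)\leq 1$, the two ``$+\ ph \leq 1$'' riders are free under the median hypothesis and collapse to \textnormal{(vi)} and \textnormal{(vii)} respectively, giving \textnormal{(i)} $\Rightarrow$ \textnormal{(v)},\textnormal{(vi)},\textnormal{(vii)}. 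For the converses, the idea is that each of \textnormal{(v)}, \textnormal{(vi)}, \textnormal{(vii)}, applied to the bipartite modular graph $G$, forces every $W_{ab}$ to be convex: $\mathrm{S}_4$ does so by separating a suitable pair of disjoint convex sets by a half-space (and hence by some $W_{ab}$), Pash is equivalent to $\mathrm{S}_4$ for graph convexities by \cite{Che86} as noted in the text, and the Peano property gives join-hull commutativity which combined with modularity yields the same conclusion. Convexity of every $W_{ab}$ then makes $G$ a partial cube by Theorem~\ref{T:Djokovic-Winkler}, whereupon modularity plus the uniqueness of medians in a partial cube (Lemma~\ref{L:gen.propert.}(i)) forces $G$ to be a median graph.

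The main obstacle is the equivalence involving \textnormal{(iii)}. The easy direction \textnormal{(i)} $\Rightarrow$ \textnormal{(iii)} is immediate, since a median graph is a partial cube and in a partial cube $\Theta$ is globally transitive on $E(G)$, hence a fortiori on the edge-boundary of any convex set. The hard direction \textnormal{(iii)} $\Rightarrow$ \textnormal{(i)} I would attack by bootstrapping the hypothesis into global transitivity of $\Theta$ on $E(G)$: given a chain $e_1\ \Theta\ e_2\ \Theta\ e_3$, the goal is to exhibit a single convex set $C$ that has each $e_i$ on its edge-boundary, so that the local transitivity supplied by \textnormal{(iii)} forces $e_1\ \Theta\ e_3$. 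A natural candidate is the convex hull of a well-chosen union of medians (available by modularity) of the endvertices of the $e_i$ lying on one "side" of $e_2$; the delicate point is verifying that this set really is convex and that all three edges do cross its boundary with the required orientations. Once $\Theta$ is transitive on $E(G)$, Theorem~\ref{T:Djokovic-Winkler} makes $G$ a partial cube, and modularity together with Lemma~\ref{L:gen.propert.}(i) closes the loop to \textnormal{(i)}.
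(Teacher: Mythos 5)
There is a genuine gap, and it sits exactly where you flagged it. The implication \textnormal{(iii)} $\Rightarrow$ \textnormal{(i)} is not something you can leave as a sketch: your plan to bootstrap local transitivity of $\Theta$ on edge-boundaries of convex sets into global transitivity hinges on producing, for a chain $e_1\,\Theta\,e_2\,\Theta\,e_3$, a single convex set having all three edges on its boundary with the right orientations, and you explicitly leave that verification open. That is the entire content of the implication, and without it the equivalence of \textnormal{(iii)} with the rest is unproved. The paper avoids this altogether: it cites \cite[Theorem 7.7]{PS09} for the equivalence of \textnormal{(i)}, \textnormal{(ii)} \emph{and} \textnormal{(iii)} (not of \textnormal{(i)}, \textnormal{(ii)} and \textnormal{(iv)}, as you assume), so \textnormal{(iii)} costs nothing there. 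Your misreading of what that citation covers is what forces you to reprove \textnormal{(iii)} from scratch, and the reproof is incomplete.

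A second, smaller gap: in your converse direction for \textnormal{(v)} you assert that the Peano Property, i.e.\ join-hull commutativity, ``combined with modularity yields'' convexity of every $W_{ab}$, but no argument is given. The paper instead gets the equivalence of \textnormal{(v)}, \textnormal{(vi)} and \textnormal{(viii)} for modular graphs directly from \cite[Theorem 6.10]{V93}, the equivalence \textnormal{(vi)} $\Leftrightarrow$ \textnormal{(vii)} from \cite{Che86-1}, and \textnormal{(iv)} $\Leftrightarrow$ \textnormal{(v)} from \cite[Theorem 4.11]{V93}; it then closes the cycle with two short original implications, \textnormal{(i)} $\Rightarrow$ \textnormal{(iv)} (via Theorem~\ref{T:Peano/ph}, since every convex subgraph of a median graph is median and hence has pre-hull number at most $1$) and \textnormal{(iv)} $\Rightarrow$ \textnormal{(ii)} (via interval convexity). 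Your global strategy of funnelling everything through interval monotonicity and Theorem~\ref{T:Peano/ph} is sound in outline and close to the paper's, but as written the proof of \textnormal{(iii)} $\Rightarrow$ \textnormal{(i)} is missing and the \textnormal{(v)} $\Rightarrow$ \textnormal{(viii)} step is unsupported.
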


\begin{proof}
The equivalences of the assertions (i) -- (iii) are those of \cite[Theorem 7.7]{PS09}, the equivalences of the assertions (v), (vi) and (viii) are consequences of \cite[Theorem 6.10]{V93},  the equivalence of (vi) and (vii) was proved in \cite{Che86-1}, and the equivalence of (iv) and (v) is \cite[Theorem 4.11]{V93}.

(i) $\Rightarrow$ (iv):\; If $G$ is a median graph, then so is any of its convex subgraphs.  Hence, by the equivalence of (iii) and (i), the pre-hull number of any convex subgraph of $G$ is at most $1$.  Therefore $G$ is join-hull commutative by Theorem~\ref{T:Peano/ph}.

(iv) $\Rightarrow$ (ii):\; Assume that $G$ is join-hull commutative.  By the equivalence of (iv) and (viii), all intervals of $G$ are convex.  Hence the Join-Hull Commutativity Property implies that $ph(G) \leq 1$.
\end{proof}

By~\cite{B82}, a partial cube $G$ is a median graph if, for every edge $ab$ of $G$, the sets $U_{ab}$ and $U_{ba}$ are convex.  More generally, a partial cube $G$ is a \emph{netlike partial cube}~\cite{P05-1} if, for every edge $ab$ of $G$, any cycle of $G[co_G(U_{ab})]$ (resp. $G[co_G(U_{ba})]$) is a cycle of $G[U_{ab}]$ (resp. $G[U_{ba}]$).

The class of netlike partial cubes contains median graphs, even cycles, cellular bipartite graphs and benzenoid graphs as particular instances.  We recall some characterizations of netlike partial cubes.

\begin{pro}\label{P:charact.(netlike)}\textnormal{(Polat~\cite[Theorems
3.8 and 3.10]{P05-1})} Let $G$ be a partial cube.  Then $G$ is netlike if and only if it satisfies any of
the pairs \textnormal{(i)(ii)} or \textnormal{(ii)(iii)} of the following three properties:

\textnormal{(i)}\; $ph(G) \leq 1$.

\textnormal{(ii)}\; For any edge $ab$ of $G$, each vertex in 
$co_{G}(U_{ab})-U_{ab}$ has degree $2$ in $G[co_{G}(U_{ab})]$.

\textnormal{(iii)}\; The convex hull of each non-convex isometric cycle
of $G$ is a hypercube.
\end{pro}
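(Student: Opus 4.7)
The plan is to verify the two biconditionals in turn. I would first handle the forward direction, showing that $G$ netlike implies each of the three properties. Property (i) is equivalent, by Proposition~\ref{P:charact.p.c.ph}, to the ph-stability of $U_{ab}$ and $U_{ba}$ for every edge $ab$; any failure of ph-stability would let one build a cycle of $G[co_{G}(U_{ab})]$ passing through a vertex outside $U_{ab}$, contradicting the netlike definition. Given (i), Lemma~\ref{L:ph-stable} identifies $co_{G}(U_{ab}) = \mathcal{I}_{G}(U_{ab})$, so every $v \in co_{G}(U_{ab}) - U_{ab}$ lies on an $(x,y)$-geodesic with endpoints in $U_{ab}$, giving it at least two neighbors in $G[co_{G}(U_{ab})]$; a third such neighbor would close a cycle through $v$ inside $G[co_{G}(U_{ab})]$, again violating netlike, which establishes (ii). For (iii), given a non-convex isometric cycle $C$, I would inspect the $\Theta$-classes crossed by $C$ and use the strong netlike constraint on each $G[co_{G}(U_{ab})]$ (together with Lemma~\ref{L:edge/co(H)}) to show that $co_{G}(C)$ is the Cartesian product of copies of $K_{2}$, i.e.\ a hypercube.

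For the converse (i)\;+\;(ii)\;$\Rightarrow$\;netlike, I would argue by contradiction. Fix an edge $ab$ and a cycle $C \subseteq G[co_{G}(U_{ab})]$ containing some $v \in co_{G}(U_{ab}) - U_{ab}$. By (ii), $v$ has exactly two neighbors $v_{1}, v_{2}$ in $G[co_{G}(U_{ab})]$, so these are its two neighbors on $C$. By (i) and Lemma~\ref{L:ph-stable}, $v$ lies on some $(x,y)$-geodesic $P$ with $x, y \in U_{ab}$, and $v_{1}, v_{2}$ must be the neighbors of $v$ on $P$; hence $vv_{1}$ and $vv_{2}$ are not $\Theta$-equivalent by Lemma~\ref{L:gen.propert.}(v). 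Applying the same observation at every vertex of $C$ lying outside $U_{ab}$, combined with Lemma~\ref{L:gen.propert.}(vi) (every edge of $C$ is $\Theta$-equivalent to another edge of $C$) and the transitivity of $\Theta$ from Theorem~\ref{T:Djokovic-Winkler}, yields a $\Theta$-matching on the edges of $C$ that is incompatible with the constraints around $v$, a contradiction. A symmetric argument handles $U_{ba}$, so $G$ is netlike.

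For (ii)\;+\;(iii)\;$\Rightarrow$\;netlike, the cleanest route is to first deduce (i) from (ii) and (iii) and then invoke the preceding paragraph. If ph-stability of $U_{ab}$ fails, then by Corollary~\ref{C:Wab=IGab}-type reasoning there is a cycle of $G[co_{G}(U_{ab})]$ passing through some $v \in co_{G}(U_{ab}) - U_{ab}$; from it I would extract an isometric subcycle $C'$. If $C'$ is convex, the degree-$2$ constraint of (ii) at $v$, together with $C'$ being shortest, forces the $\Theta$-classes of $C'$'s edges to align with a geodesic structure landing in $U_{ab}$, so $v \in U_{ab}$, a contradiction. If $C'$ is non-convex, (iii) makes $co_{G}(C')$ a hypercube $Q \subseteq G[co_{G}(U_{ab})]$; but then every vertex of $Q$ has uniform degree $\dim(Q) \geq 2$ in $Q \subseteq G[co_{G}(U_{ab})]$, and a vertex of $V(Q) - U_{ab}$ violates (ii) unless $\dim(Q) = 2$, in which case direct inspection of a $4$-cycle shows all of its vertices must already lie in $U_{ab}$. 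Either branch produces the contradiction, yielding (i).

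The main obstacle is the $\Theta$-class bookkeeping in both converse directions, and in particular the extraction of an isometric subcycle in the (ii)\;+\;(iii) case together with the reconciliation of the local degree-$2$ constraint of (ii) with the global hypercube structure supplied by (iii); once the $\Theta$-equivalences along $C$ are correctly tracked, all the remaining pieces follow from standard partial-cube facts collected in Section~\ref{S:prelim.}.
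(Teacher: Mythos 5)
The paper does not actually prove Proposition~\ref{P:charact.(netlike)}: it is imported verbatim from \cite[Theorems 3.8 and 3.10]{P05-1}, so there is no in-paper argument to compare yours against, and your sketch has to stand on its own. It does not. The decisive failure is in your proof of (i)$+$(ii) $\Rightarrow$ netlike. After correctly observing that the two neighbours $v_1,v_2$ of $v$ on the cycle $C$ must coincide with the neighbours of $v$ on a geodesic $P$ joining two vertices of $U_{ab}$, you deduce that ``$vv_1$ and $vv_2$ are not $\Theta$-equivalent by Lemma~\ref{L:gen.propert.}(v)'' and propose to reach a contradiction from a ``$\Theta$-matching on the edges of $C$ incompatible with the constraints around $v$''. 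But in a bipartite graph two distinct adjacent edges are \emph{never} $\Theta$-equivalent (for $v_1\neq v_2$ one has $d(v_1,v)+d(v,v_2)=2=d(v_1,v_2)+d(v,v)$), so the statement ``$vv_1\not\Theta vv_2$'' carries no information, and Lemma~\ref{L:gen.propert.}(vi) plus transitivity of $\Theta$ is entirely consistent with such a cycle existing --- every even cycle realizes exactly this pattern. No contradiction is produced; the real content of this implication (propagating the local coincidence of $C$ with geodesics from vertex to vertex until $C$ is forced either to leave $co_G(U_{ab})$ or to violate (ii) at some vertex) is precisely what is missing.

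The other three implications are asserted rather than proved, and the assertions are not innocuous. For netlike $\Rightarrow$ (i) you claim that a failure of ph-stability ``would let one build a cycle of $G[co_G(U_{ab})]$ through a vertex outside $U_{ab}$''; but the cycle one naturally closes from a $U_{ab}$-geodesic through such a vertex uses the neighbours in $U_{ba}$ and therefore leaves $co_G(U_{ab})$, so it does not contradict the netlike definition --- this implication is the substance of \cite[Theorem 3.8]{P05-1} and is not a one-line consequence. For netlike $\Rightarrow$ (ii), the claim that a third neighbour $w$ of $v$ in $co_G(U_{ab})$ ``closes a cycle through $v$'' presupposes that $v$ does not separate $w$ from $v_1,v_2$ in the (connected) graph $G[co_G(U_{ab})]$, which needs an argument (e.g.\ ph-stability applied to the pair $(v,w)$). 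Netlike $\Rightarrow$ (iii) is left entirely to ``inspecting the $\Theta$-classes''. Finally, in (ii)$+$(iii) $\Rightarrow$ (i) the production of a cycle from a ph-stability failure has the same unproved step as above, and the two terminal claims (that a convex isometric subcycle ``lands in $U_{ab}$'', and that a $4$-cube-free $Q$ must have all vertices in $U_{ab}$ by ``direct inspection'') are exactly the points where the work lies. The overall architecture --- prove all three properties from netlike, then the two converses --- is reasonable, but each of the four nontrivial implications is missing its key step, and one of them rests on a vacuous deduction.
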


By \cite[Proposition 4.4]{P05-2}, any convex subgraph of a netlike partial cube is also netlike, and thus has a pre-hull number which is at most $1$ by Proposition~\ref{P:charact.(netlike)}(i).  Hence a netlike partial cube is a Peano partial cube.  Consequently we have:

\begin{thm}\label{T:netlike/JHC}
Any netlike partial cube is join-hull commutative, and thus has the Peano and Pash Properties, and the separation property $\mathrm{S}_{4}$.
\end{thm}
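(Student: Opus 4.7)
The plan is to derive this as a direct consequence of the already-established equivalence between netlike partial cubes and Peano partial cubes, together with the characterization of Peano partial cubes in Theorem~\ref{T:Peano/ph}.

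First, I would assemble the observation that is essentially made in the paragraph immediately preceding the statement: if $G$ is a netlike partial cube, then by the cited \cite[Proposition 4.4]{P05-2} every convex subgraph of $G$ is itself netlike, and by Proposition~\ref{P:charact.(netlike)}(i) every netlike partial cube has pre-hull number at most $1$. Combining these two facts, every finite convex subgraph of $G$ has pre-hull number at most $1$, so $G$ is ph-homogeneous, i.e., a Peano partial cube in the terminology adopted at the end of Subsection~\ref{SS:conv.prop.}.

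Second, I would check that the hypothesis of Theorem~\ref{T:Peano/ph} is met, namely that $G$ is an interval monotone bipartite graph. Bipartiteness is immediate from $G$ being a partial cube, and interval monotonicity follows from Lemma~\ref{L:gen.propert.}(ii), which says that every interval of a partial cube is convex.

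Finally, applying Theorem~\ref{T:Peano/ph} to $G$: since (iii) holds for $G$ (being a Peano partial cube), all the other equivalent assertions hold as well. In particular (ii) gives join-hull commutativity, (i) gives the Peano Property, (vi) gives the Pash Property, and (v) gives the separation property $\mathrm{S}_4$, which is exactly what is claimed.

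There is no real obstacle here: the theorem is a two-line deduction once one notes the stability of the netlike property under taking convex subgraphs and feeds it into Theorem~\ref{T:Peano/ph}. The only thing to be careful about is not to cite Theorem~\ref{T:Peano/ph} against a single netlike partial cube without first observing the hereditary property on convex subgraphs, since ph-homogeneity is defined via \emph{all} finite convex subgraphs rather than via $G$ alone.
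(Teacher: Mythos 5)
Your proposal is correct and follows essentially the same route as the paper: the paper's own justification is precisely the two-sentence remark preceding the theorem (convex subgraphs of netlike partial cubes are netlike by \cite[Proposition 4.4]{P05-2}, hence have pre-hull number at most $1$ by Proposition~\ref{P:charact.(netlike)}(i), so $G$ is ph-homogeneous), followed by an implicit appeal to Theorem~\ref{T:Peano/ph}. Your additional check that the hypotheses of Theorem~\ref{T:Peano/ph} hold (bipartiteness and interval monotonicity via Lemma~\ref{L:gen.propert.}(ii)) is a sound, if routine, piece of diligence that the paper leaves tacit.
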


This theorem generalizes \cite[Theorem 4.1]{P07-3}, where this property was proved only for the particular netlike partial cubes that were said to have the \textquotedblleft Median Cycle Property\textquotedblright.  We will use this result to obtain the Helly number of a netlike partial cube.

\subsection{Elementary Peano partial cubes}\label{SS:elem.ph-homog.}

We denote by
$\mathbf{C}$ the class of even cycles of length greater than $4$.  The
Cartesian product of a finite family of even cycles such that
\emph{the length of at least one of them is greater than $4$} is
called a \emph{hypertorus}.  In particular an even cycle of length greater than $4$
is a hypertorus, and more precisely a $1$-torus, whereas a $4$-cycle
and more generally a $2n$-cube will not be considered as a hypertorus.
By the \emph{prism over a hypertorus} we mean
the Cartesian product of a hypertorus with $K_{2}$.  If $n$ is a positive integer, we call 
an hypertorus which is the Cartesian product of $n$ cycles an \emph{$n$-torus}, and
a prism over a $n$-torus an \emph{$n$-prism}.  Furthermore, by a \emph{quasi-hypertorus} we mean either a hypercube or a hypertorus or the prism over a hypertorus, that is, the Cartesian products of two-vertex complete graphs and even cycles.  We denote by $\mathbf{Tor}$ the class of all quasi-hypertori.  
Finally, the Cartesian product of an even cycle of length greater than $4$ with a path is called a \emph{cylinder}, and more generally the Cartesian product of an even cycle of length greater than $4$ with a Peano partial cube is called a \emph{hypercylinder}, and the class of all hypercylinders is denoted by $\mathbf{Cyl}$.

Quasi-hypertori are particular hypercylinders.  Because of Theorem~\ref{T:hypernet./gat.amalg.+cart.prod.} and the fact that even cycles are ph-homogeneous, we infer that \emph{quasi-hypercylinders are Peano partial cubes}.

\section{Characteristic and general properties}\label{S:charac.prop.}

Any properties of Theorem~\ref{T:Peano/ph} characterizes Peano partial cubes, but none of them gives some useful piece of information on the structure of these graphs.  In this section we give six structural characterizations of Peano partial cubes (Theorems~\ref{T:charact.}) which will be essential to obtain most of the properties of these graphs  in the subsequent sections.\\

\subsection{The characterization theorem}\label{SS:charact.thm.}

If $A$ is a set of vertices of a graph $G$, then, by an \emph{$A$-path} of $G$ we mean a path of $G$ of length at least $2$ joining two vertices in $A$ and with no inner vertex in $A$.

\begin{defn}\label{D:strong.ph-stable}
Let $ab$ be a edge of some partial cube $G$.  Then $U_{ab}$ is said to be \emph{strongly ph-stable} if, for any vertex $u \in \mathcal{I}_G(U_{ab})-U_{ab}$, there exists a convex $U_{ab}$-path $P_u$ which passes through $u$ and which satisfies the following two properties:

(SPS1)\; For every $x \in \mathcal{I}_{G}(U_{ab})$,\; $u \in I_G(x,v)$ for some endvertex $v$ of $P_u$.

(SPS2)\; For all vertices $x, y \in U_{ab}$ such that $u \in I_G(x,y)$,\; $P_u$ is a subpath of some $(x,y)$-geodesic.
\end{defn}

By (SPS1), $U_{ab}$ is ph-stable if it is strongly ph-stable.  The converse is clearly not true.  However we have the following results.

\begin{lem}\label{L:SPS2=>SPS1}
Let $G$ be a partial cube, $ab$ an edge of $G$ such that $U_{ab}$ is ph-stable, and $P_u$ a convex $U_{ab}$-geodesic which passes through a given vertex $u \in \mathcal{I}_G(U_{ab})-U_{ab}$.  If $P_u$ satisfies \textnormal{(SPS2)}, then it also satisfies \textnormal{(SPS1)}.
\end{lem}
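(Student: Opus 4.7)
The plan is to take any $x \in \mathcal{I}_G(U_{ab})$ and, using the ph-stability of $U_{ab}$, produce a geodesic $R$ with both endpoints in $U_{ab}$ that passes through both $u$ and $x$. (If $x=u$ the conclusion is trivial, since $u \in I_G(u,v)$ for any endvertex $v$ of $P_u$.) By Definition~\ref{D:ph-stable}, such a geodesic $R$ exists; write its endvertices as $a'',b'' \in U_{ab}$, and, after swapping $a''$ and $b''$ if necessary, assume that along $R$ the vertex $x$ precedes $u$. Then $u \in I_G(x,b'')$, so
\begin{equation*}
d_G(x,b'') = d_G(x,u) + d_G(u,b'').
\end{equation*}

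Next I would invoke hypothesis (SPS2). Since $u \in I_G(a'',b'')$ with $a'',b'' \in U_{ab}$, (SPS2) supplies an $(a'',b'')$-geodesic $R'$ that contains $P_u$ as a subpath. Let $v_0,v_1$ be the endvertices of $P_u$ ordered along $R'$ so that the order of traversal is $a'',\ldots,v_0,\ldots,u,\ldots,v_1,\ldots,b''$. Because $R'$ is a geodesic, the subpath from $u$ to $b''$ is also a geodesic, hence
\begin{equation*}
d_G(u,b'') = d_G(u,v_1) + d_G(v_1,b'').
\end{equation*}

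The final step is pure bookkeeping: combining the two equalities gives $d_G(x,b'') = d_G(x,u) + d_G(u,v_1) + d_G(v_1,b'')$, while the triangle inequality yields $d_G(x,b'') \leq d_G(x,v_1) + d_G(v_1,b'')$. Subtracting $d_G(v_1,b'')$ produces $d_G(x,v_1) \geq d_G(x,u) + d_G(u,v_1)$, and the reverse inequality is the usual triangle inequality, so equality holds and $u \in I_G(x,v_1)$. Taking $v := v_1$ (or $v_0$ in the symmetric case where $u$ precedes $x$ along $R$) proves (SPS1). The only subtle point is the very first move—using ph-stability to force $u$ and $x$ onto a common $U_{ab}$-to-$U_{ab}$ geodesic—since after that the conclusion is extracted simply by matching the distance identities coming from $R$ and $R'$.
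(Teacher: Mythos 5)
Your proof is correct and follows essentially the same route as the paper's: use ph-stability to place $u$ and $x$ on geodesics between vertices of $U_{ab}$, invoke (SPS2) to splice in $P_u$, and conclude that $u$ lies between $x$ and an endvertex of $P_u$. The only cosmetic difference is that the paper verifies the resulting concatenation is a geodesic via a $\Theta$-class argument (Lemma~\ref{L:gen.propert.}(iv,v)), whereas you extract the same betweenness relation by adding the two distance identities and applying the triangle inequality.
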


\begin{proof}
Let $x \in \mathcal{I}_G(U_{ab})$.  Because $U_{ab}$ is ph-stable, $x \in I_G(u,y)$ for some $y \in U_{ab}$, and also $u \in I_G(y,z)$ for some $z \in U_{ab}$.  Then, by (SPS2), there exists a $(y,z)$-geodesic $Q$ which contains $P_u$ as a subpath.  Let $v$ be the endvertex of $P_u$ which lies in $Q[u,z]$, and let $R$ be a $(y,u)$-geodesic passing through $x$.  Then, by Lemma~\ref{L:gen.propert.}(iv,v), $R \cup Q[u,v]$ is an $(x,v)$-geodesic which passes through $u$, and thus $u \in I_G(x,v)$.
\end{proof}

We obtain immediately:

\begin{cor}\label{C:ph-stab.+SPS2=>strong.ph-stab.}
Let $G$ be a partial cube, $ab$ an edge of $G$ such that $U_{ab}$ is ph-stable.  Then $U_{ab}$ is strongly ph-stable if and only if, for any vertex $u \in \mathcal{I}_G(U_{ab})-U_{ab}$, there exists a convex $U_{ab}$-path $P_u$ which passes through $u$ and which satisfies \textnormal{(SPS2)}.
\end{cor}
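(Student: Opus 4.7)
The statement is essentially a repackaging of Lemma~\ref{L:SPS2=>SPS1}, so the plan is short and the main task is just to verify that no extra argument is needed beyond invoking the lemma.

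The plan is to handle the two directions separately. The forward direction is immediate from Definition~\ref{D:strong.ph-stable}: if $U_{ab}$ is strongly ph-stable, then by definition, for every $u \in \mathcal{I}_G(U_{ab})-U_{ab}$ there is a convex $U_{ab}$-path $P_u$ through $u$ satisfying both (SPS1) and (SPS2); in particular $P_u$ satisfies (SPS2), which gives the required conclusion.

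For the converse, I would simply combine the hypothesis with Lemma~\ref{L:SPS2=>SPS1}. Assume $U_{ab}$ is ph-stable and that for every $u \in \mathcal{I}_G(U_{ab})-U_{ab}$ there exists a convex $U_{ab}$-path $P_u$ through $u$ satisfying (SPS2). Before invoking the lemma I would point out that a convex $U_{ab}$-path is automatically a $U_{ab}$-geodesic, since the convexity of $P_u$ forces $I_G(p,q) \subseteq V(P_u)$ for its endvertices $p, q$, and hence $P_u$ itself must be a geodesic between them; this matches the hypothesis of Lemma~\ref{L:SPS2=>SPS1}. Applying the lemma then shows that $P_u$ also satisfies (SPS1), so $P_u$ satisfies both (SPS1) and (SPS2), and $U_{ab}$ is strongly ph-stable by Definition~\ref{D:strong.ph-stable}.

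There is no real obstacle here: the content is entirely in Lemma~\ref{L:SPS2=>SPS1}, and the only minor bookkeeping point is the harmless translation between ``convex $U_{ab}$-path'' in Definition~\ref{D:strong.ph-stable} and ``convex $U_{ab}$-geodesic'' in Lemma~\ref{L:SPS2=>SPS1}. A two- or three-line proof should suffice.
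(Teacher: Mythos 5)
Your proof is correct and is exactly the paper's intended argument: the paper states this corollary as an immediate consequence of Lemma~\ref{L:SPS2=>SPS1}, with the forward direction being trivial from Definition~\ref{D:strong.ph-stable}. One small caveat: your justification that a convex $U_{ab}$-path is automatically a geodesic (``$I_G(p,q)\subseteq V(P_u)$ hence $P_u$ is a geodesic'') is not valid in general (a path traversing a convex $4$-cycle is a counterexample), but the conclusion still holds here because (SPS2) itself exhibits $P_u$ as a subpath of a geodesic, so the application of the lemma goes through.
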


Let $G$ be a partial cube, and $\mathbf{H}$ a class of graphs.  Then any convex subgraph of $G$ that belongs to $\mathbf{H}$
is called an \emph{$\mathbf{H}$-subgraph}
 of $G$, and the set of all $\mathbf{H}$-subgraphs of $G$ is denoted by $\mathbf{H}(G)$.  Furthermore, for
any edge $ab$ of $G$, we denote by $\mathbf{H}(G,ab)$ the set of all
$\mathbf{H}$-subgraphs of $G$ that have an edge $\Theta$-equivalent
to $ab$.

\begin{defn}\label{D:bulge}
Let $ab$ be an edge of a partial cube $G$, and $A$ a
component of $G[co_G(U_{ab})-U_{ab}]$.  Then the subgraph of $G$
induced by $N_{G[co_G(U_{ab})]}[V(A)]$ is called a \emph{bulge} of $co_G(U_{ab})$.
\end{defn}

\begin{thm}[\textbf{Characterization Theorem}]\label{T:charact.}
Let $G$ be a partial cube.  The following assertions are equivalent:

\textnormal{(i)}\; $G$ is ph-homogeneous.

\textnormal{(ii)}\; $U_{ab}$ and $U_{ba}$ are strongly ph-stable for every edge $ab$ of $G$.

\textnormal{(iii)}\; For each edge $ab$ of $G$ and any bulge $X$ of $co_{G}(U_{ab})$ (resp. $co_G(U_{ba})$), we have the following two properties:

\hspace{5pt} \textnormal{(HNB1)}\; There exists a convex $H \in \mathbf{Cyl}(G,ab)$ such that $X = H-W_{ba}$ (resp. $X = H-W_{ab}$).

\hspace{5pt} \textnormal{(HNB2)}\; $X-U_{ab}$ (resp. $X-U_{ba}$) is a separator of $G_{ab} := G[co_{G}(U_{ab})]$ (resp. $G_{ba}$).

\textnormal{(iv)}\; For each edge $ab$ of $G$ and any bulge $X$ of $co_{G}(U_{ab})$ (resp. $co_G(U_{ba})$), there exists a gated $H \in \mathbf{Cyl}(G,ab)$ such that $X =
H-W_{ba}$ (resp. $X = H-W_{ab}$).

\textnormal{(v)}\; For each edge $ab$ of $G$, any vertex $u \in \mathcal{I}_G(U_{ab})-U_{ab}$ (resp. $u \in \mathcal{I}_G(U_{ba})-U_{ba}$) lies on a gated cycle $C_u \in \mathbf{C}(G,ab)$.

\textnormal{(vi)}\; For each edge $ab$ of $G$, any vertex $u \in \mathcal{I}_G(U_{ab})-U_{ab}$ (resp. $u \in \mathcal{I}_G(U_{ba})-U_{ba}$) lies on an isometric cycle $C_u \in \mathbf{C}(G,ab)$, and the convex hull of any isometric cycle of a $G$ is a gated quasi-hypertorus.
\end{thm}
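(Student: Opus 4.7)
The plan is to close the cycle of implications \textnormal{(i)} $\Rightarrow$ \textnormal{(ii)} $\Rightarrow$ \textnormal{(iii)} $\Rightarrow$ \textnormal{(iv)} $\Rightarrow$ \textnormal{(v)} $\Rightarrow$ \textnormal{(vi)} $\Rightarrow$ \textnormal{(i)}, relying throughout on facts already in hand: a ph-homogeneous partial cube has $ph(G)\le 1$ (Proposition~\ref{P:hypernet./ph1}), is join-hull commutative with $co_G(U_{ab})=\mathcal{I}_G(U_{ab})$ (Theorem~\ref{T:Peano/ph} and Lemma~\ref{L:ph-stable}), and satisfies the Peano Property. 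The backbone of the argument is to upgrade ph-stability progressively to a Cartesian-product description of each bulge, and then to recover ph-homogeneity from that local product structure via the closure results of Corollary~\ref{C:closure} and Theorem~\ref{T:hypernet./gat.amalg.+cart.prod.}.

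For \textnormal{(i)}$\Rightarrow$\textnormal{(ii)} I would argue as follows. Given $u\in\mathcal{I}_G(U_{ab})-U_{ab}$, ph-stability provides $x,y\in U_{ab}$ with $u\in I_G(x,y)$; apply the Peano Property to the triple $(x,y,b)$ (with $x',y'\in U_{ba}$ the $\Theta$-partners of $x,y$) to extract a convex $U_{ab}$-subpath $P_u$ of some $(x,y)$-geodesic through $u$, then invoke Lemma~\ref{L:SPS2=>SPS1} (and Corollary~\ref{C:ph-stab.+SPS2=>strong.ph-stab.}) to obtain \textnormal{(SPS1)} for free. For \textnormal{(ii)}$\Rightarrow$\textnormal{(iii)}, fix a bulge $X$ of $co_G(U_{ab})$. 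Using the convex $U_{ab}$-paths $P_u$ from strong ph-stability, I would glue them together along $U_{ab}\cap X$: each $P_u$ together with the matching $\Theta$-partner in $W_{ba}$ closes into an even cycle of length $>4$, and the $4$-Cycle Property of Proposition~\ref{P:pro.Cartes.prod.}(iv) applied to adjacent $\Theta$-classes forces the bulge together with its $W_{ba}$-reflection to be the Cartesian product of such a cycle with a Peano partial cube, proving \textnormal{(HNB1)}. Property \textnormal{(HNB2)} follows because any path in $G_{ab}$ that leaves $X$ and returns must either remain in $U_{ab}$ or cross $X\cap U_{ab}$, as all non-$U_{ab}$ edges of $G_{ab}$ meeting $X$ are interior to the product.

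The step \textnormal{(iii)}$\Rightarrow$\textnormal{(iv)} is where I expect the main obstacle: one has to promote the convex hypercylinder $H$ to a gated one. The plan is to use the separator property \textnormal{(HNB2)} to reduce the computation of gates: any vertex $z\notin H$ either lies in $W_{ab}-V(H)$ (in which case $(X-U_{ab})$ blocks access to $V(H)\cap W_{ab}$, and the gate is found by projecting to $U_{ab}\cap V(H)$) or in $W_{ba}$ (in which case gating is dictated by the $\Theta$-partner edge). Finite polytopes of partial cubes are finite (Lemma~\ref{L:gen.propert.}(iii)), so it suffices to verify gatedness on finite convex enlargements and apply Lemma~\ref{L:gated.inter.convex}(ii). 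The hypercylinder structure itself is gated in each such finite enlargement by Proposition~\ref{P:pro.Cartes.prod.}(v) combined with gatedness of convex subgraphs of finite hypercylinders.

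The remaining steps are lighter. For \textnormal{(iv)}$\Rightarrow$\textnormal{(v)}, the gated hypercylinder $H$ through $u$ is a Cartesian product of a cycle $C\in\mathbf{C}$ with a partial cube, and the $C$-layer through $u$ is a gated cycle in $\mathbf{C}(G,ab)$. For \textnormal{(v)}$\Rightarrow$\textnormal{(vi)}, gated cycles are a fortiori isometric; the second clause is obtained by noting that any isometric cycle $C$ of length $>4$ has some edge $ab$ such that $C$ meets $\mathcal{I}_G(U_{ab})-U_{ab}$, so each of its vertices sits on a gated cycle, and an iterative application of Proposition~\ref{P:pro.Cartes.prod.}(iv) together with \textnormal{(v)} builds $co_G(C)$ as a Cartesian product of even cycles and copies of $K_2$, i.e., a gated quasi-hypertorus. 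Finally \textnormal{(vi)}$\Rightarrow$\textnormal{(i)} is handled by Proposition~\ref{P:fin.conv./ph1}: on each finite convex subgraph $F$ one expresses the set of cycles via gated quasi-hypertori and argues by gated amalgamation, using Corollary~\ref{C:closure} to conclude $ph(F)\le 1$.
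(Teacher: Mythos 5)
Your overall cycle of implications is reasonable in outline, and several of the lighter steps ((iv)$\Rightarrow$(v), the first clause of (v)$\Rightarrow$(vi)) match what the paper does. But there are genuine gaps at the two hardest points.

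The most serious one is (i)$\Rightarrow$(ii). You propose to extract the convex $U_{ab}$-path $P_u$ by a single application of the Peano Property to one triple and then invoke Lemma~\ref{L:SPS2=>SPS1}. That lemma only converts (SPS2) into (SPS1); it does not produce $P_u$. The content of (SPS2) is a \emph{uniform} statement: one and the same convex $U_{ab}$-path through $u$ must be a subpath of some $(x,y)$-geodesic for \emph{every} pair $x,y\in U_{ab}$ with $u\in I_G(x,y)$. The Peano Property applied to one triple gives information about one pair only, and nothing in your sketch explains why the paths obtained from different pairs coincide or why the resulting path is convex. The paper spends all of Subsection~\ref{SS:(i)<=>(ii)4.15} on exactly this: an induction on the isometric dimension using $\Theta$-contractions (Lemma~\ref{L:contract./local.hypernet.}), the tripod lemma (Lemma~\ref{L:tripod}), and the uniqueness statement Lemma~\ref{L:min.Uab-path=associated}. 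None of that machinery is replaceable by one use of the Peano axiom.

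The second gap is (vi)$\Rightarrow$(i). You propose to decompose each finite convex subgraph by gated amalgamation from quasi-hypertori and then apply Corollary~\ref{C:closure}. That decomposition exists only for \emph{tricycle-free} (hyper-median) Peano partial cubes (Theorem~\ref{T:tricycle/decomp.fin.hypernet.}); a general Peano partial cube, e.g.\ a benzenoid tricycle, admits no such decomposition, so the argument fails. The paper instead argues locally: by the first clause of (vi) the vertex $u$ lies on an isometric cycle whose convex hull $F$ is a gated quasi-hypertorus; since $F$ is itself a Peano partial cube, condition (v) inside $F$ yields a gated cycle through $u$, gated in $G$ because $F$ is; one then concludes via (v)$\Rightarrow$(ii)$\Rightarrow$(i). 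Relatedly, your route (v)$\Rightarrow$(vi) needs the second clause of (vi) (convex hulls of isometric cycles are gated quasi-hypertori), which the paper only proves \emph{after} ph-homogeneity is available (Theorem~\ref{T:hypernet./conv.hull(isom.cycle)} plus Theorem~\ref{T:conv.reg.H-subgr.=>gated}); deriving it directly from (v) alone, as you suggest, would need a separate argument you have not supplied. The safer order, which the paper follows, is to close the loop at (v)$\Rightarrow$(ii) and treat (vi) as a separate equivalence with (i).
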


Note the analogy between Proposition~\ref{P:charact.p.c.ph} and the equivalence of the assertions (i) and (ii) of the above theorem.

To prove this theorem, we need a lot of secondary results.

\subsection{Expansion and $\Theta$-contraction}\label{SS:expansion}

To prove the implication (i)$\Rightarrow$(ii) of Theorem~\ref{T:charact.} we need some basic properties of an expansion and of a $\Theta$-contraction of a graph, a concept which was introduced by Mulder~\cite{M78} to
characterize median graphs and which was later generalized by
Chepoi~\cite{Che88}.

\begin{defn}\label{D:proper cover} 
A pair $(V_{0},V_{1})$ of sets of vertices of a graph $G$ is called a
\emph{proper cover} of $G$ if it satisfies the following conditions:

\textbullet\;  $V_{0} \cap V_{1} \neq \emptyset$ and $V_{0} \cup V_{1} = 
V(G)$;

\textbullet\;  there is no edge between a vertex in $V_{0}-V_{1}$ and 
a vertex in $V_{1}-V_{0}$;

\textbullet\;  $G[V_{0}]$ and $G[V_{1}]$ are isometric subgraphs of 
$G$.
\end{defn}

\begin{defn}\label{D:expansion}
An \emph{expansion} of a graph $G$ with respect to a proper cover 
$(V_{0},V_{1})$ of $G$ is the subgraph  of $G \Box K_{2}$ induced by
the vertex set $(V_{0} \times \{0 \}) \cup (V_{1} \times \{1 \})$ (where
$\{0,1 \}$ is the vertex set of $K_{2}$).
\end{defn}

An expansion of a partial cube is a partial cube (see~\cite{Che88}).  If $G'$ is an
expansion of a partial cube $G$, then we say that $G$ is
a \emph{$\Theta$-contraction} of $G'$, because, as we can easily
see, $G$ is obtained from $G'$ by contracting each element of
some $\Theta$-class of edges of $G'$.  More precisely, let $G$ be a
partial cube different from $K_{1}$ and let $uv$ be an edge of $G$.
Let $G/uv$ be the quotient graph of $G$ whose vertex set $V(G/uv)$ is
the partition of $V(G)$ such that $x$ and $y$ belong to the same block
of this partition if and only if $x = y$ or $xy$ is an edge which is
$\Theta$-equivalent to $uv$.  The natural surjection $\gamma_{uv}$
of $V(G)$ onto $V(G/uv)$ is a contraction (weak homomorphism
in~\cite{HIK11}) of $G$ onto $G/uv$, that is, an application which maps
any two adjacent vertices to adjacent vertices or to a single vertex.
Then clearly the graph $G/uv$ is a partial cube and
$(\gamma_{uv}(W_{uv}^{G}),\gamma_{uv}(W_{vu}^{G}))$ is a proper cover
of $G/uv$ with respect to which $G$ is an expansion of $G/uv$.  We
will say that $G/uv$ is the \emph{$\Theta$-contraction of $G$ with
respect to the $\Theta$-class of $uv$}.

Let $G'$ be an expansion of a graph $G$ with respect to a
proper cover $(V_{0},V_{1})$ of $G$.  We will use the following
notation.

\textbullet\; For $i = 0, 1$ denote by $\psi_{i} : V_{i} \to V(G')$
the natural injection $\psi_{i} : x \mapsto (x,i)$, $x \in V_{i}$, and
let $V'_{i} := \psi_{i}(V_{i})$.  Note that $V'_{0}$ and $V'_{1}$ are
complementary half-spaces of $G'$.  It follows in particular that
these sets are copoints of $G'$.

\textbullet\; For any vertex $x$ of $G$ (resp.  $G'$),
denote by $i(x)$ an element of $\{0,1 \}$ such that $x$ belongs to
$V_{i(x)}$ (resp.  $V'_{i(x)}$).  If $x \in V(G')$ and also if $x \in 
V(G) - (V_{0} \cap V_{1})$, then $i(x)$ is unique; if $x \in V_{0} \cap
V_{1}$ it may be $0$ or $1$.

\textbullet\; For $A \subseteq V(G)$ put $$\psi(A) := \psi_{0}(A \cap V_{0})
\cup \psi_{1}(A \cap V_{1}).$$  Note that in the opposite direction we
have that for any $A' \subseteq V(G')$, $$\textnormal{pr}(A') =
\psi_{0}^{-1}(A' \cap V'_{0}) \cup
\psi_{1}^{-1}(A' \cap V'_{1}),$$ where $\textnormal{pr} : G \Box K_{2}
\to G$ is the projection $(x,i) \mapsto x$.

The following lemma is a restatement with more precisions of
\cite[Lemma 4.5]{P05-1}.

\begin{lem}\label{L:interv.}
Let $G$ be a connected bipartite graph and $G'$ an expansion of
$G$ with respect to a proper cover $(V_{0},V_{1})$ of
$G$, and let $P = \langle x_{0},\ldots,x_{n}\rangle$ be a path in 
$G$.  We have the following properties:

\textnormal{(i)}\; If $x_{0}, x_{n} \in V_{i}$ for some $i = 0$ or $1$, then:

\textbullet\;  if $P$ is a geodesic in $G$, then there exists an $(x_{0},x_{n})$-geodesic $R$ in $G[V_i]$ such that $V(P) \cap V_i \subseteq V(R)$;

\textbullet\;  $P$ is a geodesic in $G[V_i]$ if and only if $P' = \langle \psi_{i}(x_{0}),\ldots,\psi_{i}(x_{n}) \rangle$ is
a geodesic in $G'$;  

\textbullet\;  $d_{G'}(\psi_{i}(x_{0}),\psi_{i}(x_{n}))
= d_{G}(x_{0},x_{n})$;

\textbullet\;  $I_{G'}(\psi_{i}(x_{0}),\psi_{i}(x_{n})) =
\psi_{i}(I_{G[V_i]}(x_{0},x_{n})) \subseteq \psi(I_{G}(x_{0},x_{n}))$.

\textnormal{(ii)}\; If $x_0 \in V_i$ and $x_1 \in V_{1-i}$ for some $i = 0$ or $1$, then:

\textbullet\;  if there exists $p$ such that $x_0,\ldots,x_p \in V_i$ and $x_p,\ldots,x_n \in V_{1-i}$, then $P$ is a geodesic in $G$ if and only if the path $$P' = \langle
\psi_{i}(x_{0}),\ldots,\psi_{i}(x_{p}),\psi_{1-i}(x_{p}),\ldots,\psi_{1-i}(x_{n})
\rangle$$ is a geodesic in $G'$;

\textbullet\;  $d_{G'}(\psi_{i}(x_{0}),\psi_{1-i}(x_{n}))
= d_{G}(x_{0},x_{n}) + 1$;

\textbullet\;  $I_{G'}(\psi_{i}(x_{0}),\psi_{1-i}(x_{n})) =
\psi(I_{G}(x_{0},x_{n}))$.
\end{lem}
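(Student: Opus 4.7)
My plan is to work throughout with the bipartition of $E(G')$ into \emph{horizontal} edges (those of the form $\psi_i(x)\psi_i(y)$ with $xy \in E(G[V_i])$) and \emph{vertical} edges (those of the form $\psi_0(x)\psi_1(x)$ with $x \in V_0 \cap V_1$), and to exploit the projection $\pi\colon V(G') \to V(G)$, $(x,i) \mapsto x$. Every path $R$ of $G'$ projects to a walk $\pi(R)$ of $G$ of length equal to the number of horizontal edges in $R$, so that $|E(R)| = |E(\pi(R))| + v(R)$ with $v(R)$ counting the vertical edges; moreover $v(R)$ has the same parity as $|i-j|$ whenever $R$ runs from a vertex of $V'_i$ to a vertex of $V'_j$. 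A further structural observation I use repeatedly is that, since no edge of $G$ joins $V_0 - V_1$ to $V_1 - V_0$, every path in $G$ from $V_i$ to $V_{1-i}$ must visit a vertex of $V_0 \cap V_1$.

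I first establish the distance identities. In case (i), any $\psi_i(x_0)$--$\psi_i(x_n)$ path uses an even number of vertical edges, hence has length at least $|E(\pi(R))| \geq d_G(x_0,x_n)$; equality is attained by lifting via $\psi_i$ any $(x_0,x_n)$-geodesic of $G[V_i]$, which exists with length $d_G(x_0,x_n)$ since $G[V_i]$ is isometric in $G$. In case (ii), the number of vertical edges is odd and hence at least $1$, giving $d_{G'}(\psi_i(x_0),\psi_{1-i}(x_n)) \geq d_G(x_0,x_n)+1$; for the upper bound I fix any $(x_0,x_n)$-geodesic $Q$ of $G$, use the structural observation to pick $y \in V(Q) \cap V_0 \cap V_1$, and concatenate an $(x_0,y)$-geodesic of $G[V_i]$, the vertical edge at $y$, and a $(y,x_n)$-geodesic of $G[V_{1-i}]$, all existing by isometricity of the respective subgraphs and of total length $d_G(x_0,y)+1+d_G(y,x_n) = d_G(x_0,x_n)+1$. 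The two "iff" statements then reduce to length comparisons: since $|E(P')| = n$ in case (i) and $|E(P')| = n+1$ in case (ii), $P'$ is a $G'$-geodesic precisely when $n = d_G(x_0,x_n)$, which coincides with $P$ being a geodesic in $G[V_i]$ (respectively in $G$).

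For the containment bullet in case (i), I list the vertices of $V(P) \cap V_i$ in the order $y_0, \dots, y_l$ in which they occur on $P$; each sub-path $P[y_j,y_{j+1}]$ has length at least $d_G(y_j,y_{j+1}) = d_{G[V_i]}(y_j,y_{j+1})$, so replacing every $P[y_j,y_{j+1}]$ by a $G[V_i]$-geodesic and concatenating yields a walk in $G[V_i]$ of total length at most $|E(P)| = d_{G[V_i]}(x_0,x_n)$; this forces equality throughout, producing the required $G[V_i]$-geodesic $R$ containing all the $y_j$. The interval identities are then obtained by a two-sided argument. For the forward inclusions I lift an appropriate geodesic through a given interior vertex: in case (i), a $G[V_i]$-geodesic through $y \in I_{G[V_i]}(x_0,x_n)$ lifts by $\psi_i$; in case (ii), I apply the case-(ii) upper-bound construction above, choosing the crossing vertex $y$ compatibly with whether $z \in V_i$, $z \in V_{1-i}$, or $z \in V_0 \cap V_1$. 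For the reverse inclusions I invoke the distance formulas: any $G'$-geodesic in case (i) uses $0$ vertical edges (otherwise length would exceed $d_G(x_0,x_n)$ by at least $2$), hence lies in $V'_i$ and projects to a $G[V_i]$-geodesic; any $G'$-geodesic in case (ii) uses exactly $1$ vertical edge at some $y \in V_0 \cap V_1$ and splits into a $V'_i$-part and a $V'_{1-i}$-part whose projection is an $(x_0,x_n)$-geodesic of $G$.

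The main obstacle is not conceptual but bookkeeping, particularly in case (ii), where the positions (in $V_i$, $V_{1-i}$, or $V_0 \cap V_1$) of both the interior vertex $z$ and the crossing vertex $y$ must be juggled consistently, and where isometricity of both $G[V_i]$ and $G[V_{1-i}]$ is needed to stitch sub-geodesics together on either side of the unique vertical edge.
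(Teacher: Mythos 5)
Your proposal is correct: the vertical/horizontal edge decomposition, the parity count of vertical edges, and the lifting of geodesics (using the isometricity of $G[V_0]$ and $G[V_1]$ and the fact that any path from $V_i$ to $V_{1-i}$ must pass through $V_0\cap V_1$) give exactly the distance formulas, the two equivalences, and both interval identities, and your argument for the first bullet of (i) (concatenating $G[V_i]$-geodesics between consecutive vertices of $V(P)\cap V_i$ and forcing equality) is sound. The paper itself offers no proof to compare against --- it presents the lemma as a restatement of Lemma~4.5 of \cite{P05-1} and simply cites that reference --- so your write-up is the standard self-contained argument that the citation leaves implicit; the only point worth making explicit in the case (ii) interval inclusion is that routing the crossing vertex $y$ ``compatibly'' with the position of $z$ uses your case (i) first bullet to find a $G[V_i]$-geodesic from $x_0$ to $y$ passing through $z$.
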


The following result is an immediate consequence of Lemma~\ref{L:interv.}.

\begin{cor}\label{C:conv.G0/conv.G1}
Let $K$ be a convex set of connected bipartite graph $G$,  and $G'$ an expansion of $G$.  Then $\psi(K)$ is a convex set of 
$G'$.
\end{cor}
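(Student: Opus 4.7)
The plan is to verify convexity of $\psi(K)$ in $G'$ directly from the interval formulas in Lemma~\ref{L:interv.}. First I observe that $\psi$ is monotone: if $A \subseteq K$, then $A \cap V_i \subseteq K \cap V_i$ for $i = 0,1$, so $\psi(A) = \psi_0(A \cap V_0) \cup \psi_1(A \cap V_1) \subseteq \psi(K)$. This reduces the task to showing that $I_{G'}(x',y') \subseteq \psi(I_G(x,y))$ whenever $x' = \psi_{i(x)}(x)$ and $y' = \psi_{i(y)}(y)$ for some $x, y \in K$, since then the convexity $I_G(x,y) \subseteq K$ gives the conclusion.

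Next I split into two cases according to whether $x'$ and $y'$ lie in the same half-space of the expansion. If $x', y' \in V'_i$ for some $i \in \{0,1\}$, then Lemma~\ref{L:interv.}(i) yields directly
\[
I_{G'}(x',y') \;=\; \psi_i(I_{G[V_i]}(x,y)) \;\subseteq\; \psi(I_G(x,y)),
\]
and we are done. If instead $x' \in V'_i$ and $y' \in V'_{1-i}$, then $x \in V_i$, $y \in V_{1-i}$, and Lemma~\ref{L:interv.}(ii) gives
\[
I_{G'}(x',y') \;=\; \psi(I_G(x,y)),
\]
so again the claim holds.

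There is really no obstacle here; the corollary is an essentially automatic consequence of the interval description for expansions. The only subtle point is making sure one invokes the right clause of Lemma~\ref{L:interv.}: clause (i) applies when both endpoints sit on the same side of the contracted $\Theta$-class, while clause (ii) applies when they sit on opposite sides and the interval then automatically uses a vertex of $V_0 \cap V_1$ as the transition point. Once both cases are handled, combining them with the monotonicity of $\psi$ finishes the proof.
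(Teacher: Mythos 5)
Your proof is correct and follows exactly the route the paper intends: the paper gives no explicit proof, simply declaring the corollary an immediate consequence of Lemma~\ref{L:interv.}, and your two-case argument (same side versus opposite sides of the expansion, using clauses (i) and (ii) respectively, together with the convexity of $K$ and the monotonicity of $\psi$) is precisely the verification being left to the reader.
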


\begin{lem}\label{L:Theta-contraction/ph-stable}\textnormal{(Polat~\cite[Lemma
4.8]{P05-1})} Let $ab$ be an edge of a finite partial cube $G$ such
that $U_{ab}$ is ph-stable.  Let $cd$ be an edge which is $\Theta$-equivalent to an edge of $G[\mathcal{I}_G(U_{ab})]$, and let $G' := G/cd$ be the
$\Theta$-contraction of $G$ with respect to the $\Theta$-class of
$cd$, and $\gamma_{cd}$ the natural surjective contraction of $G$ onto
$G'$.  Then $\mathcal{I}_{G'}(U_{a'b'}^{G'}) =
\gamma_{cd}(\mathcal{I}_{G}(U_{ab}^{G}))$, where $a' :=
\gamma_{cd}(a)$ and $b' := \gamma_{cd}(b)$, and $U_{a'b'}^{G'}$ is
ph-stable.
\end{lem}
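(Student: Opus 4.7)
The plan is to exploit the expansion/contraction correspondence of Subsection~\ref{SS:expansion} together with Lemma~\ref{L:interv.}. Since $cd$ is $\Theta$-equivalent to an edge of $G[\mathcal{I}_G(U_{ab})]$, whose endpoints both lie in $\mathcal{I}_G(U_{ab})\subseteq W_{ab}^G$, the $\Theta$-class of $cd$ must differ from that of $ab$ (otherwise that edge would have its endpoints in different members of $\{W_{ab}^G,W_{ba}^G\}$). Hence the endpoints of $ab$ lie on the same side of the $cd$-cut; without loss of generality $a,b\in W_{cd}^G$. Then $G$ is the expansion of $G'=G/cd$ with respect to the proper cover $(V_0,V_1):=(\gamma_{cd}(W_{cd}^G),\gamma_{cd}(W_{dc}^G))$ of $G'$, with $a=\psi_0(a')$, $b=\psi_0(b')$ and $a'b'\in E(G'[V_0])$.

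I would next establish $\gamma_{cd}(U_{ab}^G)=U_{a'b'}^{G'}$ and the interval equality $\mathcal{I}_{G'}(U_{a'b'}^{G'})=\gamma_{cd}(\mathcal{I}_G(U_{ab}^G))$. The first fact is routine: $\gamma_{cd}$ induces a bijection between the $\Theta$-class of $ab$ and that of $a'b'$, and sends $W_{ab}^G$ onto $W_{a'b'}^{G'}$. For the interval equality, the inclusion $(\supseteq)$ is immediate, since any $(u,v)$-geodesic of $G$ with $u,v\in U_{ab}^G$ crosses the contracted $\Theta$-class at most once and therefore projects under $\gamma_{cd}$ to a $(u',v')$-geodesic of $G'$ containing $\gamma_{cd}(y)$ for every $y$ of the original. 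For $(\subseteq)$, given $y'\in I_{G'}(u',v')$ with $u',v'\in U_{a'b'}^{G'}$, I choose lifts $u,v\in U_{ab}^G$ and apply Lemma~\ref{L:interv.} to a $(u',v')$-geodesic of $G'$ through $y'$: according to which of $V_0-V_1,\, V_1-V_0,\, V_0\cap V_1$ contains each of $u',v',y'$, one of the branches $\psi_0$ or $\psi_1$ yields a lift $y$ of $y'$ sitting on a $(u,v)$-geodesic of $G$, whence $y\in I_G(u,v)\subseteq\mathcal{I}_G(U_{ab}^G)$.

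Finally, ph-stability of $U_{a'b'}^{G'}$ is obtained by transfer: for $u',v'\in\mathcal{I}_{G'}(U_{a'b'}^{G'})$, the interval equality provides lifts $u,v\in\mathcal{I}_G(U_{ab}^G)$; ph-stability of $U_{ab}^G$ produces $w\in U_{ab}^G$ with $v\in I_G(u,w)$, and I put $w':=\gamma_{cd}(w)\in U_{a'b'}^{G'}$. A short case analysis on whether the contracted $\Theta$-class separates the pairs $(u,v),\, (v,w),\, (u,w)$ (only three of the four a priori cases are consistent with $v\in I_G(u,w)$, since by Lemma~\ref{L:gen.propert.}(v) any geodesic crosses a given $\Theta$-class at most once) gives $d_{G'}(u',v')+d_{G'}(v',w')=d_{G'}(u',w')$, so $v'\in I_{G'}(u',w')$, as required. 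I expect the main obstacle to be the $(\subseteq)$ direction of the interval equality, where one must carefully select between the branches $\psi_0,\psi_1$ to lift $y'$ into $G$: when $y'\in V_0\cap V_1$ both lifts are available, whereas $y'\in V_i-V_{1-i}$ forces the choice and compels the accompanying $(u,v)$-geodesic of $G$ to avoid $V'_{1-i}-V'_i$ in a neighborhood of the lifted vertex.
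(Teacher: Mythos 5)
The paper does not prove this lemma at all: it is quoted verbatim from \cite[Lemma 4.8]{P05-1}, so there is no in-paper argument to compare yours against. Judged on its own, your proposal has the right architecture (the expansion/contraction correspondence, Lemma~\ref{L:interv.}, the distance bookkeeping $d_G(x,y)=d_{G'}(x',y')+\epsilon$), and both the $(\supseteq)$ inclusion and the final transfer of ph-stability are fine once the interval equality is in hand. But the $(\subseteq)$ direction of $\mathcal{I}_{G'}(U_{a'b'}^{G'})\subseteq\gamma_{cd}(\mathcal{I}_G(U_{ab}))$ has a genuine gap.

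The problem is your assertion that the lifted $(u',v')$-geodesic can be made to terminate at lifts $u,v$ lying in $U_{ab}^G$. When $u'\in V'_0\cap V'_1$, only one of its two lifts need belong to $U_{ab}^G$ (the two lifts are joined by an edge of the contracted class, and in a general partial cube there is no $4$-cycle forcing both to have neighbors across the $ab$-cut). So the side into which $u'$ must be lifted to land in $U_{ab}^G$ is dictated by the graph, and likewise for $v'$ and for $y'$ (which is forced when $y'\in V'_i-V'_{1-i}$). In the configuration where the $U_{ab}$-lifts of both $u'$ and $v'$ lie in $W_{dc}$ while $y'$ lifts only into $W_{cd}$, the distance formula gives $d_G(u,y)+d_G(y,v)=d_G(u,v)+2$, so $y\notin I_G(u,v)$ and no choice of the branches $\psi_0,\psi_1$ repairs this; your closing remark about "avoiding $V'_{1-i}-V'_i$ in a neighborhood of the lifted vertex" does not address it. A telling symptom is that your argument for the interval equality never uses the hypothesis that $U_{ab}$ is ph-stable. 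The way out of the bad case is precisely through that hypothesis: by Lemma~\ref{L:ph-stable}, $\mathcal{I}_G(U_{ab})=co_G(U_{ab})$ is convex; one then shows that the endpoints $s_0,t_0$ of the maximal subsegment of the lifted geodesic lying on the $y$-side (which belong to $U_{cd}$, not to $U_{ab}$) nevertheless lie in $co_G(U_{ab})$ — because the $cd$-neighbor of any vertex of $co_G(U_{ab})\cap W_{dc}$ lies on a geodesic from that vertex to any vertex of $co_G(U_{ab})\cap W_{cd}$, and the $cd$-class meets $E(G[\mathcal{I}_G(U_{ab})])$ — whence $y\in I_G(s_0,t_0)\subseteq co_G(U_{ab})=\mathcal{I}_G(U_{ab})$ by convexity. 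Without an argument of this kind (or a proof that the bad configuration cannot occur), the inclusion is not established.
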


\begin{lem}\label{L:expans./theta}
Let $G$ be a partial cube and $G'$ an expansion of
$G$ with respect to a proper cover $(V_{0},V_{1})$ of
$G$.  Let $u_{0}u_{1}$ and $v_{0}v_{1}$ be two edges of $G$.  For $j = 0, 1$, let $i(u), i(v) \in \{0,1\}$ be such that $u_j \in V_{i(u)}$ and $v_j \in V_{i(v)}$, and let $u'_{j} :=
\psi_{i(u)}(u_{j})$ and $v'_{j} := \psi_{i(v)}(v_{j})$.  Then
$u_{0}u_{1}$ and $v_{0}v_{1}$ are $\Theta$-equivalent if and only if so are
$u'_{0}u'_{1}$ and $v'_{0}v'_{1}$.
\end{lem}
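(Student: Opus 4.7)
The plan is to reduce $\Theta$-equivalence to a check of four pairwise distances and then read off those distances from Lemma~\ref{L:interv.}. Recall that in a bipartite graph two edges $pq$ and $rs$ satisfy $pq\,\Theta\,rs$ if and only if $d(p,r)=d(q,s)$ and $d(p,s)=d(q,r)$. Applied to our situation this says: $u_0u_1\,\Theta\,v_0v_1$ in $G$ iff $d_G(u_0,v_0)=d_G(u_1,v_1)$ and $d_G(u_0,v_1)=d_G(u_1,v_0)$, and similarly for $u'_0u'_1$, $v'_0v'_1$ in $G'$. So it suffices to relate the four distances $d_G(u_j,v_k)$ to the four distances $d_{G'}(u'_j,v'_k)$ for $j,k\in\{0,1\}$.

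I would split into two cases according to whether $i(u)=i(v)$ or not. In the first case, write $i:=i(u)=i(v)$; then all of $u_0,u_1,v_0,v_1$ lie in $V_i$ and their lifts lie in $V'_i$. The last bullet of Lemma~\ref{L:interv.}(i) gives
\[
d_{G'}(u'_j,v'_k)=d_G(u_j,v_k)\quad\text{for all }j,k\in\{0,1\},
\]
so the two distance identities characterising $\Theta$ transport verbatim between $G$ and $G'$. In the second case, assume without loss of generality $i(u)=0$ and $i(v)=1$; then every $u_j$ lies in $V_0$ and every $v_k$ in $V_1$, while $u'_j\in V'_0$ and $v'_k\in V'_1$ sit on opposite sides of the expansion. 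By the second bullet of Lemma~\ref{L:interv.}(ii),
\[
d_{G'}(u'_j,v'_k)=d_G(u_j,v_k)+1\quad\text{for all }j,k\in\{0,1\}.
\]
A uniform shift by $+1$ on all four quantities preserves the pair of equalities that defines $\Theta$-equivalence, so again $u_0u_1\,\Theta\,v_0v_1$ in $G$ holds iff $u'_0u'_1\,\Theta\,v'_0v'_1$ holds in $G'$. Combining the two cases yields the lemma.

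There is no real obstacle once Lemma~\ref{L:interv.} is available; the argument is a case split plus bookkeeping. The only point that warrants care is the possible ambiguity of $i(u)$ or $i(v)$ when some $u_j$ or $v_k$ belongs to $V_0\cap V_1$: one simply fixes the choice of $i(u),i(v)$ made in the hypothesis and applies the appropriate bullet of Lemma~\ref{L:interv.}, noting that for $x\in V_0\cap V_1$ the two lifts $\psi_0(x)$ and $\psi_1(x)$ are adjacent in $G'$, so any alternative choice changes the four distances $d_{G'}(u'_j,v'_k)$ by the same constant and again leaves the characterising equalities intact.
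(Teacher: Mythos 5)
Your proof is correct and follows essentially the same route as the paper: both reduce $\Theta$-equivalence to the four pairwise distances and invoke Lemma~\ref{L:interv.} to show they all shift by the same $\epsilon\in\{0,1\}$ according to whether $i(u)=i(v)$, which the paper states in a single sentence. Your extra remark on the ambiguity of $i(u),i(v)$ for vertices in $V_0\cap V_1$ is a harmless (and correct) refinement the paper leaves implicit.
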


\begin{proof}
By Lemma~\ref{L:interv.}, for $j, k \in \{0,1 \}$, 
$d_{G}(u'_{j},v'_{k}) = d_{G}(u_{j},v_{k}) + \epsilon$ where 
$\epsilon$ is equal to $0$ or $1$ depending to whether $i(u)$ is or 
is not equal to $i(v)$.  Whence the result.    
\end{proof}

\begin{lem}\label{L:expans./conv.subgr.}
Let $G$ be a connected bipartite graph and $G'$ an expansion of
$G$ with respect to a proper cover $(V_{0},V_{1})$ of
$G$, and let $F$ be a convex subgraph of $G$.  Then $F' := G'[\psi(V(F))]$ is a convex subgraph of $G'$.  Moreover, if $V(F) \cap V_i \neq \emptyset$ for $i = 0, 1$, then $(V_0 \cap V(F),V_1 \cap V(F))$ is a proper cover of $F$, and $F'$ is the expansion of $F$ with respect to $(V_0 \cap V(F),V_1 \cap V(F))$.
\end{lem}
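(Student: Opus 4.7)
The plan is to handle the two assertions in turn, exploiting Corollary~\ref{C:conv.G0/conv.G1} which was just established.

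For the first claim, that $F'$ is convex in $G'$, I would apply Corollary~\ref{C:conv.G0/conv.G1} directly to the convex set $K := V(F)$ of $G$. This yields that $\psi(V(F))$ is convex in $G'$. Since by definition $F'$ is the subgraph of $G'$ induced by $\psi(V(F))$, we conclude that $F'$ is a convex subgraph of $G'$.

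For the second claim, assume $V(F) \cap V_i \neq \emptyset$ for $i = 0, 1$. I would verify the three conditions of Definition~\ref{D:proper cover} for the pair $(V_0 \cap V(F), V_1 \cap V(F))$ inside $F$. The covering identity $(V_0 \cap V(F)) \cup (V_1 \cap V(F)) = V(F)$ comes from $V_0 \cup V_1 = V(G)$. The no-edge condition between $(V_0 - V_1) \cap V(F)$ and $(V_1 - V_0) \cap V(F)$ in $F$ is inherited at once from the corresponding condition for $(V_0, V_1)$ in $G$, since $F$ is an induced subgraph. The isometry of $F[V_i \cap V(F)]$ in $F$ follows because, for any $x, y \in V_i \cap V(F)$, an $(x, y)$-geodesic in the isometric subgraph $G[V_i]$ has all its vertices in $I_G(x, y) \subseteq V(F)$ (by convexity of $F$) and in $V_i$, hence lies in $F[V_i \cap V(F)]$ and has length $d_G(x, y) = d_F(x, y)$.

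The only slightly delicate point is the nonemptiness of $V_0 \cap V_1 \cap V(F)$. I would fix $x \in V(F) \cap V_0$ and $y \in V(F) \cap V_1$; by convexity of $F$, every $(x, y)$-geodesic of $G$ lies in $V(F)$, and since no edge of $G$ joins $V_0 - V_1$ to $V_1 - V_0$, such a geodesic must contain at least one vertex of $V_0 \cap V_1$, which supplies the required element. Finally, to conclude that $F'$ is the expansion of $F$ with respect to $(V_0 \cap V(F), V_1 \cap V(F))$, I would use again that $F = G[V(F)]$ is an induced subgraph: the edges of $F[V_i \cap V(F)]$ are exactly the edges of $G[V_i]$ with both endpoints in $V(F)$, and the ``vertical'' edges $\psi_0(x)\psi_1(x)$ with $x \in V_0 \cap V_1 \cap V(F)$ are common to both descriptions. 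Matching up vertex and edge sets identifies $G'[\psi(V(F))] = F'$ with the expansion of $F$. There is no real obstacle here; the main care needed is purely bookkeeping about the restrictions of the injections $\psi_i$ to $V(F)$, all of which is dictated by the definitions.
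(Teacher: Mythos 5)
Your proposal is correct and follows essentially the same route as the paper: the paper derives convexity of $F'$ directly from Lemma~\ref{L:interv.} (of which Corollary~\ref{C:conv.G0/conv.G1} is the immediate consequence you invoke), and then checks the proper-cover conditions and the product description exactly as you do. If anything, you are slightly more careful than the paper on the non-emptiness of $V_0 \cap V_1 \cap V(F)$, which the paper dismisses as clear but which does require the geodesic argument you give.
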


\begin{proof}
Let $P'$ be a $(u,v)$-geodesic for some $u', v' \in V(F')$, then $V(P') \subseteq \psi(V(P))$ for some $(u,v)$-geodesic $P$ of $G$ by Lemma~\ref{L:interv.}, where $u$ and $v$ are vertices of $F$ such that $u' = \psi(u)$ and $v' = \psi(v)$. Then $P$ is a path of $F$ since $F$ is convex in $G$, and thus $P'$ is a path of $F'$ by the definition of $F'$, which proves that $F'$ is convex in $G'$.

Assume now that $V(F) \cap V_i \neq \emptyset$ for $i = 0, 1$.  Let $V'_i := V_i \cap V(F)$ for $i = 0, 1$.  The first two properties of a proper cover are clearly satisfied by $(V'_0,V'_1)$ since they are satisfied by $(V_0,V_1)$.  Moreover $F[V'_0]$ and $F[V'_1]$ are isometric subgraphs of $F$ since $F$ is convex in $G$ and $G[V_0]$ and $G[V_1]$ are isometric in $G$.  Consequently $(V'_0,V'_1)$ is a proper cover of $F$.

By definition, $G'$ is the subgraph of $G \Box K_2$ induced by $(V_{0} \times \{0 \}) \cup (V_{1} \times \{1 \})$, where
$\{0,1 \}$ is the vertex set of $K_{2}$.  It follows that $F'$, which is equal to $G'[\psi(V(F))]$ by definition, is the subgraph of $F \Box K_2$ induced by $(V'_{0} \times \{0 \}) \cup (V'_{1} \times \{1 \})$ since $V'_i = V_i \cap V(F)$ for $i = 0, 1$.  Therefore $F'$ is the expansion of $F$ with respect to $(V'_0,V'_1)$.
\end{proof}

\begin{rem}\label{R:notation}
Let $G$ be a partial cube, $e = cd$ some edge of $G$,\; $G' := G/e$ the $\Theta$-contraction of $G$ with respect to the $\Theta$-class of $e$, and $\gamma_{e}$ the natural surjective contraction
of $G$ onto $G'$.  We use the following notation already introduced: $$V_0 := W_{cd}\quad V_1 := W_{dc}\quad V'_0 := \gamma_{e}(V_0)\quad V'_1 := \gamma_{e}(V_1).$$  For $i = 0, 1$, let $\psi_i : V'_i \to V_i$ be such that $\gamma_{e}(\psi_i(x)) = x$ for each $x \in V'_i$, and for any $A \subseteq V(G')$, let $$\psi(A) := \psi_{0}(A \cap V'_{0}) \cup
\psi_{1}(A \cap V'_{1}).$$  Furthermore we denote by $x'$ the vertex $\gamma_e(x)$ for all $x \in V(G)$.

We make two remarks.

1.\; Let $Q$ be an $(x,y)$-geodesic in $G$ which passes through some vertex $u$.  Then $Q$ clearly has at most one edge which is $\Theta$-equivalent to $e$.  Hence, by Lemma~\ref{L:interv.}, $\gamma_e(Q)$ is a $(x', y')$-geodesic in $G'$ which passes through $u'$.

2.\; Let $R := \langle x_0,\dots,x_p\rangle$ be a geodesic in $G'$.  Because $G'[V'_0]$ and $G'[V'_1]$ are isometric subgraphs of $G'$, there exists an $(x_0,x_p)$-geodesic $R' := \langle y_0,\dots,y_p\rangle$ in $G'$ such that:

\textbullet\; if $x_0, x_p \in V'_i$ for some $i = 0$ or $1$, then $y_j \in V'_i$ for all $j$ with $0 \leq j \leq p$;

\textbullet\; if $x_0 \in V'_i-V'_{1-i}$ and $x_p \in V'_{1-i}-V'_i$ for some $i = 0$ or $1$, and if $k$ is any non-negative integer such that $x_k \in V'_0 \cap V'_1$, then $y_j$ belongs to $V'_i$ or $V'_{1-i}$ with $y_j = x_j$ if $x_j$ belongs to $V'_i$ or $V'_{1-i}$ according as $j \leq k$ or $j \geq k$.

Then, by Lemma~\ref{L:interv.}, in the first case $\psi_i(R')$ is a $(\psi_i(x_0),\psi_i(x_p))$-geodesic, and in the second case $\psi_i(R'[y_0,y_k]) \cup \psi_{1-i}(R'[y_k,y_p])$ is a $(\psi_i(x_0),\psi_{1-i}(x_p))$-geodesic.  It follows in particular that $d_G(x_0,x_p) = d_{G'}(x'_0,x'_p) + \epsilon$ with $\epsilon = 0$ or $1$ according as $x_0, x_p \in V'_i$ or $x_0 \in V'_i-V'_{1-i}$ and $x_p \in V'_{1-i}-V'_i$ for some $i = 0$ or $1$.  In each case, this geodesic in $G$ will be denoted by $\Psi(R')$ in what follows.

  Note that, by the above remarks, if $ab$ is an edge of $G$ which is not $\Theta$-equivalent to $e$, then we clearly have $\mathcal{I}_{G'}(U_{a'b'}^{G'}) = \gamma_e(\mathcal{I}_G(U_{ab}))$.
\end{rem}

\subsection{Local ph-homogeneity}\label{SS:local.hypernet.}

We now introduce a property of partial cubes which is weaker than that of being ph-homogeneous.

\begin{defn}\label{L:local.hypernet.}
Let $G$ be a partial cube, and $ab$ an edge of $G$.  We say that $G$ is \emph{ph-homogeneous in $ab$} if $U_{cd}^F$ and $U_{dc}^F$ are ph-stable for any finite convex subgraph $F$ of $G$ that contains an edge $cd$ which is $\Theta$-equivalent to $ab$.
\end{defn}

By Proposition~\ref{P:charact.p.c.ph}, \emph{a partial cube is ph-homogeneous if and only if it is ph-homogeneous in each of its edges}.  Moreover, \emph{if $G$ is ph-homogeneous in $ab$, then any finite convex subgraph $F$ of $G$ that contains an edge $cd$ which is $\Theta$-equivalent to $ab$ is ph-homogeneous in $cd$}.

\begin{thm}\label{T:loc.hypernet./charact.}
Let $G$ be a partial cube, and $ab$ one of its edges.  The following assertions are equivalent:

\textnormal{(i)}\; $G$ is ph-homogeneous in $ab$.

\textnormal{(ii)}\; $U_{ab}$ and $U_{ba}$ are strongly ph-stable.

\textnormal{(iii)}\; For any bulge $X$ of $co_{G}(U_{ab})$ (resp. $co_G(U_{ba})$), we have the properties (HNB1) and (HNB2).
\end{thm}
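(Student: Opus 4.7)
The plan is to establish the cycle of implications $(i) \Rightarrow (ii) \Rightarrow (iii) \Rightarrow (i)$.

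For $(i) \Rightarrow (ii)$, I would first show that $U_{ab}$ and $U_{ba}$ are ph-stable in $G$. Given $u, v \in \mathcal{I}_G(U_{ab})$, pick geodesics $Q_u, Q_v$ through $u$ and $v$ with endpoints in $U_{ab}$, and let $F := co_G(V(Q_u) \cup V(Q_v) \cup \{a,b\})$. By Lemma~\ref{L:gen.propert.}(iii), $F$ is a finite convex subgraph of $G$ containing $ab$, so $U_{ab}^F$ is ph-stable by hypothesis (i); since $U_{ab}^F = U_{ab} \cap V(F)$ by Lemma~\ref{L:prop.isom subgr.}, any witness $w \in U_{ab}^F$ realizing $v \in I_F(u,w)$ lies in $U_{ab}$ and $v \in I_G(u,w)$ by convexity of $F$. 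To upgrade to strong ph-stability, I would invoke Corollary~\ref{C:ph-stab.+SPS2=>strong.ph-stab.}: for each $u \in \mathcal{I}_G(U_{ab}) - U_{ab}$, pick a geodesic $Q$ through $u$ joining vertices of $U_{ab}$, let $F_u := co_G(V(Q) \cup \{a,b\})$, and select $P_u$ as a convex $U_{ab}^{F_u}$-path through $u$ of maximal length within $F_u$. Verification of (SPS2) in $G$ requires enlarging $F_u$ whenever a new pair $(x',y') \in U_{ab}^2$ with $u \in I_G(x',y')$ appears; ph-stability within the enlarged finite convex subgraph, together with the convexity of $P_u$, forces some $(x',y')$-geodesic to contain $P_u$ as a subpath.

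For $(ii) \Rightarrow (iii)$, fix a bulge $X$ of $co_G(U_{ab})$. For $u \in X - U_{ab}$, let $P_u$ be the convex $U_{ab}$-path from (ii) with endpoints $x,y \in U_{ab}$, and let $x', y'$ be the neighbors of $x, y$ in $U_{ba}$. Using Lemma~\ref{L:E(G[Wab])} and Theorem~\ref{T:Djokovic-Winkler}(ii), an $(x',y')$-geodesic $P'$ inside $G[W_{ba}]$ of length $|P_u|$ exists, producing a cycle $C_u$ of length at least $6$ that belongs to $\mathbf{C}(G,ab)$. I would verify $C_u$ is isometric by combining (SPS2) with the $\Theta$-class structure, and then show that the union of all such cycles (varying $u$ across $X$ and using the dual strong ph-stability of $U_{ba}$) spans a convex subgraph $H$ which decomposes as a Cartesian product of an even cycle of length $> 4$ with a Peano partial cube, i.e. $H \in \mathbf{Cyl}(G,ab)$, with $X = H - W_{ba}$ by construction. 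Property (HNB2) follows by noting that any path in $G_{ab}$ from $U_{ab}$ into the component of $G_{ab} - U_{ab}$ inside $X$ must cross $X - U_{ab}$, since the bulge is defined by the component structure of $G[co_G(U_{ab}) - U_{ab}]$.

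For $(iii) \Rightarrow (i)$, let $F$ be a finite convex subgraph of $G$ containing an edge $cd \sim_\Theta ab$; I claim $U_{cd}^F$ is ph-stable (the case of $U_{dc}^F$ being symmetric). Given $u, v \in \mathcal{I}_F(U_{cd}^F)$, Lemma~\ref{L:prop.isom subgr.} yields $U_{cd}^F = U_{cd} \cap V(F)$, so $u, v \in \mathcal{I}_G(U_{cd})$ with the interiors taken in $G$. Hypothesis (iii), applied to $cd$, produces a convex hypercylinder $H$ with $u \in H - W_{dc}$; inside $F \cap H$—which is a convex subgraph of $F$ and hence of the hypercylinder $H$, a graph with transparent Cartesian-product geometry—one extracts $w \in U_{cd}^F$ realizing $v \in I_F(u,w)$ by projecting onto the cycle factor.

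The main obstacle is the implication $(ii) \Rightarrow (iii)$: promoting a single convex $U_{ab}$-path to a full hypercylinder requires verifying both that the candidate cycle $C_u$ is isometric (so that its $\Theta$-edges are well-behaved) and that its convex hull, combined with the bulge $X$, genuinely splits as a Cartesian product. Excluding extraneous edges that would destroy the product structure involves a delicate interplay between strong ph-stability on both sides of the $\Theta$-class of $ab$ and the convexity of half-spaces in partial cubes.
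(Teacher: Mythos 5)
Your overall architecture (a cycle of implications, with the hypercylinder built from the convex $U_{ab}$-paths) matches the paper's, but two steps conceal genuine gaps rather than routine verifications.

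First, in (i)$\Rightarrow$(ii) the passage from ph-stability to \emph{strong} ph-stability is the central difficulty of the whole theorem, and your sketch does not bridge it. Choosing ``a convex $U_{ab}^{F_u}$-path through $u$ of maximal length'' and asserting that ph-stability of the enlarged finite convex subgraph ``forces'' (SPS2) is a restatement of the conclusion, not an argument: ph-stability of every $U_{cd}^F$ gives you, for each pair $(x,y)$ with $u\in I_G(x,y)$, \emph{some} geodesic structure, but nothing a priori forces a single convex path $P_u$ to sit inside a geodesic for \emph{every} such pair simultaneously. (Note also that the associated path is characterized by \emph{minimal} length, cf.\ Lemma~\ref{L:min.Uab-path=associated}, not maximal.) The paper has to work hard here: an induction on the isometric dimension using $\Theta$-contractions (Lemma~\ref{L:contract./local.hypernet.}), the ``tripod'' Lemma~\ref{L:tripod} excluding a third neighbor of $u$ in $U_{ab}$, and Lemma~\ref{L:idim(G)=length(geod)=>G=cycle} forcing the degenerate case to be a single cycle. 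That this extra input is genuinely needed is shown by the graph of Remark~\ref{R:fin.subgr.fin.conv.subgr.ph1}, where every $U_{ab}$ is ph-stable ($ph(G)\le 1$) yet no $U_{ab}$ is strongly ph-stable; so no argument that only uses ph-stability of the ambient sets, as yours essentially does, can succeed.

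Second, in (ii)$\Rightarrow$(iii) you have misread (HNB2). It does not say that a path entering the bulge must cross $X-U_{ab}$ (that is immediate from the definition of a bulge and proves nothing); it says that $X-U_{ab}$ is a \emph{separator} of the whole graph $G_{ab}=G[co_G(U_{ab})]$, i.e.\ that the two components of $X[U_{ab}]$ cannot be joined by a path of $G_{ab}$ avoiding $X-U_{ab}$ --- one cannot ``go around'' the bulge through the rest of $co_G(U_{ab})$. The paper's proof of this is a substantial argument: it takes a hypothetical minimal cycle $D$ witnessing the failure of separation, observes that $D$ is isometric, invokes Theorem~\ref{T:hypernet./conv.hull(isom.cycle)} to see that $co_G(D)$ is a quasi-hypertorus, and then derives a contradiction from the product structure. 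Your proposal contains no substitute for this. The remaining parts of your plan --- the reduction of ph-stability of $U_{ab}$ in $G$ to finite convex subgraphs, the assembly of the $ab$-cycles into a product $C\Box A$ via Lemma~\ref{L:uv/CuCv}-type arguments, and the verification of (SPS1)--(SPS2) from a gated hypercylinder in (iii)$\Rightarrow$(i) --- are in line with the paper and would go through once the two gaps above are filled.
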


The equivalences of the assertions (i),(ii) and (iii) of Theorem~\ref{T:charact.} are immediate consequences of the above theorem.

\subsection{Proof of the equivalence (i)$\Leftrightarrow$(ii) of Theorem~4.15}\label{SS:(i)<=>(ii)4.15}

We need several lemmas.

\begin{lem}\label{L:contract./local.hypernet.}
Let $G$ be a partial cube which is ph-homogeneous in one of its edges $ab$.  Let $e$ be an edge of $G[\mathcal{I}_G(U_{ab})]$,\; $G' := G/e$ the
$\Theta$-contraction of $G$ with respect to the $\Theta$-class of
$e$, and $\gamma_{e}$ the natural surjective contraction of $G$ onto
$G'$.  Then $G'$ is ph-homogeneous in the edge $a'b'$, where $a' :=
\gamma_{e}(a)$ and $b' := \gamma_{e}(b)$.
\end{lem}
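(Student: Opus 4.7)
The plan is to verify ph-homogeneity of $G'$ in $a'b'$ directly from the definition: for every finite convex subgraph $F'\subseteq G'$ containing an edge $c'd'$ with $c'd'\,\Theta\,a'b'$ in $G'$, the goal is to show that $U^{F'}_{c'd'}$ and (symmetrically) $U^{F'}_{d'c'}$ are ph-stable in $F'$.  The strategy is to lift the problem to $G$ via the expansion viewpoint.  Since $e\in E(G[\mathcal{I}_G(U_{ab})])\subseteq E(G[W_{ab}])$ has both endpoints on the $a$-side of $ab$, $e$ is not $\Theta$-equivalent to $ab$ in $G$; hence $a'b'$ is an edge of $G'$, and Lemma~\ref{L:expans./theta} may be invoked throughout.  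I would set $F := G[\psi(V(F'))]$, where $\psi$ is the canonical partial section associated with the proper cover $(V'_0,V'_1):=(\gamma_e(W^G_{uv}),\gamma_e(W^G_{vu}))$ of $G'$ making $G$ the expansion of $G'$.  By Lemma~\ref{L:expans./conv.subgr.} (applied with $G'$ as base and $G$ as expansion), $F$ is a convex subgraph of $G$, and it is clearly finite; $F$ contains a lift $cd$ of $c'd'$, and Lemma~\ref{L:expans./theta} gives $cd\,\Theta\,ab$ in $G$.  The hypothesis that $G$ is ph-homogeneous in $ab$ then implies that $U^F_{cd}$ and $U^F_{dc}$ are ph-stable in $F$.

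The next step is to lift any $u'\in\mathcal{I}_{F'}(U^{F'}_{c'd'})$ to some $u^l\in\mathcal{I}_F(U^F_{cd})$.  Choose witnesses $x',y'\in U^{F'}_{c'd'}$ with $u'\in I_{F'}(x',y')$; each $z'\in\{x',y'\}$ is adjacent in $F'$ to a vertex $z'_n\in U^{F'}_{d'c'}$ by an edge $\Theta\,c'd'$ in $F'$, which lifts (as it is not in the contracted class) to an edge in $F$ that is $\Theta\,cd$ in $F$ by Lemma~\ref{L:expans./theta}, so some lift $z^l\in V(F)$ of $z'$ belongs to $U^F_{cd}$.  Applying Remark~\ref{R:notation}(2) to an $F'$-geodesic from $x'$ to $y'$ through $u'$ then produces an $F$-geodesic from $x^l$ to $y^l$ passing through a particular lift $u^l$ of $u'$; since $V(F)=\psi(V(F'))$ contains all three vertices, $u^l\in\mathcal{I}_F(U^F_{cd})$.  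The same construction provides a lift $v^l\in\mathcal{I}_F(U^F_{cd})$ of $v'$.

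Ph-stability of $U^F_{cd}$ in $F$ then supplies $w\in U^F_{cd}$ with $v^l\in I_F(u^l,w)$.  Setting $w':=\gamma_e(w)$, a final application of Lemma~\ref{L:expans./theta} to the defining edge of $w$ in $U^F_{cd}$ gives $w'\in U^{F'}_{c'd'}$, and the image under $\gamma_e$ of the $F$-geodesic from $u^l$ through $v^l$ to $w$ is, by the distance identity of Remark~\ref{R:notation}(2), an $F'$-geodesic from $u'$ through $v'$ to $w'$; hence $v'\in I_{F'}(u',w')$, completing the verification that $U^{F'}_{c'd'}$ is ph-stable in $F'$.  The argument for $U^{F'}_{d'c'}$ is entirely symmetric.

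The chief technical obstacle is the lifting step: vertices of $F'$ lying in $V'_0\cap V'_1$ admit two lifts in $V(F)$, and the canonical $\Psi$-lift of an $F'$-geodesic selects specific ones depending on the side-data at its endpoints.  The delicate part is to ensure that, with consistent choices, the lifts $x^l,y^l,u^l,v^l$ and the witness $w$ all remain inside $V(F)=\psi(V(F'))$ so that their $\gamma_e$-projections stay in $V(F')$.  This amounts to a case analysis on the positions of $u',v'$ and their witnesses with respect to $V'_0\cap V'_1$, organized by Remark~\ref{R:notation}(2), and is the genuinely nontrivial bookkeeping of the proof.
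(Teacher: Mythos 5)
Your proposal follows essentially the same route as the paper: both proofs lift $F'$ to the finite convex subgraph $F := G[\psi(V(F'))]$ of $G$ via Lemma~\ref{L:expans./conv.subgr.}, use the hypothesis that $G$ is ph-homogeneous in $ab$ to get ph-stability of $U^F_{cd}$, and then transfer that ph-stability back down through the contraction $\gamma_e$. The one difference is that the paper disposes of the transfer step in a single line by invoking Lemma~\ref{L:Theta-contraction/ph-stable} (quoted from an earlier paper), whereas you re-derive it by hand via Lemma~\ref{L:expans./theta} and Remark~\ref{R:notation}; the "genuinely nontrivial bookkeeping" you flag at the end --- reconciling, for a vertex of $V'_0\cap V'_1$, the lift selected by the $\Psi$-geodesic with the lift carrying the $U_{cd}$-witness edge, and showing $\mathcal{I}_{F'}(U^{F'}_{c'd'})=\gamma_e(\mathcal{I}_F(U^F_{cd}))$ --- is exactly the content of that cited lemma, so you should either cite it outright or actually carry out the case analysis rather than leave it as a remark.
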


\begin{proof}
$G'$ is partial cube by what we saw above.  We will use the notation of Remark~\ref{R:notation}.

Let $F'$ be a finite convex subgraph of $G'$ which contains an edge $\Theta$-equivalent to $a'b'$.  Without loss of generality we will suppose that $a'b' \in E(F')$.  We will show that $U_{a'b'}^{F'}$ and $U_{b'a'}^{F'}$ are ph-stable.  By Lemma~\ref{L:expans./conv.subgr.}, $F := G[\psi(V(F'))]$ is a finite convex subgraph of $G$, and thus $U_{ab}^F$ and $U_{ba}^F$ are ph-stable since $G$ is ph-homogeneous in $ab$.  We have two cases:

(a)\; $V(F') \cap V'_i = \emptyset$ for some $i = 0$ or $1$.

Say $i = 1$.  Then $F := \psi_0(F')$ is isomorphic to $F'$.  It follows that $U_{a'b'}^{F'} = U_{ab}^F$ and $U_{b'a'}^{F'} = U_{ba}^F$, and thus $U_{a'b'}^{F'}$ and $U_{b'a'}^{F'}$ are ph-stable.

(b)\; $V(F') \cap V'_i \neq \emptyset$ for $i = 0, 1$.

By Lemma~\ref{L:expans./conv.subgr.}, $F := G[\psi(V(F'))]$ is the expansion of $F'$ with respect to $(V'_0 \cap V(F'),V'_1 \cap V(F'))$, and thus $F'$ is the $\Theta$-contraction of $F$ with respect to the $\Theta$-class of the edge $e$, and the restriction $\gamma_{e}$ onto $V(F)$ is the natural surjective contraction
of $F$ onto $F'$.  Because $e$ is $\Theta$-equivalent to an edge of $F[\mathcal{I}_F(U_{ab}^F)]$ and thus of $F[\mathcal{I}_F(U_{ba}^F)]$, it follows that $U_{a'b'}^{F'}$ and $U_{b'a'}^{F'}$ are ph-stable by Lemma~\ref{L:Theta-contraction/ph-stable}.

Consequently $G'$ is ph-homogeneous in $a'b'$.
\end{proof}

\begin{lem}\label{L:tripod}
Let $G$ be a partial cube which is ph-homogeneous in one of its edges $ab$, and $u \in W_{ab}$ which has two neighbors $v$ and $w$ in $U_{ab}$.  Then $u \notin I_G(x,v) \cup I_G(x,w)$ for every vertex $x \in U_{ab}-\{ v,w\}$ such that $I_G(u,x) \cap U_{ab} = \{ x\}$.
\end{lem}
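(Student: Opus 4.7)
The plan is proof by contradiction: assume $u \in I_G(x,v)$, and aim to contradict the hypothesis $I_G(u,x) \cap U_{ab} = \{x\}$. I first show that $u$ must also lie in $I_G(x,w)$. Since $u \sim w$ and $G$ is bipartite, $d_G(x,w) \in \{d_G(x,u)-1,\,d_G(x,u)+1\}$. If $d_G(x,w) = d_G(x,u)-1$, then $w \in I_G(u,x) \cap U_{ab} = \{x\}$, giving $w = x$, contradicting $w \neq x$. Hence $d_G(x,w) = d_G(x,u)+1$, yielding $u \in I_G(x,w)$. Together with the obvious $u \in I_G(v,w)$, coming from the length-$2$ path $\langle v,u,w\rangle$, the vertex $u$ is a median of the triple $\{v,w,x\}$, and by Lemma~\ref{L:gen.propert.}(i) it is the unique such median in $G$.

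Next, I localise the configuration in a finite convex subgraph in order to exploit ph-homogeneity. Let $v',w',x' \in U_{ba}$ denote the $\Theta_{ab}$-neighbors of $v$, $w$, $x$ respectively, and set $F := co_G(\{v,w,x,v',w',x'\})$. By Lemma~\ref{L:gen.propert.}(iii), $F$ is finite; by construction it is convex, contains $u$ (since $u \in I_G(v,w) \subseteq F$), and contains the three edges $vv'$, $ww'$, $xx'$, each $\Theta$-equivalent to $ab$. Ph-homogeneity of $G$ in $ab$ then yields $ph(F) \leq 1$, so by Proposition~\ref{P:charact.p.c.ph} combined with Lemma~\ref{L:prop.isom subgr.}, the set $U_{ab}^F = U_{ab} \cap V(F)$ is ph-stable in $F$. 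Note also that $u \notin U_{ab}^F$, since $u$ has no neighbor in $W_{ba}^G \supseteq W_{ba}^F$.

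Finally I draw the contradiction. Applying ph-stability of $U_{ab}^F$ to the pair $u, x$, both of which lie in $\mathcal{I}_F(U_{ab}^F)$, produces a geodesic $Q$ in $F$ between two vertices $y, z \in U_{ab}^F$ that passes through both $u$ and $x$; the convexity of $F$ in $G$ makes $Q$ a geodesic of $G$ as well. Up to exchanging $y$ and $z$, I may assume $u$ lies between $y$ and $x$ along $Q$. The main work—and the main obstacle of the argument—is then to combine the median property of $u$ with the Peano property of $G$ (available through Theorem~\ref{T:Peano/ph}, since $G$ is ph-homogeneous) to deform $Q$ into another $(y,x)$-geodesic that first visits one of the neighbors $v$ or $w$ of $u$ in $U_{ab}$, and then to pivot at that vertex so as to reach $x$ through a $U_{ab}$-vertex lying strictly between $u$ and $x$ on an $(u,x)$-geodesic, contradicting $I_G(u,x)\cap U_{ab} = \{x\}$. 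The two preliminary reductions above are routine; the delicate geodesic deformation in the final step, requiring the careful tracking of the half-spaces $W_{uv}$ and $W_{uw}$ on which $u$, $v$, $w$, $x$ are distributed, is where the real work lies.
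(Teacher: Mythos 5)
Your proposal does not contain a proof: the step you yourself identify as ``where the real work lies'' --- deforming the geodesic $Q$ so as to produce a $U_{ab}$-vertex strictly between $u$ and $x$ --- is never carried out, and it is precisely the entire content of the lemma. Indeed, your own first paragraph shows (correctly) that the hypotheses force $u \in I_G(x,v)\cap I_G(x,w)$ unconditionally, so the statement is really the assertion that no such $x$ can exist; everything therefore hinges on actually deriving a contradiction, and the two ``routine reductions'' contribute nothing toward it. The paper obtains the contradiction by induction on $d_G(u,x)$: the base case $d_G(u,x)=1$ is settled by a direct analysis inside $co_G(\{u,x,v,w,x'\})$ showing that a vertex of $W_{ab}$ cannot have three neighbours in $U_{ab}$ (using ph-stability of $U_{ba}^F$ and transitivity of $\Theta$), and the inductive step contracts the $\Theta$-class of an edge $uy$ with $y\in I_G(u,x)$, invoking Lemma~\ref{L:contract./local.hypernet.} to preserve local ph-homogeneity and reduce the distance. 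Nothing in your sketch supplies an equivalent mechanism.

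There is also a specific reason to doubt that your planned final step could be made to work as described: you propose to use ``the Peano property of $G$ (available through Theorem~\ref{T:Peano/ph}, since $G$ is ph-homogeneous)'', but the hypothesis is only that $G$ is ph-homogeneous \emph{in the single edge $ab$}, not that it is a ph-homogeneous partial cube; Theorem~\ref{T:Peano/ph} is therefore not applicable, and relying on it would in any case be circular, since this lemma is an ingredient of the local characterization (Theorem~\ref{T:loc.hypernet./charact.}) from which such global properties are ultimately derived. A smaller slip of the same kind: local ph-homogeneity gives you ph-stability of $U_{ab}^F$ and $U_{ba}^F$ for your finite convex $F$, but not $ph(F)\leq 1$.
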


\begin{proof}
The proof will be by induction on $d_G(u,x)$.

(a)\; Suppose that $d_G(u,x) = 1$.  Then $u$ has three neighbors $x, v, w$ in $U_{ab}$.  Let $F$ be the subgraph of $G$ induced by $co_G(u,x,v, w,x')$, where $x'$ is the neighbor of $x$ in $U_{ba}$.  Then $F$ is a finite convex subgraph of $G$.  Let $x', v', w'$ be the neighbors in $U_{ba}^F$ of $x, v, w$, respectively.  Then $d_F(x',v') = d_F(v',w') = d_F(w',x') = 2$.  Denote by $u'$ the common neighbor of $x'$ and $w'$.  Suppose that $u'$ and $v'$ are not adjacent (see Figure~\ref{F:tripod}).  Then $d_F(u',v') = 3$, and thus any $(v',u')$-geodesic is a geodesic of maximal length because, by Lemma~\ref{L:edge/co(H)}, any edge of $F$ is $\Theta$-equivalent to one of the edges $ux, uv, uw, xx'$.  It follows that $u' \notin I_F(v',y)$ for some $y \in U_{ba}^F$, contrary to the fact that $U_{ba}^F$ is ph-stable since $G$ is ph-homogeneous in $ab$.  Therefore $u'$ is adjacent to $x', v', w'$.  Hence both the edges $u'x'$ and $u'w'$ are $\Theta$-equivalent to the edge $vu$, and thus they are $\Theta$-equivalent by transitivity, which is impossible.  We infer that $u$ has exactly two neighbors in $U_{ab}$.

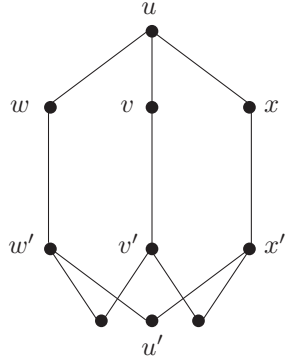
\begin{figure}[!h]
    \centering
{\tt    \setlength{\unitlength}{0.80pt}
\begin{picture}(123,199)
\thinlines    \put(62,83){\line(0,1){104}}
              \put(84,49){\circle*{6}}
              \put(38,49){\circle*{6}}
              \put(62,186){\circle*{6}}
              \put(14,150){\circle*{6}}
              \put(108,150){\circle*{6}}
              \put(14,83){\circle*{6}}
              \put(108,83){\circle*{6}}
              \put(62,83){\circle*{6}}
              \put(62,150){\circle*{6}}
              \put(62,49){\circle*{6}}
              \put(109,84){\line(-2,-3){23}}
              \put(13,84){\line(2,-3){24}}
              \put(61,83){\line(2,-3){24}}
              \put(13,151){\line(4,3){48}}
              \put(109,151){\line(-4,3){48}}
              \put(109,151){\line(0,-1){66}}
              \put(13,151){\line(0,-1){65}}
              \put(13,84){\line(4,-3){46}}
              \put(108,83){\line(-4,-3){45}}
              \put(62,84){\line(-2,-3){23}}
              \put(115,81){$x'$}
              \put(57,194){$u$}
              \put(-5,148){$w$}
              \put(47,148){$v$}
              \put(115,148){$x$}
              \put(-6,81){$w'$}
              \put(46,81){$v'$}
              \put(57,33){$u'$}
\end{picture}}    
\caption{Part (a) of the proof of Lemma~\ref{L:tripod}.}
\label{F:tripod}
\end{figure}

(b)\;  Suppose that $u \notin I_G(x,v) \cup I_G(x,w)$ for every vertex $x \in U_{ab}-\{ v,w\}$ such that $I_G(u,x) \cap U_{ab} = \{ x\}$ and $d_G(u,x) \leq n$ for some positive integer $n$.  Let $x \in U_{ab}-\{ v,w\}$ be such that $I_G(u,x) \cap U_{ab} = \{ x\}$ and $d_G(u,x) = n+1$.  Suppose that $u \in I_G(x,v) \cup I_G(x,w)$.  Let $e = uy$ where $y \in N_G(u) \cap I_G(u,x)$, and let $G' := G/e$ be the $\Theta$-contraction of $G$ with respect to the $\Theta$-class of $e$.  We will use the notations introduced in Remark~\ref{R:notation}.  $G'$ is ph-homogeneous in $a'b'$ by Lemma~\ref{L:contract./local.hypernet.}, and $v'$ and $w'$ are neighbors of $u'$ in $U_{a'b'}^{G'}$.  Suppose that some vertex $c \in I_G(u,x)$ is adjacent to some vertex $d \in U_{ab}$ such that the edge $dc$ is $\Theta$-equivalent to $e$.  Then clearly $c \neq u$ and $d \neq x$, and moreover $z \in I_G(u,x)$, contrary to the hypothesis.  Therefore $I_G(u',x') \cap U_{a'b'}^{G'} = \{ x'\}$.  Furthermore, by Lemma~\ref{L:interv.}, $u' \in I_G(x',v') \cup I_G(x',w')$ with $d_{G'}(u',x') = p$.  This yields to a contradiction with the induction hypothesis.  Consequently $u \notin I_G(x,v) \cup I_G(x,w)$.
\end{proof}

Recall that the
\emph{isometric dimension} of a finite partial cube $G$, i.e., the least non-negative integer $n$ such
that $G$ is an isometric subgraph of an $n$-cube, coincides with the
number of $\Theta$-classes of $E(G)$.  We denote it by  $\mathrm{idim}(G)$.

\begin{lem}\label{L:idim(G)=length(geod)=>G=cycle}
Let $G$ be a partial cube which is ph-homogeneous in one of its edges $ab$, and $P$ a $U_{ab}$-geodesic whose length is as small as possible, and such that any edge of $G$ is $\Theta$-equivalent either to $ab$ or to an edge of $P$.  Let $v$ and $w$ be the endvertices of $P$,\; $v'$ and $w'$ the neighbors in $U_{ba}$ of $v$ and $w$, respectively, and let $P'$ be a $(v',w')$-geodesic.  Then $G$ is equal to the cycle $C := P \cup \langle w,w'\rangle \cup P' \cup \langle v,v'\rangle$.
\end{lem}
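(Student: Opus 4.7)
I would proceed by induction on $p := |E(P)|$, after first extracting the key structural consequences of the hypotheses. Since $P$ is a geodesic its $p$ edges fall in $p$ distinct $\Theta$-classes (Lemma~\ref{L:gen.propert.}(v)) and none of them is $\Theta$-equivalent to $ab$ (as $V(P) \subseteq W_{ab}$), so the $\Theta$-property forces $\mathrm{idim}(G) = p+1$; hence $G$ embeds isometrically, in fact as an induced subgraph, in $Q_{p+1}$. Choose coordinates so that $\phi_0$ is the $ab$-direction and $v = 0^{p+1}$. Any two distinct $x, y \in U_{ab}$ satisfy $d_G(x,y) \geq p$ (minimality of $P$) and $d_G(x,y) \leq p$ ($x, y$ agree on the $ab$-coordinate, so they differ in at most the $p$ remaining ones), so equality holds and $y$ must be the bitwise complement of $x$ in the non-$ab$ coordinates. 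This forces $U_{ab} = \{v, w\}$; the parallel argument applied to $\{v', w'\}$ yields $U_{ba} = \{v', w'\}$. In particular $d_G(v', w') = p$, so $P'$ has length $p$, the cycle $C$ has length $2(p+1)$, and each $\Theta$-class of $G$ contributes exactly two antipodal edges of $C$, making $C$ isometric by Lemma~\ref{L:gen.propert.}(vii).

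The base case $p = 1$ is immediate: $G$ has only two $\Theta$-classes, so $G$ is an induced isometric subgraph of $Q_2 = C_4$ containing the $4$-cycle $C$, hence $G = C$. For the inductive step $p \geq 2$, I would contract the first edge $e := x_0 x_1 \in E(G[\mathcal{I}_G(U_{ab})])$; by Lemma~\ref{L:contract./local.hypernet.} the graph $G' := G/e$ is ph-homogeneous in $a'b'$. One verifies that $U_{a'b'}^{G'} = \gamma_e(U_{ab}) = \{\tilde v, \tilde w\}$, with $\tilde v \neq \tilde w$ since $d_G(v, w) = p \geq 2$ means $v, w$ are not $[e]$-adjacent; that $\gamma_e(P)$ is a $(\tilde v, \tilde w)$-geodesic of length $p - 1$ and hence a shortest $U_{a'b'}^{G'}$-geodesic; and that the $\Theta$-property descends to $G'$ upon collapsing $[e]$. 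The induction hypothesis therefore gives $G' = C_{2p}$.

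Finally, $G$ is the expansion of $G' = C_{2p}$ along the proper cover $(V_0, V_1) := (\gamma_e(W_{x_0 x_1}^G), \gamma_e(W_{x_1 x_0}^G))$. Both $V_0$ and $V_1$ induce connected isometric subgraphs of the cycle $C_{2p}$, hence arcs of length at most $p$ with at most $p + 1$ vertices each. The identity $|V_0| + |V_1| = 2p + |V_0 \cap V_1|$ then gives $|V_0 \cap V_1| \leq 2$, while the containment $C \subseteq G$ gives $|V(G)| \geq 2(p+1)$ and therefore $|V_0 \cap V_1| \geq 2$. Equality forces both arcs to have length exactly $p$, with shared endpoints antipodal in $C_{2p}$; a direct check shows that the expansion of $C_{2p}$ along such a cover is itself a cycle of length $2(p+1)$. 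Since a cycle admits no proper subcycle of its own length, the inclusion $C \subseteq G$ forces $G = C$.

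The main obstacle is the preliminary collapse $|U_{ab}| = |U_{ba}| = 2$: this is where the minimality hypothesis on $P$, combined with the $\Theta$-property, does the heavy lifting by converting a soft metric condition into the rigid statement that $U_{ab}$ is precisely the antipodal pair of the $p$-dimensional $\phi_0 = 0$ slice. Once this is in place, the induction step and the final cycle-expansion combinatorics fall out cleanly.
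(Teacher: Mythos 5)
Your argument hinges on the opening claim that any two distinct $x,y \in U_{ab}$ satisfy $d_G(x,y) \geq p$ ``by minimality of $P$'', from which you deduce $U_{ab}=\{v,w\}$ and then let everything else follow. That claim is not justified by the hypothesis. The minimality of $P$ is minimality among $U_{ab}$-\emph{geodesics}, and by the paper's definition a $U_{ab}$-path has length at least $2$ and no inner vertex in $U_{ab}$. So the hypothesis says nothing about pairs $x,y\in U_{ab}$ that are adjacent, nor about pairs whose geodesics pass through further vertices of $U_{ab}$: splitting an $(x,y)$-geodesic at its $U_{ab}$-vertices, every segment of length at least $2$ is indeed forced to have length at least $p$, but segments that are single edges of $G[U_{ab}]$ escape the minimality hypothesis entirely. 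Such an edge $xy$ of $G[U_{ab}]$ is merely required to be $\Theta$-equivalent to some edge of $P$, which is consistent with everything you have assumed; ruling this configuration out is exactly where the work lies. The paper devotes Lemma~\ref{L:tripod} (itself proved by an induction on distance using $\Theta$-contractions) and part (a) of its proof of the present lemma to precisely this point, namely controlling edges joining inner vertices of $P$ to vertices of $U_{ab}-\{v,w\}$. Your proof has no substitute for that step, and since your induction and your final expansion analysis both presuppose $U_{ab}=\{v,w\}$ and $U_{ba}=\{v',w'\}$, the gap is foundational rather than cosmetic.

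For what it is worth, the rest of your plan is a genuinely different route from the paper's (which inducts on $\mathrm{idim}(G)$, contracts edges of $P$, and proves convexity of $P$, $P'$ and $C$ in turn, without ever isolating $\vert U_{ab}\vert=2$ as an intermediate statement), and it would be attractive if the first step could be salvaged. But establishing that $G[U_{ab}]$ has no edges, equivalently that $U_{ab}$ contains no vertex besides $v$ and $w$, is essentially the content of the lemma itself, so you cannot take it as a one-line consequence of minimality. Two smaller points would also need tightening: in the expansion step you assume $V_0$ and $V_1$ are proper arcs of $C_{2p}$, whereas an isometric connected subgraph of a cycle could a priori be the whole cycle (i.e.\ $W_{x_1x_0}$ could be a periphery); and the base case for a $U_{ab}$-geodesic is $p=2$, not $p=1$, since $U_{ab}$-paths have length at least $2$.
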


\begin{proof}
The proof will be by induction on $\mathrm{idim}(G) = d_G(v,w)+1$.  This is clear if $d_G(v,w) = 2$.  Suppose that this holds if $d_G(v,w) \leq n$ for some $n \geq 2$, and let $G$,\; $P$ and $v, w$ be such that $\mathrm{idim}(G) = d_G(v,w)+1 = n+2$.  

(a)\; Let $e$ be any edge of $P$,\; $G' := G/e$ the $\Theta$-contraction of $G$ with respect to the $\Theta$-class of $e$, and $\gamma_{e}$ the natural surjective contraction
of $G$ onto $G'$.  Then $G'$ is ph-homogeneous in $a'b'$ by Lemma~\ref{L:contract./local.hypernet.}.

Denote by $x_v$ and $x_w$ the neighbors in $P$ of $v$ and $w$, respectively.  We will show that, if the edge $e$ is $\Theta$-equivalent to some edge $xy$ where $x$ is an inner vertex of $P$ and $y \in U_{ab}-\{v,w\}$, then $e = xy$ and moreover it is equal to $x_vv$ or $x_ww$.  Suppose that $e$ is distinct from $x_vv$ and $x_ww$.  Then, by Lemma~\ref{L:tripod}, $x$ is distinct from $x_v$ and $x_w$.  It follows that $P[v,x] \cup \langle x,y\rangle$ or $P[w,x] \cup \langle x,y\rangle$ is a $U_{ab}$-geodesic depending on whether $e$ is an edge of $P[w,x]$ or of $P[v,x]$.  This yields a contradiction with the fact that $P$ is a $U_{ab}$-geodesic whose length is as small as possible.  Note that, by the properties of $E(G)$ and the minimality of $l(P)$, any $U_{ab}$-geodesic $Q$ of $G$ has length $n+1$, and the above result also holds for $Q$.

We deduce that $\gamma_e(P)$ is a $U_{\gamma_{e}(a)\gamma_{e}(b)}^{G'}$-geodesic whose length is $n$, and thus is as small as possible, and moreover, by Lemma~\ref{L:expans./theta}, any edge of $G'$ is $\Theta$-equivalent either to $\gamma_{e}(a)\gamma_{e}(b)$ or to an edge of $\gamma_{e}(P)$.   It follows, by the induction hypothesis, that $$G' = \gamma_e(C) = \gamma_e(P) \cup \langle \gamma_e(w),\gamma_e(w')\rangle \cup \gamma_e(P') \cup \langle \gamma_e(v),\gamma_e(v')\rangle.$$  This proves in particular that $\gamma_e(P)$ is convex.

(b)\; Suppose that $P$ is not convex.  Then there exists another $(v,w)$-geodesic $Q$.  Because $n \geq 2$, we can choose the edge $e$ such that $\gamma_e(P) \neq \gamma_e(Q)$, contrary to the fact that $\gamma_e(P)$ is convex by (a).  Therefore $P$ is convex.  For the same reason $P'$ is convex.

(c)\; Suppose that $C$ is not convex.  Then there exists a geodesic joining a vertex of $P$ and a vertex of $P'$ which contains an edge $cd$ which is $\Theta$-equivalent to $ab$ and distinct from the edges $vv'$ and $ww'$.  Because $n \geq 2$, and thus $l(P) \geq 3$, the vertex $c$ is not adjacent to both $v$ and $w$.  Hence we can choose $e$ so that $\gamma_e(c)$ is distinct from $\gamma_e(v)$ and $\gamma_e(w)$.  It follows that $\gamma_e(c)\gamma_e(d)$ is an edge of $G'$ distinct from the edges $\gamma_e(v)\gamma_e(v')$ and $\gamma_e(w)\gamma_e(w')$, contrary to the fact that $G' = C'$ by (a).  Therefore the cycle $C$ is convex.

(d)\; Suppose now that $G \neq C$.  Then, because a partial cube is convex, there exists an edge $xy$ with $x \in V(C)$ and $y \notin V(C)$.  By the properties of $G$, $xy$ is $\Theta$-equivalent to some edge $cd$ of $G$.  Then $y \in I_G(d,x)$.  Hence $y \in V(C)$ since $d, x \in V(C)$ and $C$ is convex.  Consequently $G = C$.
\end{proof}

\begin{lem}\label{L:Pconv.=>l(P)min.}
Let $G$ be a partial cube, $ab$ an edge of $G$, and $P$ a $U_{ab}$-geodesic such that each edge of $G$ is $\Theta$-equivalent to some edge of $P$ or to $ab$.  If $P$ is convex, then $P$ is a $U_{ab}$-path of minimal length.
\end{lem}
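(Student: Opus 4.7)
My plan rests on a single observation: the hypothesis on $\Theta$-equivalence, combined with the convexity of $P$, forces $W_{ab}$ to coincide with $V(P)$ as a set. Write $P = \langle v_0, v_1, \ldots, v_n\rangle$ with $v_0, v_n \in U_{ab}$. By Lemma~\ref{L:gen.propert.}(v) the $n$ edges of $P$ lie in distinct $\Theta$-classes, say $\Theta_1, \ldots, \Theta_n$; together with the class $\Theta_0$ of $ab$ these exhaust the $\Theta$-classes of $G$, so $\mathrm{idim}(G) = n+1$. In particular each $\Theta_i$ with $i\geq1$ separates $v_0$ from $v_n$ (its representative edge sits on $P$), whereas $\Theta_0$ does not (both $v_0$ and $v_n$ lie in $W_{ab}$).

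The crucial step is to prove $W_{ab} = V(P)$. Fix $u \in W_{ab}$. Then $\Theta_0$ does not separate $u$ from $v_0$ or from $v_n$, and for every $i \in \{1, \ldots, n\}$ the class $\Theta_i$ separates $u$ from exactly one of $v_0, v_n$, since these two vertices lie in opposite half-spaces of $\Theta_i$. Using the standard fact that in a partial cube the distance between two vertices equals the number of $\Theta$-classes separating them, this yields
\[
d_G(u, v_0) + d_G(u, v_n) \;=\; n \;=\; d_G(v_0, v_n),
\]
so $u \in I_G(v_0, v_n)$. Convexity of $P$ gives $I_G(v_0, v_n) = V(P)$, hence $u \in V(P)$; the reverse inclusion is trivial.

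The rest is immediate. Since $U_{ab} \subseteq W_{ab} = V(P)$ and the inner vertices $v_1, \ldots, v_{n-1}$ of the $U_{ab}$-geodesic $P$ are by definition excluded from $U_{ab}$, we obtain $U_{ab} = \{v_0, v_n\}$. Consequently every $U_{ab}$-path of $G$ is a $(v_0, v_n)$-path (the constraint on inner vertices is automatic, since $U_{ab}$ contains only the two endpoints), so its length is at least $d_G(v_0, v_n) = n = l(P)$, and $P$ attains this minimum. I expect the only step that requires any thought is the collapse $W_{ab} = V(P)$; once this is in place the conclusion is a one-line unwrapping of definitions. The only routine point to guard against is the implicit use of the coordinate/half-space description of distance in a partial cube, which is standard via the isometric embedding into a hypercube.
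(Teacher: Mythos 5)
Your proof is correct, but it follows a genuinely different route from the paper's. The paper argues by contraposition: assuming $P=\langle v_0,\dots,v_n\rangle$ is not a $U_{ab}$-path of minimal length, it produces a vertex $x\in U_{ab}$ distinct from $v_0$ and $v_n$, follows a $(v_0,x)$-geodesic to the first vertex $x_{i+1}$ that leaves $P$, observes that the departing edge $x_ix_{i+1}$ must be $\Theta$-equivalent to a later edge $v_jv_{j+1}$ of $P$ (it cannot be $\Theta$-equivalent to $ab$ because the geodesic stays inside the convex set $W_{ab}$, nor to an earlier edge since the initial segment coincides with $P$), and deduces from $x_{i+1}\in I_G(v_i,v_{j+1})$ a second $(v_0,v_n)$-geodesic avoiding $V(P)$, contradicting convexity. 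You instead prove the stronger structural fact $W_{ab}=V(P)$ by counting separating $\Theta$-classes: each of the $n$ classes met by $P$ separates an arbitrary $u\in W_{ab}$ from exactly one of $v_0,v_n$, while the class of $ab$ separates it from neither, so $d_G(u,v_0)+d_G(u,v_n)=n=d_G(v_0,v_n)$ and $u\in I_G(v_0,v_n)=V(P)$; hence $U_{ab}=\{v_0,v_n\}$ and every $U_{ab}$-path is a $(v_0,v_n)$-path of length at least $n$. Your argument buys more — it identifies $W_{ab}$ with the path itself, information the paper only extracts later (Lemma~\ref{L:idim(G)=length(geod)=>G=cycle}) by a $\Theta$-contraction induction — at the mild cost of invoking the identity between distance and the number of separating $\Theta$-classes, which the paper never states explicitly but which follows routinely from Theorem~\ref{T:Djokovic-Winkler} and Lemma~\ref{L:gen.propert.}(iv),(v). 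Both proofs are sound.
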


\begin{proof}
Let $P = \langle v_0,\dots,v_n\rangle$.  Assume that $P$ is not a $U_{ab}$-path of minimal length.  It follows that there exists a vertex $x \in U_{ab}$ which is distinct from $v_0$ and $v_n$.  Let $Q = \langle x_0,\dots,x_p\rangle$ be a $(v_0,x)$-geodesic with $x_0 = v_0$ and $x_p = x$, and let $i$ be the smallest integer such that $x_{i+1} \notin V(P)$.  Then $x_i = v_i$, and the edge $x_ix_{i+1}$ is $\Theta$-equivalent to the edge $v_jv_{j+1}$ for some $j > i$.  It follows that $x_{i+1} \in I_G(v_i,v_{j+1})$.  Therefore there exist a $(v_0,v_n)$-geodesic which passes through $x_{i+1}$, and thus which is distinct from $P$, which proves that $P$ is not convex.
\end{proof}

Let $G$ be a partial cube, $ab$ one of its edges, and $u \in \mathcal{I}_G(U_{ab})-U_{ab}$.  Then any convex $U_{ab}$-path that passes through $u$ and that satisfies (SPS1) and (SPS2) will be said to be \emph{associated with $u$}.

\begin{lem}\label{L:min.Uab-path=associated}
Let $G$ be a partial cube, $ab$ one of its edges, and $P_u$ a $U_{ab}$-path which is associated with some vertex $u \in \mathcal{I}_G(U_{ab})-U_{ab}$.  Then any $U_{ab}$-geodesic that passes through $u$ of minimal length is equal to $P_u$, and thus this path is unique.
\end{lem}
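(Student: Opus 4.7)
The plan is to compare any minimal-length $U_{ab}$-geodesic through $u$ directly with $P_u$ by playing (SPS2) against the minimality assumption. First I would fix a $U_{ab}$-geodesic $Q$ through $u$ of minimal length, and denote its endvertices $x,y\in U_{ab}$; since $Q$ is a geodesic passing through $u$ we have $u\in I_G(x,y)$. Because $P_u$ is convex, it is a geodesic between its endvertices, hence itself a $U_{ab}$-geodesic through $u$, and minimality of $|Q|$ therefore yields $|Q|\le|P_u|$.

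Next I would invoke (SPS2) on the pair $(x,y)$ to produce an $(x,y)$-geodesic $R$ containing $P_u$ as a subpath, whence $|P_u|\le|R|=d_G(x,y)=|Q|$. Combining the two inequalities forces $|P_u|=|Q|=|R|$, so the subpath $P_u$ of $R$ exhausts $R$; in particular the endvertices of $P_u$ are exactly $x$ and $y$.

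The final step uses the convexity of $P_u$ together with $x,y\in V(P_u)$, which gives $I_G(x,y)\subseteq V(P_u)$, hence $V(Q)\subseteq V(P_u)$. Equality of cardinalities (both sides have $d_G(x,y)+1$ vertices) yields $V(Q)=V(P_u)$. Since in any $(x,y)$-geodesic the vertex at position $i$ is uniquely determined within the vertex set as the one at distance $i$ from $x$, two $(x,y)$-geodesics sharing a vertex set must coincide as paths; hence $Q=P_u$. Uniqueness of the path associated with $u$ is then immediate: if $P_u$ and $P'_u$ were two associated paths, each would equal any such minimal $Q$, so $P_u=P'_u$.

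I do not expect a real obstacle here; the only pitfall is the temptation to locate the endvertices of $P_u$ via (SPS1), which introduces an avoidable case distinction about which endvertex $v$ goes with which of $x,y$. Applying (SPS2) together with the minimality of $|Q|$ is the cleanest route, extracting both the length equality $|P_u|=|Q|$ and the identification of endvertices in a single stroke.
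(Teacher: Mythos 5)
Your proof is correct and follows essentially the same route as the paper: take a minimal-length $U_{ab}$-geodesic $Q$ through $u$, apply (SPS2) to its endvertices to embed $P_u$ in a geodesic between them, derive the length equality forcing $P_u$ and $Q$ to share endvertices, and conclude $P_u=Q$ from the convexity of $P_u$. The only difference is that you spell out the final convexity step in more detail than the paper does.
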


\begin{proof}
Let $Q$ be a $U_{ab}$-geodesic which passes through $u$ whose length is minimal with respect to this property, and let $v$ and $w$ be its endvertices.  By (SPS2), $P_u$ is a subpath of some $(v,w)$-geodesic.  Because $l(Q) \leq l(P)$, it follows that $Q$ has the same endvertices as $P_u$.  Therefore $P_u = Q$ since $P_u$ is convex.
\end{proof}

Because of its uniqueness, $P_u$ is called \emph{the $U_{ab}$-geodesic associated with $u$}.

\begin{proof}[\textnormal{\textbf{Proof of the equivalence (i)$\Leftrightarrow$(ii) of 
Theorem~\ref{T:loc.hypernet./charact.}}}]
Let $G$ be a partial cube, and $ab$ one of its edges.

\emph{Necessity.}\; Assume that $G$ is ph-homogeneous in $ab$, and let $u \in \mathcal{I}_G(U_{ab})-U_{ab}$.  We distinguish two cases.

\emph{Case 1.}\; $G$ is finite.

We proceed by induction on the isometric dimension of $G$ to prove that, if $G$ is ph-homogeneous in $ab$, then $U_{ab}$ and $U_{ba}$ are strongly ph-stable.  We clearly have $\mathrm{idim}(G) \geq 3$.  If $\mathrm{idim}(G) = 3$, then we can easily prove that $G$ is a $6$-cycle, and thus we are done.  Suppose that the result is true for any partial cube which is ph-homogeneous in one of its edge $ab$ and whose isometric dimension is at most $n$ for some $n \geq 3$, and let $G$ be a partial cube of isometric dimension $n+1$ which is ph-homogeneous in $ab$.  Without loss of generality we can suppose that $V(G) = \mathcal{I}_G(U_{ab}) \cup \mathcal{I}_G(U_{ba})$, i.e., $G = G_{\overline{ab}}$.

  Let $P$ be a $U_{ab}$-geodesic passing through $u$ whose length $l(P)$ is as small as possible, and let $v$ and $w$ be its endvertices.  We have two subcases.\\
  
\emph{Subcase 1.1.}\; Assume that any edge of $G$ is $\Theta$-equivalent either to $ab$ or to an edge of $P$.

Suppose that $P$ is not a $U_{ab}$-path of minimal length.  Then there exists a vertex $x \in U_{ab}$ which is distinct from $v$ and $w$.  Because $U_{ab}$ is ph-stable, it follows that $u \in I_G(x,y)$ for some $y \in U_{ab}$.  Hence $d_G(x,y) = l(P)$ because $l(P)$ is minimal and since any edge of $G$ is $\Theta$-equivalent either to $ab$ or to an edge of $P$.  Let $Q$ be an $(x,y)$-geodesic which passes through $u$. By the Interval Property of the Cartesian product and since $l(P)$ is minimal, both $Q[u,x]$ and $Q[u,y]$ have edges which are $\Theta$-equivalent to edges of both $P[u,v]$ and $P[u,w]$.  We deduce that $l(P) \geq 4$.

We infer that there exists an edge $e$ of $P$ which is not $\Theta$-equivalent to an edge $uz$ for some $z \in U_{ab}$, and which is such that $\gamma_e(P) \neq \gamma_e(Q)$.  By Lemma~\ref{L:contract./local.hypernet.}, $G' := G/e$ is ph-homogeneous in $a'b'$.  Hence, by the induction hypothesis, since the isometric dimension of $G'$ is equal to $n$, there exists a $U_{a'b'}^{G'}$-geodesic associated with $u'$.  Hence $\gamma_e(P) = \gamma_e(Q)$, by Lemma~\ref{L:min.Uab-path=associated}, because $\gamma_e(P)$ and $\gamma_e(Q)$ are both $U_{a'b'}^{G'}$-paths passing through $u'$ of minimal length.  This yields a contradiction with the fact that $\gamma_e(P) \neq \gamma_e(Q)$.  Consequently $P$ is convex, and thus, by Lemma~\ref{L:Pconv.=>l(P)min.}, $P$ is a $U_{ab}$-path of minimal length.

It follows, by Lemma~\ref{L:idim(G)=length(geod)=>G=cycle}, that $G$ is the cycle $P \cup \langle w,w'\rangle \cup P' \cup \langle v,v'\rangle$, where $v'$ and $w'$ are the neighbors in $U_{ba}$ of $v$ and $w$, respectively, and $P'$ is a $(v',w')$-geodesic.  Therefore $U_{ab}$, and analogously $U_{ba}$, are strongly ph-stable.  Hence 
$G$ satisfies condition (ii) of Theorem~\ref{T:charact.}.\\

\emph{Subcase 1.2.}\; Assume now that some edge $e = cd$ of $G$ is not $\Theta$-equivalent to $ab$ or to an edge of $P$.

(a)\; We first show that $e$ is not $\Theta$-equivalent to an edge incident to an inner vertex of $P$ and to a vertex in $U_{ab}$.  If $l(P) = 2$, then we already know, by Lemma~\ref{L:tripod}, that $v$ and $w$ are the only neighbors of $u$ in $U_{ab}$.  Assume now that the length of $P$ is greater than $2$, and that some inner vertex $x$ of $P$ is adjacent to some vertex $y \in U_{ab}$.  By Lemma~\ref{L:tripod}, $y$ coincides with $v$ or $w$ if $x$ is a neighbor of $v$ or $w$, respectively.  Suppose that $x$ is not a neighbor of $v$ or of $w$, and, without loss of generality, that $u \in V(P[v,x])$.  Then $P[v,x] \cup \langle x,y\rangle$ is not a geodesic by the minimality of the length of $P$.  It follows that the edge $xy$ is $\Theta$-equivalent to some edge of $P[v,x]$, and thus of $P$.

We deduce that in any case the edge $e$ is not $\Theta$-equivalent to an edge incident to an inner vertex of $P$ and to an edge in $U_{ab}$.\\

(b)\; Let $G' := G/e$ be the $\Theta$-contraction of $G$ with respect to the $\Theta$-class of $e$.  We will use the notations introduced in Remark~\ref{R:notation}.  Because $G = G_{\overline{ab}}$ by hypothesis, the graph $G'$ is ph-homogeneous in $a'b'$.  Due to the properties of $e$, it follows that $\gamma_e(P)$ is a $U_{a'b'}^{G'}$-geodesic whose length is the same as that of $P$.

By the induction hypothesis, since the isometric dimension of $G'$ is equal to $n$, there exists a $U_{a'b'}^{G'}$-geodesic $P_{u'}$ which is associated with $u'$.  Because $\gamma_e(P)$ is a $(v',w')$-geodesic which passes through $u'$, it follows, by (SPS2), that there exists in $G'$ a $(v',w')$-geodesic $Q'$ which contains $P_{u'}$ as a subpath.  Let $Q$ be a $(v,w)$-geodesic in $G$ such that $\gamma_e(Q) = Q'$.  Because any edge of $Q$ is $\Theta$-equivalent to an edge of $P$ by Lemma~\ref{L:gen.propert.}(iv), it follows that $e$ is not $\Theta$-equivalent to an edge of $Q$.  Moreover, by (a), $e$ is not $\Theta$-equivalent to an edge incident to an inner vertex of $Q$ and to a vertex in $U_{ab}$.  Finally $Q$ is a $U_{ab}$-geodesic by the minimality of $l(P)$.  Hence $v'$ and $w'$ are the only vertices of $Q'$ that belongs to $U_{a'b'}^{G'}$.  Therefore $Q' = P_{u'}$, and thus $P_{u'} = \gamma_e(P)$ since $P_{u'}$ is convex.  Consequently $P$ is convex since any geodesic in $G$ between the endvertices of $P$ contains no edge $\Theta$-equivalent to $e$ by Lemma~\ref{L:gen.propert.}(iv).  It remains to prove that $P$ satisfies (SPS1) and (SPS2).  Note that, due to Lemma~\ref{L:SPS2=>SPS1}, it suffices to show that $P$ satisfies (SPS2) since $U_{ab}$ is ph-stable.  Without loss of generality we suppose that $u$, and thus any vertex of $P$, belongs to $V_0$ (recall that $V_0 := W_{cd}$ if $e = cd$ (see Remark~\ref{R:notation})).

Let $x, y \in U_{ab}$ such that $u \in I_G(x,y)$.  Then $u' \in I_{G'}(x',y')$ by Remark~\ref{R:notation}.1.  By (SPS2), there exists an $(x',y')$-geodesic in $G'$ which contains $P_{u'}$ as a subpath.  By Remark~\ref{R:notation}.2, we can choose $R$ such that, if $x', y' \in V'_0$, then $V(R) \subseteq V'_0$, or, if $x'$ or $y'$ belongs to $V'_1$, say $x' \in V'_1$ and thus $y' \in V'_0$, such that $R[v',y'] \subseteq V'_0$.  Then, by the conclusion of Remark~\ref{R:notation}.2, $\Psi(R)$ is an $(x,y)$-geodesic in $G$ which contains $P$ as a subpath.  Hence $P$ satisfies (SPS2).

Consequently $U_{ab}$, and analogously $U_{ba}$, are strongly ph-stable.\\

\emph{Case 2.}\; $G$ is infinite.

Let $c, d \in U_{ab}$ be such that $u \in I_G(c,d)$.  Then $F := G[co_G(a,b,c,d,u)]$ is a finite convex subgraph of $G$, and thus is a ph-homogeneous in $ab$.  By Case 1, $u$ lies on a convex $U_{ab}^F$-path $P$ of $F$ which satisfies (SPS1) and (SPS2) in $F$.

Let $F'$ be a finite convex subgraph of $G$ which contains $F$.  Then, as above, $F'$ contains a convex $U_{ab}^{F'}$-path $P'$ which satisfies (SPS1) and (SPS2) in $F'$.  By (SPS2), $P'$ is a subpath of $P$.  This implies that $P = P'$ since $P$ is also a $U_{ab}^{F'}$-path.  It clearly follows that $P$ is a $U_{ab}$-path which satisfies (SPS1) and (SPS2) in $G$.  Therefore $U_{ab}$, and analogously $U_{ba}$, are strongly ph-stable.\\

\emph{Sufficiency.}\; Assume that $U_{ab}$ and $U_{ba}$ are strongly ph-stable.  Let $F$ be a convex subgraph of $G$ that contains an edge $\Theta$-equivalent to $ab$.  Without loss of generality we suppose that $ab$ is an edge of $F$.  We will show that $U_{ab}^F$ is ph-stable.

Let $u \in \mathcal{I}_F(U_{ab}^F)$.  Then $u \in I_G(x,y)$ for some $x, y \in U_{ab}^F$.  Because $F$ is convex, and thus $U_{ab}^{F} = U_{ab}^{G} \cap V(F)$ by Lemma~\ref{L:prop.isom subgr.}, it follows, by (SPS2), that the $U_{ab}$-geodesic $P_u$ associated with $u$ is a subpath of some $(x,y)$-geodesic of $G$, and thus of $F$.  Therefore $P_u$ is a $U_{ab}^F$-geodesic.  It follows, by (SPS1), that, for every $z \in \mathcal{I}_{F}(U_{ab}^F)$, $u \in I_G(z,v)$ for some endvertex $v$ of $P_u$, and thus $u \in I_F(z,v)$ by the convexity of $F$.  Hence $U_{ab}^F$ is ph-stable.  Analogously $U_{ba}^F$ is ph-stable.

We infer that $G$ is ph-homogeneous in $ab$.
\end{proof}

\subsection{Proof of the equivalence (ii)$\Leftrightarrow$(iii) of Theorem~4.15}\label{SS:(ii)<=>(iii)4.15}

\emph{Throughout this subsection, $G$ is a partial cube such that $U_{ab}$ and $U_{ba}$ are strongly ph-stable for some edge $ab$ of $G$}, and thus, by the equivalence of conditions (i) and (ii) of this theorem that we have already proved, $G$ is ph-homogeneous in $ab$.  Hence, by Lemma~\ref{L:ph-stable}, we deduce that $co_{G}(U_{ab}) = \mathcal{I}_{G}(U_{ab})$.

\begin{lem}\label{L:Pu/conv.unique}
Let $ab$ be an edge of $G$, $u \in \mathcal{I}_{G}(U_{ab})-U_{ab}$,\; $P_u$ the $U_{ab}$-geodesic associated with $u$, and $P_v$ the $U_{ab}$-geodesic associated with some inner vertex $v$ of $P_u$.  Then $P_u = P_v$.
\end{lem}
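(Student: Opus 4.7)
The plan is to exploit the strong ph-stability of $U_{ab}$ (which yields existence and uniqueness of the associated geodesic via Lemma~\ref{L:min.Uab-path=associated}) together with the convexity of $P_u$ to sandwich $P_v$ inside $P_u$, and then use the ``no inner vertex lies in $U_{ab}$'' property of a $U_{ab}$-path to force equality of the endvertices.

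First I would verify that $P_v$ is well-defined: since $v$ is an inner vertex of the $U_{ab}$-path $P_u$, the definition of a $U_{ab}$-path gives $v \notin U_{ab}$, while $v \in V(P_u) \subseteq I_G(x,y) \subseteq \mathcal{I}_G(U_{ab})$, where $x, y$ are the endvertices of $P_u$ (both in $U_{ab}$). Hence $v \in \mathcal{I}_G(U_{ab}) - U_{ab}$, and the associated geodesic $P_v$ exists by the assumption that $U_{ab}$ is strongly ph-stable.

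Next I apply property (SPS2) of $P_v$ to the pair $x, y \in U_{ab}$: since $P_u$ is a geodesic through $v$ with endpoints in $U_{ab}$, we have $v \in I_G(x,y)$, so $P_v$ is a subpath of some $(x,y)$-geodesic $R$. But $P_u$ is convex, which means $I_G(x,y) \subseteq V(P_u)$; combined with $|V(R)| = d_G(x,y)+1 = |V(P_u)|$, we obtain $V(R) = V(P_u)$ and therefore $R = P_u$ as paths. Consequently $P_v$ is a subpath of $P_u$.

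The final step is to identify the endpoints. The endvertices of $P_v$ lie in $U_{ab}$ (because $P_v$ is a $U_{ab}$-path), hence they are vertices of $P_u$ that lie in $U_{ab}$. But $P_u$ being a $U_{ab}$-path has no inner vertex in $U_{ab}$, so its only vertices in $U_{ab}$ are $x$ and $y$. Thus the endvertices of $P_v$ coincide with those of $P_u$, and being a subpath of $P_u$ with the same endpoints forces $P_v = P_u$. I do not foresee a genuine obstacle here; the only delicate point is remembering the precise definition of an $A$-path (inner vertices excluded from $A$), which is exactly what prevents $P_v$ from being a shorter sub-geodesic hidden strictly inside $P_u$.
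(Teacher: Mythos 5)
Your proof is correct and follows essentially the same route as the paper's: apply (SPS2) for $P_v$ to the endvertices $x,y$ of $P_u$, use the convexity of $P_u$ to identify the resulting $(x,y)$-geodesic with $P_u$ itself, and conclude from the fact that no inner vertex of $P_u$ lies in $U_{ab}$. The extra details you supply (well-definedness of $P_v$, the endpoint identification) are sound and merely make explicit what the paper leaves implicit.
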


\begin{proof}
Let $x$ and $y$ be the endvertices of $P_u$.  By (SPS2), $P_v$ is a subpath of some $(x,y)$-geodesic passing through $v$.  This $(x,y)$-geodesic is then $P_u$, because $P_u$ is convex.  It follows that $P_v = P_u$ since no inner vertex of $P_u$ belongs to $U_{ab}$.
\end{proof}

\begin{lem}\label{L:I(u,x)/v,w}
Let $ab$ be an edge of $G$, $u \in \mathcal{I}_{G}(U_{ab})-U_{ab}$, $P_u$ the $U_{ab}$-geodesic associated with $u$, and $v$ and $w$ the endvertices of $P_u$.  Then, for each vertex $x \in U_{ab}$, $v$ or $w$ belongs to $I_G(u,x)$.
\end{lem}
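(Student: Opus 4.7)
The claim follows by combining the two defining properties (SPS1) and (SPS2) of the associated geodesic $P_u$. The conclusion is immediate in the degenerate case $x \in \{v,w\}$, since an endpoint of an interval always lies in it; so I may assume $x \in U_{ab} \setminus \{v,w\}$.

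First I would apply (SPS1) to $x \in U_{ab} \subseteq \mathcal{I}_G(U_{ab})$ to obtain an endvertex $v'$ of $P_u$ with $u \in I_G(x,v')$; by the symmetry between $v$ and $w$, I may assume $v' = v$, so $u \in I_G(x,v)$. Now both $x$ and $v$ belong to $U_{ab}$, which puts us in a position to invoke (SPS2): there exists an $(x,v)$-geodesic $R$ that contains $P_u$ as a subpath. The crux of the argument is then that $v$ is simultaneously an endvertex of $R$ and an endvertex of $P_u$, so, since a connected subpath of a geodesic is determined by its endpoints, $P_u$ must coincide with the terminal segment of $R$ ending at $v$. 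Hence $R$ decomposes as $R = R[x,w] \cup P_u$, and its prefix $R[x,u] = R[x,w] \cup P_u[w,u]$ is an $(x,u)$-geodesic passing through $w$. This gives $d_G(u,x) = d_G(u,w) + d_G(w,x)$, i.e., $w \in I_G(u,x)$. The dual case $v' = w$ delivered by (SPS1) yields $v \in I_G(u,x)$ by the same argument with the roles of $v$ and $w$ exchanged.

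I do not anticipate a real obstacle; the only conceptual point is to notice that the hypothesis and the conclusion face in opposite directions (the hypothesis says $u$ lies on a geodesic issuing from an endpoint of $P_u$ toward $x$, while the conclusion says an endpoint of $P_u$ lies on a geodesic from $u$ to $x$), and that the bridge between the two is exactly (SPS2): once $u \in I_G(x,v)$ is known with $x,v \in U_{ab}$, the convex path $P_u$ can be extended at both ends to an $(x,v)$-geodesic, which forces the other endpoint $w$ to lie on the $x$-side of $u$.
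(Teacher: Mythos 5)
Your proof is correct and follows essentially the same route as the paper: apply (SPS1) to place $u$ on a geodesic from $x$ to one endvertex of $P_u$, then use (SPS2) to extend $P_u$ to an $(x,v)$-geodesic and read off that the other endvertex $w$ lies between $x$ and $u$. The extra care you take in identifying $P_u$ with the terminal segment of $R$ is implicit in the paper's one-line deduction, so there is no substantive difference.
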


\begin{proof}
Let $x \in U_{ab}$.  By (SPS1), $u$ belongs to $I_G(x,v)$ or $I_G(x,w)$, say $I_G(x,v)$.  Then, by (SPS2), $P_u$ is a subpath of some $(x,v)$-geodesic.  It follows that $w \in I_G(x,u)$.
\end{proof}

\begin{lem}\label{L:ab-cycle}
Let $ab$ be an edge of $G$,\; $P$ the $U_{ab}$-geodesic associated with some vertex in $\mathcal{I}_{G}(U_{ab})-U_{ab}$, and $v$ and $w$ its endvertices.  Let $v'$ and $w'$ be the neighbors in $U_{ba}$ of $v$ and $w$, respectively, and $P'$ a $(v',w')$-geodesic.  Then $C := P \cup \langle w,w'\rangle \cup P' \cup \langle v',v\rangle$ is the unique convex cycle containing $P$ and an edge $\Theta$-equivalent to $ab$.
\end{lem}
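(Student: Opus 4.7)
First, I would set up the cycle structure. Since $v,w \in U_{ab}$ each have a unique neighbor in $W_{ba}$, namely $v',w'$, the edges $vv'$ and $ww'$ are both $\Theta$-equivalent to $ab$, hence $\Theta$-equivalent to each other; in a bipartite partial cube this forces $d_G(v',w') = d_G(v,w) = l(P)$. Consequently $C$ is an even cycle of length $2l(P)+2$, and both $P \cup \langle w,w'\rangle$ and $\langle v,v'\rangle \cup P'$ are $(v,w')$-geodesics of length $l(P)+1$.

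Next, I would show $C$ is isometric using Lemma~\ref{L:gen.propert.}(vii). Pairs of edges within $E(P)$ (respectively $E(P')$) are non-$\Theta$-equivalent by Lemma~\ref{L:gen.propert.}(v); edges of $E(P) \cup E(P')$ lie inside $W_{ab}$ or $W_{ba}$ so are not $\Theta$-equivalent to $ab$; and the pair $\{vv',ww'\}$ is $\Theta$-equivalent and antipodal in $C$. The delicate case is a pair $e_i \in E(P)$, $f_j \in E(P')$. Lemma~\ref{L:gen.propert.}(iv) applied to the two $(v,w')$-geodesics just identified yields a bijection $\sigma$ on $\{1,\dots,l(P)\}$ with $e_i \mathrel{\Theta} f_{\sigma(i)}$. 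The key claim is $\sigma(i) = l(P)+1-i$, the antipodal matching on $C$; I expect to establish this by invoking strong ph-stability of $U_{ba}$ together with Lemmas~\ref{L:Pu/conv.unique} and~\ref{L:I(u,x)/v,w} applied on that side: each inner vertex $p'_j \notin U_{ba}$ of $P'$ has a uniquely determined associated $U_{ba}$-geodesic, which by its convexity must coincide with the restriction of $P'$ between its endpoints (in particular showing $P'$ itself is convex), and matching the ordered sequences of $\Theta$-classes along $P$ and along $P'$ then pins down the antipodal correspondence.

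For convexity of $C$, I consider $x,y \in V(C)$. If $x,y \in V(P)$, then $I_G(x,y) \subseteq V(P)$ by convexity of $P$; if $x,y \in V(P')$, then $I_G(x,y) \subseteq V(P')$ by convexity of $P'$ obtained above. If $x \in V(P)$ and $y \in V(P')$, any $(x,y)$-geodesic contains exactly one edge $\Theta$-equivalent to $ab$, and Lemma~\ref{L:I(u,x)/v,w} applied on the $U_{ab}$- and $U_{ba}$-sides forces this edge to be $vv'$ or $ww'$; the geodesic then decomposes into a subgeodesic within $V(P)$, the bridging edge, and a subgeodesic within $V(P')$, all contained in $V(C)$.

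Finally, for uniqueness, let $C^*$ be any convex cycle containing $P$ and an edge $\Theta$-equivalent to $ab$. By Lemma~\ref{L:gen.propert.}(vi), $C^*$ has exactly two such edges, splitting $V(C^*)$ into convex arcs $Q \subseteq W_{ab}$ (containing $P$) and $Q^* \subseteq W_{ba}$. Since $Q$ is a convex $U_{ab}$-geodesic through the inner vertex $u$ of $P$, any proper extension of $P$ within $Q$ would produce an endvertex $x \in U_{ab}$ such that some $(u,x)$-geodesic avoids $\{v,w\}$, contradicting Lemma~\ref{L:I(u,x)/v,w}; hence $Q = P$, and symmetrically $Q^* = P'$, whence $C^* = C$. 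The principal obstacle is the antipodal-matching step: identifying $\sigma$ as the antipodal permutation is where the full strength of (SPS1)--(SPS2) on both sides becomes essential and the geometric picture is least transparent.
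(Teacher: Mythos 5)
Your overall strategy is close in spirit to the paper's --- both arguments ultimately rest on (SPS1)/(SPS2) through Lemmas~\ref{L:Pu/conv.unique} and~\ref{L:I(u,x)/v,w} --- but the step you yourself flag as the principal obstacle is a genuine gap, and the mechanisms you propose do not close it. The paper's central step (its part (a)) is a direct distance computation: for a geodesic $R$ from $u\in V(P)$ to $u'\in V(P')$ whose unique edge $\Theta$-equivalent to $ab$ is $zz'$ with $z\in U_{ab}$, (SPS1) gives $u\in I_G(z,v)\cup I_G(z,w)$, and (SPS2) then upgrades this to $v\in I_G(z,w)$ (say), hence $v'\in I_G(z',w')$; since $u'\in I_G(v',w')$, the path $R$ is strictly longer than the $(u,u')$-path through $v,v'$ whenever $z\neq v,w$. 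So every geodesic between $V(P)$ and $V(P')$ crosses at $vv'$ or $ww'$, and the convexity of $P'$ and of $C$ follow. Your substitutes do not supply this. First, ``matching the ordered sequences of $\Theta$-classes'' of $P$ and $P'$ cannot pin down the antipodal correspondence: the two $(v,w')$-geodesics $P\cup\langle w,w'\rangle$ and $\langle v,v'\rangle\cup P'$ only share their \emph{set} of $\Theta$-classes (two geodesics with the same endvertices may traverse their common classes in different orders), so an order-preserving rather than order-reversing bijection is not excluded by this argument, and it would make $C$ non-isometric. Second, the claim that the associated $U_{ba}$-geodesic of an inner vertex of $P'$ coincides with $P'$ presupposes both that no inner vertex of $P'$ lies in $U_{ba}$ and that the endvertices of that associated geodesic are exactly $v'$ and $w'$ --- which is essentially the crossing statement you are trying to prove. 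Third, in your convexity step, Lemma~\ref{L:I(u,x)/v,w} only yields that $v$ or $w$ lies on $I_G(u,z)$ for the crossing vertex $z$; without the (SPS2)-based inequality above this does not force $z\in\{v,w\}$, since a geodesic through $z$ could a priori still be as short as one through $v$.

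A secondary defect: in your uniqueness argument the advertised contradiction with Lemma~\ref{L:I(u,x)/v,w} does not materialize. If the arc $Q$ of $C^{*}$ contained in $W_{ab}$ extends $P$ beyond $w$ to an endvertex $x\in U_{ab}$, then $Q[u,x]$ passes \emph{through} $w$, so it does not avoid $\{v,w\}$; and nothing forces $Q$ to be a $U_{ab}$-path, since a vertex of $Q$ may have a neighbour in $W_{ba}$ that is not on $C^{*}$. The paper instead obtains uniqueness from the convexity of $C$ together with the uniqueness of the associated geodesic $P$. Once the crossing computation is supplied, the rest of your outline is workable, but as written the proof does not go through.
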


\begin{proof}
By the uniqueness of $P$, it suffices to show that $C$ is convex.

(a)\; Let $R$ be an $(u,u')$-geodesic for some $u \in V(P)$ and $u' \in V(P')$.  Let $z$ and $z'$ be the vertices of $R$ in $U_{ab}$ and $U_{ba}$, respectively.  Because $P$ is associated with $u$ by Lemma~\ref{L:Pu/conv.unique}, it follows, by (SPS1), that $u \in I_G(z,v) \cup I_G(z,w)$, say $x \in I_G(z,w)$.  By (SPS2), $P$ is a subpath of some $(z,w)$-geodesic.  Hence $d_G(z,w) = d_G(z,v)+d_G(v,w)$.  It follows that $d_G(z',w') = d_G(z',v')+d_G(v',w')$.  Therefore, if $z \neq v$, and thus $z' \neq v'$, then $d_G(u,v) < d_G(u,z)$ and $d_G(u',v') < d_G(u',z')$, contrary to the hypothesis that $R$ is a geodesic.  Consequently $z = v$ and $z' = v'$.

(b)\; Suppose that $P'$ is not convex.  Then there exists a vertex $x$ of $P'$, and $y, z \in N_G(x) \cap I_G(x,w')$ such that only $y$ is a vertex of $P'$.  By (a), the edges $xy$ and $xz$ are both $\Theta$-equivalent to some edge of $P$.  Hence $xy$ and $xz$ are $\Theta$-equivalent by transitivity of $\Theta$.  Therefore $P'$ is convex.

We infer that $C$ is convex.
\end{proof}

This cycle $C$ will be called \emph{the $ab$-cycle associated with some given inner vertex of $P$}, and thus, by Lemma~\ref{L:Pu/conv.unique}, with \emph{any} inner vertex of $P$.  Note that $C \in \mathbf{C}(G,ab)$.

\begin{lem}\label{L:ab-cycle/cd-cycle}
Let $ab$ be an edge of $G$, 
and $C$ the $ab$-cycle associated with some vertex $u \in \mathcal{I}_{G}(U_{ab})-U_{ab}$.  Let $cd$ be an edge of $C$.  Then $C$ is the $cd$-cycle associated with any inner vertex of $C-W_{dc}$.
\end{lem}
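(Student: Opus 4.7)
The plan is to apply Lemma~\ref{L:ab-cycle} with $cd$ in place of $ab$ and show that the resulting cycle coincides with $C$. To set this up, note that $C$ is convex, hence isometric, of even length $2n \geq 6$, so Lemma~\ref{L:gen.propert.}(vii) implies the $\Theta$-class of $cd$ in $E(C)$ consists exactly of $cd$ and its antipodal edge $c''d''$; by Lemma~\ref{L:bip.gr./convex} the labeling can be chosen so that $c, c'' \in W_{cd}^G$ and $d, d'' \in W_{dc}^G$. Let $Q$ and $Q'$ be the two subpaths of $C$ joining $c$ to $c''$ and $d$ to $d''$ (avoiding the antipodal edges), each of length $n-1$. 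Since $C$ is isometric, both are geodesics in $G$; moreover $c, c'' \in U_{cd}^G$ with neighbors $d, d'' \in U_{dc}^G$, and $V(C - W_{dc}) = V(Q)$, so the inner vertices in question are exactly those of $Q$.

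Fix such an inner vertex $u$ of $Q$. I would first verify $u \in \mathcal{I}_G(U_{cd}) - U_{cd}$ and that $Q$ is convex in $G$. Membership in $\mathcal{I}_G(U_{cd})$ is immediate from $u \in V(Q) \subseteq I_G(c, c'')$; if $u$ had a neighbor $z \in W_{dc}^G$, the edge $uz$ would be $\Theta$-equivalent to $cd$, yet $uz$ is neither $cd$ nor $c''d''$ (since $u$ is inner to $Q$), so $z \notin V(C)$ and $uz \in \partial_G(C)$ would be $\Theta$-equivalent to an edge of $C$, contradicting Lemma~\ref{L:gen.propert.}(ix). For convexity of $Q$, any two $x, y \in V(Q)$ satisfy $d_G(x,y) = d_C(x,y) \leq n-1 < n$, so they are not antipodal in $C$; hence the unique $(x,y)$-geodesic in $C$ lies inside $Q$, and the convexity of $C$ gives $I_G(x,y) = I_C(x,y) \subseteq V(Q)$.

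The main step is to show $Q$ satisfies (SPS1) and (SPS2) as a $U_{cd}$-path through $u$. For (SPS2), given $x, y \in U_{cd}^G$ with $u \in I_G(x,y)$, the strategy is to start from an $(x,y)$-geodesic through $u$ and pivot it through $c$ and $c''$, thereby embedding the convex segment $Q$ as a subpath. The pivot is driven by the fact that $u \in V(C)$ lies either on $P$ or on $P'$, so Lemma~\ref{L:I(u,x)/v,w} applied to the $ab$-cycle controls which endvertex of the $U_{ab}$-geodesic through $u$ is reached by a given vertex of $U_{ab}$; together with the strong ph-stability of $U_{ab}$, this lets us extend any $(x,y)$-geodesic through $u$ into one passing through $c$ and $c''$, and the convexity of $Q$ from the previous step forces $Q$ to appear as the central segment. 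Combined with the ph-stability of $U_{cd}$ at $u$ (a by-product of the same pivoting), Lemma~\ref{L:SPS2=>SPS1} then yields (SPS1). With $Q$ identified as the $U_{cd}$-geodesic associated with $u$, Lemma~\ref{L:ab-cycle} applied with $(cd, Q)$ in place of $(ab, P)$ characterizes the $cd$-cycle of $u$ as the unique convex cycle containing $Q$ and an edge $\Theta$-equivalent to $cd$, which is precisely $C$. The main obstacle is the verification of (SPS2): it requires a careful case analysis according to whether $cd$ is in the $\Theta$-class of $ab$, in $P$, or in $P'$, and a delicate extension of $(x,y)$-geodesics exploiting the strong ph-stability of $U_{ab}$ in the ambient graph.
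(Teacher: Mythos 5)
Your setup is sound: identifying the $\Theta$-class of $cd$ inside $E(C)$ as $\{cd,c''d''\}$ via Lemma~\ref{L:gen.propert.}(vii), checking that an inner vertex $u$ of $Q:=C-W_{dc}$ lies in $\mathcal{I}_G(U_{cd})-U_{cd}$, and proving $Q$ convex are all correct. The genuine gap is that the heart of your argument --- that $Q$ satisfies (SPS1) and (SPS2), so that it is the $U_{cd}$-geodesic associated with $u$ --- is never actually carried out. You describe a ``pivoting'' strategy and then explicitly defer ``the main obstacle,'' the verification of (SPS2), to ``a careful case analysis'' and ``a delicate extension of $(x,y)$-geodesics'' that are not supplied. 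That step is not routine: for arbitrary $x,y\in U_{cd}$ with $u\in I_G(x,y)$, possibly far from $C$, an $(x,y)$-geodesic containing $Q$ cannot be extracted from Lemma~\ref{L:I(u,x)/v,w} (which concerns vertices of $U_{ab}$ and the $ab$-cycle, not vertices of $U_{cd}$) together with strong ph-stability of $U_{ab}$ alone. In effect you are trying to re-derive strong ph-stability of $U_{cd}$ at $u$ from that of $U_{ab}$, a substantially stronger claim than the lemma itself; note also that when $cd$ lies on $P$ the inner vertices of $Q$ may include the endvertices $v,w\in U_{ab}$ of $P$, where Lemma~\ref{L:I(u,x)/v,w} does not even apply.

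The paper's proof inverts the logic and is three lines long: it takes the $U_{cd}$-geodesic $P_x$ associated with the inner vertex $x$ as already existing (this is what the phrase ``the $cd$-cycle associated with'' presupposes), applies (SPS2) for $P_x$ to the pair $(c,c')$ of endvertices of the two $\Theta$-equivalent edges $cd$ and $c'd'$ of $C$ --- legitimate since $x\in I_G(c,c')$ --- and concludes that $P_x$ is a subpath of some $(c,c')$-geodesic. Convexity of $C$ makes that geodesic unique and equal to $C-W_{dc}$, and since no inner vertex of $C-W_{dc}$ belongs to $U_{cd}$, the endvertices of $P_x$ must be $c$ and $c'$, whence $P_x=C-W_{dc}$ and $C$ is the $cd$-cycle associated with $x$ by Lemma~\ref{L:ab-cycle}. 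If you reorganize your argument to start from the associated $U_{cd}$-geodesic rather than certifying $Q$ from scratch, the convexity observations you have already made finish the proof.
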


\begin{proof}
Denote by $c'd'$ the other edge of $C$ which is $\Theta$-equivalent to $cd$.  
Let $x$ be an inner vertex of $C-W_{ba}$, and $P_x$ the $U_{cd}$-geodesic associated with $x$.  By (SPS2), $P_x$ is a subpath of some $(c,c')$-geodesic.  It follows that $P_x = C-W_{dc}$ by the convexity of $C$ and the fact that no inner vertex of $C-W_{dc}$ belong to $U_{cd}$.  Therefore $C$ is the $cd$-cycle associated with $x$.
\end{proof}

\begin{lem}\label{L:ab-cycle/gated}
Let $ab$ be an edge of $G$, $u \in \mathcal{I}_{G}(U_{ab})-U_{ab}$, and $C_u$ the $ab$-cycle associated with $u$.  Then $C_u$ is gated in $G_{\overline{ab}}$.
\end{lem}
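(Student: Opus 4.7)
The plan is to verify the gate property by a $\Theta$-sign analysis.  Let $x \in V(G_{\overline{ab}})$.  By the symmetry exchanging $U_{ab}$ and $U_{ba}$, I may assume $x \in co_G(U_{ab}) = \mathcal{I}_G(U_{ab})$; any gate of $x$ in $C_u$ then lies in $V(C_u) \cap W_{ab} = V(P)$, where $P = \langle s_0 = v, s_1, \ldots, s_n = w\rangle$ and $P' = \langle t_0 = v', \ldots, t_n = w'\rangle$.  Writing $\mathrm{sgn}_F(\cdot)$ for the side of a $\Theta$-class $F$ containing a vertex, a vertex $y \in V(C_u)$ is the gate of $x$ precisely when $\mathrm{sgn}_F(y) = \mathrm{sgn}_F(x)$ for every $\Theta$-class $F$ that has an edge in $E(C_u)$ (a \emph{$C_u$-class}); the remaining $\Theta$-classes impose no constraint because they do not separate any two vertices of $V(C_u)$.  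By Lemma~\ref{L:gen.propert.}(vii) the $C_u$-classes are $\Theta(ab)$ and $\Theta(s_i s_{i+1}) = \Theta(t_{n-i-1} t_{n-i})$ for $0 \leq i \leq n-1$.

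I first dispatch the case $x \in U_{ab}$.  If $x \in \{v,w\}$ the gate is $x$; otherwise Lemma~\ref{L:I(u,x)/v,w}, after swapping $v$ and $w$ if needed, yields $v \in I_G(u,x)$, and its proof (through (SPS1) and (SPS2)) furnishes an $(x,w)$-geodesic of which $P$ is a subpath.  Hence $d_G(x, s_i) = d_G(x, v) + i$ for every $i$, so $x$ and $v$ share the same side of each $\Theta(s_i s_{i+1})$ and of $\Theta(ab)$; $v$ is the gate of $x$.

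Next I treat $x \in \mathcal{I}_G(U_{ab}) \setminus (U_{ab} \cup V(P))$.  Strong ph-stability produces the unique (by Lemma~\ref{L:min.Uab-path=associated}) associated $U_{ab}$-geodesic $P_x$ through $x$, with endpoints $v_x, w_x \in U_{ab}$; the previous step identifies $g(v_x), g(w_x) \in \{v,w\}$.  If $g(v_x) = g(w_x) = v$, the argument of the previous paragraph applied to $v_x$ and $w_x$ places both of them in $W_{s_i s_{i+1}}$ for every $i$; convexity of half-spaces together with $x \in I_G(v_x, w_x)$ then places $x$ on the same side.  A $\Theta$-class count now verifies $v$ as the gate: for any $z \in V(C_u)$, the non-$C_u$-classes separating $x$ and $z$ coincide with those separating $x$ and $v$ (since $v$ and $z$ are on the same side of every non-$C_u$-class), while the $C_u$-classes separating $x$ and $z$ are exactly those separating $v$ and $z$; summing gives $d_G(x,z) = d_G(x,v) + d_G(v,z)$.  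The case $g(v_x) = g(w_x) = w$ is symmetric.  In the mixed sub-case $g(v_x) = v$, $g(w_x) = w$, I aim to locate a switching index $i^* \in \{0,\ldots,n\}$ with $x \in W_{s_{i+1} s_i}$ for $i < i^*$ and $x \in W_{s_i s_{i+1}}$ for $i \geq i^*$, whereupon $s_{i^*}$ emerges as the gate.

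The main obstacle is securing the monotonicity of the $C_u$-class signs of $x$ in this mixed sub-case.  A non-monotone pattern would produce two local minima of $i \mapsto d_G(x, s_i)$ on $V(P)$, and therefore no vertex of $V(C_u)$ would lie on the $x$-side of every $C_u$-class.  My strategy to exclude this is to exploit the convexity and the minimality of $P_x$ given by Lemma~\ref{L:min.Uab-path=associated}: any inversion in the order in which $P_x$ traverses the $\Theta$-classes $\Theta(s_i s_{i+1})$ would, by a standard $\Theta$-class shortcut argument in partial cubes together with the $(v_x, w)$- and $(v, w_x)$-geodesics produced by Case~A (each containing $P$), yield an alternative $U_{ab}$-path through $x$ of strictly smaller length than $l(P_x)$, contradicting the minimality of $P_x$.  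Once monotonicity is in hand, a sign check on the $C_u$-classes together with the $\Theta$-class counting used above confirms that $s_{i^*}$ satisfies $s_{i^*} \in I_G(x,z)$ for every $z \in V(C_u)$, and the lemma follows.
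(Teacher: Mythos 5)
Your reduction of gatedness to the sign condition on the $\Theta$-classes having an edge in $C_u$ is correct, and your treatment of $x \in U_{ab}$ and of the sub-cases where $g(v_x)=g(w_x)$ is sound. But the mixed sub-case $g(v_x)=v$, $g(w_x)=w$ is where the entire geometric content of the lemma sits, and there you have only stated an aim, not a proof. This sub-case is not exotic: already in $C\Box K_2$ with $u$ an inner vertex of $P=C_u-W_{ba}$ and $x$ the corresponding inner vertex of the parallel $C$-layer, one has $g(v_x)=v$, $g(w_x)=w$, and the gate is an \emph{interior} vertex $s_{i^*}$ of $P$; so the monotonicity of $i\mapsto \mathrm{sgn}_{F_i}(x)$ is exactly what must be proved. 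The shortcut argument you sketch does not close it: $P_x$ is convex, hence the \emph{unique} $(v_x,w_x)$-geodesic, so there is no competing $(v_x,w_x)$-geodesic against which a "strictly shorter $U_{ab}$-path through $x$" could be produced, and the minimality of $l(P_x)$ guaranteed by Lemma~\ref{L:min.Uab-path=associated} constrains the \emph{set} of $\Theta$-classes crossed, not the \emph{order} in which $P_x$ crosses the classes $\Theta(s_is_{i+1})$. What actually forces the monotone crossing order is the fact that $C_x$ and $C_u$ are parallel $C$-layers of a prism/hypercylinder (Lemmas~\ref{L:uv/CuCv} and~\ref{L:bulge/ab-cycle}), but those results are proved \emph{after} and \emph{from} Lemma~\ref{L:ab-cycle/gated}, so you cannot appeal to them here without circularity.

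For comparison, the paper's proof never introduces $P_x$ at all: it applies (SPS1) and (SPS2) to $u$ and its own associated geodesic $P_u$ with the external vertex $x$, so that one endvertex of $P_u$ (together with the symmetric statement on the $W_{ba}$ side in part (b)) is exhibited directly as lying in $I_G(x,y)$ for all $y\in V(C_u)$. If you want to salvage your route, you need a self-contained argument that the convex geodesic $P_x$ crosses the classes $\Theta(s_0s_1),\dots,\Theta(s_{n-1}s_n)$ in monotone order; as it stands, that step is a genuine gap.
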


\begin{proof}
(a)\; Let $x$ be a vertex of $G_{\overline{ab}}-C_u$.  Without loss of generality, we can suppose that $x \in V(G_{ab})$.  Note that $VG_{ab}) = \mathcal{I}_G(U_{ab})$ since $U_{ab}$ is ph-stable, because $ph(G) \leq 1$.  By (SPS1), $u \in I_G(x,v)$ for some endvertex $v$ of $P_u := C-W_{ba}$.  Then, by (SPS2), $P_u$ is a subpath of some $(x,v)$-geodesic.  If $w$ is the endvertex of $P_u$ distinct from $v$, then $w \in I_G(x,y)$ for every $y \in V(P_u)$.

(b)\; Denote by $v'$ and $w'$ the neighbors in $U_{ba}$ of $v$ and $w$, respectively.  Analogously, for any $x' \in W_{ba}$,\; $v'$ or $w'$ belong to $I_G(x',y')$ for every $y' \in V(C_u-P_u)$.  Then, because $w \in I_G(x,v') \cup I_G(x,w')$ since $w \in I_G(x,v)$ by (a), it follows that $w \in I_G(x,y')$ for every $y' \in V(C_u-P_u)$.

We infer, from (a) and (b), that  $w$ is the gate of $x$ in $C_u$.
\end{proof}

\begin{lem}\label{L:uv/CuCv}
Let $ab$ be an edge of $G$,\; $u$ and $v$ two adjacent vertices in $\mathcal{I}_{G}(U_{ab})-U_{ab}$, and $C_u$ and $C_v$ the $ab$-cycles associated with $u$ and $v$, respectively.  Then $C_u = C_v$ or the subgraph induced by $V(C_u \cup C_v)$ is isomorphic to the prism $C_u \Box K_2$ over $C_u$, and moreover this subgraph is gated in $G_{\overline{ab}}$.
\end{lem}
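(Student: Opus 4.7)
The plan is to split into two cases according to whether $uv$ is an edge of $C_u$. If $uv \in E(C_u)$, then since $u, v \in W_{ab}$ the edge $uv$ is not $\Theta$-equivalent to $ab$, so $uv$ is neither one of the two edges of $C_u$ that are $\Theta$-equivalent to $ab$ nor an edge of the portion of $C_u$ lying in $W_{ba}$; hence $uv$ lies in $P_u$, with both $u$ and $v$ as inner vertices (since neither belongs to $U_{ab}$). Applying Lemma~\ref{L:Pu/conv.unique} with $v$ as an inner vertex of $P_u$ yields $P_v = P_u$, and thus $C_u = C_v$. The case $uv \in E(C_v)$ is symmetric.

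In the remaining case, $uv$ is an edge of neither $C_u$ nor $C_v$. Convex cycles in partial cubes are induced (by isometry together with bipartiteness), so $v \notin V(C_u)$ and $u \notin V(C_v)$. By Lemma~\ref{L:ab-cycle/gated}, $C_u$ is gated in $G_{\overline{ab}}$; since $v \in V(G_{ab}) \subseteq V(G_{\overline{ab}})$ is adjacent to $u \in V(C_u)$, the gate of $v$ in $C_u$ must be $u$, which forces $V(C_u) \subseteq W_{uv}$. Symmetrically $V(C_v) \subseteq W_{vu}$, and so $V(C_u) \cap V(C_v) = \emptyset$.

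The structural core of the proof is obtained via the $\Theta$-contraction $\gamma_e \colon G \to G' := G/e$ with $e = uv$. Since $u, v \in \mathcal{I}_G(U_{ab})$, Lemma~\ref{L:contract./local.hypernet.} gives that $G'$ is ph-homogeneous in $a'b' := \gamma_e(a)\gamma_e(b)$, and the common image $z := \gamma_e(u) = \gamma_e(v)$ belongs to $\mathcal{I}_{G'}(U_{a'b'}) \setminus U_{a'b'}$, so it determines a unique associated $a'b'$-cycle $C_z$ in $G'$. Because $V(C_u) \subseteq W_{uv}$, no edge of $C_u$ is $\Theta$-equivalent to $e$, and $\gamma_e$ restricts to an isomorphism $C_u \to \gamma_e(C_u)$; Lemma~\ref{L:interv.} implies $\gamma_e(P_u)$ is the unique geodesic between its endpoints in $G'$, hence convex, and by uniqueness of the associated geodesic we get $\gamma_e(P_u) = P_z$, so $\gamma_e(C_u) = C_z$. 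The same argument gives $\gamma_e(C_v) = C_z$. Combined with $V(C_u) \subseteq V_0 := W_{uv}$ and $V(C_v) \subseteq V_1 := W_{vu}$, this forces $V(C_z) \subseteq V'_0 \cap V'_1$, so every vertex of $C_z$ has exactly two preimages in $V(H) := V(C_u) \cup V(C_v)$, connected by an edge $\Theta$-equivalent to $uv$. This produces a bijection $\phi \colon V(C_u) \to V(C_v)$ with $x\phi(x) \in E(G)$ for every $x$, and through the factorization via $\gamma_e$ and $C_z$ an isomorphism of cycles $\phi \colon C_u \to C_v$. Any additional edge between $V(C_u)$ and $V(C_v)$ would go from $V_0$ to $V_1$ and thus lie in the $\Theta$-class of $uv$, forcing it to be some $x\phi(x)$. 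Hence the induced subgraph on $V(H)$ is isomorphic to $C_u \Box K_2$.

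The gatedness of $H$ in $G_{\overline{ab}}$ follows by descent along $\gamma_e$: given $w \in V(G_{\overline{ab}})$, let $g' \in V(C_z)$ be the gate of $\gamma_e(w)$ in $C_z$ (available by Lemma~\ref{L:ab-cycle/gated} applied in $G'$), and let $g_0 \in V(C_u)$ and $g_1 \in V(C_v)$ be its two preimages. Assuming $w \in V_0$ (the case $w \in V_1$ is symmetric), the convexity of $V_0$ together with Lemma~\ref{L:interv.} yields that $g_0$ is the gate of $w$ in $V(C_u)$; moreover, the distance identity $d_G(w,z) = d_G(w,\phi^{-1}(z)) + 1$ for $z \in V(C_v)$, which follows by comparing the length of any $(w,z)$-geodesic in $G$ with the corresponding $(\gamma_e(w),\gamma_e(z))$-geodesic in $G'$, supplies an explicit $(w,z)$-geodesic passing through $g_0$. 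Hence $g_0$ is the gate of $w$ in $V(H)$. The main obstacle I anticipate is precisely this last step: the careful extraction of the distance identity from the expansion/contraction correspondence and the verification that geodesics from $w$ to $V(C_v)$ cannot detour outside $V(H)$.
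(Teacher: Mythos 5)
Your overall strategy is genuinely different from the paper's: you contract the edge $e=uv$, identify both $C_u$ and $C_v$ with the $a'b'$-cycle $C_z$ associated with the merged vertex $z$ in $G'=G/e$, and lift the structure and the gate back. The paper instead stays in $G$: it applies (SPS2) to the endvertex pairs of $P_u$ and $P_v$ to force $x_u x_v$ and $y_u y_v$ to be edges, propagates this around the cycle via Lemma~\ref{L:ab-cycle/cd-cycle} to build the prism, and proves gatedness by induction on $d_G(x,C_u)$ using $K_{2,3}$-freeness and $\Theta$-classes. Your preliminary reductions (the case $uv\in E(C_u)$ via Lemma~\ref{L:Pu/conv.unique}, and $V(C_u)\subseteq W_{uv}$, $V(C_v)\subseteq W_{vu}$ via gatedness of $C_u$) are correct, and the downstream deductions would work \emph{if} the identification $\gamma_e(C_u)=C_z=\gamma_e(C_v)$ were established.

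That identification is where there is a genuine gap. You justify $\gamma_e(P_u)=P_z$ by claiming that Lemma~\ref{L:interv.} makes $\gamma_e(P_u)$ ``the unique geodesic between its endpoints in $G'$, hence convex.'' Lemma~\ref{L:interv.} controls intervals of the \emph{expansion} in terms of intervals of $G[V_i]$, i.e.\ of one side of the proper cover; read in the contraction direction it only gives convexity of $\gamma_e(P_u)$ inside $G'[V'_0]$, not inside $G'$, because a geodesic of $G'$ between two vertices of $V'_0$ may leave $V'_0$ (the endpoints $\gamma_e(x_u),\gamma_e(y_u)$ lie in $V'_0\cap V'_1$, which is exactly the dangerous case: in $C_6$ the convex path $\langle 5,6,1\rangle$ maps to a non-convex path of $C_6/e$ when one contracts a $\Theta$-class disjoint from it). So convexity of $\gamma_e(P_u)$ in $G'$ does not follow from what you cite, and in fact it is essentially equivalent to the conclusion you are trying to prove. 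Moreover, even granting convexity, concluding $\gamma_e(P_u)=P_z$ via Lemma~\ref{L:min.Uab-path=associated} requires knowing that $\gamma_e(P_u)$ is a $U_{a'b'}^{G'}$-geodesic through $z$ of \emph{minimal length} (or that it satisfies (SPS1)--(SPS2) in $G'$); you establish neither, and the natural repair --- lifting a hypothetical shorter $U_{a'b'}^{G'}$-geodesic through $z$ back to $G$ --- runs into the delicate point of controlling which of the two preimages $u,v$ of $z$ the lift passes through. Since the bijection $\phi$, the prism structure, and the gate descent through $C_z$ all rest on this identification, the proof is not complete as written; the paper's endpoint-adjacency argument via (SPS2) is precisely the tool that circumvents this.
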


\begin{proof}
(a)\; Assume that $C_u \neq C_v$.  Put $P_u := C_u - W_{ba}$ and $P_v := C_v - W_{ba}$.  Then $P_u$ and $P_v$ are disjoint by Lemma~\ref{L:Pu/conv.unique}.  Denote by $x_u$ and $y_u$, and $x_v$ and $y_v$ the endvertices of $P_u$ and $P_v$, respectively.  By the convexity of $P_u$ and $P_v$, the paths $P_u[x_u,u] \cup \langle u,v\rangle$,\; $P_u[y_u,u] \cup \langle u,v\rangle$,\; $P_v[x_v,v] \cup \langle u,v\rangle$ and $P_v[y_v,v] \cup \langle u,v\rangle$ are geodesics.

Without loss of generality, we can suppose that $v \in I_G(x_u,y_v)$ and $u \in I_G(x_v,y_u)$.  By (SPS2), there exist an $(x_u,x_v)$-geodesic $R_x$, and a $(y_u,y_v)$-geodesic $R_y$ such that $P_u \cup R_x$ and $P_v \cup R_y$ are $(x_v,y_u)$-geodesics, and $P_v \cup R_x$ and $P_u \cup R_y$ are $(x_u,y_v)$-geodesics.  This straightforward implies that $x_u$ and $x_v$ are adjacent, and that $y_u$ and $y_v$ are adjacent.

By using Lemma~\ref{L:ab-cycle/cd-cycle}, and by repeating the above argument for different edges of $C$, we can prove that $H := G[V(C_u \cup C_v)] = C_u \Box K_2$.

(b)\; We will now show that $H$ is gated.  Let $x$ be a vertex of $G_{\overline{ab}}-H$.  Without loss of generality we can suppose that $d_G(x,C_u) \leq d_G(x,C_v)$.  We will first show that $d_G(x,C_u) < d_G(x,C_v)$ by induction on $k := d_G(x,C_u)$.  If $k = 1$ and $d_G(x,C_u) = d_G(x,C_v)$, then $x, u_{i+1}$ and $v_j$ are adjacent to $u_i$ and $v_{i+1}$, contrary to the fact that a partial cube contains no $K_{2,3}$.  Suppose that $d_G(x,C_u) < d_G(x,C_v)$ for any vertex $x$ such that $d_G(x,C_u) = k$ for some positive integer $k$.  Let $x$ be such that $d_G(x,C_u) = k+1$.

 Put $C_u = \langle u_1,\dots,u_{2n},u_1,\rangle$ and $C_v = \langle v_1,\dots,v_{2n},v_1,\rangle$.  By Lemma~\ref{L:ab-cycle/gated}, $C_u$ and $C_v$ are gated in $G_{\overline{ab}}$.  Let $u_i$ and $v_j$ be the gates of $x$ in $C_u$ and $C_v$, respectively.  If $d_G(x,C_u) = d_G(x,C_v)$, then $j = i+1$ or $i-1$, say $j = i+1$.  Denote by $y$ a neighbor of $x$ in $I_G(x,v_{i+1})$.  Suppose that $d_G(y,u_i) = k$.  Then $d_G(y,u_i) = d_g(y,v_{i+1})$.  Clearly, $u_i$ and $v_{i+1}$ are the gates of $y$ in $C_u$ and $C_v$, respectively.  Hence this would yield a contradiction to the induction hypothesis.  Therefore $d_G(y,u_i) = k+2$.  Then, because $d_G(x,u_i) = d_G(x,v_{i+1}) = k+1$, and $d_G(x,u_{i+1}) = k+2$ since $u_i$ is the gate of $x$ in $C_u$, it follows that the edges $xy$ and $u_iu_{i+1}$ are $\Theta$-equivalent.  Hence the edges $xy$ and $v_iv_{i+1}$ shall also be $\Theta$-equivalent, contrary to the fact that $d_G(x,v_i) = k+2$ since $v_i$ is the gate of $x$ in $C_v$.  Consequently $d_G(x,C_u) < d_G(x,C_v)$.
 
 With the above notation, we infer that the gate of $x$ in $C_v$ must be $v_i$.  Therefore $u_i$ is the gate of $x$ in $H$.
\end{proof}

\begin{lem}\label{L:bulge/ab-cycle}
Let $ab$ be an edge of $G$, and $X$ a bulge of $co_G(U_{ab})$.  Then we have the following properties:

\textnormal{(i)}\; There exists a unique $H \in \mathbf{Cyl}(G,ab)$ such that $X = H-W_{ba}$.

\textnormal{(ii)}\; $H = C \Box A$, where $C$ is the $ab$-cycle which is associated with some vertex of $X-U_{ab}$, and $A$ is a component of $X[U_{ab}]$.

\textnormal{(iii)}\; $H-W_{ab}$ is a bulge of $co_G(U_{ba})$.

\textnormal{(iv)}\; $H$ is a convex subgraph of $G$.
\end{lem}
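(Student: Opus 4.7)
The plan is to construct $H$ explicitly and then read off (i)--(iv). Fix any interior vertex $u \in V(X) - U_{ab}$, which exists because the component $A_{0}$ of $G[\mathcal{I}_{G}(U_{ab})-U_{ab}]$ defining the bulge $X$ is non-empty. By Lemma~\ref{L:ab-cycle} let $C := C_{u}$ be the $ab$-cycle associated with $u$, so that $P_{u} := C - W_{ba}$ is the $U_{ab}$-geodesic associated with $u$, with endvertices $v,w \in U_{ab}\cap V(X)$. Let $A$ be the component of $X[U_{ab}]$ containing $v$ (typically $X[U_{ab}]$ has two such components, one at each ``end'' of the cycle, and I expect both choices to produce the same $H$ up to relabelling of the $A$-factor).

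To assemble $H$, I would iteratively apply Lemma~\ref{L:uv/CuCv} as $t$ varies through $A_{0}$: for any edge $xy$ in $A_{0}$, the cycles $C_{x}$ and $C_{y}$ either coincide (precisely when $xy$ is an edge of $P_{x} = P_{y}$, by Lemma~\ref{L:Pu/conv.unique}) or the subgraph $G[V(C_{x} \cup C_{y})]$ is a gated prism $C_{x}\Box K_{2}$. Collecting the cycles $C_{t}$ along a spanning tree of $A_{0}$ rooted at $u$, each new tree-edge either leaves the current collection unchanged or adjoins a new parallel copy of $C$; induction on depth will then yield a subgraph $H$ carrying a Cartesian product structure $C\Box A$, which establishes (ii). The convexity assertion (iv) follows from Proposition~\ref{P:pro.Cartes.prod.}(v) (Convex Subgraph Property) applied to this product, once the factors are recognised as convex: $C$ is convex by Lemma~\ref{L:ab-cycle} and the convexity of $A$ inside $G[U_{ab}]$ is forced by the gatedness of each prism $C\Box K_{2}$ produced at each slide.

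For (i), existence is immediate from the construction. For uniqueness, any $H' \in \mathbf{Cyl}(G,ab)$ with $H'-W_{ba} = X$ must contain every cycle $C_{t}$, $t \in V(A_{0})$ (each such $C_{t}$ being the unique convex cycle through $P_{t}$ with an edge $\Theta$-equivalent to $ab$, by Lemma~\ref{L:ab-cycle}), and it cannot contain any additional vertex without violating the equality $H' - W_{ba} = X$; this forces $H' = H$. For (iii), I invoke symmetry: the hypothesis of this subsection gives that $U_{ba}$ is also strongly ph-stable, so that the dual construction applied to any vertex $u'$ of $H - W_{ab}$ outside $U_{ba}$ produces precisely $H - W_{ab}$ as a bulge of $co_{G}(U_{ba})$; the product decomposition $H = C\Box A$ is inherently symmetric between the $U_{ab}$-side and the $U_{ba}$-side of $C$.

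The principal obstacle will be the induction in step (ii): verifying that the prisms produced by iterated application of Lemma~\ref{L:uv/CuCv} genuinely glue into a single Cartesian product rather than some more intricate amalgam. Concretely, when two distinct $\Theta$-classes of edges of $A_{0}$ emanate from a common cycle $C_{t}$, one must prove a commutativity statement: sliding along one $\Theta$-class and then the other produces the same $2$-dimensional layer as sliding in the opposite order. I expect this to be forced by the $4$-Cycle Property (Proposition~\ref{P:pro.Cartes.prod.}(iv)) applied inside the gated prisms, combined with transitivity of the relation $\Theta$ in $G$, which together create the required $4$-cycle between the two new parallel copies of $C$ and thereby collapse any potential discrepancy into a single Cartesian layer.
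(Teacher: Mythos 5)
Your overall strategy for (i)--(iii) — building $H$ by propagating the $ab$-cycles via Lemma~\ref{L:uv/CuCv}, reading off the product structure, and appealing to symmetry between $U_{ab}$ and $U_{ba}$ for (iii) — is essentially the route the paper takes (the paper walks along geodesics in $X-U_{ab}$ rather than a spanning tree of $A_0$, and first pins down that every vertex of $X-U_{ab}$ has degree at least $3$ in $X$ when some vertex does, but these are cosmetic differences). The gluing problem you flag as the ``principal obstacle'' is real, and the paper resolves it by showing that any two vertices of $X-U_{ab}$ lie in a Cartesian product of $C$ with a path, built edge-by-edge along a geodesic; your proposed fix via the $4$-Cycle Property is plausible but is left as a sketch, so this part of your argument is incomplete rather than wrong.

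The genuine gap is in (iv). Proposition~\ref{P:pro.Cartes.prod.}(v) characterises the convex subgraphs \emph{of a graph that is itself a Cartesian product} $G_0 \Box G_1$; it says nothing about a subgraph $H$ of an arbitrary partial cube $G$ that merely happens to carry an internal product structure $C \Box A$. Knowing that $C$ and $A$ are each convex in $G$ does not prevent a geodesic between two vertices of $H$ from leaving $H$ altogether, which is exactly what convexity of $H$ in $G$ must exclude. The paper devotes a separate argument to this: it supposes a $V(H)$-geodesic $R$ of minimal length passes through a vertex outside $H$, uses the fact that both $H-W_{ba}$ and $H-W_{ab}$ are bulges to force the endvertices of $R$ into $U_{ab}$ (or $U_{ba}$), and then applies (SPS2) together with the uniqueness of the $C$-layers (Lemma~\ref{L:ab-cycle/cd-cycle}) to pull the escaping vertex back into $H$, a contradiction. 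Your proof needs an argument of this kind; the cited Convex Subgraph Property cannot supply it.
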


This unique hypercylinder $H$ will be denoted $\mathbf{Cyl}[X]$.

\begin{proof}
Let $\delta := \sup_{x \in V(X-U_{ab})}\delta_X(x)$.  We distinguish two cases.

\emph{Case 1.}\; $\delta > 2$.

(a)\; We first show that the degree in $X$ of each vertex of
$X-U_{ab}$ is at least equal to $3$.  Suppose that some vertices of
$X-U_{ab}$ have degree $2$ in $X$.  Because $\delta \geq 3$, some of these
vertices, say $y$, is adjacent to a vertex $x$ of $X-U_{ab}$ whose
degree in $X$ is greater than $2$.  By Lemma~\ref{L:uv/CuCv}, $x$ and $y$ belongs to a convex $1$-prism $P_{xy} := C_x \Box K_2$, where $C_x$ is the $ab$-cycle associated with $x$. Because a $1$-prism
is $3$-regular, it follows that $\delta_{X}(y)
\geq \delta_{P_{xy}}(y) = 3$, contrary to the hypothesis.  Therefore the
degree in $X$ of each vertex of $X-U_{ab}$ is at least equal to $3$.\\

(b)\; We now show that any two vertices $x$ and $y$ of $X-U_{ab}$ are vertices of the Cartesian product of some path with the cycle $C_x$.  Let $\langle y_0,\dots,y_n\rangle$ be an $(x,y)$-geodesic in $X-U_{ab}$ with $y_0 = x$ and $y_n=y$.  We construct a
sequence $x_{0}, x_{1}, \ldots$ of vertices of $X-U_{ab}$, and a
sequence $P_{1}, P_{2}, \ldots$ of $1$-prisms such that, for every $i
\geq 1$, $x_{i-1}$ and $x_{i}$ are adjacent vertices
of $P_{i}$ which do not lie in the same cycle of $P_{i}$ containing
edges $\Theta$-equivalent to $ab$.  For each $i \geq 1$, we denote by $C_{i}$
and $C'_{i}$ the convex cycles of $P_{i}$ containing edges
$\Theta$-equivalent to $ab$ and such that $x_{i} \in V(C'_{i})$, i.e., $C_i = C_{x_{i-1}}$ and $C'_i = C_{x_i}$.

Let $x_{0}=y_0$, and let $P_{1}=P_{y_0y_1}$ (with the notation of (a)).  Suppose
that $x_{0}, \ldots, x_{i}$ and $P_{1}, \ldots, P_{i}$ have already
been constructed for some positive integer $i$.  If $y_n \in V(P_i)$, then we are done.  Suppose that $y_n \notin V(P_i)$.  Because $\langle y_0,\dots,y_n\rangle$ is a geodesic, there exists $p<n$ such that $y_p \in V(C'_i)$ and $y_j \notin \bigcup_{1
\leq k \leq i}P_{k}$ for $p+1 \leq j \leq n$.  Put $P_{i+1} := P_{y_p y_{p+1}}$, and let $x_{i+1}$ be the neighbor of $x_i$ which does not lie in the same cycle of $P_{i+1}$ containing edges $\Theta$-equivalent to $ab$.  Then $x_i$ lies in the cycles $C'_i$ by the induction hypothesis and $C_{i+1}$ by construction.  It follows that $C_{i+1} = C'_i = C_{x_i}$ because $C_{x_i}$ is unique by Lemma~\ref{L:ab-cycle}.  We
deduce from the construction that $C_{i+1}$ is isomorphic to $C_{1}$.  Because $n$ is finite, $y_n \in V(P_q)$ for some positive integer $q$.

Consequently $x$ and $y$ are vertices of the Cartesian product $C_{x_0} \Box \langle x_0,\dots,x_q\rangle$.\\

(c)\; We infer that, for any vertex $x$ of $X-U_{ab}$, the convex $ab$-cycle $C_x$ which is associated with $x$ is isomorphic to $C_{x_0}$.  Let $A_X$ be a component of $X[U_{ab}]$ (where, \emph{from now on, we will use $X[U_{ab}]$ as a short notation for $X[U_{ab} \cap V(X)]$}).  Let $u \in V(A_X)$.  By the definition of a bulge, $u$ is adjacent to some vertex $x$ of $X-U_{ab}$.  Then $u \in V(C_x)$ by the properties of $C_x$.

We deduce, by what we proved above, that $H := C_x \Box A_X$ is a subgraph of $G$ such that $X = H-W_{ba}$.  On the other side, $H-W_{ab}$ is contained in a bulge $Y$ of $co_G(U_{ba})$.  As we showed for $X$,\; $H' := C \Box A_{Y}$, where $A_{Y}$ is some component of $Y[U_{ba}]$, is a subgraph of $G$ such that $Y = H'-W_{ab}$.  Since $H$ is then a subgraph of $H'$, and because $X = H-W_{ba}$, it follows that $H=H'$, and thus $Y = H-W_{ab}$.  Note that this hypercylinder $H$ is clearly unique.

(d)\;  Suppose that $H$ is not convex.  Due to the facts that $H-W_{ba}$ and $H-W_{ab}$ are bulges of $co_G(U_{ab})$ and $co_G(U_{ba})$, respectively, it follows that, if $R$ is a $(V(H))$-geodesic which passes through a vertex $x$ of $G-H$ and whose length is as small as possible, then both its endvertices $u$ and $v$ belongs to $U_{ab}$ or $U_{ba}$, say to $U_{ab}$, and thus $R$ is a path of $G_{ab}$ since this graph is convex.  Denote by $u'$ and $v'$ the unique neighbors of $u$ and $v$ in $X-U_{ab}$.  Let $D$ be the unique $uu'$-cycle associated with $x$.  By (SPS2) there exists a $(u,v)$-geodesic which contains $D-W_{u'u}$.  It follows that there exists a $(u',v')$-geodesic which contains $D-W{uu'}$.  Then any vertex $y$ of $D-W{uu'}$ belongs to $co_G(U_{ab})$ since $u'$ and $v'$ belongs to $co_G(U_{ab})$.  It follows that $y$ is a vertex of the bulge $X$, and thus of $H$.  By Lemma~\ref{L:ab-cycle/cd-cycle}, $D$ is then equal to the $C$-layer of $H$ passing through $y$.  Hence $x \in V(H)$, contrary to the hypothesis.  Consequently $H$ is convex.\\

\emph{Case 2.}\; $\delta = 2$.

Then all vertices of $X-U_{ab}$ have degree $2$ in $G_{ab}$ since $X$ is connected.  Let $u$ be a vertex of $X-U_{ab}$.  Then all vertices of $X-U_{ab}$ lie to the $ab$-cycle $C$ associated with $u$.  Hence $X = C-W_{ba}$, and moreover $C$ is convex by Lemma~\ref{L:ab-cycle}.

On the other side, $C-W_{ab}$ is contained in a bulge $Y$ of $co_G(U_{ba})$.  Clearly  $Y = C-W_{ab}$ if $\sup_{x \in V(Y-U_{ba})}\delta_Y(x) = 2$.  Suppose that $\sup_{x \in V(Y-U_{ba})}\delta_Y(x) \geq 3$.  Then, by what we proved above, there exists some $H \in \mathbf{Cyl}(G,ab)$ such that $Y = H-W_{ab}$.  Since $C$ is then a subgraph of $H$, and because $X = C-W_{ba}$, it follows that $C=H$, and thus $Y = C-W_{ab}$.
\end{proof}

The following theorem shows that Peano partial cubes satisfy a property which generalizes the facts that the convex hull of any isometric cycle of a median graph (resp. a netlike partial cube) $G$ is a hypercube (resp. is either this cycle itself or a hypercube).

\begin{thm}\label{T:hypernet./conv.hull(isom.cycle)}
The convex hull of any isometric cycle of a Peano partial cube is a quasi-hypertorus.
\end{thm}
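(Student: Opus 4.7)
I proceed by induction on $|V(C)|$. For $|V(C)|=4$, the cycle $C$ is the interval between each pair of its antipodal vertices, hence convex by Lemma~\ref{L:gen.propert.}(ii), so $co_G(C)=C=K_2\Box K_2$ is a hypercube and therefore a quasi-hypertorus.

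For $|V(C)|=2n\ge 6$, fix any edge $ab\in E(C)$ and write $C=\langle x_0,x_1,\ldots,x_{2n-1},x_0\rangle$ with $x_0x_1=ab$. By Lemma~\ref{L:gen.propert.}(vii), the antipodal edge $x_nx_{n+1}$ is the only other edge of $C$ in the $\Theta$-class of $ab$; hence $\{x_0,x_{n+1}\}\subseteq U_{ab}$, $\{x_1,x_n\}\subseteq U_{ba}$, and by isometricity of $C$ the arcs $P^+=\langle x_1,\ldots,x_n\rangle\subseteq W_{ba}$, $P^-=\langle x_0,x_{2n-1},\ldots,x_{n+1}\rangle\subseteq W_{ab}$ are geodesics of $G$. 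Consequently every interior vertex of $P^+$ (resp.\ $P^-$) lies in $\mathcal{I}_G(U_{ba})=co_G(U_{ba})$ (resp.\ $\mathcal{I}_G(U_{ab})=co_G(U_{ab})$), via Proposition~\ref{P:hypernet./ph1} and Lemma~\ref{L:ph-stable}.

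Suppose first that some interior vertex $u$ of $P^+$ lies outside $U_{ba}$ (or symmetrically for $P^-$). Then $u\in\mathcal{I}_G(U_{ba})-U_{ba}$ lies in a bulge $X$ of $co_G(U_{ba})$, and Lemma~\ref{L:bulge/ab-cycle} produces a unique convex hypercylinder $H=\mathbf{Cyl}[X]=D\Box A\in\mathbf{Cyl}(G,ab)$, with $D$ an even cycle of length $\ge 6$ and $A$ a Peano partial cube, satisfying $X=H-W_{ab}$. I verify $V(C)\subseteq V(H)$: the interior vertices of $P^+$ outside $U_{ba}$ lie in the component of $G[co_G(U_{ba})-U_{ba}]$ determined by $u$ (being connected through $P^+$), so the adjacent $x_1,x_n\in U_{ba}$ belong to $V(X)\subseteq V(H)$; the $ab$-$\Theta$-partners $x_0,x_{n+1}$ are then forced into $V(H)$ since $H\in\mathbf{Cyl}(G,ab)$ is convex and carries the $ab$-class; and convexity of $H$ combined with $P^-\subseteq I_G(x_0,x_{n+1})$ gives $x_{n+2},\ldots,x_{2n-1}\in V(H)$. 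Thus $co_G(C)\subseteq V(H)$, and by the Convex Subgraph Property (Proposition~\ref{P:pro.Cartes.prod.}(v)), $co_G(C)=D'\Box A'$ for convex $D'\subseteq D$, $A'\subseteq A$. The two antipodal $\Theta$-equivalent edges $x_0x_1,x_nx_{n+1}$ project onto two antipodal $\Theta$-equivalent edges of the even cycle $D$ whose convex hull is $D$; hence $D'=D$. A secondary induction on $|V(co_G(\cdot))|$, noting $|V(A')|<|V(co_G(C))|$ since $|V(D)|\ge 6$, applied to a generating isometric structure of $A'=co_A(pr_A(V(C)))$ in the Peano partial cube $A$, yields that $A'$ is a quasi-hypertorus, whence $co_G(C)=D\Box A'$ is one.

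Otherwise every interior vertex of $P^+$ lies in $U_{ba}$ and every interior vertex of $P^-$ lies in $U_{ab}$, so each $x_i\in V(C)$ admits a unique $ab$-$\Theta$-partner $y_i$ (with $y_0=x_1$, $y_1=x_0$, $y_n=x_{n+1}$, $y_{n+1}=x_n$); the $4$-cycle identity for $\Theta$-equivalent edges makes $\langle y_1,\ldots,y_n\rangle$ a geodesic in $W_{ab}$. Concatenating with $\langle y_n=x_{n+1},x_{n+2},\ldots,x_{2n-1},x_0=y_1\rangle$ produces a cycle $C'$ of length $2(n-1)<2n$ contained in $W_{ab}$; its $\Theta$-classes are exactly the non-$ab$ $\Theta$-classes of $C$, and each pair of $\Theta$-equivalent edges of $C'$ is antipodal in $C'$, so by Lemma~\ref{L:gen.propert.}(vii), $C'$ is isometric. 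The inductive hypothesis yields that $co_G(C')$ is a quasi-hypertorus, and matching each vertex of $co_G(C')\subseteq W_{ab}$ with its $ab$-$\Theta$-partner in $W_{ba}$ should give $co_G(C)=K_2\Box co_G(C')$, a quasi-hypertorus. The main obstacle is establishing this product decomposition rigorously: it requires showing $co_G(C')\subseteq U_{ab}$ (so that $ab$-partners exist throughout the convex hull) and that $K_2\Box co_G(C')$ is convex in $G$; both hinge on the strong ph-stability of $U_{ab},U_{ba}$ provided by Theorem~\ref{T:charact.}(ii), which controls how iterated interval closures behave within each half-space.
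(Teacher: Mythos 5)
Your induction on $|V(C)|$ is a genuinely different strategy from the paper's, but both branches of your inductive step contain gaps that are not merely technical. In Case~1 the containment $V(C)\subseteq V(H)$ for a \emph{single} hypercylinder $H$ is not established: the interior vertices of $P^+$ lying outside $U_{ba}$ need not all belong to the same bulge, since $P^+$ may re-enter $U_{ba}$ between two of them; and even when they do, $x_1$ and $x_n$ need not be adjacent to any vertex of the component $A$ defining the bulge, which is what membership in $X=N_{G[co_G(U_{ba})]}[V(A)]$, hence in $V(H)$, requires. More seriously, your ``secondary induction'' has nothing to induct on: your hypothesis concerns isometric \emph{cycles}, but $pr_A(V(C))$ is in general only a closed walk of $A$, and you exhibit no isometric cycle of $A$ whose convex hull is $A'$ --- the existence of such a cycle (Proposition~\ref{P:TPQ/prop.isom.cycle}, Lemma~\ref{L:met.tri./equiv.}) is only available \emph{after} one knows $A'$ is a quasi-hypertorus. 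In Case~2 you explicitly leave unproved the decomposition $co_G(C)=K_2\Box co_G(C')$, and that is precisely the hard point: $V(C')\subseteq U_{ab}$ only gives $co_G(C')\subseteq\mathcal{I}_G(U_{ab})$, with no control over whether the hull escapes $U_{ab}$ into a bulge.

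The paper sidesteps all of this by first setting $G:=co_G(C)$ and invoking Lemma~\ref{L:edge/co(H)}(i): every edge of $G$ is then $\Theta$-equivalent to an edge of $C$, and since $C$ is isometric every $\Theta$-class already has a representative in $G[W_{ba}]$, so by Lemma~\ref{L:gen.propert.}(ix) no half-space $W_{ab}$ can properly contain $co_G(U_{ab})$; that is, $G$ is strongly semi-peripheral. Strong semi-peripherality then forces $G=\mathbf{Cyl}[X]=C_1\Box H_1$ for any bulge $X$, and iterating peels off one cycle factor at a time until a strongly peripheral graph remains, which is a hypercube by Proposition~\ref{P:strong.-periph./hypercube}. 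If you want to salvage your scheme, the missing ingredient is exactly this global step: an argument confining $co_G(C)$ inside a single Cartesian product before you project and recurse.
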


\begin{proof}
Let $D$ be an isometric cycle of a Peano partial cube $G$, and $F$ the convex hull of $D$ in $G$.  Without loss of generality we will suppose that $G = F$, and we will show that $G$ is a quasi-hypertorus.  By Lemma~\ref{L:gen.propert.}(iii), $G$ is finite, and thus it has a semi-periphery.

(a)\; We first show that $G$ is strongly semi-peripheral (see Definition~\ref{D:strong.periph.}).  Let $ab \in E(G)$.  Suppose that $W_{ab}$ is not a semi-periphery.  Then there exists an edge $xy \in \partial_G(W_{ba} \cup co_G(U_{ab}))$.  By Lemma~\ref{L:gen.propert.}(ix), $xy$ is not $\Theta$-equivalent to an edge of $G[W_{ba} \cup co_G(U_{ab})]$ since this subgraph is convex.  This yields a contradiction with the fact that, because $G = co_G(D)$, any edge of $G$ is $\Theta$-equivalent to an edge of $D$ by Lemma~\ref{L:edge/co(H)}(i), and that, since $D$ is an isometric cycle, any edge of $D$ is $\Theta$-equivalent to an edge of $D[W_{ba}]$ and thus to an edge of $G[W_{ba}]$.  Therefore $W_{ab}$ is a semi-periphery, and analogously, $W_{ba}$ is also a semi-periphery.

(b)\; We construct a sequence $H_0, H_1,\dots$ of non-empty convex subgraphs of $G$ such that, for each non-negative integer $n$,\; $H_n = C_{n+1} \Box H_{n+1}$, where $C_{n+1} \in \mathbf{C}(H_n)$.

Put $H_0 := G$.  Suppose that $H_0,\dots,H_n$ has already been constructed for some non-negative integer $n$.  Because $H_n$ is a non-empty convex subgraph of $G$, it follows that $H_n$ is also a strongly semi-peripheral Peano partial cube.  

Suppose that $W_{ab}^{H_n}$ is not a periphery for some $ab \in E(H_n)$.    Let $X$ be a bulge of $co_{H_n}(U_{ab}^{H_n})$, and $H$ the hypercylinder defined by Lemma~\ref{L:bulge/ab-cycle} (note that $H_n$ is ph-homogeneous in $ab$).  Suppose that $H_n \neq H$.  Then there exists an edge $uv \in \partial_{H_n}(V(H))$ with $u \in U_{ab}^{H_n} \cap V(H)$ and $v \in W_{ab}^{H_n}$.  By Lemma~\ref{L:gen.propert.}(ix), $uv$ is not $\Theta$-equivalent to an edge of $H$.  Let $C$ be the cycle of $H$ of length greater than $4$ which contains an edge $\Theta$-equivalent to $ab$ and which passes through $u$, and let $cd$ be an edge of $C$ which is not $\Theta$-equivalent to $ab$ and such that $u \in W_{cd}^{H_n}$.  Then $v \notin co_{G}(U_{cd}^{H_n})$ since $W_{cd}^{H}$ is a bulge of $W_{cd}^{H_n}$.  It follows that $W_{vu}^{H_n} \subset W_{cd}^{H_n}$, contrary to the assumption that $H_n$ is strongly semi-peripheral.  Therefore $H_n = H$.  It follows that $H_n = C_{n+1} \Box H_{n+1}$, where $C_{n+1} \in \mathbf{C}(H_n,ab)$ and $H_{n+1}$ is a component of $X[U_{ab}^{H_n}]$.  Then $H_{n+1}$ is a convex subgraph of $H_n$, and thus it is a strongly semi-peripheral Peano partial cube since so is $H_n$.

$H_0 \supset H_1 \supset \dots$ is a decreasing sequence of non-empty convex subgraphs of $G$.  Because $G$ is finite, $H_n$ is strongly peripheral for some non-negative integer $n$.  Then $H_n$ is a hypercube by Proposition~\ref{P:strong.-periph./hypercube}.  It follows that $G$ is a quasi-hypertorus because $G = H_0$ or $G = C_0 \Box \dots \Box C_{n} \Box H_n$ depending on whether $n$ is or is not equal to $0$.
\end{proof}

\begin{proof}[\textnormal{\textbf{Proof of
the equivalence (ii)$\Leftrightarrow$(iii) of Theorem~\ref{T:loc.hypernet./charact.}}}]

Let $G$ be a partial cube, and $ab$ an edge of $G$.

(ii)$\Rightarrow$(iii):\; Assume that $U_{ab}$ and $U_{ba}$ are strongly ph-stable.  Then $G$ is ph-homogeneous in $ab$ by the equivalence (i)$\Leftrightarrow$(ii) of Theorem~\ref{T:loc.hypernet./charact.}.  Property (HNB1) is an obvious consequence of Lemma~\ref{L:bulge/ab-cycle}.

We now prove that (HNB2) is satisfied.  Let $A_X$ and $B_X$ be the two components of $X[U_{ab}]$.  Suppose that $X-U_{ab}$ is not a separator of $G_{ab}$.  Then there exists in $G_{ab}$ a cycle $D$ of minimal length which passes through a vertex $u$ of $A_X$ and a vertex $v$ of $B_X$, such that $P := D \cap X$ is an $(u,v)$-geodesic in $X$, and $Q := D-P$ is a non-empty path of $G_{ab}-X$.  Because the length of $D$ is minimal, it follows that $D$ is isometric in $G$.

Put $P = \langle u_0,\dots,u_n\rangle$ with $u_0 = u$ and $u_n = v$.  Then, because $P$ is a geodesic in $X$, and thus in $H$, and since each $C$-layer of $H$ is a convex cycle of $G$, and no edge of $A_X$ is $\Theta$-equivalent to an edge of a $C$-layer of $H$, it follows, by Lemma~\ref{L:gen.propert.}(v and vii), that each edge $e$ of $P$ that is $\Theta$-equivalent to an edge of $C$ distinct from $ab$ is $\Theta$-equivalent to exactly one edge of $Q$, and that this edge is antipodal to $e$ in $P \cup Q$.  Because the length of $C$ is at least $6$, it follows that there exists $i$ with $0 \leq i < n$ such that the edge $u_iu_{i+1}$ of $P$ is an edge of a $C$-layer $C_i$ of $H$ which is $\Theta$-equivalent to an edge $cd$ of $Q$ with $d \neq v$.  Hence the path $Q[d,v]$ is a path of $G_{u_iu_{i+1}} := G[co_G(U_{u_iu_{i+1}})]$.

By Theorem~\ref{T:hypernet./conv.hull(isom.cycle)} and the fact that $G$ is ph-homogeneous, the convex hull of $D$ is a quasi-hypertorus.  By the Distance Property of the Cartesian product, there exists in $co_G(D)$ an $(u,v)$-geodesic in $X$ which is the union $R_0 \cup R_1$ of the path $R_0$ of some $C$-layer $C_0$ of $H$ joining $u$ to some vertex $v'$ of $B_X$, and a $(v',v)$-geodesic $R_1$ in $B_X$.  Let $R_0 := \langle r_1,\dots,r_p\rangle$ with $r_1 = u$,\; $r_p = v'$ and $p \geq 3$.  Let $j$ be such that the edge $r_jr_{j+1}$ is $\Theta$-equivalent to the edge $u_iu_{i+1}$ of $P$.  Because $p \geq 3$, there exists $k$ with $1 \leq k \leq p$ which is distinct from $j$ and $j+1$.  Hence the vertex $r_k$ cannot be incident to an edge $\Theta$-equivalent to the edge $u_iu_{i+1}$ since $C_0$ is a convex cycle of $G$.  It follows that $co_G(D)$ is neither a hypercube nor a prism $T \Box K_2$ one of whose $K_2$-layer is $\langle u_i,u_{i+1}\rangle$.  

Therefore $co_G(D)$ is the Cartesian product of a quasi-hypertorus with some convex cycle which passes through $r_k$ and which contains an edge $\Theta$-equivalent to $u_iu_{i+1}$.  By Remark~\ref{R:notation}.2, the only cycle which has this properties is $C_0$.  It follows that every vertex of the path $Q[d,v]$ belongs to a $C$-layer of $co_G(D)$.  Therefore, because $C_0$ is cycle of $H$, the path $Q[d,v]$ is a path of $H \cap G_{u_iu_{i+1}}$, and thus of $X$.  This implies in particular that $d \in V(X)$, contrary to the hypothesis and the fact that $d \neq v$.  Consequently $X-U_{ab}$ is a separator of $G_{ab}$.\\

(iii)$\Rightarrow$(ii):\; Conversely, assume that $G$ satisfies (iii).  Let $u \in co_G(U_{ab})-U_{ab}$.  Then $u$ is a vertex of some bulge $X$ of $co_G(U_{ab})$.  By (iii), there exists a convex $H \in \mathbf{Cyl}(G,ab)$ such that $X = H-W_{ba}$.  Then $H = C \Box F$, where $C \in \mathbf{C}(G,ab)$ and $F$ is some partial cube.  Let $C_u$ be the $C$-layer of $H$ which contains $u$, and $P_u := C_u-W_{ba}$.  We will show that $P_u$ has the properties (SPS1) and (SPS2), which will implies that $U_{ab}$ is strongly ph-stable, and consequently that $G$ satisfies condition (ii) of Theorem~\ref{T:charact.}.

(SPS1):\; Let $x \in \mathcal{I}_G(U_{ab})$, and $R$ an $(x,u)$-geodesic.  By the definition of a bulge, $R$ passes through a vertex of one of the two components $A_0$ and $A_1$ of $X[U_{ab}]$, say $A_0$.  Let $v$ be the endvertex of $P_u$ in $A_1$.  Then, by the properties of the Cartesian product and the fact that $X-U_{ab}$ is a separator of $G_{ab}$ by (HNB2), $R \cup P_u[u,v]$ is a geodesic, which proves that $P_u$ satisfies (SPS1).

(SPS2):\; Let $x, y \in U_{ab}$ such that there exists an $(x,y)$-geodesic $R$ which passes through $u$.  Then, by (HNB2) and the definition of a bulge, there exists $i = 0$ or $1$ such that $R[x,u]$ and $R[y,u]$ pass through some vertices $z_x \in A_i$ and $z_y \in A_{1-i}$, respectively.  By the properties of the Cartesian product, $P_u$ is a subpath of some $(z_x,z_y)$-geodesic $Q$.  It follows that $R[x,z_x) \cup Q \cup R[z_y,y]$ is an $(x,y)$-geodesic which contains $P_u$ as a subpath.  Hence $P_u$ satisfies (SPS2).
\end{proof}

As we already noticed, the equivalences of the assertions (i),(ii) and (iii) of Theorem~\ref{T:charact.} are immediate consequences of Theorem~\ref{T:charact.}.  In particular, it follows that the hypercylinder defined by Lemma~\ref{L:bulge/ab-cycle}, and denoted by $\mathbf{Cyl}[X]$, is the hypercylinder introduced in (HNB1).  In order to prove that assertion (iv) of Theorem~\ref{T:charact.} is also equivalent to the other assertions of this theorem, we will study the gated sets in a Peano partial cube.

\subsection{Gated sets in Peano partial cubes}\label{SS:gated/hypernet.}

We say that a
subgraph $H$ of a graph $G$ is \emph{$\Gamma$-closed} if every convex
cycle that has at least three vertices in common with $H$ is a cycle
of $H$.

\begin{pro}\label{P:gated/G-closed}
Any gated subgraph is $\Gamma$-closed.
\end{pro}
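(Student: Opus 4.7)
The plan is to reduce the statement to a local fact about a single convex cycle. Given a gated subgraph $H$ of $G$ and a convex cycle $C$ with $|V(H)\cap V(C)| \geq 3$, Lemma~\ref{L:gated.inter.convex}(i), applied with $F = C$ (which is convex because $C$ is), tells us that the intersection $H \cap C$ is gated in $C$; in particular $V(H) \cap V(C)$ is a gated subset of the graph $C$. It therefore suffices to prove the following auxiliary fact: in an even cycle $C_{2n}$ with $n \geq 2$, any gated subset of cardinality at least $3$ equals $V(C_{2n})$. Once this is done, we get $V(C) \subseteq V(H)$, and since gated subgraphs are convex and hence induced by their vertex sets (as per the conventions in Subsection~\ref{SS:gated}), all edges of $C$ lie in $H$ and $C$ is a cycle of $H$.

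For the auxiliary fact I would first classify the convex subsets of $V(C_{2n})$. Any two non-antipodal vertices of $C_{2n}$ have as interval the unique shorter arc joining them, while two antipodal vertices have all of $V(C_{2n})$ as interval; consequently the convex subsets are $\emptyset$, singletons, arcs $\{v_i, v_{i+1}, \ldots, v_{i+k-1}\}$ of $k$ consecutive vertices with $2 \leq k \leq n$, and $V(C_{2n})$. For $n = 2$ no convex set has exactly three vertices, so the claim is vacuous. For $n \geq 3$ it remains to rule out the arcs $A = \{v_0, v_1, \ldots, v_{k-1}\}$ of size $3 \leq k \leq n$. I will do so by exhibiting a vertex of $C_{2n}$ having no gate in $A$, namely $v_{n+1}$ (which lies outside $A$ since $n+1 > n \geq k$): the three intervals $I_{C_{2n}}(v_{n+1}, v_0)$, $I_{C_{2n}}(v_{n+1}, v_1)$ and $I_{C_{2n}}(v_{n+1}, v_2)$ are, respectively, the short forward arc $\{v_{n+1}, v_{n+2}, \ldots, v_{2n-1}, v_0\}$, all of $V(C_{2n})$ (since $v_1$ is the antipode of $v_{n+1}$), and the short backward arc $\{v_{n+1}, v_n, \ldots, v_2\}$. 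Their intersection is the singleton $\{v_{n+1}\}$, which is not a vertex of $A$; hence no element of $A$ can serve as a gate of $v_{n+1}$ in $A$, so $A$ is not gated.

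The only step requiring actual calculation is the interval computation just described; it amounts to routine modular arithmetic on a single $2n$-cycle and presents no real obstacle.
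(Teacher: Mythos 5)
Your proof is correct and rests on the same key idea as the paper's: exhibit a vertex of the convex cycle, roughly antipodal to the arc $C \cap H$, whose intervals to the endpoints of that arc run around opposite sides of $C$ and hence meet only in the vertex itself, so it can have no gate. The only difference is organizational — you first localize to $C$ via Lemma~\ref{L:gated.inter.convex}(i) and classify the convex subsets of an even cycle, whereas the paper runs the same gate computation directly in $G$ — so the two arguments are essentially identical.
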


\begin{proof}
Let $F$ be a subgraph of a graph $G$.  Assume that $F$ is gated.  
Suppose that there
is a convex cycle $C = \langle x_{1},\ldots,x_{2n},x_{1} \rangle$ of
$G$ such that $|V(C \cap F)| \geq 3$ and $C \nsubseteq F$.  Without
loss of generality we can suppose that $C \cap F = \langle
x_{p},\ldots,x_{2n} \rangle$ with $2n-p \geq 2$.  Because $F$ is
convex, it follows that $n < p$.  Let $i$ be the largest integer less
than or equal to $(p-1)/2$.  Then, because $C$ is convex, $\langle
x_{2n},x_{1},\ldots,x_{i} \rangle$ and $\langle
x_{i},x_{i+1},\ldots,x_{p} \rangle$ are the only
$(x_{i},x_{2n})$-geodesic and $(x_{i},x_{p})$-geodesic, respectively.
It follows that $x_{i}$ has no gate in $F$, and thus that $F$ is not
gated, contrary to the assumption.
\end{proof}

The converse is true if $G$ is a netlike partial cube (see
\cite[Theorem 6.2]{P05-1}).  This is generally not true if $G$ is not netlike.  However we have the following result.

\begin{thm}\label{T:hypernet.=>G-closed+conv.=gated}
A convex subgraph of a Peano partial cube is gated
if and only if it is $\Gamma$-closed.
\end{thm}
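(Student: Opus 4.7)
One direction is exactly Proposition~\ref{P:gated/G-closed}, so I focus on the converse.  Suppose $F$ is a convex $\Gamma$-closed subgraph of a Peano partial cube $G$ but is not gated.  Since every interval of a partial cube is finite (Lemma~\ref{L:gen.propert.}(ii)), any $x \in V(G)-V(F)$ without a gate admits a vertex $g \in V(F)$ realising $d_G(x,V(F))$; among $y \in V(F)$ such that $g \notin I_G(x,y)$ I choose one minimising $d := d_G(g,y)$.  A short parity argument exploiting the minimality of $g$ and $d$ shows $d \geq 2$, $d_G(x,y) = d_G(x,g) + d - 2$, and $|\Theta_{x,g} \cap \Theta_{g,y}| = 1$; write $\theta$ for this unique $\Theta$-class and fix $ab \in \theta$ with $g \in W_{ab}$ and $x,y \in W_{ba}$.

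The plan is to exhibit a convex cycle $C_g \in \mathbf{C}(G,ab)$ that shares at least three vertices with $F$ yet is not contained in $F$, thereby contradicting $\Gamma$-closedness.  First, $g \notin U_{ab}$: otherwise its $\theta$-neighbour $g^{*}$ would lie in $I_G(g,y) \subseteq V(F)$ by convexity while satisfying $d_G(x,g^{*}) = d_G(x,g)-1$, contradicting the minimality of $g$.  To locate $g$ inside $co_G(U_{ab})$, let $q$ be the $U_{ab}$-vertex on an $(x,g)$-geodesic at the crossing of $\theta$, and $h$ the $U_{ab}$-vertex at the analogous crossing on a $(g,y)$-geodesic chosen inside $F$; so $h \in V(F)$ by convexity.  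Any element of $\Theta_{q,g} \cap \Theta_{g,h}$ must lie in $\Theta_{x,g} \cap \Theta_{g,y} = \{\theta\}$, but $\theta$ separates neither $q$ from $g$ nor $h$ from $g$, so $\Theta_{q,g} \cap \Theta_{g,h} = \emptyset$.  Therefore $g \in I_G(q,h)$ and hence $g \in \mathcal{I}_G(U_{ab}) - U_{ab}$.

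The equivalence (i)$\Leftrightarrow$(ii) of Theorem~\ref{T:loc.hypernet./charact.} now supplies the $U_{ab}$-geodesic $P_g$ associated with $g$, and Lemma~\ref{L:ab-cycle} the convex $ab$-cycle $C_g \in \mathbf{C}(G,ab)$ associated with $g$.  Applying property (SPS2) to $q,h \in U_{ab}$ with $g \in I_G(q,h)$, $P_g$ is a subpath of some $(q,h)$-geodesic; let $v^{*}$ and $w^{*}$ be its endvertices on the $h$-side and the $q$-side respectively.  Then $v^{*} \in I_G(g,h) \subseteq V(F)$ by convexity, and its $\theta$-neighbour $(v^{*})'$, which is also a vertex of $C_g$, satisfies $(v^{*})' \in I_G(v^{*},y) \subseteq V(F)$ because crossing $\theta$ strictly shortens the distance to $y \in W_{ba}$.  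On the other hand $w^{*} \in I_G(g,q) \subseteq I_G(g,x)$ gives $d_G(w^{*},x) < d_G(g,x)$, so $w^{*} \notin V(F)$ by minimality of $g$.

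The three vertices $g$, $v^{*}$, $(v^{*})'$ lie respectively in $W_{ab}-U_{ab}$, $U_{ab}$ and $U_{ba}$, hence are pairwise distinct and all belong to $V(F) \cap V(C_g)$; meanwhile $w^{*} \in V(C_g) - V(F)$.  Thus $C_g$ is a convex cycle sharing at least three vertices with $F$ but not contained in $F$, contradicting $\Gamma$-closedness of $F$.  The main technical obstacle is the step $g \in \mathcal{I}_G(U_{ab})$: it rests entirely on the \emph{uniqueness} of the $\Theta$-class in $\Theta_{x,g} \cap \Theta_{g,y}$, without which a spurious shortcut between $q$ and $h$ bypassing $g$ could destroy the identity $d_G(q,h) = d_G(q,g) + d_G(g,h)$.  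Once $g$ is placed inside a bulge of $co_G(U_{ab})$, the Peano structure (Theorem~\ref{T:loc.hypernet./charact.} together with Lemma~\ref{L:ab-cycle}) automatically produces the convex cycle $C_g$ that breaks $\Gamma$-closedness.
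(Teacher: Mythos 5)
Your converse is correct, and while it reaches the same kind of contradiction as the paper's proof --- a convex cycle, carrying edges $\Theta$-equivalent to a suitable edge $ab$, that meets $F$ in at least three vertices yet is not contained in $F$ --- it gets there by a genuinely different route. The paper minimizes first the distance $k$ from an outside vertex $u$ to $F$ and then $d_G(x,a)$, forms the closed walk $P_x \cup P_a \cup R$, splits on whether it is convex, and in the hard case shows the path $Q$ lies in a bulge of $co_G(U_{ba})$, so that an $A$-layer of the gated hypercylinder $\mathbf{Cyl}[X]$ through $a,b,c$ is the offending convex cycle. You instead minimize only $d_G(g,y)$, observe via the identity $d_G(x,y)=d_G(x,g)+d_G(g,y)-2\lvert\Theta_{x,g}\cap\Theta_{g,y}\rvert$ and minimality of $d$ that exactly one class $\theta$ is crossed by both an $(x,g)$- and a $(g,y)$-geodesic (each such class must be crossed at the last edge of a fixed $(g,y)$-geodesic), and use that uniqueness to place $g$ itself in $\mathcal{I}_G(U_{ab})-U_{ab}$; then the equivalence (i)$\Leftrightarrow$(ii) of Theorem~\ref{T:loc.hypernet./charact.} together with Lemma~\ref{L:ab-cycle} and (SPS1)/(SPS2) hands you the associated convex $ab$-cycle $C_g$ directly, with $g$, $v^{*}$, $(v^{*})'$ in $F$ and $w^{*}\notin V(F)$. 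This buys you a shorter argument that avoids the double minimization and the detour through hypercylinders, at the cost of the $\Theta$-class count that you only sketch; I checked the compressed steps ($d\geq 2$, $g\notin U_{ab}$, $\Theta_{q,g}\cap\Theta_{g,h}=\emptyset$ hence $g\in I_G(q,h)$, and $w^{*}\notin V(F)$ from $d_G(w^{*},x)<d_G(x,V(F))$) and they all go through, with no circularity since Lemma~\ref{L:ab-cycle} and Theorem~\ref{T:loc.hypernet./charact.} precede this theorem in the paper.
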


\begin{proof}    
The necessity is Proposition~\ref{P:gated/G-closed}.  Conversely, let
$F$ be a convex subgraph of a Peano partial cube
$G$.  Assume that $F$ is not gated.  Then there exist a vertex $u$ of
$G-F$ and $x, a \in V(F)$ such that $d_{G}(u,x) = d_{G}(u,V(F)) =: k$,
$d_{G}(u,a) < d_{G}(u,x) + d_{G}(x,a)$, and $k$ is minimal with
respect to these properties (see Figure~\ref{F:Thm.3.29}).  Without loss of generality we can
suppose that $a$ is chosen so that $d_{G}(x,a)$ is minimal with
respect to the preceding properties.  Note that $d_{G}(x,a) \geq 2$
since $k = d_{G}(u,V(F))$ and $G$ is bipartite.  Also note that $d_{G}(u,x)$ and
$d_{G}(u,a)$ are greater than $1$, since otherwise $u$ would belong to
$V(F)$ by the convexity of $F$, contrary to the hypothesis $u \notin
V(F)$.

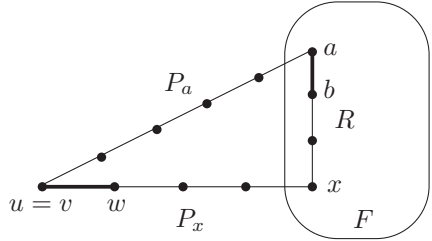
\begin{figure}[!h]
    \centering
{\tt    \setlength{\unitlength}{0.70pt}
\begin{picture}(247,148)
\thinlines    \put(196,16){$F$}
              \put(198,74){\oval(78,128)}
              \put(94,90){$P_{a}$}
              \put(186,70){$R$}
              \put(100,16){$P_{x}$}
              \put(63,25){$w$}
              \put(10,25){$u=v$}
              \put(182,35){$x$}
              \put(180,86){$b$}
              \put(180,109){$a$}
              \put(174,91){\line(0,-1){52}}
              \put(67,38){\line(1,0){106}}
\linethickness{0,5mm}
              \put(27,38){\line(1,0){39}}
              \put(174,110){\line(0,-1){22}}
\thinlines    \put(28,38){\circle*{5}}
              \put(174,38){\circle*{5}}
              \put(104,38){\circle*{5}}
              \put(67,38){\circle*{5}}
              \put(138,38){\circle*{5}}
              \put(174,111){\circle*{5}}
              \put(174,88){\circle*{5}}
              \put(174,63){\circle*{5}}
              \put(60,54){\circle*{5}}
              \put(90,69){\circle*{5}}
              \put(117,83){\circle*{5}}
              \put(145,97){\circle*{5}}
              \put(174,112){\line(-2,-1){145}}
\end{picture}}
\caption{Illustration of the first part of the proof of Theorem~\ref{T:hypernet.=>G-closed+conv.=gated}.}
\label{F:Thm.3.29}
\end{figure}

Let $P_{x}$, $P_{a}$ and $R$ be a $(u,x)$-geodesic, a $(u,a)$-geodesic
and an $(x,a)$-geodesic, respectively.  Then $R$ is a path of $F$
since $F$ is convex.  By the minimality of $k$ and of $d_{G}(x,a)$,\;
$C := P_{x} \cup P_{a} \cup R$ is a cycle.  If $C$ is convex then $F$
is not $\Gamma$-closed because $d_{G}(x,a) \geq 2$.

Assume that $C$ is not convex, and let $b$ be the neighbor of $a$ in
$R$.  By the minimality of $d_{G}(x,a)$ and the fact that $G$ is
bipartite, we have $d_{G}(u,b) = d_{G}(u,a)~+~1$.  Then, there
is an edge $vw$ of $P_{x} \cup P_{a}$ which is in relation $\Theta$
with the edge $ab$.  Because $a \in I_{G}(u,b)$, it follows that 
$vw \notin E(P_{a})$.  Therefore $vw \in E(P_{x})$.  Then  $$d_{G}(v,b) = d_{G}(w,a) = d_{G}(w,b)+1 = d_{G}(v,a)+1.$$  Hence $v = u$ by the minimality of $k$.

Let $y$ be an inner vertex of the $(w,b)$-geodesic $Q := P_{x}[w,x]
\cup R[x,b]$.  Suppose that $y \in U_{ba}$.  Let $y'$ be the neighbor
of $y$ in $U_{ab}$.  Then $y' \in I_{G}(u,y)$ because the edges $uw$
and $y'y$ are $\Theta$-equivalent.  We distinguish two cases:

\textbullet\; If $y \in V(P_{x}[w,x]-x)$, then $y' \notin F$ and $y'
\in I_{G}(u,x) \cap I_{G}(u,a)$, contrary to the minimality of $k$.

\textbullet\; If $y \in V(R[x,b])$, then $y' \in I_{G}(x,a)$, and thus
$y' \in F$ since $F$ is convex, and moreover $d_{G}(u,y') <
d_{G}(u,x)+d_{G}(x,y')$, contrary to the choice of $a$.

\noindent Therefore the only vertices of $Q$ that belong to $U_{ba}$
are its endvertices $w$ and $b$.

By what we proved above, the inner
vertices of $Q$ belong to $co_{G}(U_{ba})-U_{ba}^{G}$.  It follows that
$Q$ is a path of a bulge $X$ of $co_{G}(U_{ba})$.  Because $G$ is a Peano partial cube, there exists, by the Characterization Theorem, a convex $H := \mathbf{Cyl}[X]$.  It follows that
$Q$ is a geodesic of $H-W_{ab}$.  Hence the cycle $C$ is a subgraph of $H$.

On the other hand, $H = A \Box B$, where $A \in \mathbf{C}(G,ab)$ and $B$ is a component of $X[U_{ba}]$.
Because $b$ is the only vertex of $R[x,b]$ which belongs to $U_{ba}$,
it follows that $\langle a,b,c\rangle$, where $c$ is the neighbor of $b$ in
$R[x,b]$, is a path of some $A$-layer $A_a$ of $H$.

Suppose that $A_a$ is a cycle of $F$.  Then, because $F$ is convex,
it follows that $F \cap H$ is the Cartesian product of $A$ with some convex subgraph of $B$.  Let $A_x$ be the $A$-layer of $H$ containing $x$, and let $u'$ be the vertex of $A_x$ such that $d_G(u,u') = d_G(u,V(A_x))$.  Then $u' \in U_{ab}$ since $u \in U_{ab}$.  Because $A_x$ is a cycle of $F$, it follows that $k \leq d_G(u,u')$.  On the other hand, by the fact that $u' \neq x$ since $x \in U_{ba}$, by the Distance Property of Cartesian product and the fact that $d_G(u,x) = k$ by assumption, we have $$d_G(u,u') < d_G(u,u')+d_G(u',x) = d_G(u,x) = k,$$ a contradiction with the above inequality.

Therefore $A$ is not a cycle of $F$.  On the other hand, $A$
is a convex cycle of $G$ whose three vertices
$a, b, c$ belong to $F$.  It follows that $F$ is not $\Gamma$-closed.
\end{proof}

Note that a partial cube $G$ being an isometric subgraph of some
hypercube $Q$, any hypercube in $G$ is then a convex subgraph of $Q$,
and thus is gated in $G$.  We now generalize
\cite[Corollary~6.4]{P05-1} stating that any convex cycle of a netlike
partial cube is gated.

We first have to characterize the finite regular Peano partial cubes.  We recall that the finite regular median graphs are the hypercubes, 
and that the finite regular netlike partial cubes are the hypercubes 
and the even cycles.  More generally we have:

\begin{thm}\label{T:reg. hypernet./str.sem.-periph./quasi-hypert.}
Let $G$ be a compact Peano partial cube.  The following assertions are equivalent:

\textnormal{(i)}\; $G$ is a quasi-hypertorus.

\textnormal{(ii)}\; $G$ is finite and regular.

\textnormal{(iii)}\; $G$ is strongly semi-peripheral.
\end{thm}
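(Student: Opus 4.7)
The plan is to establish four implications: \textnormal{(i)}$\Rightarrow$\textnormal{(ii)}, \textnormal{(i)}$\Rightarrow$\textnormal{(iii)}, \textnormal{(iii)}$\Rightarrow$\textnormal{(i)} and \textnormal{(ii)}$\Rightarrow$\textnormal{(iii)}. The first is direct: a quasi-hypertorus is a weak Cartesian product of copies of $K_2$ and even cycles, each a regular graph, and by the Distance Property of Proposition~\ref{P:pro.Cartes.prod.} the product is regular with degree equal to the sum of factor degrees. An infinite weak Cartesian product with infinitely many non-trivial factors contains an isometric ray (traverse half of each cycle or the unique edge of each $K_2$ in sequence), so compactness forces the index set to be finite and hence $G$ is finite.

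For \textnormal{(i)}$\Rightarrow$\textnormal{(iii)}, I would first check directly that $K_2$ and every even cycle $C_{2n}$ are strongly semi-peripheral --- in $C_{2n}$ each $W_{ab}$ is the length-$n$ path whose endpoints form $U_{ab}$, so $W_{ab}=co(U_{ab})$. Strong semi-peripherality is preserved by Cartesian products: for an edge $ab$ of $G_0\Box G_1$ lying in the $G_0$-factor, Proposition~\ref{P:pro.Cartes.prod.} yields
\begin{equation*}
W_{ab}^G = W_{a_0b_0}^{G_0}\times V(G_1),\qquad U_{ab}^G = U_{a_0b_0}^{G_0}\times V(G_1),
\end{equation*}
and hence $co_G(U_{ab}^G)=co_{G_0}(U_{a_0b_0}^{G_0})\times V(G_1)$, so $W_{ab}^G=co_G(U_{ab}^G)$ is equivalent to the corresponding equality in $G_0$. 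Induction on the number of factors then gives (iii).

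For \textnormal{(iii)}$\Rightarrow$\textnormal{(i)}, I would reuse the iterative construction from part (b) of the proof of Theorem~\ref{T:hypernet./conv.hull(isom.cycle)}: starting from $H_0:=G$, pick, at each stage, an edge $ab$ of $H_n$ for which $W_{ab}^{H_n}$ is not a periphery, and write $H_n=C_{n+1}\Box H_{n+1}$ with $C_{n+1}\in\mathbf{C}(H_n,ab)$, using that strong semi-peripherality passes to convex subgraphs (so each $H_{n+1}$ remains a strongly semi-peripheral Peano partial cube). If the process did not terminate, $G$ would split as $C_1\Box C_2\Box\cdots$ with infinitely many non-trivial factors, yielding isometric rays just as in the proof of \textnormal{(i)}$\Rightarrow$\textnormal{(ii)} and contradicting compactness. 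Hence the chain stabilises at a strongly peripheral $H_n$, which by Proposition~\ref{P:strong.-periph./hypercube} is a hypercube, so $G=C_1\Box\cdots\Box C_n\Box H_n$ is a finite quasi-hypertorus.

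The main obstacle is \textnormal{(ii)}$\Rightarrow$\textnormal{(iii)}, which I would attack by induction on $|V(G)|$, the cases $|V(G)|\leq 2$ being immediate. I first treat the case where $G$ has a periphery $W_{ab}=U_{ab}$: the $[ab]$-matching $M\colon U_{ab}\to U_{ba}$ together with the Djokovi\'{c}--Winkler distance identity implies that $xy\in E(G[W_{ab}])$ forces $M(x)M(y)\in E(G)$, so $G[W_{ab}\cup U_{ba}]\cong G[W_{ab}]\Box K_2$. By regularity each $M(x)\in U_{ba}$ already uses up all $d$ of its edges inside $W_{ab}\cup U_{ba}$, so there is no edge from $U_{ba}$ to $W_{ba}-U_{ba}$; combined with the fact that every $[ab]$-edge terminates in $U_{ba}$, this forces $W_{ba}-U_{ba}$ to be disconnected from the rest, hence empty by connectedness of $G$, giving $G=G[W_{ab}]\Box K_2$; the induction hypothesis applied to the $(d-1)$-regular Peano partial cube $G[W_{ab}]$ then finishes this case. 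In the no-periphery case one has $\mathcal{I}_G(U_{ab})-U_{ab}\neq\emptyset$ for every edge $ab$, so by Theorem~\ref{T:charact.}(v) every such vertex lies on a gated cycle $C\in\mathbf{C}(G,ab)$, and by Theorem~\ref{T:hypernet./conv.hull(isom.cycle)} its convex hull $co_G(C)$ is a gated quasi-hypertorus with a non-trivial cycle factor. The hardest step of the whole proof will be to upgrade this local structure into a global Cartesian factorisation $G=C\Box G'$ with $C\in\mathbf{C}$, by using $d$-regularity to show that the gated hypercylinder attached to a bulge of $co_G(U_{ab})$ must exhaust the corresponding $\Theta$-fibre of $G$; once this is done, $G'$ is a strictly smaller $(d-2)$-regular Peano partial cube and the induction hypothesis completes the proof.
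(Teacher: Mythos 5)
Your implications (i)$\Rightarrow$(ii), (i)$\Rightarrow$(iii) and (iii)$\Rightarrow$(i) are fine and follow the paper's route (the last is essentially the paper's own argument, recycled from part (b) of the proof of Theorem~\ref{T:hypernet./conv.hull(isom.cycle)}); your periphery case of the remaining implication is also correct, and in fact more explicit than the paper's one-line ``$G = G[U_{ab}]\Box K_{2}$ because $W_{ab}=U_{ab}$'', since you supply the degree count forcing $W_{ba}-U_{ba}=\emptyset$. The genuine gap is the non-periphery case: you announce that ``the hardest step will be to upgrade this local structure into a global Cartesian factorisation $G=C\Box G'$'' but you do not carry it out, so (ii)$\Rightarrow$(iii) is unproved exactly where it matters. (There is also a small slip in the setup: $W_{ab}\neq U_{ab}$ does not by itself give $\mathcal{I}_G(U_{ab})-U_{ab}\neq\emptyset$, since one may have $\mathcal{I}_G(U_{ab})=U_{ab}\subsetneq W_{ab}$; you must choose $ab$ so that $W_{ab}$ is a semi-periphery that is not a periphery, which exists because $G$ is finite and, in this case, has no periphery.)

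The missing step is much shorter than you fear, and the paper closes it by a degree count rather than any analysis of $\Theta$-fibres. Take $ab$ with $W_{ab}=co_G(U_{ab})\neq U_{ab}$, let $X$ be a bulge of $co_G(U_{ab})$ and $H:=\mathbf{Cyl}[X]=C\Box A$ with $C\in\mathbf{C}(G,ab)$. Every vertex $x\in V(X)-U_{ab}$ has all of its $d$ neighbours inside $X$ (it has none in $W_{ba}$, and the bulge is the closed neighbourhood of the component of $co_G(U_{ab})-U_{ab}$ containing $x$), and inside $H$ exactly two of them lie in the $C$-layer through $x$; hence $\delta_{A_x}(x)=d-2$ for the $A$-layer $A_x$ through $x$, so $A$ is $(d-2)$-regular and $H$ is $d$-regular. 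A $d$-regular subgraph of the connected $d$-regular graph $G$ admits no boundary edges, so $H=G=C\Box A$, and the induction hypothesis applies to the convex $(d-2)$-regular Peano partial cube $A$. With this paragraph inserted your argument is complete and coincides with the paper's proof of (ii)$\Rightarrow$(i).
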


\begin{proof}
The implications (i) $\Rightarrow$ (ii) and (iii) are obvious.

(ii) $\Rightarrow$ (i):\; Let $G$ be a finite $n$-regular Peano partial cube.  We proceed by induction on $|V(G)|$.  The result is obvious if $|V(G)|
= 1$.  Suppose it this is true if $|V(G)| \leq k$ for some positive
integer $k$.  Let $G$ be such that $|V(G)| = k+1$.  Because $G$ is finite, it
has a semi-periphery $W_{ab}$ for some $ab \in E(G)$, and this
semi-periphery may be a periphery.  So we distinguish two cases.

\emph{Case 1.}\; $W_{ab} = U_{ab}$.

By
Theorem~\ref{T:hypernet./gat.amalg.+cart.prod.}, $G[U_{ab}]$ is ph-homogeneous since it is a convex subgraph of $G$.
Moreover it is $(n-1)$-regular with $|V(G[U_{ab}])| \leq k$.  Therefore, by the induction
hypothesis, $G[U_{ab}]$ is a $(n-1)$-cube, or a $p$-prism or a
$p$-torus depending on whether $n-1 = 2p$ or $2p+1$.  By the properties
of partial cubes, $G = G[U_{ab}] \Box K_{2}$ because $W_{ab} =
U_{ab}$.  It follows that $G$ is $n$-cube, or a $p$-torus or a
$p$-prism depending on whether $n = 2p$ or $2p+1$.

\emph{Case 2.}\; $W_{ab} \neq U_{ab}$.

Let $X$ be a bulge of $W_{ab}$, and let $H := \mathbf{Cyl}[X]$.  Then $H = C \Box A$, where $C \in \mathbf{C}(G,ab)$ and $A$ is a component of $X[U_{ab}]$.  For each $x \in V(X)-U_{ab}$,\; $n = \delta_{G}(x) =
\delta_{H}(x) = \delta_{A_{x}}(x)+2$, where $A_{x}$ is the 
$A$-layer of $H$ containing $x$.  It follows that $A$ is 
$(n-2)$-regular, and thus $H$ is $n$-regular.  Consequently $G = H$ 
because $G$ is connected and also $n$-regular.  On the other hand $A$ is a convex subgraph of $G$, and thus is ph-homogeneous by Theorem~\ref{T:hypernet./gat.amalg.+cart.prod.}.  Moreover $|V(A)| \leq k$.  Hence, by the induction hypothesis, $A$ is a $(n-2)$-cube, or a $p$-prism or a
$p$-torus depending on whether $n-2 = 2p$ or $2p+1$.  It follows that $H$, being the Cartesian product of $C$ with $A$, is a $n$-cube, or a $p$-torus or a
$p$-prism depending on whether $n = 2p$ or $2p+1$.\\

(iii) $\Rightarrow$ (i):\; Let $G$ be a compact strongly semi-peripheral Peano partial cube.  Suppose that $G$ is not a quasi-hypertorus.  We construct a sequence $H_0, H_1,\dots$ of non-empty convex subgraphs of $G$ such that, for each non-negative integer $n$,\; $H_n = C_{n+1} \Box H_{n+1}$, where $C_{n+1} \in \mathbf{C}(H_n)$.

Put $H_0 := G$.  Suppose that $H_0,\dots,H_n$ has already been constructed for some non-negative integer $n$.  Because $H_n$ is a non-empty convex subgraph of $G$, it follows that $H_n$ is also a compact strongly semi-peripheral Peano partial cube.  

Suppose that $W_{ab}^{H_n}$ is a periphery for any $ab \in E(H_n)$, that is, that $H_n$ is strongly peripheral.  Then $H_n$ is a hypercube by Proposition~\ref{P:strong.-periph./hypercube}.  Moreover $H_n$ is a finite hypercube since  it contains no isometric rays by \cite[Corollary 3.15]{P09-1}.  It follows that $G$ is a quasi-hypertorus because $G = H_0$ or $G = C_0 \Box \dots \Box C_{n} \Box H_n$ depending on whether $n$ is or is not equal to $0$.  This yields a contradiction with the hypothesis that $G$ is not a quasi-hypertorus.

Hence $W_{ab}^{H_n}$ is not a periphery for some $ab \in E(H_n)$.    Let $X$ be a bulge of $co_{H_n}(U_{ab}^{H_n})$, and $H := \mathbf{Cyl}[X]$.  Suppose that $H_n \neq H$.  Then there exists an edge $uv \in \partial_{H_n}(V(H))$ with $u \in U_{ab}^{H_n} \cap V(H)$ and $v \in W_{ab}^{H_n}$.  By Lemma~\ref{L:gen.propert.}(ix), $uv$ is not $\Theta$-equivalent to an edge of $H$.  Let $C$ be the cycle of $H$ of length greater than $4$ which contains an edge $\Theta$-equivalent to $ab$ and which passes through $u$, and let $cd$ be an edge of $C$ which is not $\Theta$-equivalent to $ab$ and such that $u \in W_{cd}^{H_n}$.  Then $v \notin co_{G}(U_{cd}^{H_n})$ since $W_{cd}^{H}$ is a bulge of $W_{cd}^{H_n}$.  It follows that $W_{vu}^{H_n} \subset W_{cd}^{H_n}$, contrary to the assumption that $H_n$ is strongly semi-peripheral.  Therefore $H_n = H$.  It follows that $H_n = C_{n+1} \Box H_{n+1}$, where $C_{n+1} \in \mathbf{C}(H_n,ab)$ and $H_{n+1}$ is a component of $X[U_{ab}^{H_n}]$.  Then $H_{n+1}$ is a convex subgraph of $H_n$, and thus it is a compact strongly semi-peripheral Peano partial cube since so is $H_n$.

$H_0 \supset H_1 \supset \dots$ is a decreasing sequence of non-empty convex subgraphs of $G$.  Hence $H := \bigcap_{n \in \mathbb{N}}H_n \neq \emptyset$ since $G$ is compact.  Therefore $G = \cartbig_{n \in \mathbb{N}}C_n \Box H$.  It follows that $G$ contains the infinite convex hypertorus $\cartbig_{n \in \mathbb{N}}C_n$, and thus contains an isometric ray, contrary to \cite[Corollary 3.15]{P09-1}.

Consequently $G$ is a quasi-hypertorus.
\end{proof}

We will give another equivalent condition later. According to the above proof we immediately infer the following corollary.

\begin{cor}\label{C:reg.hypernet.}
A finite Peano partial cube is
$n$-regular for some positive integer $n$ if and only if it is a
$n$-cube, or a $p$-torus or a $p$-prism depending on whether $n = 2p$
or $2p+1$.
\end{cor}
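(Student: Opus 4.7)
The plan is to derive this corollary as a short parity-counting consequence of the equivalence (i)$\Leftrightarrow$(ii) in Theorem~\ref{T:reg. hypernet./str.sem.-periph./quasi-hypert.}, which already contains all the structural work. That theorem asserts that a finite $n$-regular Peano partial cube must be a quasi-hypertorus; here the only remaining task is to read off what the value of $n$ forces the Cartesian factorisation to look like.

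For sufficiency I would verify each family by a one-line degree computation. By the Distance Property of the Cartesian product (Proposition~\ref{P:pro.Cartes.prod.}(i)), vertex degrees add across factors, so the $n$-cube $Q_n$ is $n$-regular, a $p$-torus (a Cartesian product of $p$ even cycles) is $2p$-regular, and a $p$-prism (the product of a $p$-torus with $K_2$) is $(2p+1)$-regular. Each of these graphs is a finite Peano partial cube, as recorded in Subsection~\ref{SS:elem.ph-homog.}.

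For necessity, let $G$ be a finite $n$-regular Peano partial cube. Theorem~\ref{T:reg. hypernet./str.sem.-periph./quasi-hypert.} yields that $G$ is a quasi-hypertorus, and the definition of a quasi-hypertorus splits into three exhaustive cases. If $G$ is a hypercube $Q_d$, regularity forces $d=n$, so $G$ is an $n$-cube. If $G$ is a hypertorus $C_{2\ell_1}\Box\cdots\Box C_{2\ell_m}$ (with some $\ell_i\geq 3$), then $G$ is $2m$-regular, hence $n=2m$ and $G$ is a $p$-torus with $p=m$. If $G$ is the prism $K_2\Box T$ over an $m$-torus $T$, then $G$ is $(2m+1)$-regular, hence $n=2m+1$ and $G$ is a $p$-prism with $p=m$.

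I expect no real obstacle: the substantive step is entirely the invocation of Theorem~\ref{T:reg. hypernet./str.sem.-periph./quasi-hypert.}. In fact the proof of (ii)$\Rightarrow$(i) given there already proceeds by induction on $|V(G)|$, splitting into the peripheral subcase $G=G[U_{ab}]\Box K_2$ (which adds $1$ to the degree, producing the prism factor) and the non-peripheral subcase $G=C\Box A$ with $A$ being $(n-2)$-regular (which adds $2$, producing a cycle factor). The dichotomy between $p$-prism and $p$-torus is therefore already baked into that induction; the present corollary simply extracts it.
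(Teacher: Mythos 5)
Your proposal is correct and matches the paper, which derives the corollary exactly as you describe in your final paragraph: the induction in the proof of (ii)$\Rightarrow$(i) of Theorem~\ref{T:reg. hypernet./str.sem.-periph./quasi-hypert.} already carries the refined conclusion ($n$-cube, $p$-torus, or $p$-prism according to the parity of $n$), and the paper states that the corollary is ``immediately inferred'' from that proof. Your standalone version (quasi-hypertorus by the theorem, then a degree count over the three exhaustive cases of the definition) is a harmless repackaging of the same argument.
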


\begin{thm}\label{T:conv.reg.H-subgr.=>gated}
Let $G$ be a Peano partial cube.  Then any finite convex regular
subgraph of $G$ is gated.
\end{thm}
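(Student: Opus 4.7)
The plan is to first identify $F$ explicitly via the classification of finite regular Peano partial cubes, then reduce gatedness to $\Gamma$-closedness via Theorem~\ref{T:hypernet.=>G-closed+conv.=gated}, and finally run a convex $4$-cycle argument inside the Cartesian-product structure of $F$.

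First, I would invoke the closure of Peano partial cubes under convex subgraphs (Theorem~\ref{T:hypernet./gat.amalg.+cart.prod.}) to conclude that $F$ is itself a finite regular Peano partial cube, hence by Corollary~\ref{C:reg.hypernet.} a quasi-hypertorus $F = F_1 \Box \cdots \Box F_k$ with each $F_j$ equal to $K_2$ or an even cycle. By Theorem~\ref{T:hypernet.=>G-closed+conv.=gated} it then suffices to show that $F$ is $\Gamma$-closed, i.e., that any convex cycle $C$ of $G$ with $|V(C) \cap V(F)| \geq 3$ satisfies $V(C) \subseteq V(F)$.

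Second, I would observe that, since both $F$ and $C$ are convex in $G$, the set $A := V(C) \cap V(F)$ is arc-convex in $C$: for any $u,v \in A$, the equality $I_G(u,v) = I_C(u,v)$ together with $I_G(u,v) \subseteq V(F)$ forces the shorter $C$-arc from $u$ to $v$ (or both arcs if $u,v$ are antipodal in $C$) to lie in $A$. If $C$ is a $4$-cycle, any three of its vertices already contain a $C$-antipodal pair whose $I_G$-interval is $V(C)$, so $V(C) \subseteq V(F)$. If $C$ has length $2n \geq 6$ and $A$ contains a $C$-antipodal pair, the same conclusion holds. Otherwise $A$ must be a single arc of $C$ of length strictly less than $n$, and since $|A| \geq 3$ it contains three consecutive vertices $x_i, x_{i+1}, x_{i+2}$, so $x_i x_{i+1}$ and $x_{i+1} x_{i+2}$ are edges of $F$.

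The main obstacle is ruling out this last configuration. The plan is to show that $x_i x_{i+1}$ and $x_{i+1} x_{i+2}$ lie in distinct $\Theta$-classes of $F$, using that $d_F(x_i, x_{i+2}) = d_G(x_i, x_{i+2}) = 2$ together with Remark~\ref{R:Theta/notation}. Grouping the Cartesian factors of $F$ so that these two $\Theta$-classes live in complementary factors, the $4$-Cycle Property (Proposition~\ref{P:pro.Cartes.prod.}(iv)) then produces a unique convex $4$-cycle $D = \langle x_i, x_{i+1}, x_{i+2}, w, x_i\rangle$ of $F$ with $w \neq x_{i+1}$. Since $D$ is convex in $F$ and $F$ is convex in $G$, $D$ is convex in $G$, so $w \in I_G(x_i, x_{i+2})$. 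But by convexity of $C$ and the fact that $x_i, x_{i+2}$ are non-antipodal in a cycle of length $\geq 6$, $I_G(x_i, x_{i+2}) = I_C(x_i, x_{i+2}) = \{x_i, x_{i+1}, x_{i+2}\}$, forcing $w = x_{i+1}$ — the desired contradiction. The crux is thus coupling the Cartesian-product decomposition of $F$ with the simultaneous convexity of $F$ and $C$ in $G$ to manufacture a forbidden common neighbour.
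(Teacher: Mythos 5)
Your setup is sound: reducing to $\Gamma$-closedness via Theorem~\ref{T:hypernet.=>G-closed+conv.=gated}, identifying $F$ as a quasi-hypertorus, observing that $V(C)\cap V(F)$ is an arc of $C$, and isolating three consecutive vertices $x_i,x_{i+1},x_{i+2}$ all match the paper's strategy. But the final step has a genuine gap. You claim that, since $x_ix_{i+1}$ and $x_{i+1}x_{i+2}$ lie in distinct $\Theta$-classes of $F$, you can group the Cartesian factors of $F$ so that these classes live in complementary factors and then apply the $4$-Cycle Property. This is false: in a quasi-hypertorus, every even-cycle factor of length $2n\geq 6$ is prime and carries $n$ distinct $\Theta$-classes, so two adjacent edges lying in the same $C_{2n}$-layer are $\Theta$-inequivalent yet belong to the \emph{same} layer of every product decomposition, and no convex $4$-cycle contains them both (e.g.\ two adjacent edges of a convex $6$-cycle layer). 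The hypothesis ``in different layers'' in Proposition~\ref{P:pro.Cartes.prod.}(iv) is essential and cannot be arranged here.

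Worse, this is precisely the only case that survives: if the two edges \emph{did} lie in different layers (or in a common $C_4$- or hypercube-part of $F$), the resulting convex $4$-cycle would give $x_i$ and $x_{i+2}$ a second common neighbour inside $I_G(x_i,x_{i+2})=\{x_i,x_{i+1},x_{i+2}\}$, an immediate contradiction — so your argument correctly kills that sub-case but leaves untouched the situation where both edges sit in a common convex cycle-layer $D$ of $F$ of length greater than $4$. Ruling that out requires a different idea: one must show that $C$ actually \emph{equals} the $D$-layer through $x_{i+1}$, so that $V(C)\subseteq V(F)$. The paper does this by taking an edge $ab$ of $D$, placing $F-W_{ba}$ inside a bulge $X$ of $co_G(U_{ab})$ with $\mathbf{Cyl}[X]=D\Box A$, and invoking the uniqueness of the convex cycle through a given vertex of $X-U_{ab}$ that contains an edge $\Theta$-equivalent to $ab$ (Lemma~\ref{L:ab-cycle} and Remark~\ref{R:notation}.2, i.e.\ the machinery of the Characterization Theorem). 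Without some form of this uniqueness statement your proof cannot be completed.
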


\begin{proof}
\begin{sloppypar}
Let $F$ be a finite convex regular subgraph of $G$.  Then $F$ is ph-homogeneous since it is convex.  By Theorem~\ref{T:reg. hypernet./str.sem.-periph./quasi-hypert.}, $F$ is a quasi-hypertorus.  Let $C$ be a convex cycle of $G$ which has at least three vertices in
$F$.  If $C$ is a $4$-cycle, then $C$ is clearly a cycle of $F$ since
$F$ is convex.  Suppose that the length of $C$ is greater than $4$.
\end{sloppypar}

Because $C$ and $F$ are convex, $C \cup F$ has at least two adjacent edges $ab$ and $ac$, and, 
since $F$ is a quasi-hypertorus, at least one of them, say $ab$, is an edge of some convex cycle $D$ of $F$ of length greater than $4$.  Clearly $F$ is the Cartesian product of $D$ with some regular partial cube.  By the Characterization Theorem, $F- W_{ba}$ is contained in a bulge $X$ of $co_G(U_{ab})$, and moreover $\mathbf{Cyl}[X] = D \Box A$, where $A$ is a component of $X[U_{ab}]$.

On the other hand, $F$ also has the vertex $c$ in common with $C$.  Hence $c$ also belongs to a $D$-layer $D_c$ of $H$.  This vertex cannot belongs to $U_{ab}$, since otherwise it would belong to a $4$-cycle, contrary to the fact that $C$ is convex and of length greater than $4$.  Then $c \notin U_{ab}$.  It follows that $C = D_c$ by Remark~2 at the beginning of the proof of Theorem~\ref{T:charact.} stating that each vertex of $X-U_{ab}$ lies in at most one convex cycle containing an edge $\Theta$-equivalent to $ab$.  Hence $C$ is a cycle of $F$.  Therefore $F$ is $\Gamma$-closed, and thus gated by Theorem~\ref{T:hypernet.=>G-closed+conv.=gated}.
\end{proof}

\begin{rem}\label{R:conv./gated}
Because of Theorems~\ref{T:reg. hypernet./str.sem.-periph./quasi-hypert.} and~\ref{T:conv.reg.H-subgr.=>gated}, any convex quasi-hypertorus of a Peano partial cube is always gated.  Hence, in several results of this paper, the term \textquotedblleft gated\textquotedblright will often be implicit, and thus not plainly expressed.
\end{rem}

\begin{thm}\label{T:faithf.hypertor.=>gated}
Any faithful quasi-hypertorus of a Peano partial cube is gated.
\end{thm}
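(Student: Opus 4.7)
My plan is to reduce to convexity and then apply Remark~\ref{R:conv./gated}: once one knows that a faithful quasi-hypertorus $H$ is convex in $G$, that remark (which follows from Theorems~\ref{T:reg. hypernet./str.sem.-periph./quasi-hypert.} and~\ref{T:conv.reg.H-subgr.=>gated}) immediately yields that $H$ is gated. Thus the whole task is to prove that a faithful quasi-hypertorus of a Peano partial cube is convex.

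For convexity, I would work in the sign-tuple (coordinate) description of partial cubes. Write $H = \cartbig_{i \in I} G_{i}$ with each $G_i$ either $K_2$ or an even cycle, and take $u, v \in V(H)$ and $z \in I_G(u,v)$. Because $H$ is isometric in $G$, each $H$-$\Theta$-class is the restriction of a unique $G$-$\Theta$-class, and every $G$-$\Theta$-class not meeting $E(H)$ has all of $V(H)$ on a single side. Consequently $z$ agrees with both $u$ and $v$ on every non-$H$-$\Theta$-class, and on each $H$-$\Theta$-class its sign equals that of $u$ or of $v$. Call a sign tuple on the $H$-$\Theta$-classes \emph{admissible} if, for every $i \in I$, its restriction to the $\Theta$-classes of $G_i$ is realized by some vertex of $G_i$; the admissible tuples are then in bijection with the vertices of $H = \cartbig_{i \in I} G_{i}$. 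If $z$'s $H$-coordinate tuple is admissible, it coincides with the $H$-coordinate tuple of some $w \in V(H)$, and then $z$ and $w$ have identical full $G$-coordinate tuples, so $z = w \in V(H)$.

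It remains to exclude the case that $z$'s $H$-coordinate tuple is non-admissible; here faithfulness, together with a short combinatorial sub-lemma, gives the contradiction. The sub-lemma, proved by a direct construction on each cycle factor, reads: for every even cycle $C_{2k}$ and every sign tuple $t \in \{+,-\}^{k}$ there exist three vertices of $C_{2k}$ whose coordinate-wise majority equals $t$. Combining this in each cycle factor (and taking constant triples in the $K_2$ factors) produces, for the non-admissible tuple $t$ given by $z$'s $H$-coordinates, a triple $(a,b,c) \in V(H)^{3}$ whose coordinate-wise majority in $G$ equals $t$ on the $H$-$\Theta$-classes and equals the common $V(H)$-value on every non-$H$-$\Theta$-class, that is, coincides with the full $G$-coordinate tuple of $z$. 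In a partial cube the median of a triple, when it exists, is the unique vertex realizing the coordinate-wise majority, so $z$ is the median of $(a,b,c)$ in $G$; the median-stability part of faithfulness then forces $z \in V(H)$, contradicting the non-admissibility of $z$'s $H$-coordinate tuple. Hence $I_G(u,v) \subseteq V(H)$, $H$ is convex, and the theorem follows. The main obstacle is the combinatorial sub-lemma itself: for admissible $t$ the triple $(v,v,v)$ with $v$ the realizing vertex works at once, but for non-admissible tuples one has to exhibit an explicit triple, which amounts to writing the support of $t$ as the indicator-majority of three prefixes or suffixes of $\{1,\dots,k\}$; once this is in hand, the rest of the argument is short coordinate bookkeeping.
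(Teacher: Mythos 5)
Your reduction to convexity, the coordinate set-up, and the identification ``$z$ realizes the coordinate-wise majority of a triple of vertices of $H$ $\Rightarrow$ $z$ is the median of that triple in $G$ $\Rightarrow$ faithfulness forces $z\in V(H)$'' are all sound, as is the admissible case. The gap is the combinatorial sub-lemma, which is false. Order the $k$ $\Theta$-classes of $C_{2k}$ as they are crossed when traversing the cycle from a fixed base vertex; the sign vectors realized by the vertices of $C_{2k}$ are then exactly those of the form $1^{j}0^{k-j}$ or $0^{j}1^{k-j}$, i.e.\ $0$--$1$ sequences with at most one sign change. If the coordinate-wise majority of three such sequences changes value between positions $i$ and $i+1$, then at least one of the three must change value there; since each has at most one sign change in total, the majority has at most three. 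Hence for $k=5$ the tuple $10101$, which has four sign changes, is not the coordinate-wise majority of any triple of vertices of $C_{10}$. Yet it is precisely the kind of tuple your non-admissible case must handle: the antipodal pair $0^{5},1^{5}$ consists of vertices of $C_{10}$ whose hypercube interval is all of $\{0,1\}^{5}$, so restricting attention to tuples lying between two cycle vertices excludes nothing. (A counting argument gives the same conclusion for every $k\ge 11$: there are $2^{k}$ tuples but at most $\binom{2k}{3}+2k$ distinct majorities.)

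This failure is not easily patched. The median closure of $C_{2k}$ in $Q_{k}$ is indeed all of $Q_{k}$, but only after iterating the median operation; a single majority step does not reach every tuple, and iterating inside $G$ is unavailable because the intermediate medians need not exist in $G$ (faithfulness constrains only those triples whose median in $G$ actually exists). This is exactly where the Peano hypothesis must enter, and it is a warning sign that your convexity argument never uses it: as written it would show that a faithful quasi-hypertorus of an \emph{arbitrary} partial cube is convex. The paper instead reduces to the convexity in $G$ of each cycle of $F$ of length greater than $4$ that is convex in $F$, invokes Theorem~\ref{T:hypernet./conv.hull(isom.cycle)} to see that the convex hull $C'$ of such a cycle $C$ is a quasi-hypertorus, and then exhibits one explicit $4$-cycle $\langle u,v,w,x,u\rangle$ of $C'$ for which $x$ is forced to be the median of the triple $(u,w,v')$ of vertices of $C$ (with $v'$ the antipode of $v$ in $C$); this single, guaranteed-to-exist median contradicts faithfulness. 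You should either prove a correct replacement for your sub-lemma (none exists at the level of a single majority) or switch to an argument that, like the paper's, exploits the structure of $co_{G}(C)$.
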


\begin{proof}
Let $F$ be a faithful quasi-hypertorus of a Peano partial cube $G$.  By Theorem~\ref{T:conv.reg.H-subgr.=>gated}, it suffices to prove that $F$ is convex, and for that, it clearly suffices to prove that any cycle
of $F$ of length greater than $4$ that is convex in $F$ is also
convex in $G$.  Let $C$ be such a cycle of $F$.  Then $C$ is faithful in $G$ since so is $F$.  
Suppose that $C$ is
not convex in $G$, and let $C'$ be its convex hull in $G$.  Then, by Theorem~\ref{T:hypernet./conv.hull(isom.cycle)}, $C'$
is a quasi-hypertorus.
Because $C \neq C'$, there exists a path $\langle u,v,w \rangle$ of
$C$ such that $uv$ and $vw$ are edges of different layers of $C'$.
Hence, by the $4$-Cycle Property of Cartesian product since $C'$ is a Cartesian product,
$u, v, w$ are vertices of exactly one $4$-cycle $\langle u,v,w,x,u
\rangle$ of $C'$.  Because $C$ is an isometric cycle, if $u', v', w'$
are the vertices of $C$ which are antipodal to $u, v, w$,
respectively, then the edges $v'u'$ and $w'v'$ are $\Theta$-equivalent
to $uv$ and $vw$, respectively, and thus to $xw$ and $ux$,
respectively.  It follows that $x \in I_{G}(v',u) \cap I_{G}(v',w)
\cap I_{G}(u,w)$, and thus $x$ is the median of the triple $(u,w,v')$
of vertices of $C$, contrary to the facts that $C$ is median-stable
and $x \notin V(C)$.  Therefore $C = C'$.
\end{proof}

From now on we will use the following notation.  Let $G$ be a Peano partial cube, and $ab$ an edge of $G$.  We denote:
\begin{align*}
\mathbf{Cyl}[G,ab] &:= \{\mathbf{Cyl}[X]: X \text{ bulge of}\; co_G(U_{ab})\}\\
\mathbf{Cyl}[G] &:= \bigcup_{ab \in E(G)}\mathbf{Cyl}[G,ab].
\end{align*}

\begin{thm}\label{T:C-subgr./gated}
Let $G$ be a Peano partial cube.  Then any convex hypercylinder of $G$ is gated in $G$.
\end{thm}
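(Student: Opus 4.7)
The plan is to invoke Theorem~\ref{T:hypernet.=>G-closed+conv.=gated}: since $H$ is convex in the Peano partial cube $G$, gatedness is equivalent to $\Gamma$-closedness, so it suffices to show that every convex cycle $D$ of $G$ with $|V(D)\cap V(H)|\geq 3$ lies in $H$. Write $H=C\Box F'$ with $C\in\mathbf{C}$ and $F'$ a Peano partial cube. Since $V(H)$ is convex, the set $V(D)\cap V(H)$ is a convex subset of the cycle $D$, hence an arc; pick three consecutive vertices $v_0,v_1,v_2$ of $D$ inside $V(H)$, so that both edges $v_0v_1$ and $v_1v_2$ lie in $E(H)$.

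The case $|D|=4$ is immediate: by convexity of $V(H)$, $I_G(v_0,v_2)\subseteq V(H)$, and the fourth vertex of $D$ lies in this interval. Assume henceforth $|D|\geq 6$. The two edges at $v_1$ cannot be of mixed type (one $C$-layer, one $F'$-layer): the $4$-Cycle Property (Proposition~\ref{P:pro.Cartes.prod.}(iv)) would complete $v_0,v_1,v_2$ to a $4$-cycle $\langle v_0,v_1,v_2,v',v_0\rangle$ of $H$, whence $v'\in I_G(v_0,v_2)\subseteq V(D)$ by convexity of $D$, producing a chord impossible in a convex cycle of length $\geq 6$. If both are $C$-type, then $v_0,v_1,v_2$ lie in a common $C$-layer $C_\star$ of $H$; this layer is a finite, convex, $2$-regular subgraph of $G$, so it is a gated cycle by Theorem~\ref{T:conv.reg.H-subgr.=>gated}, and therefore $\Gamma$-closed by Proposition~\ref{P:gated/G-closed}. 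Thus $D\subseteq C_\star\subseteq H$.

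The remaining case places $v_0,v_1,v_2$ in a common $F'$-layer $L$ of $H$. Here the mixed-type argument applied to any putative later vertex of $D$ that would stay in $H$ but leave $L$ shows that $D\subseteq H$ actually reduces to $D\subseteq L$. Form the convex hull $T=co_G(D)$: by Theorem~\ref{T:hypernet./conv.hull(isom.cycle)}, $T$ is a quasi-hypertorus, and since $T$ is finite, convex, and hence faithful, Theorem~\ref{T:faithf.hypertor.=>gated} ensures $T$ is gated in $G$. Writing $T=\cartbig_i A_i$, the Convex Subgraph Property (Proposition~\ref{P:pro.Cartes.prod.}(v)) gives $T\cap H=\cartbig_i B_i$ with each $B_i$ convex in $A_i$; moreover, since a convex cycle of length $\geq 6$ in a quasi-hypertorus must be a cycle-factor layer, $D=A_j\times\{p\}$ for a unique even-cycle factor $A_j$ with $|A_j|=|D|$. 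The arc vertices $v_0,v_1,v_2$ force $B_j$ to contain three consecutive vertices of $A_j$; the conclusion $D\subseteq H$ amounts to $B_j=A_j$.

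The main obstacle is precisely this last identification. Suppose, for contradiction, that $B_j\subsetneq A_j$, so that $B_j$ is a proper convex path in the cycle $A_j$; then some edge $v_bv_{b+1}$ of $D$ lies on the boundary of the arc and hence in $\partial_G(V(H))$. By Lemma~\ref{L:gen.propert.}(ix), $v_bv_{b+1}$ is not $\Theta$-equivalent in $G$ to any edge of $H$. On the other hand, the $G$-$\Theta$-class of $v_bv_{b+1}$ coincides in $T$ with an $A_j$-$\Theta$-class, which (together with the arc edges) is linked through the Cartesian product structure of $T$ and the $F'$-type $\Theta$-classes of the arc edges in $H$ to a representative edge lying in $E(H)$; this contradicts the previous sentence. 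Hence $B_j=A_j$, giving $D\subseteq T\cap H\subseteq H$, which establishes the $\Gamma$-closure of $H$ and, by Theorem~\ref{T:hypernet.=>G-closed+conv.=gated}, its gatedness.
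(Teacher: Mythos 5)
Your reduction to $\Gamma$-closedness via Theorem~\ref{T:hypernet.=>G-closed+conv.=gated}, the treatment of $4$-cycles, the observation that $D\cap H$ is an arc lying entirely in a single layer, and the disposal of the $C$-layer case via Theorem~\ref{T:conv.reg.H-subgr.=>gated} all match the paper's argument. The problem is the remaining case, where $D\cap H$ lies in an $A$-layer (your $F'$-layer): this is the genuinely hard case, and your argument for it does not go through. First, since $D$ is a \emph{convex} cycle, $T=co_G(D)$ is just $D$ itself, so the ``Cartesian product structure of $T$'' is a single cycle factor and yields no information. Second, and decisively, the contradiction you want --- that the boundary edge $v_bv_{b+1}\in\partial_G(V(H))$ is $\Theta$-equivalent to some edge of $H$ --- is asserted but never derived. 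Inside $D$ the only edge $\Theta$-equivalent to $v_bv_{b+1}$ is its antipodal edge (Lemma~\ref{L:gen.propert.}(vii)), and since the arc $D\cap H$ is a geodesic of length at most $\tfrac{|D|}{2}-1$, that antipodal edge does not lie in the arc, hence is not known to lie in $H$. Nothing you have established produces a representative of this $\Theta$-class in $E(H)$, so the appeal to Lemma~\ref{L:gen.propert.}(ix) has no target.

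The paper closes this case with machinery you do not use: writing $H=\mathbf{Cyl}[X]$ for a bulge $X$ of $co_G(U_{ab})$ (with $ab$ an edge of $C$), it first invokes condition (HNB2) of the Characterization Theorem (bulges minus $U_{ab}$ are separators of $G_{ab}$) to force $V(D)\subseteq U_{ab}$; it then takes a length-$2$ geodesic $\langle x_0,x_1,x_2\rangle$ in $D\cap A_0$, considers the bulge $Y$ of $co_G(U_{x_1x_0})$ containing $x_2$, and shows $\mathbf{Cyl}[Y]=D\Box B$ where $B$ contains the $C$-layer through $x_2$; this yields a copy of $C\Box D$ inside $H$, hence $D\subseteq H$. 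Some argument of this kind --- exploiting the hypercylinder structure guaranteed by (HNB1)/(HNB2) rather than the $\Theta$-classes of $D$ alone --- is needed to finish your proof.
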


\begin{proof}
Let $H$ be a convex hypercylinder of $G$.  Then $H = C \Box A$, where $C \in \mathbf{C}(G,ab)$ and $A$ is a partial cube.  By Theorem~\ref{T:hypernet.=>G-closed+conv.=gated}, it suffices to prove that $H$ is $\Gamma$-closed in $G$.  Let $D$ be a convex cycle of $G$ which has at least three vertices in common with $H$.  If $D$ is a $4$-cycle, then $D$ is a cycle of $H$ since $H$ is convex, and then we are done.  Assume that the length of $D$ is at least $6$.  Because $D$ is convex it follows, by the Distance property of the Cartesian product, that $D \cap H$ is a geodesic of a $C$-layer or a $A$-layer of $H$.

\emph{Case 1.}\; $D \cap H$ is a geodesic of a $C$-layer $C_0$ of $H$.

Because $C_0$ is a convex cycle of $G$, it follows by Theorem~\ref{T:conv.reg.H-subgr.=>gated} that $C_0$ is gated, and thus $\Gamma$-closed.  Hence $D = C_0$, and thus $D$ is a cycle of $H$.

\emph{Case 2.}\; $D \cap H$ is a geodesic of a $A$-layer $A_0$ of $H$.

Then there exists an edge $ab$ of $C$ such that $V(D) \subseteq W_{ab}$, and $A_0$ is one of the two components of the bulge $X$ of $co_G(U_{ab})$ such that $H = \mathbf{Cyl}[X]$.  Suppose that $V(D) \nsubseteq U_{ab}$.  Then there exists a bulge $Y$ of $co_G(U_{ab})$ which contains a geodesic of $D-X$.  It follows that $Y$ is not a separator of $G_{ab}$, contrary to (HNB2).  Therefore $V(D) \subseteq U_{ab}$.

Because $D \cap A_0$ has at least three vertices, it contains a geodesic $\langle x_0,x_1,x_2\rangle$ of length $2$.  Let $Y$ be the bulge of $co_G(U_{x_1x_0})$ which contains $x_2$.  Then $F := \mathbf{Cyl}[Y]$ is the Cartesian product of $D$ with a component $B$ of $Y[U_{x_1x_0}]$, and $B$ contains the $C$-layer $C_{x_2}$ of $H$ passing through $x_2$.  It follows that $F$ contains a subgraph $K$ isomorphic to $C \Box D$ which contains $C_{x_2}$ and $D$.  Consequently $H$ also contains $K$, and thus $D$ is a cycle of $H$.
\end{proof}

\subsection{Completion of the proof of Theorem 4.5}\label{SS:compl.Thm4.5}

\begin{proof}[\textnormal{\textbf{Proof of the implications (iii)$\Rightarrow$(iv)$\Rightarrow$(v)$\Rightarrow$(ii) of Theorem~\ref{T:charact.}}}]

\begin{sloppypar}
Let $G$ be a partial cube that satisfies (iii), $ab$ an edge of $G$, and $X$ a bulge of $co_G(U_{ab})$.  By (iii), there exist a convex $H \in \mathbf{Cyl}(G,ab)$ such that $X = H-W_{ba}$.  By the equivalence of conditions (i) and (iii) of Theorem~\ref{T:charact.} that we already proved, $G$ is ph-homogeneous.  It follows that $H$ is gated by Theorem~\ref{T:C-subgr./gated}.  Therefore $G$ satisfies (iv).
\end{sloppypar}

(iv)$\Rightarrow$(v):\; Let $ab$ be an edge of a Peano partial cube $G$,\; $u \in \mathcal{I}_G(U_{ab})-U_{ab}$, and $X$ the bulge of $co_G(U_{ab})$ that contains $u$.  By (iv), there exists a gated $H \in \mathbf{Cyl}(G,ab)$ such that $X = H-W_{ba}$.  Then $H$ is the Cartesian product of an even cycle $C$ of length greater than $4$ with  some partial cube.  Denote by $C_u$ the $C$-layer of $H$ passing through $u$.  Then $C_u$ is convex in $H$, and thus in $G$ since $H$ is convex.  Hence $C_u$ is gated by Theorem~\ref{T:conv.reg.H-subgr.=>gated}.

(v)$\Rightarrow$(ii):\; Let $G$ be a partial cube which satisfies (v).  Let $ab \in E(G)$ and $u \in \mathcal{I}_G(U_{ab})-U_{ab}$.  Then $u$ lies on a gated cycle $C_u \in \mathbf{C}(G,ab)$.  The $U_{ab}$-path $P_u := C_u-W_{ba}$ is convex since so are $C_u$ and $W_{ab}$. We will show that $P_u$ is the $U_{ab}$-geodesic associated wit $u$.

In the following, for each $x \in V(G)$, we denote by $g(x)$ its gate in $C_u$.  Clearly $g(x)$ belongs to $W_{ab}$ or $W_{ba}$ depending on whether $x$ belongs to $W_{ab}$ or $W_{ba}$, because these sets are convex.  It follows that, if $x \in U_{ab}$, and if $x'$ is the neighbor of $x$ in $U_{ba}$, then we can easily prove that $g(x)$ and $g(x')$ are adjacent, which implies that $g(x) \in U_{ab}$ and $g(x') \in U_{ba}$. 

(SPS1):\; Let $x \in \mathcal{I}_G(U_{ab})$.  Then $g(x) \in V(P_u)$, and $u \in I_G(g(x),v)$ for some endvertex $v$ of $P_u$.  It follows that $u \in I_G(x,v)$.

(SPS2):\; Let $x, y \in U_{ab}$ be such that $u \in I_G(x,y)$.  Clearly $u \in I_G(g(x),g(y))$.  Moreover $I_G(g(x),g(y)) = V(P_u)$ since $P_u$ is convex and $g(x)$ and $g(y)$ belongs to $U_{ab}$, and thus are the endvertices of $P_u$.  It follows that $P_u$ is a subpath of some $(x,y)$-geodesic.

Consequently $U_{ab}$ is strongly ph-stable.
\end{proof}

\begin{proof}[\textnormal{\textbf{Proof of the equivalence (i)$\Leftrightarrow$(vi) of Theorem~\ref{T:charact.}}}]

\emph{Necessity.}\; Let $G$ be a Peano partial cube.  The first part of condition (vi) is obvious by Theorem~\ref{T:charact.}(v).  The second part of this condition is an immediate consequence of Theorems~\ref{T:hypernet./conv.hull(isom.cycle)} and \ref{T:conv.reg.H-subgr.=>gated}.

\emph{Sufficiency.}\; Let $G$ be a partial cube which satisfies condition (vi).  Let $ab$ be an edge of $G$, and $u \in \mathcal{I}_G(U_{ab}) - U_{ab}$.  By the first part of (vi), $u$ lies on an isometric cycle $C \in \mathbf{C}(G,ab)$.  Then, by the second part of (vi), the convex hull $F$ of $C$ is a gated quasi-hypertorus of $G$.  Because a quasi-hypertorus is a Peano partial cube, we infer from condition (v) of Theorem~\ref{T:charact.} that $u$ lies on a gated cycle $C_u \in \mathbf{C}(F,ab)$.  This cycle $C_u$ which is gated in $F$ is then gated in $G$ since $F$ is itself gated in $G$.

We deduce that $G$ is a Peano partial cube by the equivalence (i)$\Leftrightarrow$(v) of Theorem~\ref{T:charact.}.
\end{proof}

\subsection{Consequences of Theorem 4.5 and special Peano partial cubes}\label{SS:consequences}

\subsubsection{Quasi-hypertori and antipodal partial cubes}\label{SSS:quasi-hyp.antip.p.c.}

We first give two new characterizations of quasi-hypertori.  We recall that the \emph{eccentricity} of a
vertex $x$ of a graph $G$ is $e_{G}(x) := \max_{y \in V(G)}d_{G}(x,y)$, and that a
\emph{central vertex} of $G$ is a vertex of minimum eccentricity.  We say that the graph $G$ is \emph{self-centered} if all vertices of $G$ are central.  We need a preliminary result that will be useful in the subsequent sections.

\begin{lem}\label{L:phi_ab}
For each edge $ab$ of a Peano partial cube $G$, there exists a unique isomorphism $\phi_{ab}$ of $G[\mathcal{I}_G(U_{ab})]$ onto $G[\mathcal{I}_G(U_{ba})]$ such that $\phi_{ab}(x)$ is the neighbor of $x$ in $U_{ba}$ for every $x \in U_{ab}$.
\end{lem}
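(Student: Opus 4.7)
The plan is to construct $\phi_{ab}$ explicitly by piecing together the natural ``reflection'' maps provided by the gated hypercylinders of $\mathbf{Cyl}[G,ab]$, then verify the usual isomorphism axioms together with uniqueness.

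First I would define $\phi_{ab}$ as follows. On $U_{ab}$, set $\phi_{ab}(x)$ to be the unique neighbor of $x$ in $U_{ba}$, i.e.\ the other endpoint of the (unique) edge at $x$ that is $\Theta$-equivalent to $ab$. For $u \in \mathcal{I}_G(U_{ab}) \setminus U_{ab}$, the vertex $u$ lies in a unique component of $G[co_G(U_{ab}) \setminus U_{ab}]$, hence in a unique bulge $X$ of $co_G(U_{ab})$. By the equivalence (i)$\Leftrightarrow$(iv) of the Characterization Theorem~\ref{T:charact.}, there is a gated hypercylinder $H = \mathbf{Cyl}[X] = C \Box A$, where $C = \langle x_0,x_1,\dots,x_{2m-1},x_0\rangle$ is an even cycle with $m \geq 3$, the two edges $x_0x_{2m-1}$ and $x_{m-1}x_m$ are $\Theta$-equivalent to $ab$, $x_0,\dots,x_{m-1} \in W_{ab}$, and $A$ is a component of $X[U_{ab}]$. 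Set
\[
\phi_{ab}(x_i,a) := (x_{2m-1-i},a) \quad \text{for } 0 \leq i \leq m-1, \ a \in V(A).
\]
On $U_{ab} \cap V(H) = \{x_0,x_{m-1}\} \times V(A)$ this formula gives precisely the $\Theta$-neighbor in $U_{ba}$, so it agrees with the first clause of the definition; and because a vertex of $U_{ab}$ has a unique neighbor in $U_{ba}$, belonging to several bulges causes no inconsistency.

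Next I would verify that $\phi_{ab}$ is a bijective graph homomorphism from $G[\mathcal{I}_G(U_{ab})]$ onto $G[\mathcal{I}_G(U_{ba})]$. The bulges of $co_G(U_{ab})$ partition $\mathcal{I}_G(U_{ab}) \setminus U_{ab}$ and the map $X \mapsto \mathbf{Cyl}[X] \cap W_{ba}$ is, by Lemma~\ref{L:bulge/ab-cycle}, a bijection between bulges of $co_G(U_{ab})$ and bulges of $co_G(U_{ba})$. Within each bulge, $\phi_{ab}|_X$ is the Cartesian product of the path reversal on $V(C) \cap W_{ab}$ with $\mathrm{id}_A$, hence a graph isomorphism $H \cap W_{ab} \to H \cap W_{ba}$; combined with the $\Theta$-matching on $U_{ab}$ this yields a bijection onto $\mathcal{I}_G(U_{ba})$. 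For adjacency preservation, any edge $uv$ of $G[\mathcal{I}_G(U_{ab})]$ with at least one endpoint in $\mathcal{I}_G(U_{ab}) \setminus U_{ab}$ lies in a single bulge and is preserved by the isomorphism within that bulge; the only remaining case is $u,v \in U_{ab}$, but then $uu'$ and $vv'$ (with $u',v'$ the $U_{ba}$-neighbors) are $\Theta$-equivalent, so the defining inequality of $\Theta$ forces $u'v' \in E(G)$. Finally, running the same construction for $ba$ yields $\phi_{ba}$ with $\phi_{ba} \circ \phi_{ab} = \mathrm{id}$, so $\phi_{ab}$ is an isomorphism.

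For uniqueness, suppose $\psi$ is another isomorphism satisfying the stated condition on $U_{ab}$. Its restriction to a bulge $X$ must land in a bulge $Y$ of $co_G(U_{ba})$ and agree with the $\Theta$-matching on $X \cap U_{ab}$. Working in $H = C \Box A$, the column $\{x_0\} \times V(A)$ is forced to go to $\{x_{2m-1}\} \times V(A)$ via the identity on $A$; and for each $i$ with $1 \leq i \leq m-2$, the adjacency of $(x_{i-1},a)$ with $(x_i,a)$ together with the fact that the unique $C$-neighbor of $(x_{2m-i},a)$ in $(V(C) \cap W_{ba}) \times V(A)$ is $(x_{2m-1-i},a)$ forces $\psi(x_i,a) = (x_{2m-1-i},a)$ inductively. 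Hence $\psi = \phi_{ab}$. The most delicate step is the adjacency check across bulges described above; it is where one uses, crucially, the fact that $\Theta$-classes behave like parallel edge classes, and it is also the ingredient that makes the columnwise induction in the uniqueness argument rigid.
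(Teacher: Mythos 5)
Your construction is essentially the paper's: the reflection of the $C$-coordinate inside each gated hypercylinder $\mathbf{Cyl}[X]=C\Box A$ is exactly the paper's reflection $x_i\mapsto x_{2n+1-i}$ along the $ab$-cycle associated with $u$ (that cycle being precisely the $C$-layer through $u$), combined with the $\Theta$-matching on $U_{ab}$. Your verification of bijectivity, edge-preservation and uniqueness is in fact more detailed than the paper's, which simply asserts these properties, so the proposal is correct and follows the same route.
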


\begin{proof}
We construct $\phi_{ab}$ as follows.  For each $x \in U_{ab}$, define $\phi_{ab}(x)$ as the neighbor of $x$ in $U_{ba}$.  Let $x \in \mathcal{I}_G(U_{ab})-U_{ab}$, and denote by $C_x$ the $ab$-cycle which is associated with $x$.  Recall that $C_x$ is a convex subgraph of $G$, and that it is also associated with each of its vertices.  Then $C = \langle x_{1},\dots,x_{2n},x_{1}\rangle$ with $x_{1}, x_{n} \in U_{ab}$ and $x = x_{i}$ for some $i$ with $1 < i < n$.  Put $\phi_{ab}(x_{i}) = x_{2n+1-i}$.  This map is clearly a bijection which preserves the edges, with $\phi_{ab}^{-1} = \phi_{ba}$.  Whence the result.
\end{proof}

We can easily prove that $\phi_{ab}$ is distance-preserving, and thus is an isometry of $G[\mathcal{I}_G(U_{ab})]$ onto $G[\mathcal{I}_G(U_{ba})]$.

We recall the following result which was first proved by Kotzig and Laufer~\cite[Theorem 2]{KoL78}, then, independently, by G\"{o}bel and Veldman~\cite[Proposition 19]{GV86}.

\begin{pro}\label{P:antip./cart.prod.}
The Cartesian product $G \Box H$ of two graphs $G$ and $H$ is antipodal if and only if both $G$ and $H$ are antipodal.
\end{pro}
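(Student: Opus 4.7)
The plan is to derive both directions from the Interval Property of the Cartesian product (Proposition~\ref{P:pro.Cartes.prod.}(ii)), which gives
\[
I_{G \Box H}((x,y),(x',y')) = I_G(x,x') \times I_H(y,y')
\]
for all $(x,y), (x',y') \in V(G \Box H)$. This identity makes antipodality transfer almost mechanically between the factors and the product.

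For the necessity direction, I would assume both $G$ and $H$ are antipodal and, given an arbitrary vertex $(x,y) \in V(G \Box H)$, take the antipodes $\bar x \in V(G)$ and $\bar y \in V(H)$. Applying the Interval Property,
\[
I_{G \Box H}((x,y),(\bar x,\bar y)) = I_G(x,\bar x) \times I_H(y,\bar y) = V(G) \times V(H) = V(G \Box H),
\]
so $(\bar x,\bar y)$ is an antipode of $(x,y)$ in $G \Box H$.

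For the sufficiency, I would assume $G \Box H$ is antipodal and show each factor is antipodal separately (by symmetry it suffices to handle $G$). Pick any $x \in V(G)$ and fix any $y \in V(H)$; let $(a,b) \in V(G \Box H)$ be an antipode of $(x,y)$ in $G \Box H$. Then
\[
V(G) \times V(H) = I_{G \Box H}((x,y),(a,b)) = I_G(x,a) \times I_H(y,b),
\]
which forces $I_G(x,a) = V(G)$ (and $I_H(y,b) = V(H)$). Hence $a$ is an antipode of $x$ in $G$, and since $x$ was arbitrary, $G$ is antipodal; the same argument (with the roles of the factors swapped) gives that $H$ is antipodal.

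There is no substantive obstacle here: the Interval Property reduces the statement to a one-line verification in each direction. The only mild subtlety is to notice, in the sufficiency direction, that the antipode of $(x,y)$ in the product always projects to an antipode of $x$ in $G$ regardless of the choice of $y$, so antipodality of a single factor follows from antipodality of the product by projecting.
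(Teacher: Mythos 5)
Your proof is correct. Note that the paper does not actually prove this proposition: it is quoted from Kotzig--Laufer and G\"obel--Veldman and used as a black box, so there is no in-paper argument to compare against. Your derivation from the Interval Property of the Cartesian product (Proposition~\ref{P:pro.Cartes.prod.}(ii)) is the natural self-contained proof, and both directions are handled cleanly; the only point worth making explicit is that the Interval Property is stated for connected factors, which is harmless here since antipodal graphs are connected by definition and a Cartesian product is connected exactly when both factors are.
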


\begin{thm}\label{T:quasi-hypert./antipod./self-center.}
Let $F$ be a finite convex subgraph of a Peano partial cube $G$.  The following assertions are equivalent:

\textnormal{(i)}\; $F$ is a quasi-hypertorus.

\textnormal{(ii)}\; $F$ is antipodal.

\textnormal{(iii)}\; $F$ is self-centered.
\end{thm}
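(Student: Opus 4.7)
I plan to prove the cyclic chain $(i)\Rightarrow(ii)\Rightarrow(iii)\Rightarrow(i)$. The first two implications are short. For $(i)\Rightarrow(ii)$, a quasi-hypertorus is a finite Cartesian product of copies of $K_{2}$ and even cycles; each factor is antipodal (in $K_{2}$ the antipode of a vertex is the other vertex, and in $C_{2n}$ it is the diametrically opposite vertex), so Proposition~\ref{P:antip./cart.prod.} gives that $F$ is antipodal. For $(ii)\Rightarrow(iii)$, antipodality means $I_{F}(x,\bar x)=V(F)$, hence $d_{F}(x,\bar x)=\mathrm{diam}(F)$ for every $x$; so every vertex has eccentricity equal to the diameter and $F$ is self-centered.

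For $(iii)\Rightarrow(i)$, since $F$ is finite and hence compact, Theorem~\ref{T:reg. hypernet./str.sem.-periph./quasi-hypert.} reduces the task to showing $F$ is strongly semi-peripheral. The key preliminary observation is a sharp eccentricity bound: for every edge $ab$ of $F$, $\max_{x\in W_{ab}^{F}}d_{F}(a,x)=d-1$, where $d=\mathrm{diam}(F)$. Indeed, since $d_{F}(a,y)=1+d_{F}(b,y)$ for every $y\in W_{ba}^{F}$, the two self-centeredness equations $e_{F}(a)=e_{F}(b)=d$ immediately rule out the value $d$ for $\max_{x\in W_{ab}^{F}}d_{F}(a,x)$ (otherwise $e_{F}(b)\geq d+1$), and force equality to $d-1$ by symmetry at $b$.

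From here I would go via antipodality. First I would show that $F$ is antipodal: by self-centeredness the set $S(x):=\{y:d_{F}(x,y)=d\}$ is non-empty for each $x$, and to prove it is a singleton I would take supposed distinct elements $y_{1},y_{2}\in S(x)$ and apply the Helly property of intervals (which have Helly number at most $3$ in Peano partial cubes, by Theorem~\ref{T:netlike/JHC} and convexity of intervals) to $I_{F}(x,y_{1})$, $I_{F}(x,y_{2})$, $I_{F}(y_{1},y_{2})$ to obtain a common vertex $m$. The resulting equalities $d(m,y_{1})=d(m,y_{2})=d-d(x,m)$ and $d(y_{1},y_{2})=2(d-d(x,m))$, combined with the eccentricity bound above applied to $m$ and to the edge chosen on a $(y_{1},y_{2})$-geodesic through $m$, force a contradiction. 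Once $F$ is known to be antipodal with antipodal map $x\mapsto\bar x$, the conclusion is clean: for any edge $ab$ the equality $F=I_{F}(a,\bar a)$ and $d_{F}(a,\bar a)=d=\mathrm{idim}(F)$ (so that every $(a,\bar a)$-geodesic crosses every $\Theta$-class exactly once) imply that each $v\in W_{ab}^{F}$ lies on an $(a,\bar a)$-geodesic whose crossing of the $ab$-class is an edge $u\phi_{ab}(u)$ with $u\in U_{ab}^{F}$. Then $v\in I_{F}(a,u)\subseteq co_{F}(U_{ab}^{F})$ since both endpoints lie in $U_{ab}^{F}$ and $U_{ab}^{F}$ is ph-stable (so $\mathcal{I}_{F}(U_{ab}^{F})=co_{F}(U_{ab}^{F})$). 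Hence $W_{ab}^{F}=co_{F}(U_{ab}^{F})$ and $F$ is strongly semi-peripheral.

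The principal obstacle is the antipodality step: ruling out two distinct farthest vertices of a common base point. The Helly/median configuration outlined above is the natural line of attack, but closing it cleanly requires combining the eccentricity bound with the strong ph-stability of the $U_{ab}^{F}$ at auxiliary edges emanating from $m$ and using the Characterization Theorem~\ref{T:charact.} to locate the relevant convex cycle or hypercylinder carrying $y_{1},y_{2}$; an alternative to manage is to attempt a direct proof of strong semi-peripherality by applying the eccentricity bound to the edge $uv$ produced by the failure of the semi-periphery condition (with $u\in co_{F}(U_{ab}^{F})$ and $v\in W_{ab}^{F}\setminus co_{F}(U_{ab}^{F})$ as in the proof of Proposition~\ref{P:min.half-space/semi-periph.}) and propagating the constraint $d_{F}(u,w)=d-1$ on a farthest vertex $w$ of $v$ until $v$ is forced into $\mathcal{I}_{F}(U_{ab}^{F})$.
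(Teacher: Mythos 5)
Your implications (i)$\Rightarrow$(ii) and (ii)$\Rightarrow$(iii) are correct and coincide with the paper's, as does your reduction of (iii)$\Rightarrow$(i) to strong semi-peripherality via Theorem~\ref{T:reg. hypernet./str.sem.-periph./quasi-hypert.}; the eccentricity bound $\max_{x\in W_{ab}^{F}}d_{F}(a,x)=d-1$ is also correct and is implicitly the engine of the paper's argument. The gap is in your main route to antipodality. To show that $S(x)$ is a singleton you invoke the ``Helly property'' on the three pairwise-intersecting intervals $I_{F}(x,y_{1})$, $I_{F}(x,y_{2})$, $I_{F}(y_{1},y_{2})$ to produce a common vertex $m$. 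But the Helly number of a Peano partial cube is $3$, not $2$ (Theorem~\ref{T:Helly/ph-homogeneous}), and Helly number $3$ says nothing about a family of three sets: for three sets, ``$3$-wise non-disjoint'' is exactly the conclusion you are trying to reach. Worse, the conclusion itself is false in general: a triple of vertices of a Peano partial cube need not have a median (three pairwise distance-$2$ vertices of a convex $6$-cycle already fail, and the benzenoid of Figure~\ref{F:noMCP} has a triple with neither a median nor a hyper-median). So the vertex $m$ on which your contradiction rests need not exist, and the antipodality step collapses.

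The ``alternative'' you mention only in your final sentence --- working directly with a boundary edge $uv\in\partial_{G[W_{ab}]}(\mathcal{I}_{G}(U_{ab}))$ witnessing the failure of semi-peripherality and comparing eccentricities --- is essentially the paper's actual proof: one chooses $ab$ so that $W_{vu}$ is a semi-periphery for every such edge, picks a farthest vertex $w$ of $u$, splits on whether $w\in W_{ba}\cup\mathcal{I}_{G}(U_{ab})$ or $w\in W_{ab}-\mathcal{I}_{G}(U_{ab})$, and in each case uses ph-stability, the isometry $\phi_{ab}$ and the gatedness of $\mathbf{Cyl}[X]$ from Theorem~\ref{T:charact.}(iv) to exhibit a vertex of eccentricity different from $\mathrm{diam}(F)$. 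If you develop that branch your strategy goes through; note also that the paper never needs antipodality as an intermediate step, so your final paragraph (which is correct as far as it goes) becomes unnecessary.
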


\begin{proof}
By Theorem~\ref{T:hypernet./gat.amalg.+cart.prod.}, $F$ is a Peano partial cube.  Then, without loss of generality, we can assume that $G$ is finite and that $F = G$.

(i) $\Rightarrow$ (ii):\; Because $K_2$ and any even cycle are bipartite  antipodal partial cubes, it follows, by Proposition~\ref{P:antip./cart.prod.}, that any quasi-hypertorus is an  antipodal partial cube.

(ii) $\Rightarrow$ (iii):\; Assume that $G$ is antipodal.  Then $G$ is self-centered, because $I_G(x,\overline{x}) = V(G)$ with $e_G(x) = d(x,\overline{x}) = \mathrm{diam}(G)$ for every vertex $x$ of $G$.

(iii) $\Rightarrow$ (i):\; Assume that $G$ is self-centered.  Suppose that $G$ is not strongly semi-peripheral.  Then, because $G$ is finite, there exist an edge $ab$ of $G$ such that $W_{ab} \neq \mathcal{I}_G(U_{ab})$, and such that, for each edge $uv \in \partial_{G[W_{ab}]}(\mathcal{I}_G(U_{ab}))$ with $u \in \mathcal{I}_G(U_{ab})$, $W_{vu}$ is a semi-periphery, i.e., $W_{vu} = \mathcal{I}_G(U_{vu})$.  Let $uv$ be an edge as above.  Let $w$ be a vertex of $G$ such that $d_G(u,w) = e_G(u)$.

Suppose that $w \in W_{ba} \cup \mathcal{I}_G(U_{ab})$.  Then, clearly, $w \in W_{ba}$ with $e_G(u) = d_G(u,w) \geq d_G(u,\phi_{ab}(u))$.  We cannot have $v \in I_G(u,w)$, since otherwise $v \in W_{ba} \cup \mathcal{I}_G(U_{ab})$ because this set is convex in $G$, contrary to the choice of $v$.  It follows that $$e_G(v) \geq d_G(v,w) = d_G(u,w)+1 = e_G(u)+1,$$ contrary to the fact that $G$ is self-centered.

Therefore $w \in W_{ab}-\mathcal{I}_G(u_{ab})$.  Hence there is an edge $xy \in \partial_{G[W_{ab}]}(\mathcal{I}_G(U_{ab}))$ with $x \in \mathcal{I}_G(U_{ab})$ (note that the subgraph $G[W_{ab}-\mathcal{I}_G(u_{ab})]$ may have several components).  Because $U_{yx}$ is ph-stable since $ph(G) \leq 1$, it follows that $w \in U_{yx}$.  Let $w' := \phi_{yx}(w)$.  Then $w' \in I_G(w,u)$.  Denote by $X$ the bulge of $\mathcal{I}_G(u_{ab})$ which contains $u$.  By Theorem~\ref{T:charact.}(iv), $\mathbf{Cyl}[X]$ is gated.  Moreover the gate of $w'$ in $\mathbf{Cyl}[X]$ belongs to $\mathcal{I}_G(U_{ab})$ since $W_{yx} \cup \mathcal{I}_G(U_{xy}) \subseteq W_{ab}$.  It follows, because $G$ is self-centered, that $$e_G(w) = e_G(u) = d_G(w,u) = d_G(w',u)+1 < d_G(w',\phi_{ab}(u))+1 \leq e_G(w'),$$ contrary to the fact that $G$ is self-centered.

Consequently $G$ is strongly semi-peripheral, and thus it is a quasi-hypertorus by Theorem~\ref{T:reg. hypernet./str.sem.-periph./quasi-hypert.}.
\end{proof}

In other words we have:

\begin{cor}\label{C:antip.Peano.p.c.}
The partial cube that are both antipodal and Peano are the quasi-hypertori.
\end{cor}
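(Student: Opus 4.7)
The proof splits naturally into two directions. For the forward implication, I would invoke facts already established in the paper: quasi-hypertori lie in the class of elementary Peano partial cubes discussed in Subsection~\ref{SS:elem.ph-homog.}, and antipodality of a Cartesian product of antipodal graphs is Proposition~\ref{P:antip./cart.prod.}, which applies since $K_2$ and even cycles are visibly antipodal. This half is essentially bookkeeping and is already covered inside the proof of Theorem~\ref{T:quasi-hypert./antipod./self-center.}.

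The reverse direction is where the real content sits, and the first thing I would observe is the finiteness reduction: any antipodal partial cube is automatically finite. Indeed, if $G$ is antipodal then there exists a vertex $x$ with $I_G(x,\bar{x}) = V(G)$, and Lemma~\ref{L:gen.propert.}(ii) tells us that every interval of a partial cube is finite, so $V(G)$ is finite.

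With finiteness in hand, the corollary is an immediate application of Theorem~\ref{T:quasi-hypert./antipod./self-center.} to the finite convex subgraph $F := G$ of the Peano partial cube $G$: the equivalence (i) $\Leftrightarrow$ (ii) there says that $F$ is a quasi-hypertorus iff $F$ is antipodal, which is the desired conclusion. There is no real obstacle here; the entire point of the corollary is to record that, once finiteness has been extracted from antipodality, Theorem~\ref{T:quasi-hypert./antipod./self-center.} upgrades from a statement about finite convex subgraphs to a global characterization. Accordingly I would keep the proof to three or four lines, emphasizing only the finiteness observation and the citation of Theorem~\ref{T:quasi-hypert./antipod./self-center.}.
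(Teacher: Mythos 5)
Your proposal is correct and follows essentially the same route as the paper, which presents this corollary simply as a restatement of Theorem~\ref{T:quasi-hypert./antipod./self-center.} applied to $F = G$. Your explicit observation that antipodality forces finiteness (via $V(G) = I_G(x,\bar{x})$ and Lemma~\ref{L:gen.propert.}(ii)) is exactly the small detail needed to justify taking $F = G$ in that theorem, and the forward direction via Proposition~\ref{P:antip./cart.prod.} matches the paper's argument.
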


\begin{figure}[!h]
    \centering
{\tt    \setlength{\unitlength}{1.30pt}
\begin{picture}(128,134)
\thinlines   
              \put(115,94){\line(0,-1){55}}
              \put(115,95){\line(-2,1){51}}
              \put(115,38){\line(-4,3){21}}
              \put(115,94){\line(-4,-3){21}}              
              \put(64,13){\line(2,1){53}}
              \put(62,14){\line(-2,1){51}}
              \put(64,39){\line(0,-1){26}}

               \put(12,40){\line(3,2){22}}
              \put(12,94){\line(4,-3){21}}
              \put(64,39){\line(2,1){30}}
              \put(64,39){\line(-2,1){31}}
              \put(64,94){\line(2,-1){30}}
              \put(64,94){\line(-2,-1){31}}
              \put(64,94){\line(0,1){25}}
              \put(94,79){\line(0,-1){24}}
              \put(33,77){\line(0,-1){23}}
              \put(12,94){\line(0,-1){55}}
              \put(12,95){\line(2,1){53}}             
              
              \put(64,13){\circle*{3}}
              \put(115,39){\circle*{3}}
              \put(94,53){\circle*{3}}
              \put(64,39){\circle*{3}}
              \put(33,55){\circle*{3}}
              \put(12,40){\circle*{3}}
              \put(94,78){\circle*{3}}
              \put(64,93){\circle*{3}}
              \put(33,77){\circle*{3}}
              \put(115,94){\circle*{3}}
              \put(64,121){\circle*{3}}
              \put(12,94){\circle*{3}}
              
              \put(64,112){\circle*{3}}
              \put(64,103){\circle*{3}}
              \put(64,29){\circle*{3}}
              \put(64,21){\circle*{3}}
              \put(19,89){\circle*{3}}
              \put(26,83){\circle*{3}} 
              \put(19,45){\circle*{3}}
              \put(26,50){\circle*{3}} 
              \put(101,83){\circle*{3}}
              \put(108,88){\circle*{3}} 
              \put(101,49){\circle*{3}}
              \put(108,44){\circle*{3}}            
             
              \put(19,89){\line(2,1){45}}
              \put(19,45){\line(2,-1){45}}
              \put(108,44){\line(0,1){45}} 
              \put(26,83){\line(0,-1){35}}
              \put(64,29){\line(2,1){38}}
              \put(64,103){\line(2,-1){38}}
              
              \put(119,38){$1$}
              \put(37,71){$\overline{1}$}
              \put(119,94){$2$}
              \put(37,56){$\overline{2}$}
              \put(62,126){$3$}
              \put(62,45){$\overline{3}$}
              \put(4,93){$4$}
              \put(87,56){$\overline{4}$}
              \put(4,38){$5$}
              \put(87,73){$\overline{5}$}
              \put(62,3){$6$}
              \put(62,82){$\overline{6}$}
              \put(102,39){$7$}
              \put(29,84){$\overline{7}$}
              \put(103,91){$8$}
              \put(29,44){$\overline{8}$}
              \put(67,111){$9$}
              \put(57,30){$\overline{9}$}
              \put(15,80){$10$}
              \put(98,53){$\overline{10}$}
              \put(15,50){$11$}
              \put(98,73){$\overline{11}$}
              \put(66,21){$12$}
              \put(53,99){$\overline{12}$}
               
\end{picture}}
\caption{The partial cube $B_1$.}
\label{F:B1}
\end{figure}
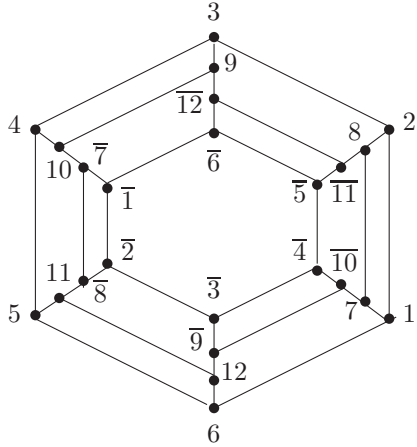

If the regular finite Peano partial cubes are the quasi-hypertori, this not so for regular antipodal partial cubes, as is shown in next theorem.

\begin{thm}\label{T:reg.antip.p.c.}
For any integer $n \geq 3$ there exits a $n$-regular antipodal partial cube which is not a quasi-hypertorus.
\end{thm}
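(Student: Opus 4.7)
My plan is to handle $n=3$ by exhibiting the graph $B_1$ of Figure~\ref{F:B1}, and then to bootstrap to all $n \geq 4$ by taking Cartesian products with hypercubes.

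For $n=3$: I would take $G := B_1$ as drawn, with its 24 vertices labelled in antipodal pairs $i,\bar{i}$, $1\le i\le 12$. First, verify $B_1$ is $3$-regular by direct inspection. Second, verify that $B_1$ is a partial cube; for this, I would list the $\Theta$-classes induced by the edges of the picture (there are more than seven of them), and either check transitivity of $\Theta$ directly, or, equivalently, produce an explicit isometric embedding into some $Q_k$ by assigning to each vertex a $\{0,1\}$-coordinate for each $\Theta$-class (the coordinate of $y$ differs from that of $x$ along $\Theta$-class $F$ iff some/every $(x,y)$-geodesic uses an edge of $F$). Third, verify antipodality: the labelling is designed so that $\bar{\bar{i}}=i$, and one checks $d_{B_1}(i,\bar{i})=\mathrm{diam}(B_1)$ and $I_{B_1}(i,\bar{i})=V(B_1)$ (it suffices, thanks to the vertex transitivity suggested by the symmetry of the figure, to do this for a single $i$).

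The crucial step is showing $B_1$ is not a quasi-hypertorus. Here I would argue contrapositively via the Peano machinery already in the paper: if $B_1$ were a quasi-hypertorus then it would in particular be a Peano partial cube, hence by Theorem~\ref{T:reg. hypernet./str.sem.-periph./quasi-hypert.} it would be $3$-regular Peano and so a $1$-prism $C_{2k} \Box K_2$. Since $|V(B_1)|=24$, the only possibility is $C_{12}\Box K_2$. I then exhibit an invariant that distinguishes the two graphs: for instance, count the number of $\Theta$-classes (in $C_{12}\Box K_2$ there are exactly $7$, whereas the drawing of $B_1$ already displays more parallel classes of edges), or point out a convex $Q_3^-$-subgraph of $B_1$ (whose pre-hull number is $2$ by \cite[Theorem 5.8]{PS09}) which would violate $ph(B_1)\le 1$; either obstruction rules out $B_1\cong C_{12}\Box K_2$ via Corollary~\ref{C:antip.Peano.p.c.}.

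For $n\ge 4$, set
\[
G_n := B_1 \,\Box\, \underbrace{K_2 \,\Box\cdots\Box\, K_2}_{n-3\text{ times}} = B_1 \,\Box\, Q_{n-3}.
\]
Then $G_n$ is a partial cube as a Cartesian product of partial cubes, it is $n$-regular since $B_1$ is $3$-regular and $Q_{n-3}$ is $(n-3)$-regular, and it is antipodal by Proposition~\ref{P:antip./cart.prod.}. Suppose $G_n$ were a quasi-hypertorus, i.e.\ a Cartesian product of copies of $K_2$ and even cycles; by the unique prime factorization of connected graphs with respect to the Cartesian product (Sabidussi–Vizing), the factor $B_1$ would then itself decompose as a Cartesian product of $K_2$'s and even cycles, hence would be a quasi-hypertorus, contradicting the case $n=3$. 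Thus $G_n$ is an $n$-regular antipodal partial cube which is not a quasi-hypertorus.

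The main obstacle will be the verifications for $B_1$ itself — in particular producing the $\Theta$-class data that simultaneously proves $B_1$ is a partial cube and yields the numerical discrepancy with $C_{12}\Box K_2$. The Cartesian-product reduction from $n=3$ to arbitrary $n\ge 3$ is routine once $B_1$ is in hand, its only non-trivial ingredient being the Sabidussi–Vizing unique factorization theorem applied in the class of connected graphs.
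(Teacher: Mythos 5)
Your proposal is correct and follows essentially the same route as the paper: the base case is the cubic partial cube $B_1$, and the general case is obtained by multiplying with copies of $K_2$ (the paper does this by induction, adding one $K_2$ factor at a time via Proposition~\ref{P:antip./cart.prod.}, whereas you take the product with $Q_{n-3}$ in one step). Your explicit appeal to Sabidussi--Vizing unique prime factorization to rule out $G_n$ being a quasi-hypertorus, and your detailed verification plan for $B_1$, merely fill in steps the paper leaves implicit.
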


\begin{proof}
For $n = 3$, take the cubic partial cube $B_1$ of~\cite{BoKL}.  It is clearly antipodal (see Figure~\ref{F:B1}), but it is not a quasi-hypertorus.

Suppose that there is a $n$-regular antipodal partial cube $G$ that is not a quasi-hypertorus.  By Proposition~\ref{P:antip./cart.prod.}, the prism $K_2 \Box G$ is a $(n+1)$-regular antipodal partial cube since so are both $K_2$ and $G$ by the induction hypothesis, but it is not a quasi-hypertorus because so is not $G$ by the same hypothesis.
\end{proof}

\subsubsection{Median graphs and netlike partial cubes}\label{SSS:net.p.c.}

Among the numerous characterizations of median graphs we recall the following one.

\begin{thm}\label{T:isom.cycl./med.gr.}\textnormal{(Bandelt~\cite{B82})}
A connected graph $G$ is a median graph if and only if the convex hull
of any isometric cycle of $G$ is a hypercube.
\end{thm}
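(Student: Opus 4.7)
The plan is to prove the two directions separately, leveraging results already developed in the paper.

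For the \emph{necessity}, I would show that every isometric cycle of a median graph $G$ is a $4$-cycle; then, since $Q_2 = C_4$, its convex hull is a $2$-cube. Suppose for contradiction that $C = \langle x_0, x_1, \ldots, x_{2n-1}, x_0\rangle$ is an isometric cycle with $n \geq 3$. Using that $C$ is isometric, $d_G(x_i, x_j)$ equals the cyclic distance in $C$. The unique median $m$ of the triple $(x_0, x_2, x_4)$ must satisfy $d_G(x_0,m)+d_G(m,x_2)=2$ and similarly for the other two pairs, which forces $m$ to be a common neighbor of $x_0$, $x_2$, $x_4$. One checks (using the triple $(x_1,x_3,x_5)$ and the uniqueness of medians) that this produces a chord or a second median, contradicting either isometric-ness of $C$ or the median axiom. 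Hence $n = 2$, and the convex hull of $C$ is $C_4 = Q_2$.

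For the \emph{sufficiency}, I would proceed in four steps. First, $G$ is bipartite: any shortest odd cycle is isometric, so by hypothesis its convex hull is a hypercube, which is bipartite, a contradiction. Second, the hypothesis forces every isometric cycle of $G$ to have length $4$: an isometric cycle $C$ of length $\geq 6$ would be an isometric cycle inside its convex hull $Q = co_G(C)$, a hypercube; but the only isometric cycles of a hypercube are its $4$-cycles, contradiction. Third, $G$ is a partial cube: every polytope of $G$ is contained in the convex hull of finitely many of its vertices, and, building up via the hypercube hulls of the isometric cycles these vertices span, one obtains that every polytope is an isometric subgraph of a hypercube; by Lemma~\ref{L:inf.p.c.}, $G$ is a partial cube.

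For the last step, I would use Bandelt's characterization as stated in Proposition~\ref{P:med.gr./loc.periph.}: a partial cube is a median graph iff $U_{ab}$ is convex for every edge $ab$. Given $u, v \in U_{ab}$ and $w \in I_G(u,v)$, the existence of an $(u,v)$-path through $a$ and $w$, together with the fact that all convex cycles of $G$ are $4$-cycles (since all isometric cycles are $4$-cycles, and convex cycles are isometric), forces $w$ to have a neighbor $w'$ with $aw' \Theta$-equivalent to $ab$; iterating the $4$-cycle arguments along an $(u,w)$-geodesic shows $w \in U_{ab}$. The main obstacle is precisely this step: organizing the $4$-cycle propagation cleanly, equivalently proving convexity of $U_{ab}$ from the local ``all convex cycles are $Q_2$'' structure. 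Once it is established, Proposition~\ref{P:med.gr./loc.periph.} finishes the proof.
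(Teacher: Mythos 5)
There is a fatal error that sinks both directions of your argument: you assert that every isometric cycle of a median graph (equivalently, of a hypercube) is a $4$-cycle, and this is false. The $3$-cube $Q_3$ is a median graph, and $\langle 000,001,011,111,110,100,000\rangle$ is an isometric $6$-cycle in it: every antipodal pair of this cycle is at Hamming distance $3$ and every pair at cycle-distance $2$ is at Hamming distance $2$. Its convex hull is all of $Q_3$, which is a hypercube, exactly as the theorem asserts --- but the cycle itself has length $6$. Your attempted contradiction in the necessity direction does not materialize: in this example the median of $(x_0,x_2,x_4)=(000,011,110)$ is $010$ and the median of $(x_1,x_3,x_5)=(001,111,100)$ is $101$; both lie off the cycle, no chord of $C$ is created (an edge from a vertex outside $C$ to a vertex of $C$ is not a chord), and median uniqueness is untouched. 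What is true in a median graph is that every \emph{convex} cycle is a $4$-cycle (cf.\ Proposition~\ref{P:hypernet./med.gr.}); you have conflated ``isometric'' with ``convex''. The genuine content of the necessity direction is precisely that the convex hull of an isometric $2n$-cycle of a median graph is an $n$-cube, which requires real work (e.g., induction via $\Theta$-contractions or the convexity of the sets $U_{ab}$) and cannot be shortcut by showing the cycle has length $4$.

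The same false premise destroys the second step of your sufficiency argument: ``the only isometric cycles of a hypercube are its $4$-cycles'' is wrong ($Q_n$ contains isometric $2n$-cycles; these are its principal cycles in the sense of Proposition~\ref{P:TPQ/prop.isom.cycle}), so you cannot conclude that all isometric cycles of $G$ have length $4$, and the subsequent $4$-cycle propagation toward convexity of $U_{ab}$ --- which you yourself flag as the unresolved ``main obstacle'' --- has no foundation. Your first step (bipartiteness) and the appeals to Lemma~\ref{L:inf.p.c.} and Proposition~\ref{P:med.gr./loc.periph.} are reasonable ingredients, but the bridge from ``convex hulls of isometric cycles are hypercubes'' to ``$U_{ab}$ is convex'' must be built differently, for instance by extracting from a failure of convexity of some $U_{ab}$ an isometric cycle whose convex hull cannot be a hypercube. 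Note finally that the paper gives no proof of this statement --- it is quoted as Bandelt's theorem --- so there is no internal argument to compare against, but the gaps above are decisive on their own.
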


\begin{pro}\label{P:hypernet./med.gr.}
Let $G$ be a Peano partial cube.  The following assertions are equivalent:

\textnormal{(i)}\; $G$ is a median graph.

\textnormal{(ii)}\; The convex hull of any isometric cycle of $G$ is a hypercube.

\textnormal{(iii)}\; Any convex cycle of $G$ is a $4$-cycle.
\end{pro}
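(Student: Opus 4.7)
The implication (i) $\Rightarrow$ (ii) is immediate from Bandelt's characterization (Theorem~\ref{T:isom.cycl./med.gr.}), so nothing needs to be done there.

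For (ii) $\Rightarrow$ (iii), I would argue that any convex cycle $C$ of $G$ is isometric, since a convex subgraph is always isometric. Hence $co_G(C) = C$ is a hypercube by (ii). The only hypercube that is a cycle is $Q_2 = C_4$, so $C$ must be a $4$-cycle. This step is essentially a one-liner.

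The main content is (iii) $\Rightarrow$ (i), and this is where I would use the strength of the Characterization Theorem (Theorem~\ref{T:charact.}). Assume every convex cycle of $G$ has length $4$. Since $G$ is Peano, condition (v) of Theorem~\ref{T:charact.} holds: for each edge $ab$ and each $u \in \mathcal{I}_G(U_{ab}) - U_{ab}$, the vertex $u$ lies on a \emph{gated} cycle $C_u \in \mathbf{C}(G,ab)$, where recall $\mathbf{C}$ denotes even cycles of length \emph{greater than} $4$. But a gated cycle is convex, which contradicts the hypothesis that all convex cycles have length $4$. Consequently $\mathcal{I}_G(U_{ab}) = U_{ab}$ for every edge $ab$ of $G$.

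To finish, recall that $G$ is Peano, so $ph(G) \leq 1$ by Proposition~\ref{P:hypernet./ph1}, and $U_{ab}$ is ph-stable. Combining this with Lemma~\ref{L:ph-stable} gives $co_G(U_{ab}) = \mathcal{I}_G(U_{ab}) = U_{ab}$, i.e., $U_{ab}$ is convex. This holding for every edge, Proposition~\ref{P:med.gr./loc.periph.}(ii) yields that $G$ is a median graph. The only mildly delicate point in the whole argument is recognizing that ``cycle in $\mathbf{C}(G,ab)$'' in condition (v) of the Characterization Theorem forbids the $4$-cycles: this is exactly what forces $\mathcal{I}_G(U_{ab}) = U_{ab}$ under hypothesis (iii). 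No further obstacle is expected.
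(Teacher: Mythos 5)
Your proof is correct; the paper states this proposition without proof, and your argument supplies exactly the intended details: Bandelt's theorem for (i)$\Rightarrow$(ii), the trivial observation that a convex cycle is its own convex hull for (ii)$\Rightarrow$(iii), and condition (v) of the Characterization Theorem (whose cycles lie in $\mathbf{C}$, hence are convex of length greater than $4$) combined with Proposition~\ref{P:med.gr./loc.periph.} for (iii)$\Rightarrow$(i). One minor simplification: once you have $\mathcal{I}_G(U_{ab}) = U_{ab}$ you already know $U_{ab}$ is convex, since a set is convex precisely when it is closed under the pre-hull operator, so the detour through ph-stability and Lemma~\ref{L:ph-stable} is not needed.
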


The Characterization Theorem enables us to obtain the following five characterizations of netlike partial cubes.

\begin{pro}\label{P:charact.netl.p.c./bulge}
A Peano partial cube $G$ is netlike if and only if, for each edge $ab$ of $G$ and any bulge $X$ of $co_{G}(U_{ab})$, there exists a gated $C \in \mathbf{C}(G,ab)$ such that $X = C-W_{ba}$.
\end{pro}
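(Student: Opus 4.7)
The plan is to combine Theorem~\ref{T:charact.}(iv) with Proposition~\ref{P:charact.(netlike)}, reducing the equivalence to the question of whether the second factor $A$ in the decomposition $\mathbf{Cyl}[X] = C \Box A$ (given by Lemma~\ref{L:bulge/ab-cycle}(ii)) is trivial, i.e.\ $A = K_1$. Recall that any Peano partial cube satisfies $ph(G) \leq 1$ by Proposition~\ref{P:hypernet./ph1}, so Proposition~\ref{P:charact.(netlike)} tells us that $G$ is netlike if and only if every vertex of $co_G(U_{ab}) - U_{ab}$ has degree $2$ in $G[co_G(U_{ab})]$, and symmetrically for $U_{ba}$.

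For the necessity, I would fix a bulge $X$ of $co_G(U_{ab})$ and invoke Lemma~\ref{L:bulge/ab-cycle}(ii) to write $\mathbf{Cyl}[X] = C \Box A$, with $C \in \mathbf{C}(G,ab)$ and $A$ a component of $X[U_{ab}]$. For each $a \in V(A)$, pick an inner vertex $c$ of the path $C - W_{ba}$ (one exists since $|V(C)| \geq 6$); then $(c,a) \in X - U_{ab}$, and its neighbors inside $\mathbf{Cyl}[X]$ all remain inside $X$ (the two $C$-neighbors of $c$ stay in $V(C)\cap W_{ab}$, and the $A$-neighbors keep the $C$-coordinate $c$ fixed). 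Hence the degree of $(c,a)$ in $G[co_G(U_{ab})]$ equals its degree in $\mathbf{Cyl}[X]$, which is $\delta_C(c)+\delta_A(a)=2+\delta_A(a)$. Netlikeness forces this to equal $2$, so $\delta_A(a)=0$ for every $a \in V(A)$; connectedness of $A$ then yields $A = K_1$, whence $\mathbf{Cyl}[X] = C$ and $X = C - W_{ba}$, with $C$ gated by Theorem~\ref{T:conv.reg.H-subgr.=>gated} as a convex quasi-hypertorus.

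For the sufficiency, I would take $u \in co_G(U_{ab}) - U_{ab}$; by definition $u$ lies in some bulge $X$, which by hypothesis has the form $X = C - W_{ba}$ for a gated $C \in \mathbf{C}(G,ab)$. Since $C$ is convex, hence induced in $G$, the subgraph $X$ coincides with the path $C - W_{ba}$, and $u$ is one of its inner vertices, so $\delta_X(u) = 2$. Because the bulge $X$ is defined as the subgraph induced on the closed $G[co_G(U_{ab})]$-neighborhood of a component of $G[co_G(U_{ab}) - U_{ab}]$, every neighbor of $u$ in $G[co_G(U_{ab})]$ already lies in $X$. Thus $u$ has degree $2$ in $G[co_G(U_{ab})]$; the symmetric argument applies to $U_{ba}$, and Proposition~\ref{P:charact.(netlike)} concludes that $G$ is netlike.

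The only delicate point I anticipate is the degree bookkeeping in the necessity direction: I need to verify simultaneously that the bulge $X$ captures all $G[co_G(U_{ab})]$-neighbors of its interior vertices, and that every vertex of $A$ is realized as the $A$-coordinate of some vertex in $X - U_{ab}$, so that the pointwise vanishing of $\delta_A$ really forces $A$ to reduce to $K_1$. Both facts fall out of the definition of a bulge and of the structure $\mathbf{Cyl}[X] = C \Box A$, but they are what makes the netlike degree condition translate cleanly into the desired form of the bulge.
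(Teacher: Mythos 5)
Your proposal is correct and follows essentially the same route as the paper: both directions reduce to Proposition~\ref{P:charact.(netlike)} via the condition that every vertex of $co_G(U_{ab})-U_{ab}$ has degree $2$ in $G[co_G(U_{ab})]$, with the necessity using the decomposition $\mathbf{Cyl}[X]=C\Box A$ to force $A=K_1$. The paper states the necessity in one compressed line; your version merely spells out the degree bookkeeping that the paper leaves implicit.
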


\begin{proof}
\emph{Necessity.}\; Let $G$ be a Peano partial cube, $ab$ an edge of $G$, and $X$ a bulge of $co_{G}(U_{ab})$.  Then $G$ is ph-stable as we already saw and, by Proposition~\ref{P:charact.(netlike)}(ii), $\mathbf{Cyl}[X] \in \mathbf{C}(G,ab)$.

\emph{Sufficiency.}\; Assume that, for each edge $ab$ of $G$ and any bulge $X$ of $co_{G}(U_{ab})$, there exists a gated $C \in \mathbf{C}(G,ab)$ such that $X = C-W_{ba}$.  Then $ph(G) \leq 1$ since $G$ is a Peano partial cube, and moreover each vertex in $co_G(U_{ab})$ has degree $2$ in $G[co_G(U_{ab})$.  Therefore $G$ is a netlike partial cube by Proposition~\ref{P:charact.(netlike)}.
\end{proof}

\begin{pro}\label{P:hypernet./netl.}
A partial cube $G$ is netlike if and only if it is a Peano partial cube such that the convex hull of any isometric cycle of $G$ is either this cycle itself or a finite hypercube.
\end{pro}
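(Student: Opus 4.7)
Suppose $G$ is netlike. Since any convex subgraph of a netlike partial cube is again netlike (as noted before Theorem~\ref{T:netlike/JHC}), every finite convex subgraph of $G$ has pre-hull number at most $1$ by Proposition~\ref{P:charact.(netlike)}(i), whence $G$ is a Peano partial cube. Moreover, by Proposition~\ref{P:charact.(netlike)}(iii) the convex hull of each non-convex isometric cycle of $G$ is a hypercube, while convex isometric cycles are trivially their own convex hulls; this is the stated cycle condition.

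\textbf{Sufficiency.} Assume $G$ is a Peano partial cube whose every isometric cycle has convex hull either equal to the cycle itself or isomorphic to a finite hypercube. To apply Proposition~\ref{P:charact.netl.p.c./bulge}, I fix an edge $ab$ of $G$ and a bulge $X$ of $co_G(U_{ab})$; it suffices to show that the hypercylinder $H := \mathbf{Cyl}[X]$ is a cycle of $\mathbf{C}(G,ab)$. By Lemma~\ref{L:bulge/ab-cycle} together with Theorem~\ref{T:charact.}(iv), $H = C \Box A$ is gated in $G$, where $C \in \mathbf{C}(G,ab)$ is a convex cycle of even length $2n \geq 6$ and $A$ is a component of $X[U_{ab}]$ (itself a partial cube). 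The goal reduces to showing $|V(A)| = 1$.

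Suppose for contradiction that $A$ contains an edge. Then $K_2$ is convex in $A$, so by the Convex Subgraph Property of the Cartesian product the prism $P := C \Box K_2$ is convex in $H$, hence in $G$. Writing $C = \langle v_1,\ldots,v_{2n},v_1 \rangle$, I would exhibit the zigzag cycle of length $2n+2$
\[
D := \langle (v_1,0), (v_2,0), \ldots, (v_n,0), (v_n,1), (v_{n+1},1), \ldots, (v_{2n},1), (v_{2n},0), (v_1,0) \rangle.
\]
A direct bookkeeping of the $n+1$ $\Theta$-classes of $P$ (the $n$ horizontal classes inherited from the pairs of antipodal edges of $C$, together with the single bridge class) shows that each class has exactly two representatives in $D$ and that the two representatives sit at antipodal edge-positions of $D$. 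By Lemma~\ref{L:gen.propert.}(vii), $D$ is isometric in $P$, hence in $G$; and since $D$ meets every $\Theta$-class of $P$, Lemma~\ref{L:edge/co(H)}(ii) applied inside $P$ gives $co_P(D) = P$, which the convexity of $P$ in $G$ upgrades to $co_G(D) = P$.

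To finish, $P = C_{2n} \Box K_2$ with $n \geq 3$ is not a cycle (it has $4n$ vertices and contains induced $4$-cycles) and is not a hypercube (it contains the isometric cycle $C \times \{0\}$ of length at least $6$, which is not isometric in any hypercube). This contradicts the cycle hypothesis on $G$. Hence $A$ is a single vertex, $H = C$, and Proposition~\ref{P:charact.netl.p.c./bulge} yields that $G$ is netlike. The main obstacle in the argument is the explicit choice of the zigzag cycle $D$ and the accompanying $\Theta$-class accounting that guarantees antipodality of each pair of representatives; the remaining steps are direct applications of previously established structural results.
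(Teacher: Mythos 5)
Your proof follows essentially the same route as the paper's: necessity via Proposition~\ref{P:charact.(netlike)}, and sufficiency by reducing, through Proposition~\ref{P:charact.netl.p.c./bulge} and Theorem~\ref{T:charact.}(iv), to showing that no $\mathbf{Cyl}[X]$ can properly contain a convex prism $C\Box K_2$ over a cycle $C$ of length greater than $4$. The only real difference is that where the paper invokes Proposition~\ref{P:TPQ/prop.isom.cycle} (principal cycles of antipodal partial cubes) to produce an isometric cycle of $P = C\Box K_2$ whose convex hull is all of $P$, you construct such a cycle explicitly (your zigzag $D$ is exactly a principal cycle of the antipodal graph $P$), and your $\Theta$-class bookkeeping for $D$ checks out: each of the $n+1$ classes of $P$ appears exactly twice in $D$, in antipodal positions, so Lemma~\ref{L:gen.propert.}(vii) and Lemma~\ref{L:edge/co(H)}(ii) apply as you say.

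One justification is wrong, though the conclusion it supports is correct and trivially repairable: you assert that a cycle of length at least $6$ ``is not isometric in any hypercube.'' That is false --- $Q_3$ contains isometric $6$-cycles (e.g.\ the six vertices remaining after deleting an antipodal pair). The correct reason $P = C_{2n}\Box K_2$ is not a hypercube is, for instance, that the layer $C\times\{0\}$ is a \emph{convex} cycle of length $2n\ge 6$ and a hypercube, being a median graph, has no convex cycle of length greater than $4$ (Proposition~\ref{P:hypernet./med.gr.}); alternatively, $P$ has isometric dimension $n+1$ but only $4n<2^{n+1}$ vertices. With that one sentence repaired, the argument is complete.
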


\begin{proof}
The necessity is clear by what we saw in Subsection~\ref{SS:applications}.  Conversely, assume that $G$ is a Peano partial cube such that the convex hull of any isometric cycle of $G$ is either this cycle itself or a finite hypercube.  Let $ab$ be an edge of $G$, and $X$ a bulge of $co_{G}(U_{ab})$.  By Theorem~\ref{T:charact.}(iv), there exists a gated $H \in \mathbf{Cyl}(G,ab)$ such that $X = H-W_{ba}$.  If $H$ is not a cycle, then it contains a convex prism $P$ over a cycle which belongs to $\mathbf{C}(G,ab)$.  
Clearly $P$ contains an isometric cycle $C$ such that $co_P(C) = P$ (this a particular case of Proposition~\ref{P:TPQ/prop.isom.cycle} that we will prove later).  Because $P$ is convex in $G$ since so is $H$, it follows that $co_P(C) = co_G(C)$, and that $C$ is isometric in $G$.  Hence $co_G(C)$ is either $C$ itself or a hypercube by the assumption, contrary to the fact that $P$ is the prism over a cycle of length greater than $4$.  Therefore $H \in \mathbf{Cyl}(G,ab)$, and thus $G$ is a netlike partial cube by Proposition~\ref{P:charact.netl.p.c./bulge}.
\end{proof}

By the assertion (vi) of the Characterization Theorem and Proposition~\ref{P:hypernet./netl.} we have:

\begin{cor}\label{P:netl./conv.hull}
A partial cube $G$ is netlike if and only if, for each edge $ab$ of $G$, any vertex $u \in \mathcal{I}_G(U_{ab})-U_{ab}$ (resp. $u \in \mathcal{I}_G(U_{ba})-U_{ba}$) lies on an isometric cycle $C_u \in \mathbf{C}(G,ab)$, and the convex hull of any isometric cycle of $G$ is gated and is either this cycle itself or a hypercube.
\end{cor}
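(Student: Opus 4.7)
The plan is to derive this corollary as a direct combination of two results already established in the excerpt: assertion (vi) of the Characterization Theorem~\ref{T:charact.} and Proposition~\ref{P:hypernet./netl.}. In essence, the corollary is obtained by specializing the ``gated quasi-hypertorus'' clause in condition (vi) to the dichotomy ``isometric cycle itself, or a hypercube'' that characterizes netlike partial cubes among Peano partial cubes.

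For the necessity I would start from the assumption that $G$ is netlike. Proposition~\ref{P:hypernet./netl.} gives at once that $G$ is a Peano partial cube and that the convex hull of any isometric cycle of $G$ is either that cycle itself or a finite hypercube. Applying assertion (vi) of the Characterization Theorem~\ref{T:charact.} to the Peano partial cube $G$ then produces, for each edge $ab$ and each vertex $u \in \mathcal{I}_G(U_{ab})-U_{ab}$, an isometric cycle $C_u \in \mathbf{C}(G,ab)$ passing through $u$, and also supplies the gatedness of the convex hull of any isometric cycle. Combining these two ingredients yields exactly the stated conclusion, and the same argument applied on the opposite side handles the case $u \in \mathcal{I}_G(U_{ba})-U_{ba}$.

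For the sufficiency I would observe that the hypothesis is a strengthening of condition (vi) of the Characterization Theorem, since every even cycle and every hypercube is a quasi-hypertorus. Consequently $G$ is a Peano partial cube. Moreover the finiteness of the hypercube required in Proposition~\ref{P:hypernet./netl.} is automatic, because the convex hull of an isometric cycle of a partial cube is the convex hull of a finite vertex set, hence finite by Lemma~\ref{L:gen.propert.}(iii). Proposition~\ref{P:hypernet./netl.} then applies directly and yields that $G$ is netlike.

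The only step worth flagging is the verification that cycles in $\mathbf{C}$ and hypercubes fit within the quasi-hypertorus framework used in (vi); this is immediate from the definition of a quasi-hypertorus as the Cartesian product of copies of $K_2$ and even cycles. Beyond that, I do not anticipate any genuine obstacle: the corollary is really a recombination of two previously proved characterizations, not a new structural result, and both directions reduce to matching the stated hypothesis against the hypotheses and conclusions of those two results.
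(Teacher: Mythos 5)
Your proposal is correct and follows exactly the paper's route: the paper derives this corollary by combining assertion (vi) of the Characterization Theorem with Proposition~\ref{P:hypernet./netl.}, which is precisely the recombination you carry out in both directions. Your added checks (that even cycles and hypercubes are quasi-hypertori, and that the convex hull of an isometric cycle is automatically finite by Lemma~\ref{L:gen.propert.}(iii)) are the right details to verify and are consistent with the paper's conventions.
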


The next result has the same formulation as Definition~\ref{D:ph-homog.} of ph-homogeneous partial cubes.

\begin{pro}\label{P:netl./fin.faithf.}
A partial cube $G$ is netlike if and only if $ph(F) \leq 1$ for every finite faithful subgraph $F$ of $G$.
\end{pro}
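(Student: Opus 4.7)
\emph{Plan.} I will prove both directions by leveraging the characterizations of netlike partial cubes from Proposition~\ref{P:charact.(netlike)} and Proposition~\ref{P:hypernet./netl.}.

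\emph{Necessity.} Assume $G$ is netlike and $F$ is a finite faithful subgraph. Since $F$ is isometric in the partial cube $G$, it is itself a partial cube. The strategy is to show that $F$ is itself netlike, whence $ph(F)\le 1$ by Proposition~\ref{P:charact.(netlike)}. To this end I verify the characterization pair (ii)+(iii) of that proposition for $F$. For (iii): if $C$ is an isometric cycle of $F$ that is non-convex in $F$, then $C$ is also isometric in $G$, and non-convexity in $F$ forces non-convexity in $G$ (since $co_F(C) \subseteq co_G(C) \cap V(F)$). By (iii) applied to $G$, $co_G(C)$ is a hypercube $Q_n$; the median-stability of $F$ then forces $V(Q_n) \subseteq V(F)$, because each additional vertex of $Q_n \setminus V(C)$ is the (unique) median in $G$ of some triple in $V(C)$, hence must lie in $V(F)$. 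It follows that $co_F(C) = Q_n$. A parallel median-stability argument lifts the degree-$2$ condition (ii) for $G[co_G(U_{ab}^G)]$ down to $F[co_F(U_{ab}^F)]$, completing the verification.

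\emph{Sufficiency.} Every convex subgraph is faithful, so the hypothesis yields $ph(F)\le 1$ for every finite convex $F$, i.e., $G$ is a Peano partial cube. Suppose for contradiction that $G$ is not netlike. By Proposition~\ref{P:hypernet./netl.} there is an isometric cycle $C$ whose convex hull is a quasi-hypertorus that is neither $C$ itself nor a hypercube, so by the Characterization Theorem~\ref{T:charact.} there exist an edge $ab$ and a bulge $X$ of $co_G(U_{ab})$ with $\mathbf{Cyl}[X]= C'\Box A$, where $C'$ is an even cycle of length at least $6$ and $A$ has at least two vertices; Lemma~\ref{L:uv/CuCv} then provides a gated prism $C'\Box K_2\subseteq G_{\overline{ab}}$. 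I will construct a finite faithful subgraph $F\subseteq G$ with $ph(F)>1$ by starting from a carefully chosen subset of vertices of this prism (the cycle $C'$ together with a transverse pendant vertex from the opposite $C'$-layer) and iteratively adjoining the medians in $G$ of triples already present. The closure is finite since it lives inside the finite polytope $co_G(C')$ (Lemma~\ref{L:gen.propert.}(iii)), and is faithful by construction. The closure is arranged so as to remain properly inside $C'\Box K_2$ and to contain, for some edge $cd$ of $F$, a pair $u,v\in\mathcal{I}_F(U_{cd}^F)\setminus U_{cd}^F$ for which no $w\in U_{cd}^F$ satisfies $v\in I_F(u,w)$, witnessing $ph(F)>1$ via Proposition~\ref{P:charact.p.c.ph}. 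This contradicts the hypothesis.

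\emph{Main obstacle.} The hard step is the construction in the sufficiency direction. One must design $F$ so that its median-closure in $G$ does not swell $F$ into the whole convex quasi-hypertorus $co_G(C)$ (which is convex, hence faithful, and has $ph\le 1$, producing no contradiction), while at the same time preserving the ``cycle-plus-$K_2$'' obstruction to ph-stability that is exactly the feature separating non-netlike from netlike Peano partial cubes. Balancing these two requirements—keeping the median closure strictly inside the prism while retaining the pre-hull witness—is the delicate point.
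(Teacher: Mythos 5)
The decisive problem is in your sufficiency direction, and it is exactly the point you flag as the ``main obstacle'': your proposed seed set does not produce a witness. Starting from one $C'$-layer $C_0$ of the convex prism together with a single transverse vertex $(c_1,1)$ of the opposite layer, the set $V(C_0)\cup\{(c_1,1)\}$ is already median-stable in $G$ (any triple involving $(c_1,1)$ and two vertices of $C_0$ has its median, when it exists, in $C_0$), so the median-closure adds nothing, and the resulting faithful subgraph is the gated amalgam of $C_{2n}$ and $K_2$ along a vertex — which has pre-hull number at most $1$ by Theorem~\ref{T:gat.amalgam}. No contradiction arises. The paper's proof takes instead the full layer $\{(c_i,0):1\le i\le 2n\}$ together with \emph{three consecutive} vertices $(c_1,1),(c_2,1),(c_3,1)$ of the opposite layer; this subgraph $H$ is faithful and has $ph(H)=2$. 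The three vertices are all needed: $(c_1,1)$ forces $(c_2,1)\in U_{(c_2,0)(c_1,0)}^H$, and then $(c_3,1)\in\mathcal{I}_H(U_{(c_2,0)(c_1,0)}^H)$ (it lies on a geodesic from $(c_2,1)$ to $(c_{n+1},0)$) while $(c_3,1)\notin I_H((c_2,0),w)$ for any $w\in U_{(c_2,0)(c_1,0)}^H$, so this set is not ph-stable and Proposition~\ref{P:charact.p.c.ph} gives $ph(H)>1$. Your plan never exhibits such a configuration, and the one you sketch provably fails, so the contradiction is not obtained.

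On the necessity direction: the paper simply cites \cite[Proposition 4.4]{P05-2} (faithful subgraphs of netlike partial cubes are netlike), whereas you attempt to reverify the characterization pair (ii)+(iii) of Proposition~\ref{P:charact.(netlike)} directly. Your argument for (iii) is essentially sound once the one-step median claim is replaced by the median-closure statement (Proposition~\ref{P:TPQ/prop.isom.cycle}), but the ``parallel median-stability argument'' for lifting the degree-$2$ condition (ii) is not spelled out and is not routine: for a faithful (non-convex) subgraph $F$ one only has $U_{ab}^F\subseteq U_{ab}^G\cap V(F)$, not equality, so the relation between $co_F(U_{ab}^F)$ and $co_G(U_{ab}^G)$ needs a genuine argument. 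Either cite the external result as the paper does, or supply that step.
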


\begin{proof}
\emph{Necessity.}\; If $G$ is netlike then, by \cite[Proposition 4.4]{P05-2}, any faithful subgraph of a netlike partial cube is also netlike, and thus has a pre-hull number which is at most $1$.

\emph{Sufficiency.}\; Assume that the pre-hull number of any finite faithful subgraph of $G$ is at most equal to $1$.  Then $G$ is a Peano partial cube since any convex subgraph of $G$ is faithful.  Suppose that the convex hull $F$ of some isometric cycle of $G$ is not this cycle nor a hypercube.  Then $F$, which is a quasi-hypertorus by the Characterization Theorem, contains a convex prism $C \Box K_2$ over a cycle $C = \langle c_1,\dots,c_{2n},c_1\rangle$ of length $2n \geq 6$.  Let $V(K_2) = \{0,1\}$, and let $H$ be the subgraph of $C \Box K_2$ induced by the set of vertices $\{(c_i,0): 1 \leq i \leq 2n\} \cup \{(c_1,1),(c_2,1),(c_3,1)\}$ (see Figure~\ref{F:faithful}, where $n = 3$ and $H$ is the subgraph depicted by the thick edges and the big vertices).  Then $H$ is clearly a faithful subgraph of $C \Box K_2$, and thus of $G$, because $C \Box K_2$ is convex in $F$, which is itself convex in $G$.  In the other hand, we can easily check that $ph(H) = 2$, contrary to the assumption.  Therefore the convex hull of any isometric cycle of $G$ is either this cycle itself or a hypercube.  Consequently $G$ is netlike by Proposition~\ref{P:hypernet./netl.}.
\end{proof}

\begin{figure}[!h]
    \centering
{\tt    \setlength{\unitlength}{0.90pt}
\begin{picture}(128,134)
\thinlines   
              \put(115,94){\line(0,-1){55}}
              \put(115,95){\line(-2,1){51}}
              \put(115,38){\line(-4,3){21}}
              \put(115,94){\line(-4,-3){21}}              
              \put(64,13){\line(2,1){53}}
              \put(62,14){\line(-2,1){51}}
              \put(64,39){\line(0,-1){26}}

\linethickness{0,5mm}
               \put(12,40){\line(3,2){22}}
              \put(12,94){\line(4,-3){21}}
              \put(64,39){\line(2,1){30}}
              \put(64,39){\line(-2,1){31}}
              \put(64,94){\line(2,-1){30}}
              \put(64,94){\line(-2,-1){31}}
              \put(64,94){\line(0,1){25}}
              \put(94,79){\line(0,-1){24}}
              \put(33,77){\line(0,-1){23}}
              \put(12,94){\line(0,-1){55}}
              \put(12,95){\line(2,1){53}}             
              
              \put(64,13){\circle*{4}}
              \put(115,39){\circle*{4}}
              \put(94,53){\circle*{6}}
              \put(64,39){\circle*{6}}
              \put(33,55){\circle*{6}}
              \put(12,40){\circle*{6}}
              \put(94,78){\circle*{6}}
              \put(64,93){\circle*{6}}
              \put(33,77){\circle*{6}}
              \put(115,94){\circle*{4}}
              \put(64,121){\circle*{6}}
              \put(12,94){\circle*{6}}
\end{picture}}
\caption{A faithful subgraph of $C_{6} \Box K_{2}$ whose pre-hull number is $2$.}
\label{F:faithful}
\end{figure}
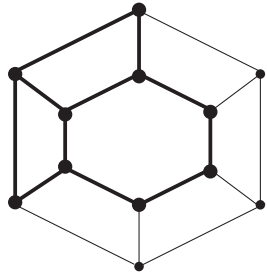

\begin{pro}\label{P:netlike/fin.conv.subgr.}
A partial cube is netlike if and only if so are all its finite convex subgraphs.
\end{pro}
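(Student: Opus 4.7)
My plan is to prove the equivalence by reducing both directions to already-established characterizations, chiefly Proposition~\ref{P:hypernet./netl.}, which states that a partial cube is netlike iff it is a Peano partial cube whose isometric cycles have convex hulls equal to themselves or to finite hypercubes.

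For the necessity, I would simply invoke \cite[Proposition 4.4]{P05-2}, which is cited earlier in the excerpt and asserts that any convex subgraph of a netlike partial cube is itself netlike; this instantly gives that every finite convex subgraph of a netlike $G$ is netlike.

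For the sufficiency, suppose every finite convex subgraph of $G$ is netlike. I would first show that $G$ is a Peano partial cube: by Proposition~\ref{P:charact.(netlike)}(i), each finite convex subgraph $F$ of $G$ satisfies $ph(F)\leq 1$; since any finite subgraph of $G$ is contained in some finite convex subgraph of $G$ (namely, in the convex hull of its vertex set, which is finite by Lemma~\ref{L:gen.propert.}(iii)), Proposition~\ref{P:fin.conv./ph1} yields $ph(G)\leq 1$, and more importantly $G$ is ph-homogeneous (i.e., Peano) directly from Definition~\ref{D:ph-homog.}. Next, let $C$ be an isometric cycle of $G$ and set $F:=co_G(C)$; by Lemma~\ref{L:gen.propert.}(iii), $F$ is finite, so by hypothesis $F$ is netlike. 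Because $F$ is convex in $G$ it is isometric in $G$, so $C$ remains an isometric cycle of $F$, and clearly $co_F(C)=F$. Applying Proposition~\ref{P:hypernet./netl.} to the netlike partial cube $F$ and its isometric cycle $C$, we conclude that $F$ is either $C$ itself or a finite hypercube. Thus every isometric cycle of $G$ has the required property, and a final appeal to Proposition~\ref{P:hypernet./netl.} shows that $G$ is netlike.

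This proof is essentially routine once the characterization theorems from the earlier subsections are in hand; there is no genuine obstacle. The only subtlety worth verifying carefully is the passage from \emph{finite} convex subgraphs of $G$ being netlike to $G$ itself being Peano---this requires noting that the definition of ph-homogeneity (Definition~\ref{D:ph-homog.}) only quantifies over finite convex subgraphs, so the hypothesis feeds directly into it without needing the full strength of Proposition~\ref{P:fin.conv./ph1}.
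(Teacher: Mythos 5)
Your proof is correct, but it follows a different route from the paper's. The paper proves both directions by funnelling everything through Proposition~\ref{P:netl./fin.faithf.} (netlike $\Leftrightarrow$ $ph(F)\leq 1$ for every finite \emph{faithful} subgraph $F$): for necessity it observes that a faithful subgraph of a finite convex subgraph of $G$ is faithful in $G$, and for sufficiency that a finite faithful subgraph of $G$ is faithful in its convex hull $co_G(F)$, which is finite, convex, hence netlike by hypothesis, so $ph(F)\leq 1$. You instead dispatch necessity by the external result \cite[Proposition 4.4]{P05-2} (legitimately, since the paper quotes it), and for sufficiency you pass through the structural characterization of Proposition~\ref{P:hypernet./netl.}: the hypothesis gives $ph(F)\leq 1$ for all finite convex $F$ via Proposition~\ref{P:charact.(netlike)}(i), so $G$ is Peano directly from Definition~\ref{D:ph-homog.}, and then the convex hull of each isometric cycle $C$ is itself a finite convex, hence netlike, subgraph $F$ with $co_F(C)=F$, forcing $F$ to be $C$ or a hypercube. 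Your observation that the full strength of Proposition~\ref{P:fin.conv./ph1} is not needed is correct and worth making, since ph-homogeneity quantifies only over finite convex subgraphs. The paper's argument is shorter and exploits the faithful-subgraph machinery it has just built; yours localizes the netlike condition to the convex hulls of isometric cycles, which makes the geometric content (cycle hulls are cycles or hypercubes) explicit. Both are valid; there is no gap.
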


\begin{proof}
Let $G$ be a netlike partial cube, $F$ one of its finite convex subgraph, and $H$ a faithful subgraph of $F$.  Clearly $H$ is a faithful subgraph of $G$, and thus $ph(H) \leq 1$, by Proposition~\ref{P:netl./fin.faithf.}.  Hence $F$ is a netlike partial cube by this same proposition.

Conversely, assume that all finite convex subgraphs of $G$ are netlike.  Let $F$ be a finite faithful subgraph of $G$.  Then $F$ is a faithful subgraph of $co_G(F)$, which is a finite convex subgraph of $G$.  Hence $co_G(F)$ is a netlike partial cube by the assumption, and thus $ph(F) \leq 1$ by Proposition~\ref{P:netl./fin.faithf.}.  Therefore $G$ is netlike by the same proposition.
\end{proof}

\begin{pro}\label{P:netlike/faithful}
Every faithful subgraph of a netlike partial cube is a netlike partial cube.
\end{pro}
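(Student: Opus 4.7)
The plan is to deduce this directly from Proposition~\ref{P:netl./fin.faithf.}, which characterizes netlike partial cubes by the condition that every finite faithful subgraph has pre-hull number at most $1$. Let $H$ be a faithful subgraph of a netlike partial cube $G$.

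First I would check that $H$ is itself a partial cube. Since $H$ is by definition an isometric subgraph of $G$, and $G$ is an isometric subgraph of a hypercube, $H$ is an isometric subgraph of that same hypercube, hence a partial cube.

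Second, I would verify that the characterization of Proposition~\ref{P:netl./fin.faithf.} can be applied to $H$. Let $F$ be any finite faithful subgraph of $H$. The text preceding Lemma~\ref{L:gen.propert.} explicitly notes that a faithful subgraph of a faithful subgraph is itself faithful in the larger graph; hence $F$ is a finite faithful subgraph of $G$. Since $G$ is netlike, Proposition~\ref{P:netl./fin.faithf.} yields $ph(F) \leq 1$. As $F$ was an arbitrary finite faithful subgraph of $H$, the same proposition (applied in the reverse direction to $H$) shows that $H$ is netlike.

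There is essentially no obstacle: the result is a clean transitivity consequence of the characterization theorem for netlike partial cubes in terms of faithful subgraphs together with the already-noted transitivity of the faithful-subgraph relation. The only mild point worth a sentence is checking that $H$ is a partial cube, which reduces to the transitivity of the isometric-subgraph relation inside a hypercube.
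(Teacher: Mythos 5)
Your proposal is correct and is essentially identical to the paper's own proof: both reduce the claim to Proposition~\ref{P:netl./fin.faithf.} via the transitivity of the faithful-subgraph relation. Your extra remark that $H$ is a partial cube (by transitivity of isometric embeddings into the hypercube) is a harmless added check that the paper leaves implicit.
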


\begin{proof}
Let $H$ be a faithful subgraph of some netlike partial cube $G$, and $F$ a finite faithful subgraph of $H$.  Clearly $F$ is faithful in $G$, and thus $ph(F) \leq 1$ by Proposition~\ref{P:netl./fin.faithf.}.  Therefore $H$ is netlike by the same theorem.
\end{proof}

\subsubsection{Cube-free netlike partial cubes}\label{SSS:cube-free}

\begin{thm}\label{T:cube-free netlike}\textnormal{(\cite[Theorem 7.4]{P05-1})}
Let $G$ be a partial cube.  The following assertions are equivalent:

\textnormal{(i)}\; $\mathcal{I}_G(U_{ab})$ and $\mathcal{I}_G(U_{ba})$ induce trees for every edge $ab$ of $G$.

\textnormal{(ii)}\; $G$ is a cube-free netlike partial cube.

\textnormal{(iii)}\; $G$ is a netlike partial cube whose isometric cycles are convex.
\end{thm}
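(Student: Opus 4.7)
The plan is to establish the three conditions cyclically, relying on Proposition~\ref{P:charact.(netlike)} and its refinement Proposition~\ref{P:hypernet./netl.}, which together say that netlike partial cubes are precisely the Peano partial cubes whose isometric cycles have convex hulls that are either the cycle itself or a hypercube. I would tackle (ii) $\Leftrightarrow$ (iii) first, then (i) $\Leftrightarrow$ (ii).

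For (ii) $\Leftrightarrow$ (iii), the argument is almost immediate from Proposition~\ref{P:hypernet./netl.}. If $G$ is netlike and cube-free, the only hypercube that can arise as a convex hull of an isometric cycle is $Q_2 = C_4$, which is already self-convex, so every isometric cycle of $G$ must be convex. Conversely, if all isometric cycles are convex but $G$ contains a $Q_3$, then any $6$-cycle of $Q_3$ is isometric (in $Q_3$, and hence in $G$ since $Q_3$ is convex as a hypercube subgraph) but not convex, contradicting~(iii).

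For (i) $\Rightarrow$ (ii), ph-stability follows from the tree structure: any leaf of the tree $G[\mathcal{I}_G(U_{ab})]$ must lie in $U_{ab}$ (else it could be pruned, contradicting minimality of $\mathcal{I}_G(U_{ab})$); so for any $u, v \in \mathcal{I}_G(U_{ab})$, extending the unique tree-path from $u$ through $v$ to a leaf $w \in U_{ab}$ gives a $G$-geodesic containing both $u$ and $v$, since $I_G(x,y) \subseteq \mathcal{I}_G(U_{ab})$ for $x,y \in U_{ab}$ and tree-paths are unique. To conclude netlikeness via Proposition~\ref{P:charact.(netlike)}, I would then rule out branching at inner vertices (degree~$\ge 3$ in the tree) by a Lemma~\ref{L:tripod}-style analysis, which excludes three independent $G$-geodesics through a common vertex of $\mathcal{I}_G(U_{ab}) \setminus U_{ab}$ to distinct points of $U_{ab}$. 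Cube-freeness is then immediate, for a convex $Q_3$ forces $G[\mathcal{I}_G(U_{ab})] = C_4$ at its base edge, which is not a tree.

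For (ii) $\Rightarrow$ (i), $G[\mathcal{I}_G(U_{ab})] = G[co_G(U_{ab})]$ is connected by convexity, so it suffices to exclude cycles. Any cycle $C \subseteq G[\mathcal{I}_G(U_{ab})]$ must, by the netlike characterization Proposition~\ref{P:charact.(netlike)}(ii), lie entirely in $G[U_{ab}]$; pairing with $\phi_{ab}(C) \subseteq G[U_{ba}]$ (Lemma~\ref{L:phi_ab}) and the $\Theta$-class of $ab$ embeds a copy of $C \Box K_2$ in $G$. If $|C|=4$, this prism is $Q_3$, already contradicting cube-freeness. The main obstacle is the case $|C| \geq 6$: here one must show that the prism sits isometrically in $G$, so that its diagonal $(|C|+2)$-cycle (isometric and non-convex in $C \Box K_2$) remains isometric in $G$ and contradicts the equivalent condition~(iii); equivalently, one may invoke Proposition~\ref{P:charact.(netlike)}(iii) directly to argue that the presence of such a long prism forces the convex hull of some isometric cycle to be a hypercube strictly larger than $Q_2$, whence a copy of $Q_3$. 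Either route uses the netlike bulge restriction to preclude $G$-shortcuts through the prism, and this is the technically most delicate part of the argument.
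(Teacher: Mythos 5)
A preliminary remark: the paper does not prove this theorem — it is imported verbatim as \cite[Theorem 7.4]{P05-1} and stated without proof — so there is no in‑paper argument to compare yours against; what follows assesses your sketch on its own terms. The architecture is right, and (ii)$\Leftrightarrow$(iii) is essentially complete as written (via Proposition~\ref{P:hypernet./netl.}, together with the observation that the convex hull of an isometric $2n$-cycle, if a hypercube, must be $Q_n$ by Lemma~\ref{L:edge/co(H)}(i)). The genuine gaps are in the other two implications. In (i)$\Rightarrow$(ii), the claim that the tree-path from $u$ through $v$ extended to a leaf $w\in U_{ab}$ is a $G$-geodesic does not follow from uniqueness of tree-paths: when $u$ and $w$ are not both in $U_{ab}$, a $(u,w)$-geodesic of $G$ may leave $\mathcal{I}_G(U_{ab})$ entirely, so being the unique path \emph{inside the tree} says nothing about its length in $G$. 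One must show that no two edges of a tree-path are $\Theta$-equivalent (Lemma~\ref{L:gen.propert.}(v)); this can be repaired by first proving, with a cut-edge argument, that every edge of the tree lies on a geodesic between two vertices of $U_{ab}$, and then noting that two $\Theta$-equivalent edges on a common tree-path would both lie on the unique (hence geodesic) tree-path joining vertices of $U_{ab}$ chosen in the two outer components of the tree minus those edges — a contradiction. Similarly, Lemma~\ref{L:tripod} cannot simply be cited for the degree-$2$ condition, since it presupposes ph-homogeneity in $ab$, which is precisely what is not yet available; the tripod contradiction must be rebuilt from the two tree hypotheses alone (a branch vertex of $\mathcal{I}_G(U_{ab})-U_{ab}$ forces either a cycle in $G[\mathcal{I}_G(U_{ba})]$ or two $\Theta$-equivalent edges sharing a vertex). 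This is the real work of the implication, not a citation.

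In (ii)$\Rightarrow$(i), the delicate point you flag has a clean resolution you should make explicit: take $C$ to be a \emph{shortest} cycle of the convex subgraph $G[co_G(U_{ab})]$. By Lemma~\ref{L:gen.propert.}(viii) it is convex in that subgraph, hence in $G$; netlikeness then forces $C$ into $G[U_{ab}]$, and the prism spanned by a convex cycle and its $\phi_{ab}$-image is automatically isometric in $G$, all three types of distances being determined by the $\Theta$-class of $ab$. Your route via the isometric, non-convex $(|C|+2)$-cycle and Proposition~\ref{P:charact.(netlike)}(iii) then yields a hypercube of dimension at least $|C|/2+1\ge 4$, hence a $Q_3$. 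Without some such normalization the prism over an arbitrary cycle of $G[U_{ab}]$ need not be isometric, so this step is not optional.
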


By Proposition~\ref{P:hypernet./netl.} and Theorem~\ref{T:cube-free netlike} we have the following property:

\begin{pro}\label{P:hypernet./cube-free netl.}
A partial cube is a cube-free netlike partial cube if and only if it is a Peano partial cube whose isometric cycles are convex.
\end{pro}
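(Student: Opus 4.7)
The plan is to derive this proposition by combining Proposition~\ref{P:hypernet./netl.} with Theorem~\ref{T:cube-free netlike}, both of which have already been established in the excerpt. The key observation is that the condition ``isometric cycles are convex'' is strong enough to collapse the disjunction ``the cycle itself or a hypercube'' appearing in the characterization of netlike partial cubes.

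For the necessity, I would assume $G$ is a cube-free netlike partial cube. By the equivalence (ii)$\Leftrightarrow$(iii) of Theorem~\ref{T:cube-free netlike}, all isometric cycles of $G$ are convex. Since $G$ is in particular netlike, Proposition~\ref{P:hypernet./netl.} tells us that $G$ is a Peano partial cube. Hence $G$ is a Peano partial cube whose isometric cycles are convex.

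For the sufficiency, assume $G$ is a Peano partial cube in which every isometric cycle is convex. Then the convex hull of any isometric cycle of $G$ is simply that cycle itself, which is trivially one of the two possibilities ``this cycle itself or a finite hypercube'' allowed by Proposition~\ref{P:hypernet./netl.}. That proposition then yields that $G$ is a netlike partial cube. Combined with the convexity of its isometric cycles, condition (iii) of Theorem~\ref{T:cube-free netlike} holds, so by the equivalence (ii)$\Leftrightarrow$(iii) there, $G$ is a cube-free netlike partial cube.

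There is no real obstacle: the statement is essentially a reformulation obtained by threading the Peano hypothesis through the already-proved characterizations of netlike (Proposition~\ref{P:hypernet./netl.}) and of cube-free netlike (Theorem~\ref{T:cube-free netlike}). The only point worth double-checking is the trivial remark that when an isometric cycle is convex it coincides with its convex hull, so Proposition~\ref{P:hypernet./netl.} applies without needing to exhibit any hypercube.
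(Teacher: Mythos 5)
Your proposal is correct and matches the paper exactly: the paper derives this proposition directly from Proposition~\ref{P:hypernet./netl.} together with the equivalence (ii)$\Leftrightarrow$(iii) of Theorem~\ref{T:cube-free netlike}, just as you do. The observation that a convex isometric cycle equals its own convex hull, so the disjunction in Proposition~\ref{P:hypernet./netl.} is satisfied trivially, is precisely the point the paper relies on.
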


\begin{pro}\label{P:isom.=faithf.=>cube-free}
A partial cube $G$ is cube-free if every isometric subgraph of $G$ is median-stable.
\end{pro}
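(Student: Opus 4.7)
The plan is to argue by contraposition: assuming $G$ contains a subgraph $Q$ isomorphic to $Q_{3}$, I will exhibit an isometric subgraph of $G$ that is not median-stable, namely $H := Q - v$ for a suitable vertex $v$ of $Q$.

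The first step is to observe that, because $G$ is a partial cube, the subgraph $Q$ is automatically isometric in $G$. Indeed, fix any vertex of $Q$; its three incident edges in $Q$ belong to three distinct $\Theta$-classes $\Theta_{1},\Theta_{2},\Theta_{3}$ (otherwise some vertex of $Q$ would have two neighbors in a common $\Theta$-class, which is impossible in a partial cube). Hence, viewing $G$ as an isometric subgraph of a hypercube, all vertices of $Q$ agree outside the three coordinates corresponding to $\Theta_{1},\Theta_{2},\Theta_{3}$, so $d_{G}(x,y)=d_{Q}(x,y)$ for every $x,y \in V(Q)$.

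The second step is to pick a vertex $v \in V(Q)$, set $H := Q - v$, and check that $H$ is isometric in $Q$. This reduces to a finite case analysis on pairs $(x,y)$ in $V(H)$: pairs at $Q$-distance $1$ are trivial; pairs at $Q$-distance $2$ have two geodesics in $Q$ and at most one can pass through $v$; antipodal pairs in $V(H)$ (i.e., pairs at $Q$-distance $3$ not involving $v$) enjoy several geodesics in $Q$, of which only two pass through $v$, so an avoiding geodesic exists. Combining with the previous step, $d_{G}(x,y)=d_{Q}(x,y)=d_{H}(x,y)$ for all $x,y \in V(H)$, so $H$ is isometric in $G$.

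The third step is to exhibit a triple in $H$ whose unique median in $G$ lies outside $V(H)$. Let $x_{1},x_{2},x_{3}$ be the three neighbors of $v$ in $Q$; these belong to $V(H)$, and each path $\langle x_{i},v,x_{j}\rangle$ is a $Q$-geodesic, so by the isometry of $Q$ in $G$ we get $d_{G}(x_{i},x_{j})=2$ and $v \in I_{G}(x_{i},x_{j})$ for all $i \neq j$. Thus $v$ is a median of the triple $(x_{1},x_{2},x_{3})$ in $G$, and by Lemma~\ref{L:gen.propert.}(i) it is the unique one. Since $v \notin V(H)$, $H$ is an isometric subgraph of $G$ that is not median-stable, completing the contrapositive.

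The one place that needs real attention, rather than just bookkeeping, is the second step: verifying that $Q_{3}-v$ is isometric in $Q_{3}$. This is elementary but must be done for each distance class (especially the antipodal pairs, where one must make sure at least one of the six geodesics in $Q_{3}$ avoids $v$). Everything else is a direct appeal to the partial-cube structure and to uniqueness of medians.
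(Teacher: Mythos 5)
Your argument is correct and follows essentially the same route as the paper's proof: exhibit $Q_3^- = Q_3 - v$ as an isometric subgraph of the cube (hence of $G$, since hypercubes in a partial cube are convex) that fails to be median-stable because the three neighbors of $v$ have $v$ as their unique median. You merely make explicit the verifications that the paper leaves implicit (that $Q_3$ is automatically isometric in a partial cube and that $Q_3^-$ is isometric in $Q_3$).
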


\begin{proof}
Assume that every isometric subgraph of $G$ is median-stable.  Then $G$ contains no cube $Q_3$ since otherwise $Q_3^-$ would be an isometric subgraph of $Q_3$, and thus of $G$, but $Q_3^-$ is not median-stable in $Q_3$, and thus not in $G$, contrary to the assumption.
\end{proof}

\begin{thm}\label{T:isom.=faithf.<=cube-free}
A netlike partial cube $G$ is cube-free if and only if every isometric subgraph of $G$ is median-stable.
\end{thm}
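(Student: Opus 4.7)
The ``only if'' direction is Proposition \ref{P:isom.=faithf.=>cube-free}. For the converse, assume $G$ is a cube-free netlike partial cube, let $H$ be an isometric subgraph of $G$, and let $x,y,z \in V(H)$ have median $m$ in $G$. My plan is to prove by induction on $s := d_G(x,m) + d_G(y,m) + d_G(z,m)$ that $m \in V(H)$. The base $s=0$ forces $x=y=z=m \in V(H)$, and in the inductive step I may assume $m \notin \{x,y,z\}$.

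Since $H$ is isometric, choose an $(x,y)$-geodesic $P \subseteq H$, and in $G$ choose an $(x,y)$-geodesic $Q$ passing through $m$. If $m \in V(P)$ we are done, so suppose $m \notin V(P)$. Let $a$ be the last vertex of $V(P) \cap V(Q)$ appearing on $Q$ before $m$ and $b$ the first such vertex after $m$; then $P[a,b]$ and $Q[a,b]$ are internally disjoint $(a,b)$-geodesics, so $C := P[a,b] \cup Q[a,b]$ is an isometric cycle of $G$ containing $m$ as an inner vertex of $Q[a,b]$. By Theorem \ref{T:cube-free netlike}, $C$ is convex in $G$, and by \cite[Corollary 6.4]{P05-1} $C$ is gated; moreover $a, b \in V(P) \subseteq V(H)$.

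Suppose first that $a \neq x$. Since $a \in I_G(x,m)$, a short calculation using the triangle inequality and the median identities shows that $m$ is also the median of $(a,y,z)$ in $G$ and that $d(a,m) + d(y,m) + d(z,m) = s - d(x,a) < s$, so the inductive hypothesis applied to the triple $(a,y,z) \subseteq V(H)$ yields $m \in V(H)$. The case $b \neq y$ is handled symmetrically. The remaining situation is $a=x$ and $b=y$, in which $C = P \cup Q$ has length $2d(x,y)$; using gatedness of $C$ together with the identities $d(x,z) = d(x,m) + d(m,z)$ and $d(y,z) = d(y,m) + d(m,z)$, one forces the gate of $z$ in $C$ to be $m$ itself.

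To dispose of this remaining case I plan to bootstrap: applying the same divergence construction to an $(x,z)$-geodesic of $H$ and an $(x,z)$-geodesic of $G$ through $m$ either reduces to a smaller instance via the inductive hypothesis (when the divergence is not maximal) or produces a second gated isometric cycle $C'$ through $m$; likewise from the $(y,z)$-pair one obtains $C''$. The $\Theta$-class sets of $C, C', C''$ are then $\Theta_{xm} \sqcup \Theta_{ym}$, $\Theta_{xm} \sqcup \Theta_{zm}$, and $\Theta_{ym} \sqcup \Theta_{zm}$, pairwise sharing one of the groups $\Theta_{xm}, \Theta_{ym}, \Theta_{zm}$, and all three cycles meet at $m$. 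The main obstacle and crux of the argument is to show that the coexistence of three such pairwise-overlapping gated isometric cycles attached at $m$ is incompatible with the tree property of $\mathcal{I}_G(U_{ab})$ guaranteed by Theorem \ref{T:cube-free netlike}(i); concretely, one must extract from this configuration a cycle in some $\mathcal{I}_G(U_{ab})$ for an edge $ab$ incident to $m$, contradicting cube-freeness.
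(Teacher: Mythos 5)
There is a genuine gap: your argument is a plan rather than a proof, and you say so yourself. The step you label ``the main obstacle and crux of the argument'' --- extracting a cycle inside some $\mathcal{I}_G(U_{ab})$ from the configuration of three pairwise-overlapping gated isometric cycles attached at $m$ --- is exactly the contradiction that has to be established, and nothing in the proposal establishes it. In addition, an earlier step is unjustified: two internally disjoint $(a,b)$-geodesics form a cycle, but that cycle need not be isometric in $G$, so you cannot invoke Theorem~\ref{T:cube-free netlike} to conclude that $C$ is convex, nor \cite[Corollary 6.4]{P05-1} to conclude that it is gated. Since both the gatedness of $C$, $C'$, $C''$ and the final incompatibility argument are load-bearing, the proof does not go through as written. (The inductive reduction to $a=x$, $b=y$ via the triple $(a,y,z)$ is fine, but it only isolates the hard case; it does not resolve it.)

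For contrast, the paper's proof avoids all of this machinery. Supposing the median $a$ of $(u,v,w)$ lies outside $F$, it fixes a single $\Theta$-class: the class of the edge $ab$ where $b$ is the neighbour of $a$ towards $w$. One checks $u,v\in W_{ab}$, takes $(u,w)$- and $(v,w)$-geodesics $P_u,P_v$ inside $F$ (hence missing $a$), and observes that each must contain an edge $\Theta$-equivalent to $ab$, yielding vertices $a_u,a_v\in U_{ab}\cap V(F)$. An $(a_u,a_v)$-geodesic inside $F$ avoids $a$, while $a\in U_{ab}$ as well, so $I_G(a_u,a)\cup I_G(a,a_v)\cup I_G(a_u,a_v)$ contains a cycle lying entirely in $\mathcal{I}_G(U_{ab})$ --- contradicting Theorem~\ref{T:cube-free netlike}(i), which says $G[\mathcal{I}_G(U_{ab})]$ is a tree. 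That is the concrete mechanism your sketch gestures at but never carries out; if you want to salvage your approach, this single-$\Theta$-class argument is the missing idea.
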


\begin{proof}
By Proposition~\ref{P:isom.=faithf.=>cube-free}, we only have to prove the necessity.  Assume that $G$ is a cube-free netlike partial cube.  Let $F$ be an isometric subgraph of $G$.  We show that $F$ is median-stable.  Suppose that there is a triplet $(u,v,w)$ of vertices of $F$ which has a median $a$ in $G$ that does not belong to $V(G)$.  Let $b$ be the neighbor of $a$ in some $(a,w)$-geodesic.

Suppose that $u \notin W_{ab}$.  Let $P$ be a $(u,a)$-geodesic.  Then $P$ has an edge $xy$  which is $\Theta$-equivalent to $ab$.  It follows that $d_G(u,b) = d_G(u,a)-1$, and thus $d_G(u,w) = d_G(u,a)+d_G(a,w)-1$, contrary to the fact that $a \in I_G(u,w)$.  Therefore $u \in W_{ab}$, and likely $v \in W_{ab}$.

Because $F$ is isometric in $G$, there exits in $F$ a $(u,w)$-geodesic $P_u$ and a $(v,w)$-geodesic $P_v$ that do not pass through $a$.  Then there are edges $a_ub_u$ and $a_vb_v$ of $P_u$ and $P_v$, respectively, which are $\Theta$-equivalent to $ab$.  Because $F$ is isometric in $G$, it follows that there exist an $(a_u,a_v)$-geodesic in $G$ that does not pass through $a$.  Hence the subgraph of $G$ induced by $I_G(a_u,a) \cup I_G(a,a_v) \cup I_G(a_v,a_u)$ contains a cycle.  A fortiori the subgraph $G[I_G(U_{ab})]$ contains a cycle, contrary to the fact that this subgraph is a tree, by Theorem~\ref{T:cube-free netlike}, since $G$ is a cube-free netlike partial cube.  It follows that $a \in V(F)$.  Hence any isometric subgraph $F$ of $G$ is median-stable.
\end{proof}

As Proposition~\ref{P:netl./fin.faithf.}, the following characterization has the same formulation as Definition~\ref{D:ph-homog.} of ph-homogeneous partial cubes.

\begin{pro}\label{P:cube-free netlike/isom.subgr.}
A partial cube $G$ is a cube-free netlike partial cube if and only if $ph(F) \leq 1$ for every finite isometric subgraph $F$ of $G$.
\end{pro}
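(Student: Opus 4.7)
The plan is to derive both directions by combining Proposition~\ref{P:netl./fin.faithf.} (a partial cube is netlike iff every finite faithful subgraph has pre-hull number at most $1$) with Theorem~\ref{T:isom.=faithf.<=cube-free} (a netlike partial cube is cube-free iff every isometric subgraph is median-stable, hence faithful), plus the fact recalled after Theorem~\ref{T:bip.gr.ph1=>p.c.} that $ph(Q_3^-) = 2$.

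For necessity, suppose $G$ is a cube-free netlike partial cube and let $F$ be a finite isometric subgraph of $G$. By Theorem~\ref{T:isom.=faithf.<=cube-free}, $F$ is median-stable, hence faithful in $G$. Since $F$ is finite and faithful, Proposition~\ref{P:netl./fin.faithf.} yields $ph(F) \leq 1$.

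For sufficiency, assume $ph(F) \leq 1$ for every finite isometric subgraph $F$ of $G$. Every faithful subgraph is by definition isometric, so the hypothesis applies in particular to all finite faithful subgraphs; Proposition~\ref{P:netl./fin.faithf.} then gives that $G$ is netlike. It remains to check that $G$ is cube-free. Suppose, toward a contradiction, that $G$ contains an isometric copy of $Q_3$. A direct check on the $3$-cube shows that $Q_3^-$ (the $3$-cube minus a vertex) is isometric in $Q_3$: any two vertices of $Q_3^-$ admit a geodesic in $Q_3$ avoiding the deleted vertex, because every pair of vertices of $Q_3$ at distance $2$ or $3$ has at least two internally disjoint geodesics. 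By transitivity of isometric embeddings, $Q_3^-$ is then a finite isometric subgraph of $G$, so the hypothesis forces $ph(Q_3^-) \leq 1$, contradicting $ph(Q_3^-) = 2$. Hence $G$ is cube-free.

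The main step is simply identifying the correct ingredients to invoke; there is no genuine obstacle, since the substantive work (notably the equivalence between cube-freeness and isometric-equals-faithful within the netlike class) has already been established in Theorem~\ref{T:isom.=faithf.<=cube-free}, and the implication ``netlike $\Leftrightarrow$ $ph \leq 1$ on finite faithful subgraphs'' is Proposition~\ref{P:netl./fin.faithf.}. The only routine verification needed is that $Q_3^-$ is isometric in $Q_3$, which is immediate from the multiplicity of geodesics in the $3$-cube.
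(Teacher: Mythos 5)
Your proposal is correct and follows essentially the same route as the paper: necessity via Theorem~\ref{T:isom.=faithf.<=cube-free} plus Proposition~\ref{P:netl./fin.faithf.}, and sufficiency by applying Proposition~\ref{P:netl./fin.faithf.} to faithful subgraphs and then ruling out a $3$-cube because $Q_3^-$ would be a finite isometric subgraph with $ph(Q_3^-)=2$. The only cosmetic difference is that the paper notes a $3$-cube in a partial cube is automatically convex (hence isometric), whereas you verify the isometry of $Q_3^-$ in $Q_3$ directly; both observations are immediate.
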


\begin{proof}
\emph{Sufficiency.}\; Assume that the pre-hull number of every finite isometric subgraph of a $G$ is at most $1$.  Then $G$ is a netlike partial cube by Proposition~\ref{P:netl./fin.faithf.} since a faithful subgraph is isometric.  Suppose that $G$ contains a $3$-cube $Q$.  Then $Q$ is convex in $G$.  Moreover $Q$ minus any vertex is a $Q_3^-$, that is isometric in $Q$, and thus in $G$.  But $ph(Q_3^-) = 2$, contrary to the fact that the pre-hull number of every isometric subgraph of a partial cube $G$ is at most $1$ by assumption.  Therefore $G$ is cube-free.

\emph{Necessity.}\; Assume that $G$ is a cube-free netlike partial cube.  Let $F$ be a finite isometric subgraph of $G$.  By~\ref{T:isom.=faithf.<=cube-free}, $F$ is a faithful subgraph of $G$.  Therefore $ph(F) \leq 1$, by Proposition~\ref{P:netl./fin.faithf.}. 
\end{proof}

\begin{thm}\label{T:cube-free.net./gat.amal.}
The gated amalgam of two cube-free netlike partial cubes is a cube-free netlike partial cube.
\end{thm}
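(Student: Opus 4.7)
The plan is to apply Proposition~\ref{P:hypernet./cube-free netl.}, which reduces the task to showing that the amalgam $G = G_0 \cup G_1$ (with $G_0, G_1$ gated in $G$ and $G_{01} := G_0 \cap G_1$) is (a) a Peano partial cube and (b) a graph whose isometric cycles are all convex. Part (a) is immediate from Theorem~\ref{T:hypernet./gat.amalg.+cart.prod.}, which already provides closure of the class of Peano partial cubes under gated amalgams.

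For part (b), I would first record the standard fact that in a gated amalgam there can be no edge between $V(G_0)\setminus V(G_1)$ and $V(G_1)\setminus V(G_0)$: if $uv$ were such an edge, the gate of $u$ in $G_1$ would be forced to be $v$ and, symmetrically, the gate of $v$ in $G_0$ would be forced to be $u$; comparing distances to any point of $V(G_{01})$ then yields an immediate contradiction. Let $C$ be an isometric cycle of $G$ and partition
\begin{equation*}
V(C) = B_0 \sqcup B_1 \sqcup B_{01},
\end{equation*}
where $B_0 := V(C) \cap (V(G_0)\setminus V(G_1))$, $B_1 := V(C) \cap (V(G_1)\setminus V(G_0))$, and $B_{01} := V(C) \cap V(G_{01})$. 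Since $V(G_{01})$ is an intersection of gated sets, it is convex in $G$; combined with $C$ isometric in $G$, this forces $B_{01}$ to be a convex subset of the cycle $C$, hence either an arc (of length at most $n-1$ in a $2n$-cycle), a single vertex, the empty set, or the whole cycle. The complementary subset $V(C) \setminus B_{01}$ is then connected along $C$, and since no edge of $G$ crosses between $B_0$ and $B_1$, consecutive vertices of this complementary arc must lie in a common $B_i$. Hence $V(C) \setminus B_{01}$ is contained entirely in $B_0$ or entirely in $B_1$, so $V(C) \subseteq V(G_0)$ or $V(C) \subseteq V(G_1)$.

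Without loss of generality suppose $V(C) \subseteq V(G_0)$. Because $G_0$ is convex (hence isometric) in $G$, $C$ is isometric in $G_0$. Since $G_0$ is a cube-free netlike partial cube, Theorem~\ref{T:cube-free netlike} (equivalence (ii)$\Leftrightarrow$(iii)) gives that $C$ is convex in $G_0$, and the convexity of $G_0$ in $G$ then transfers this to convexity of $C$ in $G$. This establishes (b) and finishes the argument via Proposition~\ref{P:hypernet./cube-free netl.}.

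The main obstacle, and essentially the only genuine content, is the cycle-splitting step: showing that an isometric cycle of the amalgam cannot have vertices in both $V(G_0)\setminus V(G_1)$ and $V(G_1)\setminus V(G_0)$. The rest is a routine chain of invocations of the characterization results already assembled in the paper.
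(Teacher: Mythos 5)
Your proof is correct, and its core is the same as the paper's: an isometric cycle of the amalgam must lie entirely in $G_0$ or $G_1$, where it is convex by cube-freeness of that factor, and convexity then transfers to $G$ because the factor is convex; a characterization of cube-free netlike partial cubes as those whose isometric cycles are convex finishes the job. The differences are in the framing. The paper first cites an external result ([Theorem 6.5] of an earlier paper in the series) to get that the gated amalgam of two netlike partial cubes is netlike, and then concludes via Theorem~\ref{T:cube-free netlike}(iii) (netlike with all isometric cycles convex). You instead route through Proposition~\ref{P:hypernet./cube-free netl.} (Peano with all isometric cycles convex), getting the Peano property from Theorem~\ref{T:hypernet./gat.amalg.+cart.prod.}, which is proved in this paper; this makes your argument self-contained relative to the present text. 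You also spell out the cycle-splitting step --- that $V(C)\cap V(G_{01})$ is a convex arc of $C$ and the complementary arc cannot cross between $V(G_0)\setminus V(G_1)$ and $V(G_1)\setminus V(G_0)$ --- which the paper dismisses with ``clearly \ldots because these graphs are convex in $G$.'' Both that expansion and the choice of characterization are sound; nothing is missing.
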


\begin{proof}
Let $G_0$ and $G_1$ be two cube-free netlike partial cubes.  By \cite[Theorem 6.5]{P07-3}, the gated amalgam of two netlike partial cubes is a netlike partial cube.  Hence  $G$ is a netlike partial cube.  Let $C$ be an isometric cycle of $G$.  Clearly $C$ is an isometric cycle of $G_0$ or $G_1$, say $G_0$, because these graphs are convex in $G$.  Hence, by Theorem~\ref{T:cube-free netlike}, $C$ is convex in $G_0$ since $G_0$ is a cube-free netlike partial cube.  It follows that $C$ is convex in $G$ since $G_0$ is gated in $G$.  Therefore $G$ is cube-free by Theorem~\ref{T:cube-free netlike}.
\end{proof}

As an extension of Proposition~\ref{P:netlike/fin.conv.subgr.} we clearly have:

\begin{pro}\label{P:cube-free netlike/fin.conv.subgr.}
A partial cube is a cube-free netlike partial cube if and only if so are all its finite convex subgraphs.
\end{pro}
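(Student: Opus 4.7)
The proposition is the natural extension of Proposition~\ref{P:netlike/fin.conv.subgr.} once the cube-free condition is taken into account. I would mimic the structure of that earlier proof, relying on two observations: ``cube-free'' is a hereditary property (preserved under passing to subgraphs), while ``netlike'' behaves well under taking finite convex subgraphs by Proposition~\ref{P:netlike/fin.conv.subgr.}. So each direction reduces to an immediate consequence of the netlike case plus a routine check that the cube-free condition is transferred.

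For the necessity, let $G$ be a cube-free netlike partial cube and $F$ a finite convex subgraph. Proposition~\ref{P:netlike/fin.conv.subgr.} gives that $F$ is netlike, and $F$ is cube-free simply because it is a subgraph of the cube-free graph $G$. No further argument is required.

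For the sufficiency, assume every finite convex subgraph of $G$ is a cube-free netlike partial cube. Proposition~\ref{P:netlike/fin.conv.subgr.} immediately delivers that $G$ itself is netlike, so it only remains to show that $G$ is cube-free. Suppose, for a contradiction, that $G$ contains a subgraph isomorphic to $Q_{3}$. Parallel edges of the $4$-cycles of this $Q_{3}$ are $\Theta$-equivalent in $G$, so its three pairs of parallel edges fall into exactly three distinct $\Theta$-classes of $G$; consequently the distance in $G$ between any two vertices of $Q_{3}$ equals their Hamming distance in $Q_{3}$, and any $G$-geodesic between them crosses precisely those three $\Theta$-classes. Since a vertex of a partial cube is determined by the side of each $\Theta$-class on which it lies, every vertex on such a geodesic must still belong to $Q_{3}$. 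Hence this $Q_{3}$ is a finite convex subgraph of $G$ that fails to be cube-free, contradicting the hypothesis.

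The only step that goes beyond a purely formal transfer from the netlike case is the verification that $Q_{3}$ subgraphs of a partial cube are automatically convex, and this is entirely standard; I therefore do not expect any real obstacle.
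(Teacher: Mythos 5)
Your proposal is correct and matches the paper's intent: the paper states this proposition without proof ("we clearly have"), as an immediate extension of Proposition~\ref{P:netlike/fin.conv.subgr.}, and your argument fills in exactly the expected reasoning — reduce the netlike part to that proposition and handle the cube-free part via the standard fact (used elsewhere in the paper, e.g.\ in the proof of Proposition~\ref{P:cube-free netlike/isom.subgr.}) that any $3$-cube of a partial cube is convex. No gaps.
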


\subsection{Remarks}\label{SS:rem.}

\begin{rem}\label{R:gated/convex}
In conditions (iv), (v) and (vi) of the Characterization Theorem, the term ‘‘gated’’ is essential and cannot be replaced by ‘‘convex’’, as is shown by the graph $G$ defined in Remark~\ref{R:fin.subgr.fin.conv.subgr.ph1}.  Recall that this graph has the following properties:

\textbullet\; $G$ is a partial cube.

\textbullet\; $ph(G) \leq 1$.

\textbullet\; $G_n$ is a convex subgraph of $G$ such that $ph(G_n) > 1$ if $n \geq 1$, and thus $G$ is not ph-homogeneous.

\textbullet\; For each edge $ab$ of $G$, any vertex $u \in \mathcal{I}_G(U_{ab})-U_{ab}$ (resp. $u \in \mathcal{I}_G(U_{ba})-U_{ba}$) lies on a $6$-cycle which belongs to $\mathbf{C}(G,ab)$.

\textbullet\; The isometric cycles of $G$ are its $6$-cycles, and these cycles are convex but none of them is gated.

This graph $G$ is not ph-homogeneous.  However, we can note that the above last two properties are analogous to assertion (vi) of the Characterization Theorem with the substitution of convex for gated.

\end{rem}

\begin{rem}\label{R:ph/isom.cycle}
If a partial cube $G$ has a pre-hull number less than or equal to $1$, then, for any edge $ab$ of $G$ and any vertex $u \in \mathcal{I}_G(U_{ab})-U_{ab}$, there is not necessarily an isometric cycle in $\mathbf{C}(G,ab)$ which passes through $u$.  See for example the graph in Figure~\ref{F:Rem. 4.38}.  Note that this graph contains an isometric cycle whose convex hull is not a quasi-hypertorus.
\end{rem}

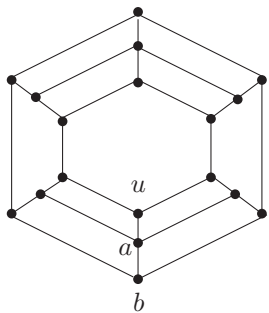
\begin{figure}[!h]
    \centering
{\tt    \setlength{\unitlength}{0.92pt}
\begin{picture}(128,148)
\thinlines    \put(61,62){$u$}
              \put(62,13){$b$}
              \put(56,36){$a$}
              \put(64,41){\circle*{4}}
              \put(104,61){\circle*{4}}
              \put(24,61){\circle*{4}}
              \put(22,101){\circle*{4}}
              \put(64,122){\circle*{4}}
              \put(105,100){\circle*{4}}
              \put(64,41){\line(2,1){41}}
              \put(63,42){\line(-2,1){39}}
              \put(63,122){\line(2,-1){43}}
              \put(21,101){\line(2,1){42}}
              \put(12,108){\circle*{4}}
              \put(64,136){\circle*{4}}
              \put(115,108){\circle*{4}}
              \put(33,91){\circle*{4}}
              \put(64,107){\circle*{4}}
              \put(94,92){\circle*{4}}
              \put(12,53){\circle*{4}}
              \put(33,68){\circle*{4}}
              \put(64,53){\circle*{4}}
              \put(94,68){\circle*{4}}
              \put(115,53){\circle*{4}}
              \put(64,26){\circle*{4}}
              \put(12,109){\line(2,1){53}}
              \put(64,26){\line(2,1){53}}
              \put(116,109){\line(-2,1){51}}
              \put(62,27){\line(-2,1){51}}
              \put(12,108){\line(0,-1){55}}
              \put(115,108){\line(0,-1){55}}
              \put(33,91){\line(0,-1){23}}
              \put(94,93){\line(0,-1){24}}
              \put(64,108){\line(0,1){25}}
              \put(64,53){\line(0,-1){26}}
              \put(64,108){\line(-2,-1){31}}
              \put(64,108){\line(2,-1){30}}
              \put(64,53){\line(-2,1){31}}
              \put(64,53){\line(2,1){30}}
              \put(12,108){\line(4,-3){21}}
              \put(115,108){\line(-4,-3){21}}
              \put(115,53){\line(-4,3){21}}
              \put(12,53){\line(3,2){22}}
\end{picture}}
\caption{Illustration of Remarque~\ref{R:ph/isom.cycle}.}
\label{F:Rem. 4.38}
\end{figure}

\begin{rem}\label{R:ph>1}
If the pre-hull number of a partial cube $G$ is greater than $1$, then the convex hull of any isometric cycle of $G$ may still be a gated quasi-hypertorus, as is shown by the following example.  Let $G$ be the expansion of $Q_3^-$ consisting of two $6$-cycles having one edge $uv$ in common, and of a vertex $w$ adjacent to both neighbors of $v$ distinct from $u$ in the union of these two cycles.  Then $ph(G) = 2$, but each isometric cycle of $G$ is gated.
\end{rem}

These three remarks give rise to the following question:

\begin{que}\label{Q:charact.ph-homog.}
Let $G$ be a partial cube such that $ph(G) \leq1$ and such that the convex hull of each of its isometric cycles is a gated quasi-hypertorus.  Is $G$ a Peano partial cube?  Otherwise what kind of graph is $G$?
\end{que}

\subsection{Infinite quasi-hypertori}\label{SS:inf.hypert.}

We can extend the finite concept of hypertorus in order to define infinite hypertori and more generally infinite quasi-hypertori by considering Cartesian products of infinite families of even cycles instead of finite ones.  Infinite quasi-hypertori are Peano partial cubes since the class of these graphs is closed under infinite Cartesian products.  Note that, \emph{throughout this paper, by a hypertorus and a quasi-hypertorus, we always mean a finite hypertorus and a finite quasi-hypertorus, respectively}.  In this subsection we will only generalize Theorem~\ref{T:faithf.hypertor.=>gated}.

\begin{thm}\label{T:faithf.inf.hypertor.=>gated}
Let $G$ be a Peano partial cube.  Then any faithful finite or infinite quasi-hypertorus of $G$ is gated.
\end{thm}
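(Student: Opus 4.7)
The finite case is already Theorem~\ref{T:faithf.hypertor.=>gated}, so I only need to handle the infinite case. Let $F = \cartbig^{a}_{i \in I} F_i$ be an infinite faithful quasi-hypertorus in $G$, written as a weak Cartesian product at a fixed base vertex $a$, where each $F_i$ is either $K_2$ or an even cycle. The strategy is to approximate $F$ by an increasing family of finite faithful sub-quasi-hypertori — to which the already-proven finite case applies — and construct the gate of an arbitrary vertex of $G$ as a ``stable value'' across this family.

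For each finite $J \subseteq I$, define
$$F_J := \{z \in V(F) : z_i = a_i \text{ for all } i \in I-J\},$$
so that $G[F_J]$ is isomorphic to $\cartbig_{j \in J} F_j$ and is a finite quasi-hypertorus. By the Convex Subgraph Property of Cartesian product (Proposition~\ref{P:pro.Cartes.prod.}(v)), $G[F_J]$ is convex in $F$, hence faithful in $F$; faithfulness being transitive (see the paragraph before Lemma~\ref{L:gen.propert.}), $G[F_J]$ is faithful in $G$. Theorem~\ref{T:faithf.hypertor.=>gated} then yields that $G[F_J]$ is gated in $G$.

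Fix $x \in V(G)$. Because $F$ is isometric in $G$ and intervals in a partial cube are finite (Lemma~\ref{L:gen.propert.}(ii)), the set $I_G(x,a) \cap V(F)$ is finite, so
$$J^{\ast} := \{i \in I : y_i \neq a_i \text{ for some } y \in I_G(x,a) \cap V(F)\}$$
is a finite subset of $I$. Let $g$ denote the gate of $x$ in $G[F_{J^{\ast}}]$. I claim $g$ is the gate of $x$ in $F$. Let $z \in V(F)$ be arbitrary. The set $J' := J^{\ast} \cup \{i \in I : z_i \neq a_i\}$ is finite, and $G[F_{J'}]$ is gated in $G$ by the previous paragraph, with $\{a, z\} \subseteq F_{J'}$ and $F_{J^{\ast}} \subseteq F_{J'}$. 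If $g'$ is the gate of $x$ in $G[F_{J'}]$, then $g' \in I_G(x,a) \cap V(F)$, so by the choice of $J^{\ast}$ we have $g' \in F_{J^{\ast}}$; moreover $g'$ lies on a geodesic from $x$ to every vertex of $F_{J^{\ast}}$, which forces $g' = g$ by uniqueness of gates. Since $z \in F_{J'}$, we conclude $g \in I_G(x,z)$.

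The argument is essentially routine once the approximation is set up; the only delicate point is to ensure that the gate of $x$ in the larger $G[F_{J'}]$ actually lands in the smaller $F_{J^{\ast}}$, which is exactly why $J^{\ast}$ is defined in terms of the finite interval $I_G(x,a)$ rather than some arbitrary finite subset of $I$. That choice forces every candidate gate to live in the finite subproduct fixed at the outset, making the stability of $g$ automatic.
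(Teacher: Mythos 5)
Your proof is correct, but it globalizes the finite case by a different mechanism than the paper. The paper first argues that $F$ is convex in $G$ (by re-reading the proof of Theorem~\ref{T:faithf.hypertor.=>gated}, which nowhere uses finiteness), then shows $F$ is $\Gamma$-closed by trapping any convex cycle $C$ inside a finite subproduct determined by the finitely many coordinates on which $C \cap F$ is nonconstant, and finally invokes the criterion of Theorem~\ref{T:hypernet.=>G-closed+conv.=gated} that a convex subgraph of a Peano partial cube is gated iff it is $\Gamma$-closed. You instead build the gate of an arbitrary vertex $x$ by hand: the finite layers $G[F_J]$ are faithful in $G$ (convex in $F$, hence faithful in $F$, and faithfulness composes), so gated by the finite theorem, and anchoring $J^{\ast}$ to the finite set $I_G(x,a)\cap V(F)$ forces the gate in every larger layer $G[F_{J'}]$ back into $F_{J^{\ast}}$, whence it stabilizes at $g$. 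Both routes reduce to the finite theorem; what yours buys is independence from the $\Gamma$-closedness characterization (and from the convexity of $F$, which you get afterwards for free since gated sets are convex), at the cost of a slightly more delicate bookkeeping step — correctly handled — namely that the gate $g'$ of $x$ in $G[F_{J'}]$ lies in $I_G(x,a)\cap V(F)$ and therefore in $F_{J^{\ast}}$, so that uniqueness of gates yields $g'=g$. The paper's route is shorter given the machinery already in place, while yours is more self-contained and would transfer to any setting where the finite subproducts are known to be gated and intervals are finite.
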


\begin{proof}
By Theorem~\ref{T:faithf.hypertor.=>gated}, we only have to consider the infinite case.  Let $F$ be an infinite faithful quasi-hypertorus of $G$.  Then $F$ is convex in $G$ by the proof of Theorem~\ref{T:faithf.hypertor.=>gated} where we did not use the fact that $F$ is finite.

Let $C$ be a convex cycle of $G$ which has at least three vertices in $F = \cartbig_{i \in I}^{a}F_i$, where $\cartbig_{i \in I}^{a}F_i$ is the component of $\cartbig_{i \in I}F_i$ containing some vertex $a$ by the definition of the weak Cartesian product.  Because $C$ is finite, the set $J := \{j \in I: \vert pr_j(C \cap F)\vert > 1\}$  is finite.  Then $C \cap F = C \cap \cartbig_{i \in I}^{a}F'_i$, where $F'_i = F_i$ if $i \in J$ and $F'_i = pr_i(C \cap F)$ otherwise.  Moreover $\cartbig_{i \in I}^{a}F'_i$ is isomorphic to the finite quasi-hypertorus $\cartbig_{j \in J}F_j$.  Hence $\cartbig_{j \in J}^{a}F'_i$ is gated by Theorem~\ref{T:faithf.hypertor.=>gated}, and thus $\Gamma$-closed.  It follows that $C$ is a cycle of $\cartbig_{i \in I}^{a}F'_i$, and thus of $F$.

Therefore $F$ is $\Gamma$-closed, and thus gated by Theorem~\ref{T:hypernet.=>G-closed+conv.=gated}.
\end{proof}

Infinite quasi-hypertori are obviously regular infinite Peano partial cubes.  However, there are not the only ones.  Moreover there are regular infinite Peano partial cubes which are not even Cartesian products, such as
 double rays (i.e., two-way infinite paths) and the hexagonal grid.

\subsection{Compact Peano partial cubes}\label{SS:compact}

We complete this section with a result which will be useful in several subsequent sections.  We recall that a partial cube is said to
be geodesically consistent if the geodesic topology on $V(G)$
coincides with the weak geodesic topology
(Subsection~\ref{SS:geod.consist.p.c.}).  Geodesically consistent
partial cubes are interesting in view of the property that they are
compact if and only if they contains no isometric rays.  By
\cite[Proposition 4.15]{P09-1}, any netlike partial cube, and thus any
median graph, is geodesically consistent.  The following result shows that this property also holds for
all Peano partial cubes.

\begin{thm}\label{T:loc.H-periph./geod.consist.}
Any Peano partial cube is geodesically consistent.
\end{thm}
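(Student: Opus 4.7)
The plan is to invoke Proposition~\ref{P:charact.geod.consistent.p.c.}, which reduces the problem to the following: for every edge $ab$ of a Peano partial cube $G$, any vertex of $co_G(U_{ab})$ that geodesically dominates $U_{ab}$ must already lie in $U_{ab}$. Since $ph(G) \leq 1$, we have $co_G(U_{ab}) = \mathcal{I}_G(U_{ab})$ (Proposition~\ref{P:hypernet./ph1} and Lemma~\ref{L:ph-stable}), so the task is to show that no vertex $x \in \mathcal{I}_G(U_{ab}) - U_{ab}$ can geodesically dominate $U_{ab}$.

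Fix such an $x$ and use the Characterization Theorem: by assertion (v), $x$ lies on a gated cycle $C_x \in \mathbf{C}(G,ab)$. Let $P_x := C_x - W_{ba}$ be the $U_{ab}$-geodesic associated with $x$, and let $v,w$ denote its endvertices, which lie in $U_{ab}$. Since $x$ is an inner vertex of $P_x$, we have $v, w \in V(G)-\{x\}$, so the finite set $S := \{v,w\}$ is a legitimate test set against the definition of geodesic domination.

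The crucial step is then to show that for every $a \in U_{ab}-\{x\} = U_{ab}$ we have $S \cap I_G(x,a) \neq \emptyset$. The cases $a = v$ and $a = w$ are trivial (then $v \in I_G(x,v)$ or $w \in I_G(x,w)$, respectively), and for $a \in U_{ab} \setminus \{v,w\}$ this is exactly the content of Lemma~\ref{L:I(u,x)/v,w}, which guarantees that $v$ or $w$ belongs to $I_G(x,a)$. Thus no $a \in U_{ab}-\{x\}$ satisfies $S \cap I_G(x,a) = \emptyset$, contradicting the assumption that $x$ geodesically dominates $U_{ab}$.

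The argument is short because all of the hard geometric work has been absorbed into the Characterization Theorem; the only potentially delicate point is confirming that Lemma~\ref{L:I(u,x)/v,w} applies uniformly to \emph{every} element of $U_{ab}$, including $v$ and $w$ themselves, but this is handled by the two trivial cases noted above. No further case analysis should be required, and the conclusion follows from Proposition~\ref{P:charact.geod.consistent.p.c.}.
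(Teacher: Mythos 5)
Your proposal is correct and follows essentially the same route as the paper: reduce via Proposition~\ref{P:charact.geod.consistent.p.c.}, take the $U_{ab}$-geodesic $P_u$ associated with a vertex $u \in co_G(U_{ab})-U_{ab}$ (guaranteed by the strong ph-stability of $U_{ab}$ from the Characterization Theorem), and use Lemma~\ref{L:I(u,x)/v,w} to see that the finite set consisting of the two endvertices of $P_u$ meets $I_G(u,a)$ for every $a \in U_{ab}$, so $u$ cannot geodesically dominate $U_{ab}$. The only cosmetic difference is that you route the existence of $P_u$ through assertion (v) of the Characterization Theorem and separate out the trivial cases $a=v,w$, which the paper leaves implicit.
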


\begin{proof}
Let $ab$ be an edge
of some Peano partial cube $G$, and $u \in
co_{G}(U_{ab})-U_{ab}$.  Let $P_u$ be the $U_{ab}$-geodesic associated with $u$, and let $v$ and $w$ be its endvertices.  Then, by Lemma~\ref{L:I(u,x)/v,w}, for any vertex $x \in U_{ab}$ $v$ or $w$ belongs to $I_G(u,x)$.  Therefore $u$ cannot geodesically dominates $U_{ab}$.  It
follows that any vertex in $co_{G}(U_{ab})$ which geodesically
dominates $U_{ab}$ must belong to $U_{ab}$.  Consequently $G$ is
geodesically consistent by
Proposition~\ref{P:charact.geod.consistent.p.c.}.
\end{proof}

Consequently we have:

\begin{cor}\label{C:comp.hyp.=no.isom.rays}
A Peano partial cube is compact if and only if it contains no isometric rays.
\end{cor}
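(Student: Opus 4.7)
The proof plan is essentially a one-line combination of two results that have already been established. By Theorem~\ref{T:loc.H-periph./geod.consist.}, any Peano partial cube $G$ is geodesically consistent, meaning that its weak geodesic topology (which, per the convention adopted in Subsection~\ref{SS:wg.top./part.cube}, is the topology used to define compactness of a partial cube) coincides with its geodesic topology in the sense of Definition~\ref{D:geod.consistent}. Therefore, calling $G$ a compact Peano partial cube is the same as saying that $V(G)$ is compact in the geodesic topology.

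Now I would invoke Proposition~\ref{P:geod.dom/isom.ray}, which states the equivalence of compactness of the geodesic space $V(G)$ with the absence of isometric rays in $G$ (for arbitrary graphs). Combining these two facts yields the desired equivalence immediately.

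There is essentially no obstacle here: both inputs are already available in the paper. The only small subtlety worth noting in the proof is to make explicit which topology is meant by ``compact'' — the default in the paper is the weak geodesic topology, and the bridge to the geodesic topology is exactly what Theorem~\ref{T:loc.H-periph./geod.consist.} provides. A one-sentence proof along the lines of ``By Theorem~\ref{T:loc.H-periph./geod.consist.} and Proposition~\ref{P:geod.dom/isom.ray}'' suffices, and one could alternatively route the argument through Proposition~\ref{P:compactness} to obtain the conclusion without passing explicitly through the geodesic topology, since geodesic consistency identifies limit points with geodesically dominating vertices.
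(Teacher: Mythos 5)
Your proof is correct and is essentially the paper's own derivation: the corollary is stated immediately after Theorem~\ref{T:loc.H-periph./geod.consist.} precisely because geodesic consistency identifies the weak geodesic topology with the geodesic topology, whence Proposition~\ref{P:geod.dom/isom.ray} gives the equivalence with the absence of isometric rays. Nothing is missing.
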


\section{Decomposability and hyper-median partial\\
 cubes}\label{S:decompos.}

In this section, by generalizing the relation between median graphs and Peano partial cubes, we introduce hyper-median partial cubes, and we deal with the process of decomposition of hyper-median partial cubes into simpler partial cubes.  As we will see, some special triples of convex cycles, that we call tricycles, are the cornerstones of these concepts.

\subsection{Tricycles and gated amalgams}\label{SS:tric.gat.amalg.}

We define a concept of tricycle that is slightly different from the
one introduced in~\cite{P07-3}.

\begin{defn}\label{D:tricycle}
Let $G$ be a Peano partial cube.  
A triple of convex cycles of $G$ of length greater than $4$ (resp. such that at most two of them are $4$-cycles), which intersect in one
vertex and have pairwise exactly one edge in common, is called a  \emph{tricycle} (resp. \emph{quasi-tricycle}) of $G$.
\end{defn}

See Figure~\ref{F:tricycle} for an example of a quasi-tricycle.  Note that a 
quasi-hypertorus $H$ contains no tricycle because any two convex 
cycles of length greater than $4$ in $H$ may have at most one vertex 
in common.

\begin{lem}\label{L:quasi-tricycle}
Let $G$ be a Peano partial cube, $T = (C_{0},C_{1},C_{2})$ a quasi-tricycle of $G$ such that $C_{2}$ is the only cycle in $T$ of length greater than $4$, and $uv_{i}$ the common edge of $C_{0}$ with $C_{i}$ for $i = 1, 2$.  Then $T$ is a quasi-tricycle of some maximal convex hypercylinder of $G$.  Moreover $T$ is a quasi-tricycle of the prism $C_{2} \Box \langle u,v_{1} \rangle$.
\end{lem}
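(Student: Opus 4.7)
The plan is to apply the Characterization Theorem~\ref{T:charact.} (specifically assertion (iv) on bulges and gated hypercylinders) to the edge $e = uv_2 \in E(C_0) \cap E(C_2)$. First I would set up notation: write $C_2 = \langle u_0,u_1,\dots,u_{2n-1},u_0\rangle$ with $u_0 = u$, $u_1 = v_2$, $u_{2n-1} = v'$, where $v'$ is the common neighbor of $u$ in $C_1 \cap C_2$, and label $C_0 = \langle u, v_2, w_0, v_1, u\rangle$, $C_1 = \langle u, v', w_1, v_1, u\rangle$. The $\Theta$-relation $v_1 w_0 \,\Theta\, uv_2$ inside $C_0$ together with the antipodal $\Theta$-relation $u_n u_{n+1} \,\Theta\, uv_2$ inside the convex cycle $C_2$ force $u, v_1, u_{n+1} \in U_{uv_2}$. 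Convexity of $C_2$ in $G$ gives the geodesic $u, v', u_{2n-2}, \dots, u_{n+1}$ of length $n-1$, placing the vertices $v', u_{2n-2}, \dots, u_{n+2}$ in $I_G(u, u_{n+1}) \subseteq \mathcal{I}_G(U_{uv_2}) = co_G(U_{uv_2})$, where the final identification uses ph-stability of $U_{uv_2}$ from Proposition~\ref{P:hypernet./ph1}.

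Next I would perform routine distance checks inside $C_0 \cup C_1 \cup C_2$: $d_G(v_2, w_1) = d_G(v_2, u_{2n-2}) = 3$ while $d_G(u, w_1) = d_G(u, u_{2n-2}) = 2$, so $w_1, u_{2n-2} \in W_{uv_2}$ and every tricycle-neighbor of $v'$ lies in $W_{uv_2}$; moreover $w_1 \in I_G(v_1, u_{n+1})$ via the geodesic $v_1, w_1, v', u_{2n-2}, \dots, u_{n+1}$ of length $n$, so $w_1 \in co_G(U_{uv_2})$. These facts put $v', w_1, u_{2n-2}, \dots, u_{n+2}$ into $co_G(U_{uv_2}) - U_{uv_2}$, and the adjacencies $w_1 \sim v' \sim u_{2n-2} \sim \cdots \sim u_{n+2}$ inside $G[co_G(U_{uv_2}) - U_{uv_2}]$ exhibit them as belonging to a single bulge $X$ of $co_G(U_{uv_2})$. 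The Characterization Theorem then yields a gated $H = \mathbf{Cyl}[X]$ with $X = H - W_{v_2 u}$, and Lemma~\ref{L:bulge/ab-cycle} combined with the uniqueness of the $uv_2$-cycle associated with $v'$ (Lemma~\ref{L:ab-cycle}) forcing it to equal $C_2$, gives $H = C_2 \Box A$, where $A$ is the component of $X[U_{uv_2}]$ containing the attaching point $u$.

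From there, $A$ contains the edge $uv_1$: $v_1 \in U_{uv_2}$ (via $v_1 w_0 \,\Theta\, uv_2$) and $v_1$ is adjacent to the bulge-interior vertex $w_1$, so $v_1 \in X \cap U_{uv_2}$, and the edge $uv_1$ together with $u \in A$ places $v_1$ in the same component $A$. Consequently $C_2 \Box \langle u, v_1\rangle$ sits in $H$ as a convex sub-prism of $G$, and $T$ is a quasi-tricycle of this prism: $C_2$ is the $C_2$-layer through $u$, while $C_0 = \langle u, v_2\rangle \Box \langle u, v_1\rangle$ and $C_1 = \langle u, v'\rangle \Box \langle u, v_1\rangle$ are the two $4$-cycle sub-prisms meeting pairwise at the edges $uv_2, uv_1, uv'$ through the common vertex $u$. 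The maximality assertion then follows by Zorn's lemma applied to the family of convex hypercylinders of $G$ containing $H$: any chain $\{C_2 \Box A_i\}$ has union $C_2 \Box \bigcup_i A_i$, which is convex in $G$ as a chain-union of convex subgraphs and a hypercylinder because $\bigcup_i A_i$ is itself a convex Peano partial cube by Theorem~\ref{T:hypernet./gat.amalg.+cart.prod.}.

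The hard part will be verifying that the bulge $X$ is genuinely non-empty without any extra hypothesis on $G$ outside $T$: an additional neighbor of $v'$ in $W_{v_2 u}$ would in principle push $v'$ itself into $U_{uv_2}$, but then the strictly interior vertex $u_{2n-2}$ of $C_2$ still remains in $co_G(U_{uv_2}) - U_{uv_2}$ for $n \geq 4$ by the same distance argument applied to its two $C_2$-neighbors, and for $n = 3$ (so that $|C_2|=6$) one falls back to $w_1$, whose only tricycle-adjacencies $v_1, v'$ are both in $W_{uv_2}$, which automatically makes $w_1$ a bulge-interior vertex.
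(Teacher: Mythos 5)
Your proposal is correct and follows essentially the same route as the paper: the paper's own proof is just the two-line observation that the bulge $X$ of $co_G(U_{uv_2})$ containing the far side of $C_2$ yields $\mathbf{Cyl}[X] = C_2 \Box A$ with $v_1 \in A$, so that $T$ sits inside the prism $C_2 \Box \langle u,v_1\rangle$; you simply supply the verifications the paper labels ``clearly''. The only remark worth making is that the worry in your last paragraph is vacuous --- a neighbour of $v'$ in $W_{v_2u}$ would be a second common neighbour of $v'$ and $v_2$, contradicting the convexity of $C_2$ (and the absence of $K_{2,3}$), and the same kind of argument gives $w_1\notin U_{uv_2}$ (or one can bypass it via $v_1\in I_G(u,w_1)\subseteq V(H)$ by convexity of $H$), so the bulge and its interior vertices are there unconditionally.
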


\begin{proof}
Let $X$ be the bulge of $co_{G}(U_{uv_{2}})$ in the subgraph $G[co_{G}(U_{uv_{2}}) \cup W_{v_{2}u}]$.  Then $T$ is clearly a quasi-tricycle of $H := \mathbf{Cyl}[X]$.  Moreover $H$ contains the prism $C_{2} \Box \langle u,v_{1} \rangle$, and $T$ is clearly a quasi-tricycle of this prism.
\end{proof}

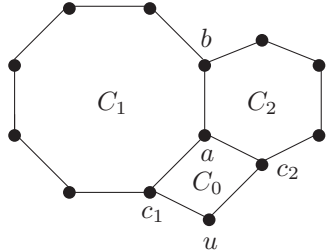
\begin{figure}[!h]
    \centering
    {\tt    \setlength{\unitlength}{0.80pt}
\begin{picture}(194,144)
\thinlines    \put(103,16){$u$}
              \put(138,51){$c_{2}$}
              \put(74,31){$c_{1}$}
              \put(102,112){$b$}
              \put(102,58){$a$}
              \put(98,44){$C_{0}$}
              \put(124,82){$C_{2}$}
              \put(53,82){$C_{1}$}
              \put(40,130){\circle*{6}}
              \put(78,130){\circle*{6}}
              \put(104,103){\circle*{6}}
              \put(131,115){\circle*{6}}
              \put(158,103){\circle*{6}}
              \put(158,70){\circle*{6}}
              \put(131,56){\circle*{6}}
              \put(106,30){\circle*{6}}
              \put(78,43){\circle*{6}}
              \put(104,70){\circle*{6}}
              \put(40,43){\circle*{6}}
              \put(14,70){\circle*{6}}
              \put(14,103){\circle*{6}}
              \put(79,44){\line(2,-1){28}}
              \put(106,31){\line(1,1){25}}
              \put(159,103){\line(0,-1){32}}
              \put(158,71){\line(-2,-1){26}}
              \put(104,71){\line(2,-1){28}}
              \put(132,115){\line(5,-2){26}}
              \put(104,104){\line(5,2){28}}
              \put(4670,-2590){\oval(0,5545)}
              \put(4670,-2591){\oval(0,5544)}
              \put(40,43){\line(1,0){38}}
              \put(40,131){\line(1,0){38}}
              \put(78,131){\line(1,-1){25}}
              \put(40,132){\line(-1,-1){26}}
              \put(104,103){\line(0,-1){32}}
              \put(14,103){\line(0,-1){32}}
              \put(14,71){\line(0,1){9}}
              \put(14,70){\line(0,1){1}}
              \put(78,44){\line(1,1){25}}
              \put(14,71){\line(1,-1){26}}
\end{picture}}
\caption{A quasi-tricycle.}
\label{F:tricycle}
\end{figure}

\begin{lem}\label{L:tricycle/4-cycle}
A Peano partial cube
contains no quasi-tricycle with exactly one $4$-cycle.
\end{lem}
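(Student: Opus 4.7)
The plan is to derive a contradiction from the existence of such a quasi-tricycle $T = (C_0, C_1, C_2)$ in a Peano partial cube $G$. Write $C_0 = \langle u, v_1, a, v_2, u\rangle$ for the $4$-cycle, with $uv_1 \in E(C_0) \cap E(C_1)$, $uv_2 \in E(C_0) \cap E(C_2)$, and $uv_3 \in E(C_1) \cap E(C_2)$, where $|C_1|, |C_2| \geq 6$. The strategy is to focus on the shared edge $uv_3$ of the two long cycles and show that the Peano structure forces $C_1 = C_2$.

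First I would establish that $v_1, v_2 \in co_{G}(U_{uv_3}) - U_{uv_3}$. Each of $v_1, v_2$ lies on a geodesic inside its long convex cycle between two vertices of $U_{uv_3}$ (namely $u$ and the antipode of $u$ in that cycle's $W_{uv_3}$-half), so both belong to $\mathcal{I}_{G}(U_{uv_3}) \subseteq co_{G}(U_{uv_3})$. Moreover $v_1 \notin U_{uv_3}$: otherwise $v_1$ would have a neighbor $w \in W_{v_3u}$ with $v_1 w \, \Theta \, uv_3$, and since convexity of $C_1$ rules out chords we would have $w \notin V(C_1)$, so $v_1 w \in \partial_G(C_1)$ would be $\Theta$-equivalent to the edge $uv_3 \in E(C_1)$, violating Lemma~\ref{L:gen.propert.}(ix). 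Symmetrically $v_2 \notin U_{uv_3}$.

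Let $X$ be the bulge of $co_{G}(U_{uv_3})$ containing $v_2$, and set $H := \mathbf{Cyl}[X]$. By Theorem~\ref{T:charact.}(iv) together with Lemma~\ref{L:bulge/ab-cycle}, $H$ is a gated convex hypercylinder of the form $C \Box A_X$ with $C \in \mathbf{C}(G, uv_3)$, and $H \cap W_{uv_3} = X$. Both $u$ and $a$ lie in $V(H)$: $u \in U_{uv_3}$ is a neighbor of $v_2 \in X$, and $a$ — which sits in $co_{G}(U_{uv_3})$ because $a \in I_G(v_1, v_2)$ — is also a neighbor of $v_2$, hence $a \in X$. Gatedness of $H$ now pins $v_1$ inside $V(H)$: the gate $g$ of $v_1$ in $H$ must lie both in $I_G(v_1, u) = \{v_1, u\}$ and in $I_G(v_1, a) = \{v_1, a\}$, and since $u \neq a$ the only option is $g = v_1$. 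Combined with $v_1 \in W_{uv_3}$ this gives $v_1 \in H \cap W_{uv_3} = X$; since $v_1 \notin U_{uv_3}$, $v_1$ lies in the same component of $G[co_{G}(U_{uv_3}) - U_{uv_3}]$ as $v_2$, i.e., in the same bulge.

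Finally, I derive the contradiction from the Cartesian structure $H = C \Box A_X$. The specific edge $uv_3 \in E(G)$ lies in the unique $C$-layer at $u$'s $A_X$-coordinate, so any convex cycle of length greater than $4$ in $H$ that contains $uv_3$ must coincide with that single layer. Because $v_1$ and $v_2$ lie in the same bulge $X$, both $C_1 = C_{v_1}$ and $C_2 = C_{v_2}$ are convex cycles of length greater than $4$ in $H$ through $uv_3$, forcing $C_1 = C_2$ and contradicting $C_1 \neq C_2$. The main obstacle is the first step: without $v_1, v_2 \notin U_{uv_3}$ the pivots $u, a$ could not be wielded against the gate of $v_1$, and $v_1$ might escape the bulge of $v_2$; once this is in hand, everything downstream is a clean application of the hypercylinder decomposition afforded by the Peano property.
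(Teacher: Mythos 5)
Your proof is correct and rests on the same essential machinery as the paper's: it pivots on the common edge $uv_3$ of the two long cycles and exploits the gated hypercylinder $\mathbf{Cyl}[X] = C \Box A_X$ supplied by the Characterization Theorem, inside which a convex cycle of length greater than $4$ through a fixed edge of a $C$-layer must be that layer itself, forcing $C_1 = C_2$. The only organizational difference is that where the paper splits into cases according to whether the antipode of the common vertex in the $4$-cycle lies in $U_{ab}$, you use the gate of $v_1$ in $H$ (pinched between the adjacent vertices $u$ and $a$ of $H$) to place both long cycles in the same hypercylinder directly — a slightly cleaner route to the same contradiction.
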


\begin{proof}
Let $(C_{0},C_{1},C_{2})$ be a quasi-tricycle of a Peano partial cube $G$ such that $C_{0}$ is the only
$4$-cycle.  Let $ab$ be the common edge of $C_{1}$ and $C_{2}$ and,
for $i = 1, 2$, let $ac_{i}$ be the common edge of $C_{i}$ and $C_{0}$
(see Figure~\ref{F:tricycle}).  Let $u$ be the antipode of $a$ in $C_{0}$.

If $u \notin U_{ab}$, then $C_{0} \cup C_{1} \cup C_{2}$ is contained 
in a bulge $X$ of $co_{G}(U_{ab})$, and $(C_{0},C_{1},C_{2})$ is a 
tricycle of $\mathbf{Cyl}[X]$, which is impossible.

If $u \in U_{ab}$, then $C_{1} \cup \langle c_{1},u \rangle$ is
contained in a bulge $X$ of $co_{G}(U_{ab})$, and thus in $\mathbf{Cyl}[X]$, which is impossible as well.    
\end{proof}

Those two results clearly imply \cite[Lemma 2.15]{P07-3} stating that a netlike partial cube contains no quasi-tricycle.

We will say that a graph is \emph{tricycle-free} if it contains no tricycle.  We now state two of the main results of this subsection:

\begin{thm}\label{T:tricycle/decomp.}
Let $G$ be a compact Peano partial
cube that is not a quasi-hypertorus.  If
$G$ is tricycle-free, then $G$ is the gated amalgam of two of its proper
subgraphs.
\end{thm}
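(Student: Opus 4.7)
The plan is to use Theorem~\ref{T:reg. hypernet./str.sem.-periph./quasi-hypert.} together with the Characterization Theorem (Theorem~\ref{T:charact.}) to locate a proper gated hypercylinder $H$ inside $G$, and then to exhibit $G$ as a gated amalgam by taking $H$ as one half and the gated closure of its complement as the other.

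First, since $G$ is compact and Peano but not a quasi-hypertorus, Theorem~\ref{T:reg. hypernet./str.sem.-periph./quasi-hypert.} ensures that $G$ cannot be strongly semi-peripheral. Hence there is an edge $ab$ with, after relabelling, $W_{ab} \supsetneq co_G(U_{ab})$, so that $W_{ab}$ fails to be a semi-periphery. Picking any bulge $X$ of $co_G(U_{ab})$, Lemma~\ref{L:bulge/ab-cycle} and the Characterization Theorem produce a gated hypercylinder $H := \mathbf{Cyl}[X] = C \Box A$ with $C \in \mathbf{C}(G,ab)$. Because $V(H) \subseteq co_G(U_{ab}) \cup co_G(U_{ba})$ while $W_{ab} \supsetneq co_G(U_{ab})$, any vertex of $W_{ab} \setminus co_G(U_{ab})$ witnesses that $H$ is already a \emph{proper} subgraph of $G$.

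Next, I would exhibit $G$ as a gated amalgam $G = G_0 \cup G_1$ by setting $G_0 := H$ and letting $G_1$ be the subgraph of $G$ induced by the vertices outside the strict interior of $H$, with seam $F := G_0 \cap G_1$ given by the subgraph of $H$ whose vertices are exactly those admitting a neighbor in $V(G) \setminus V(H)$. The tricycle-free hypothesis is used to control this attachment: if at a vertex $v \in V(H)$ two distinct convex cycles of length $\ge 6$ of $G \setminus H$ were attached, then together with the $C$-layer of $H$ through $v$ (a convex cycle of length $\ge 6$, gated by Theorem~\ref{T:conv.reg.H-subgr.=>gated}) they would form three pairwise edge-sharing cycles of length $\ge 6$ meeting at $v$ --- a tricycle, contradicting our hypothesis and the obstructions already catalogued in Lemmas~\ref{L:quasi-tricycle} and~\ref{L:tricycle/4-cycle}. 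This local constraint forces the attachment of $H$ to $G \setminus H$ to be simple enough that $F$ admits a natural gated structure.

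The main obstacle is to verify simultaneously that $G_1$ is gated and that $V(H) \setminus V(F) \neq \emptyset$ (so that $G_1$ is proper). For gatedness, for each $x$ in the strict interior of $H$ one must locate a gate of $x$ in $F$; the tricycle-free hypothesis, combined with the gate map on the already-gated subgraph $H$ and on the gated convex cycles of $G \setminus H$ (also gated by Theorem~\ref{T:conv.reg.H-subgr.=>gated}), should allow such a gate to be chosen. Properness of $G_1$ requires showing that $H$ contains at least one vertex whose every $G$-neighbor lies in $V(H)$ --- a more global consequence of the tricycle-free hypothesis, since the existence of an external neighbor at every vertex of $H$ would, via iterated application of the Characterization Theorem along the $C$-layers, propagate cycles of length $\ge 6$ through $H$ and ultimately force a tricycle. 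Once these two verifications are in hand, $F$ is gated as an intersection of gated subgraphs and $G = G_0 \cup_F G_1$ is the desired decomposition into two proper gated subgraphs.
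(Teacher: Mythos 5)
Your overall strategy --- single out one gated hypercylinder $H=\mathbf{Cyl}[X]$ and declare $G$ to be the amalgam of $H$ with everything outside its ``strict interior'' --- fails structurally, not merely at the two verifications you flag. Take $G$ to be the gated amalgam of three $6$-cycles $C_1,C_2,C_3$ in a chain, where $C_1$ and $C_2$ share one edge and $C_2$ and $C_3$ share a disjoint edge of $C_2$: this is a compact, tricycle-free Peano (indeed cellular bipartite) partial cube that is not a quasi-hypertorus. Choosing $ab$ to be an edge of $C_2$ whose $\Theta$-class stays inside $C_2$, your construction produces $H=\mathbf{Cyl}[X]=C_2$, the \emph{middle} cycle; the vertices of $H$ with an external neighbour are the two disjoint attaching edges, so your $G_1$ is induced on $V(C_1)\cup V(C_3)$ and is disconnected --- hence not convex, let alone gated. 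No local tricycle argument about attachments at a single vertex can repair this, because the defect is global: $H$ may be attached to several mutually far-apart pieces of $G$. (For other choices of $ab$ in this example your construction does succeed, but you give no criterion for selecting a ``peripheral'' $H$, and producing one is essentially the hard part.) The paper's proof never uses a single hypercylinder as one side of the amalgam: if some edge $ab$ has neither $W_{ab}$ nor $W_{ba}$ a semi-periphery, it amalgamates the two thickened half-spaces $G_{\overrightarrow{ab}}$ and $G_{\overrightarrow{ba}}$ along $G_{\overline{ab}}$, gatedness of which is exactly where tricycle-freeness enters (Lemma~\ref{L:tricycle/Gab}); otherwise every edge has a semi-peripheral side, the elements of $\mathbf{Cyl}^+[G]$ pairwise intersect, and compactness together with the Helly property of gated sets (Proposition~\ref{P:gated.sets/Helly}) yields a common intersection $Q$ along which to amalgamate, falling back on $G_{\overline{ab}}$ versus $G[W_{ab}]$ in the remaining subcase.

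There is a second, independent gap at the very first step: from ``$W_{ab}$ is not a semi-periphery'', i.e.\ $W_{ab}\supsetneq co_G(U_{ab})$, you cannot conclude that $co_G(U_{ab})$ has a bulge; bulges exist only when $co_G(U_{ab})\supsetneq U_{ab}$. If $G$ is a median graph that is not a hypercube (a perfectly admissible instance of the theorem), every $U_{ab}$ is convex, there are no bulges and no hypercylinders at all, and your construction never starts. The paper disposes of the median case separately before asserting that $G$ contains an element of $\mathbf{Cyl}[G]$, and also reduces at the outset to $G$ being $2$-connected, another case your proposal does not address.
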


\begin{thm}\label{T:tricycle/decomp.fin.hypernet.}
A finite partial cube is a tricycle-free Peano partial cube if and only if it is obtained by a sequence of gated amalgamations from finite quasi-hypertori.
\end{thm}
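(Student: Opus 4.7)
Both implications can be proved by induction, with the necessity nearly immediate from Theorem~\ref{T:tricycle/decomp.} and the sufficiency requiring a preservation lemma for tricycle-freeness under gated amalgamation.

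For the necessity, I would induct on $|V(G)|$. If $G$ is a finite quasi-hypertorus, the empty sequence works. Otherwise $G$ is finite, hence compact, and not a quasi-hypertorus, so Theorem~\ref{T:tricycle/decomp.} supplies a decomposition $G=G_0\cup G_1$ as a gated amalgam of two proper subgraphs. Each $G_i$ is convex in $G$, hence a Peano partial cube by Theorem~\ref{T:hypernet./gat.amalg.+cart.prod.}, and is tricycle-free because each of its convex cycles is convex in $G$. As $|V(G_i)|<|V(G)|$, the induction hypothesis provides a decomposition of each $G_i$ by gated amalgamations from finite quasi-hypertori, and concatenation yields one for $G$.

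For the sufficiency, I would induct on the number of amalgamation steps. The base case is a finite quasi-hypertorus, which is a Peano partial cube by closure of the class under Cartesian products of $K_2$'s and even cycles (Theorem~\ref{T:hypernet./gat.amalg.+cart.prod.}) and tricycle-free by the remark following Definition~\ref{D:tricycle}. The inductive step assumes $G=G_0\cup G_1$ is the gated amalgam of two tricycle-free Peano partial cubes. Theorem~\ref{T:hypernet./gat.amalg.+cart.prod.} immediately gives that $G$ is Peano, so only tricycle-freeness remains.

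\textbf{The main obstacle} is this last verification. The first step is to show that every convex cycle of $G$ lies entirely in $G_0$ or in $G_1$: otherwise such a cycle would contain two vertices in $G_{01}:=G_0\cap G_1$ with its two arcs crossing $G_0-G_{01}$ and $G_1-G_{01}$ respectively, and the gated (hence convex) structure of $G_{01}$ would yield a geodesic shortcutting the longer arc, contradicting the convexity of the cycle (or forcing it into $G_{01}$). Given this, a hypothetical tricycle $(C_0,C_1,C_2)$ with common vertex $u$ has each $C_i$ assigned to $G_0$ or $G_1$; if all three fall in the same part, the inductive tricycle-freeness of that part is violated. Otherwise, WLOG $C_0,C_1\subseteq G_0$ and $C_2\subseteq G_1$, whence the shared vertex $u$ and the endpoints $v_{02},v_{12}$ of the shared edges $e_{02}=C_0\cap C_2$, $e_{12}=C_1\cap C_2$ are all forced into $G_{01}$. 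If $C_2\subseteq G_{01}$, the tricycle lies in $G_0$, a contradiction. The hard step, which I expect to be the main technical difficulty, is to exploit the Characterization Theorem~\ref{T:charact.}(iv) --- specifically the uniqueness of the gated hypercylinder $\mathbf{Cyl}[X]$ attached to the bulge of $co_G(U^G_{e_{02}})$ that contains the ``$u$-side'' vertices of $C_0$ --- to show that $C_2$ must coincide with a cycle-layer of this hypercylinder; since the hypercylinder lies in $G_0$ (its entire structure being determined inside the convex subgraph $G_0$), this would force $C_2\subseteq G_0$, hence $C_2\subseteq G_{01}$, returning us to the already-contradicted case.
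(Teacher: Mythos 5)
Your skeleton matches the paper's: necessity by induction via Theorem~\ref{T:tricycle/decomp.}, sufficiency by showing that tricycle-freeness (and Peano-ness) is preserved under gated amalgamation. The necessity half is fine. The problem is in the sufficiency half, where you leave the decisive step — forcing $C_2$ into $G_0$ once $C_0,C_1\subseteq G_0$ — as an unproven ``hard step'' to be attacked via the Characterization Theorem and the uniqueness of the hypercylinder $\mathbf{Cyl}[X]$. As written, that step is a gap: you have not shown that $C_2$ is a cycle-layer of any hypercylinder attached to $e_{02}$, and the route you sketch would require nontrivial work to relate $C_2$ (a convex cycle possibly straddling the amalgam) to the bulge structure of $co_G(U_{e_{02}})$.

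The gap closes in one line with a tool already in the paper and much weaker than the Characterization Theorem: Proposition~\ref{P:gated/G-closed} says every gated subgraph is $\Gamma$-closed, i.e.\ any convex cycle having at least three vertices in it is a cycle of it. This is exactly Proposition~\ref{P:tricycle/gat.amalg.} in the paper, whose proof runs: each $C_i$, being a convex cycle of length at least $6$, has at least three of its vertices in $G_0$ or in $G_1$ and hence lies entirely in one of them (this also replaces your hand-waved ``shortcutting'' argument for the first step); and if $C_0,C_1\subseteq G_0$, then $C_2$ contains the two edges $e_{02}\subseteq C_0$ and $e_{12}\subseteq C_1$, which share the common vertex $u$ and thus supply at least three vertices of $C_2$ inside $G_0$, so $\Gamma$-closedness puts $C_2$ in $G_0$ and the whole tricycle sits in $G_0$, a contradiction. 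I would also note that the paper's proof additionally invokes the distributivity of the Cartesian product over gated amalgamation to handle the pieces of the decomposition that arise as products; your induction tacitly relies on Theorem~\ref{T:tricycle/decomp.} always delivering a gated amalgam, which its statement does guarantee, so this is only a presentational difference.
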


We recall that, for any edge $ab$ of a partial cube $G$, we denote:
\begin{align*}
G_{ \overrightarrow{ab}}& := G[co_G(U_{ab}) \cup W_{ba}]\\
G_{\overline{ab}}& := G_{ \overrightarrow{ab}} \cap G_{ \overrightarrow{ba}}.
\end{align*}

\begin{lem}\label{L:tricycle/Gab}
If a Peano partial cube $G$
is tricycle-free, then the subgraphs $G_{ \overrightarrow{ab}}$,\; $G_{ \overrightarrow{ba}}$ and $G_{ \overline{ab}}$ are gated for any edge $ab$ of $G$.
\end{lem}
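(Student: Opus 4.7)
My plan is to apply Theorem~\ref{T:hypernet.=>G-closed+conv.=gated}, which states that a convex subgraph of a Peano partial cube is gated iff it is $\Gamma$-closed. It therefore suffices to verify convexity and $\Gamma$-closedness for $G_{\overrightarrow{ab}}$; the statement for $G_{\overrightarrow{ba}}$ follows by symmetry, and $G_{\overline{ab}}=G_{\overrightarrow{ab}}\cap G_{\overrightarrow{ba}}$ is then gated as an intersection of two gated subgraphs (since intervals in a partial cube are finite, alternately applying the two gate maps to any vertex $x$ produces a decreasing chain inside the finite interval $I_G(x,a)$ for a fixed $a\in V(G_{\overline{ab}})$, which stabilizes at the gate).

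Convexity of $G_{\overrightarrow{ab}}$ uses that $W_{ba}$ is a half-space and $co_G(U_{ab})$ is convex by construction. Given $x\in co_G(U_{ab})$ and $y\in W_{ba}$, any $(x,y)$-geodesic $P$ contains, by Lemma~\ref{L:gen.propert.}(iv), an edge $uv$ that is $\Theta$-equivalent to $ab$ with $u\in U_{ab}$ and $v\in U_{ba}$; the sub-geodesics $P[x,u]$ and $P[v,y]$ then lie in $co_G(U_{ab})$ and $W_{ba}$ respectively.

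For $\Gamma$-closedness, let $C$ be a convex cycle of $G$ with at least three vertices in $V(G_{\overrightarrow{ab}})$. If $|C|=4$, convexity of $G_{\overrightarrow{ab}}$ gives $C\subseteq G_{\overrightarrow{ab}}$. Assume $|C|\geq 6$. If $C$ contains an edge $\Theta$-equivalent to $ab$, then Lemma~\ref{L:gen.propert.}(vii) forces exactly one antipodal pair of such edges, splitting $C$ into a $W_{ab}$-arc (a geodesic between two vertices of $U_{ab}$, hence contained in $\mathcal{I}_G(U_{ab})=co_G(U_{ab})$ by $ph(G)\leq 1$) and a $W_{ba}$-arc; thus $C\subseteq G_{\overrightarrow{ab}}$. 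If $C$ has no edge $\Theta$-equivalent to $ab$, it lies entirely in $W_{ab}$ or $W_{ba}$; the latter immediately gives $C\subseteq G_{\overrightarrow{ab}}$, leaving the critical case $C\subseteq W_{ab}$ with $V(C)\cap co_G(U_{ab})$ a proper arc of $C$ of length $\geq 2$.

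In this critical case, pick an interior vertex $y$ of the arc $V(C)\cap co_G(U_{ab})$. Since $y\in\mathcal{I}_G(U_{ab})$, the Characterization Theorem (iv)--(v) yields a gated convex hypercylinder $H=\mathbf{Cyl}[X]=D\Box A$ containing $y$ and a gated convex $ab$-cycle $D_y\in\mathbf{C}(G,ab)$ of length $\geq 6$ through $y$ (a $D$-layer of $H$). A $\Theta$-class analysis at $y$, combined with the $4$-Cycle Property inside $H$, should show that $C$ and $D_y$ share an edge at $y$ (the alternative being that the edges of $C$ and $D_y$ at $y$ generate a $4$-cycle). The three convex cycles at $y$ so produced---namely $C$, $D_y$, and a third cycle inside $H$ either from the $A$-direction or a second $D$-layer meeting $C$---pairwise share one edge and meet only at $y$, producing either a tricycle (excluded by hypothesis) or a quasi-tricycle with exactly one $4$-cycle (excluded by Lemma~\ref{L:tricycle/4-cycle}); in either case a contradiction, so $V(C)\cap co_G(U_{ab})$ cannot be a proper arc. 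The main obstacle is this last step: the $\Theta$-class bookkeeping at the boundary vertex $y$ must be carried out carefully, using that the adjacent vertex $z\in V(C)\setminus co_G(U_{ab})$ cannot lie in $H$ (else $z\in co_G(U_{ab})$, since $H-W_{ba}\subseteq co_G(U_{ab})$), which is what forces the existence of the third cycle completing the forbidden configuration.
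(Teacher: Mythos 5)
Your overall strategy is the same as the paper's: show $G_{\overrightarrow{ab}}$ is convex and $\Gamma$-closed, invoke Theorem~\ref{T:hypernet.=>G-closed+conv.=gated}, and derive contradictions from tricycle-freeness and Lemma~\ref{L:tricycle/4-cycle}; your convexity argument and the easy cases are fine, and $G_{\overline{ab}}$ as an intersection of gated subgraphs is also how the paper concludes. The problem is that your critical case is not actually proved, and it is set up around the wrong vertex. You take an interior vertex $y$ of the arc $V(C)\cap co_G(U_{ab})$ and apply Theorem~\ref{T:charact.}(iv)--(v) to get an $ab$-cycle $D_y$ through $y$; but those assertions only concern vertices of $\mathcal{I}_G(U_{ab})-U_{ab}$, and nothing prevents every interior vertex of the arc from lying in $U_{ab}$, in which case no $D_y$ exists and your argument never starts. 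Conversely, when such a $y\in co_G(U_{ab})-U_{ab}$ does exist, both of its $C$-neighbours lie in the same bulge $X$, so $C$ has three vertices in the gated (hence $\Gamma$-closed, by Theorem~\ref{T:C-subgr./gated}) hypercylinder $\mathbf{Cyl}[X]$ and is therefore a cycle of it; the conclusion is immediate and the tricycle detour you sketch is both unnecessary and unsubstantiated (note also that an \emph{interior} vertex of the arc has no $C$-neighbour outside $co_G(U_{ab})$, so the vertex $z$ you appeal to does not exist).

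The genuinely hard sub-cases — which your proposal never reaches — are those where the middle vertex $u$ of the arc lies in $U_{ab}$. There the paper brings in the $U_{ba}$-neighbours $u', v_i'$, which are entirely absent from your argument: if both edges $uv_1,uv_2$ lie in $G[U_{ab}]$, the two $4$-cycles $\langle u,v_i,v_i',u',u\rangle$ together with $C$ form a quasi-tricycle, and Lemma~\ref{L:quasi-tricycle} forces $C$ into the prism $C\Box\langle u,u'\rangle$, whence $V(C)\subseteq U_{ab}$; if exactly one of the two edges leads into a bulge, one gets a quasi-tricycle with exactly one $4$-cycle, excluded by Lemma~\ref{L:tricycle/4-cycle}; and if both lead into bulges, the two long convex $ab$-cycles through $uv_1$ and $uv_2$ share the edge $uu'$ and form a genuine tricycle with $C$. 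These constructions are the substance of the lemma, and you yourself flag that the "bookkeeping" for your replacement step is missing; as written, the proof does not go through.
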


\begin{proof}
We will prove that $G_{ \overrightarrow{ab}}$ is gated. The proof that $G_{ \overrightarrow{ba}}$ is gated would be analogous, and then $G_{ \overline{ab}}$ will be gated as the intersection of two gated subgraphs.

$G_{ \overrightarrow{ab}}$ is clearly convex.  We will prove that it is also
$\Gamma$-closed.  Let $C$ be a convex cycle of $G[W_{ab}]$ whose intersection
with $G_{ \overrightarrow{ab}}$ has at least three vertices.  Because $C$ and $G_{ \overrightarrow{ab}}$
are convex it follows that $C \cap G_{ \overrightarrow{ab}}$ has two adjacent edges
$uv_{1}$ and $uv_{2}$.  If $C$ is a $4$-cycle, then $C$ is a cycle of
$G_{ \overrightarrow{ab}}$ since $G_{ \overrightarrow{ab}}$ is convex.  Assume that the length of $C$ is
greater than $4$.  The edges $uv_{1}$ and $uv_{2}$ do not belong to a
$4$-cycle since $C$ is convex.  Moreover they also do not belong to a
convex cycle $C' \neq C$ of length greater than $4$, because otherwise
$C$ and $C'$ would be convex cycles that pass through $v_2$ and contain edges $\Theta$-equivalent to $uv_1$, contrary to Remark~\ref{R:notation}.2.  In the following, we will denote by $x'$ the neighbor in $U_{ba}$ of any $x \in U_{ab}$.  We distinguish two cases.

\emph{Case 1.}\; At least one of the edges $uv_{1}$ or $uv_{2}$ is an 
edge of $G[U_{ab}]$.

Assume that $uv_{1} \in E(G[U_{ab}])$.  Suppose that $uv_{2} \notin
E(G[U_{ab}])$.  Then $uv_{2}$ is an edge of a bulge of
$\mathcal{I}_{G}(U_{ab})$, and thus an edge of a convex cycle $C_{2}$
of length greater than $4$ having an edge $\Theta$-equivalent to $ab$.
Let $C_{1} := \langle u,v_{1},v'_{1},u',u \rangle$.  Then
$(C,C_{1},C_{2})$ is a quasi-tricycle of $G$ such that $C_{1}$ is a
$4$-cycle, which is impossible by Lemma~\ref{L:tricycle/4-cycle}.

Then $uv_{2} \in E(G[U_{ab}])$.  Let $C_{i} := \langle
u,v_{i},v'_{i},u',u \rangle$ for $i = 0, 1$.  Then $(C,C_{1},C_{2})$ is a quasi-tricycle of $G$ with two $4$-cycles.  Hence, by Lemma~\ref{L:quasi-tricycle}, it is a quasi-tricycle of the prism $C \Box \langle u,u' \rangle$.  Therefore $V(C) \subseteq U_{ab}$, and thus $C$ 
is a cycle of $G_{ \overrightarrow{ab}}$.

\emph{Case 2.}\; $uv_{1}$ and $uv_{2}$ are not edges of $G[U_{ab}]$.

If $u \in U_{ab}$, then there are two $H_{1}, H_{2} \in
\mathbf{Cyl}[G,ab]$ such that $uv_{i}$ is an edge
of $H_{i}$ for $i = 0, 1$.  By the properties of $H_{i}$, for $i = 0,
1$, there exists a convex cycle $C_{i}$ of $H_{i}$ of length greater
than $4$ containing edges $\Theta$-equivalent to $ab$, such that
$uv_{i}$ is an edge of $C_{i}$.  It follows that $C_{1}$ and
$C_{2}$ contain the edge $uu'$.  Therefore $(C,C_{1},C_{2})$ is a
tricycle of $G$, contrary to the hypothesis.

Then $u \notin U_{ab}$.  Hence $uv_{1}$ and $uv_{2}$ are edges
of some element $H$ of $\mathbf{Cyl}[G,ab]$, and
thus $C$ is a cycle of $H$, and thus of $G_{ab}$, because $H$
is gated by Theorem~\ref{T:C-subgr./gated}.\\

Consequently $G_{ \overrightarrow{ab}}$ is $\Gamma$-closed, and thus gated by
Theorem~\ref{T:hypernet.=>G-closed+conv.=gated}.
\end{proof}

We recall that every median graph with more than two vertices is either a Cartesian product or a gated amalgam of proper median subgraphs.  In particular every finite median graph can be obtained by a sequence of gated amalgamations from hypercubes.

\begin{proof}[\textnormal{\textbf{Proof of
Theorem~\ref{T:tricycle/decomp.}}}]
Assume that $G$ is $2$-connected.  Otherwise, $G$ could be decomposed
via gated amalgamation along a single vertex.  We denote by $\mathbf{Cyl}^+[G]$ the set of all subgraphs of $G$ that are either elements of $\mathbf{Cyl}[G]$ or maximal hypercubes of $G$.  Then $G =
\bigcup\mathbf{Cyl}^+[G]$ since $G$ is $2$-connected.  We recall that, by Theorem~\ref{T:C-subgr./gated}, all elements of $\mathbf{Cyl}^+[G]$ are gated in $G$.  
If $G$ is a median
graph but not a hypercube, then $G$ is the gated amalgam of two of its proper median subgraphs.  Suppose that $G$ is
not a median graph.  It follows that $G$ contains an element of $\mathbf{Cyl}[G]$, say $H_0$.  We distinguish two cases.

\emph{Case 1.}\; There exists an edge $ab$ of $G$ such that $W_{ab}$ 
and $W_{ba}$ are not semi-peripheries.

Then, since $G_{ \overrightarrow{ab}}$ and $G_{ \overrightarrow{ba}}$ are gated by Lemma~\ref{L:tricycle/Gab}, it follows that $G$ is the gated amalgam of these
two subgraphs.

\emph{Case 2.}\; For every edge $ab$ of $G$, $W_{ab}$ or $W_{ba}$ is a
semi-periphery.

Then the elements of $\mathbf{Cyl}^+[G]$ are
pairwise non-disjoint.  Indeed, if $H$ and $H'$ are disjoint
elements of $\mathbf{Cyl}^+[G]$, and if $ab$ is
an edge of a smallest path between $H$ and
$H'$, then neither $W_{ab}$ nor $W_{ba}$ is a semi-periphery,
contrary to the assumption.  Then, by Proposition~\ref{P:gated.sets/Helly} and because $G$ is compact, the elements of $\mathbf{Cyl}^+[G]$ have a non-empty intersection $Q$.

\emph{Subcase 2.1.}\; $Q = H_0 \cap H$ for every $H \in \mathbf{Cyl}^+[G]-\{H_0\}$.

  Then $G$ is
the gated amalgam along $Q$ of $H_0$ and the subgraph induced by the union of
all the other elements of $\mathbf{Cyl}^+[G]$.

\emph{Subcase 2.2.}\; $Q \neq H_0 \cap H$ for some $H \in \mathbf{Cyl}^+[G]-\{H_0\}$.

Let $ab$ be an edge of $H_0$ such that $a \in V(Q)$ and $b \in
V(H-Q)$.  Then $W_{ab}$ is not a semi-periphery because $G[W_{ab}]$
contains all the elements of $\mathbf{Cyl}^+[G]$
that do not contain $b$.  It follows that $W_{ba}$ is a
semi-periphery by the assumption, and thus $G_{ \overrightarrow{ab}} = G_{ \overline{ab}}$.  Then $G$ is the union of $G_{ \overline{ab}}$
and $G[W_{ab}]$.  Moreover
$G[W_{ab}]$ is convex.  We will show that it is gated as well.

Suppose that there is a convex cycle $C$ of $G$ that has at least
three vertices in common with $G[W_{ab}]$ and that is not a cycle of this
subgraph.  Then the length of $C$ is greater than $4$ since $G[W_{ab}]$ is
convex, and thus $C$ contains edges $\Theta$-equivalent to $ab$.
Hence $C$ is a convex cycle of some element $H'$ of $\mathbf{Cyl}[G,ab]$.  Then $H'$ contains $Q$ and thus the
vertex $a$.  Let $C'$ be the convex cycle of $H'$ containing $ab$ and
isomorphic to $C$.  Because $G[W_{ab}]$ is
convex, it follows that $C' \cap G[W_{ab}]$ is a path of length at least $2$,
one of whose endvertices is $a$.  Because $a \in V(Q)$, and since each
edge of $G[W_{ab}]$ is an edge of some element of $\mathbf{Cyl}^+(G[W_{ab}])$, it follows that $C' \cap G[W_{ab}] = C' \cap
F$ for some $F \in \mathbf{Cyl}^+(G[W_{ab}])$.  Then $C'$ is a cycle of $F$ since $F$ is
gated, and thus $\Gamma$-closed by
Theorem~\ref{T:hypernet.=>G-closed+conv.=gated}, contrary to the fact that $F$ is a subgraph of $G[W_{ab}]$, and thus cannot contain the edge $ab$.

Therefore $G[W_{ab}]$ is gated by Theorem~\ref{T:hypernet.=>G-closed+conv.=gated}.
Furthermore $G_{ \overline{ab}}$ is gated by Lemma~\ref{L:tricycle/Gab} since
$G$ contains no tricycle.  Consequently
$G$ is the gated amalgam of $G[W_{ab}]$ and $G_{ \overline{ab}}$.
\end{proof}

\begin{pro}\label{P:tricycle/gat.amalg.}
Let $G$ be the gated amalgam of two partial cubes $G_{0}$ and $G_{1}$.
Then $G$ is tricycle-free if and only if so are $G_{0}$ and $G_{1}$.
\end{pro}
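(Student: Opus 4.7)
The necessity direction is immediate. Since $G_0$ and $G_1$ are isomorphic to gated subgraphs of $G$, they are convex and hence isometric in $G$; any tricycle of $G_i$ -- three convex cycles of $G_i$ of length greater than $4$, intersecting in one vertex with pairwise exactly one edge in common -- therefore remains a tricycle of $G$. Contrapositively, $G$ tricycle-free forces both $G_0$ and $G_1$ tricycle-free. For the sufficiency I argue by contradiction: assume $G_0,G_1$ tricycle-free and suppose $G$ has a tricycle $T=(C_0,C_1,C_2)$ with common vertex $v$. Since $V(C_0)\cap V(C_1)\cap V(C_2)=\{v\}$ while each pairwise intersection $V(C_i)\cap V(C_j)$ consists of the two endpoints of the unique common edge $e_{ij}$, the three common edges $e_{01},e_{02},e_{12}$ must all contain $v$; I write $e_{01}=vy$, $e_{02}=vw$, $e_{12}=vq$.

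The first key step is a localisation lemma: every convex cycle of $G$ lies entirely in $G_0$ or entirely in $G_1$. The subsets $V(G_0)$, $V(G_1)$ and $V(G_{01})=V(G_0)\cap V(G_1)$ are convex in $G$, so their intersections with $V(C_i)$ are convex arcs of the isometric cycle $C_i$. Combined with the gated-amalgam property that no edge of $G$ joins $V(G_0)\setminus V(G_1)$ to $V(G_1)\setminus V(G_0)$, a cycle straddling both sides would force $V(C_i)\cap V(G_{01})$ to consist of two disjoint arcs (one near each boundary crossing), contradicting its convexity in $C_i$. After this step, by relabelling I may assume the mixed case $C_0,C_2\subseteq G_0$ and $C_1\subseteq G_1$; then $vy$ and $vq$, being common edges of a cycle in $G_0$ and the cycle $C_1$ in $G_1$, lie in $E(G_0)\cap E(G_1)=E(G_{01})$, so $v,y,q\in V(G_{01})$.

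The decisive step is an antipode argument. Writing $|C_1|=2n$ with $n\geq 3$ and $C_1=v_0v_1\cdots v_{2n-1}v_0$ with $v_0=v$, $v_1=y$, $v_{2n-1}=q$, I let $v^{\ast}:=v_n$ be the antipode of $v$ in $C_1$. Since $V(C_1)\cap V(G_{01})$ is a convex arc of $C_{2n}$ (hence has at most $n$ vertices) containing the three consecutive vertices $y,v,q$, no vertex of this arc can lie at distance $n$ from $v$; in particular $v^{\ast}\notin V(G_{01})$. Now $G_{01}$ is gated in $G_1$ (as a gated subgraph of $G$ contained in the convex $G_1$), so $v^{\ast}$ must have a gate $\pi(v^{\ast})\in V(G_{01})$ satisfying $\pi(v^{\ast})\in I_{G_1}(v^{\ast},g)$ for every $g\in V(G_{01})$. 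Applying this with $g=y$ and $g=q$: by convexity of $C_1$ in $G$ and since $d_{C_1}(v^{\ast},y)=n-1<n$, the $(v^{\ast},y)$-geodesic is the unique shorter arc $\{v_n,v_{n-1},\ldots,v_1\}$ of $C_1$, and symmetrically $I_{G_1}(v^{\ast},q)=\{v_n,v_{n+1},\ldots,v_{2n-1}\}$; these two arcs meet only in $\{v^{\ast}\}$, forcing $\pi(v^{\ast})=v^{\ast}\notin V(G_{01})$, a contradiction. Therefore $V(C_1)\subseteq V(G_{01})\subseteq V(G_0)$, so $C_1\subseteq G_0$, making $T$ a tricycle of the tricycle-free $G_0$ -- contradiction. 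Combined with the localisation lemma, the only remaining case has all three cycles in the same $G_j$, producing a tricycle of $G_j$ -- again impossible.

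The main obstacle I anticipate is making the localisation step truly precise: one has to argue that $V(C_i)\cap V(G_{01})$ is a \emph{single} convex arc (not merely contained in one), and that the two crossing points of a straddling cycle would create two components, which requires a careful case analysis of convex subsets of an even cycle. Once that is in place, the antipode trick is a clean and self-contained idea that closes the argument, relying only on the uniqueness of the shorter $(v^{\ast},y)$- and $(v^{\ast},q)$-geodesics in a convex cycle of length greater than $4$ together with the gatedness of $G_{01}$ in $G_1$.
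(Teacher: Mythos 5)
Your proof is correct, and its skeleton matches the paper's: first localise each $C_i$ to one of $G_0$, $G_1$, then use the pigeonhole principle and force the remaining cycle into the side already containing the other two. The difference is in how the two steps are justified. The paper disposes of both at once by invoking its Proposition~\ref{P:gated/G-closed} (every gated subgraph is $\Gamma$-closed): each $C_i$ has at least six vertices, hence at least three in one of $V(G_0)$, $V(G_1)$, so it is a cycle of that side; and in the mixed case the third cycle already has the three vertices $q,v,y$ of its two shared edges in $G_0$, so it too lies in $G_0$. You instead reprove the needed special cases from scratch: your localisation lemma via convexity of $V(C_i)\cap V(G_{01})$ as an arc of the cycle, and your antipode/gate argument showing that $v^{\ast}$ would have no gate in $G_{01}$. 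Both arguments are sound (the antipode step is in fact very close in spirit to the paper's own proof that gated subgraphs are $\Gamma$-closed, which also exhibits a vertex of a straddling convex cycle with no gate), and they make the proof self-contained at the cost of length; recognising that the whole content of both of your key steps is the single statement ``gated $\Rightarrow$ $\Gamma$-closed'' would compress your argument to the paper's three lines.
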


\begin{proof}
We only have to prove the sufficiency.  Assume that $G = G_{0} \cup
G_{1}$ and that $G_{0}$ and $G_{1}$ are tricycle-free.  Suppose
that $G$ contains a tricycle $(C_{0},C_{1},C_{2})$.  Because $G_{0}$
and $G_{1}$ are gated, and thus $\Gamma$-closed by
Proposition~\ref{P:gated/G-closed}, it follows that each cycle $C_{i}$
is a subgraph of $G_{0}$ or $G_{1}$.

Suppose that two of these cycles, say $C_{0}$ and $C_{1}$ are cycles
of $G_{0}$.  Then, by the definition of a tricycle, $C_{2}$ has two
edges in common with $G_{0}$.  Hence $C_{2}$ is also a cycle of
$G_{0}$ because $G_{0}$ is gated, and thus $\Gamma$-closed.  This
yields a contradiction with the assumption that $G_{0}$ contains no
tricycle.
\end{proof}

\begin{proof}[\textnormal{\textbf{Proof of
Theorem~\ref{T:tricycle/decomp.fin.hypernet.}}}] The necessity is a
consequence of Theorem~\ref{T:tricycle/decomp.}, and the converse
follows immediately from Proposition~\ref{P:tricycle/gat.amalg.}, the fact that quasi-hypertori are tricycle-free, and the property that Cartesian multiplication distributes over gated amalgamation, viz., the Cartesian product of a graph $G_0$ with the gated amalgams of two graphs $G_1$ and $G_2$ is equal to the gated amalgam of $G_0 \Box G_1$ and $G_0 \Box G_2$.
\end{proof}

\subsection{Hyper-median partial cubes}\label{SS:hyper-median}

We recall several definitions.  Let $(u,v,w)$ be a triple of vertices of a graph $G$.  Then:

\textbullet\; a \emph{median} of $(u,v,w)$ is any element of the intersection $I_{G}(u,v) \cap I_{G}(v,w) \cap I_{G}(w,u)$;

\textbullet\; $(u,v,w)$ is a \emph{metric triangle} if the intervals $I_G(u,v), I_G(v,w), I_G(w,u)$ pairwise intersect in their common endvertices;

\textbullet\; a \emph{quasi-median}\footnote{This definition is more general than the specific notion used in the context of quasi-median graphs where a quasi-median $(x,y,z)$ of a triple $(u,v,w)$ must satisfies the additional conditions: $d_G(x,y) = d_G(y,z) = d_G(z,x) =: k$ and $k$, the size of the quasi-median, is minimum.} of $(u,v,w)$ is a metric triangle $(x,y,z)$ such that
\begin{align*}
d_G(u,v) = d_G(u,x)+d_G(x,y)+d_G(y,v),\\
d_G(v,w) = d_G(v,y)+d_G(y,z)+d_G(z,w),\\
d_G(w,u) = d_G(w,z)+d_G(z,x)+d_G(x,u).
\end{align*}

\begin{lem}\label{L:met.tri./equiv.}
Let $(u,v,w)$ be a metric triangle of a Peano partial cube $G$.  The following assertions are equivalent:

\textnormal{(i)}\; There exists an isometric cycle of $G$ that passes through the three vertices $u, v, w$.

\textnormal{(ii)}\; $co_G(u,v,w)$ induces a hypertorus.
\end{lem}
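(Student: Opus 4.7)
The plan is to leverage Theorem~\ref{T:hypernet./conv.hull(isom.cycle)} (the convex hull of any isometric cycle of a Peano partial cube is a gated quasi-hypertorus), together with the product structure of quasi-hypertori and the observation that median graphs admit no metric triangle of three distinct vertices.

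For (i) $\Rightarrow$ (ii): if $C$ is an isometric cycle through $u, v, w$, then, since $C$ is isometric, the metric-triangle condition forces each of the three arcs of $C$ delimited by $u, v, w$ to have length at most $|C|/2$ (if some arc exceeded $|C|/2$, the opposite one would be the geodesic in $C$ and would contain the third vertex, producing an unwanted element in $I_G(u,v) \cap I_G(v,w)$). Hence these arcs are $G$-geodesics, so $V(C) \subseteq I_G(u,v) \cup I_G(v,w) \cup I_G(w,u) \subseteq co_G(u,v,w)$, which together with the obvious reverse inclusion yields $co_G(u,v,w) = co_G(C)$. By Theorem~\ref{T:hypernet./conv.hull(isom.cycle)}, $co_G(C)$ is a gated quasi-hypertorus $T = \cartbig_i F_i$ with each $F_i$ either $K_2$ or an even cycle. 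Two cases then need to be ruled out. First, if some $F_j = K_2$, Proposition~\ref{P:pro.Cartes.prod.}(vii) forces the projections $pr_j(u), pr_j(v), pr_j(w)$ to cover $\{0,1\}$; a short case check using the Interval Property shows that one of the three intersections $I_T(x,y) \cap I_T(y,z)$ then contains a set of the form $K_2 \times \{\cdot\}$ of size $\geq 2$, contradicting the metric-triangle condition. Second, if every $F_i$ were $C_4$, then $T$ would be a hypercube, hence a median graph, and the unique median of $(u,v,w)$ would have to coincide with each of $u, v, w$, collapsing the triple. Thus $T$ is a hypertorus.

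For (ii) $\Rightarrow$ (i): write $T := co_G(u,v,w) = \cartbig_{i=1}^k C_{2m_i}$ with at least one $m_j \geq 3$. Proposition~\ref{P:pro.Cartes.prod.}(vii) gives $co_{C_{2m_i}}(pr_i(u), pr_i(v), pr_i(w)) = C_{2m_i}$ for every $i$; the metric-triangle hypothesis, transferred factor-wise via the Interval Property, forces the three arcs of $C_{2m_i}$ delimited by the three projections, of respective lengths $a_i, b_i, c_i$, to be geodesics in $C_{2m_i}$ (each $\leq m_i$) with $a_i + b_i + c_i = 2m_i$. The plan is to build an isometric cycle $D \subseteq T$ of length $L = \sum_i 2m_i = d_T(u,v) + d_T(v,w) + d_T(w,u)$ passing through $u, v, w$ by a balanced interleaving of the factor-moves: at each step one advances by one unit in whichever factor is currently most behind its proportional schedule, so that the $2m_i$ moves in factor $i$ traverse $C_{2m_i}$ monotonically once, with $u, v, w$ occurring at times $0$, $\sum a_i$, $\sum(a_i+b_i)$ respectively. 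The main obstacle lies in verifying isometry: by Lemma~\ref{L:gen.propert.}(vii) it suffices to check that each $\Theta$-class of $T$ (an antipodal edge-pair within some $C_{2m_i}$) is represented in $D$ by exactly two edges antipodal on $D$, which in turn reduces to the balance property that any window of $L/2$ consecutive steps of $D$ contains at most $m_i$ moves in each factor $i$; this is precisely what a proportional (three-distance-theorem type) schedule delivers. A cleaner alternative would be induction on $k$: the base $k=1$ is $T = C_{2m_1}$ itself, and the inductive step glues an isometric cycle from a sub-hypertorus to a detour through the remaining factor, again reducing to the same balancing combinatorial lemma.
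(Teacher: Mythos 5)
Your (i) $\Rightarrow$ (ii) argument is correct and follows the paper's route: identify $co_G(u,v,w)$ with $co_G(C)$, invoke Theorem~\ref{T:hypernet./conv.hull(isom.cycle)}, and eliminate the $K_2$-factor case and the hypercube case exactly as the paper does (the paper phrases the first elimination as: the projection of $w$ onto the $T$-layer containing $u$ and $v$ lies in $I_G(u,w)\cap I_G(v,w)$). Your preliminary observation that the metric-triangle condition forces the three arcs of $C$ to be geodesics, whence $co_G(u,v,w)=co_G(C)$, justifies an equality the paper states without proof.

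For (ii) $\Rightarrow$ (i) you take a genuinely different route: the paper inducts on the number of cycle factors and exhibits an explicit six-run ``staircase'' in $C\Box T_1$, whereas you build the cycle in one pass and reduce isometry, via Lemma~\ref{L:gen.propert.}(vii), to the balance condition that every window of $M:=\sum_i m_i$ consecutive steps contains exactly $m_i$ moves in factor $i$. That reduction is correct, and you are right that this balance is the crux: a schedule with long straight runs generally violates it (in $C_6\Box C_6$ with $u=(x_1,c_1)$, $v=(x_3,c_3)$, $w=(x_5,c_5)$, the six-run staircase carries the $\Theta$-equivalent edges $x_1x_2$ and $x_4x_5$ at distance $5\neq 6$ apart along the $12$-cycle, so it is not isometric), so the condition cannot be bypassed even in an inductive step. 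However, there is a genuine gap where you appeal to a ``proportional, three-distance-theorem type'' schedule: that greedy rule is calibrated only to the balance condition and gives prefix counts close to $A\,m_i/M$ after $A:=d_G(u,v)$ steps, which need not equal the required $a_i$; you never show that the balance condition and the two checkpoint conditions (exactly $a_i$ factor-$i$ moves in the first $A$ steps, exactly $b_i$ in the next $d_G(v,w)$) can be met simultaneously. The claim is true and the gap is closable by elementary counting: the balance condition is equivalent to the factor-label sequence $s_1\dots s_{2M}$ being periodic with period $M$, so it suffices to choose one period $s_1\dots s_M$ whose prefixes of lengths $M-d_G(w,u)$ and $A$ contain respectively $m_i-c_i$ and $a_i$ symbols $i$; this is possible because $0\le m_i-c_i\le a_i\le m_i$ (each $a_i,b_i,c_i\le m_i$, the arcs being geodesics) and the block sizes $\sum_i(m_i-c_i)=M-d_G(w,u)$, $\sum_i(m_i-b_i)$, $\sum_i(m_i-a_i)$ add up correctly. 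Without this (or an equivalent) verification, the proof of (ii) $\Rightarrow$ (i) is incomplete.
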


\begin{proof}
(i) $\Rightarrow$ (ii)\; Let $C$ be an isometric cycle of $G$ that passes through $u, v, w$.  By Theorem~\ref{T:hypernet./conv.hull(isom.cycle)}, $F := G[co_G(u,v,w)] = co_G(C)$ is a quasi-hypertorus.  $F$ is not a hypercube, since otherwise $(u,v,w)$ would have a median in $F$, and thus in $G$, contrary to the fact that $(u,v,w)$ is a metric triangle.  Suppose now that $F = T \Box K_2$ is the prism over a hypertorus $T$.  Then each $T$-layer of $F$ contains at least one of the vertices $u, v, w$.  Without loss of generality, we can suppose that $u, v$ belong to the $T$-layer $T_0$, and $w$ belong to the other $T$-layer $T_1$.  Then $I_G(u,w)$ and $I_G(v,w)$ contains the projection of $w$ onto $T_0$, contrary to the fact that $(u,v,w)$ is a metric triangle.  Therefore $F$ is a hypertorus.

(ii) $\Rightarrow$ (i)\; Conversely, assume that $co_G(u,v,w)$ induces a hypertorus $T$.  Then $T$ is the Cartesian product of some finite family of even cycles.  We will prove the existence of an isometric cycle of $G$ that passes through $u, v, w$ by induction on the number of these cycles.

This is trivial if $T$ is a cycle.  Suppose that the result holds if $T$ is any $n$-torus, for some positive integer $n$.  Let $T$ be an $(n+1)$-torus.  Then $T = T_0 \Box T_1$, where $T_0$ is an $n$-torus and $T_1$ is an even cycle.  Clearly $pr_{T_i}(u,v,w)$ is a metric triangle of $T_i$ for $i = 0, 1$.  Then, by the induction hypothesis, $T_0$ contains an isometric cycle $C$ that passes through the projections on $T_0$ of $(u,v,w)$.  Put $C = \langle x_1,\dots,x_{2p},x_1 \rangle$ and $T_1 = \langle c_1,\dots,c_{2q},c_1 \rangle$.  Without loss of generality we can suppose that $u = (x_1,c_1),\; v = (x_i,c_k)$ and $w = (x_j,c_l)$ for some $i, j, k, l$ with $0 < i < j < 2p$ and $0 < k < l < 2q$.  By the Distance Property of Cartesian product, $C \Box T_1$ is an isometric subgraph of $F$ such that $\mathrm{idim}(C \Box T_1) = \mathrm{idim}(F)$.  Then
\begin{multline*}
D := \langle (x_0,c_0),\dots,(x_i,c_0),(x_i,c_1),\dots,(x_i,c_k),(x_{i+1},c_k),\\
\dots,(x_j,c_k),(x_j,c_{k+1}),\dots,(x_j,c_l),(x_{j+1},c_l),\\
\dots,(x_{2p},c_l),(x_{2p},c_{l+1}),\dots,(x_{2p},c_{2q}),(x_1,c_1) \rangle
\end{multline*}
is an isometric cycle of $F$, and thus of $G$, that passes through $u, v, w$.
\end{proof}

\begin{defn}\label{D:hyper-median}
Let $G$ be a Peano partial cube.  A \emph{hyper-median} of a triple $(u,v,w)$ of vertices of $G$ is a quasi-median $(x,y,z)$ of $(u,v,w)$ that satisfies the equivalent properties (i) and (ii) of Lemma~\ref{L:met.tri./equiv.}.
\end{defn}

Recall that a convex hypertorus is gated by Theorem~\ref{T:conv.reg.H-subgr.=>gated}.

\begin{pro}\label{P:hypermed./gates}
Let $(x,y,z)$ be a hyper-median of a triple $(u,v,w)$ of vertices of a Peano partial cube $G$, and $H := \cartbig_{i \in I}C_i$ the hypertorus induced by the convex hull of $(x,y,z)$.  Then:

\textnormal{(i)}\; For each $i \in I$,\; $(pr_i(x),pr_i(y),pr_i(z))$ is a metric triangle of $C_i$, and thus $C_i$ is an even cycle of length greater than $4$.

\textnormal{(ii)}\; $x, y, z$ are the gates in $H$ of $u, v, w$, respectively.

\textnormal{(iii)}\; $\mathrm{idim}(H) = (d_G(x,y)+d_G(y,z)+d_G(z,x))/2$.
\end{pro}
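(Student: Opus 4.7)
The plan is to use the fact that $H = G[co_G(x,y,z)]$ is a hypertorus by Lemma~\ref{L:met.tri./equiv.}, hence convex by definition and gated by Theorem~\ref{T:conv.reg.H-subgr.=>gated} (as a finite convex regular subgraph of a Peano partial cube). Write $H = \cartbig_{i\in I}C_i$ and use the Interval and Distance Properties of the Cartesian product (Proposition~\ref{P:pro.Cartes.prod.}), iterated over the finite set $I$.

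\emph{For part (i):} Since $x,y,z\in V(H)$ and $H$ is convex, each $I_G(x,y)$ coincides with $I_H(x,y) = \prod_{i\in I}I_{C_i}(pr_i(x),pr_i(y))$ (and analogously for the other two pairs). The metric-triangle identity $I_G(x,y)\cap I_G(x,z)=\{x\}$ (and the two analogous identities at $y$ and $z$) project factorwise to $I_{C_i}(pr_i(x),pr_i(y))\cap I_{C_i}(pr_i(x),pr_i(z))=\{pr_i(x)\}$, so $(pr_i(x),pr_i(y),pr_i(z))$ is a metric triangle of $C_i$. A direct check in a cycle shows that if two of the three are equal then so are the three (e.g.\ $pr_i(x)=pr_i(y)$ forces $I_{C_i}(pr_i(x),pr_i(z))=I_{C_i}(pr_i(y),pr_i(z))\cap I_{C_i}(pr_i(x),pr_i(z))=\{pr_i(z)\}$, hence $pr_i(x)=pr_i(z)$); but then $x,y,z$ would all lie in the proper convex $C_i$-fiber through the common projection, contradicting $co_G(x,y,z)=V(H)$. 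Thus the three projections are pairwise distinct in each $C_i$, and since $C_4$ admits no nondegenerate metric triangle (any three distinct vertices of $C_4$ contain one on the geodesic between the other two), $|C_i|>4$.

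\emph{For part (ii):} The quasi-median equation $d_G(u,v)=d_G(u,x)+d_G(x,y)+d_G(y,v)$ together with $x\in I_G(u,v)$ and $y\in I_G(v,x)$ forces $d_G(u,y)=d_G(u,x)+d_G(x,y)$, i.e.\ $x\in I_G(u,y)$; likewise $x\in I_G(u,z)$. Let $g:=g_H(u)$; by the gate property, $d_G(u,t)=d_G(u,g)+d_H(g,t)$ for every $t\in V(H)$. Applying this with $t\in\{x,y\}$ and using $d_G(x,y)=d_H(x,y)$ yields $d_H(g,y)=d_H(g,x)+d_H(x,y)$, i.e.\ $x\in I_H(g,y)$, and analogously $x\in I_H(g,z)$. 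Projecting these inclusions by the Interval Property gives
\[
pr_i(x)\in I_{C_i}(pr_i(g),pr_i(y))\cap I_{C_i}(pr_i(g),pr_i(z)) \qquad (i\in I).
\]
The key cycle lemma is: in $C_{2n}$ with $n\geq 3$, if $(\alpha,\beta,\gamma)$ is a nondegenerate metric triangle and $\delta\in V(C_{2n})$ satisfies $\alpha\in I(\delta,\beta)\cap I(\delta,\gamma)$, then $\delta=\alpha$. This follows by elementary arc-length bookkeeping: placing $\alpha,\beta,\gamma$ around the cycle with surrounding arcs of lengths $p_1,p_2,p_3<n$, a case analysis according to which arc contains $\delta$ shows that the displayed incidence forces $\delta=\alpha$. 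Applied factorwise it gives $pr_i(g)=pr_i(x)$ for every $i$, hence $g=x$. The symmetric arguments yield $g_H(v)=y$ and $g_H(w)=z$.

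\emph{For part (iii):} Since the isometric dimension is additive over Cartesian products and $\mathrm{idim}(C_{2n_i})=n_i=|C_i|/2$, we have $\mathrm{idim}(H)=\sum_{i\in I}|C_i|/2$. By part~(i), for each $i$ the three distinct projections $pr_i(x),pr_i(y),pr_i(z)$ partition $C_i$ into three arcs of lengths precisely $d_{C_i}(pr_i(x),pr_i(y))$, $d_{C_i}(pr_i(y),pr_i(z))$, $d_{C_i}(pr_i(z),pr_i(x))$ (the pairwise distances being strictly less than $n_i$), summing to $|C_i|$. Summing over $i$ and using the Distance Property together with $d_G=d_H$ on $V(H)$ yields $d_G(x,y)+d_G(y,z)+d_G(z,x)=\sum_i|C_i|=2\,\mathrm{idim}(H)$. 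The principal technical point is the cycle lemma in part~(ii); once the triangle is placed with arc-lengths below $n$, this is a short direct computation.
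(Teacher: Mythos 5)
Your proof is correct. Part (i) follows essentially the paper's own route: the paper likewise observes that each projected triple is either a metric triangle of $C_i$ or fully degenerate, and rules out degeneracy via $V(C_i)=co_{C_i}(pr_i(x),pr_i(y),pr_i(z))$ (Proposition~\ref{P:pro.Cartes.prod.}(vii)); your two-step degeneracy argument reaches the same conclusion. In part (ii) the skeleton is the same (compare the gate of $u$ with $x$ factorwise), but the pivot differs: the paper notes that $(x,y,z)$ is a quasi-median of the gate triple $(x',y',z')$ and appeals to the rigidity of quasi-medians in a cycle, whereas you derive the interval incidences $x\in I_H(g,y)\cap I_H(g,z)$ from the quasi-median equations and then prove an explicit arc-length lemma in $C_{2n}$ forcing $g=x$. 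Your version is more elementary and self-contained (the paper's one-line projection step arguably hides the same case analysis), at the cost of a page of bookkeeping. Part (iii) is where you genuinely diverge: the paper counts $\Theta$-classes of $H$ directly, observing via Lemma~\ref{L:edge/co(H)}(i) that every $\Theta$-class of $H$ is represented in the three geodesics and that each class is hit by exactly two of the three pairwise geodesics, so the sum of the three distances double-counts $\mathrm{idim}(H)$; you instead use additivity of $\mathrm{idim}$ over the Cartesian product together with the fact that the three projected points partition each $C_i$ into arcs realizing the pairwise distances. The two computations are equivalent, but yours makes the double-counting concrete and avoids the paper's unproved claim about ``exactly two pairs,'' which is really the arc-partition fact in disguise; the paper's version has the advantage of not depending on the explicit product structure and would survive in settings where $H$ is merely antipodal.
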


\begin{proof}
(i)\; For every $i \in I$, either $(pr_i(x),pr_i(y),pr_i(z))$ is a metric triangle of $C_i$ or  $pr_i(x) = pr_i(y) = pr_i(z)$.  On the other hand, by Proposition~\ref{P:pro.Cartes.prod.}(vii), $V(C_i) = co_{C_i}(pr_i(x),pr_i(y),pr_i(z))$.  It follows that $(pr_i(x),pr_i(y),pr_i(z))$ must be a metric triangle of $C_i$, which implies that $C_i$ cannot be a $4$-cycle.

(ii)\; Let $(x',y',z')$ be the gates in $H$ of $u, v, w$, respectively.  Then $(x,y,z)$ is a quasi-median of $(x',y',z')$.  It follows that, for every $i \in I$, because $C_i$ is a cycle,   $(pr_i(x),pr_i(y),pr_i(z))$ must be a quasi-median of $(pr_i(x'),pr_i(y'),pr_i(z'))$, and thus $pr_i(x') = pr_i(x)$,\; $pr_i(y') = pr_i(y)$ and $pr_i(z') = pr_i(z)$.  Therefore $x' = x$,\; $y' = y$ and $z' = z$.

(iii)\; By Lemma~\ref{L:edge/co(H)}(i), each edge of $H$ is $\Theta$-equivalent to an edge of $I_G(x,y) \cup I_G(y,z) \cup I_G(z,x)$.  Moreover, each geodesic between any two vertices has the same $\Theta$-classes of edges.  Finally, for each edge $ab$ of $H$, there are exactly two pairs of vertices among $x, y, z$ such that any geodesic between the vertices of each of these two pairs contains an edge that is $\Theta$-equivalent to $ab$.  Whence the result.
\end{proof}

Clearly median graphs are the particular Peano partial cubes for which any triple of vertices has a median.  By generalizing this property we obtain the new concept of hyper-median partial cubes.

\begin{defn}\label{D:hypermed.p.c.}
A Peano partial cube all of whose triples of vertices admit a median or a hyper-median is called a \emph{hyper-median partial cube}.
\end{defn}

Median graphs are then hyper-median partial cubes.  Cellular bipartite graphs
are other examples of hyper-median partial cubes. The \emph{cellular
bipartite graphs} are the graphs that can be obtained from single
edges and even cycles by a sequence of gated amalgamations.  These graphs
were defined and studied by Bandelt and Chepoi~\cite{BC96}.  They
showed in particular~\cite[Proposition 3]{BC96} that the cellular
bipartite graphs are hyper-median partial cubes.  For a cellular bipartite graph, and more generally for a netlike partial cube, a hyper-median is a gated cycle.

As was shown in~\cite{P07-3}, there are netlike partial cubes that are not hyper-median. This is the case of the benzenoid graph in Figure~\ref{F:noMCP}
where the triple $(u,v,w)$ of vertices has neither a median nor a
hyper-median.

\begin{figure}[!h]
    \centering
    {\tt    \setlength{\unitlength}{1,2pt}
\begin{picture}(162,131)
\thinlines    \put(107,26){$w$}
              \put(23,26){$v$}
              \put(66,100){$u$}
              \put(51,82){\line(3,2){16}}
              \put(86,79){\line(-4,3){17}}
              \put(50,80){\line(0,-1){19}}
              \put(85,79){\line(0,-1){19}}
              \put(68,49){\line(3,2){16}}
              \put(68,47){\line(-4,3){17}}
              \put(101,48){\line(-4,3){17}}
              \put(51,16){\line(-4,3){17}}
              \put(51,18){\line(3,2){16}}
              \put(68,48){\line(0,-1){19}}
              \put(33,49){\line(0,-1){19}}
              \put(33,48){\line(3,2){16}}
              \put(102,47){\line(0,-1){19}}
              \put(84,18){\line(3,2){16}}
              \put(85,16){\line(-4,3){17}}
              \put(68,92){\circle*{4}}
              \put(50,80){\circle*{4}}
              \put(85,80){\circle*{4}}
              \put(84,17){\circle*{4}}
              \put(68,29){\circle*{4}}
              \put(50,17){\circle*{4}}
              \put(33,29){\circle*{4}}
              \put(102,47){\circle*{4}}
              \put(68,47){\circle*{4}}
              \put(33,47){\circle*{4}}
              \put(85,59){\circle*{4}}
              \put(50,59){\circle*{4}}
              \put(102,29){\circle*{4}}
\end{picture}}
\caption{A benzenoid graph that is not hyper-median.}
\label{F:noMCP}
\end{figure}
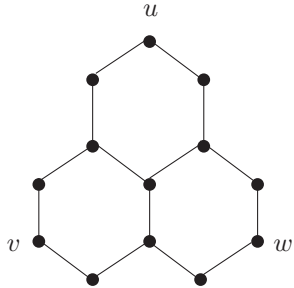

We first prove the uniqueness of the median or of the hyper-median of a triple of vertices of a hyper-median partial cube.

\begin{lem}\label{L:BC/pro.3}
Let $G$ be a Peano partial cube such that the convex hull of any metric triangle induces a hypertorus.  Then, for any three vertices $u, v, w$ of $G$, there exists a (necessarily unique) vertex $x$ such that $$I_G(u,v) \cap I_G(u,w) = I_G(u,x).$$
\end{lem}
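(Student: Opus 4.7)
My plan is to fix any quasi-median $(x_1,x_2,x_3)$ of $(u,v,w)$---such a metric triangle always exists in a connected graph, and collapses to a common median in the degenerate case---and to set $x:=x_1$. By the hypothesis on $G$, the convex hull $F:=G[co_G(\{x_1,x_2,x_3\})]$ is a (possibly trivial) hypertorus, hence a quasi-hypertorus, so Theorem~\ref{T:conv.reg.H-subgr.=>gated} will make $F$ gated in $G$, and Proposition~\ref{P:hypermed./gates} will identify $x_1,x_2,x_3$ as the gates in $F$ of $u,v,w$. The inclusion $I_G(u,x_1)\subseteq I_G(u,v)\cap I_G(u,w)$ is immediate, since the quasi-median equations place $x_1$ in both intervals and intervals of a partial cube are convex by Lemma~\ref{L:gen.propert.}(ii); the uniqueness statement drops out as well, since $I_G(u,a)=I_G(u,b)$ forces $a=b$ (just inspect the point farthest from $u$ on each side).

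For the substantive inclusion, I would fix $y\in I_G(u,v)\cap I_G(u,w)$, denote by $g(y)$ its gate in $F$, and split the $\Theta$-classes of $G$ into those represented in $F$ and those that are not. For a class $\tau$ represented in $F$, the gate property places each of $u,v,w,y$ on the same side of $\tau$ as its respective gate, so the classes of $F$ separating $u$ from $y$ are exactly those separating $x_1$ from $g(y)$; comparing against $v$ and $w$ in the same way yields $g(y)\in I_F(x_1,x_2)\cap I_F(x_1,x_3)$, which equals $\{x_1\}$ because $(x_1,x_2,x_3)$ is a metric triangle and $F$ is convex. Hence $g(y)=x_1$, and no $\Theta$-class represented in $F$ separates $u$ from $y$.

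It then remains to show that every $\Theta$-class $\tau$ not represented in $F$ which separates $u$ from $y$ also separates $u$ from $x_1$. Since $V(F)$ lies entirely on one side of such a $\tau$, the positions of $u,v,w$ relative to this side are the only data that matters, and because $\tau$ separates $u$ from both $v$ and $w$ a short case-split leaves two possibilities: either $u$ lies opposite $V(F)$, in which case $\tau$ appears on the $(u,x_1)$-geodesic and so separates $u$ from $x_1$, as required; or $u$ lies on the $V(F)$-side while $v,w$ lie opposite, so that $\tau$ appears both on a $(v,x_2)$-geodesic and on a $(w,x_3)$-geodesic.

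The hard part will be excluding this latter configuration, and my plan for doing so is to invoke the quasi-median identity $d_G(v,w)=d_G(v,x_2)+d_G(x_2,x_3)+d_G(x_3,w)$: concatenating a $(v,x_2)$-geodesic, a $(x_2,x_3)$-geodesic (which lies in the convex $F$ and therefore crosses no $\Theta$-class outside $F$), and a $(x_3,w)$-geodesic then produces a $(v,w)$-geodesic, and Lemma~\ref{L:gen.propert.}(v) forbids this geodesic from crossing the same $\Theta$-class twice, contradicting the presumption that $\tau$ appears both in the first and in the third segment. Therefore every $\tau$ separating $u$ from $y$ separates $u$ from $x_1$, so $y\in I_G(u,x_1)$.
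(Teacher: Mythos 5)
Your proof is correct, but it takes a genuinely different route from the paper's. The paper argues by contradiction on a violating triple $(u,v,w)$ chosen with $d_G(u,v)+d_G(u,w)$ minimal: following the proof of Proposition~2 of Bandelt and Chepoi on cellular bipartite graphs, it produces two vertices $x,y$ with $I_G(u,x)$ and $I_G(u,y)$ properly contained in $I_G(u,v)\cap I_G(u,w)$ and with $(v,x,y)$ and $(w,x,y)$ metric triangles, and then uses gatedness of the hypertori $co_G(v,x,y)$ and $co_G(w,x,y)$ together with join-hull commutativity to force $v=w$. You instead argue directly: you take a quasi-median $(x_1,x_2,x_3)$ of $(u,v,w)$, note that the hypothesis makes its convex hull a gated (quasi-)hypertorus $F$ whose vertices $x_1,x_2,x_3$ are the gates of $u,v,w$ (Theorem~\ref{T:conv.reg.H-subgr.=>gated} and Proposition~\ref{P:hypermed./gates}), and verify $I_G(u,v)\cap I_G(u,w)=I_G(u,x_1)$ by tracking $\Theta$-classes, split according to whether they are represented in $F$; the bookkeeping is sound, including the elimination of the bad configuration in your last step via the perimeter identity and Lemma~\ref{L:gen.propert.}(v). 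Your route has the merit of exhibiting $x$ explicitly as the first vertex of the quasi-median (so it also shows that the vertex produced later in Proposition~\ref{P:hypermed.p.c./met.triangle} coincides with it) and of being essentially self-contained, where the paper defers to an external proof. The one outside input you rely on --- that every triple of vertices of a connected graph with finite intervals admits a quasi-median --- is true but is nowhere proved in the paper, which in fact obtains quasi-median existence as a consequence of this very lemma; since the argument is not circular you should still justify it in a line, e.g.\ by minimizing the perimeter $d_G(x,y)+d_G(y,z)+d_G(z,x)$ over all triples satisfying the three distance identities (the triple $(u,v,w)$ itself is admissible, intervals are finite by Lemma~\ref{L:gen.propert.}(ii), and a minimizer is easily seen to be a metric triangle).
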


\begin{proof}
Suppose the contrary, and select a triple $(u,v,w)$ of distinct vertices of $G$ violating the assertion of the proposition such that $d_G(u,v)+d_G(u,w)$ is as small as possible.  Then, by the proof of \cite[Proposition 2]{BC96}, there exist two distinct vertices $x$ and $y$ such that both $I_G(u,x)$ and $I_G(u,y)$ are properly contained in $I_G(u,v) \cap I_G(u,w)$, and such that the triples $(v,x,y)$ and $(w,x,y)$ are metric triangles.

By the property of $G$,\; $H_v := G[co_G(v,x,y)]$ and $H_w := G[co_G(w,x,y)]$ are hypertori, and more precisely gated hypertori.  Let $w'$ be the gate of $w$ in $H_v$.  Then $(u,v,w')$ is also a triple of vertices of $G$ that violates the assertion of the proposition.  By the minimality of $d_G(u,v)+d_G(u,w)$, it follows that $w' = w$.  Hence $w \in V(H_v)$, and analogously $v \in V(H_w)$. By the definition of a Peano partial cube, $G$ is join-hull commutative.  Hence $w \in I_G(v,z)$ and $v \in I_G(w,z')$ for some $z, z' \in I_G(x,y)$.  On the other hand $I_G(x,y) \subseteq I_G(u,v) \cap I_G(u,w)$ since any interval of a partial cube is convex, and thus $z, z' \in I_G(u,v) \cap I_G(u,w)$.  
Therefore $$I_G(u,w) \subseteq I_G(u,v) \subseteq I_G(u,w),$$ and thus $v = w$, contrary to the hypothesis.
\end{proof}

\begin{pro}\label{P:hypermed.p.c./met.triangle}
A Peano partial cube $G$ is a hyper-median partial cube if and only if the convex hull of any metric triangle of $G$ induces a hypertorus.
\end{pro}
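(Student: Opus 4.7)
The forward implication is almost immediate from the definition of metric triangle. If $G$ is hyper-median and $(u,v,w)$ is a metric triangle, any median $m$ of $(u,v,w)$ lies in each pairwise intersection of intervals, which by the metric triangle property equals $\{u\}$, $\{v\}$ and $\{w\}$ respectively, forcing $u=v=w$; any hyper-median $(x,y,z)$ must satisfy $x \in I_G(u,v)\cap I_G(u,w)=\{u\}$ and symmetrically $y=v$, $z=w$, so $(x,y,z)=(u,v,w)$ and by Definition~\ref{D:hyper-median} the set $co_G(u,v,w)$ induces a hypertorus.

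For the converse, assume every metric triangle of $G$ spans a hypertorus, and let $(u,v,w)$ be any triple. Apply Lemma~\ref{L:BC/pro.3} three times to obtain unique vertices $x,y,z$ satisfying $I_G(u,x)=I_G(u,v)\cap I_G(u,w)$, $I_G(v,y)=I_G(v,u)\cap I_G(v,w)$, $I_G(w,z)=I_G(w,u)\cap I_G(w,v)$. If any two of $x,y,z$ coincide, say $x=y$, then $x$ lies simultaneously in $I_G(u,v)\cap I_G(u,w)$ and in $I_G(v,u)\cap I_G(v,w)$, hence in all three pairwise intervals, and is therefore a median of $(u,v,w)$. Otherwise $x,y,z$ are pairwise distinct, and the plan is to verify that $(x,y,z)$ is both a quasi-median of $(u,v,w)$ and a metric triangle; the hypothesis will then produce a hypertorus structure on $co_G(x,y,z)$, making $(x,y,z)$ a hyper-median.

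For the quasi-median equation $d_G(u,v)=d_G(u,x)+d_G(x,y)+d_G(y,v)$, since $x\in I_G(u,v)$ gives $d_G(u,v)=d_G(u,x)+d_G(x,v)$, it suffices to show $y\in I_G(x,v)$, equivalently $x\in I_G(u,y)$. Both $x$ and $y$ lie in the convex interval $I_G(u,v)$, and in this convex sub-partial-cube $x\in I_G(u,y)$ reduces to the inclusion $\Theta(u,x)\subseteq\Theta(u,y)$, where $\Theta(u,p)$ denotes the set of $\Theta$-classes of edges on any $(u,p)$-geodesic. By construction $\Theta(u,x)\subseteq\Theta(u,v)\cap\Theta(u,w)$, which consists of $\Theta$-classes separating $u$ from both $v$ and $w$; and the defining property of $y$ yields $\Theta(v,y)\subseteq\Theta(v,u)\cap\Theta(v,w)$, so $\Theta(u,y)=\Theta(u,v)\setminus\Theta(v,y)$ contains every $\Theta$-class separating $u$ from both $v$ and $w$, establishing the required inclusion. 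Symmetric arguments handle the other two quasi-median equations. For the metric triangle property, given $p\in I_G(x,y)\cap I_G(x,z)$, convexity of intervals yields $I_G(x,y)\subseteq I_G(u,v)$ and $I_G(x,z)\subseteq I_G(u,w)$, so $p\in I_G(u,v)\cap I_G(u,w)=I_G(u,x)$; combining $d_G(u,x)=d_G(u,p)+d_G(p,x)$ with $p\in I_G(x,v)$ (which holds since $I_G(x,y)\subseteq I_G(x,v)$ by the just-proved $y\in I_G(x,v)$) and the identity $d_G(u,v)=d_G(u,x)+d_G(x,v)$, a short distance computation forces $d_G(p,x)=0$, hence $p=x$.

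The main obstacle is the $\Theta$-class bookkeeping needed to establish the inclusion $\Theta(u,x)\subseteq\Theta(u,y)$, which underpins both the quasi-median equations and the metric triangle verification. Once this inclusion is obtained the remaining work consists of three symmetric repetitions and routine distance arithmetic inside $I_G(u,v)$, together with the case analysis for coincidences among $x,y,z$, all of which collapse to the existence of a median.
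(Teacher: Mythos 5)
Your proof is correct and follows essentially the same route as the paper: both use Lemma~\ref{L:BC/pro.3} to produce the candidate triple $(x,y,z)$, show that it is a metric triangle (hence a quasi-median) unless a median exists, and then invoke the hypothesis to obtain the hypertorus. The only differences are that you define $y$ and $z$ symmetrically rather than successively (the two definitions coincide, as noted in the proof of Proposition~\ref{P:quasi-med./uniqueness}) and that you spell out, via the $\Theta$-class bookkeeping, the verification of the quasi-median equations and of the metric-triangle property, which the paper leaves implicit.
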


\begin{proof}
(a)\; Let $(u,v,w)$ be a metric triangle of a hyper-median partial cube $G$.  then $(u,v,w)$ is clearly its own hyper-median, and thus $co_G(u,v,w)$ induces a hypertorus.

(b)\; Assume that the convex hull of any metric triangle of $G$ induces a hypertorus.  Let $(u,v,w)$ be a triple of vertices of $G$, and let $x, y,z$ be the vertices successively determined by
\begin{align*}
I_G(u,v) \cap I_G(u,w) = I_G(u,x),\\
I_G(v,x) \cap I_G(v,w) = I_G(v,y),\\
I_G(w,x) \cap I_G(w,y) = I_G(w,z),
\end{align*}
according to Lemma~\ref{L:BC/pro.3}.  If $x = y = z$, then this vertex is the unique median vertex.  Otherwise, the three intervals $I_G(x,y), I_G(y,z), I_G(z,x)$ pairwise intersect in their common endvertices.  Hence $(x,y,z)$ is metric triangle, and thus the convex hull of $\{x,y,z\}$ induces a hypertorus $H$.  Therefore $(x,y,z)$ is a hyper-median of $(u,v,w)$.  Hence $G$ is a hyper-median partial cube.
\end{proof}

\begin{pro}\label{P:quasi-med./uniqueness}
Let $G$ be a Peano partial cube.  If a triple of vertices of $G$ has a quasi-median, then this quasi-median is unique.
\end{pro}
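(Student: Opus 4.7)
My approach is to encode a quasi-median by the $\Theta$-classes it separates from each of $u,v,w$, and then show this encoding is forced and unique.  Since $G$ is bipartite, the $\Theta$-classes of $G$ separating some pair of $\{u,v,w\}$ partition into three disjoint families $E_u,E_v,E_w$, where $E_p$ consists of the classes with $p$ on one side and the other two vertices on the other (so that $d_G(u,v)=|E_u|+|E_v|$ and cyclically).  For any quasi-median $(x,y,z)$, the three distance equalities together with the three conditions $x\in I_G(u,v)\cap I_G(u,w)$, $y\in I_G(v,u)\cap I_G(v,w)$, $z\in I_G(w,u)\cap I_G(w,v)$ force the set of $\Theta$-classes separating $x$ from $u$ to be a subset $A\subseteq E_u$, and symmetrically $B\subseteq E_v$, $C\subseteq E_w$.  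A short computation then shows that the set of $\Theta$-classes separating $x$ from both $y$ and $z$ is exactly $E_u\setminus A$, so the metric triangle condition $I_G(x,y)\cap I_G(x,z)=\{x\}$ is equivalent to the local statement that \emph{$x$ has no edge of $G$ in any class of $E_u\setminus A$}, and analogously for $y$ and $z$.

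With this dictionary established, I would first observe that the no-edge condition forces $A$ to be maximal inside the convex set $J:=I_G(u,v)\cap I_G(u,w)$: if some $x^{\sharp}\in J$ separated a strictly larger subset from $u$ than $A$, then since $J$ is convex, any $(x,x^{\sharp})$-geodesic lies in $J$, and its initial edge at $x$ would belong to a class of $E_u\setminus A$, contradicting the no-edge condition.  Suppose for contradiction that two quasi-medians yield $A\neq A'$.  The same no-edge analysis applied to an $(x,x')$-geodesic in $J$ shows that it starts at $x$ by crossing a class of $A\setminus A'$ (going toward $u$) and ends at $x'$ by crossing a class of $A'\setminus A$ (also toward $u$); reordering all downward steps before all upward steps identifies a pivot vertex $p^{\star}\in J$ separated from $u$ by exactly $A\cap A'$, carrying edges of $G$ in some $c\in A\setminus A'$ and some $c'\in A'\setminus A$.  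Applying the join-hull commutativity of $G$ (Theorem~\ref{T:Peano/ph}) to $\{u,x,x'\}$, together with the Characterization Theorem~\ref{T:charact.} to locate the two ascending edges at $p^{\star}$ inside a gated hypercylinder, I would produce a vertex $x^{\star}\in J$ separated from $u$ by exactly $A\cup A'$, contradicting the maximality of $A$.  This forces $A=A'$, and the symmetric argument at $v$ and $w$ yields $(x,y,z)=(x',y',z')$.

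The main obstacle is exactly the production of the join vertex $x^{\star}$.  In a generic partial cube the family of $\Theta$-subsets realisable from a base point need not be closed under union — the canonical obstruction being $Q_3^{-}$ — so Peano-homogeneity must genuinely be used.  The delicate point is that the two ascending classes $c$ and $c'$ at $p^{\star}$ could a priori lie in the same even-cycle factor of the surrounding hypercylinder, in which case the desired $4$-cycle extension is not available; the technical heart of the argument is to show, using the bulge structure supplied by Theorem~\ref{T:charact.}, that $c$ and $c'$ necessarily sit in commuting Cartesian factors of that hypercylinder, so that the $4$-Cycle Property (Proposition~\ref{P:pro.Cartes.prod.}(iv)) furnishes $x^{\star}$.
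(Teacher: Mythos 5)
Your reduction of the metric-triangle condition at $x$ to the statement that $x$ carries no edge in a class of $E_u\setminus A$, and the resulting maximality of $A$ among separating sets of vertices of $J$, are correct and cleanly set up. But the argument breaks down exactly where you place its difficulty, and in fact one step earlier. First, ``reordering all downward steps before all upward steps'' along an $(x,x')$-geodesic is not a legitimate operation in a partial cube: the $\Theta$-classes crossed by a geodesic are determined, but the orders in which they can be crossed are constrained by the graph (already between antipodal vertices of a convex $6$-cycle only two orders are realizable), and the existence of a vertex separated from $u$ by exactly $A\cap A'$ is precisely the median-type statement that fails in non-median Peano partial cubes. So the pivot $p^\star$ is not available without a genuine argument. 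Second, the construction of $x^\star$ with separating set $A\cup A'$ is explicitly deferred as ``the technical heart'': even granting $p^\star$ and a convex $4$-cycle through edges of the classes $c$ and $c'$, one application of the $4$-Cycle Property only adjoins the two classes $c,c'$ to $A\cap A'$, not all of $A\cup A'$, so an iteration inside the bulge structure would still have to be designed and justified. As it stands the proposal is a programme rather than a proof.

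For contrast, the paper's route avoids both obstacles. It first establishes (invoking the proof of Proposition 3 of Bandelt and Chepoi, which rests on the Peano property) that any quasi-median $(x,y,z)$ of $(u,v,w)$ satisfies $I_G(u,v)\cap I_G(u,w)=I_G(u,x)$ together with its two cyclic analogues. A second quasi-median $(x',y',z')$ then has $x'\in I_G(u,x)$, $y'\in I_G(v,y)$, $z'\in I_G(w,z)$; a short distance computation places $x$ in $I_G(x',y')\cap I_G(x',z')$, and the metric-triangle condition for $(x',y',z')$ forces $x=x'$, and likewise $y=y'$, $z=z'$. If you want to salvage your $\Theta$-class framework, the interval identity $I_G(u,v)\cap I_G(u,w)=I_G(u,x)$ is the missing lemma that would replace both $p^\star$ and $x^\star$: in your language it says exactly that every separating set of a vertex of $J$ is contained in $A$, which subsumes your maximality statement and makes the exchange argument unnecessary.
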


\begin{proof}
Assume that a triple $(u,v,w)$ of vertices of $G$ has a quasi-median $(x,y,z)$.  Then $(x,y,z)$ satisfies the following properties:
\begin{align*}
I_G(u,v) \cap I_G(u,w) = I_G(u,x),\\
I_G(v,x) \cap I_G(v,w) = I_G(v,y),\\
I_G(w,x) \cap I_G(w,y) = I_G(w,z).
\end{align*}

In order to prove the uniqueness of this quasi-median, we first note that, by the proof of \cite[Proposition 3]{BC96}, we have $$I_G(u,v) \cap I_G(v,w) = I_G(v,y) \text{\quad and}\quad I_G(u,w) \cap I_G(v,w) = I_G(w,z).$$  Let $(x',y',z')$ be any hyper-median of $(u,v,w)$.  Then $$x' \in I_G(u,x),\quad y' \in I_G(v,y)  \text{\quad and}\quad  z' \in I_G(w,z).$$  Then, because $(x',y',z')$ is a metric triangle, it follows that $x' = x,\; y' = y$ and $z' = z$.
\end{proof}

Because any triple of vertices of a partial cube may have at most one median, we immediately deduce the following result from the above proposition.

\begin{cor}\label{C:hypermed./uniqueness}
Let $G$ be a Peano partial cube.  If a triple of vertices of $G$ has a median or a hyper-median, then this median or hyper-median is unique.
\end{cor}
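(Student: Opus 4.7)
The plan is to reduce the corollary to the two immediately preceding uniqueness results by splitting on which of the two structures the triple admits. First I would dispose of the median case: since $G$ is a partial cube, Lemma~\ref{L:gen.propert.}(i) already asserts that a triple of vertices of any partial cube can have at most one median. So nothing further is needed there.

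For the hyper-median case, I would argue that a hyper-median $(x,y,z)$ of a triple $(u,v,w)$ is, by Definition~\ref{D:hyper-median}, in particular a quasi-median of $(u,v,w)$. Hence Proposition~\ref{P:quasi-med./uniqueness}, which establishes uniqueness of quasi-medians in a Peano partial cube, applies verbatim: any two hyper-medians of $(u,v,w)$ would be two quasi-medians of $(u,v,w)$, hence equal. Combining these two observations yields the statement.

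The only point that deserves a sentence of explanation is that the ``or'' in the statement is genuinely exclusive, i.e.\ one does not need to worry about a triple simultaneously admitting both a median and a distinct hyper-median. This follows because a median of $(u,v,w)$ lies in $I_G(u,v) \cap I_G(v,w) \cap I_G(w,u)$, forcing the canonical quasi-median vertices $x, y, z$ constructed via the interval intersections in Lemma~\ref{L:BC/pro.3} to coincide, so the only quasi-median is the degenerate triangle $(m,m,m)$, which is not a hyper-median in the sense of Definition~\ref{D:hyper-median}. There is essentially no obstacle here; the corollary is a direct repackaging of Lemma~\ref{L:gen.propert.}(i) and Proposition~\ref{P:quasi-med./uniqueness}, and the proof can be given in two lines.
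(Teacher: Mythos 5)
Your proof is correct and is essentially the paper's own argument: the paper likewise derives the corollary immediately from Lemma~\ref{L:gen.propert.}(i) for medians and from Proposition~\ref{P:quasi-med./uniqueness} for hyper-medians (a hyper-median being in particular a quasi-median). Your extra observation that a triple cannot simultaneously admit a median and a hyper-median is a correct bonus but is not needed for the statement as phrased.
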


In the following, if a triple $(u,v,w)$ of vertices of a Peano partial cube $G$ has a median (resp. a hyper-median), then this median (resp. hyper-median) will be denoted by $m_G(u,v,w)$ (resp. $h_G(u,v,w)$).

\begin{cor}\label{C:quasi-hypertori/hyper-median}
Any quasi-hypertorus is a hyper-median partial cube.
\end{cor}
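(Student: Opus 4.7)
The plan is to verify the criterion of Proposition~\ref{P:hypermed.p.c./met.triangle}: a Peano partial cube is hyper-median if and only if the convex hull of every metric triangle induces a hypertorus. Since $K_2$ and every even cycle are Peano partial cubes, Theorem~\ref{T:hypernet./gat.amalg.+cart.prod.} shows that every quasi-hypertorus $G = \cartbig_{i \in I}G_i$ (a finite product of copies of $K_2$ and even cycles) is itself a Peano partial cube, and it remains to check the convex-hull condition.

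The main step is to translate the defining equalities of a metric triangle into coordinate-wise statements. For a triple $(u,v,w)$ of $G$, the Interval Property (Proposition~\ref{P:pro.Cartes.prod.}(ii)) gives $I_G(u,v) \cap I_G(v,w) = \prod_{i \in I}(I_{G_i}(\pr_i u, \pr_i v) \cap I_{G_i}(\pr_i v, \pr_i w))$, and likewise for the other two pairs. Hence $(u,v,w)$ is a (nontrivial) metric triangle of $G$ if and only if, in each coordinate $i$, either $(\pr_i u, \pr_i v, \pr_i w)$ is a metric triangle of $G_i$ with three distinct vertices, or the three projections coincide; indeed, a quick case-check shows that having exactly two equal projections at a single coordinate already violates one of the three pairwise endpoint-intersection equalities. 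I would then rule out the trivial factors by noting that neither $K_2$ (too few vertices) nor $C_4$ (any three distinct vertices contain an antipodal pair, whose interval exhausts the cycle) admits a metric triangle of three distinct vertices, so each coordinate $j$ at which the projections are pairwise distinct forces $G_j$ to be an even cycle of length at least $6$, and in such a cycle the convex hull of a metric triangle is the whole cycle.

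To assemble the argument, let $J := \{i \in I : \pr_i u, \pr_i v, \pr_i w \text{ are pairwise distinct}\}$. Using Proposition~\ref{P:pro.Cartes.prod.}(vii) and the Convex Subgraph Property (v), we get $co_G(u,v,w) = \cartbig_{i \in I} co_{G_i}(\pr_i u, \pr_i v, \pr_i w)$, which reduces to $\cartbig_{j \in J}G_j$ (for $i \notin J$ the factor collapses to a single vertex, and for $j \in J$ the factor is all of $G_j$). This is a Cartesian product of even cycles of length greater than $4$, that is, a hypertorus, provided $J \neq \emptyset$; but $J = \emptyset$ would force $u = v = w$, contradicting nontriviality. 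The only step that takes genuine care is the coordinate-wise characterization of metric triangles in a Cartesian product; the remainder is routine bookkeeping of the standard Cartesian-product identities for intervals and convex hulls.
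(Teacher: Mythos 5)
Your proof is correct and follows essentially the same route as the paper: both reduce to the criterion of Proposition~\ref{P:hypermed.p.c./met.triangle} and then analyse a metric triangle coordinate-wise via the Interval Property, concluding that each factor's convex hull is either a single vertex or the whole cycle. The only (minor, harmless) difference is organizational: you absorb the $K_2$ and $C_4$ factors directly into the coordinate-wise dichotomy, whereas the paper first disposes of the hypercube and prism cases separately before treating the hypertorus case.
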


\begin{proof}
Let $G$ be a quasi-hypertorus.  The result is clear if $G$ is a hypercube.  Assume that $G$ is not a hypercube.  Let $(u,v,w)$ be a metric triangle of $G$.  If $G$ is a prism $F \Box K_2$ over a hypertorus $F$, then, as we saw in the proof of Proposition~\ref{P:hypermed.p.c./met.triangle}, the vertices $u, v, w$ must belong to the same $F$-layer of $G$.  Hence, without loss of generality we can suppose that $G$ is a hypertorus, say $G = \cartbig_{i \in I}C_i$, where $C_i$ is an even cycle for every $i \in I$, the  length of $C_i$ being greater than $4$ for at least one $i \in I$.  Because the intervals $I_G(u,v), I_G(v,w),I_G(w,u)$ pairwise intersect in their common endvertices, it follows that, for every $i \in I$, either $pr_i(u) = pr_i(v) = pr_i(w)$, which is necessarily  the case if $C_i$ is a $4$-cycle, or the intervals $I_{C_i}(pr_i(u),pr_i(v)), I_{C_i}(pr_i(v),pr_i(w)), I_{C_i}(pr_i(w),pr_i(u))$ pairwise intersect in their common endvertices, and this is the case for at least one $i$.  Therefore the subgraph of $C_i$ induced by $co_{C_i}(pr_i(u),pr_i(v),pr_i(w))$ is either a $K_1$ or the cycle $C_i$.  It follows that the subgraph of $G$ induced by $co_G(u,v,w)$ is a hypertorus.  Consequently $G$ is a hyper-median partial cube by Proposition~\ref{P:hypermed.p.c./met.triangle}.
\end{proof}

By \cite[Theorem 3.5]{P07-3}, a netlike partial cube is hyper-median if and only if it is tricycle-free.  We will extend this result to all Peano partial cubes.

\begin{thm}\label{T:charact.hyper-median}
Let $G$ be a Peano partial cube.  Then $G$ is a hyper-median partial cube if and only $G$ is tricycle-free.
\end{thm}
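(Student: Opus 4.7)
The plan is to prove the two directions separately, with the forward direction using Proposition~\ref{P:hypermed.p.c./met.triangle} to obtain a hypertorus on a metric triangle that a tricycle cannot fit into, and the reverse direction reducing to finite convex subgraphs via Theorem~\ref{T:tricycle/decomp.fin.hypernet.} together with gated amalgam preservation.

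For the forward direction, I will argue by contrapositive. Suppose $G$ is hyper-median and contains a tricycle $(C_{0},C_{1},C_{2})$ with common vertex $u$ and shared edges $uv_{k}$ where $uv_{k}$ is the edge in $C_{i} \cap C_{j}$ for $\{i,j,k\} = \{0,1,2\}$. Writing $|C_{i}| = 2n_{i} \geq 6$, let $u_{i}$ be the antipode of $u$ in $C_{i}$. Each $C_{i}$ is convex (hence gated by Theorem~\ref{T:conv.reg.H-subgr.=>gated}), so $I_{G}(u,u_{i}) = V(C_{i})$. I will verify that $(u_{0},u_{1},u_{2})$ is a metric triangle: a geodesic from $u_{i}$ to $u_{j}$ is forced through the shared vertex $v_{k}$ giving $d_{G}(u_{i},u_{j}) = n_{i}+n_{j}-2$, and the pairwise intervals intersect only at endpoints (verified by a direct distance check using gatedness of the $C_{i}$'s). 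By Proposition~\ref{P:hypermed.p.c./met.triangle}, $H := G[co_{G}(\{u_{0},u_{1},u_{2}\})]$ must then induce a hypertorus.

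Next I will show $V(C_{0}) \cup V(C_{1}) \cup V(C_{2}) \subseteq V(H)$. Each $v_{k}$ lies in $I_{G}(u_{i},u_{j}) \subseteq V(H)$, then $u$ lies in $I_{G}(v_{i},v_{j}) \subseteq V(H)$ since the $v$'s sit at distance $2$ through $u$ inside one of the cycles, and finally $V(C_{i}) = I_{G}(u,u_{i}) \subseteq V(H)$ by convexity. But in any hypertorus $\cartbig_{i} D_{i}$, the $4$-Cycle Property (Proposition~\ref{P:pro.Cartes.prod.}(iv)) forces two convex cycles of length at least $6$ to be either parallel $D_{i}$-layers of the same factor (hence disjoint) or $D_{i}$- and $D_{j}$-layers in different factors (hence sharing at most a single vertex). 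So a hypertorus cannot contain two convex cycles of length at least $6$ sharing an edge, contradicting the tricycle inside $H$.

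For the reverse direction, assume $G$ is tricycle-free. Given a triple $(u,v,w)$, the polytope $F := G[co_{G}(\{u,v,w\})]$ is finite by Lemma~\ref{L:gen.propert.}(iii); it is a Peano partial cube by Theorem~\ref{T:hypernet./gat.amalg.+cart.prod.} and inherits tricycle-freeness because convex cycles of $F$ remain convex in $G$. By Theorem~\ref{T:tricycle/decomp.fin.hypernet.}, $F$ is built from finite quasi-hypertori by a sequence of gated amalgamations, and by Corollary~\ref{C:quasi-hypertori/hyper-median} each such quasi-hypertorus is hyper-median. I will prove by induction on the number of amalgamation steps that the gated amalgam of two hyper-median partial cubes is hyper-median. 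Granted this, $F$ is hyper-median, the triple $(u,v,w)$ admits a median or hyper-median in $F$, and this transfers unchanged to $G$ because $F$ is convex in $G$ (so intervals in $F$ equal the restrictions of intervals in $G$ and a convex hull computed in $F$ equals the one computed in $G$).

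The inductive step is the main obstacle. Suppose $F = F_{0} \cup F_{1}$ with $F_{0}, F_{1}$ gated and hyper-median, and consider a triple $(u,v,w)$ in $F$. If all three lie in the same $F_{i}$, one uses the median or hyper-median provided by $F_{i}$. Otherwise, I will reduce by replacing any vertex lying outside a chosen side by its gate in that side: for instance, if $u,v \in V(F_{0})$ and $w \in V(F_{1}) \setminus V(F_{0})$, replace $w$ by its gate $g_{0}(w) \in V(F_{0} \cap F_{1})$. The gate property $d_{F}(x,w) = d_{F}(x,g_{0}(w)) + d_{F}(g_{0}(w),w)$ for $x \in V(F_{0})$ ensures that a median $m$ of $(u,v,g_{0}(w))$ in $F_{0}$ is also a median of $(u,v,w)$ in $F$, and similarly that a hyper-median $(x,y,z)$ of $(u,v,g_{0}(w))$ in $F_{0}$ (whose convex hull, computed in $F_{0}$, coincides with its convex hull in $F$) satisfies the quasi-median distance equations relative to $(u,v,w)$ via the gate decomposition. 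The symmetric cases with two vertices on one side and one on the other are handled identically, completing the plan.
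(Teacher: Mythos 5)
Your reverse direction is sound and takes a genuinely different route from the paper: you fully decompose the polytope $F=G[co_G(\{u,v,w\})]$ via Theorem~\ref{T:tricycle/decomp.fin.hypernet.} and prove that gated amalgamation preserves the existence of medians/hyper-medians by the gate-replacement argument, whereas the paper works only with metric triangles, applies a \emph{single} decomposition step (Theorem~\ref{T:tricycle/decomp.}) to $co_G(u,v,w)$, and gets an immediate contradiction from the gate of the isolated vertex via Proposition~\ref{P:hypermed.p.c./met.triangle}. Your amalgam-preservation step (replacing the vertex outside $F_i$ by its gate and checking the quasi-median distance equations through the gate decomposition) is correct, and the transfer from $F$ to $G$ is justified by convexity; this is more work than the paper's version but it is valid.

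The forward direction, however, has a genuine gap at the one step that carries all the difficulty: the verification that $(u_0,u_1,u_2)$ is a metric triangle. Gatedness of the cycles $C_i$ (Theorem~\ref{T:conv.reg.H-subgr.=>gated}) does give you that the gate of $u_i$ in $C_j$ is the shared neighbour $v_k$ of $u$ (since the gate lies in $I_G(u_i,u)\cap I_G(u_i,v_k)\subseteq V(C_i)\cap V(C_j)=\{u,v_k\}$ and cannot be $u$), hence $d_G(u_i,u_j)=n_i+n_j-2$ and $v_k$ lies on \emph{some} $(u_i,u_j)$-geodesic. But the claim that ``the pairwise intervals intersect only at endpoints'' requires $I_G(u_i,u_j)\subseteq V(C_i)\cup V(C_j)$, equivalently that \emph{every} $(u_i,u_j)$-geodesic is forced through $v_k$ — and the gate property only asserts membership of $v_k$ in the interval, not that the interval contains nothing outside the two cycles. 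A direct distance computation from gatedness of the three cycles does not close this: one gets bounds like $d_G(x,u_j)\le n_j-1$ for $x\in I_G(u_i,u_j)\cap I_G(u_j,u_k)$, not $x=u_j$. The paper's proof of exactly this point is the substantial part of its argument (a): it places $C_i-W_{uv_k}$ and $C_j-W_{uv_k}$ in \emph{distinct} bulges $X_i\ne X_j$ of $co_G(U_{v_ku})$, uses that the gated hypercylinders $\mathbf{Cyl}[X_i]$ and $\mathbf{Cyl}[X_j]$ are separated so that the gate of $u_i$ in $\mathbf{Cyl}[X_j]$ lands in $U_{v_ku}$, and only then deduces uniqueness of the geodesic $P_{ij}$. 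So the missing ingredient is the bulge/hypercylinder structure from the Characterization Theorem; without it the metric-triangle claim is unsupported, and everything downstream (the hypertorus $H$ containing the tricycle) rests on it. The remainder of your forward direction — that $V(C_0)\cup V(C_1)\cup V(C_2)\subseteq V(H)$ and that a hypertorus cannot contain two convex cycles of length at least $6$ sharing an edge — is fine and matches the paper's remark after Definition~\ref{D:tricycle}.
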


\begin{proof}
(a)\; Assume that $G$ is a hyper-median partial cube. Suppose that
$G$ contains a tricycle $(C_{1}, C_{2}, C_{3})$.  Then the length of each of these cycles is
greater than $4$.  Let $a$ be the common vertex of these three cycles
and, for every triple $(i,j,k)$ of elements of $\{1,2,3\}$, let
$ab_{i}$ be the common edge of $C_{j}$ and $C_{k}$.  For $i = 1, 2,
3$, let $u_{i}$ be the antipode of $a$ in $C_{i}$.  Because of the
convexity of each of these cycles we have $ V(C_{i}) \cap U_{b_{j}a} =
\{u_{i},b_{j} \}$ for any $i \neq j$.

For $i \neq j$, let $C_{i}[u_{i},b_{j}]$ be the
$(u_{i},b_{j})$-geodesic in $C_{i}$.  We will show that $P_{ij} :=
C_{i}[u_{i},b_{k}] \cup C_{j}[u_{j},b_{k}]$ is the only
$(u_{i},u_{j})$-geodesic.  Without loss of generality we will suppose
that $i = 1,\; j = 2$ and $k = 3$, and moreover that the length of
$C_{1}$ is less than or equal to that of $C_{2}$, and hence that
$d_{G_{1}}(u_{1},b_{3}) \leq d_{G_{2}}(u_{2},b_{3})$.

For $i = 1, 2$, because $C_{i}$ is a convex cycle of length greater
than $4$, there exists a bulge $X_{i}$ of $co_{G}(U_{b_{3}a})$ such
that $C_{i}$ is a convex cycle of $H_{i} := \mathbf{Cyl}[X_{i}]$.  Clearly $H_{1} \neq H_{2}$.  Hence, by
the definition of bulges, $u_{1} \notin V(H_{2})$.  It
follows that the gate $x$ of $u_{1}$ in $H_{2}$ belongs to
$U_{b_{3}a}$.  By the Distance Property of the Cartesian product
$H_{2}$,\; $C_{2}$ is gated in $H_{2}$ and thus in $G$, and moreover the
gate of $x$, and thus of $u_{1}$, in $C_{2}$ belongs to $U_{b_{3}a}$
as well.  Therefore the gate of $u_{1}$ in $C_{2}$ is $b_{3}$ because
$d_{G_{1}}(u_{1},b_{3}) \leq d_{G_{2}}(u_{2},b_{3})$ by hypothesis.
Then $d_{G_{1}}(u_{1},u_{2}) = d_{G_{1}}(u_{1},b_{3}) +
d_{G_{1}}(u_{2},b_{3})$.  Therefore $P_{12}$ is a
$(u_{1},u_{2})$-geodesic.

Suppose that there exists another $(u_{1},u_{2})$-geodesic $R$ 
distinct from $P_{12}$.  By the properties of $H_{1}$ and $H_{2}$ and 
the fact that $C_{1}$ and $C_{2}$ are convex, it follows that 
$P_{12}$ and $R$ are internally disjoint.  Then, clearly, the 
neighbor of $u_{1}$ in $R$ has no gate in $C_{2}$, contrary to the 
fact that $C_{2}$ is gated.  Consequently $P_{12}$ is the only 
$(u_{1},u_{2})$-geodesic in $G$.

It follows that the intervals $I_{G}(u_{1},u_{2}),\; I_{G}(u_{2},u_{3}),\;
I_{G}(u_{3},u_{1})$ pairwise intersect in their common endvertices.  On the other hand the subgraph of $G$ induced by $co_G(u_1,u_2,u_3)$ contains the tricycle $(C_{1}, C_{2}, C_{3})$, and thus cannot be a hypertorus, contrary to Proposition~\ref{P:hypermed.p.c./met.triangle} and the assumption that $G$ is a hyper-median partial cube.

(b)\; Conversely assume that $G$ contains no tricycle.
Let $(u,v,w)$ be a metric triangle of $G$.  Then $F := G[co_G(u,v,w)]$ is a finite convex subgraph of $G$, and thus is a Peano partial cube, that is tricycle-free since so is $G$ by assumption.

Suppose that $F$ is not a hypertorus.  Then it is not a quasi-hypertorus by the property of the triple $(u,v,w)$ and the fact that quasi-hypertori are hyper-median partial cubes  by Corollary~\ref{C:quasi-hypertori/hyper-median}.  It follows, by
Theorem~\ref{T:tricycle/decomp.}, that $F$ is the gated amalgam of two
gated subgraphs $F_{0}$ and $F_{1}$.
Because $F$ is induced by the convex hull of $\{u,v,w\}$, it follows that the vertices $u,
v, w$ cannot belong to $V(F_{i})$ for some $i \in \{0,1 \}$.  Hence,
without loss of generality, we can suppose that $u, v \in
V(F_{0}-F_{1})$ and $w \in V(F_{1}-F_{0})$.  Then the gate of $w$ in
$F_{0}$ must belong to $I_{F}(w,u) \cap I_{F}(w,v)$, contrary to the
hypothesis that $I_{G}(w,u) \cap I_{G}(w,v) = \{w \}$.  Therefore $F$
is a hypertorus, and consequently $G$ is a hyper-median partial cube by
Proposition~\ref{P:hypermed.p.c./met.triangle}.
\end{proof}

The following result is an immediate consequence of Theorems~\ref{T:tricycle/decomp.fin.hypernet.} and \ref{T:charact.hyper-median} and Corollary~\ref{C:quasi-hypertori/hyper-median}.

\begin{thm}\label{T:charact.finite.hyper-median}
A finite partial cube is a hyper-median partial cube if and only if it is obtained by a sequence of gated amalgamations from finite quasi-hypertori.
\end{thm}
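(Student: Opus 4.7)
The plan is to note that this theorem is essentially a packaging result: all the substance has been established in the preceding development, and the proof amounts to chaining three earlier results together cleanly.

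For the necessity direction, I would start from a finite hyper-median partial cube $G$. By Theorem~\ref{T:charact.hyper-median}, $G$ being a hyper-median partial cube is equivalent to $G$ being a tricycle-free Peano partial cube. Hence $G$ is a finite tricycle-free Peano partial cube, so Theorem~\ref{T:tricycle/decomp.fin.hypernet.} applies and yields precisely that $G$ can be obtained by a sequence of gated amalgamations from finite quasi-hypertori. No additional work is needed here.

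For the sufficiency direction, I would proceed by induction on the number $n$ of gated amalgamation steps used in the construction. The base case $n=0$ is handled by Corollary~\ref{C:quasi-hypertori/hyper-median}, which states that quasi-hypertori are hyper-median partial cubes. For the inductive step, I would show that the class of hyper-median partial cubes is closed under gated amalgams. To do this, suppose $G$ is the gated amalgam of two hyper-median partial cubes $G_0$ and $G_1$. By Theorem~\ref{T:charact.hyper-median}, each $G_i$ is a tricycle-free Peano partial cube. Two closure properties then combine: Theorem~\ref{T:hypernet./gat.amalg.+cart.prod.} ensures that $G$ is again a Peano partial cube, while Proposition~\ref{P:tricycle/gat.amalg.} ensures that $G$ is tricycle-free. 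Applying Theorem~\ref{T:charact.hyper-median} once more in the reverse direction, $G$ is a hyper-median partial cube. This closes the induction.

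The main (only mild) point of care is verifying that the notion of ``obtained by a sequence of gated amalgamations from finite quasi-hypertori'' is set up so that each intermediate graph in the construction is finite and fits the inductive hypothesis; since gated amalgams of two finite graphs are finite, this is automatic. There is no genuine obstacle, because the heavy lifting — namely the structural decomposition of compact tricycle-free Peano partial cubes (Theorem~\ref{T:tricycle/decomp.}), the stability of tricycle-freeness under gated amalgams (Proposition~\ref{P:tricycle/gat.amalg.}), the tricycle-free characterization of hyper-mediality (Theorem~\ref{T:charact.hyper-median}), and the fact that quasi-hypertori are themselves hyper-median (Corollary~\ref{C:quasi-hypertori/hyper-median}) — has already been carried out in the preceding subsections. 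The proof therefore reduces to a short invocation of these four results.
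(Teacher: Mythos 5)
Your proposal is correct and follows essentially the same route as the paper, which derives the theorem as an immediate consequence of Theorem~\ref{T:tricycle/decomp.fin.hypernet.}, Theorem~\ref{T:charact.hyper-median} and Corollary~\ref{C:quasi-hypertori/hyper-median}. Your explicit induction for the sufficiency direction merely unfolds the ``if'' part of Theorem~\ref{T:tricycle/decomp.fin.hypernet.} (whose own proof rests on Proposition~\ref{P:tricycle/gat.amalg.} in exactly the way you describe), so there is no substantive difference.
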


\begin{pro}\label{P:hyper-med./fin.conv.subgr.}
A partial cube is hyper-median if and only if so are all its finite convex subgraphs.
\end{pro}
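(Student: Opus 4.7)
The plan is to reduce both directions to Theorem~\ref{T:charact.hyper-median}, which identifies hyper-median partial cubes with tricycle-free Peano partial cubes. Throughout, I will use the basic transfer fact that if $F$ is a convex subgraph of a partial cube $G$, then $I_F(x,y) = I_G(x,y)$ for all $x,y \in V(F)$; consequently, a subgraph $C \subseteq F$ is convex in $F$ if and only if it is convex in $G$. In particular, the data constituting a tricycle (three pairwise intersecting convex cycles with the prescribed intersection pattern) transfers freely between $F$ and $G$ whenever $F$ is convex in $G$ and the three cycles are contained in $F$.

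For the forward direction, I would let $G$ be a hyper-median partial cube and $F$ a finite convex subgraph. By Theorem~\ref{T:hypernet./gat.amalg.+cart.prod.}, $F$ is a Peano partial cube. If $F$ contained a tricycle $(C_1,C_2,C_3)$, the transfer observation shows that $(C_1,C_2,C_3)$ is also a tricycle of $G$, contradicting tricycle-freeness of $G$ given by Theorem~\ref{T:charact.hyper-median}. Hence $F$ is a tricycle-free Peano partial cube, so by Theorem~\ref{T:charact.hyper-median} it is hyper-median.

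For the converse, assume every finite convex subgraph of $G$ is hyper-median. Each such subgraph is in particular a Peano partial cube, and therefore has pre-hull number at most $1$ by Proposition~\ref{P:hypernet./ph1}. By Definition~\ref{D:ph-homog.}, this means $G$ is ph-homogeneous, i.e., a Peano partial cube. It remains to show $G$ is tricycle-free. Suppose, for contradiction, that $G$ contains a tricycle $(C_1,C_2,C_3)$. Set $F := G[co_G(V(C_1)\cup V(C_2)\cup V(C_3))]$. By Lemma~\ref{L:gen.propert.}(iii), $F$ is finite, and by construction it is convex in $G$. Each $C_i$ is convex in $G$ and contained in $F$, hence convex in $F$; the intersection pattern is preserved, so $(C_1,C_2,C_3)$ is a tricycle of $F$. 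This contradicts the hyper-medianness of $F$ (via Theorem~\ref{T:charact.hyper-median}). Therefore $G$ is tricycle-free, and one more application of Theorem~\ref{T:charact.hyper-median} concludes that $G$ is hyper-median.

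There is no substantial obstacle: the argument is essentially a bookkeeping exercise that rests entirely on Theorem~\ref{T:charact.hyper-median}, on the finiteness of polytopes (Lemma~\ref{L:gen.propert.}(iii)), and on the closure of the class of Peano partial cubes under convex subgraphs (Theorem~\ref{T:hypernet./gat.amalg.+cart.prod.}). The only point to handle carefully is the observation that convexity of the individual cycles of a tricycle passes back and forth between $G$ and $F$, which is immediate once one notes that intervals coincide inside a convex subgraph.
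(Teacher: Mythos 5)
Your proof is correct and follows essentially the same route as the paper: both directions reduce to Theorem~\ref{T:charact.hyper-median} (hyper-median $=$ tricycle-free Peano partial cube), the forward direction using closure under convex subgraphs and the transfer of tricycles, and the converse using the finiteness of $co_G(C_1\cup C_2\cup C_3)$ from Lemma~\ref{L:gen.propert.}(iii). Your added care about convexity of cycles passing between $F$ and $G$ only makes explicit what the paper leaves implicit.
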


\begin{proof}
Let $G$ be a partial cube.  Assume that $G$ is hyper-median, and let $H$ be one of its finite convex subgraphs.  Then $H$ is ph-homogeneous by Theorem~\ref{T:hypernet./gat.amalg.+cart.prod.}, and moreover $H$ is tricycle-free since so is $G$.  Hence $H$ is a hyper-median partial-cube by Theorem~\ref{T:charact.hyper-median}.

Conversely, assume that all finite convex subgraphs of $G$ are hyper-median.  Then $G$ is ph-homogeneous because the pre-hull number of any of its finite convex subgraphs is at most $1$.  Moreover $G$ is tricycle-free since otherwise any tricycle $(C_1,C_2,C_3)$ of $G$ would be a tricycle of the finite convex subgraph $co_G(C_1 \cup C_2 \cup C_3)$, contrary to the fact that this subgraph is tricycle-free by Theorem~\ref{T:charact.hyper-median}.  Therefore $G$ is a hyper-median partial cube by this theorem.
\end{proof}

We have the following consequence of the above theorem and proposition.

\begin{cor}\label{C:charact.hyper-median}
A partial cube is a hyper-median partial cube if and only if all its finite convex subgraphs are obtained by a sequence of gated amalgamations from finite quasi-hypertori.
\end{cor}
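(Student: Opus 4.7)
The plan is to derive this corollary directly by chaining Proposition~\ref{P:hyper-med./fin.conv.subgr.} with Theorem~\ref{T:charact.finite.hyper-median}; there is essentially no new content to establish, only a careful quantifier-matching between the ``finite'' version and the ``general'' version.

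For the necessity, I would start from a hyper-median partial cube $G$ and let $F$ be any finite convex subgraph of $G$. By Proposition~\ref{P:hyper-med./fin.conv.subgr.}, $F$ is itself a hyper-median partial cube, and since $F$ is finite, Theorem~\ref{T:charact.finite.hyper-median} applies and exhibits $F$ as obtained from finite quasi-hypertori by a sequence of gated amalgamations.

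For the sufficiency, I would assume that every finite convex subgraph $F$ of $G$ is obtainable by a sequence of gated amalgamations from finite quasi-hypertori. For each such $F$, Theorem~\ref{T:charact.finite.hyper-median} tells us that $F$ is a (finite) hyper-median partial cube. Applying Proposition~\ref{P:hyper-med./fin.conv.subgr.} in its sufficiency direction then yields that $G$ itself is a hyper-median partial cube.

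There is no real obstacle here: the two ingredients have already been proved, and the corollary is simply their conjunction. The only thing worth flagging is that one should explicitly note that the notion of ``obtained by a sequence of gated amalgamations from finite quasi-hypertori'' is preserved between $F$ and the statement of Theorem~\ref{T:charact.finite.hyper-median}, which is immediate from how that theorem is phrased.
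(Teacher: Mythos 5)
Your derivation is correct and matches the paper exactly: the corollary is stated there as an immediate consequence of Theorem~\ref{T:charact.finite.hyper-median} and Proposition~\ref{P:hyper-med./fin.conv.subgr.}, chained in precisely the two directions you describe. Nothing further is needed.
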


\subsection{Subgraphs and operations}\label{SS:subgr.+operat.}

By Proposition~\ref{P:hypermed.p.c./met.triangle} we clearly have:

\begin{pro}\label{P:conv.subgr./hypermed.}
Any convex subgraph of a hyper-median partial cube is hyper-median.
\end{pro}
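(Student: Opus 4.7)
The plan is to deduce the statement directly from the metric-triangle characterization of hyper-median partial cubes (Proposition~\ref{P:hypermed.p.c./met.triangle}), together with the closure of Peano partial cubes under convex subgraphs (Theorem~\ref{T:hypernet./gat.amalg.+cart.prod.}). Let $G$ be a hyper-median partial cube and $H$ a convex subgraph of $G$. First, I would observe that $H$ is itself a Peano partial cube by Theorem~\ref{T:hypernet./gat.amalg.+cart.prod.}, so the hypothesis of Proposition~\ref{P:hypermed.p.c./met.triangle} applies to $H$.

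Next, I would exploit the fact that convexity of $H$ forces $I_H(x,y) = I_G(x,y)$ for all $x,y \in V(H)$. Consequently, every metric triangle $(u,v,w)$ of $H$ is also a metric triangle of $G$. Since $G$ is hyper-median, Proposition~\ref{P:hypermed.p.c./met.triangle} gives that $G[co_G(u,v,w)]$ is a hypertorus. But $co_G(u,v,w) \subseteq V(H)$ because $H$ is convex and contains $u,v,w$, hence $co_H(u,v,w) = co_G(u,v,w)$ and $H[co_H(u,v,w)]$ is the same hypertorus. Applying Proposition~\ref{P:hypermed.p.c./met.triangle} in the reverse direction to $H$ then yields that $H$ is hyper-median.

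There is essentially no obstacle here; the argument is a one-line verification of the characterization. The only thing to be careful about is to state explicitly that $H$ is a Peano partial cube (so that Proposition~\ref{P:hypermed.p.c./met.triangle} can be invoked for $H$), and that convexity transports both the metric triangle relation and the convex hull operator from $G$ to $H$. As an alternative, one could equivalently argue via Theorem~\ref{T:charact.hyper-median}: $H$ is Peano by Theorem~\ref{T:hypernet./gat.amalg.+cart.prod.}, and any tricycle of $H$ would be a tricycle of $G$ (its three convex cycles remain convex in $G$ by convexity of $H$), so tricycle-freeness of $G$ descends to $H$.
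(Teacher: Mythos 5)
Your argument is correct and is exactly the route the paper takes: the paper derives this proposition as an immediate consequence of Proposition~\ref{P:hypermed.p.c./met.triangle}, and your write-up simply makes explicit the details (that a convex subgraph is again Peano, that metric triangles and convex hulls are preserved under convexity) that the paper leaves implicit. Nothing further is needed.
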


\begin{thm}\label{T:gat.amalgam/hypermed.}
Let $G$ be the gated amalgam of two partial cubes $G_0$ and $G_1$.  Then $G$ is a hyper-median partial cube if and only if so are $G_0$ and $G_1$.
\end{thm}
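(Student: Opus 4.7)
My plan is to exploit the characterization in Theorem~\ref{T:charact.hyper-median}, which identifies the hyper-median partial cubes with the tricycle-free Peano partial cubes. Once this characterization is in hand, the result decomposes neatly into two independent closure statements, one for ph-homogeneity and one for the absence of tricycles, both of which have already been established in earlier subsections.

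For the necessity, I would simply invoke Proposition~\ref{P:conv.subgr./hypermed.}: any convex subgraph of a hyper-median partial cube is hyper-median. Since gated subgraphs are in particular convex, the fact that $G_0$ and $G_1$ are isomorphic to the gated subgraphs $G'_0$ and $G'_1$ of $G$ whose union is $G$ immediately yields that $G_0$ and $G_1$ inherit the hyper-median property from $G$.

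For the sufficiency, assume $G_0$ and $G_1$ are hyper-median partial cubes. By Theorem~\ref{T:charact.hyper-median}, each $G_i$ is a Peano partial cube that is tricycle-free. The closure of the class of Peano partial cubes under gated amalgamation, established in Theorem~\ref{T:hypernet./gat.amalg.+cart.prod.}, yields that $G$ is itself a Peano partial cube. Independently, Proposition~\ref{P:tricycle/gat.amalg.} asserts that the gated amalgam of two tricycle-free partial cubes is again tricycle-free, so $G$ contains no tricycle. Applying Theorem~\ref{T:charact.hyper-median} once more, this time in the reverse direction, we conclude that $G$ is a hyper-median partial cube.

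There is no real obstacle to anticipate: the entire argument is a two-line combination of prior closure results once the tricycle-freeness characterization is available. The only point worth stating explicitly, to keep the argument transparent, is that \textquotedblleft gated\textquotedblright\ implies \textquotedblleft convex\textquotedblright\ (justifying the use of Proposition~\ref{P:conv.subgr./hypermed.} in the necessity direction), and that Theorem~\ref{T:hypernet./gat.amalg.+cart.prod.} and Proposition~\ref{P:tricycle/gat.amalg.} together cover both ingredients of the hyper-median property in the sufficiency direction.
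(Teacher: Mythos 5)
Your proof is correct, and the necessity direction coincides with the paper's (both invoke Proposition~\ref{P:conv.subgr./hypermed.} together with the fact that gated subgraphs are convex). The sufficiency direction, however, takes a genuinely different route. The paper does not pass through tricycle-freeness at all: after establishing that $G$ is a Peano partial cube via Theorem~\ref{T:hypernet./gat.amalg.+cart.prod.}, it verifies the metric-triangle characterization of Proposition~\ref{P:hypermed.p.c./met.triangle} directly, by showing that any metric triangle $(u,v,w)$ of $G$ must lie entirely in $G_0$ or in $G_1$ --- if, say, $u,v$ were in $G_0-G_1$ and $w$ in $G_1-G_0$, the gate of $w$ in $G_0$ would lie in $I_G(u,w)\cap I_G(v,w)$, contradicting that the triple is a metric triangle --- and then using that the convex hull of a metric triangle of the hyper-median graph $G_i$ induces a hypertorus. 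Your argument instead reduces everything to the equivalence of Theorem~\ref{T:charact.hyper-median} and the closure of tricycle-freeness under gated amalgamation (Proposition~\ref{P:tricycle/gat.amalg.}). Both are legitimate and rely only on material preceding the theorem; yours buys a purely formal two-line combination of earlier closure results (and mirrors the strategy the paper itself uses for Proposition~\ref{P:hyper-med./fin.conv.subgr.}), while the paper's buys a self-contained verification that uses only the gate map and the metric-triangle criterion, without importing the full strength of the tricycle-free characterization.
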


\begin{proof}
The necessity is clear by Proposition~\ref{P:conv.subgr./hypermed.} since $G_0$ and $G_1$ are isomorphic to two gated subgraphs of $G$.  Conversely, assume that $G = G_{0} \cup G_{1}$ where $G_{0}$ and $G_{1}$ are hyper-median partial cubes that are gated subgraphs of $G$.  Then $G$ is a Peano partial cube by Theorem~\ref{T:hypernet./gat.amalg.+cart.prod.}.  Let $(u,v,w)$ be a metric triangle of $G$.  Suppose that  $(u,v,w)$ is not a triple of vertices of $G_i$ for some $i = 0$ or $1$.  Without loss of generality suppose that $u, v \in V(V_0-V_1)$ and $w \in V(V_1-V_0)$.  Let $w'$ be the gate of $w$ in $G_0$.  Then $w' \in I_G(u,w) \cap I_G(v,w)$, contrary to the fact that $(u,v,w)$ is a metric triangle.

Therefore $(u,v,w)$ is a metric triangle of $G_i$ for some $i = 0$ or $1$.  Then its convex hull in $G_i$, and thus in $G$, induces a hypertorus because $G_i$ is hyper-median.  Hence $G$ is a hyper-median partial cube by Proposition~\ref{P:hypermed.p.c./met.triangle}.
\end{proof}

\begin{thm}\label{T:cart.prod./hyper-median}
Let $G = G_0 \Box G_1$ be the Cartesian product of two partial cubes $G_0$ and $G_1$.  Then $G$ is a hyper-median partial cube if and only if so are $G_0$ and $G_1$.
\end{thm}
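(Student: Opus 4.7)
The plan is to reduce the statement to the tricycle-freeness characterization of hyper-median partial cubes (Theorem~\ref{T:charact.hyper-median}) together with the analogous result for the Peano property (Corollary~\ref{C:ph-hom./cart.prod.}). Indeed, since hyper-median partial cubes are exactly the tricycle-free Peano partial cubes, and since we already know that $G = G_0 \Box G_1$ is a Peano partial cube if and only if both $G_0$ and $G_1$ are, the whole task boils down to showing that $G$ is tricycle-free if and only if both $G_0$ and $G_1$ are tricycle-free.

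For the necessity, I would use that each $G_i$-layer of $G$ is isomorphic to $G_i$ and is a convex (in fact gated) subgraph of $G$, so any tricycle of $G_i$ lifts to a tricycle of $G$. For the sufficiency, assume $G_0$ and $G_1$ are tricycle-free and suppose, for contradiction, that $G$ contains a tricycle $(C_1, C_2, C_3)$. The key observation is that each $C_i$ is a convex cycle of length greater than $4$, so by the Convex Subgraph Property (Proposition~\ref{P:pro.Cartes.prod.}(v)) we have $C_i = \mathrm{pr}_0(C_i) \Box \mathrm{pr}_1(C_i)$ with each projection convex. Since the Cartesian product of two non-trivial connected graphs contains a $4$-cycle (and in fact the only cycle that is itself a non-trivial Cartesian product is~$C_4$), the length constraint forces one of the two projections to be a single vertex, so each $C_i$ lies in a single layer of $G$.

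The second step is to locate all three cycles in the \emph{same} layer. Let $a$ be the common vertex of $C_1$, $C_2$, $C_3$. The $G_0$-layer and the $G_1$-layer of $G$ through $a$ meet only at $a$, so they share no edge. Since the cycles $C_i$ pairwise share an edge, if one of them, say $C_1$, lies in the $G_0$-layer through $a$, then the common edge of $C_1$ and $C_2$ is a $G_0$-direction edge incident to $a$, forcing $C_2$ into the same $G_0$-layer; the same reasoning applies to $C_3$. Hence all three cycles are contained in a single $G_i$-layer of $G$, which is a convex subgraph isomorphic to $G_i$, so $(C_1, C_2, C_3)$ is a tricycle in $G_i$, contradicting the hypothesis.

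No step is really difficult once the right framework is in place; the only subtlety worth double-checking is the claim that a convex cycle of length greater than~$4$ in a Cartesian product lies in one layer. This follows cleanly from Proposition~\ref{P:pro.Cartes.prod.}(v) and the structure of $A \Box B$ when both $A$ and $B$ have an edge, where $4$-cycles are forced; so the argument is rigorous and short. The conclusion then combines Corollary~\ref{C:ph-hom./cart.prod.} with Theorem~\ref{T:charact.hyper-median}.
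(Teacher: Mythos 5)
Your proof is correct, but it takes a genuinely different route from the paper's. The paper argues directly through Proposition~\ref{P:hypermed.p.c./met.triangle}: for the sufficiency it takes a metric triangle $(u,v,w)$ of $G$, observes that each projection is either a single vertex or a metric triangle of $G_i$ (with at least one of the latter), and then uses the Convex Subgraph Property to conclude that $co_G(u,v,w)$ is the Cartesian product of the convex hulls of the projections, hence a hypertorus; the necessity is, as in your argument, just convexity of layers plus Proposition~\ref{P:conv.subgr./hypermed.}. You instead route everything through the tricycle-free characterization (Theorem~\ref{T:charact.hyper-median}) together with Corollary~\ref{C:ph-hom./cart.prod.}, reducing the problem to showing that tricycle-freeness passes to and from the factors. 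Your key step --- that a convex cycle of length greater than $4$ in $G_0 \Box G_1$ must project trivially to one factor and hence lie in a single layer, and that the pairwise shared edges then drag all three cycles of a putative tricycle into the same layer --- is sound: the Convex Subgraph Property gives $C = \mathrm{pr}_0(C) \Box \mathrm{pr}_1(C)$, and a cycle of length $>4$ contains no $4$-cycle, so one projection is a vertex; the shared edge of $C_1$ and $C_2$ determines the direction and (since distinct layers in one direction are disjoint) the layer. What the paper's approach buys is independence from Theorem~\ref{T:charact.hyper-median} (whose proof is itself substantial) and an explicit description of the hyper-median of a triple in the product; what yours buys is a short, structural argument and a reusable localization lemma for long convex cycles in Cartesian products, which the paper in effect also uses elsewhere (e.g.\ in Remark~\ref{R:notation}). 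Both are rigorous.
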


\begin{proof}
Assume that $G$ is a hyper-median partial cube.  Let $F_i$ be a $G_i$-layer of $G$ for some $i = 0$ or $1$.  Then $F_i$ is a convex subgraph of $G$.  Therefore, by Proposition~\ref{P:conv.subgr./hypermed.}, $F_i$, and thus $G_i$, is a hyper-median partial cube.

Conversely, assume that $G_0$ and $G_1$ are hyper-median partial cubes.  Let $(u,v,w)$ be a metric triangle of $G$.  Clearly, for $i = 0, 1$, either $pr_i(u) = pr_i(v) = pr_i(w)$ or $(pr_i(u),pr_i(v),pr_i(w))$ is a metric triangle of $G_i$, and this occurs at least for one value of $i$.  Because $G_i$ is a hyper-median partial cube, it follows that the convex hull of $\{pr_i(u),pr_i(v),pr_i(w)\}$ induces either a $K_1$ or a hypertorus.  Consequently, by the Convex Subgraph Property of the Cartesian product, $co_G(u,v,w)$ induces a hypertorus, and thus $G$ is a hyper-median partial cube by Proposition~\ref{P:hypermed.p.c./met.triangle}.
\end{proof}

\subsection{Special hyper-median partial cubes}\label{SS:spec.hyp-med.}

\subsubsection{Hyper-median netlike partial cubes}\label{SSS:hyp-med.netl.}

A hyper-median of a triple $(u,v,w)$ of vertices of a netlike partial cube $G$ is a quasi-median $(x,y,z)$ of $(u,v,w)$ such that there exists a gated cycle $C$ of $G$ that passes through $x, y, z$.  Moreover, $x, y$ and $z$ are the gates in $C$ of $u, v$ and $w$, respectively.  A netlike partial cube that is hyper-median was said in~\cite{P08-4} to have the \emph{Median Cycle Property} (MCP for short).

By~\cite[Theorem 3.5]{P07-3}, \ref{P:hyper-med./fin.conv.subgr.} and~\ref{P:netlike/fin.conv.subgr.}, we obtain the following particular case of Theorem~\ref{T:charact.hyper-median}.

\begin{thm}\label{T:charact.hyper-med.net.}
A partial cube is a hyper-median netlike partial cube if and only if all its finite convex subgraphs are obtained from even cycles and finite hypercubes by a sequence of gated amalgamations.
\end{thm}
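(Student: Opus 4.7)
The plan is to reduce the statement to its finite analogue and then appeal to the structural decomposition of finite hyper-median partial cubes. By Proposition~\ref{P:netlike/fin.conv.subgr.} together with Proposition~\ref{P:hyper-med./fin.conv.subgr.}, a partial cube $G$ is a hyper-median netlike partial cube if and only if every finite convex subgraph of $G$ is a hyper-median netlike partial cube. Hence it suffices to prove: a finite partial cube $F$ is both netlike and hyper-median if and only if $F$ can be obtained from even cycles and finite hypercubes by a sequence of gated amalgamations.

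For the forward direction, let $F$ be a finite hyper-median netlike partial cube. By Theorem~\ref{T:charact.finite.hyper-median}, $F$ is expressible as an iterated gated amalgamation of finite quasi-hypertori $T_1,\dots,T_r$. Each $T_j$ embeds into $F$ as a gated, and hence convex, subgraph, so by Proposition~\ref{P:netlike/fin.conv.subgr.} each $T_j$ is itself a netlike partial cube. The key step is to show that a netlike quasi-hypertorus is necessarily an even cycle or a finite hypercube. Write $T = \cartbig_{i \in I} G_i$ with each $G_i$ equal to $K_2$ or to an even cycle. If no $G_i$ is a cycle of length greater than $4$, then $T$ is a finite hypercube. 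Otherwise, fix such a factor $G_{i_0} = C_{2k}$ with $k \geq 3$; if $T \neq G_{i_0}$, then the Convex Subgraph Property of Cartesian products implies that $T$ contains a convex prism $C_{2k}\,\Box\, K_2$. As observed inside the proof of Proposition~\ref{P:hypernet./netl.}, such a prism $P$ contains an isometric cycle $C$ whose convex hull in $P$ is all of $P$; since $P$ is convex in $T$, this cycle is also isometric in $T$ and its convex hull in $T$ equals $P$, which is neither a cycle nor a hypercube. This contradicts Proposition~\ref{P:hypernet./netl.}. Hence $T = G_{i_0}$ is an even cycle.

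For the backward direction, even cycles are netlike partial cubes and are hyper-median (the unique metric triangle of any triple on a cycle has for convex hull either a vertex or the cycle itself, which is a $1$-torus), and finite hypercubes are median graphs, hence netlike and hyper-median. By \cite[Theorem 6.5]{P07-3} the class of netlike partial cubes is closed under gated amalgams, and by Theorem~\ref{T:gat.amalgam/hypermed.} so is the class of hyper-median partial cubes. Thus any $F$ obtained by iterated gated amalgamations from even cycles and finite hypercubes is both netlike and hyper-median, and Propositions~\ref{P:netlike/fin.conv.subgr.} and~\ref{P:hyper-med./fin.conv.subgr.} transfer this to $G$.

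The main obstacle is the identification step, namely proving that a netlike quasi-hypertorus has to be a cycle or a hypercube. A naive approach via Proposition~\ref{P:charact.(netlike)}(ii) (degree $2$ in $G[co_G(U_{ab})]$) would require separately checking several configurations of Cartesian factors; the cleaner route is the one above, which uses the global characterization of netlike in terms of convex hulls of isometric cycles (Proposition~\ref{P:hypernet./netl.}) applied to the convex prism $C_{2k}\,\Box\, K_2$ sitting inside any non-degenerate quasi-hypertorus.
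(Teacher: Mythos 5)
Your proof is correct and follows essentially the same route as the paper, which derives the theorem by combining Corollary~\ref{C:charact.hyper-median}, Propositions~\ref{P:netlike/fin.conv.subgr.} and~\ref{P:hyper-med./fin.conv.subgr.}, and the closure of both classes under gated amalgams. The only difference is that your ``identification step'' (a netlike quasi-hypertorus is an even cycle or a hypercube) re-derives Proposition~\ref{P:reg.netl.}, which you could have cited directly; the paper's own argument there is even shorter, applying Proposition~\ref{P:TPQ/prop.isom.cycle} to a principal cycle of the quasi-hypertorus itself rather than passing through a convex prism.
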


\subsubsection{Median graphs}\label{SSS:med.gr.}

A median graph is trivially a hyper-median partial cubes, and we clearly have:

\begin{pro}\label{P:med./fin.conv.subgr.}
A partial cube is a median graph if and only if so are all its finite convex subgraphs.
\end{pro}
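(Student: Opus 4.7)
The plan is to prove both directions directly, exploiting the fact that a partial cube is a median graph precisely when every triple of its vertices admits a (necessarily unique by Lemma~\ref{L:gen.propert.}(i)) median.

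For the forward direction, suppose $G$ is a median graph and let $F$ be a finite convex subgraph. Any convex subgraph of a partial cube is itself a partial cube, so I only need to check that every triple $(u,v,w)$ of vertices of $F$ has a median in $F$. Let $m$ be the median of $(u,v,w)$ in $G$. Then $m$ lies in $I_G(u,v) \cap I_G(v,w) \cap I_G(w,u)$, and since $F$ is convex in $G$ each of those intervals is contained in $V(F)$; hence $m \in V(F)$, and it is a median of $(u,v,w)$ in $F$ because $F$ is isometric in $G$.

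For the converse, assume every finite convex subgraph of $G$ is a median graph. Given any triple $(u,v,w)$ of vertices of $G$, consider $F := G[\mathrm{co}_G(\{u,v,w\})]$. By Lemma~\ref{L:gen.propert.}(iii), $F$ is a finite convex subgraph of $G$, hence a median graph by hypothesis. Therefore $(u,v,w)$ admits a (unique) median $m$ in $F$. Since $F$ is convex (hence isometric) in $G$, the intervals $I_F(u,v), I_F(v,w), I_F(w,u)$ coincide with $I_G(u,v), I_G(v,w), I_G(w,u)$ restricted to $V(F)$, so $m$ is also a median of $(u,v,w)$ in $G$. Uniqueness of the median in $G$ is automatic by Lemma~\ref{L:gen.propert.}(i).

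There is no real obstacle here; the argument is a routine application of two basic facts about partial cubes — the finiteness of polytopes and the fact that convex subgraphs are isometric — together with the elementary observation that medians, being intersections of three intervals, are preserved under passage to convex subgraphs containing the three base vertices. The only thing to remark is that we implicitly use that $G$ itself is a partial cube (guaranteed, in the converse direction, because every finite convex subgraph is a partial cube and by Lemma~\ref{L:inf.p.c.} this suffices).
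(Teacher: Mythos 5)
Your proof is correct and is precisely the routine argument the paper leaves implicit: the proposition is stated there without proof, prefaced only by ``we clearly have'', and the intended justification is exactly your observation that medians are intersections of three intervals, that convex subgraphs contain the full $G$-intervals of their vertex pairs, and that the convex hull of a triple is finite by Lemma~\ref{L:gen.propert.}(iii). The only superfluous remark is the appeal to Lemma~\ref{L:inf.p.c.} in the converse direction, since $G$ is already assumed to be a partial cube in the statement.
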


From this result and the result of Bandelt and van 
de Vel~\cite{BV91}, we have:

\begin{thm}\label{T:med.gr./gated.amalg.} A
graph is a median graph if and only if all its finite convex subgraphs are obtained from finite hypercubes by a sequence of gated amalgamations.
\end{thm}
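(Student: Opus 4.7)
The plan is to combine Proposition~\ref{P:med./fin.conv.subgr.} with the Bandelt--van de Vel decomposition theorem for finite median graphs cited from~\cite{BV91}. Both directions reduce to working at the level of finite convex subgraphs.

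For the forward direction, I would assume that $G$ is a median graph. Then Proposition~\ref{P:med./fin.conv.subgr.} yields that every finite convex subgraph $F$ of $G$ is itself a median graph. Since $F$ is a finite median graph, the Bandelt--van de Vel theorem directly supplies a sequence of gated amalgamations starting from finite hypercubes that produces $F$. This is purely a citation step, with no additional work required.

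For the converse, I would suppose that every finite convex subgraph $F$ of $G$ can be built from finite hypercubes by a sequence of gated amalgamations. Hypercubes are well-known to be median graphs, and by the standard fact (a special case of Theorem~\ref{T:gat.amalgam/hypermed.} specialized to the median case, or by direct invocation of the Bandelt--van de Vel result) the class of median graphs is closed under gated amalgamation. Hence $F$ is a median graph. Since every finite convex subgraph of $G$ is a median graph, Proposition~\ref{P:med./fin.conv.subgr.} gives that $G$ itself is a median graph.

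The main obstacle is essentially nonexistent, as this theorem is presented as an immediate consequence of two cited results: Proposition~\ref{P:med./fin.conv.subgr.} (which transfers the median property between a partial cube and its finite convex subgraphs) and the Bandelt--van de Vel characterization of finite median graphs via gated amalgamation from finite hypercubes. The only mild care needed is to remark that the class of median graphs is closed under gated amalgams, which is a classical fact and is also subsumed by the present theorem's cited source. The whole proof should therefore be no more than a few lines of composition of these two external facts.
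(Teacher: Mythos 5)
Your proposal is correct and follows exactly the route the paper takes: the paper presents this theorem as an immediate consequence of Proposition~\ref{P:med./fin.conv.subgr.} together with the Bandelt--van de Vel decomposition of finite median graphs into gated amalgams of hypercubes, which is precisely the two-citation composition you describe. Nothing further is needed.
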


\subsubsection{Cellular bipartite graphs}\label{SSS:cell.bip.gr.}

Finite cellular bipartite graphs were defined and studied by Bandelt and Chepoi \cite{BC96}.  Their definition can be extended to infinite graphs by using condition (3) of~\cite[Theorem~1]{BC96} as follows.

\begin{defn}\label{D:cell.bip.gr.}
A bipartite graph is a \emph{cellular bipartite graph} if it is tricycle-free, and each of its isometric cycles is gated.
\end{defn}

Cellular bipartite graphs are hyper-median Peano partial cubes.  As the two parts of the above definition are properties of finite subgraphs, we clearly have:

\begin{pro}\label{P:cell./fin.conv.subgr.}
A partial cube is a cellular bipartite graph if and only if so are all its finite convex subgraphs.
\end{pro}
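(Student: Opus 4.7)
The plan is to verify both parts of Definition~\ref{D:cell.bip.gr.} (tricycle-freeness and gatedness of isometric cycles) against the tool that the convex hull of a finite vertex set in a partial cube is finite (Lemma~\ref{L:gen.propert.}(iii)), together with the transitivity of convexity and the fact that a convex subgraph is isometric. The proof mirrors the structure of Propositions~\ref{P:netlike/fin.conv.subgr.} and~\ref{P:hyper-med./fin.conv.subgr.}.

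For the necessity, I would assume $G$ is cellular bipartite and fix a finite convex subgraph $F$ of $G$. First, any tricycle $(C_1,C_2,C_3)$ of $F$ consists of cycles convex in $F$; since $F$ is convex in $G$, convexity is transitive, so each $C_i$ is convex in $G$, and thus $(C_1,C_2,C_3)$ is a tricycle of $G$, contradicting its tricycle-freeness. Second, if $C$ is an isometric cycle of $F$, then because $F$ is isometric in $G$, $C$ is also isometric in $G$, hence gated in $G$. The gate map restricted to $V(F)$ yields a gate in $C$ for every vertex of $F$ (gates lie in $F$ since $F$ is convex and the gate sits on geodesics to vertices of $C \subseteq F$), so $C$ is gated in $F$.

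For the sufficiency, assume every finite convex subgraph of $G$ is cellular bipartite. If $G$ contained a tricycle $(C_1,C_2,C_3)$, then $H := G[co_G(V(C_1)\cup V(C_2)\cup V(C_3))]$ would be a finite convex subgraph of $G$ by Lemma~\ref{L:gen.propert.}(iii); each $C_i$, being convex in $G$ and contained in $H$, would be convex in $H$, producing a tricycle in $H$, contradicting that $H$ is cellular bipartite. Next, to show that any isometric cycle $C$ of $G$ is gated, take $x \in V(G)$ and set $F := G[co_G(\{x\} \cup V(C))]$, which is finite and convex in $G$. Since $F$ is isometric in $G$, $C$ is an isometric cycle of $F$, hence gated in $F$ by the hypothesis. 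Let $g(x) \in V(C)$ be its gate in $F$; then for each $v \in V(C)$, $d_F(x,g(x)) + d_F(g(x),v) = d_F(x,v)$, and since $F$ is isometric in $G$ these distances coincide with the corresponding $G$-distances, so $g(x) \in I_G(x,v)$. Thus $g(x)$ is the gate of $x$ in $C$ within $G$, so $C$ is gated in $G$.

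There is no serious obstacle here; the only thing that needs attention is the double use of isometricity — once to transfer the cycle $C$ from $G$ into $F$ as an isometric cycle, and once again to transfer the gating property back from $F$ to $G$. Both steps rely solely on $F$ being convex (hence isometric) in $G$, together with the finiteness of $co_G$ of a finite set. No appeal to the heavier structure theory (Characterization Theorem or the gated-amalgam decomposition) is required for this particular proposition.
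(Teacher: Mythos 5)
Your proof is correct and simply fills in the details that the paper leaves implicit: the paper gives no argument beyond remarking that both defining properties of a cellular bipartite graph (tricycle-freeness and gatedness of isometric cycles) are checkable on finite convex subgraphs. Your transfer arguments — convexity of the $C_i$ in both directions, and the passage of gates between $G$ and a finite convex $F$ via Lemma~\ref{L:gen.propert.}(iii) and Lemma~\ref{L:gated.inter.convex}(i) — are exactly the intended ones.
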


We infer from the above proposition and condition (4) of~\cite[Theorem~1]{BC96} the following result:

\begin{thm}\label{T:cell.bip.gr./gated.amalg.} A
graph is a cellular bipartite graph if and only if all its finite convex subgraphs are obtained from  copies of $K_2$ and even cycles by a sequence of gated amalgamations.
\end{thm}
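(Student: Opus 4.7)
The plan is to reduce this to the known finite case established by Bandelt and Chepoi \cite{BC96} via the ``pass to finite convex subgraphs'' principle already proved in Proposition~\ref{P:cell./fin.conv.subgr.}. Note that Definition~\ref{D:cell.bip.gr.} was explicitly set up to coincide, in the finite case, with condition~(3) of~\cite[Theorem~1]{BC96}, so the finite equivalence between being a cellular bipartite graph and admitting a decomposition from copies of $K_2$ and even cycles by gated amalgamations (condition~(4) of \cite[Theorem~1]{BC96}) is available directly.

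For the necessity, I would assume that $G$ is a cellular bipartite graph and let $F$ be any finite convex subgraph of $G$. Proposition~\ref{P:cell./fin.conv.subgr.} yields that $F$ is itself a (finite) cellular bipartite graph, and then $(1)\Rightarrow(4)$ of \cite[Theorem~1]{BC96} shows that $F$ can be obtained from copies of $K_2$ and even cycles by a sequence of gated amalgamations.

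For the sufficiency, I would assume that every finite convex subgraph $F$ of $G$ is obtained from copies of $K_2$ and even cycles by such a sequence. The implication $(4)\Rightarrow(1)$ of \cite[Theorem~1]{BC96} then gives that each such $F$ is a finite cellular bipartite graph. Applying Proposition~\ref{P:cell./fin.conv.subgr.} in the reverse direction, we conclude that $G$ is a cellular bipartite graph.

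There is essentially no real obstacle here: both implications are a direct juxtaposition of Proposition~\ref{P:cell./fin.conv.subgr.} with the finite characterization in \cite{BC96}. The only point that deserves a brief mention is verifying that our infinite Definition~\ref{D:cell.bip.gr.} restricts to the finite notion used in \cite{BC96}, which is immediate since it was defined precisely by the ``tricycle-free plus gated isometric cycles'' characterization (condition~(3) of \cite[Theorem~1]{BC96}).
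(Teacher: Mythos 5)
Your proposal is correct and follows exactly the paper's own route: the paper derives this theorem by combining Proposition~\ref{P:cell./fin.conv.subgr.} with condition (4) of \cite[Theorem~1]{BC96}, which is precisely the juxtaposition you describe. Your additional remark that Definition~\ref{D:cell.bip.gr.} restricts to the finite notion via condition (3) of that theorem is also the justification the paper itself gives when introducing the definition.
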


 By Theorems~\ref{T:charact.hyper-med.net.} and~\ref{T:cell.bip.gr./gated.amalg.}, cellular bipartite graphs are hyper-median netlike partial cubes.  The following proposition is then an immediate consequence of Definition~\ref{D:cell.bip.gr.}, Theorem~\ref{T:cube-free netlike}, Theorems~\ref{T:reg. hypernet./str.sem.-periph./quasi-hypert.} and~\ref{T:conv.reg.H-subgr.=>gated}.
 
 \begin{pro}\label{P:cell.=hyp-med.cub-free.net.p.c.}
 A graph is a cellular bipartite graph if and only it is a hyper-median cube-free netlike partial cube.
 \end{pro}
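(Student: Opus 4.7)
The plan is to verify each direction by chaining together the cited theorems and definitions.

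For the forward implication, suppose $G$ is a cellular bipartite graph. By Definition~\ref{D:cell.bip.gr.}, $G$ is tricycle-free and every isometric cycle of $G$ is gated. I would first observe, via Theorem~\ref{T:cell.bip.gr./gated.amalg.}, that every finite convex subgraph of $G$ is obtained from copies of $K_2$ and even cycles by gated amalgamation. Since $K_2$ and even cycles are (cube-free) netlike partial cubes and the class of (cube-free) netlike partial cubes is closed under gated amalgamation (Theorem~\ref{T:cube-free.net./gat.amal.} and its cited analogue for netlike partial cubes), every finite convex subgraph of $G$ is a cube-free netlike partial cube. By Propositions~\ref{P:netlike/fin.conv.subgr.} and~\ref{P:cube-free netlike/fin.conv.subgr.}, $G$ itself is a cube-free netlike partial cube. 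The hyper-median property of $G$ then follows either from the remark before the proposition (citing Theorem~\ref{T:charact.hyper-med.net.}) or, alternatively, because tricycle-freeness of a netlike partial cube is equivalent to being hyper-median by Theorem~\ref{T:charact.hyper-median}.

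For the converse, suppose $G$ is a hyper-median cube-free netlike partial cube. Then $G$ is tricycle-free by Theorem~\ref{T:charact.hyper-median}, which supplies the first half of Definition~\ref{D:cell.bip.gr.}. For the second half, I must show that every isometric cycle of $G$ is gated. Because $G$ is a cube-free netlike partial cube, Theorem~\ref{T:cube-free netlike} (equivalence of (ii) and (iii)) gives that every isometric cycle of $G$ is convex. Now an isometric cycle $C$ is a finite $2$-regular convex subgraph of $G$; since $G$ is in particular a Peano partial cube, Theorem~\ref{T:conv.reg.H-subgr.=>gated} applies and yields that $C$ is gated. (If one prefers, Theorem~\ref{T:reg. hypernet./str.sem.-periph./quasi-hypert.} identifies $C$ as a finite regular quasi-hypertorus before invoking Theorem~\ref{T:conv.reg.H-subgr.=>gated}.) Hence $G$ satisfies both conditions of Definition~\ref{D:cell.bip.gr.} and is a cellular bipartite graph.

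There is no real obstacle here: the work has already been done in the preceding theorems, and the main task is only to assemble the right chain of implications. The one small point requiring care is that Definition~\ref{D:cell.bip.gr.} demands gatedness of isometric cycles and not merely convexity, which is why the converse direction genuinely needs Theorem~\ref{T:conv.reg.H-subgr.=>gated} on top of Theorem~\ref{T:cube-free netlike}.
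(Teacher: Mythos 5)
Your proof is correct and assembles essentially the same chain of results the paper relies on: the paper states the proposition as an immediate consequence of Definition~\ref{D:cell.bip.gr.}, Theorem~\ref{T:cube-free netlike}, and Theorems~\ref{T:reg. hypernet./str.sem.-periph./quasi-hypert.} and~\ref{T:conv.reg.H-subgr.=>gated}, together with the preceding remark that cellular bipartite graphs are hyper-median netlike partial cubes, and your converse direction reproduces exactly that argument. The only (harmless) deviation is in the forward direction, where you derive cube-freeness via closure under gated amalgamation and Proposition~\ref{P:cube-free netlike/fin.conv.subgr.} rather than by noting that gated isometric cycles are convex and invoking the implication (iii)~$\Rightarrow$~(ii) of Theorem~\ref{T:cube-free netlike} directly.
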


\subsection{Hypercellular partial cubes}\label{SS:hypercell.}

We recall some concepts, results or consequences thereof of Chepoi, Knauer and Marc~\cite{CKM19}.

\begin{enumerate}
\item[(1)]  A \emph{pc-minor} of a partial cube $G$ is any graph that can be obtained from a convex subgraph of $G$ by a sequence of $\Theta$-contactions.
\item[(2)]  A partial cube $G$ is said to be \emph{hypercellular} if $Q_3^-$ is not a pc-minor of $G$.
\item[(3)]  \emph{Any pc-minor of a hypercellular graph is hypercellular.}
\item[(4)]  \emph{Any pc-minor of some pc-minor of a partial cube $G$ is a pc-minor of $G$.}
\item[(5)]  \emph{A partial cube $G$ is hypercellular if and only if each finite convex subgraph of $G$ can be obtained by successive gated amalgamations from Cartesian products of copies of $K_2$ and even cycles.}
\end{enumerate}

From (5) and Corollary~\ref{C:charact.hyper-median}, it follows that \emph{a partial cube is hypercellular if and only if it is hyper-median.}\\

\begin{lem}\label{L:ph(G)>1}
Let $G$ be a partial cube such that $ph(G) > 1$.  Then $Q_3^-$ is a pc-minor of $G$.
\end{lem}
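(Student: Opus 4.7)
The plan is to find a finite convex witness for the pre-hull failure and then $\Theta$-contract it down to $Q_3^-$.

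First I would invoke Proposition~\ref{P:charact.p.c.ph}: since $ph(G) > 1$, there is an edge $ab$ of $G$ such that, without loss of generality, $U_{ab}$ is not ph-stable. Fix vertices $u, v \in \mathcal{I}_G(U_{ab})$ with $v \notin I_G(u,w)$ for every $w \in U_{ab}$, together with witnesses $x_u, y_u, x_v, y_v \in U_{ab}$ realizing $u \in I_G(x_u, y_u)$ and $v \in I_G(x_v, y_v)$. Setting $F := G[co_G(\{a, b, x_u, y_u, x_v, y_v\})]$, the subgraph $F$ is finite by Lemma~\ref{L:gen.propert.}(iii), convex in $G$, and Lemma~\ref{L:prop.isom subgr.} yields $U_{ab}^F = U_{ab} \cap V(F)$. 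Since $F$ is convex, $I_F(u,w) = I_G(u,w)$ for every $w \in U_{ab}^F$; hence the failure of ph-stability for $U_{ab}$ descends to $F$, and $ph(F) > 1$ by Proposition~\ref{P:charact.p.c.ph}.

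Next I would choose, among all pc-minors $F^{*}$ of $F$ with $ph(F^{*}) > 1$, one of smallest isometric dimension. I claim $\mathrm{idim}(F^{*}) = 3$. If instead $\mathrm{idim}(F^{*}) \geq 4$, consider a witness of ph-failure in $F^{*}$: an edge $a'b'$ and vertices $u', v', x'_u, y'_u, x'_v, y'_v$ as above. The $\Theta$-classes genuinely entering the witness are the class of $a'b'$ together with those appearing on the two geodesics $I_{F^{*}}(x'_u, y'_u)$ and $I_{F^{*}}(x'_v, y'_v)$; since no geodesic repeats a $\Theta$-class (Lemma~\ref{L:gen.propert.}(v)), the set of essential classes has bounded size, and by picking a minimal witness (for instance, exploiting symmetry to assume $u' \in U_{a'b'}^{F^{*}}$) we may take it to consist of at most three classes. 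Consequently some $\Theta$-class $E$ of $F^{*}$ is disjoint from the witness; using Lemma~\ref{L:interv.}, the contraction $F^{*}/E$ inherits the witness---the images of $u'$ and $v'$ remain distinct elements of $\mathcal{I}(U_{a'b'}^{F^{*}/E})$, and no new $(u', w)$-geodesic through $v'$ is created---contradicting the minimality of $\mathrm{idim}(F^{*})$.

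Finally, a partial cube of isometric dimension $3$ is isomorphic to an isometric subgraph of $Q_3$; enumerating such graphs and applying Proposition~\ref{P:ph(bipart.gr.)=0/trees} for the trees among them and direct verification for $Q_2$, $C_4$, $Q_3$ (all median, hence ph-homogeneous), one sees that the only such graph with $ph > 1$ is $Q_3^-$. Therefore $F^{*} \cong Q_3^-$, and $Q_3^-$ is a pc-minor of $G$. The main obstacle is the middle step: showing that when $\mathrm{idim}(F^{*}) \geq 4$ some $\Theta$-class outside the witness always exists and that its contraction preserves the ph-failure. The delicacy is two-fold: one must first argue that the witness can always be chosen to use at most three $\Theta$-classes (either by replacing the witness geodesics with minimum-length substitutes or by exploiting that the pair $(u', v')$ of witnesses can be taken with $u' \in U_{a'b'}^{F^{*}}$), and then verify via Lemma~\ref{L:interv.} that contraction of a non-essential class neither coalesces $u'$ with $v'$ nor creates a new $(u', w)$-geodesic through $v'$ for some $w \in U_{a'b'}^{F^{*}/E}$.
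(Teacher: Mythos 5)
Your overall architecture (pass to a finite convex witness, descend by $\Theta$-contractions to isometric dimension $3$, then enumerate) is sound in its first and last steps: the reduction to a finite convex $F$ with $ph(F)>1$ via Proposition~\ref{P:charact.p.c.ph} and Lemma~\ref{L:prop.isom subgr.} is correct, and the enumeration at $\mathrm{idim}=3$ is routine. The genuine gap is the descent step. Your argument rests on two claims: that a witness of ph-failure can always be chosen to involve at most three $\Theta$-classes, and that when $\mathrm{idim}(F^{*})\geq 4$ there is therefore a $\Theta$-class $E$ disjoint from the witness whose contraction preserves the failure. Both claims are false, and the paper's own Remark~\ref{R:fin.subgr.fin.conv.subgr.ph1} already contains a counterexample: the graph $G_{1}$ there has $\mathrm{idim}(G_{1})=4$ and $ph(G_{1})=2$, but for the edge $ab=x_{0}x_{0}^{C}$ one has $U_{ab}=\{x_{0},x_{2},x_{4}\}$ and every failing pair $(u,v)$ (e.g.\ $(x_{0},x_{3})$) satisfies $d_{G_{1}}(u,v)=3$; the three $\Theta$-classes separating $u$ from $v$ together with the class of $ab$ exhaust all four $\Theta$-classes of $G_{1}$, and the same count holds for the witnesses attached to the other three classes. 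So no class is ``disjoint from the witness.'' Your survival analysis is fine for a class $E$ outside $S(u',v')\cup\{[a'b']_{\Theta}\}$ (parity/$\Theta$-class bookkeeping shows no new betweenness $v''\in I(u'',w'')$ can appear unless $E$ separates $u'$ from $v'$), but the existence of such an $E$ is exactly what fails. Note that $G_{1}$ does have $Q_{3}^{-}$ as a pc-minor --- obtained by contracting the class of $x_{0}x_{0}^{C}$ itself, i.e.\ precisely the contraction your argument excludes, after which the failure reappears attached to a different edge. Showing that a pc-minor-minimal partial cube with $ph>1$ has isometric dimension $3$ therefore requires handling the case where every class is essential to every witness; that is essentially the content of the excluded-minor characterization of hypercellular graphs in \cite{CKM19}, and it is not supplied by your sketch. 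The parenthetical reduction to $u'\in U_{a'b'}^{F^{*}}$ is also unjustified, though it is a minor issue by comparison.

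For contrast, the paper's proof is a two-line deduction from machinery already in place: $ph(G)>1$ implies $G$ is not a Peano partial cube by Proposition~\ref{P:hypernet./ph1}, hence not a hyper-median partial cube, hence not hypercellular by the equivalence obtained from item (5) of \cite{CKM19} and Corollary~\ref{C:charact.hyper-median}, hence $Q_{3}^{-}$ is a pc-minor of $G$ by the definition of hypercellularity. A self-contained contraction argument of the kind you attempt would be a genuinely different and more elementary route, but as written it does not close.
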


\begin{proof}
If $ph(G) > 1$, then $G$ is not a Peano partial cube by Proposition~\ref{P:hypernet./ph1}, and a fortiori not a hyper-median partial cube.  Hence $Q_3^-$ is a pc-minor of $G$ by (2).
\end{proof}

Note that the converse is false.  Indeed any Peano partial cube that is not hyper-median has a pc-minor isomorphic to $Q_3^-$.

\begin{pro}\label{P:hyp-med./pc-min.}
A partial cube $G$ is hyper-median if and only if $ph(F) \leq 1$ for every pc-minor $F$ of $G$.
\end{pro}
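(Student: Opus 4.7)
The plan is to leverage the identification between hyper-median partial cubes and hypercellular partial cubes that was recorded just after item (5) in Subsection~\ref{SS:hypercell.}, together with the closure properties of the pc-minor relation recalled as items (3) and (4) of the same subsection. Both implications reduce to short arguments once this identification is in hand; nothing substantial remains to be computed.

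For the necessity, I would assume $G$ is hyper-median, hence hypercellular, and let $F$ be an arbitrary pc-minor of $G$. By item (3), $F$ is also hypercellular, hence hyper-median, hence in particular a Peano partial cube. Proposition~\ref{P:hypernet./ph1} then yields $ph(F) \leq 1$, as required.

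For the sufficiency, the first step is to observe that every convex subgraph of $G$ is a pc-minor of $G$ (obtained from itself by an empty sequence of $\Theta$-contractions), so the hypothesis forces $ph(H) \leq 1$ for every finite convex subgraph $H$ of $G$. By Definition~\ref{D:ph-homog.}, $G$ is therefore ph-homogeneous, i.e., a Peano partial cube. Now suppose, toward a contradiction, that $G$ is not hyper-median. By the identification mentioned above, $G$ is then not hypercellular, so by the defining property of hypercellular partial cubes (item (2) of Subsection~\ref{SS:hypercell.}), $Q_{3}^{-}$ must occur as a pc-minor of $G$. But $ph(Q_{3}^{-}) = 2$, as noted just after Theorem~\ref{T:bip.gr.ph1=>p.c.} and as proved in \cite[Theorem 5.8]{PS09}, contradicting the assumption that every pc-minor of $G$ has pre-hull number at most $1$.

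There is no real obstacle in the argument beyond bookkeeping: one only has to be careful that the equivalence \emph{hyper-median} $=$ \emph{hypercellular} has been registered before it is used in the contrapositive step, and that item (3) applies uniformly to all pc-minors (not merely to convex subgraphs or to single $\Theta$-contractions), which is exactly what item (4) guarantees.
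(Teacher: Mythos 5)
Your proof is correct and follows essentially the same route as the paper's: both directions reduce to the identification of hyper-median with hypercellular partial cubes, the $Q_3^-$ pc-minor characterization (item (2)), and the fact that $ph(Q_3^-)=2$. The only immaterial differences are that for necessity you invoke item (3) directly where the paper argues by contrapositive via Lemma~\ref{L:ph(G)>1} and item (4), and that your preliminary step establishing that $G$ is a Peano partial cube in the sufficiency direction is harmless but redundant once hypercellularity is known.
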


\begin{proof}
Assume that $ph(F) \leq 1$ for every pc-minor $F$ of $G$.  Then $Q_3^-$ is not a pc-minor of $G$, since $ph(Q_3^-) = 2$.  Hence $G$ is hyper-median by (2).

Conversely suppose that $ph(F) \geq 2$ for some pc-minor $F$ of $G$.  Then $Q_3^-$ is a pc-minor of $F$ by Lemma~\ref{L:ph(G)>1}.  Because a pc-minor of $F$ is also a pc-minor of $G$ by (4), it follows that $G$ is not hyper-median by (2).
\end{proof}

From Proposition~\ref{P:cell.=hyp-med.cub-free.net.p.c.}, \ref{P:cube-free netlike/isom.subgr.} and~\ref{P:hyp-med./pc-min.} we infer the following corollary.

\begin{cor}\label{C:cell.}
A partial cube $G$ is a cellular bipartite graph if and only if it satifies the following two properties:

\textnormal{(i)}\; $ph(F) \leq 1$ for every finite isometric subgraph $F$ of $G$;

\textnormal{(ii)}\; $ph(F) \leq 1$ for every pc-minor $F$ of $G$.
\end{cor}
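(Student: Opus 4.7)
The plan is to observe that this corollary is a direct consequence of three results already established in the paper, so the proof is a short combination rather than new work. First, Proposition~\ref{P:cell.=hyp-med.cub-free.net.p.c.} characterizes cellular bipartite graphs as exactly the hyper-median cube-free netlike partial cubes. So it suffices to show that, for a partial cube $G$, conditions (i) and (ii) together are equivalent to being simultaneously a cube-free netlike partial cube and a hyper-median partial cube.

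For condition (i), I would cite Proposition~\ref{P:cube-free netlike/isom.subgr.}, which states exactly that a partial cube $G$ is a cube-free netlike partial cube if and only if $ph(F) \leq 1$ for every finite isometric subgraph $F$ of $G$. For condition (ii), I would cite Proposition~\ref{P:hyp-med./pc-min.}, which states exactly that a partial cube $G$ is hyper-median if and only if $ph(F) \leq 1$ for every pc-minor $F$ of $G$. Chaining these equivalences together with Proposition~\ref{P:cell.=hyp-med.cub-free.net.p.c.} yields the corollary immediately in both directions: the necessity follows because a cellular bipartite graph is both cube-free netlike and hyper-median, forcing (i) and (ii); the sufficiency follows because (i) forces $G$ to be cube-free and netlike, (ii) forces $G$ to be hyper-median, and the conjunction is by Proposition~\ref{P:cell.=hyp-med.cub-free.net.p.c.} exactly the property of being a cellular bipartite graph.

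There is essentially no obstacle here since all the substantive work has already been done in the cited propositions; the only care needed is to note that both (i) and (ii) implicitly include the hypothesis that $G$ itself is a partial cube (so that the notion of pc-minor and the applicability of Proposition~\ref{P:hyp-med./pc-min.} are well-defined), which is given in the statement. Thus the proof reduces to one sentence invoking the three results in succession.
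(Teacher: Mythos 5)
Your proof is correct and is exactly the paper's argument: the paper derives Corollary~\ref{C:cell.} by combining Proposition~\ref{P:cell.=hyp-med.cub-free.net.p.c.}, Proposition~\ref{P:cube-free netlike/isom.subgr.} and Proposition~\ref{P:hyp-med./pc-min.}, which is precisely the chain of equivalences you describe.
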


\subsection{Prime Peano partial cubes}\label{SS:prime}

A Peano partial cube is said to be \emph{prime} if it is neither the Cartesian product nor the gated amalgam of smaller Peano partial cubes.  By Theorem~\ref{T:tricycle/decomp.}, copies of $K_2$ and even cycles of length greater than $4$ are the finite prime hyper-median partial cubes, and thus are prime Peano partial cubes.  A benzenoid graph that is the union of three $6$-cycles forming a tricycle is also a prime Peano partial cube.  All these examples are netlike partial cubes, but some finite prime Peano partial cubes are not netlike, as is shown by the following example.  Let $F$ be the union of two $6$-cycles that have exactly one edge in common, and let $H := F \Box K_2$ be the prism over $F$.  Finally let $G$ (see Figure~\ref{F:prime}) be the graph that is obtained by adding a new $6$-cycle $C$ to $H$ so that $C$ and $H$ have exactly two edges in common in such a way that, if $A$ and $B$ are the two $6$-cycles of one of the two $F$-layers of $H$, then $(A,B,C)$ is a tricycle of $G$.  This graph $G$ is clearly a Peano partial cube that is not netlike, and we can easily check that it is prime.

\begin{figure}[!h]
    \centering
    {\tt    \setlength{\unitlength}{0.70pt}
\begin{picture}(230,196)
\thinlines    \put(105,13){$C$}
              \put(196,125){$B$}
              \put(10,125){$A$}
              \put(77,70){\line(0,-1){24}}
              \put(77,70){\line(0,-1){24}}
              \put(109,31){\line(2,1){30}}
              \put(109,31){\line(-2,1){31}}
              \put(139,72){\line(0,-1){24}}
              \put(139,46){\circle*{6}}
              \put(110,31){\circle*{6}}
              \put(77,46){\circle*{6}}
              \put(139,73){\line(-2,1){31}}
              \put(139,183){\line(-2,-1){30}}
              \put(77,73){\line(2,1){31}}
              \put(77,181){\line(2,-1){29}}
              \put(107,167){\circle*{6}}
              \put(107,88){\circle*{6}}
              \put(139,182){\circle*{6}}
              \put(190,155){\circle*{6}}
              \put(139,154){\circle*{6}}
              \put(169,139){\circle*{6}}
             
              \put(140,100){\circle*{6}}
              \put(169,115){\circle*{6}}
              \put(190,99){\circle*{6}}
              \put(139,73){\circle*{6}}
              \put(139,73){\line(2,1){53}}
              \put(190,156){\line(-2,1){51}}
              \put(190,155){\line(0,-1){55}}
              \put(169,140){\line(0,-1){24}}
              \put(139,155){\line(0,1){25}}
              \put(139,100){\line(0,-1){26}}
              \put(139,155){\line(-2,-1){31}}
              \put(139,155){\line(2,-1){30}}
              \put(139,100){\line(-2,1){31}}
              \put(139,100){\line(2,1){30}}
              \put(190,155){\line(-4,-3){21}}
              \put(190,100){\line(-4,3){21}}
              \put(25,99){\line(3,2){22}}
              \put(25,154){\line(4,-3){21}}
              \put(77,99){\line(-2,1){31}}
              \put(77,154){\line(2,-1){30}}
              \put(77,154){\line(-2,-1){31}}
              \put(77,99){\line(0,-1){26}}
              \put(77,154){\line(0,1){25}}
              \put(107,139){\line(0,-1){24}}
              \put(46,137){\line(0,-1){23}}
              \put(25,154){\line(0,-1){55}}
              \put(75,73){\line(-2,1){51}}
              \put(25,155){\line(2,1){53}}
              \put(78,99){\circle*{6}}
              \put(47,115){\circle*{6}}
              \put(77,153){\circle*{6}}
              \put(47,138){\circle*{6}}
              \put(77,182){\circle*{6}}
              \put(25,99){\line(3,2){22}}
              \put(25,154){\line(4,-3){21}}
              \put(77,99){\line(2,1){30}}
              \put(77,99){\line(-2,1){31}}
              \put(77,154){\line(2,-1){30}}
              \put(77,154){\line(-2,-1){31}}
              \put(77,99){\line(0,-1){26}}
              \put(77,154){\line(0,1){25}}
              \put(107,139){\line(0,-1){24}}
              \put(46,137){\line(0,-1){23}}
              \put(25,154){\line(0,-1){55}}
              \put(75,73){\line(-2,1){51}}
              \put(25,155){\line(2,1){53}}
              \put(77,72){\circle*{6}}
              \put(107,115){\circle*{6}}
              \put(78,99){\circle*{6}}
              \put(47,115){\circle*{6}}
              \put(25,99){\circle*{6}}
              \put(107,138){\circle*{6}}
              \put(77,153){\circle*{6}}
              \put(77,182){\circle*{6}}
              \put(25,154){\circle*{6}}
              \qbezier(107,167)(95,127)(107,88)
              \put(107,88){\line(0,1){25}}
              \put(107,167){\line(0,-1){25}}
\end{picture}}
\caption{A prime Peano partial cube that is not netlike.}
\label{F:prime}
\end{figure}
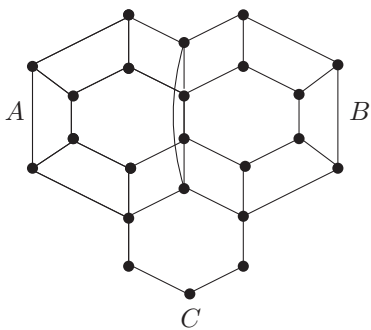

\subsection{The expansion procedure}\label{SS:expans.proc.}

In this subsection we extend to finite hyper-median partial cubes the expansion procedure that we introduced in \cite{P07-3} for netlike partial cubes.  We recall a result of Chepoi:

\begin{pro}\label{P:exp.fin.part.cub.}\textnormal{(Chepoi~\cite{Che88,Che94})}
A finite graph is a partial cube if and only if it can be obtained
from $K_{1}$ by a sequence of expansions.
\end{pro}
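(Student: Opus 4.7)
The plan is to prove both implications by induction on the isometric dimension $\mathrm{idim}(G)$, i.e., the number of $\Theta$-classes of $E(G)$. The base case is $\mathrm{idim}(G)=0$, i.e., $G=K_{1}$, which is trivially a partial cube and is vacuously obtained by the empty sequence of expansions.

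For the sufficiency direction (\emph{expansion sequence $\Rightarrow$ partial cube}), I would prove as the key step that any expansion $G'$ of a partial cube $G$ is itself a partial cube. Given an isometric embedding $f:V(G)\hookrightarrow V(Q_{n})$, define $f':V(G')\to V(Q_{n+1})$ by $f'(\psi_{i}(x)):=(f(x),i)$ for $x\in V_{i}$. That $f'$ is an isometric embedding reduces to the two distance formulas appearing in Lemma~\ref{L:interv.}: $d_{G'}(\psi_{i}(x),\psi_{i}(y))=d_{G}(x,y)$ when $x,y$ lie in the same side of the proper cover, and $d_{G'}(\psi_{i}(x),\psi_{1-i}(y))=d_{G}(x,y)+1$ when they lie on opposite sides. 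Both inequalities in each formula follow from the fact that $G[V_{0}]$ and $G[V_{1}]$ are isometric in $G$ (so a shortest path can always be routed through $V_{0}\cap V_{1}$) and from the cover structure (no edge between $V_{0}-V_{1}$ and $V_{1}-V_{0}$).

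For the necessity direction (\emph{partial cube $\Rightarrow$ expansion sequence}), assuming $\mathrm{idim}(G)=n\geq 1$, pick any edge $uv$ of $G$ and form the $\Theta$-contraction $G^{*}:=G/uv$ via the surjection $\gamma_{uv}$. The plan is to verify:
\begin{itemize}
\item[(a)] $G^{*}$ is a partial cube with $\mathrm{idim}(G^{*})=n-1$;
\item[(b)] $(\gamma_{uv}(W_{uv}),\gamma_{uv}(W_{vu}))$ is a proper cover of $G^{*}$;
\item[(c)] $G$ is the expansion of $G^{*}$ with respect to this proper cover.
\end{itemize}
Point (a) follows from Theorem~\ref{T:Djokovic-Winkler}(ii): the relation $\Theta$ on $E(G)$ is transitive, and contracting one $\Theta$-class preserves the transitivity of $\Theta$ on $E(G^{*})$ (the $\Theta$-classes of $G^{*}$ being the remaining $\Theta$-classes of $G$). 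Point (c) is then immediate from the construction, since the edges of the contracted $\Theta$-class join $\psi_{0}(U_{uv})$ to $\psi_{1}(U_{vu})$. Applying the induction hypothesis to $G^{*}$ yields the desired sequence of expansions, with one further expansion producing $G$.

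The main obstacle is point (b), and more specifically the verification that the two subgraphs $G^{*}[\gamma_{uv}(W_{uv})]$ and $G^{*}[\gamma_{uv}(W_{vu})]$ are isometric in $G^{*}$. The delicate point is that a shortest $(\gamma_{uv}(x),\gamma_{uv}(y))$-path in $G^{*}$ with $x,y\in W_{uv}$ might a priori be lifted only to a path in $G$ that crosses into $W_{vu}$; one must show such a crossing is avoidable. Here I would use the convexity of $W_{uv}$ in $G$ (Theorem~\ref{T:Djokovic-Winkler}(i)) together with Lemma~\ref{L:gen.propert.}(iv): any $(x,y)$-path in $G$ has $\Theta$-class multiset containing that of a geodesic, so from any geodesic I can excise the $uv$-$\Theta$-class edges in consecutive canceling pairs without increasing length, ultimately obtaining a geodesic inside $G[W_{uv}]$, whose image in $G^{*}$ has the same length. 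The nonemptiness of the intersection $\gamma_{uv}(W_{uv})\cap\gamma_{uv}(W_{vu})$ corresponds exactly to $\gamma_{uv}(U_{uv})=\gamma_{uv}(U_{vu})$, which is immediate from the definition of $\gamma_{uv}$.
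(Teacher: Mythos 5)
Your proposal is correct in outline, but note that the paper does not actually prove this proposition: it is quoted as Chepoi's theorem with a citation, and the only related material is the discussion in Subsection~\ref{SS:expansion}, which records without proof that an expansion of a partial cube is a partial cube and that $(\gamma_{uv}(W_{uv}^{G}),\gamma_{uv}(W_{vu}^{G}))$ is a proper cover of $G/uv$ with respect to which $G$ is an expansion of $G/uv$. Your plan reconstructs exactly that standard argument, and your use of the distance formulas of Lemma~\ref{L:interv.} for the sufficiency direction, and of the convexity of $W_{uv}$ (Theorem~\ref{T:Djokovic-Winkler}(i)) together with Lemma~\ref{L:gen.propert.}(iv),(v) for the isometricity of $G^{*}[\gamma_{uv}(W_{uv})]$, is the right toolkit; you have also correctly identified the genuinely delicate point, namely that a geodesic of $G^{*}$ between two vertices of $\gamma_{uv}(W_{uv})$ must be shown to lift without crossing into $W_{vu}$.

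One caveat on the logical order of your steps (a)--(c). As written, (a) asserts that the $\Theta$-classes of $G^{*}$ are the surviving $\Theta$-classes of $G$ and that transitivity is therefore inherited; but identifying $\Theta$ on $E(G^{*})$ with $\Theta$ on $E(G)\setminus E_{uv}$ already presupposes the distance relation $d_{G^{*}}(\gamma_{uv}(x),\gamma_{uv}(y))=d_{G}(x,y)-\epsilon$ (with $\epsilon=1$ exactly when $x$ and $y$ are separated by the $\Theta$-class of $uv$), which is essentially the content of your step (b) and of Remark~\ref{R:notation}. You should therefore either establish this distance formula first and then derive (a), (b), (c) from it, or replace the Winkler-transitivity argument in (a) by the more direct observation that $G^{*}$ is the image of $G$ under deletion of the hypercube coordinate corresponding to the $\Theta$-class of $uv$, which is isometric into $Q_{n-1}$ for the same reason (each separating $\Theta$-class other than that of $uv$ still induces a cut of $G^{*}$ whose crossing edges are exactly the images of that class, so any path in $G^{*}$ between $\gamma_{uv}(x)$ and $\gamma_{uv}(y)$ has length at least the number of such classes). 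With that reordering the proof is complete.
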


Several theorems of this kind have been stated for different
subclasses of partial cubes, see~\cite{IK98}.  The first one is the
following theorem of Mulder for median graphs.  An expansion of a
partial cube with respect to a proper cover $(V_{0},V_{1})$ is said to
be \emph{convex} if $V_{0} \cap V_{1}$ is convex.

\begin{pro}\label{P:exp.fin.med.gr.}\textnormal{(Mulder \cite{M78})} A
finite graph is a median graph if and only if it can be obtained from
$K_{1}$ by a sequence of convex expansions.
\end{pro}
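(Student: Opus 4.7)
The plan is to prove both directions by induction, using the machinery of $\Theta$-contractions (which is the inverse of expansion) as developed in Subsection~\ref{SS:expansion}. Throughout, I will exploit the characterization from Proposition~\ref{P:med.gr./loc.periph.} that $G$ is a median graph iff $U_{ab}$ is convex for every edge $ab$, since this is exactly the local condition that interacts well with $\Theta$-contractions.

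For the sufficiency direction, I would proceed by induction on the length of the expansion sequence, with $K_1$ as the trivial base. Suppose $G$ is a median graph and $G'$ is a convex expansion of $G$ with respect to a proper cover $(V_0,V_1)$ where $V_0 \cap V_1$ is convex. First I would note that $G'$ is a partial cube (this is standard, already referenced in the excerpt). To show $G'$ is median, take an arbitrary triple $(u',v',w')$ in $G'$ and project to $(u,v,w) := (\mathrm{pr}(u'),\mathrm{pr}(v'),\mathrm{pr}(w'))$ in $G$. By hypothesis this triple has a unique median $m \in V(G)$. The job is to lift $m$ canonically to $G'$. I would split into cases based on the multiset $\{i(u'),i(v'),i(w')\} \subseteq \{0,1\}$: if all three indices agree (say $i = 0$), then one argues using Lemma~\ref{L:interv.}(i) that $m \in V_0$, and the lift $\psi_0(m)$ is a median; if the indices are mixed, then Lemma~\ref{L:interv.}(ii) together with the convexity of $V_0 \cap V_1$ forces $m \in V_0 \cap V_1$, and one lifts accordingly. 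Uniqueness in $G'$ follows from the injectivity of the projection on each half-space and the uniqueness of $m$ in $G$.

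For the necessity direction, I induct on $|V(G)|$. Given a finite median graph $G$ with at least two vertices, pick any edge $ab$, and let $G' := G/ab$ be the $\Theta$-contraction with natural surjection $\gamma$, so that $G$ is the expansion of $G'$ with respect to $(V_0,V_1) := (\gamma(W_{ab}), \gamma(W_{ba}))$. Two things must be shown: that $V_0 \cap V_1$ is convex in $G'$ (so the expansion is convex), and that $G'$ is again a median graph (so induction applies, since $|V(G')| < |V(G)|$). The convexity of $V_0 \cap V_1 = \gamma(U_{ab})$ follows because $U_{ab}$ is convex in $G$ by Proposition~\ref{P:med.gr./loc.periph.}, and $\gamma$ restricts to isometries $G[W_{ab}] \to G'[V_0]$ and $G[W_{ba}] \to G'[V_1]$, so $\gamma(U_{ab})$ is convex in each of these isometric subgraphs, and convexity in $G'$ is then easy since $V_0 \cup V_1 = V(G')$. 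To show $G'$ is median, take a triple in $G'$, lift it to $G$ in a distance-minimizing way using the two injections $\psi_0,\psi_1$, use the median in $G$, and project back; uniqueness in $G'$ follows from uniqueness in $G$ together with the fact that distinct preimages in the same half-space would have been separated already in $G$.

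The main obstacle will be the case analysis in both directions that handles the distinction between triples whose three vertices lie in a common half-space versus those straddling the $V_0 \cap V_1$ boundary; in the latter situation, the convexity hypothesis on $V_0 \cap V_1$ is exactly what is needed to guarantee that the lifted/projected median stays inside the correct set. A subtlety requiring care is that in the necessity direction, one must verify that the median $m$ of the lifted triple in $G$ projects back to a vertex lying in the intervals $I_{G'}(u',v')$, $I_{G'}(v',w')$, $I_{G'}(w',u')$ simultaneously; the distance formulas of Lemma~\ref{L:interv.} show that contracting a single $\Theta$-class shortens each such interval by at most one, and the convexity of $U_{ab}$ ensures no pathological shortcut is introduced, so the projected median remains a median in $G'$.
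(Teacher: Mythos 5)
First, a point of reference: the paper offers no proof of this proposition at all --- it is quoted verbatim from Mulder \cite{M78} as a known result. So there is no internal argument to compare yours against; your proposal has to be judged on its own. Its overall architecture (induction in both directions, using the $\Theta$-contraction/expansion dictionary of Subsection~\ref{SS:expansion} together with the characterization of median graphs by convexity of the sets $U_{ab}$ in Proposition~\ref{P:med.gr./loc.periph.}) is the standard and correct route, and the necessity direction as you sketch it is essentially fine: $\gamma(U_{ab})$ is convex in $G/ab$ because $U_{ab}$ is convex in $G$ and, by Lemma~\ref{L:interv.}(i), any vertex of $I_{G/ab}(p,q)$ lying in $V_i$ pulls back into $I_G(\psi_i(p),\psi_i(q))$; and $G/ab$ inherits medians because geodesics project to geodesics (Remark~\ref{R:notation}.1), uniqueness being automatic in a partial cube by Lemma~\ref{L:gen.propert.}(i).

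In the sufficiency direction, however, two things need repair. The crucial missing observation is that for a proper cover $(V_0,V_1)$ with $V_0\cap V_1$ convex, the sets $V_0$ and $V_1$ are themselves convex: a geodesic between two vertices of $V_0$ that left $V_0$ would have to enter and leave $V_1-V_0$ through vertices of $V_0\cap V_1$ (there being no edge between $V_0-V_1$ and $V_1-V_0$), and convexity of $V_0\cap V_1$ pulls the excursion back in. It is this fact --- not Lemma~\ref{L:interv.}(i) by itself --- that guarantees the median $m$ of the projected triple lies in $V_0$ when $u,v,w\in V_0$, and that $I_{G[V_0]}(u,v)=I_G(u,v)$ so the lift $\psi_0(m)$ really lands in $I_{G'}(u',v')$. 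Second, your claim that in the mixed case ``the convexity of $V_0\cap V_1$ forces $m\in V_0\cap V_1$'' is false: take $G=K_2=\langle a,b\rangle$ with $V_0=\{a,b\}$, $V_1=\{b\}$, and the triple $((a,0),(a,0),(b,1))$ in the expansion $P_3$; its projected median is $a\in V_0-V_1$. The correct statement is that $m$ lies in $V_i$ for the index $i$ carried by at least two of the three vertices (again by convexity of $V_i$), and one lifts by $\psi_i$ for that majority index; a short distance computation with Lemma~\ref{L:interv.}(ii) then shows $\psi_i(m)$ lies in all three intervals while the other lift does not. With these two corrections the argument closes, but as written the mixed case does not go through.
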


For netlike partial cubes and Peano partial cubes such a result is impossible.  As we showed in \cite[Subsection 6]{P07-3}, there exist
netlike partial cubes such that none of their $\Theta$-contractions is
netlike nor ph-homogeneous (see Figure~\ref{F:ph-prime}).

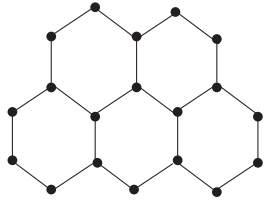
\begin{figure}[!h]
    \centering
   {\tt    \setlength{\unitlength}{0.92pt}
\begin{picture}(126,117)
\thinlines   
              \put(81,31){\circle*{4}}
              \put(97,18){\circle*{4}}
              \put(114,30){\circle*{4}}
              \put(29,61){\circle*{4}}
              \put(64,61){\circle*{4}}
              \put(97,61){\circle*{4}}
              \put(13,51){\circle*{4}}
              \put(47,49){\circle*{4}}
              \put(81,51){\circle*{4}}
              \put(114,49){\circle*{4}}
              \put(13,31){\circle*{4}}
              \put(29,19){\circle*{4}}
              \put(48,30){\circle*{4}}
              \put(63,19){\circle*{4}}
              \put(64,82){\circle*{4}}
              \put(97,81){\circle*{4}}
              \put(29,82){\circle*{4}}
              \put(47,94){\circle*{4}}
              \put(80,92){\circle*{4}}
              \put(97,18){\line(-4,3){17}}
              \put(97,19){\line(3,2){16}}
              \put(114,50){\line(0,-1){19}}
              \put(64,18){\line(-4,3){17}}
              \put(63,19){\line(3,2){16}}
              \put(81,50){\line(0,-1){19}}
              \put(12,50){\line(3,2){16}}
              \put(13,51){\line(0,-1){19}}
              \put(47,50){\line(0,-1){19}}
              \put(30,19){\line(3,2){16}}
              \put(30,18){\line(-4,3){17}}
              \put(80,50){\line(-4,3){17}}
              \put(80,51){\line(3,2){16}}
              \put(97,82){\line(0,-1){19}}
              \put(47,49){\line(-4,3){17}}
              \put(47,50){\line(3,2){16}}
              \put(64,81){\line(0,-1){19}}
              \put(29,82){\line(0,-1){19}}
              \put(115,49){\line(-4,3){17}}
              \put(98,79){\line(-4,3){17}}
              \put(64,81){\line(-4,3){17}}
              \put(64,81){\line(3,2){16}}
              \put(30,83){\line(3,2){16}}
\end{picture}}
\caption{A benzenoid graph none of whose $\Theta$-contractions is ph-homogeneous.}
\label{F:ph-prime}
\end{figure}

We first extend the definition of strong ph-stability.

\begin{defn}\label{D:gen.strong.ph-stab.}
Let $A$ be a set of vertices of some partial cube $G$.  Then $A$ is said to be \emph{strongly ph-stable} in $G$ if, for any vertex $u \in \mathcal{I}_G(A)-A$, there exists a convex $A$-path $P_u$ which passes through $u$ and which satisfies the following two properties:

(SPS1')\; For every $x \in \mathcal{I}_{G}(A)$,\; $u \in I_G(x,v)$ for some endvertex $v$ of $P_u$.

(SPS2')\; For all vertices $x, y \in A$ such that $u \in I_G(x,y)$,\; $P_u$ is a subpath of some $(x,y)$-geodesic.
\end{defn}

\begin{defn}\label{D:ph-respectful.prop.cov.}
A proper cover $(V_0,V_1)$ of a partial cube $G$ is said to be \emph{ph-respectful} if it has the following properties:

(PHR1)\; $V_0 \cap V_1$ is strongly ph-stable in $G[V_i]$ for $i = 0, 1$;

(PHR2)\; $\mathcal{I}_G(V_0,\cap V_1)$ is gated.
\end{defn}

\begin{defn}\label{D:ph-resp.expans.}
An expansion $G_1$ of a partial cube $G$ with respect to a ph-respectful proper cover of $G$ is called a \emph{ph-respectful expansion} of $G$.
\end{defn}

\begin{thm}\label{T:ph-resp.expans.hypomed.p.c.}
Any ph-respectful expansion of a hyper-median partial cube is a hyper-median partial cube.
\end{thm}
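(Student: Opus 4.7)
The plan is to deduce hyper-medianness of $G_1$ from the characterization of Theorem~\ref{T:charact.hyper-median}: I will verify separately that $G_1$ is a Peano partial cube and that $G_1$ is tricycle-free. The edges of $G_1$ split into two kinds: those lifted from an edge of $G$ (lying inside one of the half-spaces $V_0' = \psi_0(V_0)$ or $V_1' = \psi_1(V_1)$), and the ``vertical'' edges $(x,0)(x,1)$ for $x \in V_0 \cap V_1$, which form a single $\Theta$-class in $G_1$ by construction. Since $V_0'$ and $V_1'$ are complementary half-spaces of $G_1$, hence convex, the isomorphisms $\psi_i : G[V_i] \to G_1[V_i']$ are isometries and convexity inside a layer transfers directly between $G[V_i]$ and $G_1$.

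For the Peano property, I will use Theorem~\ref{T:charact.}(ii), verifying that $U_{ab}^{G_1}$ and $U_{ba}^{G_1}$ are strongly ph-stable for every edge $ab$ of $G_1$. If $ab$ belongs to the new vertical $\Theta$-class, then $U_{ab}^{G_1} = \psi_0(V_0 \cap V_1)$ on the down side and $\psi_1(V_0 \cap V_1)$ on the up side; strong ph-stability inside $V_0'$ (resp.\ $V_1'$) follows at once from hypothesis (PHR1) together with the isometry $\psi_i$, because a convex $(V_0 \cap V_1)$-path in $G[V_i]$ associated with some interior vertex lifts to a convex $\psi_i(V_0 \cap V_1)$-path in $G_1$ satisfying (SPS1)--(SPS2). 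If $ab$ is lifted from an edge $cd$ of $G$, then by Lemma~\ref{L:expans./theta} the full $\Theta$-class of $ab$ in $G_1$ is obtained by lifting the $\Theta$-class of $cd$; strong ph-stability of $U_{cd}^G$ and $U_{dc}^G$ (available because $G$ is Peano) transfers to $G_1$ using Lemma~\ref{L:interv.} to move geodesics between $G$ and its expansion, once one checks that the associated $U_{cd}^G$-geodesic, lying in $V_0$ or in $V_1$ by the convexity of the relevant bulge, remains a geodesic after lifting.

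For tricycle-freeness, suppose $(C_0, C_1, C_2)$ is a tricycle in $G_1$ sharing a vertex $a^\ast$ and pairwise sharing edges $e_0, e_1, e_2$. If all three cycles lie in the same layer, say $V_0'$, then their $\gamma$-projections form a tricycle of $G[V_0]$; because each $C_i$ is convex in $G_1$, an argument on $\Theta$-classes (using that the vertical $\Theta$-class is disjoint from the edges of $C_i$) shows the projection is convex in $G$ as well, giving a tricycle of $G$ and contradicting Theorem~\ref{T:charact.hyper-median}. The hard case is when at least one $C_i$ contains a vertical edge: then by Lemma~\ref{L:gen.propert.}(vii) it contains exactly two (antipodal) vertical edges, whose endpoints lie in $\psi_0(V_0 \cap V_1) \cup \psi_1(V_0 \cap V_1)$, and the remaining horizontal parts of $C_i$ must sit inside $\mathcal{I}_{G_1}(\psi_j(V_0 \cap V_1))$. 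Here condition (PHR2), the gatedness of $\mathcal{I}_G(V_0 \cap V_1)$ in $G$, is the essential ingredient: it forces the projections of the three cycles to assemble inside the gated hypercylinder or hypertorus structure of $G$ (via Theorem~\ref{T:charact.}(iv) applied to $G$) and to produce either a tricycle of $G$ or two convex cycles of length $\geq 6$ sharing two edges, both of which are forbidden in a hyper-median partial cube. The main obstacle is precisely this mixed case, where one must keep careful track of how the vertical $\Theta$-class interacts with the lifted convex cycles of the layers; the role of (PHR2) is to guarantee that this interaction cannot be ``frayed'' enough to create a tricycle in $G_1$ without creating one in $G$. Combining the two steps, $G_1$ is a Peano partial cube that is tricycle-free, hence hyper-median by Theorem~\ref{T:charact.hyper-median}.
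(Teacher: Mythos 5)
Your overall strategy coincides with the paper's: prove that the expansion is Peano via strong ph-stability of every $U_{ab}^{G_1}$, prove it is tricycle-free, and invoke Theorem~\ref{T:charact.hyper-median}. But there are two concrete problems. First, in the Peano step you assert that the associated $U_{cd}^{G}$-geodesic lies ``in $V_0$ or in $V_1$ by the convexity of the relevant bulge.'' This is false: $V_0$ and $V_1$ are only isometric, not convex, in $G$, so a convex path (and a bulge) can perfectly well meet both $V_0-V_1$ and $V_1-V_0$, crossing $V_0\cap V_1$. This is precisely the case the paper must treat separately (edges $\Theta$-equivalent to an edge of $G[\mathcal{I}_{G[V_i]}(V_0\cap V_1)]$): when the associated geodesic $P$ crosses at a vertex $x\in V_0\cap V_1$, the associated geodesic upstairs is $\psi_0(P\cap G[V_0])\cup\langle\psi_0(x),\psi_1(x)\rangle\cup\psi_1(P\cap G[V_1])$ --- one edge longer, containing the vertical edge --- and its convexity comes from Corollary~\ref{C:conv.G0/conv.G1}. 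As stated, your transfer argument fails exactly on these edges.

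Second, and more seriously, your ``hard case'' of the tricycle argument is where the whole proof lives, and you do not give it: you describe a desired conclusion (``it forces the projections \dots to produce either a tricycle of $G$ or two convex cycles sharing two edges'') without an argument, and you assign the decisive role to (PHR2), which is not where the contradiction comes from. The actual mechanism is: since $G$ is tricycle-free, two of the projected cycles, say $C_2'$ and $C_3'$, meet in a single vertex $x$, so the edge shared by $C_2$ and $C_3$ upstairs is the vertical edge at $x$; hence $C_1$ lies in one layer $V_i'$ and $C_1'$ has at least three vertices in $\mathcal{I}_G(V_0\cap V_1)$, and \emph{only here} does (PHR2) (gatedness) enter, to force $V(C_1')\subseteq V_i\cap\mathcal{I}_G(V_0\cap V_1)$. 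The contradiction is then produced by (PHR1): the antipode $u$ of $x$ in $C_1'$ lies on the convex $(V_0\cap V_1)$-path $P$ furnished by strong ph-stability in $G[V_i]$, and (SPS1$'$)/(SPS2$'$) force both antipodes $v_2$ and $v_3$ of $x$ in $C_2'$ and $C_3'$ to be endvertices of $P$, which is impossible because $u\notin I_G(v_2,v_3)$ (the length of $C_1'$ being at least $6$). Without this (PHR1)-based endpoint argument --- or some equally explicit substitute --- the mixed case, and hence the theorem, remains unproved.
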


\begin{proof}
We use the notations about expansions introduced in~\ref{SS:expansion}.  Let
$G_{1}$ be the expansion of some Peano partial cube $G$ with respect to a proper cover
$(V_{0},V_{1})$.

(a)\; Assume that $(V_{0},V_{1})$ is ph-respectful.  Let $ab$
be an edge of $G_{1}$.  We will distinguish three cases.

\emph{Case 1.}\; $ab = \psi_{0}(x)\psi_{1}(x)$ for some $x \in V_{0}
\cap V_{1}$.

Then $U_{ab}^{G_{1}} = \psi_{0}(V_{0} \cap V_{1})$.  Hence
$U_{ab}^{G_{1}}$ is strongly ph-stable by (PHR1).\\

\emph{Case 2.}\; There is $i \in \{0,1 \}$ such that $ab =
\psi_{i}(a_{i})\psi_{i}(b_{i})$ for some edge $a_{i}b_{i} \in
E(G[V_{i}])$ which is not $\Theta$-equivalent to any edge of
$G[\mathcal{I}_{G[V_{i}]}(V_{0} \cap V_{1})]$.

Because the set $\psi(V_{i})$ is convex, it follows that
$\mathcal{I}_{G_{1}}(U_{ab}^{G_{1}}) =
\psi_{i}(\mathcal{I}_{G}(U_{a_{i}b_{i}}^{G}))$.  Let $u_i \in \mathcal{I}_{G}(U_{a_{i}b_{i}}^{G})$ and $u := \psi_i(u_i)$.  Then, clearly, a $U_{ab}^{G_1}$-geodesic $P$ passing through $u$ satisfies the properties (SPS1) and (SPS2) if and only if $\psi_i(P)$ is a $U_{a_ib_i}^{G_i}$-geodesic which satisfies (SPS1) and (SPS2).

Therefore $U_{ab}^{G_1}$ is strongly ph-stable since so is $U_{a_ib_i}^{G_i}$.\\

\emph{Case 3.}\;  There is $i \in \{0,1 \}$ such that $ab =
\psi_{i}(a_{i})\psi_{i}(b_{i})$ for some edge $a_{i}b_{i} \in
E(G[V_{i}])$ which is $\Theta$-equivalent to some edge of
$G[\mathcal{I}_{G[V_{i}]}(V_{0} \cap V_{1})]$.

Without loss of generality we will suppose that $i = 0$ and that
$a_{0}b_{0}$ is an edge of $G[\mathcal{I}_{G[V_{0}]}( V_{0} \cap
V_{1})]$.  Let $u \in \mathcal{I}_{G_{1}}(U_{ab}^{G_{1}})$.  Let $i = 0$ or $1$ be such that $u = \psi_i(u_i)$ for some $u_i \in \mathcal{I}_{G}(U_{a_{0}b_{0}}^{G})$.  Denote by $P$ the $U_{a_0b_0}^G$-geodesic that is associated with $u_i$.

We have two subcases.

\emph{Subcase 3.1}\; $P$ is a path of $G[V_i]$.

Then, by Lemma~\ref{L:interv.}, $\psi_i(P)$ is the $U_{ab}^{G_1}$-geodesic that is associated with $u$.\\

\emph{Subcase 3.2}\; $P$ meets $V_j$ for $j = 0, 1$.

According to the fact that $P$ is convex by definition, it follows that $P_i := P \cap G[V_i]$ is a geodesic for $i = 0, 1$, and that $P = P_0 \cup P_1$.  Then, by Lemma~\ref{L:interv.}, $\psi_0(P_0) \cup \langle \psi_0(x),\psi_1(x)\rangle \cup \psi_1(P_1)$, where $x$ is the only vertex of $P_0 \cap P_1$, is the $U_{ab}^{G_1}$-geodesic that is associated with $u$.  Note that it is convex by Corollary~\ref{C:conv.G0/conv.G1}.

Therefore $U_{ab}^{G_1}$ is strongly ph-stable.

It follows from these three cases that $G_{1}$ is a Peano partial cube.\\

(b)\; Assume now that $G$ is a hyper-median partial cube and that $(V_{0},V_{1})$ is a
gated ph-respectful proper cover of $G$.  By (a), $G_{1}$
is a Peano partial cube.  Suppose that $G_{1}$ contains a tricycle
$(C_{1},C_{2},C_{3})$.  Let $j \in \{1,2,3 \}$.  The length of $C_{j}$
is greater than $4$ by definition.  Therefore
$C_{j} = \psi(C'_{j})$, where $C'_{j}$ is a convex cycle of $G$, and
thus whose vertex set cannot be contained in $V_{0} \cap V_{1}$.
Because $G$ contains no tricycle by Theorem~\ref{T:charact.hyper-median},
at least two of the cycles $C'_{1}, C'_{2}, C'_{3}$, say $C'_{2},
C'_{3}$, have a unique vertex in common.  Then this vertex, say $x$,
belongs to $V_{0} \cap V_{1}$ and is such that
$\psi_{0}(x)\psi_{1}(x)$ is the common edge of $C_{2}$ and $C_{3}$.
Therefore $C'_{1}$ intersects $C'_{2}$ and $C'_{3}$ in two distinct
edges.  It follows that $V(C'_{1}) \subseteq V_{i}$ for some $i \in
\{0,1 \}$, and moreover that $C'_{1}$ has at least three vertices in
$\mathcal{I}_{G}(V_{0} \cap V_{1})$.  Whence $V(C'_{1}) \subseteq
V_{i} \cap \mathcal{I}_{G}(V_{0} \cap V_{1})$ since
$\mathcal{I}_{G}(V_{0} \cap V_{1})$ is gated by (PHR2).

Let $u$ be the antipode of $x$ in $C'_1$.  Because $V_0 \cap V_1$ is strongly ph-stable in $G[V_i]$ by (PHR1), there exists a $(V_0,V_1)$-geodesic $P$ in $G[V_i]$ that passes through $u$ and satisfies the properties (SPS1') and (SPS2').  Let $v_2$ and $v_3$ be the antipodes of $x$ in $C'_2$ and $C'_3$, respectively.  By (SPS1'), $u \in I_G(v_2,w)$ for some endvertex $w$ of $P$.  By (SPS2'), $P$ is a subpath of any $(v_2,w)$-geodesic which passes through $u$.  Because any $(v_2,u)$-geodesic meets $V_0 \cap V_1$ in $v_2$ only, it follows that $v_2$ is an endvertex of $P$.  Likely $v_3$ is an endvertex of $P$, contrary to the fact that $u \notin I_G(v_2,v_3)$ since the length of $C'_1$ is at least $6$.

Consequently $G_{1}$ contains no tricycle, and hence it is a hyper-median partial cube by
Theorem~\ref{T:charact.hyper-median}.
\end{proof}

\begin{defn}\label{D:periph.prop.cov.}
A proper cover $(V_0,V_1)$ of a partial cube $G$ is said to be \emph{peripheral} if $V_i = \mathcal{I}_{G[V_i]}(V_0 \cap V_1)$ for some $i = 0$ or $1$.  An expansion of a partial cube $G$ with respect to a peripheral proper cover of $G$ is called a \emph{peripheral expansion} of $G$.
\end{defn}

We can now prove the main theorem of this subsection, which is of the
same kind of results as Chepoi's~\cite{Che88},
Mulder's~\cite{M78} (see Propositions~\ref{P:exp.fin.part.cub.}
and~\ref{P:exp.fin.med.gr.}) and Polat~\cite[Theorem 6.15]{P07-3}.

\begin{thm}\label{T:hypom.p.c./expans.}
A finite graph is a hyper-median partial cube if and only
if it can be obtained from $K_{1}$ by a sequence of
peripheral ph-respectful expansions.
\end{thm}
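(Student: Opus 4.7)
Plan. The sufficiency direction follows by straightforward induction on the number of expansions used to build the graph: $K_1$ is trivially a hyper-median partial cube, and Theorem~\ref{T:ph-resp.expans.hypomed.p.c.} guarantees that a ph-respectful expansion of a hyper-median partial cube is again hyper-median; peripheral ph-respectful expansions being a special case, the inductive step is immediate. The bulk of the work lies in the necessity direction, which I prove by induction on $|V(G)|$. The base case $|V(G)|=1$ is trivial. For the inductive step, given a finite hyper-median partial cube $G$ with at least two vertices, the aim is to exhibit $G$ as a peripheral ph-respectful expansion of a strictly smaller hyper-median partial cube $G'$.

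Since $G$ is finite, hence compact, Proposition~\ref{P:compact=>semi-periphery} furnishes an edge $ab$ of $G$ such that $W_{ab}$ is a semi-periphery, i.e.\ $W_{ab}=co_G(U_{ab})=\mathcal{I}_G(U_{ab})$ (using $ph(G)\leq 1$ from Proposition~\ref{P:hypernet./ph1} together with Lemma~\ref{L:ph-stable}). Let $G':=G/ab$ be the $\Theta$-contraction relative to the $\Theta$-class of $ab$, let $\gamma_e\colon V(G)\to V(G')$ be the natural surjection, and set $(V'_0,V'_1):=(\gamma_e(W_{ab}),\gamma_e(W_{ba}))$, which is the induced proper cover of $G'$. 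The remaining work is to establish (i) that $G'$ is a hyper-median partial cube, and (ii) that $(V'_0,V'_1)$ is peripheral and ph-respectful; once these are in place, the induction hypothesis applied to $G'$ completes the argument. The peripheral part of (ii) follows by pushing the identity $W_{ab}=\mathcal{I}_G(U_{ab})$ through $\gamma_e$ to obtain $V'_0=\mathcal{I}_{G'[V'_0]}(V'_0\cap V'_1)$. Condition (PHR1) is then established by transferring the strong ph-stability of $U_{ab}$ and $U_{ba}$ in $G$ (supplied by Theorem~\ref{T:charact.}(ii) since $G$ is a Peano partial cube) to strong ph-stability of $V'_0\cap V'_1=\gamma_e(U_{ab})$ in $G'[V'_i]$ for $i=0,1$, via the restriction of $\gamma_e$ to $V(G[W_{ab}])$ and $V(G[W_{ba}])$ and a careful use of Lemma~\ref{L:interv.} to track geodesics, endpoints and inner vertices.

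The principal obstacle is the simultaneous verification of (i) and of (PHR2). For (i), I verify ph-homogeneity of $G'$ by a case analysis on the edges of $G'$ in the spirit of Lemma~\ref{L:Theta-contraction/ph-stable}, and tricycle-freeness by arguing that any hypothetical tricycle in $G'$ lifts, through $\gamma_e$, to a tricycle in $G$, contradicting Theorem~\ref{T:charact.hyper-median} since $G$ is hyper-median. For (PHR2), I apply Theorem~\ref{T:hypernet.=>G-closed+conv.=gated} to the (now hyper-median) partial cube $G'$: it suffices to show that the convex subgraph of $G'$ induced by $\mathcal{I}_{G'}(V'_0\cap V'_1)$ is $\Gamma$-closed. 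A convex cycle of $G'$ with at least three vertices in this set is shown to lift, via $\gamma_e$, to a convex cycle of $G$ meeting the subgraph $G_{\overrightarrow{ab}}$ in at least three vertices; since $G$ is tricycle-free, Lemma~\ref{L:tricycle/Gab} gives that $G_{\overrightarrow{ab}}$ is gated in $G$, hence $\Gamma$-closed, forcing the lifted cycle into $G_{\overrightarrow{ab}}$ and hence its projection into $\mathcal{I}_{G'}(V'_0\cap V'_1)$. The delicate step is precisely this lifting: convex cycles of $G'$ whose lifts use edges of the $\Theta$-class of $ab$, and the correct correspondence of convexity between $G$ and $G'$ in that situation, require the most careful handling.
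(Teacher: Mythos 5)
Your overall strategy coincides with the paper's: induction on $|V(G)|$, selection of an edge $ab$ with $W_{ab}=\mathcal{I}_G(U_{ab})$ a semi-periphery, passage to the $\Theta$-contraction $G'=G/ab$, and verification that the induced proper cover is peripheral and ph-respectful while $G'$ is hyper-median. The sufficiency direction, the peripherality of the cover, and (PHR1) are handled as in the paper. For (PHR2) you take a longer route (lifting convex cycles of $G'$ to $G$ and invoking $\Gamma$-closedness), where the paper simply observes that the gatedness of $G_{\overline{ab}}$ in $G$ (Lemma~\ref{L:tricycle/Gab}) transfers directly under the identification of $H[V_1]$ with $G[W_{ba}]$; your lifting step is exactly the kind of convexity-correspondence that can go wrong across a $\Theta$-contraction, so the paper's shortcut is worth adopting.

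The genuine gap is in your verification that $G'$ is ph-homogeneous. You propose to obtain this ``by a case analysis on the edges of $G'$ in the spirit of Lemma~\ref{L:Theta-contraction/ph-stable}'', reserving the tricycle-freeness of $G$ solely for showing that $G'$ is tricycle-free. But Lemma~\ref{L:Theta-contraction/ph-stable} only covers edges $uv$ whose $\Theta$-class meets $G[\mathcal{I}_G(U_{ab})]$, and ph-homogeneity of a $\Theta$-contraction of a Peano partial cube is \emph{false} in general: the paper exhibits netlike (hence Peano) partial cubes none of whose $\Theta$-contractions is ph-homogeneous (Figure~\ref{F:ph-prime}). The critical case is an edge $uv$ of $G[W_{ba}]$ that is $\Theta$-equivalent to no edge of $H[\mathcal{I}_H(V_0\cap V_1)]$: there one must prove that $\mathcal{I}_{H}(U_{uv}^{H})=\mathcal{I}_{G}(U_{uv}^{G})$, i.e.\ that the contraction does not enlarge the interval hull of $U_{uv}$. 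The paper does this by supposing the contrary, extracting a $U_{ba}$-path $P$ with only its endvertices in $U_{ba}$, and deriving either a failure of ph-stability of some $U_{x_1x_0}$ (Subcase 2.1) or a forbidden tricycle or quasi-tricycle via Lemmas~\ref{L:quasi-tricycle} and~\ref{L:tricycle/4-cycle} (Subcase 2.2). So the hyper-median hypothesis on $G$ is what makes the contraction ph-homogeneous at all; your plan, as written, misattributes this to the contraction lemma and would not close without supplying an argument of this kind.
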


\begin{proof}
By Theorem~\ref{T:ph-resp.expans.hypomed.p.c.}, we only have to prove
the necessity.  The proof will be done by induction on the number of
vertices of finite hyper-median partial cubes. This is
trivial if such a graph has only one vertex.  Let $n \geq 1$.  Suppose
that any finite hyper-median partial cube with at most
$n$ vertices can be obtained from $K_{1}$ by a sequence of peripheral ph-respectful expansions.  Let $G$ be a hyper-median partial cube having $n+1$ vertices.  Because $G$ is finite and $ph(G) \leq 1$, there exists an edge $ab$ of $G$ such
that $W_{ab} = co_G(U_{ab}^G) = \mathcal{I}_{G}(U_{ab}^G)$.

Let $H := G/ab$ be the $\Theta$-contraction of $G$ with respect to the
$\Theta$-class of $ab$.  Put $V_{0} := \gamma_{ab}(W_{ab}^{G})$ and
$V_{1} := \gamma_{ab}(W_{ba}^{G})$.  Then $(V_{0},V_{1})$ is a proper
cover of $H$.  For simplification we will identify $H$ with the graph
obtained from $G$ by collapsing each edge $xy$ which is
$\Theta$-equivalent to $ab$ onto its endvertex $y \in U_{ba}^{G}$.
Whence $V_{1} = W_{ba}^{G}$,\; $V_{0} \cap V_{1} = U_{ba}^{G}$ and $\mathcal{I}_G(U_{ba}^G) = \mathcal{I}_{H[V_1]}(V_0 \cap V_1)$.

First note that, if $H$ is ph-homogeneous, then the proper cover $(V_0,V_1)$ is ph-respectful.  Indeed, (PHR1) is a consequence of the facts that $U_{ab}^G$ and $U_{ba}^G$ are strongly ph-stable since $G$ is ph-homogeneous, and (PHR2) is a consequence of the fact that $G_{\overline{ab}}$ is gated by Lemma~\ref{L:tricycle/Gab} since $G$ is tricycle-free by Theorem~\ref{T:charact.hyper-median}.  Moreover $(V_0,V_1)$ is peripheral because $V_0 = \gamma_{ab}(\mathcal{I}_{G}(U_{ab}^G)) = \mathcal{I}_{H[V_0]}(V_0 \cap V_1)$.

Now we show that $H$ is a hyper-median partial cube. Let
$uv$ be an edge of $H$.  Without loss of generality we can suppose
that $uv$ is an edge of $H[V_{1}]$, i.e., of $G[W_{ba}^{G}]$, because
each edge of $H[V_{0}]$ is $\Theta$-equivalent to an edge of
$H[V_{1}]$ since $W_{ab}^{G} = \mathcal{I}_{G}(U_{ab}^{G})$ by the
choice of $ab$.\\

\emph{Case 1.}\; $uv$ is $\Theta$-equivalent to an edge of 
$H[\mathcal{I}_{H}(V_0 \cap V_1)]$.

Clearly $\mathcal{I}_H(U_{uv}^H) \subseteq \mathcal{I}_{H}(V_0 \cap V_1)$.  Let $x \in \mathcal{I}_H(U_{uv}^H)$, and let $P$ be the $U_{uv}^G$-geodesic which is associated with $\psi_i(x)$ for some $i$ such that $x \in V_i$.  Because $P$ is convex, it has at most one edge which is $\Theta$-equivalent with $ab$.  Hence, because of the convexity of $P$, it follows that $\gamma_{ab}(P)$ is also convex and satisfies (SPS1) and (SPS2), and thus is the $U_{uv}^H$-geodesic which is associated with $x$.  Therefore $U_{uv}^H$ is strongly ph-stable.\\

\emph{Case 2.}\; $uv$ is $\Theta$-equivalent to no edge of 
$H[\mathcal{I}_{H}(V_0 \cap V_1)]$.

We will show that $\mathcal{I}_{H}(U_{uv}^{H}) \subseteq \mathcal{I}_{G}(U_{ba}^{G})$.
Suppose that this is not true.  Then there exist two non-adjacent
vertices $x, y \in U_{ba}^{G} \cap \mathcal{I}_{H}(U_{uv}^{H})$ and an $(x,y)$-geodesic
$P$ having only its endvertices in $U_{ba}^{G}$.  Let $C$ be the $ab$-cycle of $G$ which is associated to some inner vertex of $P$.  Without loss of generality, we can suppose that $P =C-W_{ab}$.  Recall that $C$ is a convex cycle of $G$, and that its length is greater than $4$.  Because $P$ is a geodesic in $H[\mathcal{I}_H(U_{uv}^H)]$, we can distinguish two subcases.\\

\emph{Subcase 2.1.}\; $P$ is a path of some bulge $X$ of $\mathcal{I}_G(U_{uv}^G)$.

Let $H := \mathbf{Cyl}[X]$.  Then $H$ is the Cartesian product of some $uv$-cycle $A$ and a component $B$ of $X[U_{uv}^G)]$.  Every vertex of $P$ lie on some $A$-layer of $H$, and $P$ is a path of some $B$-layer of $H$.  Let $P = \langle x_0,\dots,x_n\rangle$ with $n \geq 2$.  For $0 \geq i \geq n$, denote by $A_i$ the $A$-layer of $H$ which passes through $x_i$.  Then $\mathcal{I}_G(U_{x_1x_0}^G) = \bigcup_{1 \leq i \leq n}V(A_i) \cup \{x'_n\}$, where $x'_n$ is the neighbor of $x_n$ in $U_{ba}^G$.  Clearly the set $U_{x_1x_0}^G$ is not ph-stable because, for instance, if $y_n$ is a neighbor of $x_n$ in $A_n$, then $y \notin I_G(x_1,z)$ for every $z \in U_{x_1x_0}^G$, contrary to the fact that $ph(G) \leq 1$ since $G$ is ph-homogeneous.\\

\emph{Subcase 2.2.}\; $P$ is not a path of some bulge of $\mathcal{I}_G(U_{uv}^G)$.

Then there exist three vertices $y_1, y_2, y_3$ of $P$ such that $y_2 \in U_{uv}^G$, $\langle y_1,y_2,y_3\rangle$ is a subpath of $P$, and the edges $y_1y_2$ and $y_2y_3$ belong to distinct convex $uv$-cycles $C_1$ and $C_3$ of $G$, respectively.  Because, by Theorem~\ref{T:conv.reg.H-subgr.=>gated}, two distinct convex, and thus gated,
cycles of $G$ cannot have more than one edge in common, it follows that the intersections of $P$ with $C_1$ and $C_3$ are $\langle y_1,y_2\rangle$ and $\langle y_2,y_3\rangle$, respectively.

Suppose that $C_1$ and $C_3$ are $4$-cycles, i.e., that $y_1, y_3 \in U_{uv}^G$, then, by Lemma~\ref{L:quasi-tricycle}, the triple $(C,C_1,C_3)$ is a quasi-tricycle of the prism $C \Box \langle y_2,y'_2\rangle$, where $y'_2$ is the neighbor of $y_2$ in $U_{vu}^G$, contrary to the hypothesis that $uv$ is not $\Theta$-equivalent to an edge of 
$H[\mathcal{I}_{H}(V_0 \cap V_1)]$.  On the other hand, by Lemma~\ref{L:tricycle/4-cycle}, exactly one of the cycles $C_1$ and $C_3$ cannot be a $4$-cycle.  Therefore, the length of both $C_1$ and $C_3$ is greater than $4$.  It follows that he triple $(C,C_1,C_3)$ is a tricycle of $G$, contrary to the fact that $G$ is tricycle-free by Theorem~\ref{T:charact.hyper-median}.

Consequently, $\mathcal{I}_{H}(U_{uv}^{H}) = 
\mathcal{I}_{G}(U_{uv}^{G})$ and moreover $U_{uv}^{H} = U_{uv}^{G}$.  
Hence $U_{uv}^{H}$ is strongly ph-stable since so is $U_{uv}^{G}$.

Therefore $H$ is a Peano partial cube which clearly contains no
tricycle since $G$ tricycle-free.  Hence $H$ is a hyper-median partial cube, and $G$ is a peripheral ph-respectful expansion of $H$.

This completes the proof since, by the induction hypothesis, $H$,
which has at most $n$ vertices, can be obtained from $K_{1}$ by a
sequence of peripheral ph-respectful expansions.    
\end{proof}

\subsection{Bulge-regular Peano partial cubes}\label{SS:bulge-regul.}

Among Peano partial cubes are those all of whose bulges are finite and in a certain sense regular, such as netlike partial cubes.  In this subsection we study the decomposition of these graphs.

\begin{defn}\label{D:b-regular}
A Peano partial cube $G$ is said to be \emph{bulge-regular} if, for each edge $ab$ of $G$ and any bulge $X$ of $co_G(U_{ab})$, $X$ is finite and all vertices of $X-U_{ab}$ have the same degree in $X$.
\end{defn}

Clearly, \emph{a Peano partial cube $G$ is bulge-regular if and only if any element of $\mathbf{Cyl}[G]$ is finite and regular}, and thus is a finite hypertorus or the prism over a finite hypertorus.

For example, any netlike partial cube $G$ is bulge-regular because any convex hypercylinder of $G$ is an even cycle.  The class of all bulge-regular Peano partial cubes is not closed under:

\textbullet\;  convex subgraphs:  for example the hypertorus $G_0 := C_6 \Box C_6$ contains the Cartesian product $G_1$ of a $6$-cycle with a path of length $2$ as a convex subgraph;

\textbullet\; retracts: for example $G_1$ is a retract of $G_0$;
  
\textbullet\; Cartesian products, as is shown by $G_1$;

\textbullet\; gated amalgams: for example, let $G_{2}$ be the prism over a $6$-cycle, and $G$ 
the gated amalgam of $G_{0}$ and $G_{2}$ along a $6$-cycle.  Then, for any edge $ab$ of the common 
$6$-cycle of $G_{0}$ and $G_{2}$, the subgraph $G_{ab}$ is the only bulge of $co_G(U_{ab})$.  Then $G = \mathbf{Cyl}[G_{ab}]$, but $G$ is not regular.

However we have the following two theorems.

\begin{thm}\label{T:gated.amalg.(cyl.-reg.hypern.along.med.gr.)}
The gated amalgam
of two bulge-regular Peano partial cubes along a median
graph is a bulge-regular Peano partial cube.
\end{thm}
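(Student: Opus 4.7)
The plan is to reduce the claim to the bulge-regularity of $G_0$ and $G_1$ individually. Writing $M := G_0 \cap G_1$, I first note that $G$ is a Peano partial cube by Theorem~\ref{T:hypernet./gat.amalg.+cart.prod.}, so it will suffice to show that every $H \in \mathbf{Cyl}[G]$ is finite and regular. The cycle-localization behind the whole argument is the following: any convex cycle $D$ of $G$ of length greater than $4$ lies entirely in $G_0$ or in $G_1$. Indeed, $D$ is gated in $G$ by Theorem~\ref{T:conv.reg.H-subgr.=>gated}, so $D\cap G_0$, $D\cap G_1$ and $D\cap M$ are all gated in $D$; since $M$ is median it contains no convex cycle of length $>4$ (Proposition~\ref{P:hypernet./med.gr.}), and proper gated subgraphs of an even cycle of length $\geq 6$ have at most two vertices, so if $D$ straddled both sides one would get the impossible bound $|V(D)| \leq |V(D\cap G_0)|+|V(D\cap G_1)|\leq 4$.

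I would then lift this to the whole of $H$. By Lemma~\ref{L:bulge/ab-cycle} write $H = C \Box A$ with $C$ an even cycle of length $>4$, and assume without loss of generality that the designated $C$-layer lies in $G_0$. For adjacent $a,a' \in V(A)$ the $|V(C)|$ parallel edges joining the $C$-layers $C_a$ and $C_{a'}$ form a prism in $G$, and in the gated amalgam each such edge must have at least one endpoint in $V(M)$, as it cannot connect $V(G_0)\setminus V(M)$ to $V(G_1)\setminus V(M)$. The cycle-gatedness bound of the previous step applied to $C_a$ and $C_{a'}$ then forces $|V(C)| \leq 4$ whenever these two layers lie on opposite sides, a contradiction; connectedness of $A$ propagates the conclusion to $H \subseteq G_0$.

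It remains to identify $H$ as an element of $\mathbf{Cyl}[G_0]$, after which bulge-regularity of $G_0$ closes the proof. Write $H = \mathbf{Cyl}_G[X]$ with $X$ the bulge of $co_G(U_{ab}^G)$ giving rise to $H$; after replacing $ab$ by any $\Theta$-equivalent edge of $H$ we may assume $ab \in E(G_0)$. Then $X \subseteq V(H) \subseteq V(G_0)$, and $U_{ab}^G \cap V(G_0) = U_{ab}^{G_0}$ by Lemma~\ref{L:prop.isom subgr.}. The central component $A_X = X - U_{ab}^G$ is connected in the induced subgraph $G_0$ and lies in $co_{G_0}(U_{ab}^{G_0})$ because $H$ is a convex hypercylinder of $G_0$; its maximality in $G_0[co_{G_0}(U_{ab}^{G_0})-U_{ab}^{G_0}]$ transfers from its maximality in $G$, since any enlargement in $G_0$ would remain a valid enlargement in $G$, and the closed neighborhood of $A_X$ in $G_0[co_{G_0}(U_{ab}^{G_0})]$ coincides with $X$ because $X\subseteq V(G_0)$. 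Hence $X$ is a bulge in $G_0$ and the uniqueness clause of Lemma~\ref{L:bulge/ab-cycle}(i) applied in $G_0$ yields $H = \mathbf{Cyl}_{G_0}[X] \in \mathbf{Cyl}[G_0]$, so $H$ is finite and regular. The main obstacle, and the only place where the hypothesis that $M$ is \emph{median} is truly decisive, is the cycle-localization of the first paragraph: without it a convex cycle of length $>4$ could straddle $M$, the factor cycle $C$ of $H$ could genuinely split across both sides, and the whole reduction to bulge-regularity of $G_0$ or $G_1$ would collapse.
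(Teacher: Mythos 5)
Your proof is correct, but it follows a genuinely different route from the paper's. The paper argues edge by edge: after reducing to the case $ab \in E(G_{01})$, it proves that $\mathcal{I}_G(U_{ab}^G) = \mathcal{I}_{G_0}(U_{ab}^{G_0}) \cup \mathcal{I}_{G_1}(U_{ab}^{G_1})$ and that the gate in $G_{1-i}$ of any vertex of $\mathcal{I}_{G_i}(U_{ab}^{G_i})$ lands in $U_{ab}^{G_{01}}$; the median hypothesis enters through Proposition~\ref{P:med.gr./loc.periph.}, which makes $U_{ab}^{G_{01}}$ convex, and this forces every bulge of $co_G(U_{ab})$ into one of the two constituents, whence $\mathbf{Cyl}[G,ab] = \mathbf{Cyl}[G_0,ab] \cup \mathbf{Cyl}[G_1,ab]$. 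You instead localize each hypercylinder $H = C \Box A$ directly, by first localizing its $C$-layers (gatedness of long convex cycles together with the two-vertex bound on proper gated subgraphs of an even cycle) and then propagating along the prisms of $H$ with a counting argument on the matching edges. Both arguments are sound; yours is more combinatorial and avoids the interval computation, while the paper's yields the identity $\mathbf{Cyl}[G] = \mathbf{Cyl}[G_0] \cup \mathbf{Cyl}[G_1]$ recorded as Proposition~\ref{P:phi(G)} more immediately (your argument recovers it too, but only after the identification carried out in your third paragraph).

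One point to correct. You claim at the end that the median hypothesis is \emph{truly decisive} for the cycle-localization of your first paragraph. It is not: that localization holds for any gated amalgam of Peano partial cubes, since straddling already makes both $D \cap G_0$ and $D \cap G_1$ proper gated subgraphs of $D$. Where the hypothesis really bites is in your second paragraph: the bound $|V(C)| \le 4$ needs both $C_a \cap G_1$ and $C_{a'} \cap G_0$ to be \emph{proper} subgraphs of their cycles, i.e., it needs that no $C$-layer is entirely contained in $M$ --- and this is exactly what Proposition~\ref{P:hypernet./med.gr.} (a median graph has no convex cycle of length greater than $4$) guarantees. You should say so explicitly at that point of the argument; the configuration you must exclude, a $C$-layer swallowed by the amalgamation site, is precisely the one occurring in the paper's example of two bulge-regular graphs amalgamated along a $6$-cycle.
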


\begin{proof}
Let $G = G_{0} \cup G_{1}$ be the gated amalgam of two of its gated
subgraphs $G_{0}$ and $G_{1}$ that are bulge-regular ph-homogeneous and such that $G_{01} := G_{0} \cap G_{1}$ is
a median graph.  Then $G$ is a Peano partial cube by Theorem~\ref{T:hypernet./gat.amalg.+cart.prod.}.  We have to show that it is bulge-regular.  
Let $ab \in E(G)$,\; $G_{\overrightarrow{ab}} := G[co_{G}(U_{ab}^{G})
\cup W_{ba}^{G}]$ and $(G_i)_{\overrightarrow{ab}} := G_{i}[co_{G_{i}}(U_{ab}^{G_{i}}) \cup
W_{ba}^{G_{i}}]$ for $i = 0, 1$.  We will show that any element of $\mathbf{Cyl}[G,ab]$ are finite and regular.

\emph{Case 1.}\; $U_{ab}^{G} = U_{ab}^{G_{i}}$ for some $i = 0$ or $1$.

Then $G_{\overrightarrow{ab}} = (G_i)_{\overrightarrow{ab}} \cup G_{1-i}$ and $co_{G}(U_{ab}^{G}) = 
co_{G_{i}}(U_{ab}^{G_{i}})$ since $G_{i}$ is gated in $G$.  Hence 
$\mathbf{Cyl}[G,ab] = \mathbf{Cyl}[G_i,ab]$, and we are done.

\emph{Case 2.}\; $U_{ab}^{G} \neq U_{ab}^{G_{i}}$ for $i = 0, 1$.

Then, for $i = 0, 1$, $G_{i}$ has an edge that is $\Theta$-equivalent
to $ab$.  Hence $G_{01}$, which is gated in $G$, also has an edge
$\Theta$-equivalent to $ab$.  Then, without loss of generality we can
suppose that $ab \in E(G_{01})$.  For any $x \in V(G)$ and $i = 0, 1$,
we denote by $g_{i}(x)$ the gate of $x$ in $G_{i}$.

\emph{Claim.\; For every $ab \in E(G)$,\; $x \in
V(\mathcal{I}_{G_{i}}(U_{ab}^{G_{i}}))$ and $i = 0, 1$,\;
$co_{G}(U_{ab}^{G}) = \mathcal{I}_{G}(U_{ab}^{G}) =
\mathcal{I}_{G_{0}}(U_{ab}^{G_{0}}) \cup
\mathcal{I}_{G_{1}}(U_{ab}^{G_{1}})$ and $g_{1-i}(x) \in
U_{ab}^{G_{01}}$.}

Clearly $$W_{ab}^{G} =
W_{ab}^{G_{0}} \cup W_{ab}^{G_{1}} \textnormal{\quad and}\quad
U_{ab}^{G} = U_{ab}^{G_{0}}
\cup U_{ab}^{G_{1}}.$$

Let $i = 0$ or $1$.  By Corollary~\ref{C:Wab=IGab},
$co_{G_{i}}(U_{ab}^{G_{i}}) = \mathcal{I}_{G_{i}}(U_{ab}^{G_{i}})$.
Let $u_{i} \in \mathcal{I}_{G_{i}}(U_{ab}^{G_{i}})$ and $w \in
U_{ab}^{G_{01}}$.  By Lemma~\ref{L:ph-stable} and because $U_{ab}^{G_{i}}$ is ph-stable, there exists $v_{i} \in
U_{ab}^{G_{i}}$ such that $u_{i} \in I_{G_{i}}(w,v_{i})$.  Let
$v'_{i}$ be the neighbor of $v_{i}$ in $U_{ba}^{G_{i}}$.  Then
$v'_{i}, g_{1-i}(v_{i}) \in I_{G_{i}}(v_{i},g_{1-i}(v'_{i}))$ and
$v_{i}, g_{1-i}(v'_{i}) \in I_{G_{i}}(v'_{i},g_{1-i}(v_{i}))$.  Hence
$d_{G_{i}}(v_{i},g_{1-i}(v'_{i})) =
d_{G_{i}}(v_{i},g_{1-i}(v_{i}))+1$ and
$d_{G_{i}}(v'_{i},g_{1-i}(v_{i})) =
d_{G_{i}}(v'_{i},g_{1-i}(v'_{i}))+1$.  It follows that $g_{1-i}(v_{i}) \in
U_{ab}^{G_{01}}$ and $g_{1-i}(v'_{i}) \in U_{ba}^{G_{01}}$.

Because $g_{1-i}(u_{i}) \in I_{G_{i}}(v_{i},w)$, it follows that
$g_{1-i}(u_{i}) \in I_{G_{i}}(g_{1-i}(v_{i}),w)$.  Hence
$g_{1-i}(u_{i}) \in U_{ab}^{G_{01}}$, since $U_{ab}^{G_{01}}$ is
convex by Proposition~\ref{P:med.gr./loc.periph.} because $G_{01}$ is a
median graph.  It follows that
\begin{align*}
I_{G}(u_{0},u_{1}) &=
I_{G_{0}}(u_{0},g_{1}(u_{0})) \cup
I_{G_{01}}(g_{1}(u_{0}),g_{0}(u_{1})) \cup
I_{G_{01}}(g_{0}(u_{1}),u_{1})\\
&\subseteq I_{G_{0}}(v_{0},g_{1}(u_{0})) \cup
I_{G_{01}}(g_{1}(u_{0}),g_{0}(u_{1})) \cup
I_{G_{01}}(g_{0}(u_{1}),v_{1})\\
&\subseteq \mathcal{I}_{G_{0}}(U_{ab}^{G_{0}}) \cup U_{ab}^{G_{01}}
\cup \mathcal{I}_{G_{1}}(U_{ab}^{G_{1}}) \subseteq
\mathcal{I}_{G_{0}}(U_{ab}^{G_{0}}) \cup
\mathcal{I}_{G_{1}}(U_{ab}^{G_{1}}).
\end{align*}
This proves the claim.\\

By the above claim, $(G_i)_{\overrightarrow{ab}} = G_{\overrightarrow{ab}} \cap G_{i}$.  Note that $(G_i)_{\overrightarrow{ab}}$ is
gated in $G_{\overrightarrow{ab}}$ because $G_{i}$ is gated in $G$, and $(G_i)_{\overrightarrow{ab}}$ and $G_{\overrightarrow{ab}}$ are
convex subgraphs of $G_{i}$ and $G$, respectively.  Let $X$ be a bulge
of $co_{G}(U_{ab}^{G})$.  By the claim, $X \subseteq G_{i}$ for some
$i = 0$ or $1$.  It follows that $\mathbf{Cyl}[G,ab] = \mathbf{Cyl}[G_0,ab] \cup \mathbf{Cyl}[G_1,ab]$, and thus we are done.
\end{proof}

Let $G = G_{0} \cup G_{1}$.  We will say that the intersection $G_{0}
\cap G_{1}$ is \emph{relatively thick} (in $G$) if, for any $i = 0, 1$ and $x_{i}
\in V(G_{i}-G_{1-i})$,\; $d_{G}(x_{0},x_{1}) \geq 3$, that is, if any
$(x_{0},x_{1})$-geodesic contains an edge of $G_{0} \cap G_{1}$.  In
other words, $G_{0} \cap G_{1}$ is relatively thick if and only if
$N_{G}(G_{0}-G_{1}) \cap N_{G}(G_{1}-G_{0}) = \emptyset$.  If $G$ is
the gated amalgam of $G_{0}$ and $G_{1}$, and if $G_{0} \cap G_{1}$ is
relatively thick, then we will say that $G$ is the gated amalgam of $G_{0}$ and
$G_{1}$ \emph{along a relatively thick graph}.

\begin{thm}\label{T:gated.amalg.(cyl.-reg.hypern.along.relatively thick.gr.)}
The gated amalgam of two bulge-regular Peano partial cubes along a relatively thick graph is a bulge-regular Peano partial cube.
\end{thm}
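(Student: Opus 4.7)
The plan is to mimic the structure of Theorem~\ref{T:gated.amalg.(cyl.-reg.hypern.along.med.gr.)}, replacing the use of the convexity of $U_{ab}^{G_{01}}$ (a consequence of $G_{01}$ being median) by the relative thickness condition. First, by Theorem~\ref{T:hypernet./gat.amalg.+cart.prod.}, $G$ is a Peano partial cube, so it suffices to show that every $H\in\mathbf{Cyl}[G,ab]$ is finite and regular. We split as before into Case~1 ($U_{ab}^{G}=U_{ab}^{G_i}$ for some $i$), which is identical to the earlier proof and yields $\mathbf{Cyl}[G,ab]=\mathbf{Cyl}[G_i,ab]$ immediately, and Case~2, where both $G_0$ and $G_1$ contain edges $\Theta$-equivalent to $ab$, so $G_{01}$ also does; without loss of generality $ab\in E(G_{01})$.

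The core of the proof is to show that in Case~2, every $H=\mathbf{Cyl}[X]\in\mathbf{Cyl}[G,ab]$ is contained in $G_0$ or in $G_1$. Write $H=C\Box A$ with $|V(C)|\ge 6$, and fix any $C$-layer $C_H$ of $H$. Since $H$ is gated in $G$ (Theorem~\ref{T:C-subgr./gated}) and $C_H$ is convex in $H$ of length $\ge 6$, Theorem~\ref{T:conv.reg.H-subgr.=>gated} gives that $C_H$ is gated in $G$. Thus $C_H\cap G_0$ and $C_H\cap G_1$ are gated in $C_H$ by Lemma~\ref{L:gated.inter.convex}(i), hence each is either the whole cycle, a single edge, a single vertex, or empty. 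If neither equals $C_H$, a trivial counting argument bounds $|V(C_H)|\le 4$, a contradiction. So $C_H\subseteq G_0$ or $C_H\subseteq G_1$. Next I want to rule out that two $C$-layers of $H$ lie in different $G_i$. Pick adjacent vertices $u_1,u_2\in V(A)$ giving $C$-layers $C^{(0)}_H\subseteq G_0$ and $C^{(1)}_H\subseteq G_1$; since $C^{(j)}_H$ is not contained in $G_{01}$ (otherwise it would lie in both $G_0$ and $G_1$ and we could swap), $C^{(j)}_H\cap G_{01}$ is gated in $C^{(j)}_H$ and proper, hence has at most two vertices. But for every $c\in V(C)$ the rung edge $(c,u_1)(c,u_2)$ must have at least one endvertex in $G_{01}$, else it joins $V(G_0)\setminus V(G_1)$ to $V(G_1)\setminus V(G_0)$, contradicting relative thickness. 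This forces at least $|V(C)|\ge 6$ rungs to ``use'' the at most $2+2=4$ available vertices in $V(G_{01})\cap V(C^{(0)}_H\cup C^{(1)}_H)$, a contradiction. Therefore all $C$-layers of $H$ lie in the same $G_i$; by connectedness of $H$, $H\subseteq G_i$, say $H\subseteq G_0$.

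To conclude, identify $H$ as an element of $\mathbf{Cyl}[G_0,ab]$. Let $A_X := X - U_{ab}^G$. Since $X\subseteq V(G_0)$, for each $y\in A_X$ the $ab$-cycle $C_y$ of $G$ associated with $y$ is a $C$-layer of $H\subseteq G_0$, so its endvertices in $U_{ab}^G$ lie in $V(G_0)$, hence in $U_{ab}^{G_0}$; this shows $A_X\subseteq \mathcal{I}_{G_0}(U_{ab}^{G_0})-U_{ab}^{G_0} = co_{G_0}(U_{ab}^{G_0})-U_{ab}^{G_0}$. Moreover $A_X$ is a component there: any larger connected set in $G_0[co_{G_0}(U_{ab}^{G_0})-U_{ab}^{G_0}]$ containing $A_X$ would remain connected in $G[co_G(U_{ab}^G)-U_{ab}^G]$, contradicting maximality of $A_X$ in $G$. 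A routine check on closed neighborhoods then gives that $X$ is a bulge of $co_{G_0}(U_{ab}^{G_0})$, and because the $ab$-cycle associated with a vertex of $X-U_{ab}$ (unique in $G$ by Lemma~\ref{L:ab-cycle}) coincides in $G$ and $G_0$, the hypercylinder $\mathbf{Cyl}[X]$ computed in $G_0$ equals $H$. Hence $H\in\mathbf{Cyl}[G_0]$, and the bulge-regularity of $G_0$ yields that $H$ is finite and regular.

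The main obstacle is the layer-counting step: one has to be careful that the gated-subgraph-of-a-cycle structure combined with relative thickness leaves too few ``permissible'' rung endvertices. Once that cardinality squeeze is in hand ($|V(C)|\ge 6$ vs.\ at most $4$ vertices of $G_{01}$ available), everything else reduces to checking that the bulge $X$ and its associated $ab$-cycles behave identically in $G$ and in $G_0$, which follows from gatedness of $G_0$ in $G$ and the uniqueness results of Subsection~\ref{SS:(ii)<=>(iii)4.15}.
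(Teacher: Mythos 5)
Your reduction to showing that each $H\in\mathbf{Cyl}[G,ab]$ lies in one of the $G_i$ is a reasonable plan, and your first step is sound: each $C$-layer of $H$ is a gated cycle of length at least $6$ whose intersection with each $G_i$ is gated in it, hence is either the whole layer or has at most two vertices, so every $C$-layer is contained in $G_0$ or in $G_1$. The gap is in the next step. Your rung-counting argument only produces a contradiction when \emph{neither} of the two adjacent layers $C^{(0)}_H\subseteq G_0$, $C^{(1)}_H\subseteq G_1$ is entirely contained in $G_{01}$; you dismiss the remaining case with ``otherwise it would lie in both $G_0$ and $G_1$ and we could swap'', but swapping does not remove the obstruction: a layer lying wholly inside $G_{01}$ can sit between a layer in $G_0-G_1$ and a layer in $G_1-G_0$, and then no pair of adjacent layers violates your count (the interface layer contributes all $|V(C)|$ of its vertices to $G_{01}$, not at most two). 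This is exactly what happens in the paper's own counterexample at the start of Subsection~\ref{SS:bulge-regul.}: for $G$ the gated amalgam of $C_6\Box C_6$ and $C_6\Box K_2$ along a $C_6$, the relevant hypercylinder is $H=G=C_6\Box B$ with $B$ a $6$-cycle carrying a pendant vertex; its $C_6$-layers lie some in $G_0$ only, one in $G_1$ only, with the common $6$-cycle as interface, and $H$ is contained in neither factor. So the assertion ``all $C$-layers of $H$ lie in the same $G_i$'' is false in general, and your proof of it does not go through.

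The underlying problem is that your argument never actually uses relative thickness: the only place you invoke it is to say that a rung edge cannot join $V(G_0)\setminus V(G_1)$ to $V(G_1)\setminus V(G_0)$, which holds automatically in any amalgam $G=G_0\cup G_1$ (such an edge would belong to neither $E(G_0)$ nor $E(G_1)$) and in particular holds in the counterexample above, where the conclusion of the theorem fails. A correct proof must use the full strength of $d_G(x_0,x_1)\ge 3$, equivalently $N_G(G_0-G_1)\cap N_G(G_1-G_0)=\emptyset$, precisely to forbid the configuration in which vertices on opposite sides are at distance $2$ through an interface contained in $G_{01}$. The paper's proof does this by a different route: it establishes the claim $\mathcal{I}_G(U_{ab}^G)=\mathcal{I}_{G_0}(U_{ab}^{G_0})\cup\mathcal{I}_{G_1}(U_{ab}^{G_1})$ by showing that every $(u_0,u_1)$-geodesic with $u_i\in\mathcal{I}_{G_i}(U_{ab}^{G_i})$ meets $U_{ab}^{G_{01}}$; otherwise the two hypercylinders $H_0,H_1$ through the gates $g_{1-i}(u_i)$ would share a bulge of $G_{01}$, forcing $V(H_i)\cap N_G(G_{1-i}-G_i)\ne\emptyset$ while $V(H_i)\subseteq N_G[G_i-G_{1-i}]$, contradicting relative thickness. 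From the claim it follows that $\mathbf{Cyl}[G,ab]=\mathbf{Cyl}[G_0,ab]\cup\mathbf{Cyl}[G_1,ab]$, which is what your final paragraph needs. Your layer-by-layer strategy might be repairable along similar lines, but as written the decisive step is missing.
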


\begin{proof}
Let $G = G_{0} \cup G_{1}$ be the gated amalgam of two of its gated
subgraphs $G_{0}$ and $G_{1}$ that are bulge-regular ph-homogeneous and such that $G_{01} := G_{0} \cap G_{1}$ is
relatively thick.  Let $ab \in E(G)$.  We will show that any element of $\mathbf{Cyl}[G,ab]$ are finite and regular.

\emph{Case 1.}\; $U_{ab}^{G} = U_{ab}^{G_{i}}$ for some $i = 0$ or $1$.

This case is analogous to Case~1 in the proof of
Theorem~\ref{T:gated.amalg.(cyl.-reg.hypern.along.med.gr.)}.

\emph{Case 2.}\; $U_{ab}^{G} \neq U_{ab}^{G_{i}}$ for $i = 0, 1$.

As in Case~2 in the proof of
Theorem~\ref{T:gated.amalg.(cyl.-reg.hypern.along.med.gr.)}, we can
suppose that $ab \in E(G_{01})$.  For any $x \in V(G)$ and $i = 0, 1$,
we denote by $g_{i}(x)$ the gate of $x$ in $G_{i}$.

\emph{Claim.}\; $\mathcal{I}_{G}(U_{ab}^{G}) =
\mathcal{I}_{G_{0}}(U_{ab}^{G_{0}}) \cup
\mathcal{I}_{G_{1}}(U_{ab}^{G_{1}}).$

Clearly $$W_{ab}^{G} =
W_{ab}^{G_{0}} \cup W_{ab}^{G_{1}} \textnormal{\quad and}\quad
U_{ab}^{G} = U_{ab}^{G_{0}}
\cup U_{ab}^{G_{1}}.$$

Let $i = 0$ or $1$.  By Corollary~\ref{C:Wab=IGab},
$co_{G_{i}}(U_{ab}^{G_{i}}) = \mathcal{I}_{G_{i}}(U_{ab}^{G_{i}})$.
Let $u_{i} \in \mathcal{I}_{G_{i}}(U_{ab}^{G_{i}}) -
\mathcal{I}_{G_{01}}(U_{ab}^{G_{01}})$.  Then $$I_{G}(u_{0},u_{1}) =
I_{G_{0}}(u_{0},g_{1}(u_{0})) \cup
I_{G_{01}}(g_{1}(u_{0}),g_{0}(u_{1})) \cup
I_{G_{01}}(g_{0}(u_{1}),u_{1}).$$ Moreover $g_{1-i}(u_{i}) \in
N_{G}(G_{i}-G_{1-i})$.  Suppose that $g_{1-i}(u_{i}) \notin
U_{ab}^{G_{01}}$.  Let $w \in U_{ab}^{G_{01}}$.  By
Lemma~\ref{L:ph-stable} and Proposition~\ref{P:hypernet./ph1},
there exists $v_{i} \in U_{ab}^{G_{i}}$ such that $u_{i} \in
I_{G_{i}}(w,v_{i})$.  It follows that $g_{1-i}(u_{i})$ is a vertex of
a bulge $X_{i}$ of $co_{G_{i}}(U_{ab}^{G_{i}})$, and then a vertex of
$H_{i} := \mathbf{Cyl}[X_{i}]$, whose intersection with
$G_{i}-G_{1-i}$ is non-empty.  Then $g_{1-i}(u_{i})$ is a vertex of a
convex cycle $C_{i}$ of $H_{i}$ of length greater than $4$ and that
contains edges $\Theta$-equivalent to $ab$.  Because $G_{01}$ is
gated, and thus $\Gamma$-closed by
Theorem~\ref{T:hypernet.=>G-closed+conv.=gated}, $C_{i}$ must be a
cycle of $G_{01}$.  Moreover, for the same reason, $V(H_{i}) \subseteq
N_{G}[G_{i}-G_{1-i}]$.

Suppose that $g_{1}(u_{0})$ and $g_{0}(u_{1})$ do not belong to 
$U_{ab}^{G_{01}}$, and that there exists a 
$(g_{1}(u_{0}),g_{0}(u_{1}))$-geodesic that does not meet 
$U_{ab}^{G_{01}}$.  Then $C_{0}$ and $C_{1}$ would be cycles of a 
same bulge $X$ of $G_{01}$.  Hence, for $i = 0, 1$,\; $C_{1-i}$ would 
be a cycle of $H_{i}$, contrary to the fact that $V(H_{i}) \cap 
N_{G}(G_{1-i}-G_{i}) = \emptyset$ by what we proved above.  
Consequently, any $(u_{0},u_{1})$-geodesic meets $U_{ab}^{G_{01}}$, 
which proves the claim.\\

It follows that $\mathbf{Cyl}[G,ab] = \mathbf{Cyl}[G_0,ab] \cup \mathbf{Cyl}[G_1,ab]$, as in the proof of the above theorem, and thus we are done.
\end{proof}

By the proofs of the above two theorems, we have:

\begin{pro}\label{P:phi(G)}
Let $G_0$ and $G_1$ be two bulge-regular Peano partial cubes, and $G$ their gated amalgam along a median or a relatively thick graph.  Then $\mathbf{Cyl}[G] = \mathbf{Cyl}[G_0] \cup \mathbf{Cyl}[G_1]$.
\end{pro}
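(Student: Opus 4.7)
The plan is to establish the equality $\Theta$-class by $\Theta$-class. By the definition immediately preceding Theorem~\ref{T:C-subgr./gated}, $\mathbf{Cyl}[G] = \bigcup_{ab \in E(G)} \mathbf{Cyl}[G,ab]$ and similarly for $G_0, G_1$, so it is enough to prove that, for every edge $ab$ of $G$,
\[
\mathbf{Cyl}[G,ab] = \mathbf{Cyl}[G_0,ab] \cup \mathbf{Cyl}[G_1,ab],
\]
with the convention that $\mathbf{Cyl}[G_i,ab]$ is empty when no edge of $G_i$ is $\Theta$-equivalent to $ab$. Once this is in hand, taking unions over $E(G)$ together with the fact that every edge of $G$ lies in $E(G_0) \cup E(G_1)$ concludes the argument.

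For the forward inclusion, fix $H \in \mathbf{Cyl}[G,ab]$ and replay the case split from the proofs of Theorems~\ref{T:gated.amalg.(cyl.-reg.hypern.along.med.gr.)} and~\ref{T:gated.amalg.(cyl.-reg.hypern.along.relatively thick.gr.)}. In Case~1, where $U_{ab}^{G} = U_{ab}^{G_i}$ for some $i$, the gatedness of $G_i$ in $G$ forces $co_G(U_{ab}^{G}) = co_{G_i}(U_{ab}^{G_i})$, so the bulges of the two sets coincide and $\mathbf{Cyl}[G,ab] = \mathbf{Cyl}[G_i,ab]$. In Case~2, where both $G_0$ and $G_1$ carry edges $\Theta$-equivalent to $ab$, translating $ab$ if necessary allows one to assume $ab \in E(G_{01})$, and the Claim proved inside each theorem supplies
\[
\mathcal{I}_G(U_{ab}^{G}) = \mathcal{I}_{G_0}(U_{ab}^{G_0}) \cup \mathcal{I}_{G_1}(U_{ab}^{G_1}),
\]
together with the stronger statement that every $(u_0, u_1)$-geodesic with $u_i \in \mathcal{I}_{G_i}(U_{ab}^{G_i}) - V(G_{01})$ crosses $U_{ab}^{G_{01}}$. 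Since a bulge $X$ of $co_G(U_{ab}^{G})$ is, by Definition~\ref{D:bulge}, the closed neighborhood in $G[co_G(U_{ab}^{G})]$ of a connected component of $G[co_G(U_{ab}^{G}) - U_{ab}^{G}]$, this separation property implies that $X \subseteq V(G_i)$ for some $i$. Consequently $H = \mathbf{Cyl}[X] \in \mathbf{Cyl}[G_i,ab]$.

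For the reverse inclusion, take $H \in \mathbf{Cyl}[G_i,ab]$ with $ab \in E(G_i) \subseteq E(G)$, and write $H = \mathbf{Cyl}[X_i]$ for a bulge $X_i$ of $co_{G_i}(U_{ab}^{G_i})$. Since $G_i$ is gated, hence convex, in $G$, Lemma~\ref{L:prop.isom subgr.} gives $U_{ab}^{G_i} = U_{ab}^{G} \cap V(G_i)$, and the separation supplied by the same Claim prevents $X_i$ from being properly contained in a larger bulge of $co_G(U_{ab}^{G})$ meeting $V(G_{1-i})$. Hence $X_i$ is itself a bulge of $co_G(U_{ab}^{G})$, and the uniqueness clause of (HNB1) in Theorem~\ref{T:charact.} identifies the hypercylinder $\mathbf{Cyl}[X_i]$ computed in $G$ with $H$, whence $H \in \mathbf{Cyl}[G,ab]$.

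The main obstacle is the reverse inclusion, for which one must rule out precisely the scenario in which a bulge of $co_{G_i}(U_{ab}^{G_i})$ glues together with material from $G_{1-i}$ to produce a strictly larger bulge of $co_G(U_{ab}^{G})$. This is ruled out by the Claim already used in Case~2 of each of the two preceding proofs, so no new structural argument is required beyond recognising that the analyses of Theorems~\ref{T:gated.amalg.(cyl.-reg.hypern.along.med.gr.)} and~\ref{T:gated.amalg.(cyl.-reg.hypern.along.relatively thick.gr.)} already contain the fibrewise separation that the present proposition records.
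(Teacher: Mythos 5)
Your proposal is correct and follows essentially the same route as the paper: the paper derives this proposition directly from the proofs of Theorems~\ref{T:gated.amalg.(cyl.-reg.hypern.along.med.gr.)} and~\ref{T:gated.amalg.(cyl.-reg.hypern.along.relatively thick.gr.)}, whose Case~1 and Case~2 analyses (with their respective Claims) already establish $\mathbf{Cyl}[G,ab]=\mathbf{Cyl}[G_0,ab]\cup\mathbf{Cyl}[G_1,ab]$ for each edge $ab$, exactly as you replay them. Your extra care with the reverse inclusion is a reasonable explicitation of what the paper leaves implicit, not a different argument.
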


We will now prove a result which is analogous to Theorem~\ref{T:tricycle/decomp.}.

\begin{thm}\label{T:cyl.reg.hypermed./decomp.}
Let $G$ be a compact bulge-regular hyper-median partial
cube that is not a quasi-hypertorus.  Then $G$ is the gated amalgam along a median or a relative thick graph of two of its proper
subgraphs.
\end{thm}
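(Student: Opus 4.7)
The proof will adapt the strategy of Theorem~\ref{T:tricycle/decomp.}, since by Theorem~\ref{T:charact.hyper-median} a hyper-median partial cube is tricycle-free, so that theorem already produces a gated amalgam decomposition of $G$ into two proper subgraphs; what remains is to show that the glueing subgraph can be chosen to be either a median graph or relatively thick in $G$. After dispensing with the case of a cut vertex (amalgam along $K_{1}$, a trivially median graph), I assume $G$ is $2$-connected.

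First, suppose some edge $ab$ of $G$ has neither $W_{ab}$ nor $W_{ba}$ a semi-periphery. By Lemma~\ref{L:tricycle/Gab}, $G_{\overrightarrow{ab}}$, $G_{\overrightarrow{ba}}$ and $G_{\overline{ab}}$ are gated, and $G$ is the gated amalgam of $G_{\overrightarrow{ab}}$ and $G_{\overrightarrow{ba}}$ along $G_{\overline{ab}}$. The plan here is to verify that this intersection is relatively thick: any vertex $x \in W_{ab}\setminus \mathrm{co}_{G}(U_{ab})$ lies in $W_{ab}\setminus U_{ab}$ and so has no neighbor in $W_{ba}$, and similarly for $y \in W_{ba}\setminus \mathrm{co}_{G}(U_{ba})$, so every $(x,y)$-path must traverse an edge $uu'$ with $u \in U_{ab}$, $u' \in U_{ba}$, forcing $d_{G}(x,y) \geq d_{G}(x,u)+1+d_{G}(u',y) \geq 3$.

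Otherwise, every edge of $G$ has a semi-peripheral side. Following Case~2 of the proof of Theorem~\ref{T:tricycle/decomp.}, the elements of $\mathbf{Cyl}^{+}[G]$ are pairwise intersecting, hence by Proposition~\ref{P:gated.sets/Helly} and the compactness of $G$ they share a common nonempty gated intersection $Q$. Fix some $H_{0} \in \mathbf{Cyl}^{+}[G]$. If $Q \neq H_{0}\cap H$ for some $H \in \mathbf{Cyl}^{+}[G]\setminus\{H_{0}\}$, then, as in Subcase~2.2 of that proof, I can pick an edge $ab$ of $H_{0}$ with $a \in V(Q)$ and $b \in V(H)\setminus V(Q)$; then $W_{ab}$ is not a semi-periphery (since $G[W_{ab}]$ contains all members of $\mathbf{Cyl}^{+}[G]$ not containing $b$), so $W_{ba}$ is, and hence $G_{\overrightarrow{ab}} = G_{\overline{ab}}$. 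Both $G[W_{ab}]$ and $G_{\overline{ab}}$ are gated and their union is $G$, with intersection the convex subgraph $G_{ab} = G[\mathrm{co}_{G}(U_{ab})]$. The key remaining claim is that $G_{ab}$ is a median graph; by Proposition~\ref{P:hypernet./med.gr.} it suffices to show that every convex cycle of $G_{ab}$ is a $4$-cycle. A hypothetical convex cycle $C$ of length $>4$ inside $\mathrm{co}_{G}(U_{ab})$ would, by Theorem~\ref{T:hypernet./conv.hull(isom.cycle)} and Theorem~\ref{T:conv.reg.H-subgr.=>gated}, be gated in $G$; selecting a vertex of $C$ that lies in $U_{ab}$ (such a vertex exists since $C$ meets $U_{ab}$ by Lemma~\ref{L:edge/co(H)}(i)) and combining $C$ with the $ab$-cycle through that vertex (a convex cycle of length $>4$ by the Characterization Theorem) and with the $4$-cycle they define across $ab$ yields a tricycle of $G$, contradicting Theorem~\ref{T:charact.hyper-median}. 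The case $Q = H_{0}\cap H$ for every $H \in \mathbf{Cyl}^{+}[G]\setminus\{H_{0}\}$ is handled dually: $G$ is the gated amalgam along $Q$ of $H_{0}$ and the subgraph induced by the union of all other elements of $\mathbf{Cyl}^{+}[G]$, and an analogous tricycle-production argument using bulge-regularity (every $H \in \mathbf{Cyl}^{+}[G]$ is a finite quasi-hypertorus) shows that $Q$ has only $4$-cycles among its convex cycles, hence is median.

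The main obstacle is the tricycle construction in Case~2: one has to exhibit three convex cycles of length $>4$ pairwise sharing exactly one edge and meeting at a common vertex, starting from a putative non-$4$-cycle convex cycle of the intersection and from the cycle structure of $\mathbf{Cyl}[G,ab]$ (or of $H_{0}$ and a different $H$). The bulge-regular hypothesis is what makes Lemma~\ref{L:bulge/ab-cycle} produce full even-cycle factors rather than paths, and the gatedness asserted by Theorem~\ref{T:C-subgr./gated} combined with the Distance Property of the Cartesian product is what will pin down the common vertex; verifying these combinatorial incidences is the technical heart of the argument.
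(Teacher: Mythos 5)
Your Case~1 reproduces the paper's argument exactly: the intersection $G_{\overline{ab}}$ is relatively thick because every path from a vertex of $W_{ab}-co_G(U_{ab})$ to a vertex of $W_{ba}-co_G(U_{ba})$ must cross an edge $\Theta$-equivalent to $ab$, both of whose endpoints lie in the intersection. The gap is in Case~2. The paper settles both subcases with one preliminary fact that you never identify, Lemma~\ref{L:inters.hypercyl.}: in a bulge-regular Peano partial cube, any two elements of $\mathbf{Cyl}^{+}[G]$ intersect in a hypercube. This lemma is precisely where the bulge-regularity hypothesis acts (via the quasi-hypertorus structure it forces on the elements of $\mathbf{Cyl}[G]$). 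In Subcase~2.1 the set $Q$ is itself such a pairwise intersection, hence a hypercube, hence median --- no tricycle argument is needed or given. In Subcase~2.2 a convex cycle of length greater than $4$ in $G_{\overline{ab}}\cap G[W_{ab}]$ would be a convex cycle of two distinct elements of $\mathbf{Cyl}[G]$, contradicting the lemma.

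The substitute you propose --- manufacturing a forbidden triple of cycles from $C$, the $ab$-cycle through a vertex of $C\cap U_{ab}$, and a $4$-cycle across $ab$ --- does not go through as sketched. First, a configuration containing a $4$-cycle is at best a quasi-tricycle with exactly one $4$-cycle, excluded by Lemma~\ref{L:tricycle/4-cycle}, not a tricycle excluded by Theorem~\ref{T:charact.hyper-median}. Second, the required incidences fail: the three cycles must pairwise share exactly one edge, but $C$ and the $ab$-cycle through $u$ may meet only in the vertex $u$; moreover $C$ need not meet $U_{ab}$ at all (Lemma~\ref{L:edge/co(H)}(i) controls $\Theta$-classes of edges, not vertex membership --- $C$ could lie in an $A$-layer of some $\mathbf{Cyl}[X]=C''\Box A$ disjoint from $U_{ab}$). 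Third, and most seriously, your construction nowhere uses bulge-regularity, yet the implication ``a convex cycle of length $>4$ inside $co_G(U_{ab})$ produces a forbidden (quasi-)tricycle'' is false for general tricycle-free Peano partial cubes: the paper's own example of the gated amalgam of $C_6\Box C_6$ with the prism over $C_6$ along a $6$-cycle is compact, hyper-median, has convex $6$-cycles inside $co_G(U_{ab})$ for a suitable edge $ab$, and contains no tricycle. So the ``technical heart'' you defer is not a routine verification; it is exactly the intersection lemma, which is the one genuinely new ingredient of this theorem beyond Theorem~\ref{T:tricycle/decomp.}.
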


\begin{lem}\label{L:inters.hypercyl.}
Let $G$ be a bulge-regular Peano partial cube.  Then $H_0 \cap H_1$ is a hypercube for all $H_0, H_1 \in \mathbf{Cyl}^+[G]$.
\end{lem}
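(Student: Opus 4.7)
The plan is to pass to gatedness, dispose of the easy case in which one of $H_0, H_1$ is a hypercube, and then reduce the principal case (both are genuine hypercylinders) to a uniqueness-of-bulge argument. Both $H_0$ and $H_1$ are gated in $G$: elements of $\mathbf{Cyl}[G]$ are gated by Theorem~\ref{T:C-subgr./gated}, whereas any maximal hypercube of $G$ is gated because convex subgraphs of the ambient hypercube are subcubes. Consequently $H_0 \cap H_1$ is gated and in particular convex in $G$. If one of the two, say $H_0$, is a hypercube, then $H_0 \cap H_1$ is a convex subgraph of $H_0$, hence a subcube, and we are done. Assume henceforth that both $H_0$ and $H_1$ belong to $\mathbf{Cyl}[G]$; bulge-regularity makes each of them a finite quasi-hypertorus, so we may write $H_0 = C_1 \Box \cdots \Box C_k \Box K_2^m$ with each $C_i$ an even cycle of length $\geq 6$. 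By the Convex Subgraph Property (Proposition~\ref{P:pro.Cartes.prod.}(v)), $H_0 \cap H_1 = D_1 \Box \cdots \Box D_k \Box F$, where each $D_i$ is a convex subgraph of $C_i$ and $F$ is a subcube. Since a convex subgraph of $C_i$ is a vertex, a path, or the whole cycle, it suffices to show that every $D_i$ has at most one edge.

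Suppose for contradiction that $H_0 \neq H_1$ and some $D_i$ contains a path $\langle x_0, x_1, x_2 \rangle$ of length $2$. Set $ab := x_1 x_0$. In $H_0$ the edges $x_0 x_1$ and $x_1 x_2$ both lie in the cycle factor $C_i$, so they represent distinct $\Theta$-classes. Both edges also lie in $H_1$; if they belonged to different factors of $H_1$, the $4$-Cycle Property (Proposition~\ref{P:pro.Cartes.prod.}(iv)) would produce a convex $4$-cycle through them in $H_1$, whose fourth vertex would lie in $I_G(x_0, x_2)$. But $I_G(x_0, x_2) = \{x_0, x_1, x_2\}$ by convexity of $H_0$, a contradiction. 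Hence the two edges share a common cycle factor of $H_1$, and by Lemma~\ref{L:bulge/ab-cycle} both $H_0$ and $H_1$ belong to $\mathbf{Cyl}[G, ab]$. Next, verify $x_2 \in co_G(U_{ab}) - U_{ab}$: one has $x_2 \in W_{ab}$ since $d_G(x_2, x_1) = 1 < 2 = d_G(x_2, x_0)$; $x_2$ lies on a geodesic in $C_i$ from $x_1 \in U_{ab}$ to the antipode of $x_1$ in $C_i$ (also in $U_{ab}$), so $x_2 \in \mathcal{I}_G(U_{ab}) \subseteq co_G(U_{ab})$; and $x_2 \notin U_{ab}$ because by Lemma~\ref{L:gen.propert.}(ix) any edge at $x_2$ that is $\Theta$-equivalent to $ab$ must lie in $H_0$, yet no edge incident to $x_2$ in $H_0$ belongs to the $\Theta$-class of $ab$. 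Let $X$ be the bulge of $co_G(U_{ab})$ containing $x_2$; by Lemma~\ref{L:bulge/ab-cycle}(i), $H_0 = \mathbf{Cyl}[X] = H_1$, contradicting $H_0 \neq H_1$.

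Therefore each $D_i$ has length at most $1$, so $H_0 \cap H_1$ is a Cartesian product of copies of $K_1$ and $K_2$, that is, a hypercube. The hard step is the second paragraph: ruling out a length-two path in any cycle factor of $H_0$, which rests on combining the $4$-Cycle Property (to force $x_0 x_1$ and $x_1 x_2$ into a common cycle factor of $H_1$) with the uniqueness of the hypercylinder associated to a bulge given by Lemma~\ref{L:bulge/ab-cycle}(i).
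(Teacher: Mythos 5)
Your proof is correct and follows essentially the same route as the paper's: both reduce the claim to showing that two distinct elements of $\mathbf{Cyl}^+[G]$ cannot share two edges of a convex cycle of length greater than $4$, and both conclude via the uniqueness of the hypercylinder $\mathbf{Cyl}[X]$ attached to a bulge (Lemma~\ref{L:bulge/ab-cycle}). The only cosmetic differences are that you invoke the $4$-Cycle Property where the paper uses $\Gamma$-closedness of the gated subgraph $H_1$ to absorb the whole cycle, and that you spell out the Cartesian factorization of $H_0 \cap H_1$ that the paper dismisses with ``it clearly follows''.
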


\begin{proof}
This is obvious if $H_0$ and $H_1$ are hypercubes.  Assume that $H_0$ is not a hypercube, and let $C$ be a convex cycle of $H_0$ of length greater than $4$.  If $H_1$ contains two edges of $C$, then $C \subseteq H_1$ since $H_1$ is gated by Theorem~\ref{T:C-subgr./gated}.  Hence $H_1 = \mathbf{Cyl}[G,ab] = H_0$ for any edge $ab$ of $C$.  Therefore, if $H_1 \neq H_0$, then $H_1$ contains at most one edge of each convex cycle of $H_0$ of length greater than $4$.  It clearly follows that $H_0 \cap H_1$ is a hypercube.
\end{proof}

\begin{proof}[\textnormal{\textbf{Proof of
Theorem~\ref{T:cyl.reg.hypermed./decomp.}}}]
By Theorem~\ref{T:charact.hyper-median} and the fact that $G$ is compact, $G$ is a tricycle-free Peano partial cube.  We will then only complete the proof of Theorem~\ref{T:tricycle/decomp.} without recalling it.  

\emph{Case 1}:\; In this case, $G$ is the gated amalgam of $G_{ \overrightarrow{ab}}$ and $G_{ \overrightarrow{ba}}$ over $G_{ \overline{ab}}$.  Then we are done because the graph $G_{ab}$ is relatively thick 
since any path between a vertex in $W_{ab}$ and a vertex in $W_{ba}$ 
contains an edge $\Theta$-equivalent to $ab$, and thus which belongs 
to $E(G_{ab})$.

\emph{Subcase 2.1}:\; In this case, $Q$ is a hypercube by Lemma~\ref{L:inters.hypercyl.}, and thus a median graph.  Hence we are done.

\emph{Subcase 2.2}:\; In this case, $G$ is the gated amalgam of $G[W_{ab}]$ and $G_{\overline{ab}}$.  It remains to prove that $G_{\overline{ab}} \cap G[W_{ab}]$ is a median graph.  By Proposition~\ref{P:conv.subgr./hypermed.}, as a gated, and thus convex, subgraph of $G$, $G_{\overline{ab}} \cap G[W_{ab}]$ is a hyper-median partial cube.  Then, by
Proposition~\ref{P:hypernet./med.gr.}, it is sufficient to
show that any convex cycle of $G_{\overline{ab}} \cap G[W_{ab}]$ is a $4$-cycle.  Suppose
that $G_{\overline{ab}} \cap G[W_{ab}]$ contains a convex cycle $C$ of length greater
than $4$.  Then $C$ is a convex cycle of two distinct elements of
$\mathbf{Cyl}[G]$, which is impossible by
Lemma~\ref{L:inters.hypercyl.}.  Therefore $G_{\overline{ab}} \cap G[W_{ab}]$ is a median
graph.\\

Consequently, in any case, $G$ is  decomposable along a median or a
relatively thick graph.
\end{proof}

The following result is an immediate consequence of Theorems~\ref{T:charact.finite.hyper-median} and \ref{T:cyl.reg.hypermed./decomp.}.

\begin{thm}\label{T:charact.finite.cyl.reg.hyper-median}
A finite partial cube is a bulge-regular hyper-median partial cube if and only if it is is obtained from finite quasi-hypertori by a sequence of gated amalgamations along median or relative thick graphs.
\end{thm}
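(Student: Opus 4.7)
The plan is to establish both directions by induction, combining Theorem~\ref{T:cyl.reg.hypermed./decomp.} (decomposition) with the closure properties under gated amalgamation provided by Theorems~\ref{T:gat.amalgam/hypermed.}, \ref{T:gated.amalg.(cyl.-reg.hypern.along.med.gr.)}, and \ref{T:gated.amalg.(cyl.-reg.hypern.along.relatively thick.gr.)}.

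For the sufficiency I would induct on the number of gated amalgamation steps in the construction of $G$. The base case is a single finite quasi-hypertorus, which is hyper-median by Corollary~\ref{C:quasi-hypertori/hyper-median} and trivially bulge-regular since it is its own unique nontrivial element of $\mathbf{Cyl}$ and is finite and regular. For the inductive step, a gated amalgam of two bulge-regular hyper-median partial cubes along either a median or a relatively thick graph is a Peano partial cube by Theorem~\ref{T:hypernet./gat.amalg.+cart.prod.}, is hyper-median by Theorem~\ref{T:gat.amalgam/hypermed.}, and is bulge-regular by Theorems~\ref{T:gated.amalg.(cyl.-reg.hypern.along.med.gr.)} and \ref{T:gated.amalg.(cyl.-reg.hypern.along.relatively thick.gr.)}; so the class in question is closed under the construction.

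For the necessity I would induct on $|V(G)|$, the base case $|V(G)|=1$ being the degenerate quasi-hypertorus $K_1$. For the inductive step, if $G$ is itself a quasi-hypertorus we are done; otherwise, since $G$ is finite and hence compact, Theorem~\ref{T:cyl.reg.hypermed./decomp.} yields a decomposition $G = G_0 \cup G_1$ as the gated amalgam of two proper subgraphs $G_0,G_1$ along a median or relatively thick graph. Since each $G_i$ is gated, and hence convex, in $G$, Proposition~\ref{P:conv.subgr./hypermed.} makes $G_i$ a hyper-median partial cube. To close the induction on $G_0$ and $G_1$, which are of strictly smaller order, it remains to verify that they inherit bulge-regularity from $G$.

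The hard part will be precisely this inheritance step, which is not formal since a convex subgraph of a quasi-hypertorus need not be regular. My plan to handle it is to take an arbitrary $H' \in \mathbf{Cyl}[G_i]$, write $H' = C \Box A$ with $C$ an even cycle of length greater than $4$ convex in $G_i$ (and hence in $G$), and to argue first that $H' \subseteq H$, where $H = C \Box A''$ is the unique element of $\mathbf{Cyl}[G,ab]$ containing $C$ (existence and uniqueness of the gated $ab$-cycle through each vertex of $\mathcal{I}_G(U_{ab}^G)-U_{ab}^G$ coming from Theorem~\ref{T:charact.}(v) and Lemma~\ref{L:bulge/ab-cycle}); by bulge-regularity of $G$, $H$ is then a finite quasi-hypertorus, and $H'$ is a convex subgraph of it. A case-by-case inspection of the three decomposition cases in the proof of Theorem~\ref{T:cyl.reg.hypermed./decomp.} — the amalgam of $G_{\overrightarrow{ab}}$ and $G_{\overrightarrow{ba}}$ along $G_{\overline{ab}}$, the amalgam of an element of $\mathbf{Cyl}^+[G]$ with the rest along $Q = \bigcap\mathbf{Cyl}^+[G]$, and the amalgam of $G[W_{ab}]$ with $G_{\overline{ab}}$ along $G_{\overline{ab}}\cap G[W_{ab}]$ — will show that in each case $H$ is already contained entirely in one of the $G_j$, so that $H' = H$ is itself a finite quasi-hypertorus. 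This yields bulge-regularity of $G_0$ and $G_1$; the inductive hypothesis then expresses each $G_i$ as a sequence of gated amalgamations of finite quasi-hypertori along median or relatively thick graphs, and the amalgam $G = G_0 \cup G_1$ along such a graph completes the construction of $G$.
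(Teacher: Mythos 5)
Your proposal is correct and takes essentially the same route as the paper, which presents the theorem as an immediate consequence of Theorems~\ref{T:charact.finite.hyper-median} and~\ref{T:cyl.reg.hypermed./decomp.} (with Theorems~\ref{T:gated.amalg.(cyl.-reg.hypern.along.med.gr.)} and~\ref{T:gated.amalg.(cyl.-reg.hypern.along.relatively thick.gr.)} supplying the sufficiency direction). The one step you rightly single out as non-formal --- that the two pieces produced by Theorem~\ref{T:cyl.reg.hypermed./decomp.} inherit bulge-regularity, which convexity alone does not give since the class is not closed under convex subgraphs --- is passed over in silence by the paper, and your sketch of it (bulge-regularity of $G$ forces every element of $\mathbf{Cyl}[G]$ to be a quasi-hypertorus contained entirely in one of the two pieces, so the maximal convex hypercylinders of each piece coincide with those of $G$) is the right way to close it.
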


\section{Retracts and convex subgraphs}\label{S:retr./convex}

Retracts have been one the basic
topics of metric graph theory, for example in the study of
absolute retracts, and of varieties of graphs -- that is, classes of
graphs closed under retracts and products -- and also to obtain fixed
subgraph theorems in several classes of metric graphs.  In this section, we first prove that the class of Peano partial cubes is closed under retracts.  Any retract of a partial cube $G$ is a faithful subgraph of $G$, but the converse is generally not true, except for some special partial cubes, such as netlike ones.  If, moreover, $G$ is a Peano partial cube, then any retract of $G$ preserves not only the medians, but also the hyper-medians, and thus is what we call a ‘‘strongly faithful’’ partial cube.  However the converse is still not true.

\subsection{Principal cycles of an antipodal partial cube}\label{SS:isom.cycle}

We first recall some results about bipartite antipodal graphs (cf. Subsections~\ref{SS:p.c.ph.at most1} and \ref{SS:consequences}).

\begin{defn}\label{D:princip.cycle}
An isometric cycle $C$ of a bipartite antipodal graph $G$ is called \emph{principal} if $\alpha(C) = C$, where $\alpha : x \mapsto \overline{x}$, $x \in V(G)$, is the antipodal map of $G$.
\end{defn}

Principal cycles always exist; indeed, any geodesic in $G$ lies on a principal cycle.  Moreover, we have:

\begin{pro}\label{P:princ.cycl.}\textnormal{(Glivjak, Kotzig and Plesnik~\cite[Theorem 3]{GKP70})}
An isometric cycle $C$ of a bipartite antipodal graph $G$ is principal if and only if $\mathrm{diam}(C) = \mathrm{diam}(G)$.
\end{pro}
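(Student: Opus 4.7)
The plan is to give a short two-direction argument using only the definition of antipodality, the definition of an isometric cycle, and the observation that any cycle of length $2n$ has diameter exactly $n$, realized by pairs of antipodal vertices on the cycle.

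For the forward direction, I would start from a principal cycle $C$ and pick any $x \in V(C)$. By principality, $\overline{x} \in V(C)$, and since $C$ is isometric in $G$,
\[
d_C(x,\overline{x}) \;=\; d_G(x,\overline{x}) \;=\; \mathrm{diam}(G).
\]
Thus $\mathrm{diam}(C) \geq d_C(x,\overline{x}) = \mathrm{diam}(G)$. The reverse inequality $\mathrm{diam}(C) \leq \mathrm{diam}(G)$ follows again from $C$ being isometric, giving the desired equality.

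For the converse, suppose $\mathrm{diam}(C) = \mathrm{diam}(G) =: n$, so that $C$ has length $2n$. For any $x \in V(C)$, let $y$ denote the unique vertex of $C$ at cycle-distance $n$ from $x$ (the vertex diametrically opposite to $x$ in the cycle $C$). Since $C$ is isometric,
\[
d_G(x,y) \;=\; d_C(x,y) \;=\; n \;=\; d_G(x,\overline{x}).
\]
Because $G$ is antipodal, $I_G(x,\overline{x}) = V(G)$, hence $y \in I_G(x,\overline{x})$ and
\[
d_G(x,y) + d_G(y,\overline{x}) \;=\; d_G(x,\overline{x}) \;=\; n.
\]
This forces $d_G(y,\overline{x}) = 0$, i.e.\ $y = \overline{x}$, so $\overline{x} \in V(C)$. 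Since this holds for every $x \in V(C)$, $C$ is principal.

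I do not anticipate any real obstacle: the argument is purely metric and uses only the defining identity $I_G(x,\overline{x})=V(G)$ together with the fact that an isometric cycle of length $2n$ realizes its own diameter $n$ at antipodal cycle-vertices. No appeal to partial-cube machinery, to Theorem~\ref{T:antipod.p.c.}, or to any of the Peano partial cube structure is needed for this proposition.
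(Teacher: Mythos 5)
Your proof is correct. The paper does not prove this proposition at all --- it is quoted verbatim from Glivjak, Kotzig and Plesnik \cite[Theorem 3]{GKP70} --- so there is no internal argument to compare against; your two-direction metric argument (isometry of $C$ transfers $d_G(x,\bar{x})=\mathrm{diam}(G)$ into $C$ for one direction, and the identity $d_G(x,y)+d_G(y,\bar{x})=d_G(x,\bar{x})$ forces the cycle-antipode of $x$ to coincide with $\bar{x}$ for the other) is a valid, self-contained proof. The only point worth spelling out is that in the converse you have in fact shown that $\alpha$ restricted to $V(C)$ \emph{is} the cycle-antipodal map, which automatically preserves the edges of $C$, so $\alpha(C)=C$ as a subgraph and not merely $\alpha(V(C))\subseteq V(C)$.
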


We call the smallest median-stable subgraph $F$ of a partial cube $G$ which contains a subgraph $H$ of $G$ the
 \emph{median-closure} of $H$.  Such 
a subgraph $F$ always exists, and moreover $F$ is finite if so is 
$H$, because $F$ is a subgraph of the convex hull of $H$, which is 
finite by Lemma~\ref{L:gen.propert.}(iii).

\begin{pro}\label{P:TPQ/prop.isom.cycle}\textnormal{(Polat~\cite[Theorem 5.5]{P18})}
Let $C$ be an isometric cycle of  an antipodal partial cube $G$.  The following assertions are equivalent:

\textnormal{(i)}\; $C$ is a principal cycle of $G$. 

\textnormal{(ii)}\; $\mathrm{diam}(C) = \mathrm{diam}(G)$.

\textnormal{(iii)}\; $\mathrm{idim}(C) = \mathrm{idim}(G)$.

\textnormal{(iv)}\; $\mathcal{I}_{G}(C) = G$.

\textnormal{(v)}\; The convex hull of $C$ is $G$.

\textnormal{(vi)}\; The median-closure of $C$ is $G$.
\end{pro}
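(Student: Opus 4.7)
The plan is to prove this theorem by establishing the cycle of implications $(\mathrm{i}) \Rightarrow (\mathrm{iv}) \Rightarrow (\mathrm{v}) \Rightarrow (\mathrm{iii}) \Rightarrow (\mathrm{ii}) \Rightarrow (\mathrm{i})$, and then to link (vi) to (v). The implication $(\mathrm{ii}) \Rightarrow (\mathrm{i})$ is exactly Proposition~\ref{P:princ.cycl.}, and the equivalence $(\mathrm{ii}) \Leftrightarrow (\mathrm{iii})$ reduces to the identity $\mathrm{diam}(H) = \mathrm{idim}(H)$ applied to both $H = G$ and $H = C$. For the antipodal partial cube $G$, any geodesic from a vertex to its antipode has length $\mathrm{diam}(G)$ and, by Lemma~\ref{L:gen.propert.}(v), consists of edges in pairwise distinct $\Theta$-classes; conversely each $\Theta$-class partitions $V(G)$ into two half-spaces, so the antipodes must lie in different halves and every $\Theta$-class is crossed by this geodesic. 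For the even cycle $C$ both quantities equal half its length.

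For $(\mathrm{i}) \Rightarrow (\mathrm{iv})$: since $\alpha(V(C)) = V(C)$, every $x \in V(C)$ has its antipode $\bar x$ in $V(C)$; the defining property of antipodality, $I_G(x,\bar x) = V(G)$, then places every vertex of $G$ in $\mathcal{I}_G(V(C))$. The step $(\mathrm{iv}) \Rightarrow (\mathrm{v})$ is immediate since $\mathcal{I}_G(A) \subseteq co_G(A)$, and $(\mathrm{v}) \Rightarrow (\mathrm{iii})$ follows from Lemma~\ref{L:edge/co(H)}(i): if $co_G(C) = G$, then every edge of $G$ is $\Theta$-equivalent to an edge of $C$, so $\mathrm{idim}(G) \leq \mathrm{idim}(C)$, the reverse inequality being trivial because $C$ is a subgraph of $G$. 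This closes the cycle among (i)--(v).

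The easy half of the linkage with (vi) is $(\mathrm{vi}) \Rightarrow (\mathrm{v})$: the convex hull $co_G(C)$ is isometric in $G$ and is itself median-stable, since any median of a triple lies in the pairwise intervals, which are contained in any convex set containing the triple; hence the median-closure of $C$ is a subgraph of $co_G(C)$, and if the former equals $G$ so does the latter. The main obstacle is the converse, and the plan is to prove $(\mathrm{i}) \Rightarrow (\mathrm{vi})$ by induction on $d_G(v,V(C))$ for $v \in V(G)$. A key preliminary observation is that the median-closure $F$ of $C$ is $\alpha$-invariant: since $\alpha$ is a graph automorphism stabilizing $V(C)$ setwise, $\alpha(F)$ is also median-stable and isometric and contains $C$, whence the minimality of $F$ forces $\alpha(F) = F$. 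The inductive step then uses $(\mathrm{iv})$ to write $v$ on a geodesic between some $x \in V(C)$ and $\bar x \in V(C)$, and combines this with the $\alpha$-invariance of $F$ and the property $ph(G) \leq 1$ from Theorem~\ref{T:antipod.p.c.} in order to exhibit $v$ as the median in $G$ of a triple whose three vertices already lie in $F$. The delicate point, which is where the bulk of the technical work must sit, is the construction of this witnessing triple, since medians need not exist for arbitrary triples in an antipodal partial cube and one must carefully select the three vertices from suitable intersections of interval structures produced by the principal cycle $C$ and its antipodal image.
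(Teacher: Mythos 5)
First, a point of comparison: the paper does not prove this proposition at all — it is imported verbatim from \cite[Theorem 5.5]{P18} — so there is no internal proof to measure your argument against. Judged on its own terms, your treatment of (i)--(v) is correct and complete: (i)$\Leftrightarrow$(ii) is exactly Proposition~\ref{P:princ.cycl.}; the identity $\mathrm{idim}=\mathrm{diam}$ for an antipodal partial cube (every half-space must separate a vertex from its antipode since $I_G(x,\bar x)=V(G)$ while half-spaces are convex, and the edges of an $(x,\bar x)$-geodesic are pairwise non-$\Theta$-equivalent by Lemma~\ref{L:gen.propert.}(v)) together with the same identity for even cycles gives (ii)$\Leftrightarrow$(iii); (i)$\Rightarrow$(iv) follows from $I_G(x,\bar x)=V(G)$ applied to the antipodal pairs inside $C$, (iv)$\Rightarrow$(v) is trivial, and (v)$\Rightarrow$(iii) follows from Lemma~\ref{L:edge/co(H)}(i). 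The direction (vi)$\Rightarrow$(v), via the fact that a convex subgraph is faithful and hence contains the median-closure, is also fine.

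The gap is the remaining implication, that the median-closure of a principal cycle is all of $G$. You describe a strategy — induction on $d_G(v,V(C))$, $\alpha$-invariance of the median-closure $F$, and the production of a triple of vertices of $F$ whose median in $G$ is the target vertex $v$ — but you never construct that triple, and you yourself identify this as the place where the bulk of the technical work must sit. That is precisely the nontrivial content of the implication: in an antipodal partial cube that is not a median graph most triples have no median at all, so the mere existence of \emph{some} triple of vertices of $F$ admitting $v$ as its median is exactly what has to be established, and nothing in the surrounding machinery ($ph(G)\leq 1$, $\alpha$-invariance, assertion (iv)) produces it automatically. Already for $G=Q_3$ with $C$ a principal $6$-cycle one must exhibit, for each of the two missing vertices, three specific vertices of the hexagon whose median it is; in a general quasi-hypertorus or antipodal partial cube the analogous selection is the whole proof. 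As written, the implication (i)$\Rightarrow$(vi) is a plan rather than an argument, so the proposal does not establish the full proposition.
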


We will need this proposition in the proofs of several results dealing with quasi-hypertori.

\subsection{Retracts of Peano partial cubes}\label{SS:retr.hypernet.}

We recall that, if $G$ and
$H$ are two graphs, then a map $f : V(G) \to V(H)$ is a
\emph{contraction} (\emph{weak homomorphism} in~\cite{HIK11}) if $f$ is
a non-expansive map between the metric spaces $(V(G),d_{G})$ and
$(V(H),d_{H})$, i.e., $d_{H}(f(x),f(y)) \leq d_{G}(x,y)$ for all $x, y
\in V(G)$.  A contraction $f$ of $G$ onto one of its induced subgraphs
$H$ of $G$ is a \emph{retraction}, and $H$ is a \emph{retract}
(\emph{weak retract} in~\cite{HIK11}) of $G$, if its restriction to
$V(H)$ is the identity.

We know that the class of median graphs and that of netlike partial cubes \cite[Theorem 3.1]{P05-2} are closed under retracts.  The aim of this subsection is the proof of the following extension of these properties.

\begin{thm}\label{T:retract(hypernet.)}
The class of Peano
partial cubes is closed under retracts.
\end{thm}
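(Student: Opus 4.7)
My plan is to verify that a retract $H$ of a Peano partial cube $G$ satisfies condition (v) of the Characterization Theorem~\ref{T:charact.}: for every edge $ab$ of $H$ and every $u \in \mathcal{I}_H(U_{ab}^H) \setminus U_{ab}^H$, the vertex $u$ lies on a gated cycle in $\mathbf{C}(H, ab)$. Since any retract of a partial cube is an isometric subgraph, $H$ is itself a partial cube; and for each edge $ab$ of $H$ one has $W_{ab}^H = W_{ab}^G \cap V(H)$ by isometry and $U_{ab}^H = U_{ab}^G \cap V(H)$: the nontrivial inclusion is checked using the retraction $f \colon G \to H$, for if $x \in U_{ab}^G \cap V(H)$ has $G$-neighbor $y \in U_{ba}^G$, then $f(y) \in V(H)$ cannot equal $x$ (by comparing $d_G$-distances to $a$ and $b$) and must be adjacent to $x$ in $W_{ba}^H$. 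Consequently $\mathcal{I}_H(U_{ab}^H) \setminus U_{ab}^H \subseteq \mathcal{I}_G(U_{ab}^G) \setminus U_{ab}^G$.

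Given $u \in \mathcal{I}_H(U_{ab}^H) \setminus U_{ab}^H$, Theorem~\ref{T:charact.}(v) applied in $G$ yields a gated cycle $C \in \mathbf{C}(G, ab)$ of length $2n \geq 6$ through $u$. The crux, which I treat as the main obstacle, is to show $V(C) \subseteq V(H)$. Granting this, $C$ is convex in $H$ by isometry, and gated in $H$ because for any $x \in V(H)$ the gate $g_G(x) \in V(C)$ of $x$ in $C$ (relative to $G$) lies in $V(H)$ and satisfies $g_G(x) \in I_G(x,c) = I_H(x,c)$ for every $c \in V(C)$. Since $C$ contains an edge of $H$ that is $\Theta$-equivalent to $ab$ (inherited from $G$ via isometry), $C \in \mathbf{C}(H, ab)$, establishing condition (v) for $H$.

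To show $V(C) \subseteq V(H)$ I will argue that $f$ fixes $V(C)$ pointwise by combining three ingredients: (i) the edge $ab$, and hence the restriction to $V(H)$ of the $\Theta$-class of $ab$, is fixed by $f$; (ii) the strong ph-stability of $U_{ab}^G$ (Theorem~\ref{T:charact.}(ii)) forces the $U_{ab}^G$-geodesic $P_u$ associated with $u$ to be unique (Lemma~\ref{L:min.Uab-path=associated}), so any collapse by $f$ of a vertex of $P_u$ would produce, via a shorter $U_{ab}^H$-walk through $u$ in $H$, a configuration contradicting the ph-stability inherited by $U_{ab}^H$ from $U_{ab}^G$; (iii) the antipodal structure of $C$ together with Proposition~\ref{P:TPQ/prop.isom.cycle} symmetrizes the argument to the complementary half-cycle $C \cap W_{ba}^G$. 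This is where the proof genuinely uses that $G$ is Peano and not merely that $\mathrm{ph}(G) \leq 1$: the retraction $M_{4,1} \to Q_3^-$ recalled in Subsection~\ref{SS:p.c.ph.at most1} shows that without the rigidity provided by strong ph-stability, gated cycles of $G$ need not lift to $H$.
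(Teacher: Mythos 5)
Your strategy (verify condition (v) of Theorem~\ref{T:charact.} for the retract $H$) is not the paper's, and as written it has a genuine gap at exactly the point you call the crux, plus a false preliminary claim. The identity $U_{ab}^{H}=U_{ab}^{G}\cap V(H)$ does \emph{not} hold for retracts: let $G=C_{6}=\langle 1,\dots,6,1\rangle$ and let $f$ be the retraction onto the induced path $H=\langle 5,6,1,2\rangle$ given by $f(3)=1$, $f(4)=6$. For the edge $ab=12$ one has $U_{12}^{G}=\{1,5\}$, so $5\in U_{12}^{G}\cap V(H)$, while $U_{12}^{H}=\{1\}$; here $y=4$ is the $W_{21}$-neighbour of $x=5$ and $f(4)=6$ is indeed adjacent to $5$ but lies in $W_{12}^{H}$, not $W_{21}^{H}$, so your distance comparison does not force $f(y)$ into $W_{ba}^{H}$. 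Only the inclusion $U_{ab}^{H}\subseteq U_{ab}^{G}$ of Lemma~\ref{L:prop.isom subgr.} is available, and note that your displayed inclusion $\mathcal{I}_H(U_{ab}^H)\setminus U_{ab}^H\subseteq\mathcal{I}_G(U_{ab}^G)\setminus U_{ab}^G$ also quietly uses the false direction (a vertex of $V(H)\setminus U_{ab}^{H}$ could a priori still lie in $U_{ab}^{G}$, and then Theorem~\ref{T:charact.}(v) applied in $G$ gives you nothing for it). More seriously, the central claim $V(C)\subseteq V(H)$ is never actually proved. Ingredient (ii) of your sketch rests on ``the ph-stability inherited by $U_{ab}^{H}$ from $U_{ab}^{G}$'', but ph-stability of all the sets $U_{ab}^{H}$ is equivalent to $ph(H)\le 1$ (Proposition~\ref{P:charact.p.c.ph}), which is essentially the conclusion you are after; the paper's own example of the retraction of $M_{4,1}$ onto $Q_{3}^{-}$ shows this property is \emph{not} inherited by retracts of partial cubes with pre-hull number at most $1$, so it must be derived from the full Peano hypothesis, and you give no such derivation. (The claim $V(C)\subseteq V(H)$ is in fact true, but the natural route to it --- the gated $ab$-cycle of $H$ through $u$ is faithful, hence gated, in $G$ by Theorem~\ref{T:faithf.hypertor.=>gated} and so coincides with $C$ by uniqueness --- already presupposes that $H$ is Peano, i.e.\ it is circular.)

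For contrast, the paper sidesteps the Characterization Theorem entirely. It proves the stronger Proposition~\ref{P:Gf.hypernet.}: for \emph{every} self-contraction $f$ of a Peano partial cube $G$, the subgraph $G_{f}$ of periodic points is Peano; a retract is the special case $F=G_{f}$. The verification is of the Peano Property itself (via Theorem~\ref{T:Peano/ph}): given $v'\in I_{G_f}(u,w)$ and $x\in I_{G_f}(v,v')$, one takes the witness $u'\in I_{G}(v,w)$ supplied by the Peano Property of $G$, uses the finiteness of intervals in a partial cube to iterate $f$ until the orbit of $u'$ reaches a periodic point $u''$, and then chooses a power of $f$ that fixes all the relevant (finite) intervals pointwise to transport the witness into $G_{f}$. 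If you wish to salvage your route, you need an independent argument that $f$ cannot move the cycle $C$ off itself; some version of the paper's periodicity trick, applied inside the finite interval $I_{G}(x,y)$ that contains $C-W_{ba}$, is the missing mechanism.
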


Actually we will prove a stronger result (Proposition~\ref{P:Gf.hypernet.}) which will be essential in Section~\ref{S:FSP}.  For a self-contraction $f$ of a graph $G$ and $x \in V(G)$ we set:
\begin{align*}
[x]_{f}& := \{f^{n}(x): n \geq 0 \}\\
V(G)_{f}& := \{x \in V(G): f^{n}(x) = x \mbox{\; for some}\; n > 0
\}\\
G_{f}& := G[V(G)_{f}].
\end{align*}

By \cite[Propositions 3.9, 3.11 and 3.12]{P09-2}, we have:

\begin{lem}\label{L:Gf}
Let $f$ be a self-contraction of a partial cube $G$.  Then $G_{f}$ is 
a non-empty faithful subgraph of $G$ whose vertex set is a closed 
subset of $V(G)$.
\end{lem}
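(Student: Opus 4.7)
The lemma has three distinct contents: non-emptiness of $V(G)_f$, the fact that $G_f$ is a faithful (isometric and median-stable) subgraph of $G$, and closure of $V(G)_f$ in the weak geodesic topology on $V(G)$, corresponding respectively to Propositions~3.9, 3.11 and 3.12 of \cite{P09-2}. I would address the three in order, since faithfulness can be run through quickly once non-emptiness and the existence of common periods are available, whereas non-emptiness and closure require the structural work specific to partial cubes.

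For the isometric property, given $x, y \in V(G)_f$ I would pick a common period $N > 0$ with $f^N(x) = x$ and $f^N(y) = y$, and apply $f^N$ to any $(x,y)$-geodesic $P$: the result is an $(x,y)$-walk of length at most $d_G(x,y)$ with the same endpoints, hence an $(x,y)$-geodesic. Because $I_G(x,y)$ is finite by Lemma~\ref{L:gen.propert.}(ii), the $(x,y)$-geodesics form a finite set, so the iterates $f^{kN}(P)$ must eventually repeat: $f^{kN}(P) = f^{(k+\ell)N}(P)$ for some $k \geq 0$ and $\ell > 0$. Setting $Q := f^{kN}(P)$ yields a geodesic fixed by $f^{\ell N}$, so all its vertices are periodic and $V(Q) \subseteq V(G)_f$. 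Hence $d_{G_f}(x,y) \leq d_G(x,y)$. Median-stability then follows: if $m$ is the median in $G$ of a triple $(x,y,z) \subseteq V(G)_f$ with common period $N$, the triangle-inequality equalities forced by the contraction property on each of the three defining geodesics give $f^N(m) \in I_G(x,y) \cap I_G(y,z) \cap I_G(z,x)$, so $f^N(m)$ is a median of $(x,y,z)$; by uniqueness of medians in a partial cube (Lemma~\ref{L:gen.propert.}(i)), $f^N(m) = m$, so $m \in V(G)_f$.

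The other two assertions carry the main technical weight. For non-emptiness I would start from an arbitrary $x \in V(G)$ and observe that the displacements $d_G(f^n(x), f^{n+1}(x))$ form a non-increasing sequence of non-negative integers by contractivity, hence stabilize at some $k \geq 0$. The case $k = 0$ yields a fixed point immediately; the case $k > 0$ requires tracking the partial map induced by $f$ on $\Theta$-classes and showing, via the combinatorics of a partial cube, that the orbit of $x$ must eventually enter a finite $f$-invariant substructure on which $f$ acts periodically. This is the central obstacle, and I would rely on \cite[Proposition~3.9]{P09-2} for the detailed argument. For topological closure, convexity of the individual sets $\mathrm{Fix}(f^n)$ is not available in general (swapping the two vertices of degree $2$ in a $C_4$ is already a contraction whose fixed-point set fails to be convex), so I would instead, for each $x \notin V(G)_f$, produce a half-space separating $x$ from $V(G)_f$ by exploiting the finite $f$-invariant substructures supplied by the non-emptiness argument; this is the content of \cite[Proposition~3.12]{P09-2}. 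The hardest step is thus non-emptiness, since both faithfulness and closure rely crucially on the finite invariant substructures it provides.
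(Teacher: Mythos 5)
Your proposal is correct and follows the same route as the paper, which proves this lemma simply by citing Propositions 3.9, 3.11 and 3.12 of \cite{P09-2} for non-emptiness, faithfulness and closedness respectively; you identify exactly that decomposition and defer the two genuinely hard parts (non-emptiness and closure) to the same sources. The self-contained argument you supply for faithfulness — periodicity of a geodesic under iteration of $f^N$ on the finite set of $(x,y)$-geodesics, and the median argument via uniqueness of medians — is sound and goes beyond what the paper writes down, the only quibble being that your $C_4$ example should refer to swapping one antipodal pair while fixing the other (all vertices of $C_4$ have degree $2$).
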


\begin{pro}\label{P:Gf.hypernet.}
Let $f$ be a self-contraction of a Peano partial cube $G$.  Then $G_{f}$ is 
a Peano Partial cube.
\end{pro}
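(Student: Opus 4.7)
By Lemma~\ref{L:Gf}, $G_f$ is a non-empty faithful subgraph of $G$, hence isometric in $G$ and consequently a partial cube. The plan is to verify that $G_f$ satisfies condition~(v) of the Characterization Theorem (Theorem~\ref{T:charact.}): for each edge $ab \in E(G_f)$ and each vertex $u \in \mathcal{I}_{G_f}(U_{ab}^{G_f}) \setminus U_{ab}^{G_f}$, one must exhibit a gated cycle $C_u \in \mathbf{C}(G_f, ab)$ through $u$. The strategy is to transport, via the contraction $f$, the gated cycle structure guaranteed in the Peano partial cube $G$ onto $G_f$.

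Fix $x, y \in U_{ab}^{G_f}$ with $u \in I_{G_f}(x, y) = I_G(x, y)$, and choose $N \geq 1$ such that $f^N$ fixes $a, b, x, y, u$ simultaneously (possible since each of these vertices is periodic and the set of common periods is non-empty by finiteness of the individual orbits). The principal case to handle is $u \notin U_{ab}^G$; then by characterization~(v) applied to $G$, there is a unique gated cycle $C^G \in \mathbf{C}(G, ab)$ through $u$, described by Lemma~\ref{L:ab-cycle} as $C^G = P_u \cup \langle w, \phi_{ab}(w)\rangle \cup \phi_{ab}(P_u) \cup \langle \phi_{ab}(v), v\rangle$, where $P_u$ is the convex $U_{ab}^G$-geodesic associated with $u$ (with endpoints $v, w \in U_{ab}^G$) and $\phi_{ab}$ is the isometry of Lemma~\ref{L:phi_ab}. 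The central claim is then $V(C^G) \subseteq V(G_f)$; once established, $C_u := C^G$ is the required cycle, automatically gated in $G_f$ because it is gated in the ambient $G$ and $G_f$ is isometric.

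For the central claim, the action of $f^N$ on $C^G$ preserves the $\Theta$-equivalence class of the fixed edge $ab$. Combining this with property (SPS2) of the strong ph-stability of $U_{ab}^G$ (Theorem~\ref{T:charact.}(ii)) and the uniqueness of $P_u$ among shortest $U_{ab}^G$-geodesics through $u$ (Lemma~\ref{L:min.Uab-path=associated}), one argues that $f^N(v), f^N(w) \in U_{ab}^G$ and that $f^N(P_u)$ is again a convex $U_{ab}^G$-geodesic through $u$ of the same length; iterating, the sequence $(f^{kN}(v))_{k \geq 0}$ lies in a finite interval and its distances to $x$ are non-increasing, yielding $f^N(v) = v$ and $f^N(w) = w$. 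Median-stability of $V(G_f)$ (Lemma~\ref{L:Gf}) then inductively forces every inner vertex of $P_u$ to be fixed by $f^N$, and applying $\phi_{ab}$ together with fixedness of the bridging edges $vv'$ and $ww'$ propagates this to $\phi_{ab}(P_u)$, so that $V(C^G) \subseteq V(G_f)$. The secondary case $u \in U_{ab}^G \setminus U_{ab}^{G_f}$ is handled by a parallel argument applied to the smallest $ab$-cycle in $G$ passing simultaneously through $u$ and through an inner vertex of $I_G(x, y)$ that lies outside $U_{ab}^G$.

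The main obstacle will be the uniqueness argument that forces $f^N(v) = v$ (and $f^N(w) = w$): a self-contraction of an even cycle that fixes three vertices need not be the identity, since a $6$-cycle admits non-identity ``folding'' contractions fixing three consecutive vertices. Local analysis inside $C^G$ alone is therefore insufficient, and the rigidity must come from the embedding of $C^G$ in the ambient Peano partial cube $G$: the fact that $v, w$ sit in $U_{ab}^G$ (each incident to an edge $\Theta$-equivalent to $ab$), together with the global uniqueness of the shortest $U_{ab}^G$-geodesic through $u$ provided by strong ph-stability, must combine to rule out the folding behavior and force $f^N$ to act as the identity on $V(C^G)$.
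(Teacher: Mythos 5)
Your proposal has a genuine gap at exactly the step you flag as ``the main obstacle'': nothing in the argument actually forces $f^N(v)=v$ and $f^N(w)=w$ for the endpoints of $P_u$. What you can extract from $f^N$ fixing $x$ and $y$ is that $f^N$ maps the finite set $I_G(x,y)$ into itself \emph{preserving} the distances to $x$ and to $y$ (they are constant, not merely non-increasing, by the triangle inequality), but this leaves room for $f^N$ to permute or fold vertices at a fixed distance from $x$; in particular a contraction fixing both $a$ and $b$ need not preserve the half-spaces $W_{ab}$ and $W_{ba}$ (on a path $\langle z,a,b\rangle$ the map fixing $a,b$ and sending $z\mapsto b$ is a contraction), so the assertions that $f^N(v)\in U_{ab}^G$ and that $f^N(P_u)$ is again a convex $U_{ab}^G$-geodesic are unsupported. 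You correctly identify that the needed rigidity must come from strong ph-stability and the uniqueness statement of Lemma~\ref{L:min.Uab-path=associated}, but you do not supply that argument, so the central claim $V(C^G)\subseteq V(G_f)$ remains unproved. The ``secondary case'' $u\in U_{ab}^G\setminus U_{ab}^{G_f}$ is also mishandled: the proposed ``smallest $ab$-cycle through $u$ and an inner vertex outside $U_{ab}^G$'' need not exist; in fact that case is vacuous, since the neighbor $u'$ of $u$ in $U_{ba}^G$ is the median of $(x',y',u)$ and hence lies in $V(G_f)$ by median-stability --- an observation your write-up misses.

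For comparison, the paper's proof avoids the cycle structure entirely. It verifies the Peano Property of Theorem~\ref{T:Peano/ph} directly: given $v'\in I_{G_f}(u,w)$ and $x\in I_{G_f}(v,v')$, the Peano Property of $G$ yields a witness $u'\in I_G(v,w)$ with $x\in I_G(u,u')$; since $I_G(v,w)$ is finite and every vertex of $G_f$ is $f$-periodic, one chooses a power $f^{q}$ fixing $u,v,w,v',x$ and iterates until $f^{kq}(u')$ becomes periodic, obtaining a witness inside $G_f$. If you want to salvage your route via Theorem~\ref{T:charact.}(v), the cleaner path is not to chase fixed points of $f^N$ at all but to first prove (as the paper does) that $G_f$ has the Peano Property, or alternatively to use that a convex cycle of $G_f$ of length greater than $4$ is faithful in $G$ and hence gated in $G$ by Theorem~\ref{T:faithf.hypertor.=>gated}; either way the interval-theoretic argument is what closes the gap.
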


\begin{proof}
We already know that $G_f$  is faithful in $G$.  We show that $G_f$ has the Peano Property.  Let $(u,v,w,v',x)$ be a $5$-tuple of vertices of $G_f$ such that $v' \in I_{G_f}(u,w)$ and $x \in I_{G_f}(v,v')$.  Because $G_f$ is an isometric subgraph of $G$, it follows that $v' \in I_{G}(u,w)$ and $x \in I_{G}(v,v')$.  Hence $x \in I_G(u,u')$ for some vertex $u' \in I_G(v,w)$, since $G$ has the Peano Property by Theorem~\ref{T:Peano/ph}.  Because the interval $I_G(v,w)$ is finite, there exist two positive integers $n < m$ such that $f^n(u') = f^m(u')$.  Put $u'' := f^n(u')$ and $p := m-n$.  Then $f^p(u'') = u''$.  Therefore $u'' \in V(G_f)$, and more precisely $u'' \in I_{G_f}(f^p(v),f^p(w))$, and $f^p(x) \in I_{G_f}(f^p(u),u'')$.  Because the intervals of $G$ and thus of $G_f$ are finite, there exists a positive integer $q$ such that $f^q(y) = y$ for all $y \in I_{G_f}(u,v) \cup I_{G_f}(v,w) \cup I_{G_f}(w,u) \cup I_{G_f}(v,v')$.  It follows that $f^{pq}(y) = y$ for all $y \in I_{G_f}(u,v) \cup I_{G_f}(v,w) \cup I_{G_f}(w,u) \cup I_{G_f}(v,v') \cup I_{G_f}(u,u'')$.  This implies that $u'' \in I_{G_f}(v,w)$ and $x \in I_{G_f}(u,u'')$, which proves that $G_f$ has the Peano Property, and thus is a Peano partial cube.

\end{proof}

\begin{proof}[\textnormal{\textbf{Proof of Theorem~\ref{T:retract(hypernet.)}}}]
Let $f$ be a retraction of a Peano
partial cube $G$ onto some subgraph $F$.  Clearly $F = G_{f}$.  Hence $F$ is 
a Peano partial cube by Proposition~\ref{P:Gf.hypernet.}.
\end{proof}

We complete this subsection by three particular results.

\begin{pro}\label{P:TuPuQ/isom.cycle/retract}
Let $C$ be an isometric cycle of some quasi-hypertorus $G$ such that $\mathrm{idim}(C) = \mathrm{idim}(G)$.  Then any 
retract of $G$ that contains $C$ is equal to $G$.
\end{pro}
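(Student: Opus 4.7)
The plan is to reduce the statement to an application of Proposition~\ref{P:TPQ/prop.isom.cycle} plus the observation that every retract of a partial cube is faithful. First I would note that, by Proposition~\ref{P:antip./cart.prod.} together with the trivial fact that $K_2$ and every even cycle are bipartite antipodal graphs, any quasi-hypertorus is an antipodal partial cube. In particular $G$ is an antipodal partial cube, so Proposition~\ref{P:TPQ/prop.isom.cycle} applies to $G$ and $C$. The hypothesis $\mathrm{idim}(C) = \mathrm{idim}(G)$ is condition (iii) of that proposition, which is equivalent to condition (vi): the median-closure of $C$ in $G$ equals $G$ itself. In other words, $G$ is the smallest median-stable subgraph of $G$ containing $C$.

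Next I would verify that any retract $F$ of $G$ is a faithful subgraph. Let $f : G \to F$ be a retraction. Because $f$ is non-expansive and fixes $V(F)$ pointwise, $F$ is an isometric subgraph of $G$. For median-stability, suppose $x,y,z \in V(F)$ and that the triple $(x,y,z)$ admits a median $m$ in $G$. Applying $f$ and using $f(x)=x$, $f(y)=y$, $f(z)=z$ together with the non-expansiveness of $f$ and the triangle inequality, each of the inequalities
\[
d_F(f(m),x)+d_F(f(m),y)\leq d_G(m,x)+d_G(m,y)=d_G(x,y)=d_F(x,y)
\]
must be an equality, so $f(m)\in I_F(x,y)$; similarly $f(m)\in I_F(y,z)\cap I_F(z,x)$. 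Thus $f(m)$ is a median of $(x,y,z)$ in $F$ and hence in $G$. By Lemma~\ref{L:gen.propert.}(i) medians in a partial cube are unique, so $f(m)=m$ and $m\in V(F)$. Therefore $F$ is median-stable, so faithful.

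Finally, if $F$ is a retract of $G$ containing $C$, then $F$ is a median-stable subgraph of $G$ containing $C$, so $V(F)$ contains the median-closure of $C$ in $G$, which by the first step is all of $V(G)$; hence $F=G$. The only potentially tricky point is the median-stability of retracts, and this reduces to the brief interval calculation above; no deeper obstacle arises since the heavy lifting is done by Proposition~\ref{P:TPQ/prop.isom.cycle}.
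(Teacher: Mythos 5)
Your proof is correct and follows essentially the same route as the paper: the paper's proof likewise observes that a retract containing $C$ is median-stable and therefore contains the median-closure of $C$, which equals $G$ by Proposition~\ref{P:TPQ/prop.isom.cycle}. You merely spell out the median-stability of retracts, which the paper treats as "clear."
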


\begin{proof}
Let $F$ be a retract of $G$ which contains $C$.  Then $F$ is clearly
median-stable.  It follows that $F$ contains the median-closure of $C$
since $C$ is a subgraph of $F$.  Hence $F = G$ by
Proposition~\ref{P:TPQ/prop.isom.cycle}.
\end{proof}

\begin{pro}\label{P:retract(T.K2)}
Let $G = T \Box K_{2}$ be the prism over a hypertorus $T$, and let $T_{0}$ and $T_1$ be the two $T$-layers of $G$, and $u$ a vertex of $T_1$.  Then:

\textnormal{(i)}\; $\mathcal{I}_G(T_0 \cup \langle u\rangle) = G$.

\textnormal{(ii)}\; Any retract of
$G$ that contains $T_0 \cup \langle u\rangle$ is equal to $G$.
\end{pro}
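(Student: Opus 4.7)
For part (i), the plan is to exploit antipodality. Since $T$ is a hypertorus, it is antipodal, and by Proposition~\ref{P:antip./cart.prod.} the prism $G = T \Box K_2$ is antipodal as well. The antipode of $u = (u',1)$ in $G$ is $\bar u = (\bar{u'},0)$, where $\bar{u'}$ is the antipode of $u'$ in $T$. In particular $\bar u \in V(T_0)$, so both $u$ and $\bar u$ already lie in $V(T_0)\cup\{u\}$. By antipodality, $I_G(u,\bar u) = V(G)$, which immediately gives
\[
V(G) \;=\; I_G(u,\bar u) \;\subseteq\; \mathcal I_G\bigl(V(T_0)\cup\{u\}\bigr).
\]

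For part (ii), the plan is to reduce to Proposition~\ref{P:TuPuQ/isom.cycle/retract}: it suffices to display inside $F$ an isometric cycle $C$ of $G$ with $\mathrm{idim}(C)=\mathrm{idim}(G)$. The principal isometric cycles of the quasi-hypertorus $G$ have length $2(\mathrm{idim}(T)+1)$ and are obtained by taking a principal $(u',\bar{u'})$-geodesic $P$ in $T$, lifting it both to $T_0$ and to $T_1$, and closing the cycle with the two $K_2$-edges $\{u',u\}$ and $\{\bar{u'},(\bar{u'},1)\}$. The $T_0$-half and the edge $\{u',u\}$ lie in $F$ for free; what must be shown is that the $T_1$-half, together with $(\bar{u'},1)$, lies in $V(F)$.

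The core of the argument is a rigidity analysis of the retraction $f\colon G\to F$. For any $v=(a,1)\in V(T_1)\setminus V(F)$, the contraction inequalities $d_G(f(v),(a,0))\le 1$ and $d_G(f(v),u)\le d_T(a,u')$ together rule out $f(v)=(a,0)$ and force $f(v)=(c,0)$ for some $c\in N_T(a)\cap I_T(u',a)$. I would first apply this to $v=(\bar{u'},1)$: since $|N_T(\bar{u'})\cap I_T(u',\bar{u'})|=\deg_T(\bar{u'})\ge 2$ in any hypertorus, and since in a hypertorus any two distinct neighbors of a common vertex lie at distance exactly $2$ (there are no $4$-cycles through a common pair of neighbors in a Cartesian product of cycles without a $K_2$-factor giving them), the contraction constraints between $(\bar{u'},1)$ and its $T$-neighbors already in $V(F)$ cannot be satisfied simultaneously by a single value of $c$. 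Hence $(\bar{u'},1)\in V(F)$.

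Next I would propagate this along the lifted $T_1$-geodesic from $(\bar{u'},1)$ back toward $u$. Suppose $(x,1)\in V(T_1)\setminus V(F)$ has a neighbor $(y,1)\in V(F)$ with $y\in N_T(x)$; then the contraction constraint $d_G(f((x,1)),(y,1))\le 1$, combined with $f((x,1))=(c,0)$, forces $c=y$, because the hypertorus $T$ contains no vertex adjacent to both $x$ and $y$ other than… none such exists. Consequently $c$ is forced to equal every $y$ with $(y,1)\in V(F)\cap N_G((x,1))$, so as soon as $(x,1)$ has two such neighbors in $V(F)$ we reach a contradiction and conclude $(x,1)\in V(F)$. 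Applied inductively on $d_T(x,u')$, starting from $(\bar{u'},1)$ and using the vertices already known to be in $V(F)$ to trigger the contradiction at each step, this fills in the entire lifted geodesic and yields $V(C)\subseteq V(F)$. Proposition~\ref{P:TuPuQ/isom.cycle/retract} then gives $F=G$.

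The hard part, as expected, is the cascading rigidity argument in the second paragraph: controlling precisely which adjacent vertices of $(x,1)$ are guaranteed to be already in $V(F)$ so as to generate the required incompatible constraints on $c$. This is where the hypertorus structure of $T$ (in particular the existence of at least one cycle factor of length $>4$, which forces non-trivial $N_T(a)\cap I_T(u',a)$ sets away from $u'$) is essential, and is the step that would absorb most of the care in the full write-up.
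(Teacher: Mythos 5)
Part (i) of your proposal is correct: using that a quasi-hypertorus is antipodal (Proposition~\ref{P:antip./cart.prod.}) and that the antipode of $u=(u',1)$ lies in $T_0$ is a clean alternative to the paper's one-line appeal to the Interval Property. Part (ii), however, has two genuine gaps. First, the cycle you build --- the lift of a single $(u',\bar{u'})$-geodesic $P$ to \emph{both} layers, closed by the two $K_2$-edges --- is not an isometric cycle of $G$ once $\mathrm{diam}(T)\ge 2$: the pair of vertices antipodal on that cycle is $\bigl((p_j,0),(p_{m-j},1)\bigr)$, at graph distance $|m-2j|+1$ rather than $m+1$. So Proposition~\ref{P:TuPuQ/isom.cycle/retract} does not apply to it. A principal isometric cycle of $G$ is instead obtained by splitting a principal cycle of $T$ into two complementary arcs, one traversed in $T_0$ and the other in $T_1$.

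Second, and more seriously, the rigidity cascade is not established. Your base step claims that $(\bar{u'},1)\in V(F)$ because the contraction constraints from ``its $T$-neighbors already in $V(F)$'' are incompatible; but at that stage the only neighbor of $(\bar{u'},1)$ known to lie in $F$ is $(\bar{u'},0)$, and the constraints from $T_0$ and from the faraway vertex $u$ are all satisfied by $f\bigl((\bar{u'},1)\bigr)=(c,0)$ for \emph{any} neighbor $c$ of $\bar{u'}$ (the inequality $d_T(c,b)\le d_T(\bar{u'},b)+1$ is just the triangle inequality). No contradiction arises locally. Likewise your propagation step needs \emph{two} $T_1$-neighbors of $(x,1)$ already in $F$ to force incompatible values of $c$, but along a single lifted geodesic each new vertex acquires only one such neighbor, so the induction never closes. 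The paper's proof supplies exactly the missing global ingredient: it fixes a principal isometric cycle $\langle x_1,\dots,x_{2d},x_1\rangle$ of $T$, takes a maximal gap $i<k<j$ of $F$ on its $T_1$-lift, shows the images are forced step by step to $f(x_k,1)=(x_{k-1},0)$, and derives the contradiction only at the far end of the gap, where $f(x_j,1)=(x_j,1)$ would have to be adjacent to $f(x_{j-1},1)=(x_{j-2},0)$, a vertex at distance $3$. You would need to import that gap-and-cascade argument (or an equivalent global mechanism) to make part (ii) work.
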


\begin{proof}
(i) is a simple consequence of the Interval Property of the Cartesian product.

(ii)\; Let $F$ be a retract of $G$ that contains $T_0 \cup \langle u\rangle$.  
Suppose that $\mathrm{idim}(T) = d$.  Then $\mathrm{idim}(G) = d+1$.  Let $\langle x_{1},\dots,x_{2d},x_{1}\rangle$ be an isometric cycle of $T$, and $V(K_{2}) = \{ 0,1\}$.  Suppose that $F \neq G$.  Because $F \cap T_1 \neq \emptyset$, there exist $1 \leq i < j \leq 2d+1$ such that $x_{2d+1} := x_1$ and $(x_k,1) \in F$ if and only if $k = i$ or $j$.  Then $f(x_k,1) = (x_{k-1},0)$ for $i+1 \leq k<j$, which is impossible since $(x_j,1) = f(x_j,1)$ is adjacent to $(x_{j-1},1)$ but not to $(x_{j-2},0) = f(x_{j-1},1)$.  Therefore $F = G$.
\end{proof}

\begin{pro}\label{P:retract(C.P)}
Let $G = C \Box P$, where $C \in \mathbf{C}$ and $P$ is a path.  Let $ab$ be an edge of some $C$-layer of $G$,\; $u, v \in U_{ab}$ such that $d_G(u,v) = \mathrm{idim}(G)-1$,\; $R$ a $(u,v)$-geodesic and $R'$ a $(u',v')$-geodesic, where $u'$ and $v'$ are the neighbors in $U_{ba}$ of $u$ and $v$, respectively, and $D := R \cup \langle v,v'\rangle  \cup R' \cup \langle u',u\rangle$.  Then:

\textnormal{(i)}\; $\mathcal{I}_G(D) = G$.

\textnormal{(ii)}\; Any retract of $G$ that contains $D$ is equal to $G$.
\end{pro}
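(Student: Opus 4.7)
For (i), the plan is to apply the Interval Property of the Cartesian product (Proposition~\ref{P:pro.Cartes.prod.}(ii)) directly. Writing $C = \langle c_0, c_1, \dots, c_{2n-1}, c_0\rangle$ with $|C| = 2n$ and $V(P) = \{q_0, q_1, \dots, q_m\}$, the hypothesis that $u, v \in U_{ab}$ with $d_G(u,v) = \mathrm{idim}(G) - 1 = n+m-1$, together with $U_{ab} = \{c_0, c_{n+1}\}\times V(P)$, forces (up to the evident symmetries) $u = (c_0,q_0)$, $v = (c_{n+1},q_m)$, and hence $u' = (c_1,q_0)$, $v' = (c_n,q_m)$. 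Then
\[I_G(u,v) = \{c_0, c_{2n-1}, \dots, c_{n+1}\}\times V(P), \quad I_G(u',v') = \{c_1, c_2, \dots, c_n\}\times V(P),\]
whose union is $V(C)\times V(P) = V(G)$. Since $\{u,v,u',v'\}\subseteq V(D)$, this already yields $\mathcal{I}_G(V(D)) = V(G)$.

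For (ii), let $F$ be a retract of $G$ containing $D$, and let $f\colon G \to G$ be a retraction with image $F$. The key observation is that $(u,v')$ is a diameter pair: since $c_0$ and $c_n$ are antipodes in $C$, one has $d_G(u,v') = d_C(c_0,c_n) + d_P(q_0,q_m) = n+m = \mathrm{diam}(G)$, and $I_C(c_0,c_n) = V(C)$ forces $I_G(u,v') = V(G)$; the analogous statement holds for $(u',v)$. Hence for every $x\in V(G)$, combining $x\in I_G(u,v')$ with the contraction property of $f$ gives
\[d(f(x),u) + d(f(x),v') \leq d(x,u)+d(x,v') = d(u,v') \leq d(f(x),u)+d(f(x),v'),\]
so $d(f(x),u)=d(x,u)$ and $d(f(x),v')=d(x,v')$; symmetrically, $d(f(x),u')=d(x,u')$ and $d(f(x),v)=d(x,v)$. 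Writing $f(x) = (c_{i'},q_{j'})$ and $x = (c_i,q_j)$, these four equalities plus the antipodality identity $d_C(\cdot,c_0) + d_C(\cdot,c_n) = n$ cut the candidates for $f(x)$ down to a small shortlist. Additional contraction constraints coming from the inner vertices of $R$ and $R'$ (all in $V(D)\subseteq V(F)$), combined with the pairwise bounds $d(f(x),f(y))\leq d(x,y)$ for adjacent $x, y$, are then used to eliminate the remaining candidates and force $f(x) = x$ for every $x\in V(G)$, i.e.\ $F = G$.

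The main obstacle is that the four ``corner'' distance equations to $u, u', v, v'$ do not by themselves pin down $f(x)$: for instance, in $C_6\Box P_2$, the vertices $(c_5,q_0)$ and $(c_0,q_1)$ have identical distances to all four corners. Eliminating the spurious candidates requires chaining together the contraction constraints imposed by the vertices of $D$ lying inside the geodesics $R$ and $R'$ with pairwise constraints between $f(x)$ and $f(y)$ for neighboring $x, y \notin V(D)$; this is most naturally carried out by induction on $d_G(x,V(D))$, propagating ``fixedness'' outward from $D$. This combinatorial elimination is the technical heart of the proof and is what genuinely distinguishes Proposition~\ref{P:retract(C.P)} from the prism case $m=1$, where $G$ is a quasi-hypertorus, $D$ is an isometric cycle of full isometric dimension, and Proposition~\ref{P:TuPuQ/isom.cycle/retract} applies directly.
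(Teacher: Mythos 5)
Your part (i) is correct and is essentially the paper's argument (both rest on the Interval Property of the Cartesian product); your packaging via the two intervals $I_G(u,v)$ and $I_G(u',v')$, whose $C$-projections are the two complementary arcs of $C$, is if anything a little cleaner than the paper's layer-by-layer covering.

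Part (ii), however, has a genuine gap. The four equalities $d(f(x),w)=d(x,w)$ for $w\in\{u,u',v,v'\}$ are correctly derived, but — as your own example in $C_6\Box P_2$ shows — they only confine $f(x)$ to a ``diagonal'' of candidates, and the step that is supposed to finish the proof (``additional contraction constraints \dots are then used to eliminate the remaining candidates'') is never carried out. That elimination is not a routine verification: propagating fixedness outward from $D$ by induction on $d_G(x,V(D))$ gives you, for a vertex $x$ at distance $1$ from $D$, only that $f(x)$ is equal or adjacent to a fixed neighbour, which together with the diagonal constraint still does not obviously exclude the spurious candidate; you would have to exhibit the actual chain of constraints and check it closes up, and you have not. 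Since this is, by your own admission, the technical heart of the statement, the proposal does not yet prove (ii). For comparison, the paper avoids the corner-distance computation entirely: it first shows that $f$ maps each $C$-layer $C_i$ into itself (otherwise an arc of $C_i$ between two vertices fixed because they lie on $D$ would be sent to a walk of length at least two more than the arc, which a contraction cannot do), and then observes that if $f$ moved some vertex within a $C$-layer, say $f(c_j,i)=(c_k,i)$ with $j\neq k$, the layer-preservation forces $f(c_j,q)=(c_k,q)$ for every $q$, contradicting the fact that the $P$-layer through $c_j$ meets $D$ and hence contains a fixed vertex. Reworking your argument along these lines (two fixed vertices per $C$-layer, plus the fact that $R\cup R'$ projects onto all of $C$) would close the gap.
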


\begin{proof}
$D$ is a cycle of $G$ with $\mathrm{idim}(D) = \mathrm{idim}(G)$.  
The results are then consequences of Propositions~\ref{P:TPQ/prop.isom.cycle} and~\ref{P:TuPuQ/isom.cycle/retract}, respectively, if $D$ is isometric in $G$, which is possible only if the length of $P$ is at most $1$.  Assume that $D$ is not an isometric cycle of $G$.  Put $P = \langle 0,\dots,p\rangle$ for some non-negative integer $p$, and $C = \langle c_1,\dots,c_{2n}\rangle$ for some positive integer $n$.  Then $\mathrm{idim}(G) = \mathrm{idim}(D) = p+n$.  For $0 \leq i \leq n$, denote by $C_i$ the $C$-layer of $G$ whose projection on $P$ is $i$.  Note that, by the properties of $R$ and $R'$, for all $1 \leq j \leq 2n$ and $0 \leq i \leq p$, the sets $V(D) \cap W_{(c_j,i)(c_{j+1},i)}^{C_i}$ and $V(D) \cap W_{(c_{j+1},i)(c_j,i)}^{C_i}$ (with $c_{2n+1} := c_1$) are non-empty.

(i):\; Without loss of generality, we can suppose that $u = (c_1,0)$.  Then $v = (x_n,p)$ since $V(R) \subseteq W_{ab}$ and $d_G(u,v) = \mathrm{idim}(G)-1$, and thus $v' = (x_{n+1},p)$ and $u' = (x_{2n},0)$.  For $j$ with $1 \leq j \leq 2n$.  If $j \leq n$ (resp. $n < j \leq 2n$),  the path $R$ (resp. $R'$) contains exactly one edge that is $\Theta$-equivalent to the edge $x_{j-1}x_j$.  Hence, by the Interval Property of the Cartesian product, $(x_1,0), \dots, (x_j,0) \in \mathcal{I}_G(D)$ or $(x_{j-1},0), \dots, (x_{2n},0), (x_1,0) \in \mathcal{I}_G(D)$ depending on whether $j \leq n$ or $i > n$.  Therefore $V(C_0) \subseteq \mathcal{I}_G(D)$, and for analogous reasons $V(C_i) \subseteq \mathcal{I}_G(D)$ for $0 \leq i \leq p$ since $D$ meets each $C_i$.  Consequently $\mathcal{I}_G(D) = G$.

(ii):\; Let $F$ be a retract of $G$ that contains $D$, and $f$ a retraction of $G$ onto $F$.  Suppose that $F \neq G$.  Then the restriction of $f$ to $C_i$ is not the identity for some $i$ with $0 \leq i \leq p$.

Suppose that $f(C_i) \nsubseteq C_i$.  Then there exists $1 \leq j < k \leq 2n$ with $k-j \geq 2$ such that, $f(c_l,i) = (c_l,i)$ for $l = j, k$ and $f(c_l,i) \notin V(C_i)$ for $j<l<k$.  This is clearly impossible since $f$ would map the path $\langle (c_j,i),\dots,(c_k,i)\rangle$ of length $k-j$ onto a path of length at least $k-j+2$.

It follows that $f(C_i) \subseteq C_i$.  Because $f(C_i) \neq C_i$ by hypothesis, there exist $j, k$ with $1 < j+1 < k \leq 2n$ such that $f(c_j,i) = (c_k,i)$.  Because the paths $\langle (c_j,0),\dots,(c_j,n)\rangle$ and $\langle (c_k,0),\dots,(c_k,n)\rangle$ are $P$-layers of $G$, it follows that $f(c_j,q) = (c_k,q)$ for every $q$ with $0 \leq q \leq p$, contrary to the fact that $(c_j,q) \in V(D)$ for at least one $q$, and thus $f(c_j,q) = (c_j,q)$ since $D$ is a cycle of $F$.

Therefore the restriction of $f$ to $C_i$ is the identity for every $i$, and thus $F = G$.
\end{proof}

\begin{thm}\label{T:retract/hyper-median}
Any retract of a hyper-median partial cube is a hyper-median partial cube.
\end{thm}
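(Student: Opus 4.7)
The plan is to reduce the theorem to showing that $F$ contains no tricycle, then derive a contradiction from a hypothetical tricycle in $F$ by lifting it to a tricycle in $G$. Let $f$ denote a retraction of the hyper-median partial cube $G$ onto a subgraph $F$. By Theorem~\ref{T:retract(hypernet.)} one already knows that $F$ is a Peano partial cube, so in view of Theorem~\ref{T:charact.hyper-median} it suffices to prove that $F$ is tricycle-free.

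First I would establish that $F$ is median-stable in $G$. If $m$ is the median in $G$ of a triple $(x,y,z) \subseteq V(F)$, then the contraction inequalities $d_G(x,f(m)) \leq d_G(x,m)$ and $d_G(f(m),y) \leq d_G(m,y)$ (with their two analogues) combined with the triangle inequality force all six of them to be equalities, so that $f(m)$ is itself a median of $(x,y,z)$ in $G$. Uniqueness of medians in partial cubes (Lemma~\ref{L:gen.propert.}(i)) then yields $f(m) = m$, hence $m \in V(F)$.

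Next, I would argue by contradiction. Suppose $F$ contains a tricycle $(C_1, C_2, C_3)$. Each $C_i$ is convex in $F$ with length greater than $4$, hence isometric in $F$ and therefore in $G$. By Theorem~\ref{T:charact.}(vi) the subgraph $T_i := co_G(C_i)$ is a gated quasi-hypertorus in $G$. Lemma~\ref{L:edge/co(H)}(i) shows that every $\Theta$-class of $T_i$ has a representative in $C_i$, and distinct $\Theta$-classes of $C_i$ cannot merge in $T_i$ (as $C_i$ is isometric in $T_i$); so $\mathrm{idim}(T_i) = \mathrm{idim}(C_i)$, and since quasi-hypertori satisfy $\mathrm{diam} = \mathrm{idim}$, the cycle $C_i$ is a principal cycle of the antipodal partial cube $T_i$. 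Proposition~\ref{P:TPQ/prop.isom.cycle} then yields that the median-closure of $C_i$ in $T_i$ (equivalently in $G$) equals $T_i$.

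The crux, and the main obstacle, is to translate median-stability of $F$ into the assertion $T_i = C_i$. Since $F$ is median-stable and contains $C_i$, it contains the median-closure of $C_i$ in $G$, so $T_i \subseteq V(F)$. Because $F$ is isometric, the medians of triples from $F$ computed in $F$ coincide with those computed in $G$, so the median-closure of $C_i$ taken inside $F$ is still $T_i$; this closure is contained in the convex hull $co_F(C_i) = C_i$ (here I use that $C_i$ was assumed convex in $F$), forcing $T_i = C_i$. Thus each $C_i$ is already convex in $G$, so $(C_1, C_2, C_3)$ is a tricycle in $G$, contradicting Theorem~\ref{T:charact.hyper-median} applied to $G$. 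This completes the plan.
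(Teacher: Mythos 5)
Your proposal is correct and follows essentially the same route as the paper: reduce to showing the retract $F$ is tricycle-free via Theorems~\ref{T:retract(hypernet.)} and~\ref{T:charact.hyper-median}, then show that each cycle of a hypothetical tricycle in $F$ coincides with its convex hull in $G$ (a quasi-hypertorus), so that the tricycle lifts to $G$. The only difference is that where the paper cites Proposition~\ref{P:TuPuQ/isom.cycle/retract} applied to the restriction of the retraction, you unpack that proposition's own argument (median-stability of the retract plus the median-closure characterization of principal cycles from Proposition~\ref{P:TPQ/prop.isom.cycle}), which amounts to the same mechanism.
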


\begin{proof}
Let $G$ be a hyper-median partial cube, and $f$ a retraction of $G$ onto one of its proper subgraphs $F$.  Then, by Theorem~\ref{T:retract(hypernet.)}, $F$ is a Peano partial cube, and moreover an isometric subgraph of $G$.  Suppose that $F$ contains a tricycle $(C_0,C_1,C_2)$.  Let $i = 0, 1, 2$.  Then $C_i$ is convex in $F$, and thus isometric in $G$.  Because $G$ is a Peano partial cube, it follows that the convex hull $C'_i$ of $C_i$ in $G$ is a quasi-hypertorus such that $f(C'_i) = C_i$.  Therefore $C'_i = C_i$ by Proposition~\ref{P:TuPuQ/isom.cycle/retract}.  It follows that $(C_0,C_1,C_2)$ is a tricycle of $G$, contrary, by Theorem~\ref{T:charact.hyper-median}, to the fact that a hyper-median partial cube is tricycle-free.  Consequently $F$ contains no tricycle.  Therefore $F$ is hyper-median by Theorem~\ref{T:charact.hyper-median}.
\end{proof}

According to Theorems~\ref{T:hypernet./gat.amalg.+cart.prod.} and \ref{T:retract(hypernet.)} and Theorems~\ref{T:cart.prod./hyper-median} and \ref{T:retract/hyper-median}, respectively, we infer the following result.

\begin{thm}\label{T:varieties}
The class of Peano partial cubes and that of hyper-median partial cubes are varieties.
\end{thm}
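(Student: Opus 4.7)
The statement to prove is that both classes are closed under retracts and Cartesian products, which is precisely the definition of a variety used in the paper (see the remark in the introduction). So the plan reduces to assembling four closure results that have already been established, two for each class.

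For the class of Peano partial cubes, my plan is to invoke Theorem~\ref{T:hypernet./gat.amalg.+cart.prod.} (which asserts closure under finite or infinite Cartesian products, amongst other operations) for the product part, and Theorem~\ref{T:retract(hypernet.)} for the retract part. Together these yield that the class is a variety in the sense recalled in the introduction. No additional argument is needed here; the work has already been done in Sections~\ref{SS:ph-homog.} and \ref{SS:retr.hypernet.}.

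For the class of hyper-median partial cubes, I would cite Theorem~\ref{T:cart.prod./hyper-median} for closure under Cartesian products (of two factors), and Theorem~\ref{T:retract/hyper-median} for closure under retracts. The only very minor point worth mentioning is the case of infinite Cartesian products: since a weak Cartesian product is a connected component of the ordinary Cartesian product, and since every finite convex subgraph of such a weak product projects nontrivially onto only finitely many factors, the infinite case reduces, via Proposition~\ref{P:hyper-med./fin.conv.subgr.} and a finite induction from Theorem~\ref{T:cart.prod./hyper-median}, to the finite binary product already handled. This is entirely parallel to the argument given in the proof of Theorem~\ref{T:hypernet./gat.amalg.+cart.prod.} for Peano partial cubes.

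There is no real obstacle: the proof is a one-line assembly of previously proved closure properties. The only point requiring a moment's attention, if one wishes to be complete, is the reduction of infinite products to binary products for the hyper-median case, and this is handled exactly as in the ph-homogeneous setting by passing to projections onto the finitely many nontrivial coordinates of a given finite convex subgraph.
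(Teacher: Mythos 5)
Your proposal is correct and is essentially identical to the paper's proof, which likewise just combines Theorems~\ref{T:hypernet./gat.amalg.+cart.prod.} and \ref{T:retract(hypernet.)} for the Peano case with Theorems~\ref{T:cart.prod./hyper-median} and \ref{T:retract/hyper-median} for the hyper-median case. Your additional remark reducing infinite Cartesian products of hyper-median partial cubes to the binary case via Proposition~\ref{P:hyper-med./fin.conv.subgr.} is a sensible (and slightly more careful) supplement, but it does not change the route.
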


\subsection{Cycle-representative subgraphs and moorings}\label{SS:cycle-represent./mooring}

In this subsection we prove a result (Theorem~\ref{T:conv.=>mooring}) which will be useful to obtain the main result of Subsection~\ref{SS:ret.hom-ret./convex}.  The following concept, essentially due to Tardif~\cite{T96}, was initially defined for median graphs.

\begin{defn}\label{D:mooring}
Let $F$ be a gated subgraph of a Peano partial cube $G$.  A self-contraction $f$ of $G$ is a \emph{mooring} of $G$ on $F$ if, for all $u \notin V(F)$, $uf(u)$ is an edge of $G[\mathcal{I}_G(u,g_F(u))]$, where $g_F(u)$ is the gate of $u$ in $F$.
\end{defn}

We recall the following two results.

\begin{pro}\label{P:mooring/med.gr.}\textnormal{(Tardif~\cite[Corollary 3.2.5]{T96})}
Let $F$ be a convex subgraph of a median graph $G$.  Then there exists a mooring of $G$ on $F$.
\end{pro}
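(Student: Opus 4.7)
The plan is to construct $f$ explicitly by having each vertex cross a single carefully chosen $\Theta$-class toward $F$. Since $G$ is a median graph and $F$ is convex, $F$ is gated, so $g_F(u)$ makes sense for every $u \in V(G)$. Let $S$ denote the family of $\Theta$-classes of $G$ that separate $F$, namely those $\theta = [ab]$ for which $V(F)$ lies entirely in one of the two half-spaces $W_{ab}$, $W_{ba}$; equivalently, no edge of $F$ is $\Theta$-equivalent to $ab$. Fix a well-ordering $\preceq$ of $S$. For each $u \notin V(F)$ put
\[
S(u) := \{\theta \in S : \theta \text{ separates } u \text{ from } F \text{ and some edge of } G \text{ incident to } u \text{ lies in } \theta\},
\]
which is non-empty because the first edge of any $(u,g_F(u))$-geodesic produces an element. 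Define $\theta(u) := \min_\preceq S(u)$ and let $f(u)$ be the unique neighbor of $u$ across the edge at $u$ in class $\theta(u)$. For $u \in V(F)$, set $f(u) := u$.

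By construction, $\theta(u)$ separates $u$ from $g_F(u) \in V(F)$, so $f(u) \in I_G(u, g_F(u))$ and the edge $uf(u)$ lies in $I_G(u, g_F(u))$. What remains is to verify that $f$ is a contraction, i.e.\ $d_G(f(u),f(v)) \leq 1$ for every edge $uv$ of $G$.

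The central tool is the Quadrangle Condition characterizing median graphs: if $u,v$ are adjacent via an edge in class $\theta_{uv}$ and $\theta \neq \theta_{uv}$ is any other $\Theta$-class, then $u$ has a neighbor in class $\theta$ if and only if $v$ does, and in that case the resulting four vertices form a convex $4$-cycle. Since $u$ and $v$ also agree on every $\Theta$-class other than $\theta_{uv}$, this yields $S(u) = S(v)$ when $\theta_{uv} \notin S$, and $S(u) = S(v) \setminus \{\theta_{uv}\}$ when $\theta_{uv} \in S$ and $u$ lies on the $F$-side (with symmetric or degenerate versions when $u$ or $v$ is in $V(F)$). A short case analysis on whether $\theta(u)$ or $\theta(v)$ equals $\theta_{uv}$ then shows that either $\theta(u) = \theta(v)$, in which case the Quadrangle Condition provides a $4$-cycle $u,v,f(v),f(u)$ and so $f(u)f(v) \in E(G)$, or $\theta(v) = \theta_{uv}$ (with $u$ on the $F$-side), in which case $f(v) = u$ and thus $d_G(f(u),f(v)) \leq 1$.

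The main obstacle is purely bookkeeping: ensuring that the fixed well-ordering of $S$ produces coherent choices at adjacent vertices, so that the sets $S(u)$ and $S(v)$ differ only in the controlled way prescribed above. All the geometric content is supplied by the Quadrangle Condition, which pins down the local edge structure rigidly enough to propagate the choice $\theta(u) = \theta(v)$ whenever the edge $uv$ is not itself the crossing being made.
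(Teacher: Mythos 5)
There is a genuine gap. Your argument hinges on the claim that for adjacent vertices $u,v$ and any $\Theta$-class $\theta$ other than the class of $uv$, the vertex $u$ has an incident edge in $\theta$ if and only if $v$ does; this is what lets you assert $S(u)=S(v)$ (or $S(u)=S(v)\setminus\{\theta_{uv}\}$) and hence coordinate the choices $\theta(u)$ and $\theta(v)$. That claim is false in median graphs: already in the path $\langle a,b,c\rangle$ the vertex $b$ is incident to an edge of the class of $bc$ while its neighbour $a$ is not. In general the set of classes incident to a vertex of a median graph varies from vertex to vertex (only in a hypercube is it constant), so $S(u)$ and $S(v)$ need not be related in the controlled way you describe, and your case analysis does not cover what actually happens.

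This is not merely a bookkeeping issue; the construction genuinely fails for a bad well-ordering. Take $G=P_3\,\Box\,P_2$ with vertices $(i,j)$, $i\in\{0,1,2\}$, $j\in\{0,1\}$, let $F=\{(0,0)\}$, and let $\theta_1,\theta_2,\theta_3$ be the classes separating columns $0|1$, columns $1|2$, and the two rows, respectively. For $u=(2,1)$ and $v=(1,1)$ one computes $S(u)=\{\theta_2,\theta_3\}$ (there is no $\theta_1$-edge at $u$) and $S(v)=\{\theta_1,\theta_3\}$ (the class $\theta_2$ does not separate $v$ from $F$). With the well-ordering $\theta_1\prec\theta_3\prec\theta_2$ your rule gives $f(u)=(2,0)$ and $f(v)=(0,1)$, which are at distance $3$, so $f$ is not a contraction even though each individual step does lie on a geodesic to the gate. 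The paper itself does not prove this proposition (it cites Tardif) but proves the generalization Theorem~\ref{T:conv.=>mooring} by a transfinite tower $F_0\subset F_1\subset\cdots$ of minimal convex extensions, sending each vertex to its unique neighbour in the preceding level; that peeling procedure is precisely what coordinates the choices at adjacent vertices, and a single global well-ordering of the separating $\Theta$-classes applied locally cannot replace it.
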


\begin{pro}\label{P:mooring/net.p.c.}\textnormal{(Polat~\cite[Proposition 6.2]{P09-2})}
If a netlike partial cube $G$ contains a unique convex cycle $C$ of length greater than $4$, then there exists a mooring of $G$ on $C$.
\end{pro}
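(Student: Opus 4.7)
The plan is to construct a mooring $f$ directly, by letting every vertex outside $V(C)$ take one step toward its gate along a canonically chosen $\Theta$-class, and then to verify that this local prescription is globally a contraction.

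First I would establish the gated structure: the unique convex cycle $C$ of length greater than $4$ is gated in $G$ by \cite[Corollary 6.4]{P05-1} (or equivalently by Theorem~\ref{T:conv.reg.H-subgr.=>gated}, as $C$ is a convex regular Peano subgraph). Writing $g := g_C$ for the gate map, a key observation is that for every $u \notin V(C)$, no edge on any $(u,g(u))$-geodesic is $\Theta$-equivalent to an edge of $C$; in particular, every $\Theta$-class appearing on such a geodesic has the property, coming from Proposition~\ref{P:hypernet./netl.} and the uniqueness of $C$, that every convex cycle in $G$ containing one of its edges is a $4$-cycle. Hence the region swept out by $\bigcup_{u \notin V(C)} I_G(u,g(u))$ is ``median'' in a precise local sense.

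Next, fix a well-ordering of the set $\mathcal{E}$ of $\Theta$-classes of $G$ that are not $\Theta$-classes of $C$. For $u \notin V(C)$, let $\Theta(u) \in \mathcal{E}$ be the smallest $\Theta$-class that contains an edge at $u$ lying on some $(u,g(u))$-geodesic, and set $f(u)$ to be the other endpoint of that edge; for $u \in V(C)$ set $f(u) := u$. The mooring condition $uf(u) \in E(G[\mathcal{I}_G(u,g(u))])$ is then built into the definition, so the entire content of the proposition lies in showing that $f$ is a contraction.

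The verification of $d_G(f(u),f(v)) \le 1$ for every edge $uv$ splits into cases according to whether each endpoint lies in $V(C)$. The case $u,v \in V(C)$ is trivial, and the mixed case follows from the fact that the gate map onto a gated subgraph is itself a contraction. For $u, v \in V(G)-V(C)$ with $uv \in \mathcal{E}$, the three $\Theta$-classes spanned by $uv$, $uf(u)$ and $vf(v)$ are all of ``median'' type, so the $4$-Cycle Property together with the minimality of the well-order forces $f(u) = f(v)$ or $f(u)f(v) \in E(G)$. The main obstacle is the remaining sub-case, where $uv$ is $\Theta$-equivalent to an edge of $C$, so that $g(u) \ne g(v)$ and the two choices $\Theta(u), \Theta(v)$ are targeted at different gates on $C$. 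Here one must argue that the ``useful'' $\Theta$-classes at $u$ and at $v$ coincide once one factors out the $\Theta$-class of $uv$, so that the minimality of the well-order still forces $\Theta(u) = \Theta(v)$; this in turn uses the uniqueness of $C$ (so that no tricycle or hypertorus obstruction can appear transversally) together with $ph(G) \le 1$ from Proposition~\ref{P:charact.(netlike)}. Once these compatibilities are in place, the edges $uf(u)$ and $vf(v)$ close up into a convex $4$-cycle, giving $f(u)f(v) \in E(G)$ and completing the proof.
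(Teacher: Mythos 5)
Your overall strategy --- a direct, vertex-by-vertex definition of the mooring --- is genuinely different from the way this statement is actually established (in \cite{P09-2}, and via Theorem~\ref{T:conv.=>mooring} in the present paper, the graph is exhausted from $C$ outwards by a transfinite chain of minimal convex extensions, and each vertex is sent to its \emph{unique} neighbour in the preceding convex layer). Unfortunately the map $f$ you define is not a contraction, and the failure already occurs in the ``median'' region where you invoke the $4$-Cycle Property. The problem is that for adjacent $u,v \notin V(C)$ with common gate $w$, say $v \in I_G(u,w)$, the set of $\Theta$-classes having an edge at $u$ on a $(u,w)$-geodesic and the corresponding set at $v$ are in general incomparable, so minimizing over a fixed global well-order need not produce compatible choices: your $4$-cycle argument only yields $\Theta(v)\le\Theta(u)$, not equality. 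Concretely, let $G$ be the gated amalgam of $C_6$ with the grid $P_3 \Box P_2$ along a single vertex $w=(0,0)$ of the grid; this is a netlike partial cube whose unique convex cycle of length greater than $4$ is the $C_6$, and $w$ is the gate in $C_6$ of every grid vertex. Write $A,B,D$ for the $\Theta$-classes of the edges $(1,j)(2,j)$, $(0,j)(1,j)$ and $(i,0)(i,1)$ respectively, and order them $B<D<A$. For $u=(2,1)$ the classes available at $u$ are $A$ and $D$, so $f(u)=(2,0)$; for its neighbour $v=(1,1)$ the available classes are $B$ and $D$, so $f(v)=(0,1)$; but $d_G\bigl((2,0),(0,1)\bigr)=3$. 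Thus $f$ is not a contraction, even though a mooring of $G$ on $C_6$ certainly exists (send $(i,j)$ to $(i-1,j)$ for $i\ge 1$ and $(0,1)$ to $w$).

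What the layer-by-layer argument buys, and what your greedy rule lacks, is a globally coherent direction of descent: in the proof of Theorem~\ref{T:conv.=>mooring} the $\Theta$-class along which a vertex moves is dictated by the minimal convex extension that first captures it (by Lemma~\ref{L:min.conv.ext.} and Proposition~\ref{P:min.conv.ext./retract}, each new layer is a periphery of the enlarged convex subgraph, so every new vertex has exactly one neighbour in the previous layer), and compatibility across an edge lying inside one layer is then automatic because the two unique neighbours in the previous layer are themselves adjacent. A per-vertex minimum over an arbitrary well-order of $\Theta$-classes cannot reproduce this: for a given graph some well-orders do work, but your argument provides no mechanism for selecting one, and the remaining sub-case (where $uv$ is $\Theta$-equivalent to an edge of $C$) inherits the same difficulty in a sharper form. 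The natural repair is to replace the well-order on $\Theta$-classes by the (transfinite) order in which minimal convex extensions of $C$ exhaust $G$, which is exactly the route taken here.
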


We first extend the relation of $\Theta$-equivalence to cycles.

\begin{defn}
Let $G$ be a Peano partial cube.

\textnormal{(i)}\; If $C = \langle x_1,\dots,x_{2n},x_1\rangle$ is an isometric cycle of $G$, then a cycle $C' = \langle x'_1,\dots,x'_{2n},x'_1\rangle$ of $G$ is said to be \emph{$\Theta$-equivalent} to $C$, or is called a \emph{$\Theta$-copy} of $C$, if the edges $x_ix_{i+1}$ and $x'_ix'_{i+1}$ are $\Theta$-equivalent for every $i$ with $1 \leq i \leq 2n$, and such that $x_{2n+1} := x_1$ and $x'_{2n+1} := x'_1$.

\textnormal{(ii)}\; A convex subgraph $F$ of $G$ is said to be \emph{cycle-representative} if $F$ contains a $\Theta$-copy of each isometric cycle of $G$ of length greater than $4$.
\end{defn}

We clearly infer from (i) and Lemma~\ref{L:gen.propert.}(vii) that $C'$ is an isometric cycle of $G$, and that $d_G(x_i,x'_i) = d_G(x_j,x'_j)$ for all $i, j$ with $1 \leq i \leq j \leq 2n$.

\begin{lem}\label{L:cycle-rep./cycle}
Let $G$ be a Peano partial cube $G$, and $F$ a cycle-representative convex subgraph of $G$.  Then any isometric cycle of $G$ that has at least one vertex in $F$ is a cycle of $F$.
\end{lem}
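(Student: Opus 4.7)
The plan is to exploit the $\Theta$-copy of $C$ provided by the cycle-representative hypothesis, together with the convexity of $F$ and the orientation convention in Remark~\ref{R:Theta/notation}, to drag every vertex of $C$ into $V(F)$ one step at a time. Write $C = \langle x_1,\dots,x_{2n},x_1 \rangle$ with some $x_1 \in V(F)$ (after a cyclic relabeling). For the substantive case $2n > 4$, cycle-representativeness gives a $\Theta$-copy $C' = \langle x'_1,\dots,x'_{2n},x'_1 \rangle$ contained in $F$, so that $x_i x_{i+1}$ is $\Theta$-equivalent to $x'_i x'_{i+1}$ for every $i$ (indices modulo $2n$).

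The first key step is to use the orientation convention: by Remark~\ref{R:Theta/notation}, the $\Theta$-equivalence of $x_i x_{i+1}$ and $x'_i x'_{i+1}$ means $x_{i+1} \in I_G(x_i,x'_{i+1})$. The second step is a one-line induction along the cycle: assuming $x_i \in V(F)$, note that $x'_{i+1} \in V(C') \subseteq V(F)$, so convexity of $F$ forces $I_G(x_i,x'_{i+1}) \subseteq V(F)$, and therefore $x_{i+1} \in V(F)$. Starting from $x_1$, iterating $2n - 1$ times yields $V(C) \subseteq V(F)$, and since $F$ is an induced subgraph (being convex), the edges of $C$ are already edges of $F$, so $C$ is a cycle of $F$.

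The small obstacle is bookkeeping: the orientation of $\Theta$ in the definition of $\Theta$-copy must be read consistently so that the second endpoint of each interval used in the induction lies in $V(C') \subseteq V(F)$ (not in $V(C)$, which is what we are trying to place inside $F$). This is exactly what Remark~\ref{R:Theta/notation} delivers once the cycles $C$ and $C'$ are indexed in the same cyclic direction, which is part of the definition of $\Theta$-copy. For the remaining case of a $4$-cycle, the definition of cycle-representative imposes no constraint, so the statement is understood in its nontrivial content for isometric cycles of length greater than $4$; the above argument then suffices.
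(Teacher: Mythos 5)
Your proof is correct and follows essentially the same route as the paper's: take the $\Theta$-copy $C_F$ of $C$ in $F$, use the orientation convention of Remark~\ref{R:Theta/notation} to get $x_{i+1}\in I_G(x_i,x_{i+1}^F)$, and propagate membership in $V(F)$ around the cycle by convexity. Your explicit remark about $4$-cycles (for which cycle-representativeness imposes no hypothesis) is a point the paper's own proof silently glosses over, and your reading of the intended scope is the right one.
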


\begin{proof}
Let $C = \langle x_1,\dots,x_{2n},x_1\rangle$ be an isometric cycle of $G$ that has at least one vertex, say $x_1$, in $F$.  Because $F$ is cycle-representative, there exists a $\Theta$-copy $C_F = \langle x_1^F,\dots,x_{2n}^F,x_1^F\rangle$ of $C$ in $F$.  Suppose that $x_1, \dots, x_i \in V(F)$ for some $i$ with $1 \leq i < 2n$.  Because the edges $x_ix_{i+1}$ and $x_i^Fx_{i+1}^F$ are $\Theta$-equivalent, it follows that $x_{i+1}\in I_G(x_i,x_{i+1}^F)$, and thus $x_{i+1} \in V(F)$ since $F$ is convex.  Therefore $x_1, \dots, x_{2n} \in V(F)$, and thus $C$ is a cycle of $F$.
\end{proof}

Because, by Theorem~\ref{T:hypernet.=>G-closed+conv.=gated}, a convex subgraph of a Peano partial cube is gated if and only if it is $\Gamma$-closed, we infer immediately the following corollary from the above lemma.

\begin{cor}\label{C:cycle-represent./gated}
Let $G$ be a Peano partial cube.  Then any cycle-representative convex subgraph of $G$ is gated in $G$.
\end{cor}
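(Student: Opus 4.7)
The plan is to derive this corollary as a short synthesis of the two results the authors themselves flag just before the statement, namely Lemma~\ref{L:cycle-rep./cycle} and Theorem~\ref{T:hypernet.=>G-closed+conv.=gated}. Since $F$ is convex by hypothesis, Theorem~\ref{T:hypernet.=>G-closed+conv.=gated} reduces the task to verifying that $F$ is $\Gamma$-closed in $G$; that is, every convex cycle $C$ of $G$ sharing at least three vertices with $F$ must be a cycle of $F$.

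Concretely, I would fix such a convex cycle $C$ with $|V(C)\cap V(F)|\geq 3$ and split on the length of $C$. If $C$ is a $4$-cycle, then three of its four vertices lie in $F$, and the fourth lies on a geodesic between two of them, so by convexity of $F$ it also lies in $F$; hence $C$ is a cycle of $F$. If $C$ has length greater than $4$, I would observe that convex cycles of a partial cube are isometric (by Lemma~\ref{L:gen.propert.}(vii), or simply because convexity implies isometry for cycles), so $C$ is an isometric cycle with length $>4$ that meets $F$ (in fact in at least three vertices, but only one is needed). Then Lemma~\ref{L:cycle-rep./cycle} applies directly to give $C\subseteq F$.

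Having verified both cases, $F$ is $\Gamma$-closed, and Theorem~\ref{T:hypernet.=>G-closed+conv.=gated} then yields that $F$ is gated in $G$. There is no substantial obstacle: the corollary is essentially a packaging step that combines the cycle-transfer property of Lemma~\ref{L:cycle-rep./cycle} with the characterization of gated convex subgraphs of Peano partial cubes as the $\Gamma$-closed ones, plus the trivial convexity argument for $4$-cycles.
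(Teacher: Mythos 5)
Your proposal is correct and follows essentially the same route as the paper: the paper derives the corollary immediately from Lemma~\ref{L:cycle-rep./cycle} together with the characterization of gated convex subgraphs as the $\Gamma$-closed ones (Theorem~\ref{T:hypernet.=>G-closed+conv.=gated}). Your explicit case split on the length of $C$ (handling $4$-cycles by convexity, since cycle-representativity only supplies $\Theta$-copies of cycles of length greater than $4$) just spells out what the paper leaves implicit.
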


\begin{lem}\label{L:mooring/cycle}
Let $F$ be a convex subgraph of a graph $G$, $\mu$ a mooring of $G$ on $F$, and $C$ an isometric cycle of $G-F$.  Then $\mu(C)$ is a $\Theta$-copy of $C$.
\end{lem}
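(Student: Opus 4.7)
The plan is to verify, edge by edge along $C$, that $\mu(x_i)\mu(x_{i+1})$ is an edge of $G$ which is $\Theta$-equivalent to $x_ix_{i+1}$, and then to show the map $\mu$ is injective on $V(C)$, so that $\langle \mu(x_1),\dots,\mu(x_{2n}),\mu(x_1)\rangle$ is a bona fide $2n$-cycle. Write $C=\langle x_1,\dots,x_{2n},x_1\rangle$ and fix an edge $x_ix_{i+1}$.

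First I would extract the local edge preservation. Because $x_i,x_{i+1}\notin V(F)$, the mooring condition forces $x_i\mu(x_i)$ and $x_{i+1}\mu(x_{i+1})$ to be (genuine) edges, so $\mu(x_i)\ne x_i$ and $\mu(x_{i+1})\ne x_{i+1}$. Since $\mu$ is a contraction, $d_G(\mu(x_i),\mu(x_{i+1}))\le d_G(x_i,x_{i+1})=1$; equality would make $\mu(x_i)$ simultaneously adjacent to $x_i$ and $x_{i+1}$, producing a triangle $\langle x_i,x_{i+1},\mu(x_i)\rangle$ in the bipartite graph $G$, which is impossible. Hence $\mu(x_i)\mu(x_{i+1})\in E(G)$. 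Next I would verify that the four vertices $x_i,x_{i+1},\mu(x_{i+1}),\mu(x_i)$ are pairwise distinct and therefore form a simple $4$-cycle $D_i$ of $G$; by Lemma~\ref{L:gen.propert.}(vi,vii) applied to $D_i$, its two pairs of antipodal edges are each a $\Theta$-class, which yields both $x_ix_{i+1}\mathrel\Theta\mu(x_i)\mu(x_{i+1})$ and, as a bonus, $x_i\mu(x_i)\mathrel\Theta x_{i+1}\mu(x_{i+1})$, so all ``vertical'' edges $x_i\mu(x_i)$ lie in one common $\Theta$-class.

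Finally I would promote the closed walk $\langle \mu(x_1),\dots,\mu(x_{2n}),\mu(x_1)\rangle$ to a simple cycle. Suppose for contradiction that $\mu(x_i)=\mu(x_j)$ with $i<j$ and $|j-i|$ minimal; replacing the arc by its complement if necessary, we may assume $j-i\le n$. Because $C$ is isometric, the subpath $\langle x_i,\dots,x_j\rangle$ is a geodesic of $G$, so by Lemma~\ref{L:gen.propert.}(v) its $j-i$ edges lie in pairwise distinct $\Theta$-classes. By the previous paragraph the same holds for the consecutive edges of the image walk from $\mu(x_i)$ back to itself. But this image walk is a closed walk, contradicting Lemma~\ref{L:gen.propert.}(vi), which requires every edge of a cycle to be $\Theta$-equivalent to another edge of the cycle. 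Hence $\mu$ is injective on $V(C)$ and $\mu(C)$ is a cycle of length $2n$ whose $i$-th edge is $\Theta$-equivalent to the $i$-th edge of $C$, i.e., a $\Theta$-copy of $C$.

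The main obstacle is the distinctness needed in the middle step, namely ruling out the degeneracies $x_i=\mu(x_{i+1})$ or $x_{i+1}=\mu(x_i)$ that would collapse the auxiliary $4$-cycle. I would handle this by combining the gate identity $d_G(u,z)=d_G(u,g_F(u))+d_G(g_F(u),z)$ for $z\in V(F)$ (valid since $F$ is gated) with the mooring relation $d_G(\mu(u),g_F(u))=d_G(u,g_F(u))-1$ applied to $u=x_i,x_{i+1}$; a short distance computation together with $V(C)\cap V(F)=\emptyset$ and the contraction inequality $d_G(\mu(x_i),\mu(x_{i+1}))\le 1$ should force the required distinctness.
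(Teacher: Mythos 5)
Your skeleton is the same as the paper's, just run in the opposite order: the paper first proves $\mu$ is injective on $V(C)$ (if $\mu(x_i)=\mu(x_j)=:y$ then $j=i+2$ and $x_i,x_{i+2},\mu(x_{i+1})$ would be three common neighbours of $x_{i+1}$ and $y$, giving a forbidden $K_{2,3}$) and then asserts the edge-wise $\Theta$-equivalence as clear, whereas you first build the auxiliary $4$-cycles $x_i,x_{i+1},\mu(x_{i+1}),\mu(x_i)$ and then deduce injectivity by a $\Theta$-class count on the image walk via Lemma~\ref{L:gen.propert.}(v),(vi). Your injectivity argument is a legitimate alternative to the $K_{2,3}$ one (with the small repair that when $j-i=2$ the image walk is not a cycle, but then two of its edges coincide while being $\Theta$-equivalent to distinct edges of a geodesic, which is already a contradiction). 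Both arguments, however, stand or fall with the non-degeneracy of the $4$-cycles.

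That non-degeneracy is the genuine gap, and the distance computation you sketch cannot close it: every constraint you list ($d_G(\mu(u),g_F(u))=d_G(u,g_F(u))-1$, $V(C)\cap V(F)=\emptyset$, the contraction inequality) is compatible with $\mu(x_i)=x_{i+1}$, since nothing forbids the next vertex of $C$ from being exactly one step closer to $F$ and lying on a geodesic to the gate. Worse, the configuration actually occurs under the stated hypotheses. Take $G=Q_3$, $F$ the gated singleton $\{000\}$, and let $\mu$ send each nonzero vertex to the vertex obtained by switching its rightmost $1$ to $0$ (and fix $000$); one checks directly that $\mu$ is a contraction and a mooring of $G$ on $F$. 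On the isometric $6$-cycle $C=\langle 100,110,010,011,001,101,100\rangle$ of $G-F$ one has $\mu(110)=100$ (the degenerate case you were worried about), and $\mu$ identifies $100,010,001$ with $000$, so $\mu(C)$ is not even a cycle. So no local argument from the listed hypotheses can succeed; the paper's own proof is caught by the same configuration (its $K_{2,3}$ step tacitly assumes $\mu(x_{i+1})\notin\{x_i,x_{i+2}\}$). What actually rules the degeneracy out in the setting where the lemma is used is cycle-representativity: for the mooring built in Theorem~\ref{T:conv.=>mooring} every level $F_\alpha$ is convex and cycle-representative, so Lemma~\ref{L:cycle-rep./cycle} forces all of $C$ into a single level and $\mu$ pushes it entirely into the previous one, whence $\mu(V(C))\cap V(C)=\emptyset$. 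Some hypothesis of that kind has to enter your proof; as written, the "short distance computation" step would fail.
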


\begin{proof}
Let $C = \langle x_1,\dots,x_{2n},x_1\rangle$.  We first show that $\mu(x_i) \neq \mu(x_j)$ if $i \neq j$.  Let $i, j$ with $1 \leq i < j \leq 2n$.  Suppose that $\mu(x_i) = \mu(x_j) =: y$.  Then $j = i+2$ since $C$ is isometric in $G$.  It follows that the three vertices $x_i, x_{i+2}, \mu(x_{i+1})$ are adjacent to $x_{i+1}$ and $y$, contrary to the fact that a partial cube contains no $K_{2,3}$.  Therefore $\mu(C)$ is a cycle of length $2n$.

On the other hand, $\mu(e)$ and $e$ are clearly $\Theta$-equivalent for each edge $e$ of $C$.  Hence $\mu(C)$ is a $\Theta$-copy of $C$.
\end{proof}

\begin{thm}\label{T:conv.=>mooring}
Let $G$ be a Peano partial cube, and $F$ a cycle-representative convex subgraph of $G$.  Then there exists a mooring of $G$ on $F$.
\end{thm}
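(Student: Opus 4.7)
The approach is to build an explicit mooring $\mu$ of $G$ on $F$. First, by Corollary~\ref{C:cycle-represent./gated} the cycle-representative convex subgraph $F$ is gated, so the gate map $g_F$ is well defined on $V(G)$. I set $\mu(v) := v$ for $v \in V(F)$; the task then is to pick, for each $v \notin V(F)$, a neighbor $\mu(v) \in I_G(v, g_F(v))$ consistently so that $\mu$ is a graph contraction, after which the mooring condition $v\mu(v) \in E(G[\mathcal{I}_G(v, g_F(v))])$ holds by construction.

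Fix a well-ordering $\prec$ of the $\Theta$-classes of $G$. For $v \notin V(F)$, let $\eta(v)$ be the $\prec$-smallest $\Theta$-class that carries an edge incident to $v$ whose other endpoint lies on the $V(F)$-side (equivalently, $\eta(v)$ separates $v$ from $V(F)$ and has a representative at $v$); such a class exists because the first edge of any $(v, g_F(v))$-geodesic witnesses one. Define $\mu(v)$ to be the unique $v' \in N_G(v)$ with $vv' \in \eta(v)$. The verification that $\mu$ is a contraction reduces to a case analysis over edges $uv \in E(G)$: when $u \in V(F)$ and $v \notin V(F)$, a short $\Theta$-class argument forces $\eta(v) = [uv]_\Theta$, yielding $\mu(v) = u = \mu(u)$; when $u, v \notin V(F)$, I orient $uv$ so that $u$ sits on the $V(F)$-side of $\eta_0 := [uv]_\Theta$, and if $\eta(v) = \eta_0$ then $\mu(v) = u \in N_G(\mu(u))$, so the contraction property holds.

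The delicate situation is $\eta(v) \prec \eta_0$. A direct argument (using $uv \notin \eta(v)$, so $u, v$ lie on the same side of $\eta(v)$) shows that $\eta(v)$ also separates $u$ from $V(F)$. The goal is then to prove that $\eta(v)$ contains an edge at $u$; combined with the symmetric assertion that $\eta(u)$ contains an edge at $v$, this forces $\eta(u) = \eta(v)$, and the four-cycle property for two $\Theta$-equivalent edges $u\mu(u)$ and $v\mu(v)$ meeting the edge $uv$ in a bipartite partial cube then gives $\mu(u)\mu(v) \in E(G)$, completing the verification.

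Producing this missing edge of $\eta(v)$ at $u$ will be the main obstacle. The path $\langle u, v, \mu(v)\rangle$ is a geodesic, so it lies on an isometric cycle $C$ of $G$, whose convex hull is, by Theorem~\ref{T:charact.}, a gated quasi-hypertorus. If $C$ is a $4$-cycle, or if $co_G(C)$ is a non-trivial Cartesian product containing $uv$ and $v\mu(v)$ in different factors, the desired edge at $u$ is furnished by the $4$-Cycle Property of Proposition~\ref{P:pro.Cartes.prod.}(iv). The remaining case $co_G(C) = C$ with $|V(C)| > 4$ is exactly where the cycle-representative hypothesis enters: $F$ contains a $\Theta$-copy $C_F$ of $C$, and the gated convex hypercylinder joining $C$ and $C_F$ (obtained from Lemma~\ref{L:bulge/ab-cycle} together with the $\Theta$-equivalence of $C$ and $C_F$) supplies the required edge at $u$ in $\eta(v)$, with Lemmas~\ref{L:cycle-rep./cycle} and~\ref{L:mooring/cycle} providing the consistency checks. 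Combining the cases yields that $\mu$ is the desired mooring of $G$ on $F$.
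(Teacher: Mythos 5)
Your construction is not a contraction, so the proof fails before any of the cycle arguments are needed. Take $G=P\Box K_2$ with $P=\langle 0,1,2\rangle$ a path of length $2$ (the $2\times 3$ grid, a median graph), $F=\{(0,0)\}$ (convex and vacuously cycle-representative), and order the three $\Theta$-classes $\alpha_1\prec\beta\prec\alpha_2$, where $\alpha_i$ is the class of the edges between columns $i-1$ and $i$ and $\beta$ is the class of the three vertical edges. For $v=(2,1)$ the separating classes with an edge at $v$ are $\alpha_2$ and $\beta$, so $\eta(v)=\beta$ and $\mu(v)=(2,0)$; for $u=(1,1)$ they are $\alpha_1$ and $\beta$, so $\eta(u)=\alpha_1$ and $\mu(u)=(0,1)$. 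Then $u$ and $v$ are adjacent but $d_G(\mu(u),\mu(v))=3$. The precise point of failure is the sentence ``combined with the symmetric assertion that $\eta(u)$ contains an edge at $v$'': the situation is not symmetric (here $\alpha_1=\eta(u)$ has no edge at $(2,1)$), and nothing in your argument rules this out. Choosing the $\prec$-least separating class locally at each vertex simply does not produce globally compatible choices; some device that propagates a single choice along whole $\Theta$-classes (or levels) is needed.

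There are further problems in the hard case even where it is reached. A geodesic of length $2$ in a partial cube need not lie on an isometric cycle (think of a tree), so that step is unjustified; and when $u,v,\mu(v)$ are consecutive on a convex cycle $C$ of length at least $6$, no vertex of a hypercylinder $C\Box A$ other than the endvertices of the two $\eta(v)$-edges of each $C$-layer is incident with an edge of $\eta(v)$, so the hypercylinder cannot ``supply the required edge at $u$.'' What the $\Theta$-copy $C_F\subseteq F$ actually gives is a contradiction: $C_F$ would contain an edge of $\eta(v)$ inside $F$, whereas $\eta(v)$ separates $v$ from $F$ and hence $F$ lies entirely on one side of $\eta(v)$; so this configuration cannot occur at all. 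You also omit the case where $F$ meets both sides of $[uv]_\Theta$, in which no orientation puts $u$ on ``the $V(F)$-side.'' The paper's proof avoids all of this by a completely different mechanism: it builds a transfinite tower $F=F_0\subset F_1\subset\cdots$ of minimal convex extensions (Lemma~\ref{L:min.conv.ext.}, Propositions~\ref{P:exist.min.conv.ext.} and~\ref{P:min.conv.ext./retract}), uses cycle-representativity to guarantee that each new vertex has a unique neighbour one level down, and defines $\mu$ as ``drop one level''; the level function supplies exactly the global coherence your local rule lacks.
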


This theorem, which will we proved latter, generalizes Proposition~\ref{P:mooring/med.gr.} since any median graph contains no convex cycles of length greater than $4$.  The converse is clearly false.  For example, there exits a mooring of a cube $Q$ on any of its $4$-cycles, even if they are not cycle-representative since they contains no $\Theta$-copy of the isometric $6$-cycles of $Q$.  The following result, which generalizes Proposition~\ref{P:mooring/net.p.c.}, is an immediate consequence of Theorem~\ref{T:conv.=>mooring} because the given cycle $C$ is a convex cycle-representative subgraph of $G$.

\begin{cor}\label{C:conv.cycle/mooring}
Let $C$ be a convex cycle of a Peano partial cube $G$ whose length is greater than $4$.  If $C$ is a $\Theta$-copy of each convex cycle of $G$ of length greater than $4$, then there exists a mooring of $G$ on $C$.
\end{cor}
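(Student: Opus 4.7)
By Corollary~\ref{C:cycle-represent./gated}, $F$ is gated, so each $u \in V(G)$ has a unique gate $g(u) \in V(F)$ with $\ell(u) := d_G(u, g(u)) = d_G(u, V(F))$. For $u \notin V(F)$ let $\Theta(u, F)$ denote the set of $\Theta$-classes of $G$ separating $u$ from $V(F)$, and $\Theta'(u, F) \subseteq \Theta(u, F)$ the subset of those classes that carry an edge incident to $u$. Since the first edge of any $(u, g(u))$-geodesic belongs to $\Theta'(u, F)$, this set is non-empty.

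The plan is to fix a well-ordering $\prec$ on the $\Theta$-classes of $G$, and to define $f \colon V(G) \to V(G)$ by $f(u) := u$ for $u \in V(F)$ and $f(u) := $ the unique neighbor of $u$ along the edge in $\theta(u) := \min_{\prec} \Theta'(u, F)$ otherwise. Since $\theta(u) \in \Theta(u, F)$, one automatically has $f(u) \in I_G(u, g(u))$ and $uf(u) \in E(G[I_G(u, g(u))])$; so once $f$ is shown to be a self-contraction it is the required mooring.

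To verify $d_G(f(u), f(v)) \leq 1$ for every $uv \in E(G)$, the cases involving a vertex of $V(F)$ are straightforward, so assume $u, v \notin V(F)$. Let $\theta_0$ be the $\Theta$-class of $uv$ and, without loss of generality, $\ell(u) = \ell(v) + 1$. Since $\theta_0 \in \Theta(u, F)$, the vertex $v$ lies on an $(u, g(u))$-geodesic, whence $g(u) = g(v)$ and $\Theta(v, F) = \Theta(u, F) \setminus \{\theta_0\}$. If $\theta(u) = \theta_0$ then $f(u) = v$ and we are done; else $\theta_u := \theta(u) \prec \theta_0$, the edge $uu_1 := uf(u)$ sits in $\theta_u$, and $\theta_u \in \Theta(v, F)$. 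When $v$ carries an incident edge $vv_1 \in \theta_u$, the $4$-cycle $\langle u, u_1, v_1, v \rangle$ exists, and combining the symmetric analysis at $v$ with the $\prec$-minimality of $\theta_u$ forces $f(v) \in N_G[u_1]$.

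The main obstacle is the case in which $u_1$ and $v$ share only $u$ as a common neighbor, so the expected $4$-cycle does not exist. Then the Peano structure (Theorem~\ref{T:charact.}(v)) places $u_1, u, v$ on a gated convex cycle $C \in \mathbf{C}(G, u_1u)$ of length at least $6$, and the cycle-representative hypothesis supplies a convex $\Theta$-copy $C_F$ of $C$ inside $F$. The technical heart of the proof is to argue that the gate map restricts to a $\Theta$-class-preserving bijection $V(C) \to V(C_F)$, and that its rigidity, together with the minimality of $\theta_u$, forces $\theta(v)$ to be the $\Theta$-class in $C$ corresponding under this bijection to the edge of $C_F$ incident to $g(u_1)$; with this identification, $f(v)$ is the vertex of $C$ adjacent to $u_1$, and one again obtains $d_G(f(u), f(v)) = 1$. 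Once this verification is completed, $f$ is a contraction, and iterating $f$ strictly decreases $\ell$, so every vertex is eventually absorbed into $V(F)$, yielding the mooring.
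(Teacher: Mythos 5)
Your map $f$ is not a contraction, so the construction fails at its core: the $\prec$-minimal class of $\Theta'(v,F)$ need not lie in $\Theta'(u,F)$ for a neighbour $u$ of $v$, and nothing in the greedy rule synchronizes the choices at adjacent vertices. Concretely, let $G$ be the gated amalgam of a $6$-cycle $C$ and the grid $P \Box K_2$ (with $P=\langle 0,1,2\rangle$ and $V(K_2)=\{a,b\}$) along the single vertex $f_1:=(0,b)\in V(C)$. Then $G$ is a Peano partial cube and $C$ is its only convex cycle of length greater than $4$, so the hypothesis of the corollary holds with $F=C$. Write $A$ for the $\Theta$-class of the $K_2$-edges, $B$ for that of the edges $(0,x)(1,x)$, $D$ for that of the edges $(1,x)(2,x)$, and put $u=(2,a)$, $v=(1,a)$. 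Then $g(u)=g(v)=f_1$, the edge $uv$ lies in $D$, $\Theta'(u,F)=\{A,D\}$ (edges $u(2,b)$ and $uv$), and $\Theta'(v,F)=\{A,B\}$ (edges $v(1,b)$ and $v(0,a)$). Choosing a well-ordering with $B\prec A\prec D$ gives $f(u)=(2,b)$ and $f(v)=(0,a)$, which are at distance $3$. Note that this is exactly your ``$4$-cycle case'': $v$ does carry an edge of class $\theta_u=A$ and the $4$-cycle $\langle u,(2,b),(1,b),v\rangle$ exists, yet $f(v)\notin N_G[(2,b)]$ because $\Theta'(v,F)$ contains the extraneous class $B\prec A$; the ``minimality of $\theta_u$'' you invoke there does not give what you claim. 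Two further remarks: the case $\ell(u)=\ell(v)$ (which occurs whenever the gates of $u$ and $v$ are adjacent) is silently discarded by your ``without loss of generality''; and your ``main obstacle'' case is in fact vacuous, since a convex cycle of length at least $6$ through $u_1,u,v$ would carry an edge of class $\theta_u$, forcing $F$ --- a $\Theta$-copy of that cycle --- to carry one too, contradicting $\theta_u\in\Theta(u,F)$.

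The paper avoids this difficulty altogether: it deduces the corollary from Theorem~\ref{T:conv.=>mooring}, which is proved by building a transfinite chain $F=F_0\subseteq F_1\subseteq\cdots$ of minimal convex extensions and showing (Proposition~\ref{P:min.conv.ext./retract}, via Lemma~\ref{L:min.conv.ext.} and the fact that cycle-representative convex subgraphs are gated and $\Gamma$-closed) that every vertex of $F_{\alpha+1}-F_\alpha$ has a \emph{unique} neighbour in $F_\alpha$; the mooring sends each vertex to that neighbour. The uniqueness at each layer is what makes the choice canonical and the map automatically non-expansive --- precisely the coherence an a priori well-ordering of the $\Theta$-classes cannot supply. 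If you want to keep a ``choose one downward edge per vertex'' formulation, the class $\theta(u)$ must be dictated by such a layered exhaustion of $G$ (equivalently, by the half-space with which $u$ is peeled off), not by a global ordering of the classes.
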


We need a few preliminary results.

\begin{lem}\label{L:Wba.retract}
Let $W_{ab}$ be a periphery of a Peano partial cube $G$.  Then $G[W_{ba}]$ is a retract of $G$.
\end{lem}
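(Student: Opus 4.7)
The plan is to exhibit an explicit retraction.  Since $W_{ab} = U_{ab}$, each vertex $x \in W_{ab}$ has a (unique) neighbor $x' \in W_{ba}$ such that the edge $xx'$ is $\Theta$-equivalent to $ab$.  Define $f : V(G) \to W_{ba}$ by $f(x) := x$ if $x \in W_{ba}$ and $f(x) := x'$ if $x \in W_{ab}$.  The map $f$ is clearly the identity on $W_{ba}$, so it suffices to show that $f$ is a contraction, i.e., $d_G(f(x),f(y)) \leq d_G(x,y)$ for all $x, y \in V(G)$.

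First I would handle the easy mixed case: if $x \in W_{ba}$ and $y \in W_{ab}$, then by the bipartite characterization of the relation $\Theta$ (Remark~\ref{R:Theta/notation}) applied to the $\Theta$-equivalent edges $ab$ and $yy'$, one gets $y' \in I_G(x,y)$, so $d_G(f(x),f(y)) = d_G(x,y')= d_G(x,y)-1$.

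The main step is to show that, for $x, y \in W_{ab}$, one has $d_G(x',y') \leq d_G(x,y)$.  The key observation is that the map $x \mapsto x'$ is a graph homomorphism from $G[W_{ab}]$ to $G[W_{ba}]$: if $xy \in E(G[W_{ab}])$, then $xx'$ and $yy'$ are both $\Theta$-equivalent to $ab$, hence $\Theta$-equivalent to each other; from $d_G(x,y) = d_G(x',y')$ (part of the definition of $\Theta$ for bipartite graphs) and $d_G(x,y) = 1$, we conclude that $x'y' \in E(G[W_{ba}])$.  Now, because $W_{ab}$ is a half-space, hence convex (Theorem~\ref{T:Djokovic-Winkler}), any $(x,y)$-geodesic $P$ in $G$ lies entirely in $W_{ab}$, and its image under $x \mapsto x'$ is a walk in $W_{ba}$ from $x'$ to $y'$ of the same length as $P$.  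Thus $d_G(x',y') \leq |E(P)| = d_G(x,y)$.

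This is the whole argument; no obstacle is serious here, since the periphery hypothesis $W_{ab} = U_{ab}$ makes $x \mapsto x'$ defined on all of $W_{ab}$, and convexity of $W_{ab}$ together with the bipartite definition of $\Theta$ does the rest.  Note that the Peano hypothesis is not actually used in this particular argument; the statement holds for any partial cube.
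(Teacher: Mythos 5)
Your proposal is correct and is essentially the paper's own argument: the paper defines exactly the same map (identity on $W_{ba}$, neighbor map $\phi_{ab}$ on $W_{ab}=U_{ab}$) and appeals to Lemma~\ref{L:phi_ab} for the fact that $\phi_{ab}$ is an edge-preserving bijection, whereas you verify the non-expansiveness directly from transitivity of $\Theta$ and convexity of $W_{ab}$. Your closing observation is also accurate: in the periphery case the argument needs only that $G$ is a partial cube, not the Peano hypothesis.
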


\begin{proof}
Because $W_{ab}$ is a periphery and $G$ is ph-homogeneous, we have $W_{ab} = U_{ab}$, and $\phi_{ab}$ is an isomorphism of $G[U_{ab}]$ onto $G[U_{ba}]$.  Therefore the map $\phi : V(G) \to W_{ba}$ such that $\phi(x) = x$ if $x \in W_{ba}$ and $\phi(x) = \phi_{ab}(x)$ if $x \in W_{ab}$, is a retraction of $G$ onto $G[W_{ba}]$.
\end{proof}

Let $F$ be a convex subgraph of a Peano partial cube
$G$.  We will see that there is always a convex subgraph of $G$ which properly contains $F$ as a subgraph and which is minimal with respect to
the subgraph relation.  This graph (resp. its vertex set) is called a \emph{minimal convex
extension} of $F$ (resp. $V(F)$).  We need the following lemma:

\begin{lem}\label{L:min.conv.ext.}
Let $G$ be a Peano partial cube, and $A$ a non-empty convex set of vertices of
$G$.  We have the following properties:

\textnormal{(i)}\; A set $A'$ is a minimal convex extension of $A$ if
and only if $A' = \mathcal{I}_{G}(\{u \} \cup A)$ for some vertex $u
\in N_{G}(A)$.

\textnormal{(ii)}\; If $A'$ is a minimal convex extension of $A$, then $A =
W_{vu}^{G[A']}$ and $A'-A = W_{uv}^{G[A']}$ 
for any edge $uv$ of $G$ with $v \in A$ and $u \in A'-A$.
\end{lem}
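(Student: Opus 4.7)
The plan hinges on the join-hull commutativity of Peano partial cubes: by Theorem~\ref{T:Peano/ph}, for any convex set $C$ and vertex $u$ one has $co_G(C \cup \{u\}) = \mathcal{I}_G(C \cup \{u\})$, so the set $A'$ of~(i) is automatically convex. Modulo one short structural observation, both parts of the lemma then follow from convexity manipulations alone.

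The observation I would prove first is this: \emph{for any edge $uv$ with $v \in A$ and $u \notin A$, if $A' := \mathcal{I}_G(A \cup \{u\})$ then $A' \cap W_{vu}^G = A$, and consequently $A' - A \subseteq W_{uv}^G$}. The containment $A \subseteq W_{vu}^G$ comes from Lemma~\ref{L:bip.gr./convex} applied to the boundary edge $uv$. Conversely, any $x \in A' \cap W_{vu}^G$ lies on some $(u,a)$-geodesic with $a \in A$ by join-hull commutativity; since $x, a \in W_{vu}^G$ we have $d_G(v,x) = d_G(u,x) - 1$ and $d_G(v,a) = d_G(u,a) - 1$, whence
\[
d_G(v,x) + d_G(x,a) \;=\; d_G(u,a) - 1 \;=\; d_G(v,a),
\]
placing $x$ on a $(v,a)$-geodesic and hence in $A$ by convexity of $A$.

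Given this observation, the backward direction of~(i) is immediate: if $B$ is convex with $A \subsetneq B \subseteq A'$, pick $x \in B - A$; by the observation (with $v$ a neighbor of $u$ in $A$), $x \in W_{uv}^G$, so $d_G(x,v) = d_G(x,u) + 1$ forces $u \in I_G(x,v) \subseteq B$. Then $A \cup \{u\} \subseteq B$ yields $A' = co_G(A \cup \{u\}) \subseteq B$, so $B = A'$. For the forward direction, I would use that $G[A']$ is connected (convex sets in a connected graph induce connected subgraphs via the geodesics they contain) to find an edge $uv$ with $v \in A$, $u \in A' - A$; then $\mathcal{I}_G(A \cup \{u\})$ is a convex subset of $A'$ strictly containing $A$, so minimality of $A'$ forces it to equal $A'$.

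Finally, part~(ii) drops out of~(i) and the observation: for any edge $uv$ as in its hypothesis, the forward direction of~(i) applied to this specific $u$ yields $A' = \mathcal{I}_G(A \cup \{u\})$, so the observation gives $A' \cap W_{vu}^G = A$ and $A' \cap W_{uv}^G = A' - A$; translating these $W$-sets from $G$ to the isometric subgraph $G[A']$ (where the two distance functions agree) gives the stated identities $A = W_{vu}^{G[A']}$ and $A' - A = W_{uv}^{G[A']}$. The only delicate point in the whole plan is the distance computation inside the observation, and that is a two-line verification once one uses that both $a$ and $x$ lie in $W_{vu}^G$ — there is no serious obstacle beyond it.
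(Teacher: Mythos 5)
Your proof is correct, but it takes a genuinely different route from the paper's. The paper works through four claims: it first shows $\mathcal{I}_{G}(U_{uv}\cap N_{G}(A))\cap U_{uv}\subseteq N_{G}(A)$, then proves that $U_{uv}\cap N_{G}(A)$ is ph-stable by invoking the strong ph-stability of $U_{uv}$ (the (SPS1)/(SPS2) machinery from the Characterization Theorem~\ref{T:charact.}), identifies $\mathcal{I}_{G}(\{u\}\cup A)-A$ with $\mathcal{I}_{G}(U_{uv}\cap N_{G}(A))$, and only then deduces convexity via Lemma~\ref{L:ph-stable}. You bypass all of this by appealing directly to join-hull commutativity (Theorem~\ref{T:Peano/ph} together with the convexity of intervals), which gives $co_{G}(A\cup\{u\})=\mathcal{I}_{G}(A\cup\{u\})$ in one line, and you replace the paper's structural analysis with a short distance computation showing $A'\cap W_{vu}^{G}=A$; that computation is sound, and minimality, the forward direction via connectedness of $G[A']$, and part~(ii) via restriction of the $W$-sets to the isometric subgraph $G[A']$ all follow as you describe. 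What your approach buys is brevity and a cleaner conceptual dependence (only on the abstract characterization of Peano partial cubes as join-hull commutative); what the paper's approach buys is the finer identification of $A'-A$ as $\mathcal{I}_{G}(U_{uv}\cap N_{G}(A))$ together with the ph-stability of $U_{uv}\cap N_{G}(A)$, structural information that is reused in the explicit construction of minimal convex extensions in Proposition~\ref{P:exist.min.conv.ext.} and in the retraction arguments that follow.
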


\begin{proof}
Let $u  \in N_{G}(A)$ and let $v$ be the neighbor of $u$ in $A$.  This 
neighbor is unique because $A$ is convex and $G$ is bipartite.

\emph{Claim~1.\;  $\mathcal{I}_{G}(U_{uv} \cap N_{G}(A)) \cap 
U_{uv} \subseteq N_{G}(A)$.}

Let $x \in \mathcal{I}_{G}(U_{uv} \cap N_{G}(A)) \cap U_{uv}$.  Then
$x \in I_{G}(a,b)$ for some vertices $a, b \in U_{uv} \cap N_{G}(A)$. 
Let $a', b'$ and $x'$ be the neighbors in $U_{vu}$ of $a, b$ and $x$, 
respectively.  Then $a', b' \in A$ and $x' \in I_{G}(a',b')$ since $G$
is a partial cube.  Therefore $x' \in A$ by convexity.  Hence $x \in
N_{G}(A)$.

\emph{Claim~2.\; The set $U_{uv} \cap N_{G}(A)$ is ph-stable.}

Let $x, y \in \mathcal{I}_{G}(U_{uv} \cap N_{G}(A))$.  We have to prove that $y \in I_{G}(x,z)$ for some $z \in U_{uv} \cap N_{G}(A)$.  We are done if $y \in U_{uv}$ by Claim~1.  Assume that $y \notin U_{uv}$.  Then $y \in I_{G}(a,b)$ for some vertices $a, b \in U_{uv} \cap N_{G}(A)$ that we choose so that $d_{G}(a,b)$ is as small as possible.  Let $P$ be the $U_{uv}$-geodesic associated with $y$, and let $a'$ and $b'$ be its endvertices.  By (SPS2), $P$ is a subpath of some $(a,b)$-geodesic.  It follows that $a', b' \in N_G(A)$ by Claim~1.  Therefore $P$ is an $(a,b)$-geodesic by the minimality of $d_G(a,b)$.  We infer, by (SPS1), that $y \in I_{G}(x,a) \cup I_{G}(x,b)$.  Consequently $U_{uv} \cap N_{G}(A)$ is ph-stable.

\emph{Claim~3.\; $\mathcal{I}_{G}(\{u \} \cup A) =
\mathcal{I}_{G}(U_{uv} \cap N_{G}(A)) \cup A$.}

Clearly $\mathcal{I}_{G}(U_{uv} \cap N_{G}(A)) \cup A \subseteq
\mathcal{I}_{G}(\{u \} \cup A)$.  Conversely let $x \in A$, and let $P
= \langle u_{0},\ldots,u_{n} \rangle$ be a $(u,x)$-geodesic with
$u_{0} = u$ and $u_{n} = x$.  Without loss of generality we can
suppose that $u_{n-1} \notin A$.  Suppose that there is a vertex $w \in A \cap W_{uv}$.  Then any $(v,w)$-geodesic contains an edge $\Theta$-equivalent to $uv$.  Such an edge, say $u'v'$, is then an edge of $G[A]$ since $A$ is convex, which implies that $u \in I_{G}(v,u') \subseteq A$ by the convexity of $A$, contrary to the hypothesis.  Therefore $A \subseteq W_{vu}$.  It follows that $P$ contains an edge $u_{i}u_{i+1}$ which is $\Theta$-equivalent to $uv$.  Then $v \in I_{G}(u_{0},u_{i+1})$.  Hence $u_{i+1} \in I_{G}(v,x)$, and thus $u_{i+1} \in A$ since $A$ is convex.  It follows that $i = n-1$.  Hence $u_{n-1} \in U_{uv} \cap N_{G}(A)$.  Consequently $V(P) \in \mathcal{I}_{G}(U_{uv} \cap
N_{G}(A)) \cup A$, and thus $\mathcal{I}_{G}(\{u \} \cup A)
\subseteq \mathcal{I}_{G}(U_{uv} \cap N_{G}(A)) \cup A$.

\emph{Claim~4.\; $\mathcal{I}_{G}(\{u \} \cup A)$ is convex
and is equal to $\mathcal{I}_{G}(\{a \} \cup A)$ for each vertex $a \in
\mathcal{I}_{G}(\{u \} \cup A) - A$.}

By Claim~3, $\mathcal{I}_{G}(\{u \} \cup A) - A =
\mathcal{I}_{G}(U_{uv} \cap N_{G}(A))$.  By Claim~2, the set $U_{uv}
\cap N_{G}(A)$ is ph-stable, and thus $\mathcal{I}_{G}(U_{uv} \cap
N_{G}(A))$ is convex by Lemma~\ref{L:ph-stable}.  Therefore
$\mathcal{I}_{G}(\{u \} \cup A)$ is convex.  Now let $a \in
\mathcal{I}_{G}(\{u \} \cup A) - A = \mathcal{I}_{G}(U_{uv} \cap
N_{G}(A))$.  Clearly $\mathcal{I}_{G}(\{a \} \cup A) \subseteq
\mathcal{I}_{G}(\{u \} \cup A)$.  On the other hand, $\mathcal{I}_{G}(U_{uv}
\cap N_{G}(A)) \cup A \subseteq \mathcal{I}_{G}(\{a \} \cup A)$ since $U_{uv}
\cap N_{G}(A)$ is ph-stable.
Therefore $\mathcal{I}_{G}(\{a \} \cup A) = \mathcal{I}_{G}(\{u \}
\cup A)$.\\

From Claim~4, it follows immediately that $\mathcal{I}_{G}(\{u \} \cup
A)$ is a minimal convex extension of $A$.

Conversely, let $A'$ be a
minimal convex extension of $A$.  Let $u \in N_{G}(A) \cap A'$.  By what we proved above, $\mathcal{I}_{G}(\{u \} \cup A)$ is a minimal convex
extension of $A$, which moreover is contained in $A'$ since $u \in
A'$.  Hence $A' = \mathcal{I}_{G}(\{u \} \cup A)$.  This proves
assertion (i).

Let $A'$ be a minimal convex extension of $A$.  By (i), $A' = \mathcal{I}_{G}(\{u \} \cup A)$ for some vertex $u \in N_{G}(A)$.  Let $v$ be the neighbor of $u$ in $A$.  Then, by Claim~3, $W_{uv}^{G[A']} = \mathcal{I}_{G}(\{u \} \cup A)-A = A'-A$, and thus $A = W_{vu}^{G[A']}$.  This proves assertion (ii).
\end{proof}

\begin{pro}\label{P:min.conv.ext./retract}
Let $G$ a Peano partial cube, $F$ a proper non-empty cycle-representative convex subgraph of $G$, and $F'$ a minimal convex extension of $F$.  Then $F$ is a retract of $F'$.
\end{pro}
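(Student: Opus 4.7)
The plan is to reduce the statement to Lemma~\ref{L:Wba.retract} by showing that the canonical half-space of $F'$ complementary to $V(F)$ is actually a periphery of $F'$. The cycle-representative hypothesis will be used exactly once, to rule out ``deep'' vertices in that half-space.

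First, by Lemma~\ref{L:min.conv.ext.}(i), pick $u \in N_G(V(F))$ with $V(F') = \mathcal{I}_G(\{u\} \cup V(F))$, and let $v$ be the unique neighbor of $u$ in $V(F)$. By Lemma~\ref{L:min.conv.ext.}(ii), $V(F) = W_{vu}^{F'}$ and $V(F') - V(F) = W_{uv}^{F'}$. Using Claim~3 and Claim~2 inside the proof of Lemma~\ref{L:min.conv.ext.}, one has
\[
W_{uv}^{F'} \;=\; \mathcal{I}_G(U_{uv}^G \cap N_G(V(F))) \;=\; co_G(U_{uv}^G \cap N_G(V(F))),
\]
and since $F'$ is convex in $G$, Lemma~\ref{L:prop.isom subgr.} identifies $U_{uv}^{F'} = U_{uv}^G \cap N_G(V(F))$. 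Hence $W_{uv}^{F'} = co_{F'}(U_{uv}^{F'})$, which shows that $W_{uv}^{F'}$ is a \emph{semi-periphery} of $F'$.

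The heart of the proof is to upgrade semi-periphery to periphery. Assume, for contradiction, that some $w \in W_{uv}^{F'} - U_{uv}^{F'} = \mathcal{I}_{F'}(U_{uv}^{F'}) - U_{uv}^{F'}$ exists. Since $F'$ is convex in $G$, Theorem~\ref{T:hypernet./gat.amalg.+cart.prod.} gives that $F'$ is itself a Peano partial cube. Apply assertion (v) of the Characterization Theorem~\ref{T:charact.} inside $F'$: $w$ lies on a gated cycle $C_w \in \mathbf{C}(F', uv)$, so in particular $C_w$ has length at least $6$, is isometric in $F'$, contains an edge $\Theta$-equivalent to $uv$, and is isometric in $G$ (since $F'$ is convex in $G$). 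Because $F$ is cycle-representative in $G$, $F$ contains a $\Theta$-copy $C_w^F$ of $C_w$. By definition of a $\Theta$-copy, some edge $e'$ of $C_w^F$ is $\Theta$-equivalent to the edge of $C_w$ that is $\Theta$-equivalent to $uv$, hence $e'$ is $\Theta$-equivalent to $uv$ itself. But $uv \in \partial_G(V(F))$ and $F$ is convex in $G$, so Lemma~\ref{L:gen.propert.}(ix) forbids $uv$ from being $\Theta$-equivalent to any edge of $F$, a contradiction.

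Therefore $W_{uv}^{F'} = U_{uv}^{F'}$, i.e., $W_{uv}^{F'}$ is a periphery of $F'$. Applying Lemma~\ref{L:Wba.retract} to the Peano partial cube $F'$ with periphery $W_{uv}^{F'}$, we conclude that $F = F'[W_{vu}^{F'}]$ is a retract of $F'$. The main technical obstacle is the periphery upgrade, for which the cycle-representative hypothesis on $F$ is indispensable and is precisely what blocks the existence of a non-trivial bulge of $co_{F'}(U_{uv}^{F'})$.
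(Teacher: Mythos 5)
Your proof is correct and follows essentially the same route as the paper's: both arguments reduce the statement to showing that $W_{uv}^{F'} = V(F'-F)$ is a periphery of $F'$ and then invoke Lemma~\ref{L:Wba.retract}. The only variation is in how deep vertices are excluded — the paper argues via $\Gamma$-closedness of $F$ (Corollary~\ref{C:cycle-represent./gated}), forcing a hypothetical convex cycle straddling $\partial_G(V(F))$ into $F$, whereas you use condition~(v) of Theorem~\ref{T:charact.} together with the $\Theta$-copy guaranteed by cycle-representativity to contradict Lemma~\ref{L:gen.propert.}(ix) — an equivalent use of the same hypothesis.
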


\begin{proof}
If $uv$ is an edge of $G$ with $v \in V(F)$ and $u \in V(F'-F)$, then $uv$ cannot be $\Theta$-equivalent to an edge of some convex cycle of $G$ of length greater than $4$ which has another edge in $\partial_G(V(F))$, because $F$ is $\Gamma$-closed since it is gated by Corollary~\ref{C:cycle-represent./gated}.  It follows that the set $U_{uv}^{F'}$ is convex, and thus $U_{uv}^{F'} = W_{uv}^{F'} = V(F'-F)$.  Hence, by Lemma~\ref{L:min.conv.ext.}(ii), each vertex $x$ in $V(F'-F)$ has exactly one neighbor, $\phi_{uv}(x)$, in $F$.  Therefore $F$ is a retract of $F'$ by Lemma~\ref{L:Wba.retract}.
\end{proof}

\begin{lem}\label{L:PxPy/(x,y)-geodesic}
Let $ab$ be an edge of a Peano partial cube $G$, $x, y \in \mathcal{I}_G(U_{ab})-U_{ab}$, and $P_x$ and $P_y$ the $U_{ab}$-geodesics associated with $x$ and $y$, respectively.  Then any $(x,y)$-geodesic is a subpath of some geodesic joining an endvertex of $P_x$ to an endvertex of $P_y$.
\end{lem}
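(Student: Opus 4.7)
The plan is to exhibit the required endvertices by two successive applications of the property (SPS1), and then observe that a concatenation argument finishes the job. Write $Q$ for an arbitrary $(x,y)$-geodesic in $G$, and recall that, since $G$ is a Peano partial cube, $U_{ab}$ is strongly ph-stable (by Theorem~\ref{T:charact.}), so that the paths $P_x$ and $P_y$ satisfy (SPS1) and (SPS2).

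First, I would invoke (SPS1) for $x$ with the vertex $y \in \mathcal{I}_G(U_{ab})$: this yields an endvertex $v_x$ of $P_x$ such that $x \in I_G(y,v_x)$, i.e.\
\[
d_G(v_x,y) = d_G(v_x,x)+d_G(x,y).
\]
Next, since $v_x$ is an endvertex of $P_x$ it belongs to $U_{ab}\subseteq \mathcal{I}_G(U_{ab})$, so I can apply (SPS1) for $y$ with the vertex $v_x$ in the role of the test point, obtaining an endvertex $v_y$ of $P_y$ with $y\in I_G(v_x,v_y)$, i.e.\
\[
d_G(v_x,v_y) = d_G(v_x,y)+d_G(y,v_y).
\]
Combining the two displayed equalities gives
\[
d_G(v_x,v_y) \;=\; d_G(v_x,x)+d_G(x,y)+d_G(y,v_y).
\]

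To finish, pick any $(v_x,x)$-geodesic $Q_x$ and any $(y,v_y)$-geodesic $Q_y$; the concatenation $Q_x\cup Q\cup Q_y$ is a walk from $v_x$ to $v_y$ whose length equals $d_G(v_x,v_y)$ by the displayed identity. Any walk of length equal to the distance is necessarily a geodesic (otherwise one could cut out a repeated vertex and shorten the walk), so $Q_x\cup Q\cup Q_y$ is a $(v_x,v_y)$-geodesic containing $Q$ as a subpath, which is exactly what we need.

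The only subtle point in this plan is step two: the natural but wrong move is to apply (SPS1) to $y$ with the test point $x$, which only produces some $v_y$ with $y\in I_G(x,v_y)$. That condition alone does not force the $\Theta$-classes of $P_x[v_x,x]$ and of $P_y[y,v_y]$ to be disjoint, and the example $C_{2n}\,\Box\,K_2$ shows this choice can fail (the two endvertices obtained must lie on ``opposite sides'' of the prism). Using $v_x$ itself as the test point in the second application of (SPS1) is precisely what guarantees the clean additivity $d_G(v_x,v_y)=d_G(v_x,y)+d_G(y,v_y)$, and thus the concatenation argument goes through.
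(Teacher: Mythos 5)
Your proof is correct and follows essentially the same route as the paper's: two chained applications of (SPS1), the second one using the endvertex produced by the first as the test point, followed by the additivity-of-distances observation that lets the $(x,y)$-geodesic be extended to a $(v_x,v_y)$-geodesic. The paper merely performs the two steps in the mirror order (first $P_y$ with test point $x$, then $P_x$ with the resulting endvertex), and your closing remark about why the chaining is necessary is a sound, if implicit, part of the paper's reasoning as well.
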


\begin{proof}
By (SPS1), $y \in I_G(x,u)$ for some endvertex $u$ of $P_y$, and $x \in I_G(u,v)$ for some endvertex $v$ of $P_x$.  It follows that any $(x,y)$-geodesic is a subpath of some $(u,v)$-geodesic.
\end{proof}

\begin{pro}\label{P:exist.min.conv.ext.}
Any proper convex subgraph of Peano partial cube $G$ has a minimal convex extension.
\end{pro}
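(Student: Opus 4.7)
The plan is to reduce the existence of a minimal convex extension directly to the characterization established in Lemma~\ref{L:min.conv.ext.}(i), which already supplies a concrete recipe for producing one. Since the heavy machinery (convexity of $\mathcal{I}_G(\{u\}\cup A)$ and the absence of an intermediate convex set strictly between $A$ and this extension) is contained in Claims 2--4 of the proof of Lemma~\ref{L:min.conv.ext.}, the present proposition should reduce to a short observation about connectivity.

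First I would argue that $N_G(V(F))$ is non-empty. Indeed, $F$ is a proper convex subgraph of $G$, so $V(G)-V(F)\neq\emptyset$. Since any partial cube is connected, there exists an edge $uv\in E(G)$ with $v\in V(F)$ and $u\in V(G)-V(F)$; in particular $u\in N_G(V(F))$. (The only degenerate case worth noting is $V(F)=\emptyset$, in which case any single vertex of $G$ already induces a minimal convex extension of $F$; I would dispatch this at the outset.)

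Next I would invoke Lemma~\ref{L:min.conv.ext.}(i) with $A:=V(F)$ and the vertex $u$ just produced. This lemma states, as an if-and-only-if characterization, that $A':=\mathcal{I}_G(\{u\}\cup V(F))$ is a minimal convex extension of $V(F)$. The induced subgraph $F':=G[A']$ is therefore a convex subgraph of $G$ that properly contains $F$ and admits no convex subgraph strictly between $F$ and $F'$; by definition, $F'$ is a minimal convex extension of $F$.

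There is no real obstacle here: the substance of the argument (namely that $U_{uv}\cap N_G(V(F))$ is ph-stable, that $\mathcal{I}_G(\{u\}\cup V(F)) = \mathcal{I}_G(U_{uv}\cap N_G(V(F)))\cup V(F)$ is convex, and that replacing $u$ by any $a\in\mathcal{I}_G(\{u\}\cup V(F))-V(F)$ yields the same set) has already been carried out in the proof of Lemma~\ref{L:min.conv.ext.}. The present proof is thus essentially one line, extracting the existence statement from the structural description provided by that lemma; the only new content is the elementary remark that a proper convex subgraph of a connected graph admits a neighbor.
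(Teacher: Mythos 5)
Your proof is correct, and it takes a genuinely shorter route than the paper's. You reduce everything to the forward (``if'') direction of Lemma~\ref{L:min.conv.ext.}(i), which indeed asserts that $\mathcal{I}_{G}(\{u\}\cup V(F))$ is a minimal convex extension of $V(F)$ for \emph{every} $u\in N_{G}(V(F))$; so existence comes down to $N_{G}(V(F))\neq\emptyset$, which follows from the connectedness of $G$ and the properness of $F$, exactly as you say. (The empty case you dispatch at the outset is not really in play, since Lemma~\ref{L:min.conv.ext.} assumes $A$ non-empty and the proposition is only ever applied to non-empty $F$.) The paper instead does substantially more work: it sets $A:=\mathcal{I}_{G}(U_{vu}\cap V(F))$ and $F':=G[V(F)\cup\phi_{vu}(A)]$ using the isomorphism $\phi_{vu}$ of Lemma~\ref{L:phi_ab}, proves directly that $F'$ is convex by analysing the associated $U_{vu}$-geodesics, identifies $V(F')$ with $\mathcal{I}_{G}(\{u\}\cup V(F))$, and only then invokes Lemma~\ref{L:min.conv.ext.}(i) to get minimality. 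What that detour buys is an explicit structural description of the minimal convex extension (as $F$ glued to a $\phi_{vu}$-copy of a convex piece of $\mathcal{I}_{G}(U_{vu}\cap V(F))$) together with a self-contained verification that $\mathcal{I}_{G}(\{u\}\cup V(F))$ is convex, which is the information actually exploited in the subsequent retraction and hom-retraction arguments (Proposition~\ref{P:min.conv.ext./hom-retract}, Theorem~\ref{T:conv.=>retract}). For the bare existence statement of Proposition~\ref{P:exist.min.conv.ext.}, your reduction is complete and sufficient.
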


\begin{proof}
Let $F$ be a proper convex subgraph of $G$.  Because $F$ is convex, any vertex in $N_G(V(F))$ has exactly one neighbor in $F$, so let $u \in N_G(V(F))$,\; $v$ its unique neighbor in $F$,\; $A := \mathcal{I}_G(U_{vu} \cap V(F))$ and $$F' := G[V(F) \cup \phi_{vu}(A)].$$  We will show that $F'$ is a a minimal convex extension of $F$.

(a)\;  Let $x \in A$.  Then $x \in I_G(a,b)$ for some $a, b \in U_{vu} \cap V(F)$.  Let $P_x$ be the $U_{vu}$-geodesic associated with $x$ in $G$.  Then $P_x$ is a subpath of some $(a,b)$-geodesic in $G$ by (SPS2).  It follows that $P_x$ is a path of $F$ since $F$ is convex in $G$, and that its endvertices belong to $U_{vu} \cap V(F)$.

(b)\; Let $x, y \in A$, and $R$ a $(\phi_{vu}(x),\phi_{vu}(y))$-geodesic in $G$.  Then $\phi_{uv}(R)$ is an $(x,y)$-geodesic in $G$, and thus in $F$ since $F$ is convex.  Let $P_x$ and $P_y$ be the $U_{vu}$-geodesics associated with $x$ and $y$, respectively.  By (a), $P_x$ and $P_y$ are paths of $F$.  By Lemma~\ref{L:PxPy/(x,y)-geodesic}, $\phi_{uv}(R)$ is a subpath of some geodesic joining an endvertex $a$ of $P_x$ to an endvertex $b$ of $P_y$.  Because $a, b \in U_{vu} \cap V(F)$, it follows that $\phi_{vu}(I_G(a,b)) \subseteq V(F')$, and consequently $R$ is a path of $F'$, which proves that $F'$ is convex.

(c)\; Let $x \in V(F'-F)$.  If $x \in \phi_{vu}(U_{vu} \cap V(F))$, then $x \in \mathcal{I}_G(u,\phi_{uv}(x))$.  Suppose that $x \in I_G(a,b)$ for some $a, b \in \phi_{vu} (U_{vu} \cap V(F))$.  Let $P_x$ be the $U_{uv}$-path associated with $\phi_{uv}(x)$.  Then $P_x$ is a path of $F$ by (a), and thus $\Phi{vu}(P_x)$ is a path of $F'$ since $F'$ is convex by (b).  By the minimality of the length of $P_x$ (see Lemma~\ref{L:min.Uab-path=associated}), $\phi_{vu}(P_x)$ is the $U_{uv}$-geodesic associated with $x$.  By (SPS1), $x \in I_G(u,w)$ for some endvertex $w$ of $\phi_{vu}(P_x)$, and thus $x \in I_G(u,\Phi{uv}(w))$.

Consequently $V(F') = \mathcal{I}_G(\{u\} \cup V(F))$, which proves that $F'$ is a minimal convex extension of $F$ by Lemma~\ref{L:min.conv.ext.}(i).
\end{proof}

\begin{proof}[\textnormal{\textbf{Proof of Theorem~\ref{T:conv.=>mooring}}}]
Note that, if $F$ is cycle-representative, then any convex extension of $F$ is also cycle-representative.  It follows that, if $H'$ is a minimal convex-extension of some convex subgraph $H$ of $G$ which contains $F$, then $H$ is a retract of $H'$ by Proposition~\ref{P:min.conv.ext./retract}, and in particular each vertex in $V(H'-H)$ has exactly one neighbor in $H$.

For each ordinal $\alpha$, we inductively construct a gated subgraph
$F_{\alpha}$ of $G$ as follows:

\textbullet\; $F_{0} := F$;

\textbullet\; $F_{\alpha + 1}$ is a minimal convex extension
of $F_{\alpha}$;

\textbullet\; $F_{\alpha} :=
\bigcup_{\beta < \alpha}F_{\beta}$ if $\alpha$ is a limit ordinal.

Note that $F_{\alpha + 1}$ exists according to Proposition~\ref{P:exist.min.conv.ext.} since $F_{\alpha}$ is gated by the induction hypothesis, and that $F_{\alpha}$ is also a gated subgraph of $G$ if
$\alpha$ is a limit ordinal because the set $\{F_{\beta}: \beta <
\alpha \}$ is a set of gated subgraphs totally ordered by inclusion.  For each $x \in V(G)$ we denote by
$\alpha(x)$ the smallest ordinal $\alpha$ such that $x \in
V(F_{\alpha})$.

Define the self-map $\mu$ of $V(G)$ such that $\mu(x)$ is $x$
if $\alpha(x) = 0$ and is the only neighbor of $x$ in
$F_{\alpha(x)-1}$ if $\alpha(x) > 0$.  It suffices to prove that
$\mu$ is a contraction to show that it is a mooring of $G$ on
$F$.  Let $x$ and $y$ be two adjacent vertices of $G$ with $\alpha(x)
\leq \alpha(y)$, we have to show that $\mu(x)$ and $\mu(y)$
are equal or adjacent.  We are done if $\alpha(x) = \alpha(y) = 0$.
If $\alpha(x) = \alpha(y) \neq 0$, then $\mu(x)$ and $\mu(y)$
are adjacent because $\mu(y) \in U_{\mu(x)x}^{G_{\alpha(x)}}$.
If $\alpha(x) < \alpha(y)$, then $x = \mu(y)$ by the definition of
$\mu$, and thus $\mu(x)$ and $\mu(y)$ are equal or
adjacent depending on whether $\alpha(x)$ is or is not equal to $0$.
\end{proof}

\subsection{Retracts, hom-retracts and convex sets}\label{SS:ret.hom-ret./convex}

We recall that a contraction $f : G \to H$ which preserves the edges is called a
\emph{homomorphism}\ of $G$ into $H$.  If a retraction $f : G \to H$ is
a homomorphism, then we will say that $f$ is a \emph{hom-retraction}
and that $H$ is a \emph{hom-retract}.

Any retract of a partial cube is a faithful subgraph of this graph.  By \cite[Theorem 4.5]{P05-2}, any faithful subgraph of a netlike partial cube $G$ is both a netlike partial cube and a retract of $G$.  The corresponding result for median graphs was proved by Bandelt~\cite[Theorem 1]{B84}.  This property is generally not true for Peano partial cubes as is shown by the example of Figure~\ref{F:faithful}.  In this example the graph depicted by the thick edges and the big vertices is a faithful subgraph of the prism $C_{6} \Box K_{2}$, but it is not ph-homogeneous, and thus not a retract of $C_{6} \Box K_{2}$ by Theorem~\ref{T:retract(hypernet.)}; note that its pre-hull number is $2$.  We will show that any convex subgraph of a Peano partial cube is a retract of this graph.

On the other hand for any netlike partial cube $G$, every non-trivial retract is a hom-retract of $G$ by \cite[Theorem 4.5]{P05-2}.  However, this is not the case for any Peano partial cube.  Take for example the prism $G = B \Box K_{2}$ over the benzenoid graph $B$ that is the the union of two $6$-cycles having exactly one edge in common.  Then any of the two $B$-layers of $G$ is a convex subgraph of $G$, and also a retract of $G$, but it is straightforward to check that it cannot be a hom-retract of $G$.

\begin{thm}\label{T:conv.=>retract}
Let $G$ be a Peano partial cube (resp. such that, for each edge $ab$ of $G$, any two convex cycles of length greater than $4$ of the subgraph of $G$ induced by $U_{ab}$ are $\Theta$-equivalent).  Then any non-empty (resp. non-trivial) convex subgraph of $G$ is a retract (resp. hom-retract) of $G$.
\end{thm}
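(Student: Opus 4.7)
The plan is to prove both parts of the theorem by transfinite induction along a chain of minimal convex extensions of $F$, in the spirit of the construction used in the proof of Theorem~\ref{T:conv.=>mooring}. Fix a well-ordering of $V(G)\setminus V(F)$, set $F_0:=F$, and at each successor stage $\alpha+1$ let $F_{\alpha+1}$ be a minimal convex extension of $F_\alpha$ obtained by adjoining the smallest vertex $u_\alpha\in N_G(V(F_\alpha))\setminus V(F_\alpha)$ according to the well-ordering; at limit stages put $F_\alpha:=\bigcup_{\beta<\alpha}F_\beta$. By Proposition~\ref{P:exist.min.conv.ext.}, and because the union of a chain of convex subgraphs is again convex, each $F_\alpha$ is convex in $G$, and the process terminates at some ordinal with $F_\alpha=G$.

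For the retract part, at each successor stage I define $r_{\alpha+1}:F_{\alpha+1}\to F_\alpha$ to be the identity on $V(F_\alpha)$ and to send $y\in V(F_{\alpha+1})\setminus V(F_\alpha)$ to $\phi_{u_\alpha v_\alpha}(y)$, where $v_\alpha$ is the unique neighbor of $u_\alpha$ in $F_\alpha$ and $\phi_{u_\alpha v_\alpha}$ is the isomorphism of Lemma~\ref{L:phi_ab}. By Lemma~\ref{L:min.conv.ext.}(ii) together with the explicit description of $V(F_{\alpha+1})$ from the proof of Proposition~\ref{P:exist.min.conv.ext.}, the new vertices lie in $\mathcal{I}_G(U_{u_\alpha v_\alpha})$ and their images under $\phi_{u_\alpha v_\alpha}$ lie in $V(F_\alpha)$, so $r_{\alpha+1}$ is well-defined. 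To check it is a contraction, split by the location of the two arguments: inside $V(F_\alpha)$ distances are preserved trivially; inside $V(F_{\alpha+1})\setminus V(F_\alpha)$ the isometry property of $\phi_{u_\alpha v_\alpha}$ combined with convexity of both $F_\alpha$ and $F_{\alpha+1}$ in $G$ yields equality; and for a mixed pair any $(x,y)$-geodesic must cross the $\Theta$-cut of $u_\alpha v_\alpha$, giving a shortcut through $\phi_{u_\alpha v_\alpha}$ that strictly decreases the distance by one. The retractions compose coherently at successors (by the identity condition on $V(F_\alpha)$) and agree at limits, yielding a retraction $R:G\to F$.

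For the hom-retract part the same transfinite scheme is used, but $r_{\alpha+1}$ must be modified to preserve all edges. The edges of $F_{\alpha+1}$ between $V(F_\alpha)$ and $V(F_{\alpha+1})\setminus V(F_\alpha)$ all belong to the $\Theta$-class of $u_\alpha v_\alpha$, and the naive definition collapses them. Using the non-triviality of $F$, pick an edge $v_\alpha w_\alpha\in E(F_\alpha)$ and shift the image using $\phi_{v_\alpha w_\alpha}$ (and $\phi_{w_\alpha v_\alpha}$, depending on which side of the $\Theta$-class of $v_\alpha w_\alpha$ each point lies on), so that the image of each new vertex is a \emph{neighbor} of $\phi_{u_\alpha v_\alpha}(y)$ in $F_\alpha$ rather than $\phi_{u_\alpha v_\alpha}(y)$ itself. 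The extra hypothesis, that any two convex cycles of length greater than $4$ in $G[U_{ab}]$ are $\Theta$-equivalent, is used to show that this shift is consistent across all bulges of $co_G(U_{u_\alpha v_\alpha})$: by Theorem~\ref{T:charact.} each such bulge is controlled by a convex hypercylinder $C\Box A$, and the hypothesis forbids the only pathology in which two non-$\Theta$-equivalent long cycles live inside the same $G[U_{ab}]$, making the global shift well defined and edge-preserving.

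The main obstacle is the hom-retract case. While the retract construction is essentially isometry bookkeeping, verifying that the shifted retraction preserves \emph{every} edge (and is still a contraction) requires a careful analysis of how the two $\Theta$-classes of $u_\alpha v_\alpha$ and $v_\alpha w_\alpha$ interact inside each bulge; the hypothesis is what excludes configurations such as a prism over two hexagons sharing an edge (noted earlier in the text as the prototype of a convex retract that is not a hom-retract), and checking that it indeed suffices at every stage of the transfinite construction, and that the shift is independent of the choices made, is the technically delicate part.
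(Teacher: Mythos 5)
Your retract half follows the paper's own route: the same transfinite chain of minimal convex extensions $F_0=F\subset F_1\subset\dots$, with each successor step handled by the canonical projection of the new layer back into $F_\alpha$ (the paper packages this step as Proposition~\ref{P:min.conv.ext./retract}, built on Lemma~\ref{L:Wba.retract}; your map $y\mapsto\phi_{u_\alpha v_\alpha}(y)$ is that same projection), and the compositions glued at limit stages. That part is sound and is essentially the paper's argument.

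The hom-retract half has a genuine gap: the map $\phi_{v_\alpha w_\alpha}$ (resp. $\phi_{w_\alpha v_\alpha}$) sends a vertex to a \emph{neighbor} of itself only on $U_{v_\alpha w_\alpha}\cup U_{w_\alpha v_\alpha}$; elsewhere it is a reflection, not a unit shift. Concretely, on a convex $6$-cycle $\langle v,w,a,b,c,d,v\rangle$ of $F_\alpha$ through the chosen edge $vw$, your combined shift is $v\leftrightarrow w$, $d\leftrightarrow a$, $c\leftrightarrow b$, and $d_G(d,a)=3$. So for the prism over that cycle, the new vertex lying above $d$ is sent to $a$ while $d$ is fixed, and the rung edge joining them is mapped to a pair at distance $3$: the resulting map is not even a contraction, let alone edge-preserving. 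What the successor step actually requires is a unit shift of all of $\mathcal{I}_{F_\alpha}(U_{v_\alpha u_\alpha}^{F_\alpha})$, i.e.\ a self-contraction $\mu$ such that $z\mu(z)$ is an edge pointing toward a fixed target for every $z$ off that target, combined with a genuine rotation $c_i\mapsto c_{i+1}$ on the target cycle itself. This is exactly what the paper supplies with the mooring machinery (Proposition~\ref{P:mooring/med.gr.}, Theorem~\ref{T:conv.=>mooring} and Proposition~\ref{P:min.conv.ext./hom-retract}); the extra hypothesis on $\Theta$-equivalence of the long convex cycles of $G[U_{ab}]$ enters precisely to make a single such cycle cycle-representative in $F_\alpha[\mathcal{I}_{F_\alpha}(U_{v_\alpha u_\alpha}^{F_\alpha})]$ so that the mooring exists. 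You located the role of the hypothesis correctly (it rules out the two-hexagon prism), but without a mooring or an equivalent unit-shift the edge-preserving step cannot be built from the $\phi$-maps alone.
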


Note that the converse is false since, for any positive integer $n$, a path of length $n$ of a $2n$-cycle $C$ is a retract of $C$, but is not a convex subgraph of $C$.  Moreover the condition to have a hom-retract in the above theorem is obviously not necessary since the subgraph induced by any two adjacent vertices of a bipartite graph is a hom-retract of this graph.  We need the following supplement to Proposition~\ref{P:min.conv.ext./retract}.

\begin{pro}\label{P:min.conv.ext./hom-retract}
Let $G$ be a Peano partial cube such that, for any edge $ab$ of $G$, all convex cycles of length greater than $4$ of the subgraph of $G$ induced by $U_{ab}$ are $\Theta$-equivalent, $F$ a proper non-empty convex subgraph of $G$, and $F'$ a minimal convex extension of $F$.  Then $F$ is a hom-retract of $F'$.
\end{pro}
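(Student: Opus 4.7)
I would refine the construction in the proof of Proposition~\ref{P:min.conv.ext./retract} to obtain a hom-retraction in place of a mere retraction. As there, let $uv$ be an edge of $F'$ with $v\in V(F)$ and $u\in V(F'-F)$; by Lemma~\ref{L:min.conv.ext.}(ii) we have $V(F)=W_{vu}^{F'}$ and $V(F'-F)=W_{uv}^{F'}$, and by the construction in Proposition~\ref{P:exist.min.conv.ext.} one may write $V(F'-F)=\phi_{vu}(A)$ with $A=\mathcal{I}_G(U_{vu}^G\cap V(F))\subseteq V(F)$. The retraction in the proof of Proposition~\ref{P:min.conv.ext./retract}, which sends $\phi_{vu}(y_0)$ to $y_0$, collapses every boundary edge $\phi_{vu}(y_0)\,y_0$ and is therefore not a hom-retraction; to produce a hom-retraction one must shift the new layer into $F$ rather than fold it down.

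Since $F$ is non-trivial, convex, and hence connected, $v$ has a neighbor $w$ inside $F$. Setting $e:=vw$, the plan is to define $\pi\colon V(F')\to V(F)$ by $\pi|_{V(F)}=\mathrm{id}$ and $\pi(\phi_{vu}(y_0))=\phi_{e}(y_0)$ for $y_0\in A$, using the isomorphism $\phi_e$ of Lemma~\ref{L:phi_ab} as the required ``shift by the $\Theta$-class of $vw$'' inside $F$. For this definition to make sense and yield a hom-retraction, one must establish the following structural claim:
\begin{itemize}
\item[(a)] $A\subseteq \mathcal{I}_G(U_e^G)$, so that $\phi_e(y_0)$ is defined for every $y_0\in A$;
\item[(b)] $\phi_e(A)\subseteq V(F)$;
\item[(c)] for each $y_0\in U_{vu}^G\cap V(F)$, the vertices $y_0$ and $\phi_e(y_0)$ are adjacent in $F$;
\item[(d)] the restriction of $\phi_e$ to $A$ is a graph isomorphism onto its image.
\end{itemize}

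Granted (a)--(d), the verification that $\pi$ is a hom-retraction is routine. Edges inside $V(F)$ are preserved since $\pi|_{V(F)}=\mathrm{id}$. For edges inside $V(F'-F)$, if $\phi_{vu}(y_0)$ and $\phi_{vu}(y_0')$ are adjacent in $F'$ then by the isomorphism $\phi_{vu}$ of Lemma~\ref{L:phi_ab} the vertices $y_0$ and $y_0'$ are adjacent in $A$, and hence $\phi_e(y_0)$ and $\phi_e(y_0')$ are adjacent in $F$ by (d). For a boundary edge $\phi_{vu}(y_0)\,y_0$ with $y_0\in U_{vu}^G\cap V(F)$, $\pi$ sends it to $\phi_e(y_0)\,y_0$, which is an edge of $F$ by (c).

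The hard part is the structural claim (a)--(d): this is precisely where the extra hypothesis on $G$ must be invoked. Under the assumption that, for every edge $ab$ of $G$, all convex cycles of length greater than~$4$ of $G[U_{ab}]$ are $\Theta$-equivalent, the bulges of $co_G(U_{vu}^G)$ acquire a very rigid uniform structure -- in essence, each element of $\mathbf{Cyl}[G,vu]$ is forced to be the Cartesian product of a single cycle with a copy of a common factor whose edge-$\Theta$-classes include that of $vw$ -- which yields that $\phi_e$ and $\phi_{vu}$ commute on $A$ and that the shift $\phi_e(A)$ remains inside $F$ because $F$ is convex. Working out this structural analysis carefully, most likely by induction on the length of geodesics from $v$ into $A$ and by invoking the gated hypercylinders of Theorem~\ref{T:charact.}(iv), is the central obstacle in the proof.
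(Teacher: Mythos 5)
There is a genuine gap, and it is located exactly where you placed the ``central obstacle'': your claim (a) is false, and without it the map $\pi$ is not even defined. The isomorphism $\phi_{e}$ of Lemma~\ref{L:phi_ab} for $e=vw$ is only defined on $\mathcal{I}_G(U_{vw})$, and there is no reason for $A=\mathcal{I}_G(U_{vu}^G\cap V(F))$ to be contained in that set -- indeed it almost never is. Take $G=Q_3$ with $F$ a $4$-cycle face $\langle v,w,z,t,v\rangle$ and $F'=G$: here $A=V(F)$, but $w,z\in W_{wv}$, so $\phi_{vw}(w)$ and $\phi_{vw}(z)$ are undefined. The same failure occurs for $G=C_6$ with $F$ a convex path of length $2$. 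The underlying obstruction is structural, not technical: a vertex $y_0\in U_{vu}\cap V(F)$ far from $v$ need not be incident to any edge $\Theta$-equivalent to $vw$ at all, so there is no ``neighbor in the direction of $w$'' to send it to; a global sideways shift along a single $\Theta$-class does not exist, and the extra hypothesis on convex cycles of $G[U_{ab}]$ does not create one (your reading of that hypothesis as forcing every element of $\mathbf{Cyl}[G,vu]$ to carry the $\Theta$-class of $vw$ is not what it says and not what it implies).

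The paper's proof takes a different and workable route: keep the fold-down $\phi_{uv}$ from Proposition~\ref{P:min.conv.ext./retract}, which maps $V(F'-F)$ onto $\mathcal{I}_F(U_{vu}^F)\subseteq V(F)$, and post-compose it with a \emph{mooring} $\mu$ of $F[\mathcal{I}_F(U_{vu}^F)]$ on a small target: a single neighbor $w$ of $v$ when $F[\mathcal{I}_F(U_{vu}^F)]$ is a median graph (Proposition~\ref{P:mooring/med.gr.}), or a convex cycle $C$ of length greater than $4$ otherwise (Theorem~\ref{T:conv.=>mooring}). The mooring moves each vertex one step toward the target along a direction that varies from vertex to vertex -- this is precisely the ``non-constant shift'' that your single $\phi_e$ cannot provide -- so every collapsed boundary edge $x\,\phi_{uv}(x)$ is sent to a genuine edge $\phi_{uv}(x)\,\mu(\phi_{uv}(x))$; one then fixes up the map on the target itself by sending $\phi_{vu}(w)$ to $v$, respectively by rotating $C$ one step. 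The hypothesis that all convex cycles of length greater than $4$ in $G[U_{ab}]$ are $\Theta$-equivalent is used only to ensure that the chosen cycle $C$ is cycle-representative in $F[\mathcal{I}_F(U_{vu}^F)]$, which is the hypothesis of Theorem~\ref{T:conv.=>mooring}. If you want to salvage your write-up, replacing the single $\Theta$-class shift by a mooring toward $w$ (or toward $C$) is the missing idea.
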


\begin{proof}
We already know that $F$ is a retract of $F'$ (see Proposition~\ref{P:min.conv.ext./retract}).  Let $u \in V(F'-F)$ and $v$ its neighbor in $F$.  By Lemma~\ref{L:min.conv.ext.}, $V(F) = W_{vu}^{F'}$ and $V(F'-F) = W_{uv}^{F'}$.  Moreover, by Lemma~\ref{L:prop.isom subgr.}, $\mathcal{I}_F(U_{vu}^F) = \mathcal{I}_F(U_{vu}) \cap V(F)$ since $F$ is convex.  Hence $\mathcal{I}_F(U_{vu}^F)$ is convex in $G$, and thus is ph-homogeneous.  It follows that a cycle of $F[\mathcal{I}_F(U_{vu}^F)]$ is convex or isometric in this subgraph if and only if it is convex or isometric in $G$, respectively.  Therefore any convex cycle of $F[\mathcal{I}_F(U_{vu}^F)]$ of length greater than $4$ is cycle-representative.  We distinguish two cases.

(a)\; If $U_{vu}^F = \{v\}$, then the map $f : V(F') \to V(F)$ such that $f(x) = x$ if $x \in V(F)$ and $f(u) = w$, where $w$ is a neighbor of $v$ in $F$, is a hom-retraction of $F'$ onto $F$.  

(b)\; Assume now that $v$ is not the only element of $U_{vu}^F$.  We have two subcases.

(b.1)\; Suppose that $F[\mathcal{I}_F(U_{vu}^F)]$ contains no convex cycle of length greater than $4$.  Then $F[\mathcal{I}_F(U_{vu}^F)]$ is a median graph by Proposition~\ref{P:hypernet./med.gr.}.  On the other hand, $v$ has a neighbor $w \in \mathcal{I}_F(U_{vu}^F)$ since $\mathcal{I}_F(U_{vu}^F)$ is convex in $V(F)$, and $F[w]$ is a convex subgraph of $F[\mathcal{I}_F(U_{vu}^F)]$.  Hence, by Proposition~\ref{P:mooring/med.gr.}, there exists a mooring $\mu$ of $F[\mathcal{I}_F(U_{vu}^F)]$ on $F[w]$.  It clearly follows that the map $f : V(F') \to V(F)$ such that $f(x) = x$ if $x \in V(F)$, \; $f(x) = \mu(\phi_{uv}(x))$ if $x \in V(F'-F)$, and $f(\phi_{vu}(w)) = v$, is a hom-retraction
of $F'$ onto $F$.

(b.2)\; If $F[\mathcal{I}_F(U_{vu}^F)]$ contains a convex cycle $C = \langle c_1,\dots,c_{2n},c_1\rangle$ of length greater than $4$, then, by Theorem~\ref{T:conv.=>mooring}, there exists a mooring $\mu$ of $F[\mathcal{I}_F(U_{vu}^F)]$ on $C$.  Then the map $f : V(F') \to V(F)$ such that $f(x) = x$ if $x \in V(F)$,\; $f(x) = \mu(\phi_{uv}(x))$ if $x \in V(F'-(F \cup \phi_{vu}(C)))$, and $f(\phi_{vu}(c_i)) = c_{i+1}$ for $1 \leq i \leq 2n$ with $c_{2n+1} := c_1$, is a hom-retraction
of $F'$ onto $F$.
\end{proof}

\begin{proof}[\textnormal{\textbf{Proof of Theorem~\ref{T:conv.=>retract}}}]
For each ordinal $\alpha$, we inductively construct a convex subgraph
$F_{\alpha}$ of $G$ as follows:

\textbullet\; $F_{0} := F$;

\textbullet\; $F_{\alpha + 1}$ is a minimal convex extension
of $F_{\alpha}$;

\textbullet\; $F_{\alpha} :=
\bigcup_{\beta < \alpha}F_{\beta}$ if $\alpha$ is a limit ordinal.

Note that $F_{\alpha + 1}$ exists according to Proposition~\ref{P:exist.min.conv.ext.} since $F_{\alpha}$ is convex by the induction hypothesis, and that $F_{\alpha}$ is also a convex subgraph of $G$ if
$\alpha$ is a limit ordinal because the set $\{F_{\beta}: \beta <
\alpha \}$ is a set of convex subgraphs totally ordered by inclusion.  Let $\gamma$ be the least ordinal such that $F_{\gamma}
= G$.

Now, for each ordinal $\alpha \leq \gamma$, we construct a
retraction (resp. hom-retraction) $f_{\alpha}$ of $F_{\alpha}$ onto $F_{0}$.  Let $f_{0}$
be the identity function on $V(F_{0})$.  Let $\alpha \geq 0$.  Suppose that
$f_{\beta}$ has already been constructed for every $\beta < \alpha$.
If $\alpha = \beta + 1$ for some ordinal $\beta$, then $f_{\alpha} :=
f_{\beta} \circ f_{F_{\alpha}}$ where $f_{F_{\alpha}}$ is a
retraction (resp. hom-retraction)  of $F_{\alpha}$ onto $F_{\beta}$ induced by
Proposition~\ref{P:min.conv.ext./retract}.  Then $f_{\alpha}$ is
obviously a retraction of $F_{\alpha}$ onto $F_{0}$.

Suppose that $\alpha$ is a limit ordinal.  Let $f_{\alpha} :=
\bigcup_{\beta < \alpha}f_{\beta}$, i.e., $f_{\alpha}$ is the map of
$F_{\alpha}$ onto $F_{0}$ such that, for each vertex $x$ of
$F_{\alpha}$,\; $f_{\alpha}(x) := f_{\beta}(x)$, where $\beta$ is the
least ordinal such that $x \in V(F_{\beta})$.  In particular
$f_{\alpha}(x) = x$ if $x \in V(F_{0})$.  It remains to prove that
$f_{\alpha}$ is a contraction (resp. homomorphism).  Let $x, y$ be two adjacent vertices
of $F_{\alpha}$.  Then there is an ordinal $\beta < \alpha$ such that
$x, y \in V(F_{\beta})$.  Therefore $f_{\alpha}(x) = f_{\beta}(x)$ and
$f_{\alpha}(y) = f_{\beta}(y)$.  It follows that $f_{\alpha}(x)$ and
$f_{\alpha}(y)$ are adjacent or equal (resp. adjacent) because $f_{\beta}$ is a contraction (resp. homomorphism) by
the induction hypothesis.  Consequently $f_{\alpha}$ is a
retraction (resp. hom-retraction) of $F_{\alpha}$ onto $F_{0}$.

We deduce that $f_{\gamma}$ is the desired retraction (resp. hom-retraction) of $F$ onto
$G$.
\end{proof}

\subsection{Strongly faithful subgraphs}\label{SS:strong.faithf.}

By \cite[Proposition 4.4]{P05-2}, any faithful subgraph of a netlike partial cube $G$ is a netlike partial cube.  However, a faithful subgraph of a Peano partial cube is generally not ph-homogeneous as we saw by the example of Figure~\ref{F:faithful}.

\begin{defn}\label{D:strong.med.stable}
A subgraph $F$ of a Peano partial cube $G$ is said to be \emph{strongly median-stable} in $G$ if, for any triple $(u,v,w)$ of vertices of $F$ that has a median $m$ or a hyper-median $(x,y,z)$ in $G$, then $m$ or $x, y, z$ are vertices of $F$.
\end{defn}

\begin{defn}\label{D:strong.faithf.}
A subgraph of a Peano partial cube $G$ is said to be \emph{strongly faithful} if it is both isometric and strongly median-stable in $G$.
\end{defn}

A convex subgraph of a Peano partial cube is clearly strongly faithful.

\begin{pro}\label{P:strong.faithf./hyper-median}
Let $F$ be a strongly faithful subgraph of a Peano partial cube $G$, and $(u,v,w)$ a triple of vertices of $F$.  Then:

\textnormal{(i)}\; If $(u,v,w)$ has a median $m$ in $G$, then $m$ is the median of $(u,v,w)$ in $F$.

\textnormal{(ii)}\; If $(u,v,w)$ has a hyper-median $(x,y,z)$ in $G$, then $(x,y,z)$ is a hyper-median of $(u,v,w)$ in $F$, and moreover $co_F(x,y,z) = co_G(x,y,z)$.
\end{pro}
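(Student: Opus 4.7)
The plan is to handle the two parts separately, with (ii) being substantially the more involved.

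For (i), I would invoke strong median-stability of $F$ to place $m$ in $V(F)$; then, since $F$ is isometric in $G$, we have $I_F(a,b)=I_G(a,b)\cap V(F)$ for all $a,b\in V(F)$, so $m$ lies in $I_F(u,v)\cap I_F(v,w)\cap I_F(w,u)$ and is a median of $(u,v,w)$ in $F$. Uniqueness is automatic: $F$, being an isometric subgraph of the bipartite partial cube $G$, is itself a partial cube, so Lemma~\ref{L:gen.propert.}(i) applies.

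For (ii), strong median-stability places $x,y,z\in V(F)$. The equalities $I_F(a,b)=I_G(a,b)\cap V(F)$ and $d_F=d_G$ on $V(F)\times V(F)$ immediately transfer both the metric-triangle conditions on $(x,y,z)$ and the three quasi-median equations for $(u,v,w)$ from $G$ to $F$; hence $(x,y,z)$ is a quasi-median of $(u,v,w)$ in $F$. The crux is then to establish $co_F(x,y,z)=co_G(x,y,z)=:V(H)$. Once this is in hand, $F[V(H)]=H$ (an isometric subgraph is induced), so $co_F(x,y,z)$ induces the hypertorus $H$, and the isometric cycle of $G$ through $x,y,z$ granted by Lemma~\ref{L:met.tri./equiv.} lies in $H\subseteq F$, hence is isometric in $F$. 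Combined, these two facts certify that $(x,y,z)$ is a hyper-median of $(u,v,w)$ in $F$ in the sense of Definition~\ref{D:hyper-median}.

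The inclusion $co_F(x,y,z)\subseteq V(H)$ is automatic from isometry, so everything rests on $V(H)\subseteq V(F)$. I would prove this by induction on $\mathrm{idim}(H)=\tfrac{1}{2}(d(x,y)+d(y,z)+d(z,x))$ (Proposition~\ref{P:hypermed./gates}(iii)). A short argument shows the three pairwise distances are all at least $2$ and strictly less than $\mathrm{idim}(H)$, so when $H$ is a single even cycle the unique inter-pair geodesic in $H$ is the unique one in $G$ (since $H$ is convex in $G$); isometry of $F$ then drags these three geodesics into $V(F)$, covering $V(H)$. This handles the base $\mathrm{idim}(H)=3$ and, more generally, the single-cycle subcase of the inductive step. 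When instead $H$ decomposes as $H''\Box C_n$ with $H''$ a smaller hypertorus, I would move to the gated $H''$-layers $L_t=V(H'')\times\{t\}$: each non-empty $F\cap L_t$ is strongly faithful in $L_t\cong H''$, and the inductive hypothesis reduces matters to producing, in each relevant layer, three $F$-vertices that form a hyper-median there. These are obtained by applying strong median-stability to triples formed from $\{x,y,z\}$ together with the intermediate vertices already forced into $F$ by the uniqueness of sub-geodesics in the cycle-factor projections.

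The main obstacle is exactly this layer-reduction step: while each single-factor argument is merely a uniqueness-of-geodesic observation, in the multi-factor case one must extract, purely from the given hyper-median $(x,y,z)$ and the iterated interplay of strong median-stability with isometric $F$-geodesics, enough hyper-median corners to span every $H''$-layer. I expect the delicate bookkeeping there — showing that after finitely many such invocations the layers through $x$, $y$ and $z$ are populated, and that the remaining layers are then reached by a second round of cross-layer hyper-median invocations — to be the real technical work of the proof.
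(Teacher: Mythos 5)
Your part (i) and your reduction of part (ii) to the single inclusion $V(H)\subseteq V(F)$ (where $H:=G[co_G(x,y,z)]$) are correct, and your base case for a single cycle is fine. But the heart of the proof — the multi-factor case $H=H''\,\Box\, C_n$ — is not actually carried out: you describe a layer-by-layer induction that requires ``producing, in each relevant layer, three $F$-vertices that form a hyper-median there'' and you yourself flag the bookkeeping needed to populate all the $H''$-layers as an unresolved obstacle. As written, there is no argument that strong median-stability ever hands you a hyper-median triple inside a layer $L_t$ that contains none of $x,y,z$; the triples you can form from $\{x,y,z\}$ and forced intermediate vertices live on geodesics joining these three points, and it is not shown that such triples have hyper-medians landing in every layer. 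So the inductive step is a genuine gap, not a routine verification.

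The paper closes this gap with a different and much shorter mechanism that bypasses the product decomposition entirely. By Lemma~\ref{L:met.tri./equiv.} there is an isometric cycle $C$ of $G$ through $x,y,z$ with $co_G(C)=H$; since $H$ is a hypertorus it is antipodal, and Proposition~\ref{P:TPQ/prop.isom.cycle} says that such a principal cycle satisfies $\mathrm{idim}(C)=\mathrm{idim}(H)$ and, crucially (equivalence (iii)$\Leftrightarrow$(vi) there), that its \emph{median-closure} is all of $H$. Because $F$ is isometric, the three $F$-geodesics joining $x,y,z$ form a cycle $C'$ of $F$ with the same $\Theta$-classes as $C$, hence $C'$ is again a principal isometric cycle of $H$ and its median-closure in $G$ is $H$. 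Ordinary median-stability of $F$ (already contained in strong faithfulness) then forces this entire median-closure into $F$, giving $H\subseteq F$ and $co_F(x,y,z)=co_G(x,y,z)$ in one stroke. If you want to salvage your own route you would need to supply the missing layer-population argument; the cleaner fix is to replace the induction by the principal-cycle/median-closure argument above.
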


\begin{proof}
Because $F$ is isometric in $G$, it follows that, if $m$ is the median of $(u,v,w)$ is $G$, then it is the median of $(u,v,w)$ in $F$.  Assume that $(u,v,w)$ has a hyper-median $(x,y,z)$ in $G$.  Then $H := G[co_G(x,y,z)]$ is a hypertorus.  Moreover there exists an isometric cycle $C$ which passes through $x, y, z$, and thus which is such that $co_G(C) = H$.  By Proposition~\ref{P:TPQ/prop.isom.cycle}, $\mathrm{idim}(C) = \mathrm{idim}(H)$.  Because $F$ is isometric in $G$, it follows that there exists a cycle $C'$ in $F$ which passes through $x, y, z$, and thus which has the same $\Theta$-classes as $C$.  Once again by Proposition~\ref{P:TPQ/prop.isom.cycle}, $H$ is the median-closure of $C'$ in $G$.  Therefore, $H$ is the median-closure of $C'$ in $F$ since $F$ is median-stable.  Hence $co_F(x,y,z) = H$, which proves that $(x,y,z)$ is a hyper-median of $(u,v,w)$ in $F$.
\end{proof}

We immediately deduce the following corollary:

\begin{cor}\label{C:str.faithf.str.faithf.}
Let $F$ be a strongly faithful subgraph of a Peano partial cube $G$.  Then any strongly faithful subgraph of $F$ is a strongly faithful subgraph of $G$.
\end{cor}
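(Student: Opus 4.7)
The proof will be a straightforward transitivity argument exploiting Proposition~\ref{P:strong.faithf./hyper-median}. Let $H$ be a strongly faithful subgraph of $F$, which itself is a strongly faithful subgraph of $G$. I must verify the two defining properties of a strongly faithful subgraph of $G$ for $H$.

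First, for isometry: since $H$ is isometric in $F$ and $F$ is isometric in $G$, we get $d_H(x,y) = d_F(x,y) = d_G(x,y)$ for all $x,y \in V(H)$, so $H$ is isometric in $G$. This is immediate from the definitions.

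The main content is showing that $H$ is strongly median-stable in $G$. Take a triple $(u,v,w)$ of vertices of $H$ that admits a median $m$ or a hyper-median $(x,y,z)$ in $G$. Since $V(H) \subseteq V(F)$, the triple lies in $F$; since $F$ is strongly faithful in $G$, Proposition~\ref{P:strong.faithf./hyper-median} applies and yields that $m$ (respectively $(x,y,z)$) is actually a median (respectively a hyper-median) of $(u,v,w)$ computed inside $F$. Now invoke that $H$ is strongly median-stable in $F$: the median $m$, or the three vertices $x,y,z$ of the hyper-median, must lie in $V(H)$. This gives exactly what is needed for strong median-stability in $G$, completing the proof.

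There is no genuine obstacle here: the statement is essentially a transitivity principle for the \emph{strongly faithful} relation, and the only non-trivial ingredient is the already-established Proposition~\ref{P:strong.faithf./hyper-median}, which guarantees that medians and hyper-medians computed in $G$ coincide with those computed in the intermediate strongly faithful subgraph $F$. Without that proposition one would have to worry that, even if $m$ or $(x,y,z)$ lie in $F$, they might not be recognised as medians/hyper-medians by $F$; the proposition rules this out.
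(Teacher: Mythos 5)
Your proof is correct and follows exactly the route the paper intends: the paper derives this corollary as an immediate consequence of Proposition~\ref{P:strong.faithf./hyper-median}, and your argument simply spells out the transitivity — isometry composes, and the proposition guarantees that a median or hyper-median computed in $G$ is recognised as such in $F$, so strong median-stability of $H$ in $F$ finishes the job.
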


The following result is analogous to \cite[proposition 4.4]{P05-2} stating that a faithful subgraph of a netlike partial cube is also a netlike partial cube.

\begin{thm}\label{T:strong.faithf.=>ph-homogeneous}
Any strongly faithful subgraph $F$ of a Peano partial cube $G$ is a Peano partial cube.   Moreover $U_{ab}^F = U_{ab} \cap \mathcal{I}_G(U_{ab}^F)$ for each edge $ab$ of $F$.
\end{thm}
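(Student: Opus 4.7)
The plan is to establish the second (technical) assertion first, since the structural characterisation in Theorem~\ref{T:charact.}(vi) will then fall into place. First I would note that $F$ is automatically a partial cube, because $G$ is an isometric subgraph of some hypercube $Q$ and isometricity is transitive, so $F$ is an isometric subgraph of $Q$. The inclusion $U_{ab}^F\subseteq U_{ab}\cap\mathcal{I}_G(U_{ab}^F)$ is immediate from the definitions. For the reverse inclusion, I would take $x\in U_{ab}\cap\mathcal{I}_G(U_{ab}^F)$ and pick $y,z\in U_{ab}^F$ with $x\in I_G(y,z)$. Then $x$ is trivially the median of the triple $(x,y,z)$ in $G$, so by strong median-stability $x\in V(F)$. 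To produce a neighbour of $x$ in $W_{ba}\cap V(F)$, let $y'=\phi_{ab}(y)$ and $z'=\phi_{ab}(z)$, which lie in $W_{ba}^F$ because $y,z\in U_{ab}^F$; a direct distance computation using the isometry $\phi_{ab}$ of Lemma~\ref{L:phi_ab} shows that the neighbour $x'$ of $x$ in $W_{ba}$ is the median of $(x,y',z')$ in $G$, so strong median-stability forces $x'\in V(F)$, whence $x\in U_{ab}^F$.

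Next I would verify condition (vi) of Theorem~\ref{T:charact.} for $F$. Fix an edge $ab$ of $F$ and $u\in\mathcal{I}_F(U_{ab}^F)-U_{ab}^F$. By the first part, $u\in\mathcal{I}_G(U_{ab})-U_{ab}$, so by Theorem~\ref{T:charact.}(v) applied to $G$, the vertex $u$ lies on the gated $ab$-cycle $C_u=P_u\cup\langle w,w'\rangle\cup P'\cup\langle v',v\rangle$ associated with $u$, where $P_u$ is the convex $U_{ab}$-geodesic associated with $u$ with endvertices $v,w$. To show $C_u$ is a cycle of $F$: pick $y,z\in U_{ab}^F$ with $u\in I_F(y,z)$; by (SPS1) and (SPS2) applied in $G$, the endvertices $v,w$ of $P_u$ are forced to be in $U_{ab}^F$ (using the same median trick as above to pull them into $F$). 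Then $v',w'=\phi_{ab}(v),\phi_{ab}(w)$ lie in $W_{ba}^F$ by definition of $U_{ab}^F$. Because $F$ is isometric, there are $(v,w)$- and $(v',w')$-geodesics in $F$; but $P_u$ and $P'$ are convex in $G$, hence unique geodesics, so these $F$-geodesics must coincide with them. This shows $V(C_u)\subseteq V(F)$, and the isometricity of $F$ then makes $C_u$ isometric in $F$ as well.

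For the second half of (vi), let $D$ be any isometric cycle of $F$. Since $F$ is isometric in $G$, $D$ is isometric in $G$, so by Theorem~\ref{T:hypernet./conv.hull(isom.cycle)} and Theorem~\ref{T:conv.reg.H-subgr.=>gated}, $co_G(D)$ is a gated quasi-hypertorus of $G$. The crucial step is to verify $V(co_G(D))\subseteq V(F)$. By Lemma~\ref{L:edge/co(H)}(i) every $\Theta$-class of $co_G(D)$ has a representative in $D$, while $D\subseteq co_G(D)$, so $\mathrm{idim}(D)=\mathrm{idim}(co_G(D))$; hence by Proposition~\ref{P:TPQ/prop.isom.cycle}(vi), the median-closure of $D$ in $co_G(D)$ is all of $co_G(D)$. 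Iteratively taking medians in $G$ of triples already shown to lie in $F$ and invoking strong median-stability at each step (whenever the triple forms a metric triangle in $G$ the hyper-median clause and Proposition~\ref{P:strong.faithf./hyper-median}(ii) apply) shows $V(co_G(D))\subseteq V(F)$. Convexity in $G$ together with $F$ being isometric yields $co_F(D)=co_G(D)$, and the gate of any $x\in V(F)$ in $co_G(D)$ inside $G$ remains a gate inside $F$, so $co_F(D)$ is a gated quasi-hypertorus of $F$. Hence condition (vi) of Theorem~\ref{T:charact.} holds for $F$, and $F$ is a Peano partial cube.

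The main obstacle, I expect, is the bookkeeping required to propagate vertices of the associated $ab$-cycle $C_u$ and of the principal hypertorus $co_G(D)$ into $F$; both arguments boil down to finding median or hyper-median witnesses for the vertices we want, and Proposition~\ref{P:strong.faithf./hyper-median} together with the uniqueness of convex geodesics like $P_u$ and $P'$ should make these witnesses available. Once these set-theoretic inclusions are in place, isometricity of $F$ converts convexity and gatedness statements from $G$ to $F$ almost for free.
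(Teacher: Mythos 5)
There is a genuine gap, and it occurs at the very first step on which everything else depends. You write that for $x\in U_{ab}\cap\mathcal{I}_G(U_{ab}^F)$ with $x\in I_G(y,z)$, $y,z\in U_{ab}^F$, ``$x$ is trivially the median of the triple $(x,y,z)$ in $G$, so by strong median-stability $x\in V(F)$.'' But median-stability (and likewise Definition~\ref{D:strong.med.stable}) only constrains the medians and hyper-medians of triples \emph{of vertices of $F$}; the triple $(x,y,z)$ contains $x$ itself, which is exactly the vertex whose membership in $V(F)$ you are trying to prove. The argument is circular, and it is not repaired by choosing a different third vertex: the natural witness $t$ with $x=m_G(y,z,t)$ is $t=\phi_{ab}(x)$, which is no more obviously in $V(F)$ than $x$ is. The same circularity reappears in your second step, where ``the same median trick'' is invoked to pull the endvertices $v,w$ of $P_u$ into $U_{ab}^F$, and hence the whole cycle $C_u$ into $F$; so the verification of condition (vi) of the Characterization Theorem collapses as well.

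The paper avoids this by working with a triple already known to consist of vertices of $F$ and by using the \emph{hyper-median} clause rather than the median clause. Starting from $u\in\mathcal{I}_F(U_{ab}^F)-U_{ab}^F$ with $u\in I_F(x,y)$, $x,y\in U_{ab}^F$, one takes the neighbours $x'=\phi_{ab}(x)$ and $y'=\phi_{ab}(y)$ in $U_{ba}$ --- these lie in $V(F)$ precisely because $x,y\in U_{ab}^F$ --- and observes that the gates of $u,x',y'$ in the gated $ab$-cycle $C_u$ form a hyper-median of the $F$-triple $(u,x',y')$. Strong median-stability then puts those gates in $V(F)$, and Proposition~\ref{P:strong.faithf./hyper-median}(ii) forces the whole cycle $C_u$, being the convex hull of that metric triangle, into $F$; from there $P_u=C_u-W_{ba}$ yields (SPS1)--(SPS2) for $U_{ab}^F$, i.e.\ condition (ii) of the Characterization Theorem rather than condition (vi). The second assertion is then obtained for $u\in U_{ab}\cap\mathcal{I}_F(U_{ab}^F)$ (so $u\in V(F)$ from the outset) by exhibiting $\phi_{ab}(u)$ as the median of the $F$-triple $(\phi_{ab}(v),\phi_{ab}(w),u)$. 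Your third step, pulling $co_G(D)$ into $F$ via the median-closure and Proposition~\ref{P:TPQ/prop.isom.cycle}, is sound, but it cannot rescue the earlier ones.
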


\begin{proof}
(a)\; Let $ab \in E(F)$ and $u \in \mathcal{I}_F(U_{ab}^F) - U_{ab}^F$.  Then $u \in I_F(x,y)$ for some $x, y \in U_{ab}^F$.  Because $U_{ab}^F \subseteq U_{ab} \cap V(F)$ by Lemma~\ref{L:prop.isom subgr.}, it follows that $u \in \mathcal{I}_F(U_{ab}) - U_{ab}$ and $x, y \in U_{ab}$.  Because $G$ is ph-homogeneous, there is an $ab$-cycle $C$ which associated with $u$.  This cycle is gated by Lemma~\ref{L:ab-cycle/gated} and by the fact that the subgraph $\mathbf{Cyl}[X]$ is gated, where $X$ is the bulge of $co_G(U_{ab})$ which contains $u$.

Let $x'$ and $y'$ be the neighbors of $x$ and $y$ in $U_{ba}$, respectively.  Clearly the triple $(u,v',w')$ of the gates in $C$ of $u, x', y'$ is a hyper-median of $(u,x',y')$.  Because $F$ is strongly faithful in $G$, it follows that $u, v', w' \in V(F)$, and thus $(u,v',w')$ is a hyper-median of $(u,x',y')$ in $F$.  Moreover $C$ is a gated cycle of $F$ since $F$ is isometric in $G$.  It follows that $P := C-W_{ba}^F$ is a convex $ab$-path in $F$ which passes through $u$.  This path $P$ satisfies the properties (SPS1) and (SPS2) in $G$, since $G$ is ph-homogeneous.  Then, because $F$ is an isometric subgraph of $G$, it easily follows that $P$ satisfies the properties (SPS1) and (SPS2) in $F$.  Hence $U_{ab}^F$ is strongly ph-stable, and analogously $U_{ba}^F$ is strongly ph-stable.  Consequently $F$ is ph-homogeneous by the Characterization Theorem.

(b)\; Because $U_{ab}^F \subseteq U_{ab}$, it follows that $U_{ab}^F \subseteq U_{ab} \cap \mathcal{I}_F(U_{ab}^F)$.  Let $u \in U_{ab} \cap \mathcal{I}_F(U_{ab}^F)$.  Then $u \in I_G(v,w)$ for some $v, w \in \mathcal{I}_F(U_{ab}^F)$.  Because $u \in U_{ab}$ and since $F$ is median-stable, we have $$\phi_{ab}(u) = m_G(\phi_{ab}(v),\phi_{ab}(w),u) = m_F(\phi_{ab}(v),\phi_{ab}(w),u).$$  Hence $\phi_{ab}(u) \in V(F)$, and thus $u \in U_{ab}^F$.  Consequently $U_{ab}^F = U_{ab} \cap \mathcal{I}_G(U_{ab}^F)$.
\end{proof}

As an immediate consequence of Proposition~\ref{P:strong.faithf./hyper-median} and Theorem~\ref{T:strong.faithf.=>ph-homogeneous}, we have:

\begin{cor}\label{C:strong.faith.hypermed.=> hypermed.}
Any strongly faithful subgraph of a hyper-median partial cube is hyper-median.
\end{cor}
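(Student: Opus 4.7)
The plan is to obtain this corollary as a direct combination of the two preceding results, with essentially no additional work. First, I would invoke Theorem~\ref{T:strong.faithf.=>ph-homogeneous} to conclude that the strongly faithful subgraph $F$ is itself a Peano partial cube; this settles the underlying structural prerequisite for being hyper-median.

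Next, given any triple $(u,v,w)$ of vertices of $F$, I would use the hypothesis that the ambient graph $G$ is hyper-median to produce either a median $m_G(u,v,w)$ or a hyper-median $(x,y,z)$ of the triple in $G$. Proposition~\ref{P:strong.faithf./hyper-median} then transfers this data into $F$: in the median case, $m_G(u,v,w)$ lies in $V(F)$ and is the median in $F$, while in the hyper-median case $(x,y,z) \in V(F)^3$ and forms a hyper-median of $(u,v,w)$ in $F$ (with $co_F(x,y,z) = co_G(x,y,z)$ a hypertorus, so the hyper-median condition of Definition~\ref{D:hyper-median} is genuinely verified inside $F$).

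Putting these two steps together, every triple of vertices of $F$ admits a median or hyper-median in $F$, and $F$ is a Peano partial cube; thus $F$ satisfies Definition~\ref{D:hypermed.p.c.} and is a hyper-median partial cube. There is no real obstacle here: the work has already been done in Theorem~\ref{T:strong.faithf.=>ph-homogeneous} (which guarantees the Peano property is inherited by strongly faithful subgraphs) and in Proposition~\ref{P:strong.faithf./hyper-median} (which guarantees that medians and hyper-medians in $G$ descend to $F$), and the corollary is just the formal juxtaposition of these two facts.
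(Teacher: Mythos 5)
Your proposal is correct and matches the paper exactly: the paper derives this corollary as an immediate consequence of Proposition~\ref{P:strong.faithf./hyper-median} and Theorem~\ref{T:strong.faithf.=>ph-homogeneous}, which is precisely the combination you describe.
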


Note that a strongly faithful subgraph of a Peano partial cube $G$ is not necessarily convex in $G$.  For example, a $3$-path in a $6$-cycle is clearly strongly faithful, but is not convex.  Also note that an isometric subgraph of a Peano partial cube $G$ which is ph-homogeneous in its own right, is not necessarily strongly median-stable, as is shown by the example of an isometric $6$-cycle of a $3$-cube. 

\begin{rem}\label{R:strong.faithf./conv.}
 According to Theorem~\ref{T:strong.faithf.=>ph-homogeneous} and the facts that every convex subgraph of a partial cube is strongly faithful, and that the pre-hull number of a Peano partial cube is at most equal to $1$, we infer immediately that \emph{a partial cube $G$ is ph-homogeneous if and only if the pre-hull number of any finite strongly faithful subgraph of $G$ is at most equal to $1$}.  Contrary to Propositions~\ref{P:netl./fin.faithf.} and~\ref{P:cube-free netlike/isom.subgr.}, and even if a ‘‘strongly faithful subgraph’’ is a strictly weaker concept than that of ‘‘convex subgraph’’, the property that ‘‘the pre-hull number of any finite strongly faithful subgraph of $G$ is at most equal to $1$’’ characterizes any Peano partial cube, and not some special ones, as one may have expected.
 \end{rem}

\begin{thm}\label{T:retract.ph-homog./strong.faithf.}
Any retract of a Peano partial cube $G$ is strongly faithful in $G$.
\end{thm}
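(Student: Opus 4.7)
The plan is to exploit the fact that any retraction $r:G\to F$ is a distance-non-increasing self-map of $G$ that fixes $F$ pointwise, and then to use the quasi-median characterization of hyper-medians from Lemma~\ref{L:BC/pro.3} together with the uniqueness of medians (Lemma~\ref{L:gen.propert.}(i)) and the uniqueness of hyper-medians (Corollary~\ref{C:hypermed./uniqueness}). Throughout, I would rely on the standard ``contraction lemma'' for partial cubes: if $p\in I_G(a,b)$ and $r$ fixes $a$ and $b$, then $d_G(a,r(p))+d_G(r(p),b)\le d_G(a,p)+d_G(p,b)=d_G(a,b)$, so $r(p)\in I_G(a,b)$ and moreover both $d_G(a,r(p))=d_G(a,p)$ and $d_G(r(p),b)=d_G(p,b)$, because each of the two summands on the left is individually bounded above by the corresponding summand on the right.

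I would first dispose of isometry and of the median case. That $F$ is isometric in $G$ is the standard (and already invoked) property of retracts of partial cubes; alternatively it is contained in Lemma~\ref{L:Gf} applied to $r$, since $F=G_r$. For the median case, let $(u,v,w)$ be a triple in $V(F)$ with median $m\in V(G)$. Applying the contraction lemma to $m\in I_G(u,v)$, $m\in I_G(v,w)$ and $m\in I_G(w,u)$ (with $r$ fixing $u,v,w$) shows that $r(m)$ again lies in all three intervals, so $r(m)$ is a median of $(u,v,w)$ in $G$. By Lemma~\ref{L:gen.propert.}(i) medians are unique, hence $r(m)=m\in V(F)$.

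The main step is the hyper-median case; I expect this to be the principal obstacle because the quasi-median $(x,y,z)$ is not, a priori, characterised by membership in a single triple intersection of intervals. The right tool is Lemma~\ref{L:BC/pro.3}: from the defining distance equalities of a quasi-median we have $x\in I_G(u,y)\cap I_G(u,z)$, and combining this with $y\in I_G(u,v)$, $z\in I_G(u,w)$ gives $x\in I_G(u,v)\cap I_G(u,w)=I_G(u,x)$. Apply the contraction lemma to $x\in I_G(u,v)$: this yields $r(x)\in I_G(u,v)$ together with the equality $d_G(u,r(x))=d_G(u,x)$. The same argument for $x\in I_G(u,w)$ gives $r(x)\in I_G(u,w)$, so $r(x)\in I_G(u,v)\cap I_G(u,w)=I_G(u,x)$, in particular $d_G(u,r(x))+d_G(r(x),x)=d_G(u,x)$. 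Combined with $d_G(u,r(x))=d_G(u,x)$ this forces $d_G(r(x),x)=0$, i.e. $r(x)=x$, so $x\in V(F)$. By the symmetric use of the other two pairs in the quasi-median relations, $r(y)=y$ and $r(z)=z$, so $\{x,y,z\}\subseteq V(F)$.

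Putting the pieces together shows that $F$ is both median-stable and hyper-median-stable, i.e. strongly median-stable in the sense of Definition~\ref{D:strong.med.stable}; coupled with isometry this is exactly the definition of a strongly faithful subgraph (Definition~\ref{D:strong.faithf.}). One small bookkeeping remark to include in the final write-up: for the hyper-median argument to make sense at all we need to know a hyper-median exists in $G$, which is why the statement is phrased with ``if'' in Definition~\ref{D:strong.med.stable}; if the triple $(u,v,w)$ has only a median (respectively only a hyper-median) in $G$, then only the corresponding part of the argument above is needed. The rest is entirely routine, and no appeal to the decomposition machinery of Section~\ref{S:decompos.} or to the structure of $\mathbf{Cyl}[G]$ is required.
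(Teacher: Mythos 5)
Your proof is correct and follows essentially the same route as the paper's: both arguments rest on the interval identities $I_G(u,v)\cap I_G(u,w)=I_G(u,x)$ (and their analogues for $y$ and $z$) satisfied by a quasi-median, the paper concluding by showing that $(f(x),f(y),f(z))$ again satisfies these identities and invoking the uniqueness of quasi-medians (Proposition~\ref{P:quasi-med./uniqueness}), whereas you pin down each coordinate directly from $r(x)\in I_G(u,x)$ together with $d_G(u,r(x))=d_G(u,x)$, which is if anything slightly more self-contained. One small correction: the identity you need is not Lemma~\ref{L:BC/pro.3} (whose hypothesis is the hyper-median property, not available for a general Peano partial cube) but the equalities recorded at the start of the proof of Proposition~\ref{P:quasi-med./uniqueness}, which hold for any quasi-median in any Peano partial cube and are exactly what the paper's own proof uses.
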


\begin{proof}
Let $f$ be a retraction of $G$ onto one of its subgraph $F$.  Then $F$ is faithful by Lemma~\ref{L:Gf}.  Let $(u,v,w)$be a triple of vertices of $F$ which has a hyper-median $(x,y,z)$ in $G$.  Then $(x,y,z)$satisfies the following equalities:
\begin{align*}
I_G(u,v) \cap I_G(u,w) = I_G(u,x),\\
I_G(v,x) \cap I_G(v,w) = I_G(v,y),\\
I_G(w,x) \cap I_G(w,y) = I_G(w,z).
\end{align*}
Because $F$ is isometric in $G$, it follows that $(f(x),f(y),f(z))$ satisfies:
\begin{align*}
I_G(u,v) \cap I_G(u,w) = I_G(u,f(x)),\\
I_G(v,f(x)) \cap I_G(v,w) = I_G(v,f(y)),\\
I_G(w,f(x)) \cap I_G(w,f(y)) = I_G(w,f(z)).
\end{align*}

Then the three intervals $I_G(f(x),f(y)),\; I_G(f(y),f(z)),\; I_G(f(z),f(x))$ pairwise intersect in their common endvertices.  Hence $(f(x),f(y),f(z))$ is a quasi-median of $(u,v,w)$.  By the uniqueness of the quasi-median in a Peano partial cube (see Proposition~\ref{P:quasi-med./uniqueness}), it follows that $f(x) = x,\; f(y) = y$ and $f(z) = z$.  Therefore $F$ is strongly faithful in $G$.
\end{proof}

We recall that, by \cite[Theorem 1]{B84} (resp. \cite[Theorem 4.5]{P05-2}), any faithful subgraph of a median graph (resp. a netlike cube) $G$ is a retract of $G$.  This is generally not true for any strongly faithful subgraph of a Peano partial cube, and even of a hyper-median partial cube as is shown by the following example.  Let $F$ be the gated amalgam of $C_4$ and $K_2$ along a vertex.  Then $F$ is a strongly faithful subgraph of $Q_3 = C_4 \Box K_2$, but it is clearly not a retract of $Q_3$.  This example is a counter-example, and is in fact the smallest, of the property that a gated amalgam of Peano partial cubes is generally not a retract of the Cartesian product of its constituents.

This also shows that a hyper-median partial cube is generally not a retract of some hypertorus.  Indeed a hypertorus that contains $F$ as a subgraph must also contain $Q_3$.  Hence $F$ cannot be a retract of $H$ since it is not even a retract of $Q_3$.  Recall that in~\cite{CKM19} it was conjectured that a partial cube is hyper-median if and only if it is a retract of a Cartesian product of bipartite cellular graphs.

\begin{que}\label{Q:str.faithf./retract}
What are the strongly faithful subgraphs of a Peano partial cube which are retracts of this graph?
\end{que}

We only give a necessary condition for a subgraph of a Peano partial cube to be a retract of this graph.  First we need a lemma.

\begin{lem}\label{L:strong.med.st.}
Let $F$ be a subgraph of a Peano partial cube $G$.  We have the following properties:

\textnormal{(i)}\; If $F$ is median-stable, then $F[U_{ab}^F]$ is convex in $F[U_{ab}^G \cap V(F)]$.

\textnormal{(ii)}\; If $F$ is strongly faithful, then any isometric cycle of $F[U_{ab}^G \cap V(F)]$ that has at least two vertices in $U_{ab}^F$ has all its vertices in $U_{ab}^F$.
\end{lem}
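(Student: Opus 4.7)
Both assertions hinge on the isometry $\phi_{ab}$ of Lemma~\ref{L:phi_ab} from $G[\mathcal{I}_G(U_{ab})]$ onto $G[\mathcal{I}_G(U_{ba})]$, which pairs each $v \in U_{ab}^G$ with its unique $W_{ba}^G$-neighbor $v' := \phi_{ab}(v)$. When $F$ is isometric in $G$, Lemma~\ref{L:prop.isom subgr.} yields $U_{ab}^F = \{v \in U_{ab}^G \cap V(F) : v' \in V(F)\}$, so each claim reduces to forcing certain $\phi_{ab}$-images into $V(F)$ from the hypotheses at our disposal.

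For (i), given $x, y \in U_{ab}^F$ and $z$ on an $F[U_{ab}^G \cap V(F)]$-geodesic joining them, my plan is to exhibit $z'$ as the median in $G$ of the triple $(x', y', z)$, which sits entirely in $V(F)$, and then invoke median-stability to conclude $z' \in V(F)$, whence $z \in U_{ab}^F$. The memberships $z' \in I_G(z, x')$ and $z' \in I_G(z, y')$ are automatic, since $z'$ is the unique $W_{ba}^G$-neighbor of $z \in U_{ab}^G$, forcing every $G$-geodesic from $z$ to any vertex of $W_{ba}^G$ through the edge $zz'$. The membership $z' \in I_G(x', y')$ translates via $\phi_{ab}$ into $z \in I_G(x, y)$; this is the delicate point, since an $F[U_{ab}^G \cap V(F)]$-geodesic need not be a $G$-geodesic. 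The main obstacle is to bridge the two metrics, which I intend to handle by a step-by-step argument along the geodesic, exploiting the convex four-cycle $z_{j-1} z_j z'_j z'_{j-1}$ present in $G$ for every consecutive pair on the path (their connecting edge being non-$\Theta$-equivalent to $ab$) and iterating the median argument to promote the vertices into $U_{ab}^F$ one at a time.

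For (ii), let $C = c_1 c_2 \cdots c_{2n} c_1$ be an isometric cycle of $F[U_{ab}^G \cap V(F)]$ with $c_i, c_j \in U_{ab}^F$. Applying (i) to the shorter arc of $C$ joining $c_i$ to $c_j$, which is itself a geodesic of $F[U_{ab}^G \cap V(F)]$, already deposits every vertex of that arc into $U_{ab}^F$, providing $\phi_{ab}$-images in $V(F)$ for a whole portion of the cycle. To cover the remaining arc I invoke strong faithfulness: the plan is to select a triple of vertices of $V(F)$ drawn from $C$ and from the $\phi_{ab}$-image of the newly enriched portion, whose hyper-median in $G$ is a non-degenerate metric triangle, so that by Proposition~\ref{P:hypermed./gates} and Theorem~\ref{T:conv.reg.H-subgr.=>gated} the convex hull of that triangle is a gated quasi-hypertorus containing the prism spanned by $C$ and $\phi_{ab}(C)$ in $G$; strong faithfulness then forces this quasi-hypertorus entirely into $V(F)$, yielding $\phi_{ab}(c_l) \in V(F)$ for every remaining $c_l$. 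The main obstacle here is pinpointing a hyper-median triple whose convex hull is wide enough to engulf both $C$ and its $\phi_{ab}$-image.
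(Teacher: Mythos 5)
Your part (i) is essentially the paper's own argument: the paper likewise declares $x'_i$ (your $z'$) to be the median of the triple $(x'_0,x_i,x'_n)$ and concludes by median-stability, and it does so without comment on the point you rightly single out, namely that $z'\in I_G(x',y')$ is equivalent, via the isometry $\phi_{ab}$, to $z\in I_G(x,y)$, i.e.\ to the geodesic of $F[U_{ab}^G\cap V(F)]$ being a geodesic of $G$. Be aware, though, that your proposed repair does not actually escape this: to realize $z'_j$ as the median of a triple of $F$-vertices you still need $z'_j\in I_G(w_1,w_2)$ for two vertices $w_1,w_2$ of $F$ in $W_{ba}$, and through $\phi_{ab}$ that is exactly the same interval condition one step down; the convex $4$-cycles only re-deliver the two memberships that were already automatic. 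So for (i) you have correctly located the delicate step but not supplied an argument that closes it.

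Part (ii) contains a genuine flaw. By Proposition~\ref{P:hypermed./gates}(i), the convex hull of a hyper-median is a Cartesian product of cycles \emph{each of length greater than} $4$; for such a hull to contain the prism $G[V(C)\cup V(\phi_{ab}(C))]\cong C\Box K_2$ it would have to be at least a $2$-torus with a second long factor cycle running in the direction of the $\Theta$-class of $ab$. Nothing in your data provides such a cycle, and the plan already fails in the simplest case $G=C_6\Box K_2$ with $ab$ an edge of a $K_2$-layer: there the only convex hypertori, hence the only possible hyper-median hulls, are the two $C_6$-layers, so no triple whatsoever has a hyper-median whose convex hull contains the prism. The target is also more than you need: it suffices that the hull contain the single cycle $\phi_{ab}(C)$. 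That is what the paper arranges. With $C=\langle x_0,\dots,x_{2n-1},x_0\rangle$ and $x_0,x_i\in U_{ab}^F$ (the antipodal case $i=n$ is finished by your arc argument applied to both arcs; otherwise only the short arc is a geodesic and (i) alone cannot conclude), it picks a vertex $x_j$ near the midpoint of the long arc and considers the triple $(x'_0,x_j,x'_i)$, all three entries of which already lie in $V(F)$. Its hyper-median is $(x'_0,x'_j,x'_i)$, a metric triangle lying on the single isometric cycle $\langle x'_0,\dots,x'_{2n-1},x'_0\rangle$; strong faithfulness puts $x'_j$ into $V(F)$, and Proposition~\ref{P:strong.faithf./hyper-median}(ii) gives $co_F(x'_0,x'_j,x'_i)=co_G(x'_0,x'_j,x'_i)\supseteq V(\phi_{ab}(C))$, whence $V(C)\subseteq U_{ab}^F$. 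In short, the "main obstacle" you name is resolved by taking the third vertex on $C$ itself so that the hyper-median lands on $\phi_{ab}(C)$ alone, not by trying to engulf the prism.
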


\begin{proof}
For each vertex $u \in U_{ab}^G$, we denote by $u'$ the neighbor of $u$ in $U_{ba}^G$.
 
(i)\; Let $x, y \in U_{ab}^F$, and $\langle x_0,\dots,x_n\rangle$ be an $(x,y)$-geodesic in $F[U_{ab}^G \cap V(F)]$ with $x_0 = x$ and $x_n = y$.  Then, for every $i$ with $0 < i < n$, $x'_i$ is the median of the triple $(x'_0,x_i,x'_n)$.  Because $F$ is median-stable in $G$, it follows that $x'_i \in V(F)$, and thus $x_i \in U_{ab}^F$.

(ii)\; Let $C = \langle x_0,\dots,x_{2n-1},x_0\rangle$ be an isometric cycle of $F[U_{ab}^G \cap V(F)]$ with $n \geq 2$ such that two of its vertices, say $x_0$ and $x_i$, $i < 2n$, belong to $U_{ab}^F$.

Assume that $i = n$.  Then $x'_0$ and $x'_{n}$ belong to $U_{ba}^F$.  Hence, by (i), because $F$ is median-stable and since $C$ is the union of two $(x_0,x_n)$-geodesics in $F[U_{ab}^G \cap V(F)]$, any vertex of $C$ belongs to $U_{ab}^F$.

Now assume that $i$ is distinct from $n$.  Without loss of generality, suppose that $i < n$.  Let $j = \lfloor (i+2n \rfloor)/2$.  Because $C$ is isometric in $G$, $(x'_0,x'_j,x'_i)$ is the hyper-median of $(x'_0,x_j,x'_i)$ in $G$, and thus in $F$ since $F$ is strongly faithful in $G$.  Therefore, by Proposition~\ref{P:strong.faithf./hyper-median}(ii), the isometric cycle $\langle x'_0,\dots,x'_{2n-1},x'_0\rangle$ is a cycle of $F$, and thus $V(C) \subseteq U_{ab}^F$.
\end{proof}

\begin{pro}\label{P:retract/strong.faithf.}
If a subgraph $F$ of a Peano partial cube $G$ is a retract of $G$, then we have the following two properties:

\textnormal{(i)}\; $F$ is a strongly faithful subgraph of $G$.

\textnormal{(ii)}\; For each edge $ab \in \partial_G(V(F))$ with $a \in V(F)$, if there exist two vertices $u \in U_{ab}^F$ and $v \in U_{ab}^G \cap V(F)$ such that there are two $(u,v)$-geodesics in $F[U_{ab}^G \cap V(F)]$ that pass through distinct neighbors of $v$, then $v \in U_{ab}^F$.
\end{pro}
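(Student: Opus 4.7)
Part (i) is exactly Theorem~\ref{T:retract.ph-homog./strong.faithf.}. For part (ii), let $f\colon G\to F$ be a retraction and set $n:=d_F(u,v)$, $v':=\phi_{ab}(v)$, $w_i':=\phi_{ab}(w_i)$, and $Q_i:=\phi_{ab}(P_i)$; the last is a $(u',v')$-geodesic of length $n$ in $G$ since $P_i\subseteq U_{ab}^G$ and $\phi_{ab}$ is an isometry on $G[\mathcal{I}_G(U_{ab})]$ by Lemma~\ref{L:phi_ab}. I would argue by contradiction, assuming $v\notin U_{ab}^F$, which amounts to $v'\notin V(F)$, and study the vertex $y:=f(v')\in V(F)$.

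The first block of reductions pins down $y$ and the $f(w_i')$. Since $u\in U_{ab}^F$ one has $u'\in V(F)$ and $f(u')=u'$, so $f(Q_i)$ is a $(u',y)$-walk of length $\le n$ in $F$. Because $F$ is isometric in $G$, $d_F(u',v)=d_G(u',v)=n+1$, which forces $y\ne v$; the triangle inequality then gives $d_F(u',y)=n$, so each $f(Q_i)$ is a genuine $(u',y)$-geodesic of $F$. Applying the same distance argument to $Q_i[u',w_i']$ (length $n-1$) rules out $f(w_i')=v$, and the bipartiteness of $F$ (no triangles) applied to the image of the $G$-$4$-cycle $\langle v,w_i,w_i',v'\rangle$ rules out $f(w_i')\in\{w_i,y\}$. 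Hence each $f(w_i')$ is a common neighbor of $w_i$ and $y$ in $F$, distinct from $v,w_i,y$, and the image $4$-cycle $\langle v,w_i,f(w_i'),y\rangle$ is genuine. Since opposite edges of a $4$-cycle in a partial cube are $\Theta$-equivalent, $vw_i\sim_\Theta yf(w_i')$ in $F$, hence also in $G$ by isometry; since $vw_1$ and $vw_2$ share $v$ they belong to different $\Theta$-classes, so $f(w_1')\ne f(w_2')$. I thus obtain a hexagon $D:=\langle y,f(w_1'),w_1,v,w_2,f(w_2'),y\rangle$ in $F$ whose three pairs of $\Theta$-equivalent edges are at cycle-distance $2$ rather than antipodal.

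The final step is to extract the contradiction. Tracing the $\phi_{ab}$-images of $D$ back into $U_{ab}^G\cap V(F)$ and using the $\Theta$-class identifications produces an isometric cycle of $F[U_{ab}^G\cap V(F)]$ that contains $u$ together with a second vertex of $U_{ab}^F$ (coming from the fact that some intermediate $x_k\in V(P_i)$ is forced by the $\Theta$-class equations to have $\phi_{ab}(x_k)\in V(F)$); Lemma~\ref{L:strong.med.st.}(ii) then forces every vertex of this cycle, and in particular $v$, to lie in $U_{ab}^F$, contradicting the standing hypothesis. The main obstacle, and where I expect the argument to require most care, is exactly this last extraction: one must treat separately the degenerate subcase $y\in\{w_1,w_2\}$, in which the hexagon degenerates to a $4$-cycle $\langle v,w_1,f(w_j'),w_2\rangle$ for the non-degenerate index $j$, and one must verify isometricity in $F[U_{ab}^G\cap V(F)]$ (and not merely in $F$ or $G$) of the cycle fed into Lemma~\ref{L:strong.med.st.}(ii), which I expect to follow from the convexity of $U_{ab}^F$ in $F[U_{ab}^G\cap V(F)]$ established in Lemma~\ref{L:strong.med.st.}(i).
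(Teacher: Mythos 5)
Your part (i) and the first half of part (ii) are fine: the reduction to Theorem~\ref{T:retract.ph-homog./strong.faithf.}, the observation that $y:=f(v')$ is a neighbour of $v$ in $F$ distinct from $v$, and the analysis showing that each $f(w_i')$ completes a $4$-cycle on $v,w_i,y$ are essentially correct (modulo the degenerate cases you flag). But the configuration you arrive at --- two $4$-cycles of $F$ glued along the edge $vy$, together with the two $4$-cycles of $G$ glued along $vv'$ --- is not contradictory by itself: it is realized, for instance, around any vertex of $Q_4$. So everything hinges on your final ``extraction'' step, and that step is a genuine gap. The hexagon $D$ lives near $v$ only; its vertices $y$, $f(w_1')$, $f(w_2')$ need not lie in $U_{ab}^G$, it does not contain $u$, and nothing in your argument produces an isometric cycle of $F[U_{ab}^G\cap V(F)]$ through $v$ containing two vertices of $U_{ab}^F$. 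The two geodesics need not bound such a cycle (they may coincide except near $v$, in which case the cycle they enclose misses $u$ and may meet $U_{ab}^F$ in no vertex at all), and the assertion that some intermediate $x_k$ is ``forced by the $\Theta$-class equations'' to have $\phi_{ab}(x_k)\in V(F)$ is never substantiated. Note also that the proposition is deliberately stated for geodesics rather than cycles precisely because Lemma~\ref{L:strong.med.st.}(ii) (which needs \emph{two} vertices of $U_{ab}^F$ on an isometric cycle) is too weak here; the cycle statement with a single vertex, Corollary~\ref{C:retract/cycle}, is \emph{deduced from} this proposition, so routing the proof back through an isometric-cycle lemma points in the wrong direction.

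The paper's proof goes the other way: it anchors the retraction at $u$ rather than at $v$ and propagates. Writing the two geodesics as $\langle x_0,\dots,x_n\rangle$ and $\langle y_0,\dots,y_n\rangle$ with $x_{n-1}\ne y_{n-1}$, and letting $i_0$ be least with $x_{i_0}\notin U_{ab}^F$, one shows $f(x'_{i_0})=x_{i_0-1}$ (it must be a common neighbour of $x'_{i_0-1}=f(x'_{i_0-1})$ and $x_{i_0}$ lying in $F$, and $x'_{i_0}\notin V(F)$), and then inductively $f(x'_{i})=x_{i-1}$ for all $i\ge i_0$; hence $f(v')=x_{n-1}$ and, by the same propagation along the second geodesic, $f(v')=y_{n-1}$, a contradiction. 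Your computation of $f(v')$ and of the $f(w_i')$ is essentially the terminal step of this propagation; what is missing is the induction from $u$ that pins $f(v')$ down to a \emph{specific} neighbour of $v$ on each geodesic, which is exactly where the hypothesis of two distinct penultimate vertices bites.
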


\begin{proof}
Let $F$ be a retract of $G$.  Then $F$ must be a strongly faithful subgraph of $G$ by Theorem~\ref{T:retract.ph-homog./strong.faithf.}.  Let $f$ be a retraction of $G$ onto $F$ and $ab \in \partial_G(V(F))$ with $a \in V(F)$.  Let $\langle x_0,\dots,x_n\rangle$ and  $\langle y_0,\dots,y_n\rangle$ be two $(u,v)$-geodesics in $F[U_{ab}^G \cap V(F)]$ with $n \geq 2$, $x_0 = y_0 = u$, $x_n = y_n = v$ and $x_{n-1} \neq y_{n-1}$.

By Lemma~\ref{L:strong.med.st.}(i), because $F$ is median-stable in $G$, $I_{F[U_{ab}^G]}(z_0,z_1) \subseteq U_{ab}^F$ for all vertices $z_0, z_1 \in U_{ab}^F$.  Suppose that $v \notin U_{ab}^F$.  Let $i_0$ and $j_0$ be the smallest integers such that $x_{i_0}$ and $y_{j_0}$ do not belong to $U_{ab}^F$.  Then $i_0 \geq 1$ and $j_0 \geq 1$.  For each vertex $w \in U_{ab}^G$, denote by $w'$ the neighbor of $w$ in $U_{ba}^G$.  Then, because $f$ is a retraction,  $f(x'_{i_0}) = x_{i_0-1}$ and $f(y'_{i_0}) = y_{i_0-1}$, and then $f(x'_{i_0+1}) = x_{i_0}$ and $f(y'_{i_0+1}) = y_{i_0}$, and so on.  It follows that $x_{n-1} = f(x'_n) = f(y'_n) = y_{n-1}$, contrary to the hypothesis $x_{n-1} \neq y_{n-1}$.  Hence $v \in U_{ab}^F$.
\end{proof}

We clearly have the following consequence, which is in some ways stronger than Lemma~\ref{L:strong.med.st.}(ii).

\begin{cor}\label{C:retract/cycle}
If a subgraph $F$ of a Peano partial cube $G$ is a retract of $G$, then, for each edge $ab \in \partial_G(V(F))$ with $a \in V(F)$,  any isometric cycle of $F[U_{ab}^G \cap V(F)]$ that has a vertex in $U_{ab}^F$ has all its vertices in $U_{ab}^F$.
\end{cor}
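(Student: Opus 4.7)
The plan is to derive the corollary directly from the two results just established, namely Proposition~\ref{P:retract/strong.faithf.} and Lemma~\ref{L:strong.med.st.}(ii). The strategy is to upgrade the hypothesis ``one vertex of $C$ in $U_{ab}^F$'' to ``two vertices of $C$ in $U_{ab}^F$'' by locating a second such vertex at the antipode, and then to invoke Lemma~\ref{L:strong.med.st.}(ii) to propagate the property to every vertex of $C$.

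Concretely, let $F$ be a retract of $G$, fix an edge $ab \in \partial_G(V(F))$ with $a \in V(F)$, and let $C = \langle x_0, x_1, \dots, x_{2n-1}, x_0\rangle$ be an isometric cycle of $F[U_{ab}^G \cap V(F)]$ with $n \geq 2$ (bipartiteness forces even length) such that $x_0 \in U_{ab}^F$. Consider the antipode $x_n$ of $x_0$ in $C$. Because $C$ is isometric in $F[U_{ab}^G \cap V(F)]$, the two semi-paths $\langle x_0, x_1, \dots, x_n\rangle$ and $\langle x_0, x_{2n-1}, \dots, x_n\rangle$ are both $(x_0,x_n)$-geodesics in $F[U_{ab}^G \cap V(F)]$, and they pass through the two distinct neighbors $x_{n-1}$ and $x_{n+1}$ of $x_n$. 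Since $F$ is a retract of $G$, Proposition~\ref{P:retract/strong.faithf.}(ii) (applied with $u = x_0$ and $v = x_n$) now forces $x_n \in U_{ab}^F$.

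Thus $C$ has at least two vertices, namely $x_0$ and $x_n$, lying in $U_{ab}^F$. Since $F$ is strongly faithful in $G$ by Proposition~\ref{P:retract/strong.faithf.}(i), Lemma~\ref{L:strong.med.st.}(ii) applies and yields $V(C) \subseteq U_{ab}^F$, completing the proof.

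There is no real obstacle here: the entire content lies in noticing that an isometric cycle always provides two internally disjoint geodesics between antipodal vertices, which is precisely the combinatorial configuration needed to trigger Proposition~\ref{P:retract/strong.faithf.}(ii). The corollary is then essentially a bootstrap from ``one vertex in $U_{ab}^F$'' to ``two vertices in $U_{ab}^F$'', after which Lemma~\ref{L:strong.med.st.}(ii) does the rest.
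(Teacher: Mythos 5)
Your proof is correct and is exactly the argument the paper intends (the paper states the corollary as a "clear" consequence of Proposition~\ref{P:retract/strong.faithf.} without writing it out): the two arcs of the isometric cycle between $x_0$ and its antipode $x_n$ are two $(x_0,x_n)$-geodesics in $F[U_{ab}^G \cap V(F)]$ through the distinct neighbors $x_{n-1}$ and $x_{n+1}$, so Proposition~\ref{P:retract/strong.faithf.}(ii) places $x_n$ in $U_{ab}^F$, and Lemma~\ref{L:strong.med.st.}(ii) (whose antipodal case is precisely this configuration) finishes. No gaps.
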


\section{Fixed subgraph properties}\label{S:FSP}

Fixed finite subgraph theorems, which are far-reaching outgrowths of
metric fixed point theory, have been a flourishing topic in the
literature on metric graph theory.  In this section we prove several fixed subgraph properties that generalize analogous results on median graphs and netlike partial cubes.  As a side result, we show that the intersection graph of the maximal gated regular subgraphs of a finite Peano partial cube is dismantlable, in other words, cop-win.

\subsection{Finite Peano partial cubes}\label{SS:FSP/finite}

We recall that the gated regular subgraphs of a Peano partial cube $G$ are the convex quasi-hypertori of $G$.  Hence $\mathbf{Tor}(G)$ is the set of these subgraphs, and $\mathbf{Tor}(G,ab)$ the subset of these subgraphs that contain an edge $\Theta$-equivalent to a given edge $ab$ of $G$.

Let $G$ be a partial cube.  
We denote by $\Gamma(\mathbf{Tor}(G))$ the intersection graph of the maximal gated quasi-hypertori of $G$, i.e., the graph whose vertex set is
the set of all maximal gated quasi-hypertori of $G$, and
such that two vertices are adjacent if and only if they have a
non-empty intersection.  We also denote by $G^{\Diamond}$ the graph
having the same vertex set as $G$ and where two vertices are adjacent
if and only if they belong to a common gated
quasi-hypertori of $G$.  The graph $\Gamma(\mathbf{Tor}(G))$
is the clique graph of $G^{\Diamond}$, that is, the intersection graph
of the maximal simplices (i.e., complete subgraphs) of $G^{\Diamond}$.

We recall that, if $x$ and $y$ are two vertices of a finite graph $G$,
then $x$ is said to be \emph{dominated} by $y$ in $G$ if $N_{G}[x]
\subseteq N_{G}[y]$.  We say that a finite graph $G$ is \emph{dismantlable}
if its vertices can be linearly ordered $x_{0}, \ldots, x_{n}$ so
that, for each $i < n$, the vertex $ x_{i}$ is dominated by $x_{i+1}$
in the subgraph of $G$ induced by $\{x_{i},\ldots,x_{n} \}$.  The
enumeration $x_{0}, \ldots, x_{n}$ is called a \emph{dismantling
enumeration} of the vertices of $G$.

\begin{pro}\label{P:BP91}\textnormal{(Bandelt and Prisner 
    \cite[Proposition 2.6]{BP91})}
The clique graph of a dismantlable graph is also dismantlable.
\end{pro}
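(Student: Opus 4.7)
The plan is to proceed by induction on the number of vertices of $G$. The base case is immediate: if $|V(G)|=1$, then $K(G)$ is a single vertex, hence trivially dismantlable. For the inductive step, let $x$ be a vertex of $G$ dominated by some $y\neq x$. Using the standard fact that any dominated vertex of a dismantlable graph can be taken as the first vertex of a dismantling, $G':=G-x$ is again dismantlable, so $K(G')$ is dismantlable by the inductive hypothesis.

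The first observation, proved in one line, is that every maximal clique $A$ of $G$ containing $x$ must also contain $y$: indeed $A\subseteq N_G[x]\subseteq N_G[y]$, so $A\cup\{y\}$ is a clique, forcing $y\in A$ by maximality of $A$. Using this, I would partition $\mathcal{M}(G)$ into $\mathcal{M}_{\bar{x}}$ (maximal cliques not containing $x$) and $\mathcal{M}_x$ (those containing $x$), and further split $\mathcal{M}_x=\mathcal{M}_x^{\max}\sqcup\mathcal{X}$, where $A\in\mathcal{M}_x^{\max}$ precisely when $A\setminus\{x\}$ remains maximal in $G'$. Then I would define $\Phi:\mathcal{M}_x^{\max}\cup\mathcal{M}_{\bar{x}}\to\mathcal{M}(G')$ by $\Phi(A)=A\setminus\{x\}$ if $x\in A$ and $\Phi(A)=A$ otherwise, and verify that $\Phi$ is a bijection inducing an isomorphism between the induced subgraph $K(G)[\mathcal{M}_x^{\max}\cup\mathcal{M}_{\bar{x}}]$ and $K(G')$. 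The adjacency check is routine; the only subtle case, two cliques in $\mathcal{M}_x^{\max}$, is handled because their images both contain $y$ by the observation above.

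The crux of the argument is to show that every $A\in\mathcal{X}$ is dominated in $K(G)$ by some element of $\mathcal{M}_{\bar{x}}$. Since $A\setminus\{x\}$ is not maximal in $G'$, extend it to a maximal clique $B$ of $G'$; then $B$ is also maximal in $G$, because otherwise $B\cup\{x\}$ would be a clique of $G$ strictly containing $A$, contradicting the maximality of $A$. For any $C\in\mathcal{M}(G)$ meeting $A$, either they share a vertex $v\neq x$, in which case $v\in A\setminus\{x\}\subseteq B$, or they share $x$, in which case $y\in C$ by the observation and $y\in A\setminus\{x\}\subseteq B$; either way $C\cap B\neq\emptyset$, so $A$ is dominated by $B$ in $K(G)$.

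Finally, I would remove the vertices of $\mathcal{X}$ from $K(G)$ one at a time. Since the witnesses $B$ all lie in $\mathcal{M}_{\bar{x}}$ they are never deleted, and the domination relation $N_{K(G)}[A]\subseteq N_{K(G)}[B]$ is preserved after deletion of any third vertex, so each successive removal is legitimate. Once $\mathcal{X}$ is exhausted, the remaining induced subgraph $K(G)[\mathcal{M}_x^{\max}\cup\mathcal{M}_{\bar{x}}]$ is isomorphic to $K(G')$ via $\Phi$ and is dismantlable by induction; concatenating a dismantling of this graph with the preliminary removal of $\mathcal{X}$ yields a dismantling of $K(G)$. The heart of the proof, rather than a genuine obstacle, is the domination claim for cliques in $\mathcal{X}$ together with the verification that $\Phi$ faithfully transports the intersection structure to $K(G')$.
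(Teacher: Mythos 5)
The paper does not actually prove this proposition: it is quoted verbatim from Bandelt and Prisner \cite[Proposition 2.6]{BP91} and used as a black box, so there is no internal proof to compare against. Judged on its own merits, your argument is correct and is essentially the argument of the original source: induct on $|V(G)|$ by deleting a dominated vertex $x$, observe that every maximal clique through $x$ also contains its dominating vertex $y$, identify the maximal cliques of $G-x$ with the cliques in $\mathcal{M}_x^{\max}\cup\mathcal{M}_{\bar{x}}$ via $\Phi$, and dismantle the exceptional cliques $\mathcal{X}$ first. All the key verifications go through: the bijectivity of $\Phi$ (injectivity uses that $B\cup\{x\}$ cannot be a clique for $B\in\mathcal{M}_{\bar{x}}$, surjectivity splits on whether $C\cup\{x\}$ is a clique); the adjacency preservation (the case $A\cap B=\{x\}$ is impossible since $y$ lies in both); the domination of each $A\in\mathcal{X}$ by its extension $B$ (note $A\setminus\{x\}\ni y$ is nonempty, and $B$ is maximal in $G$ because otherwise $B\cup\{x\}$ would properly contain $A$); and the persistence of domination under deletion of third vertices.

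Two small points you should make explicit. First, you invoke without proof the fact that a dismantlable graph remains dismantlable after deleting an \emph{arbitrary} dominated vertex, not merely the first vertex of some given dismantling order; this is true and standard (it is the confluence property of dismantlings, provable by an easy induction), but it is a genuine lemma and should be stated as such. Second, the definition of dismantlability recorded in this paper requires each $x_i$ to be dominated by its immediate successor $x_{i+1}$; your construction (like the original one) produces an order in which each removed vertex is dominated by \emph{some} later vertex. The paper's formulation is evidently a slip -- taken literally it would exclude $K_{1,3}$ -- and the weaker, standard notion is the one used in \cite{BP91} and in the applications here, so this is a matter of flagging the convention rather than a gap in your proof.
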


\begin{lem}\label{L:gated.H}
Let $W_{ab}$ be a semi-periphery of a Peano partial cube $G$, and
let $H \in \mathbf{Tor}(G,ab)$ be such that $V(H) \subseteq U_{ab}
\cup U_{ba}$.  If $H \cap G[W_{ab}]$ is gated, then $H$ is gated.
\end{lem}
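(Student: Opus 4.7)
The plan is to exploit the hypothesis $V(H) \subseteq U_{ab} \cup U_{ba}$ to show that $H$ splits as $H = H_0 \Box K_2$, where $H_0 := H \cap G[W_{ab}]$ and the $K_2$ factor is the $\Theta$-class of $ab$, and then to produce the gate of every vertex of $G$ in $H$ from its gate in $H_0$. First I would note that both $H_0$ and $H_1 := H \cap G[W_{ba}]$ are convex in $G$ as intersections of convex sets. Since $H$ is convex, Lemma~\ref{L:prop.isom subgr.} gives $U_{ab}^H = U_{ab}^G \cap V(H)$, and combined with $V(H) \subseteq U_{ab}^G \cup U_{ba}^G$ this yields $V(H_0) = U_{ab}^H$ and $V(H_1) = U_{ba}^H$. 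Hence every $v \in V(H_0)$ has, within $H$, a unique neighbor in $V(H_1)$ along an edge $\Theta$-equivalent to $ab$, and this neighbor is necessarily $\phi_{ab}(v)$; together with Lemma~\ref{L:phi_ab} this shows that $\phi_{ab}$ restricts to a graph isomorphism $H_0 \to H_1$ and that the only edges of $H$ crossing between $V(H_0)$ and $V(H_1)$ are the matching edges $\{v\,\phi_{ab}(v) : v \in V(H_0)\}$. Therefore $H = H_0 \Box K_2$.

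Given this product structure, I define the candidate gate map $g_H : V(G) \to V(H)$ by $g_H(x) := g_{H_0}(x)$ when $x \in W_{ab}$ and $g_H(x) := \phi_{ab}(g_{H_0}(x))$ when $x \in W_{ba}$, where $g_{H_0}$ is the gate map of $H_0$ furnished by the hypothesis. To verify that $g_H(x) \in I_G(x,z)$ for every $z \in V(H)$, I would split into four subcases according to whether $x$ and $z$ each lie in $W_{ab}$ or $W_{ba}$; in each subcase the verification reduces to a short distance computation combining the gate property of $g_{H_0}$, the fact that $\phi_{ab}$ preserves $G$-distances between its domain and range (Lemma~\ref{L:phi_ab}), and the equality $d_G(u, z) = d_G(\phi_{ab}(u), z) \pm 1$ whenever $u \in U_{ab}$ and $z$ belongs to the same or the opposite half-space relative to $ab$.

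The main obstacle is the structural step of identifying $H$ with $H_0 \Box K_2$. If the factor of the quasi-hypertorus $H$ that carries the $\Theta$-class of $ab$ were an even cycle of length at least $6$, then $H_0$ would necessarily contain vertices lying in $W_{ab}^H \setminus U_{ab}^H$ (the interior vertices of the corresponding half-cycle), contradicting the hypothesis $V(H) \subseteq U_{ab} \cup U_{ba}$; the length-$4$ case reduces to a $K_2$-factor via the re-decomposition $C_4 \cong K_2 \Box K_2$. Once this Cartesian decomposition is secured, the remaining distance bookkeeping required to verify the gate is routine and invokes no further Peano assumption on $G$.
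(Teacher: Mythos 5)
Your proposal is correct and follows essentially the same route as the paper's (very terse) proof: both identify $H$ as the prism $\bigl(H \cap G[W_{ab}]\bigr) \Box K_2$ over its trace on $W_{ab}$ (which equals $H \cap G[U_{ab}]$ under the hypothesis $V(H) \subseteq U_{ab} \cup U_{ba}$) and then obtain the gate of a vertex $u$ in $H$ as the gate in that factor or its $\phi_{ab}$-image according to the side of the half-space containing $u$. Your additional justification of the Cartesian decomposition (ruling out a cycle factor of length at least $6$ carrying the $\Theta$-class of $ab$) merely fills in a detail the paper leaves implicit.
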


\begin{proof}
Note that $H$ is isomorphic to the Cartesian product of $H' := H \cap
G[U_{ab}]$ with $K_{2}$.  Let $u \in V(G)$ and $x$ be its gate in $H'$.
Clearly $x$ or its neighbor $x'$ in $U_{ba}$ is the gate of $u$ in $H$
depending on whether $u \in W_{ab}$ or $u \in W_{ba}$.
\end{proof}

\begin{thm}\label{T:dismantlable}
If $G$ is a finite Peano partial cube, then the
graphs $G^{\Diamond}$ and $\Gamma(\mathbf{Tor}(G))$ are dismantlable.
\end{thm}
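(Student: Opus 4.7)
The plan is to reduce the statement to dismantlability of $G^{\Diamond}$ alone. Since $\Gamma(\mathbf{Tor}(G))$ is the clique graph of $G^{\Diamond}$, Proposition~\ref{P:BP91} will yield the dismantlability of $\Gamma(\mathbf{Tor}(G))$ as soon as $G^{\Diamond}$ is shown to be dismantlable. To make the clique-graph identification rigorous, I would verify that the maximal cliques of $G^{\Diamond}$ coincide with the vertex sets of maximal gated quasi-hypertori of $G$; one direction is immediate from the definition, and for the other I would combine the strong Helly property of gated sets (Proposition~\ref{P:gated.sets/Helly}) with Theorem~\ref{T:conv.reg.H-subgr.=>gated}, showing that a family of vertices pairwise contained in common gated quasi-hypertori lies in a single gated quasi-hypertorus.

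I intend to prove the dismantlability of $G^{\Diamond}$ by induction on $|V(G)|$, the case $|V(G)|=1$ being trivial. The core step is to produce a \emph{simplicial} vertex of $G^{\Diamond}$: a vertex $x \in V(G)$ lying in a unique maximal gated quasi-hypertorus $H$ of $G$. Such an $x$ satisfies $N_{G^{\Diamond}}[x] = V(H)$, and is therefore dominated in $G^{\Diamond}$ by every $y \in V(H) \setminus \{x\}$.

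To construct $x$ I would first fix a semi-periphery $W_{ab}$ of $G$ (available via Proposition~\ref{P:compact=>semi-periphery}), and select a maximal gated quasi-hypertorus $H \in \mathbf{Tor}(G,ab)$ satisfying $V(H) \subseteq U_{ab} \cup U_{ba}$; Lemma~\ref{L:gated.H} produces such $H$ in the form $H' \Box K_2$ from a gated quasi-hypertorus $H'$ of $G[U_{ab}]$. Using assertion (iv) of Theorem~\ref{T:charact.} to control the bulges of $co_G(U_{ab})$, I would iteratively pass to peripheral substructures inside $G[W_{ab}]$ to isolate a ``corner'' vertex of $H$ inside $U_{ab}$ that does not belong to any second maximal gated quasi-hypertorus of $G$. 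Once $x$ is identified, I would check that $G - x$ is again a finite Peano partial cube (the deletion amounts to a peripheral simplification, and closure of Peano partial cubes under convex subgraphs via Theorem~\ref{T:hypernet./gat.amalg.+cart.prod.} is the underlying reason) and that $(G - x)^{\Diamond} = G^{\Diamond} - x$; the induction hypothesis then furnishes a dismantling order of the latter, and prepending $x$ gives one for $G^{\Diamond}$.

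The main obstacle will be the existence of the simplicial vertex in the general Peano setting. For hyper-median partial cubes the decomposition of Theorem~\ref{T:tricycle/decomp.fin.hypernet.} reduces the problem to the quasi-hypertorus case, where $G^{\Diamond}$ is complete and dismantlability is trivial. For Peano partial cubes that are not hyper-median, tricycles force the existence of three maximal gated quasi-hypertori meeting pairwise in an edge, and ensuring that somewhere in $G$ one still finds a vertex in a unique maximal gated quasi-hypertorus requires a careful analysis of the bulge structure along the semi-periphery. A secondary subtlety will be verifying that $G - x$ inherits the Peano property, which may force $x$ to be chosen as a vertex of a periphery of some iterated convex subgraph of $W_{ab}$ rather than of $W_{ab}$ itself.
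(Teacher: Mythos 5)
Your reduction to $G^{\Diamond}$ via Proposition~\ref{P:BP91} and the overall induction on $|V(G)|$ match the paper, but the inductive step you propose has a genuine gap that the paper's argument is specifically designed to avoid. You delete a single vertex $x$ and then need $G-x$ to be a finite Peano partial cube so that the induction hypothesis applies to $(G-x)^{\Diamond}$. This fails in general: deleting one vertex need not even leave a partial cube, and even when it does the result need not be Peano --- the paper's own running example is $Q_3 - x = Q_3^-$, which has pre-hull number $2$. Your hedge that $x$ should be taken ``in a periphery of some iterated convex subgraph'' does not repair this, since $Q_3$ itself is strongly peripheral. The paper sidesteps the issue entirely by never deleting a single vertex: it fixes a semi-periphery $W_{ab}$ and peels off the \emph{whole} half-space in two sweeps. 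First, every $x \in W_{ab}-U_{ab}$ lies on a convex cycle $C \in \mathbf{C}(G,ab)$ (coming from $\mathbf{Cyl}[X]$ for the bulge $X$ containing $x$) which is a subgraph of every convex quasi-hypertorus through $x$, so $x$ is dominated in $G^{\Diamond}$ by the vertices of $C$ in $U_{ab}$; these vertices are enumerated first. Second, in what remains, each $u \in U_{ab}$ is dominated by its neighbour $u'$ in $U_{ba}$, via Lemma~\ref{L:gated.H}. What is left is $G[W_{ba}]$, which is \emph{convex} in $G$ and hence automatically a smaller Peano partial cube, so induction applies with no class-preservation issue.

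A second, related gap is that your ``core step'' --- producing a simplicial vertex of $G^{\Diamond}$, i.e.\ a vertex lying in a unique maximal gated quasi-hypertorus --- is only sketched (``iteratively pass to peripheral substructures \ldots to isolate a corner vertex''), and you yourself flag it as the main obstacle. The paper's proof shows that no such vertex is needed: domination is obtained directly for every vertex of $W_{ab}$ without ever exhibiting a vertex with a unique maximal quasi-hypertorus, and the issue of whether $(G-x)^{\Diamond}=G^{\Diamond}-x$ after a deletion (which would require checking that gatedness and convexity are unaffected) never arises because domination is always verified inside induced subgraphs of the \emph{original} $G^{\Diamond}$. Your preliminary point about verifying that $\Gamma(\mathbf{Tor}(G))$ really is the clique graph of $G^{\Diamond}$ is reasonable and is glossed over by the paper, but it does not compensate for the broken induction. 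To salvage your write-up, replace the single-vertex deletion by the half-space peeling above.
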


\begin{proof}
By Proposition~\ref{P:BP91}, it suffices to prove that $G^{\Diamond}$
is dismantlable.  The proof will be by induction on the order $|V(G)|$
of $G$.  This is obvious if $|V(G)| = 1$.  Suppose that this holds for
any Peano partial cube whose order is at most
$n$, for some positive integer $n$.  Let $G$ be an
Peano partial cube such that $|V(G)| = n+1$.

Let $W_{ab}$ be a semi-periphery of $G$.  If $W_{ab} \neq
U_{ab}$, we first consider the elements of $W_{ab} - U_{ab}$.  Let $x
\in W_{ab} - U_{ab}$.  Then $x$ is a vertex of some bulge $X$ of $G$, and thus of $\mathbf{Cyl}[X]$.  Therefore $x$ is a vertex of some convex cycle $C \in \mathbf{C}(G,ab)$, and $C$ is clearly a cycle of any convex quasi-hypertorus that contains $x$.  It follows that $x$ is dominated in $G^{\Diamond}$ by any vertex
of $C$, and in particular by those that belong to $U_{ab}$.

Let $x_{0}, \ldots, x_{i}$ be an enumeration of the vertices of
$W_{ab} - U_{ab}$.  In the subgraph $G - \{x_{0}, \ldots, x_{i} \}$
(that is, in $G$ if $W_{ab} = U_{ab}$), each vertex $u$ of $U_{ab}$ is
clearly dominated by its neighbor $u'$ in $U_{ba}$ because, by
the properties of $\mathbf{Cyl}[X]$ or by Lemma~\ref{L:gated.H}, $u'$
belongs to every maximal gated element  of
$\mathbf{Tor}(G,ab)$ to which belongs $u$.  Let $x_{i+1}, \ldots,
x_{j}$ be an enumeration of the vertices in $U_{ab}$, and let $H := G
- \{x_{0}, \ldots, x_{j} \}$.

This subgraph $H$, which a convex subgraph of $G$, is then
a Peano partial cube.  Consequently, by the induction
hypothesis, $H^{\Diamond}$ is dismantlable.  Let $x_{j+1}, \ldots,
x_{n+1}$ be a dismantling enumeration of $V(H)$.  Then $x_{0}, \ldots,
x_{n+1}$ is a dismantling enumeration of the vertices of $G$.
\end{proof}

We say that a self-contraction $f$ of a graph $G$ \emph{fixes} a
subgraph $H$ of $G$ if $f(H) = H$.

\begin{thm}\label{T:autom./fin.hypernet.}
Any finite Peano partial cube $G$ contains a
gated quasi-hyper-torus which is fixed by all
automorphisms of $G$.
\end{thm}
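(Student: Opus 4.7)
The plan is to exploit the dismantlability of $\Gamma(\mathbf{Tor}(G))$ established in Theorem~\ref{T:dismantlable} together with the Helly property of gated sets. Any automorphism $\alpha$ of $G$ permutes the maximal gated quasi-hypertori of $G$ and preserves intersection, so it induces an automorphism of $\Gamma(\mathbf{Tor}(G))$; thus $\mathrm{Aut}(G)$ acts by automorphisms on the finite dismantlable graph $\Gamma(\mathbf{Tor}(G))$.

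The pivotal tool will be the classical fixed clique theorem for finite dismantlable (cop-win) graphs: any group of automorphisms of such a graph fixes, setwise, a non-empty clique. I would apply this to the $\mathrm{Aut}(G)$-action on $\Gamma(\mathbf{Tor}(G))$, yielding a non-empty family $\mathcal{K} \subseteq \mathbf{Tor}(G)$ of pairwise intersecting maximal gated quasi-hypertori that is invariant under every automorphism of $G$. In case this fixed clique theorem is not available in the exact form needed, I would prove it directly by induction on the number of vertices of $\Gamma(\mathbf{Tor}(G))$: pick an $\mathrm{Aut}(G)$-orbit $\mathcal{O}$ of vertices that are dominated in some dismantling order, choose a dominator equivariantly to define an $\mathrm{Aut}(G)$-equivariant retraction onto $\Gamma(\mathbf{Tor}(G)) - \mathcal{O}$, and iterate until a non-empty $\mathrm{Aut}(G)$-invariant clique remains.

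Next I would invoke Proposition~\ref{P:gated.sets/Helly} to conclude that $F := \bigcap_{H \in \mathcal{K}} H$ is non-empty and gated in $G$. Because $\mathrm{Aut}(G)$ permutes $\mathcal{K}$ setwise, the gated subgraph $F$ is fixed by every automorphism of $G$. It remains to show that $F$ is itself a quasi-hypertorus. Fixing any $H_0 \in \mathcal{K}$, Lemma~\ref{L:gated.inter.convex}(i) gives that $F$ is gated in $H_0$; since $H_0$ is a Cartesian product of copies of $K_2$ and even cycles, and the gated subgraphs of a $K_2$ or of a convex even cycle are only $K_1$, $K_2$, or the cycle itself (each path of length $\geq 2$ in a cycle fails the $\Gamma$-closedness condition of Theorem~\ref{T:hypernet.=>G-closed+conv.=gated}), any gated subgraph of $H_0$ is a Cartesian product of such factors, hence is again a quasi-hypertorus. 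Thus $F$ is a gated quasi-hypertorus fixed by all of $\mathrm{Aut}(G)$.

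The main obstacle I anticipate is the equivariant fixed clique statement for dismantlable graphs; once that is in place, the remainder is routine manipulation of gatedness, the Helly property, and the structure of Cartesian products. A subordinate point is verifying that a gated subgraph of a Cartesian product factors as a Cartesian product of gated subgraphs of the factors, which follows from the Distance and Convex Subgraph Properties (Proposition~\ref{P:pro.Cartes.prod.}) together with $\Gamma$-closedness.
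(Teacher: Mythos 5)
Your proposal is correct, and its skeleton is the same as the paper's: both pass to the intersection graph $\Gamma(\mathbf{Tor}(G))$, use its dismantlability (Theorem~\ref{T:dismantlable}) together with the fixed-simplex theorem for finite dismantlable graphs (the paper cites \cite{P95} for exactly the equivariant statement you want, so you need not re-prove it), and then apply the strong Helly property of gated sets (Proposition~\ref{P:gated.sets/Helly}) to get a non-empty, gated, automorphism-invariant intersection $F$. The genuine divergence is the endgame. The paper only uses the \emph{convexity} of $F$ inside a member of the simplex, concludes that $F$ is a Cartesian product of even cycles and \emph{paths}, and therefore needs a two-case analysis (median-graph factor versus hypertorus-times-median-graph) invoking the Bandelt--van de Vel fixed cube theorem \cite{BV87} to extract a fixed gated quasi-hypertorus when $F$ is not regular. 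You instead exploit the \emph{gatedness} of $F$ in a member $H_0$ of the clique (via Lemma~\ref{L:gated.inter.convex}(i)): the Distance and Convex Subgraph Properties of Proposition~\ref{P:pro.Cartes.prod.} show that a gated subgraph of a Cartesian product projects onto gated subgraphs of the factors, and the only gated subgraphs of $K_2$ or of an even cycle of length at least $6$ are $K_1$, $K_2$ and the whole cycle (a path of length at least $2$ in such a cycle violates $\Gamma$-closedness, Theorem~\ref{T:hypernet.=>G-closed+conv.=gated}), so $F$ is already a quasi-hypertorus. This is valid, eliminates the case analysis, and removes the dependence on \cite{BV87}; what it costs is only the small extra verification that projections of gated subgraphs are gated, which you correctly flag and which follows coordinate-wise from the Distance Property.
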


\begin{proof}
Each automorphism of $G$ clearly induces an automorphism of the graph 
$\Gamma(\mathbf{Tor}(G))$.  By Theorem~\ref{T:dismantlable} and
\cite[Theorem 4.8]{P95}, there exists a finite simplex $\mathcal{S}$
of $\Gamma(\mathbf{Tor}(G))$ which is fixed by every automorphism of
this graph.  By the definition of $\Gamma(\mathbf{Tor}(G))$, the
elements of $\mathcal{S}$ are gated and pairwise non-disjoint.  Hence,
by \cite[Proposition 2.4]{B89}, they have a non-empty intersection
$H$.  Because the intersection of gated subgraphs is gated, and thus convex, it follows by the Convex Subgraph Property of Cartesian product that $H$ is the Cartesian product of even cycles of length greater than $4$ and of paths.  Hence $H$ is either a quasi-hypertorus or a median graph or the Cartesian product of hypertorus with a median graph.  Moreover, $H$ is clearly fixed by every
automorphism of $G$.

It is sufficient to prove that $H$ contains a gated quasi-hypertorus which is fixed by every automorphism of $G$.  We are done if $H$ is regular.  Assume that $H$ is not regular.  We 
have two cases.

\emph{Case 1.}\; $H$ is a median graph.

Then, by \cite{BV87}, $H$ contains a hypercube $H'$ which is fixed by
all automorphisms of $H$, and thus of $G$.  Moreover $H'$ is gated
since it is a hypercube.

\emph{Case 2.}\; $H$ is not a median graph.

Then $H$ is the Cartesian product of a hypertorus $T$ with a finite
median graph $M$.  As in Case~1, $M$ contains a hypercube $M'$ which
is fixed by all automorphisms of $M$.  Hence $H' := T \Box M'$ is a
convex quasi-hypertorus which is fixed by all automorphisms
of $H$, and thus of $G$.  Moreover, $H'$ is convex in $G$ since so is
$H$, and thus it is gated in $G$ by
Theorem~\ref{T:conv.reg.H-subgr.=>gated}.
\end{proof}

The above theorem gives as a
particular case the result \cite[Theorem 4.3]{P08-4} stating that
\emph{in any finite netlike partial cube $G$ there exists a gated
cycle or a hypercube which is fixed by all automorphisms of $G$}.

\subsection{Compact Peano partial cubes}\label{SS:FSP/compact}

Recall that, by Corollary~\ref{C:comp.hyp.=no.isom.rays}, \emph{a Peano partial cube is compact if and only if it contains no isometric rays}.  We prove three fixed subgraph properties for compact
Peano partial cubes that are of the same type as
those which were proved in~\cite{P09-2}.  We will use Proposition~\ref{P:Gf.hypernet.} and the notations introduced before the statement of this proposition.  
We need two more results.

\begin{lem}\label{L:inter(Gi)/hypernet.}
Let $G$ be a Peano partial
cube.  Let $(G_{i})_{i \in I}$ be a family of faithful subgraphs of $G$ that are ph-homogeneous and such that, for each
finite $J \subseteq I$, $G_{J} := \bigcap_{j \in J}G_{j}$ is a
faithful subgraph of $G$ which is ph-homogeneous.
Then $G_{I} := \bigcap_{i \in I}G_{i}$ is a faithful subgraph of $G$ which is ph-homogeneous.
\end{lem}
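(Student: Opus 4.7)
The plan is to prove the two required properties of $G_I$ separately: first that it is a faithful subgraph of $G$, and second that it is ph-homogeneous.

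For faithfulness, I would start by showing $G_I$ is isometric in $G$, by induction on $d_G(x,y)$ for $x,y \in V(G_I)$. The key remark is that, since each $G_i$ is isometric in $G$ and hence each $G_J$ for finite $J$ is too, the interval $I_{G_J}(x,y)$ coincides with $I_G(x,y) \cap V(G_J)$. Given $x, y \in V(G_I)$ with $d_G(x,y) \geq 1$, the set $N_G(x) \cap I_G(x,y)$ is finite (intervals in a partial cube are finite by Lemma~\ref{L:gen.propert.}(ii)), and by hypothesis the family $\{N_G(x) \cap I_G(x,y) \cap V(G_i)\}_{i \in I}$ has the finite intersection property inside this finite set (each finite subfamily corresponds to a $G_J$, which is isometric and hence contains a geodesic whose first edge from $x$ witnesses non-emptiness). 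Therefore the intersection over all $i \in I$ is non-empty, yielding a neighbor of $x$ in $V(G_I)$ that lies on an $(x,y)$-geodesic of $G$; induction completes the argument. Median-stability of $G_I$ is immediate: if a triple in $V(G_I)$ has a median $m$ in $G$, then $m \in V(G_i)$ for every $i \in I$ by median-stability of each $G_i$, so $m \in V(G_I)$.

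For ph-homogeneity, by Definition~\ref{D:ph-homog.} it suffices to show that every finite convex subgraph $F$ of $G_I$ satisfies $ph(F) \leq 1$. The central step is to exhibit a finite $J \subseteq I$ such that $F$ is also a convex subgraph of $G_J$; then $ph(F) \leq 1$ follows immediately because $G_J$ is ph-homogeneous by hypothesis. To find such a $J$, consider the finite set
\[ S \;:=\; \bigcup_{x,y \in V(F)} I_G(x,y) \setminus V(F), \]
which is finite since $V(F)$ is finite and each interval of $G$ is finite. For every $z \in S$ one has $z \notin V(G_I)$: indeed, if $z \in I_G(x,y) \cap V(G_I)$ for some $x, y \in V(F)$, then $z \in I_{G_I}(x,y) \subseteq V(F)$ by convexity of $F$ in $G_I$ and the isometry relation $I_{G_I}(x,y) = I_G(x,y) \cap V(G_I)$ established above, contradicting $z \notin V(F)$. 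Hence for each $z \in S$ there exists $i_z \in I$ with $z \notin V(G_{i_z})$. Set $J := \{i_z : z \in S\}$, a finite subset of $I$. For any $x, y \in V(F)$, each $z \in I_G(x,y) \setminus V(F)$ lies in $S$ and therefore outside $V(G_J)$, so $I_{G_J}(x,y) = I_G(x,y) \cap V(G_J) \subseteq V(F)$, which is what was needed.

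I expect the main technical obstacle to be the isometry step, since it is the only place where one must really use a compactness/finiteness argument (the finite intersection property applied inside the finite interval $N_G(x) \cap I_G(x,y)$); once $G_I$ is known to be isometric, the identification $I_{G_I}(x,y) = I_G(x,y) \cap V(G_I)$ drives everything else, and the reduction from an arbitrary $F \subseteq G_I$ to a finite witness $J$ is then a routine bookkeeping argument on the finitely many ``obstruction vertices'' $z \in S$.
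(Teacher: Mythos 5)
Your proof is correct, and it takes a mildly different route from the paper's in both halves. For faithfulness the paper simply invokes an external result (\cite[Proposition 3.10]{P09-2}), whereas you prove isometry of $G_I$ from scratch by a finite-intersection-property argument inside the finite set $N_G(x)\cap I_G(x,y)$ (legitimate, since intervals of a partial cube are finite by Lemma~\ref{L:gen.propert.}(ii), and the FIP on subsets of a finite set does give a non-empty total intersection); median-stability is trivial either way. For ph-homogeneity both arguments reduce to exhibiting a finite $J$ such that $F$ is convex in $G_J$, but the mechanisms differ: the paper considers the descending net of convex hulls $co_{G_J}(F)$ inside the finite polytope $co_G(F)$ and stabilizes it, while you select one index $i_z$ for each obstruction vertex $z\in\mathcal{I}_G(V(F))\setminus V(F)$ and let $J$ be the finite set of these indices. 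Your version has the small advantages of being self-contained, of needing only one application of the interval operator (convexity in $G_J$ is verified directly as $I_{G_J}(x,y)\subseteq V(F)$, using the identity $I_{H}(x,y)=I_G(x,y)\cap V(H)$ valid for any isometric subgraph $H$), and of making the role of finiteness of intervals explicit; the paper's version is shorter given the cited proposition. Both ultimately rest on the same finiteness facts, Lemma~\ref{L:gen.propert.}(ii) and (iii).
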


\begin{proof}
By \cite[Proposition 3.10]{P09-2}, $G_{I}$ is a faithful subgraph of
$G$.  We will prove that it is ph-homogeneous.  First note that, for all finite $J \subseteq J' \subseteq I$,\; $G_{J'}$ is a faithful subgraph of $G_J$.

Let $F$ be a finite convex subgraph of $G_I$.  Denote by $F_G$ and $F_{G_J}$ the convex hulls of $F$ in $G$ and $G_J$, respectively, for every finite $J \subseteq I$.  The subgraph $F_G$ is finite and $F_{G_J} \subseteq F_G$ for every finite $J \subseteq I$.  Moreover $F_{G_{J'}} \subseteq F_{G_J}$ for every finite $J' \subseteq I$ such that $J \subseteq J'$, since $G_{J'}$ is a faithful subgraph of $G_J$.  It follows that there exists some finite $J \subseteq I$ such that $F_{G_J} = F_{G_{J'}}$ for every finite $J' \subseteq I$ with $J \subseteq J'$.  Therefore $F = F_J$, and thus $ph(F) \leq 1$ since $G_J$ is a Peano partial cube.
\end{proof}

\begin{pro}\label{P:fixed.fin.set./autom.}\textnormal{(Polat~\cite[Corollary
3.4]{P09-2})}
Let $G$ be a compact partial cube.  Then there exists a non-empty
finite convex subgraph of $G$ which is fixed by every automorphism of
$G$.
\end{pro}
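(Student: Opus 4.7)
My plan is to obtain the desired subgraph as a minimal element of the family $\mathcal{F}$ of all non-empty $\mathrm{Aut}(G)$-invariant closed convex subsets of $V(G)$, ordered by reverse inclusion, and then argue that such a minimal element must in fact be finite. To set up Zorn's lemma, note that $\mathcal{F}$ is non-empty since $V(G) \in \mathcal{F}$, and for any chain $(F_{i})_{i \in I}$ in $\mathcal{F}$ the intersection $F := \bigcap_{i \in I} F_{i}$ is non-empty by compactness of $V(G)$ in the weak geodesic topology (as a down-directed family of non-empty closed subsets of a compact space), closed and convex (intersection of closed convex sets), and $\mathrm{Aut}(G)$-invariant (intersection of invariant sets). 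Thus $F \in \mathcal{F}$ is an upper bound in the reverse-inclusion order, and Zorn yields a minimal element $F^{*} \in \mathcal{F}$.

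It remains to prove that $F^{*}$ is finite. The plan is to fix some $x_{0} \in F^{*}$, set $\Omega := \mathrm{Aut}(G) \cdot x_{0} \subseteq F^{*}$, and consider the smallest closed convex subset $K$ of $V(G)$ containing $\Omega$. The set $K$ is closed, convex, non-empty and $\mathrm{Aut}(G)$-invariant (since $\Omega$ is), so $K \in \mathcal{F}$ and $K \subseteq F^{*}$; by minimality $K = F^{*}$. Because the geodesic convexity of a partial cube has the separation property $\mathrm{S}_{3}$, the closed convex set $F^{*}$ is the intersection of the half-spaces of $G$ containing $\Omega$.

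Now I would exploit compactness a second time, via Proposition~\ref{P:half-spaces/compact}, which asserts that every chain of half-spaces of $G$ is finite. The idea is that the poset of half-spaces containing $\Omega$ is well-founded, so I can restrict to the minimal half-spaces among those containing $\Omega$; applying the chain-finiteness together with a descending/inductive argument, I would extract a finite family $H_{1},\dots,H_{n}$ of half-spaces whose intersection already equals $F^{*}$. Each $H_{k}$ meets $\Omega$ in a non-empty set and can be represented by a single witnessing vertex $y_{k} \in \Omega$ so that $F^{*} = co_{G}(\{y_{1},\dots,y_{n}\})$. Since $F^{*}$ is then a polytope of $G$, Lemma~\ref{L:gen.propert.}(iii) (every polytope of a partial cube is finite) yields that $F^{*}$ is finite, and the corresponding convex subgraph $G[F^{*}]$ is the desired finite convex subgraph fixed by every automorphism of $G$.

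The main obstacle will be the reduction from the (possibly infinite) intersection of half-spaces containing $\Omega$ to a \emph{finite} such intersection; chain-finiteness alone does not immediately give this, since there can still be infinitely many incomparable minimal half-spaces. A robust workaround is to argue by contradiction: if no finite subfamily suffices, one can construct an infinite strictly decreasing chain of non-empty closed convex sets of the form $F^{*} \cap H_{1} \cap \dots \cap H_{n}$, take its intersection (non-empty by compactness) and use minimality of $F^{*}$ together with the $\mathrm{Aut}(G)$-orbit of one of the witnessing half-spaces to derive either a strict refinement of $F^{*}$ in $\mathcal{F}$ or an infinite chain of half-spaces, contradicting Proposition~\ref{P:half-spaces/compact}.
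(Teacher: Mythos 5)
The paper does not actually prove this proposition; it imports it from \cite[Corollary 3.4]{P09-2}, so your argument has to stand entirely on its own. Your first two steps do: Zorn's lemma applied to the family of non-empty $\mathrm{Aut}(G)$-invariant convex (hence closed) subsets of the compact space $V(G)$ yields a minimal element $F^{*}$, and minimality together with the separation property $\mathrm{S}_{3}$ gives $F^{*}=co_{G}(\Omega)=\bigcap\{W : W \text{ a half-space containing } \Omega\}$ for the orbit $\Omega$ of any $x_{0}\in F^{*}$.

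The finiteness step has a genuine gap, and not only the one you flag. First, even if you could write $F^{*}$ as the intersection of finitely many half-spaces, that would not make it finite: a single half-space $W_{ab}$ of an infinite compact partial cube is an intersection of one half-space and is in general infinite, so the jump from ``$F^{*}=H_{1}\cap\dots\cap H_{n}$'' to ``$F^{*}=co_{G}(\{y_{1},\dots,y_{n}\})$, hence a polytope, hence finite by Lemma~\ref{L:gen.propert.}(iii)'' is a non sequitur --- that second equality is precisely the finiteness you are trying to establish. Second, the proposed workaround does not close the gap you acknowledge: Proposition~\ref{P:half-spaces/compact} bounds chains of \emph{half-spaces}, not chains of sets of the form $F^{*}\cap H_{1}\cap\dots\cap H_{n}$, and those sets are not $\mathrm{Aut}(G)$-invariant, so the minimality of $F^{*}$ cannot be invoked against them. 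What your strategy ultimately requires is the fact that an infinite compact partial cube cannot be the convex hull of a single $\mathrm{Aut}(G)$-orbit (in particular cannot be vertex-transitive); that is a substantive structural statement about compact partial cubes --- essentially the content of the cited result --- and must be argued directly, for instance by extracting from an infinite minimal $F^{*}$ a strictly smaller non-empty invariant convex subset (say via the invariant family of minimal half-spaces of $G[F^{*}]$ provided by Corollary~\ref{C:compact=>min.half-spaces}). None of that is in the proposal, so as written the proof is incomplete.
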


\begin{thm}\label{T:fix.subgr./comp.hypernet.}
Let $G$ be a compact Peano partial cube.  We
have the following properties:

\textnormal{(i)}\; $G$ contains a gated quasi-hypertorus which is fixed by all automorphisms
of $G$.

\textnormal{(ii)}\; Any self-contraction of $G$ fixes a
gated quasi-hypertorus.

\textnormal{(iii)}\; For any commuting family $\mathcal{F}$ of
self-contractions of $G$, there exists a gated
quasi-hypertorus which is fixed by every
element of
$\mathcal{F}$.
\end{thm}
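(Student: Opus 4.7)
The plan is to prove (i), (ii), (iii) in order, each bootstrapping on the previous. The unifying device throughout is that whenever a quasi-hypertorus $H$ is found inside a faithful subgraph of $G$, Theorem~\ref{T:faithf.hypertor.=>gated} promotes its gatedness from that subgraph to $G$ itself. For (i), I would apply Proposition~\ref{P:fixed.fin.set./autom.} to extract a non-empty finite convex subgraph $F$ of $G$ setwise fixed by every automorphism of $G$; by Theorem~\ref{T:hypernet./gat.amalg.+cart.prod.} $F$ is itself a finite Peano partial cube, so Theorem~\ref{T:autom./fin.hypernet.} produces a quasi-hypertorus $H$, gated in $F$ and fixed by every automorphism of $F$, hence by every automorphism of $G$. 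Being a finite convex regular subgraph of $G$, $H$ is then gated in $G$ by Theorem~\ref{T:conv.reg.H-subgr.=>gated}.

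For (ii), let $f$ be a self-contraction of $G$. Lemma~\ref{L:Gf} ensures that $V(G_f)$ is non-empty and closed in $V(G)$, hence compact, and Proposition~\ref{P:Gf.hypernet.} gives that $G_f$ is a Peano partial cube. The restriction of $f$ to $V(G_f)$ is an automorphism of $G_f$: it permutes each finite $f$-orbit in $V(G_f)$, so it is a bijection, and since $G_f$ is isometric in $G$ this bijective contraction cannot collapse any edge of $G_f$. Applying (i) to the compact Peano partial cube $G_f$ supplies a quasi-hypertorus $H$ gated in $G_f$ and fixed by every automorphism of $G_f$, in particular by $f$. Because $G_f$ is faithful in $G$, the subgraph $H$ is isometric in $G$, and any median of a triple in $V(H)$ that exists in $G$ lies in $V(G_f)$ by median-stability, then in $V(H)$ by convexity of $H$ in $G_f$. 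Hence $H$ is faithful in $G$, and by Theorem~\ref{T:faithf.hypertor.=>gated} is gated in $G$.

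For (iii), I would set $G_{\mathcal{F}_0} := \bigcap_{f \in \mathcal{F}_0} G_f$ for each finite $\mathcal{F}_0 \subseteq \mathcal{F}$. Commutativity of $\mathcal{F}$ ensures that every $g \in \mathcal{F}$ preserves $V(G_{\mathcal{F}_0})$ and restricts there to a self-contraction, so iterating Proposition~\ref{P:Gf.hypernet.} shows that each $G_{\mathcal{F}_0}$ is a compact Peano partial cube with $G_{\mathcal{F}_0 \cup \{g\}} = (G_{\mathcal{F}_0})_g$. The sets $V(G_{\mathcal{F}_0})$ form a downward-directed family of non-empty closed subsets of the compact space $V(G)$, so $V(G_{\mathcal{F}}) := \bigcap_{f \in \mathcal{F}} V(G_f)$ is non-empty, and Lemma~\ref{L:inter(Gi)/hypernet.} identifies $G_{\mathcal{F}}$ as a faithful Peano partial cube of $G$, compact because its vertex set is closed. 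Every $f \in \mathcal{F}$ restricts to an automorphism of $G_{\mathcal{F}}$: surjectivity is obtained by tracing the preimage of $y \in V(G_{\mathcal{F}})$ through every finite $G_{\mathcal{F}_0}$ containing $f$, inside which $f$ already acts bijectively. Applying (i) to $G_{\mathcal{F}}$ produces a quasi-hypertorus $H$ gated in $G_{\mathcal{F}}$ and fixed by every automorphism of $G_{\mathcal{F}}$, hence by every $f \in \mathcal{F}$; the faithfulness argument of (ii) combined with Theorem~\ref{T:faithf.hypertor.=>gated} then makes $H$ gated in $G$. The main obstacle is the descending-intersection bookkeeping of (iii): establishing non-emptiness and faithfulness of $G_{\mathcal{F}}$ and verifying that each $f \in \mathcal{F}$ surjects onto $V(G_{\mathcal{F}})$ rather than merely landing inside it, both of which rely on compactness together with the commutativity that keeps each $G_{\mathcal{F}_0}$ stable under the remaining members of $\mathcal{F}$.
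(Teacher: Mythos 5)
Your proposal is correct and follows essentially the same route as the paper's own proof: Proposition~\ref{P:fixed.fin.set./autom.} plus Theorem~\ref{T:autom./fin.hypernet.} and Theorem~\ref{T:conv.reg.H-subgr.=>gated} for (i), the passage to the compact faithful Peano partial cube $G_f$ via Lemma~\ref{L:Gf} and Proposition~\ref{P:Gf.hypernet.} followed by (i) and Theorem~\ref{T:faithf.hypertor.=>gated} for (ii), and the directed intersection of the closed sets $V_f$ with Lemma~\ref{L:inter(Gi)/hypernet.} and compactness for (iii). The only cosmetic difference is in how the finite-stage bookkeeping of (iii) is phrased; the substance is identical.
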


\begin{proof}
(i)\; By Proposition~\ref{P:fixed.fin.set./autom.}, there is a
non-empty finite convex subgraph $F$ of $G$ which is fixed by every
automorphism of $G$.  Then $F$ is a finite Peano
partial cube.  Hence, by Theorem~\ref{T:autom./fin.hypernet.}, $F$
contains a gated quasi-hypertorus which is fixed by
all automorphisms of $F$, and thus of $G$.  Note that, because $F$ is
convex, a gated quasi-hypertorus
of $F$ is convex in $G$, and thus also gated in $G$ by
Theorem~\ref{T:conv.reg.H-subgr.=>gated}.

(ii)\; Let $f$ be a self-contraction of $G$.  Then, by Lemma~\ref{L:Gf} and Proposition~\ref{P:Gf.hypernet.}, $G_{f}$ is a non-empty
Peano partial cube, which
is faithful in $G$, and whose vertex set is closed and thus compact
since so is $V(G)$.  Clearly the restriction of $f$ to $V(G_{f})$ is
an automorphism of $G_{f}$.  Therefore, by (i), there exists a
convex quasi-hypertorus $F$ of
$G_{f}$ which is fixed by all automorphisms of $G_{f}$, and thus by
$f$.  Then $F$ is a faithful subgraph of $G$, since $G_{f}$ is
faithful in $G$, and hence $F$ is gated in $G$ by
Theorem~\ref{T:faithf.hypertor.=>gated}.

(iii)\; By Lemma~\ref{L:Gf}, for every $f \in \mathcal{F}$, the set $V_{f}$, where $V$
stands for $V(G)$, is non-empty, and $G_{f}$ is a faithful subgraph of $G$ whose vertex set is a closed
and thus compact set of $G$.  Therefore, by Proposition~\ref{P:Gf.hypernet.}, $G_{f}$ is a compact
Peano partial cube.  If $g \in \mathcal{F}$
commutes with $f$ on $V_{f}$, and if $x \in V_{f}$, then $f^{p}(g(x))
= g(f^{p}(x)) = g(x)$ for any $p \geq 0$ such that $f^{p}(x) = x$.
Thus $g(V_{f}) \subseteq V_{f}$.  Hence, since $G_{f}$ is a compact
Peano partial cube, it follows by (ii) that $g$
fixes a non-empty finite gated quasi-hypertorus of
$G_{f}$.  Therefore, by Lemma~\ref{L:Gf}, $V_{f} \cap V_{g} =
(V_{f})_{g}$ ($= (V_{g})_{f}$) is a non-empty, faithful and closed, and thus
compact, set of vertices of $G_{f}$, and thus of $G$, and moreover, by Proposition~\ref{P:Gf.hypernet.}, 
$G[V_{f} \cap V_{g}] = (G_{f})_{g}$ is a faithful subgraph of $G_{f}$,
and thus of $G$, which is ph-homogeneous.  Note that $[x]_{f} \cup [x]_{g} \subseteq V_{f}
\cap V_{g}$ for every $x \in V_{f} \cap V_{g}$.  Hence the
restrictions of $f$ and $g$ to $V_{f} \cap V_{g}$ are automorphisms
of $G[V_{f} \cap V_{g}]$.  Inductively, for any non-empty finite
$\mathcal{K} := {f_{1},\dots,f_{n}} \subseteq \mathcal{F}$, the set
$V_{\mathcal{K}} := \bigcap_{f \in \mathcal{K}} V_{f} = (\dots
(V_{f_{1}}) \dots)_{f_{n}}$ is a non-empty, faithful,
 and closed, and thus compact subset of $V(G)$, and $G_{\mathcal{K}} :=
G[V_{\mathcal{K}}]$ is a Peano partial cube by Proposition~\ref{P:Gf.hypernet.}.  Therefore $V_{\mathcal{F}}
:= \bigcap_{f \in \mathcal{F}} V_{f} \neq \emptyset$ since the space
$V(G)$ is compact and the sets $V_{f}$'s are closed.  Then
$G_{\mathcal{F}} := G[V_{\mathcal{F}}]$, being the intersection of all
$G_{f}$'s, which are faithful and non-empty, is also a non-empty faithful subgraph of
$G$.   Hence $G_{\mathcal{F}}$ is a non-empty Peano partial cube by Lemma~\ref{L:inter(Gi)/hypernet.}, whose vertex
set is closed in $G$, and thus which is compact since so is $V(G)$.
Because the restriction of every $f \in \mathcal{F}$ to
$V_{\mathcal{F}}$ is an automorphism of $G_{\mathcal{F}}$, it follows
by (i) that $G_{\mathcal{F}}$ contains a gated quasi-hypertorus $F$ which is fixed by every element of
$\mathcal{F}$.    Then $F$ is a faithful subgraph of $G$, since $G_{\mathcal{F}}$ is
faithful in $G$, and hence $F$ is gated in $G$ by
Theorem~\ref{T:faithf.hypertor.=>gated}.
\end{proof}

The result above generalizes the corresponding results \cite[Theorem
1.2]{T96} for median graphs and \cite[Theorems 6.5, 6.6 and
6.8]{P08-4} for netlike partial cubes.

By ~\cite[Proposition 4.1]{P09-2}, if a graph contains an isometric ray, then there exists a self-contraction of this graph which fixes no non-empty finite set of vertices.  Hence we can state the following improvement of property (ii) of the above theorem.

\begin{cor}\label{C:NSC}
Any self-contraction of a Peano partial cube $G$ fixes a finite gated quasi-hypertorus if and only if $G$ contains no isometric rays.
\end{cor}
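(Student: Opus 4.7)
The corollary packages two implications, and both are essentially immediate from earlier results in the paper together with a single external fact.

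For the sufficiency direction, I would argue as follows. Assume $G$ contains no isometric rays. By Corollary~\ref{C:comp.hyp.=no.isom.rays}, $G$ is a compact Peano partial cube, so Theorem~\ref{T:fix.subgr./comp.hypernet.}(ii) applies: every self-contraction of $G$ fixes a gated quasi-hypertorus. Since throughout the paper a quasi-hypertorus is by convention finite (the Cartesian product of a finite family of copies of $K_2$ and even cycles, as emphasized in Subsection~\ref{SS:inf.hypert.}), this fixed subgraph is automatically a finite gated quasi-hypertorus, which is what we want.

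For the necessity direction, I would invoke the cited result \cite[Proposition 4.1]{P09-2}: if a graph contains an isometric ray, then it admits a self-contraction that fixes no non-empty finite set of vertices. Applied to our $G$, this would yield a self-contraction $f$ with no finite fixed vertex set. In particular, $f$ cannot fix any finite gated quasi-hypertorus $F$, because the vertex set of such an $F$ would be a non-empty finite set left invariant by $f$. Contrapositively, if every self-contraction of $G$ fixes a finite gated quasi-hypertorus, then $G$ has no isometric ray.

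Putting the two parts together gives the stated equivalence. There is no real obstacle: the forward direction just repackages Theorem~\ref{T:fix.subgr./comp.hypernet.}(ii) via the compactness criterion of Corollary~\ref{C:comp.hyp.=no.isom.rays}, and the reverse direction is a direct application of \cite[Proposition 4.1]{P09-2}. The only point worth flagging is the terminological convention that a quasi-hypertorus is finite by definition, so that the ``finite'' adjective in the statement is automatic once Theorem~\ref{T:fix.subgr./comp.hypernet.}(ii) produces a gated quasi-hypertorus.
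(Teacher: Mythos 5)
Your proof is correct and is essentially the paper's own argument: the paper derives the sufficiency from Theorem~\ref{T:fix.subgr./comp.hypernet.}(ii) together with Corollary~\ref{C:comp.hyp.=no.isom.rays}, and the necessity from \cite[Proposition 4.1]{P09-2}, exactly as you do. Your remark that ``quasi-hypertorus'' means a finite one by the paper's convention is also the right observation to make the word ``finite'' in the statement harmless.
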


We complete this subsection with a result which holds for infinite Peano partial cubes that are not necessarily compact, and which is a consequence of Theorem~\ref{T:fix.subgr./comp.hypernet.}.

\begin{pro}\label{P:fixed.fin;subgr./quasi-hypert.}
Let $f$ be a self-contraction of a Peano partial cube $G$.  If $f$ fixes a finite subgraph of $G$, then $f$ fixes some gated quasi-hypertorus.
\end{pro}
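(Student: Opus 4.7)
The plan is to reduce the statement to Theorem~\ref{T:autom./fin.hypernet.} (existence of a fixed gated quasi-hypertorus under any group of automorphisms of a finite Peano partial cube) by passing first to the periodic subgraph $G_f$ and then to a suitable finite convex hull inside it.

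First I would observe that, because $V(H)$ is finite and $f(V(H)) = V(H)$, the map $f$ permutes $V(H)$, so $V(H) \subseteq V(G_f)$. By Proposition~\ref{P:Gf.hypernet.} and Lemma~\ref{L:Gf}, $G_f$ is a non-empty Peano partial cube which is faithful in $G$, and (as already used in the proof of Theorem~\ref{T:fix.subgr./comp.hypernet.}(ii)) the restriction of $f$ to $V(G_f)$ is an automorphism of $G_f$: it is a homomorphism by contractivity together with the fact that two distinct periodic points cannot collapse under $f$, and it is surjective since every periodic point is in the image of some iterate of itself.

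Next I would set $F := G_f[co_{G_f}(V(H))]$. By Lemma~\ref{L:gen.propert.}(iii) this polytope is finite, and since $f|_{G_f}$ is an automorphism of $G_f$ it preserves convex hulls, whence $f(V(F)) = co_{G_f}(f(V(H))) = co_{G_f}(V(H)) = V(F)$. Hence $f$ induces an automorphism of $F$; being a convex subgraph of $G_f$, the graph $F$ is itself a Peano partial cube by Theorem~\ref{T:hypernet./gat.amalg.+cart.prod.}, and it is finite. Theorem~\ref{T:autom./fin.hypernet.} then supplies a gated quasi-hypertorus $T$ of $F$ which is fixed by every automorphism of $F$, in particular by $f$.

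The remaining step is to upgrade $T$ from being gated in $F$ to being gated in $G$. Because $T$ is convex in $F$, $F$ is convex in $G_f$, and $G_f$ is isometric and median-stable in $G$, a direct tower argument shows that $T$ is isometric in $G$ and that any median in $G$ of a triple in $V(T)$ lies first in $V(G_f)$, then in $V(F)$, then in $V(T)$; thus $T$ is a faithful quasi-hypertorus of $G$. Theorem~\ref{T:faithf.hypertor.=>gated} then gives that $T$ is gated in $G$, and by construction $f(T) = T$. The only genuine subtlety in the plan is the claim that $f|_{G_f}$ is an automorphism of $G_f$; once granted, the rest is a mechanical assembly of results already available in Sections~\ref{S:charac.prop.} and~\ref{S:retr./convex}.
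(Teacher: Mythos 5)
Your proof is correct, but it takes a genuinely different route from the paper's. The paper argues in two lines: the convex hull $\overline{H}:=G[co_G(V(H))]$ of the fixed finite subgraph $H$ is finite by Lemma~\ref{L:gen.propert.}(iii), and $f$ maps it into itself because contractions preserve intervals, so $f$ restricts to a self-contraction of the finite (hence compact) Peano partial cube $\overline{H}$; Theorem~\ref{T:fix.subgr./comp.hypernet.}(ii) then produces a gated quasi-hypertorus of $\overline{H}$ fixed by $f$, and since that quasi-hypertorus is convex in $G$ it is gated in $G$ by Theorem~\ref{T:conv.reg.H-subgr.=>gated}. You instead pass to the periodic part $G_f$ first, take the hull inside $G_f$, and invoke the automorphism theorem (Theorem~\ref{T:autom./fin.hypernet.}) directly; because your hull is taken in $G_f$ rather than in $G$, it need not be convex in $G$, so you must finish with the faithfulness tower and Theorem~\ref{T:faithf.hypertor.=>gated} instead of with convexity. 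In effect you have inlined the proof of Theorem~\ref{T:fix.subgr./comp.hypernet.}(ii) (which itself goes through $G_f$ and the automorphism theorem) rather than citing it: this makes your argument longer, but it is self-contained modulo Theorem~\ref{T:autom./fin.hypernet.}, and every ingredient you use --- the claim that $f$ restricts to an automorphism of $G_f$, the preservation of convex hulls, the faithfulness argument --- is sound and already appears in the paper's proof of Theorem~\ref{T:fix.subgr./comp.hypernet.}.
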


\begin{proof}
Let $F$ be a finite subgraph of $G$ which is fixed by $f$.  Clearly $f$ fixes the convex hull $\overline{F}$ of $F$, which is a finite Peano partial cube.  Then the restriction $f'$ of $f$ to $V(\overline{F})$ is a self-contraction of $\overline{F}$.  Hence, by Theorem~\ref{T:fix.subgr./comp.hypernet.}(ii), $\overline{F}$
contains a gated quasi-hypertorus $H$ which is fixed by
$f'$, and thus by $f$.  Moreover $H$ is gated in $G$ by
Theorem~\ref{T:conv.reg.H-subgr.=>gated}.
\end{proof}

\section{Convex invariants}\label{S:conv.inv.}

In this section we study two convex invariants of the geodesic convexity of a Peano partial cube: the Helly number and the depth, i.e., the height of the poset of the non-trivial half-spaces ordered by inclusion.

\subsection{Helly number}\label{SS:Helly}

The \emph{Helly number} $h(G)$ of a graph $G$ is the smallest integer, if it exists, such that any finite family of $h(G)$-wise non-disjoint convex sets of $G$ has a non-empty intersection.  

As an immediate consequence of a result of Bandelt and Chepoi~\cite[Theorem 2]{BC96-2} stating that \emph{the Helly number of a discrete geometric weakly modular space is equal to its clique number}, we have the following result.

\begin{pro}\label{P:Helly/med.gr.}
If $G$ is a median graph with at least two vertices, then $h(G) =~2$.
\end{pro}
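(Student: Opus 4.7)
The plan is to verify the two inequalities $h(G)\ge 2$ and $h(G)\le 2$ separately, both of which follow essentially from facts already recalled in the excerpt.

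First I would dispose of the lower bound $h(G)\ge 2$. Since $G$ is connected (any median graph is, by definition) and has at least two vertices, I can pick distinct $u,v\in V(G)$. The singletons $\{u\},\{v\}$ are convex and non-empty, so the family $(\{u\},\{v\})$ is $1$-wise non-disjoint in the trivial sense, yet has empty intersection. Hence $h(G)>1$.

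For the upper bound $h(G)\le 2$, the key input is the fact recalled in Subsection~\ref{SS:gated} that \emph{every} convex subset of a median graph is gated. Given a finite family $(C_i)_{i\in I}$ of pairwise intersecting convex subsets of $G$, it is therefore a finite family of pairwise intersecting gated subsets. By \cite[Proposition 2.4]{B89} — the Helly property for gated sets in a partial cube, already used in the proof of Proposition~\ref{P:gated.sets/Helly} — such a family has non-empty intersection. This shows that any finite family of $2$-wise non-disjoint convex sets of $G$ meets in a common vertex, so $h(G)\le 2$.

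Alternatively, one may simply apply the Bandelt–Chepoi theorem cited by the authors: median graphs are (discrete, geometric) weakly modular, and being partial cubes they are bipartite, hence triangle-free with clique number exactly $2$ (an edge exists because $G$ has at least two vertices and is connected). The theorem then delivers $h(G)=2$ directly. There is no substantive obstacle in this proof; the only nontrivial step is invoking the gatedness of convex sets in a median graph (equivalently, the hypothesis-checking for the Bandelt–Chepoi theorem), and everything else is immediate from the definition of the Helly number.
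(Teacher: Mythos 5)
Your proposal is correct, and your primary argument is genuinely different from the paper's. The paper proves this proposition in one line by invoking Bandelt--Chepoi \cite[Theorem 2]{BC96-2} (the Helly number of a discrete geometric weakly modular space equals its clique number), together with the observation that a median graph is weakly modular and, being bipartite and connected with at least two vertices, has clique number $2$ --- this is exactly the ``alternative'' route you sketch at the end. Your main argument instead combines two facts already recorded in the paper's preliminaries: every convex set of a median graph is gated (Subsection~2.7), and gated sets of a partial cube have the Helly property (\cite[Proposition 2.4]{B89}, as used in the proof of Proposition~\ref{P:gated.sets/Helly}); your lower bound via two disjoint singletons is trivially correct and is not even spelled out in the paper, since the cited theorem delivers equality outright. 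What your route buys is self-containedness relative to the paper's toolkit --- no need to verify that the geodesic convexity of a median graph is a discrete geometric weakly modular space --- at the cost of using the (nontrivial, but classical) equivalence of convex and gated sets in median graphs; the paper's route is shorter but outsources all the work to an external theorem whose hypotheses must still be checked.
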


This result does not hold if $G$ is any netlike partial cube.  Take for example an even cycle $C$ of length $6$, and let three paths of $C$ of length $2$ that pairwise have exactly one vertex in common, and thus whose union is $C$.  Then these three paths are convex and have an empty intersection, which proves that $h(C) \geq 3$.  More precisely and more generally we have:

\begin{thm}\label{T:Helly/ph-homogeneous}
The Helly number of a Peano partial cube $G$ is at most 3, with the equality if and only if $G$ is not a median graph.
\end{thm}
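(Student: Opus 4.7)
The plan is to prove the theorem in three stages: the lower bound when $G$ is not a median graph, the equality $h(G) = 2$ in the median case, and the upper bound $h(G) \leq 3$ for all Peano partial cubes.

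\textbf{Stage 1 (lower bound when $G$ is not a median graph).} By Proposition~\ref{P:hypernet./med.gr.} a non-median Peano partial cube must contain some convex cycle $C$ of length $2n \geq 6$. Write $C = \langle x_0, x_1, \ldots, x_{2n-1}, x_0\rangle$ and choose three sub-geodesics $P_1, P_2, P_3$ of $C$ whose endpoints partition $V(C)$ into three arcs of roughly equal length, so that consecutive $P_i$ share exactly one vertex while $V(P_1) \cap V(P_2) \cap V(P_3) = \emptyset$; for $n = 3$ the choice $P_1 = \langle x_0, x_1, x_2\rangle$, $P_2 = \langle x_2, x_3, x_4\rangle$, $P_3 = \langle x_4, x_5, x_0\rangle$ works. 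Each $P_i$ is a geodesic, hence convex in $C$, and since $C$ is convex in $G$, each $P_i$ is convex in $G$. This gives a pairwise-intersecting family of three convex sets with empty common intersection, so $h(G) \geq 3$.

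\textbf{Stage 2 (median case).} If $G$ is a median graph with more than one vertex, Proposition~\ref{P:Helly/med.gr.} gives $h(G) = 2$.

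\textbf{Stage 3 (upper bound $h(G) \leq 3$).} The core is the case of four sets: show that any 3-wise non-disjoint family $\{C_1, C_2, C_3, C_4\}$ of convex sets has a common point. The extension to larger families is a standard Helly induction, applied to the $n-1$ subfamilies each missing one set to produce witnesses $x_i \in \bigcap_{j \ne i} C_j$ and then combining them via the four-set case.

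For the case $n = 4$, pick witnesses $a \in C_2 \cap C_3 \cap C_4$, $b \in C_1 \cap C_3 \cap C_4$, $c \in C_1 \cap C_2 \cap C_4$, and $d \in C_1 \cap C_2 \cap C_3$. Convexity of each $C_i$ gives containments of the six pairwise intervals, e.g., $I_G(b,c) \subseteq C_1 \cap C_4$, $I_G(a,d) \subseteq C_2 \cap C_3$, and so on. By join-hull commutativity of the Peano structure (Theorem~\ref{T:Peano/ph}), $co_G(\{a\} \cup I_G(b,c)) = \bigcup_{y \in I_G(b,c)} I_G(a, y) \subseteq C_4$, so any vertex produced from $a$ and $I_G(b,c)$ by a geodesic construction automatically lies in $C_4$. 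I then apply the Pash property (Theorem~\ref{T:Peano/ph}) to the triangle $(b, c, d)$ with apex $d$, choosing $v' \in I_G(d, c)$ and $w' \in I_G(d, b)$ so that the resulting vertex in $I_G(b, v') \cap I_G(c, w')$ lies in $C_1 \cap C_2 \cap C_3$ (the constituent intervals being contained in the relevant pairwise intersections). A subsequent Peano-property step joins this vertex to $a$ along a geodesic meeting $I_G(b, c)$, and join-hull commutativity places the meeting vertex in $C_4$. This vertex lies simultaneously in $C_1, C_2, C_3, C_4$, completing the four-set case.

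\textbf{Main obstacle.} The hardest point will be stage~3, and in particular the four-set base case. Because a Peano partial cube need not be hyper-median, the triple $(b, c, d)$ need not admit a median or hyper-median that could be projected into $C_4$, and one cannot short-circuit the argument by gating. The construction of the common vertex must be purely interval-theoretic, relying on the interplay of the Pash and Peano properties together with join-hull commutativity. A natural simplifying reduction is to use property $S_3$ to write each $C_i$ as an intersection of half-spaces $W_{e}$ of $G$, and argue first in the special case where each $C_i$ itself is a half-space; the general statement then follows because any three of the half-spaces involved contain a corresponding three-wise intersection of the original convex sets, and hence inherit the 3-wise non-disjointness hypothesis.
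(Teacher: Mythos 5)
Your Stages 1 and 2 match the paper exactly (three convex subpaths of a convex cycle of length at least $6$, plus Proposition~\ref{P:Helly/med.gr.}), and reducing the upper bound to families of four sets is legitimate in principle --- though the induction you sketch (collecting witnesses $x_i \in \bigcap_{j\neq i}C_j$ and ``combining'' them) is not the one that works; the standard Levi argument instead replaces two sets by their intersection and uses the four-set case to check that the new family is still $3$-wise non-disjoint. The genuine gap is the four-set base case itself, which you flag as the hard point but do not actually prove. With $v'\in I_G(d,c)\subseteq C_1\cap C_2$ and $w'\in I_G(d,b)\subseteq C_1\cap C_3$, the Pash property only places the resulting vertex $z\in I_G(b,v')\cap I_G(c,w')$ in $C_1$ (both endpoints of each of these two intervals lie in $C_1$); nothing forces $z\in C_2$ or $z\in C_3$, since $b\notin C_2$ and $c\notin C_3$ in general. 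The subsequent step --- a geodesic from $z$ to $a$ ``meeting $I_G(b,c)$'' --- is likewise unsupported: even granting $z\in C_1\cap C_2\cap C_3$, neither the Peano property nor join-hull commutativity gives any reason why $I_G(a,z)\cap I_G(b,c)\neq\emptyset$, and that is precisely the Radon-type statement you would need (with $I_G(a,z)\subseteq C_2\cap C_3$ and $I_G(b,c)\subseteq C_1\cap C_4$). Your fallback reduction to half-spaces has an additional compactness issue: a convex set of an infinite partial cube is an intersection of possibly infinitely many half-spaces, whereas the Helly number quantifies only over finite families.

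The paper closes the upper bound by a different and essentially unavoidable route. It inducts directly on $n$: assuming $F:=F_1\cap\dots\cap F_n\neq\emptyset$, it supposes $F_0\cap F=\emptyset$, invokes the separation property $\mathrm{S}_4$ to obtain a half-space $W_{ba}\supseteq F_0$ disjoint from $F$ and maximal with these properties, and deduces $F\cap\mathrm{Att}(W_{ba})\neq\emptyset$. For a vertex $u$ in this intersection it then uses the structure theory of Peano partial cubes --- the $U_{ab}$-geodesic $P_u$ associated with $u$ by strong ph-stability, together with Lemma~\ref{L:I(u,x)/v,w}, which says that one of the two endvertices $v,w$ of $P_u$ lies on every geodesic from $u$ to a vertex of $U_{ab}$ --- to force $v$ or $w$ into every $F_i$ with $1\leq i\leq n$ and derive a contradiction with $F\subseteq W_{ab}$. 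This is the machinery your purely interval-theoretic plan is missing; without it (or something equivalent) the four-set case does not close.
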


\begin{proof}
Let $G$ be a Peano partial cube with at least two vertices.  By Proposition~\ref{P:Helly/med.gr.}, $h(G) = 2$ if $G$ is a median graph.  Assume that $G$ is not a median graph.  Then, by Proposition~\ref{P:hypernet./med.gr.}, $G$ contains a convex cycle of length greater than $4$.  It follows, by what we saw above, that $h(G) \geq 3$.  To prove that $3$ is sufficient, it suffices to show that, for every integer $n \geq 3$, any family of $n$ convex sets of $G$ that are $3$-wise non-disjoint has a non-empty intersection.  The proof will be by induction on $n$.

This is obvious if $n = 3$.  Suppose that this is true for some $n \geq 3$.  Let $(F_i)_{0 \leq i \leq n}$ be a family of $n+1$ convex sets of $G$ that are $3$-wise non-disjoint.  Suppose that $F_0 \cap \dots \cap F_n = \emptyset$.

By the induction hypothesis, the $F_i$'s are $n$-wise non-disjoint.  Hence $F := F_1 \cap \dots \cap F_n$ is a non-empty convex set.  By the definition of a Peano partial cube, $G$ has the separation property $\mathrm{S}_{4}$.  Therefore there exits a half-space $H$ containing $F_0$ and disjoint from $F$, which is maximal with respect to these two conditions.  Because $G$ is a partial cube, $H = W_{ba}$ for some edge $ab$ of $G$.  The set $W_{ba} \cup \mathrm{Att}(W_{ba})$ is convex by definition.  Hence $F \cap \mathrm{Att}(W_{ba}) \neq \emptyset$, since otherwise, by the separation property $\mathrm{S}_{4}$, there would exist a half-space $H'$ containing $W_{ba} \cup \mathrm{Att}(W_{ba})$ and disjoint from $F$, contrary to the maximality of $H$.  Let $u \in F \cap \mathrm{Att}(W_{ba})$.  We distinguish two cases.

\emph{Case 1.}\; $u \in U_{ab}$.

Then the neighbor $u'$ of $u$ in $U_{ba}$ belongs to $F_i$ for any $i$ with $1 \leq i \leq n$, since $F_i \cap W_{ba} \supseteq F_i \cap F_0 \neq \emptyset$.  It follows that $u' \in F$, contrary to the hypothesis that $F \subseteq W_{ab}$.

\emph{Case 2.}\; $u \notin U_{ab}$.

$U_{ab}$ is strongly ph-stable since $G$ is ph-homogeneous.  
Let $P_u$ be the $U_{ab}$-geodesic associated with $u$, and let $v$ and $w$ be its endvertices.  Then, by Lemma~\ref{L:I(u,x)/v,w}, for any vertex $x \in U_{ab}$, and thus for any $x \in W_{ba}$, $v$ or $w$ belongs to $I_G(u,x)$. 

On the other hand, by the induction hypothesis, the $F_i$'s are $n$-wise non-disjoint.  Hence the elements of $(F_i \cap W_{ba})_{0 \leq i \leq n}$, which are non-empty convex sets since $W_{ba}$ is convex, are also $n$-wise non-disjoint.  It follows that the elements of $(F_i \cap W_{ba})_{1 \leq i \leq n}$ are $(n-1)$-wise non-disjoint.

It follows that, for each $i$,\; $1 \leq i \leq n$,\; $v$ or $w$ belongs to $F_i$ since this set is convex and because $v$ or $w$ belongs to $I_G(u,x_i)$ for some $x_i \in F_i \cap W_{ba}$.  Suppose that $w \notin F$.  Then $w \notin F_i$ for some $i$ with $1 \leq i \leq n$.  It follows that $v \in F_i$.  Let $j \neq i$ with $1 \leq j \leq n$.  Then $F_i \cap F_j \cap W_{ba}$ is non-empty since $n \geq 3$.  Let $x_{ij}$ be an element of this intersection.  Then $v \in I_G(u,x_{ij})$ because $w \notin F_i$.  It follows that $v \in F_j$ by the convexity of this set.  We infer that $v \in F$.

Therefore, in any case, $v$ or $w$ belongs to $F$.  By Case 1, this yields a contradiction with the hypothesis that $F \subseteq W_{ab}$.  Consequently $F_0 \cap \dots \cap F_n \neq \emptyset$.

We deduce that $h(G) = 3$.
\end{proof}

\begin{cor}\label{C:med.gr.=ph-homog.Helly2}
A Peano partial cube $G$ is a median graph if and only if $h(G) =~2$.
\end{cor}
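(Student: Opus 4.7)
The plan is to derive this corollary as a direct consequence of Theorem~\ref{T:Helly/ph-homogeneous} together with Proposition~\ref{P:Helly/med.gr.}, so essentially no new work is required. I assume $G$ has at least two vertices (otherwise the statement is trivial: $K_1$ is vacuously a median graph and its Helly number is degenerate).

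For the forward implication, I would simply invoke Proposition~\ref{P:Helly/med.gr.}: if $G$ is a median graph with at least two vertices, then $h(G) = 2$ by the quoted theorem of Bandelt--Chepoi.

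For the converse, I would argue by contrapositive. Suppose $G$ is a Peano partial cube that is not a median graph. Then Theorem~\ref{T:Helly/ph-homogeneous} yields $h(G) = 3$, and in particular $h(G) \neq 2$. Equivalently, if $h(G) = 2$, then $G$ must be a median graph.

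Since both directions are immediate from results established in the excerpt, there is no genuine obstacle to overcome; the only care needed is to phrase the statement so that the trivial one-vertex case is handled (or excluded) and to cite the two results in the correct direction.
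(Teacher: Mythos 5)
Your proposal is correct and matches the paper's intent exactly: the corollary is stated as an immediate consequence of Theorem~\ref{T:Helly/ph-homogeneous} (which gives $h(G)=3$ precisely when $G$ is not median) combined with Proposition~\ref{P:Helly/med.gr.} (which gives $h(G)=2$ for median graphs with at least two vertices). Your remark about the trivial one-vertex case is a reasonable bit of extra care but does not change the argument.
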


\subsection{Depth}\label{SS:depth}

In this subsection we study the depth of the geodesic convex structure of a Peano partial cube, and mainly of a hyper-median partial cube.

In~\cite{BV91}, Bandelt and van de Vel introduced an invariant of 
convex structures---the depth---to study the structure of finite 
median graphs.

\begin{defn}\label{D:depth}
The \emph{depth} of a convex structure is the supremum length of a
chain of non-trivial half-spaces.
\end{defn}

We will prove several results about the depth of (non-necessarily finite) Peano partial cubes.  For all these results but Proposition~\ref{P:depth=1} we have to require that these partial cubes are tricycle-free.  We begin by the few following remarks:

\textbullet\; If a partial cube $G$ is compact, then any chain of half-spaces of $G$ is finite (Proposition~\ref{P:half-spaces/compact}).  The converse is false as is shown by an infinite hypercube. 

\textbullet\; If a partial cube has finite diameter, and a fortiori if it is finite, then it obviously contains no isometric rays, and thus is compact by Corollary~\ref{C:comp.hyp.=no.isom.rays}, and moreover it has finite depth.

\textbullet\; If a partial cube $G$ contains no isometric rays, then any quasi-hypertorus of $G$ is finite.

\textbullet\; Any convex subgraph of a Peano partial cube is a Peano partial cube, and thus its geodesic convexity has the separation property $\mathrm{S}_{4}$ by Theorem~\ref{T:Peano/ph}.

\textbullet\; Any convex subgraph of a compact hyper-median partial cube is also a compact hyper-median partial cube.

\textbullet\; By Theorem~\ref{T:C-subgr./gated}, if $G$ is a Peano partial cube, then any element of $\mathbf{Cyl}[G]$ is gated in $G$.\\

Clearly a partial cube has depth $1$ if and only if it is strongly semi-peripheral.  Therefore we deduce from Theorem~\ref{T:reg. hypernet./str.sem.-periph./quasi-hypert.} the following result.

\begin{pro}\label{P:depth=1}
Let $G$ be a compact Peano partial cube.  Then $G$ has depth $1$ if and only if it is a quasi-hypertorus.
\end{pro}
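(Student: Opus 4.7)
The plan is to reduce the statement to Theorem~\ref{T:reg. hypernet./str.sem.-periph./quasi-hypert.} via the intermediate property of being strongly semi-peripheral, as already suggested in the paragraph immediately preceding the proposition.

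First, I would unpack the meaning of ``depth~$1$''. Recall that the non-trivial half-spaces of a partial cube $G$ are precisely the sets $W_{ab}$ with $ab \in E(G)$ (Theorem~\ref{T:Djokovic-Winkler} and the remark that follows it). Saying that $G$ has depth~$1$ means that no two distinct non-trivial half-spaces are comparable under strict inclusion, i.e., every non-trivial half-space is a minimal non-empty half-space. By Proposition~\ref{P:min.half-space/semi-periph.}, the minimal non-empty half-spaces are precisely the semi-peripheries of $G$. Hence $G$ has depth~$1$ if and only if $W_{ab}$ is a semi-periphery for every edge $ab$ of $G$, which is exactly the definition of $G$ being strongly semi-peripheral (Definition~\ref{D:strong.periph.}).

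Second, since $G$ is by hypothesis a compact Peano partial cube, the equivalence between being strongly semi-peripheral and being a quasi-hypertorus is delivered by Theorem~\ref{T:reg. hypernet./str.sem.-periph./quasi-hypert.} (equivalence of conditions (i) and (iii)). Chaining these two equivalences yields the proposition.

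There is no real obstacle here: the only point that requires a moment's thought is the first reduction, which is just a matter of matching the definition of depth with the characterization of semi-peripheries as minimal non-empty half-spaces. Once that identification is made, the result is an immediate corollary of Theorem~\ref{T:reg. hypernet./str.sem.-periph./quasi-hypert.}, with no additional argument needed.
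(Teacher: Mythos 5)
Your proof is correct and follows exactly the route the paper takes: the paper simply asserts that depth $1$ is equivalent to being strongly semi-peripheral and then invokes the equivalence (i)$\Leftrightarrow$(iii) of Theorem~\ref{T:reg. hypernet./str.sem.-periph./quasi-hypert.}. The only difference is that you spell out the ``clear'' step by routing it through Proposition~\ref{P:min.half-space/semi-periph.}, which is a legitimate and welcome justification of the same identification.
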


We now recall the main result of~\cite{BV91}.

\begin{pro}\label{P:Band.v.d.Vel}\textnormal{(Bandelt and van de Vel \cite[Theorem 2.4]{BV91})}
A finite median graph $G$ has depth $k \geq 2$ if and only if there is a convex set $C \subseteq V(G)$ of depth $k-2$ meeting each maximal cube of $G$.  All convex sets meeting each maximal cube of $G$ and that are minimal with respect to this property are isomorphic.
\end{pro}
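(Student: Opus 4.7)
My plan is to prove both implications by a simultaneous induction on $k$, using the fact that in a finite median graph the half-spaces are exactly the gated sets $W_{ab}$ for edges $ab$, that these sets enjoy the Helly property (Helly number $2$ by Proposition~\ref{P:Helly/med.gr.}), and that a periphery $W_{ab} = U_{ab}$ cannot contain any maximal cube (since such a cube would extend across the $\Theta$-class of $ab$ to a strictly larger one, contradicting maximality). These three facts are the essential combinatorial inputs.

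For the forward implication, given a finite median graph $G$ of depth $k \geq 2$, I would set
\begin{equation*}
C := \bigcap\bigl\{W_{ba} : W_{ab} \text{ is a periphery of } G\bigr\}.
\end{equation*}
By the remark on peripheries above, every maximal cube $Q$ satisfies $Q \not\subseteq W_{ab}$ for any periphery $W_{ab}$, hence $Q \cap W_{ba} \neq \emptyset$ for each such $W_{ba}$; then the Helly property applied to the gated sets $\{W_{ba}\}$ together with $V(Q)$ yields $Q \cap C \neq \emptyset$. In particular $C$ itself is nonempty and convex, and meets every maximal cube. To evaluate the depth of the convex subgraph induced by $C$, I would observe that a chain of non-trivial half-spaces of $C$ of length $\ell$ corresponds, via extension, to a chain in $G$: sandwich it between two additional layers, namely a periphery $W_{ab}$ of $G$ (which is a non-trivial half-space strictly containing $C$) above, and a half-space strictly contained in the bottom element of the chain (obtained from an edge adjacent to but outside the bottom convex piece) below. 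Conversely any chain of half-spaces of $G$ of length $k$ can be truncated to yield a chain of length $k-2$ inside $C$ by discarding the two outermost layers. This gives $\mathrm{depth}(C) = k-2$.

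For the reverse implication, if $C' \subseteq V(G)$ is convex of depth $k-2$ meeting every maximal cube, I would produce a chain of non-trivial half-spaces of $G$ of length $k$ by taking any depth-realizing chain in $C'$ and prepending two half-spaces of $G$ obtained as follows: pick a periphery $W_{ab}$ containing $C'$ (it exists because if $C'$ avoided every periphery it would avoid some maximal cube), then a half-space strictly between $W_{ab}$ and the top of the chain in $C'$. This requires the separation property $\mathrm{S}_{4}$ of median graphs (Theorem~\ref{T:mod.gr./ph=1/Peano}), which yields half-space extensions at every intermediate step.

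For the uniqueness-up-to-isomorphism of minimal such $C$, the plan is to show that every minimal convex set meeting every maximal cube arises, up to isomorphism of median graphs, from the periphery-intersection construction above; two instances of this construction yield gated subgraphs that are conjugate under the automorphism structure induced by the gated amalgam decomposition of $G$ into hypercubes (Theorem~\ref{T:med.gr./gated.amalg.}). The main obstacle I anticipate is the depth increment of exactly $2$: showing that one can always insert \emph{two} additional half-space layers (not just one) on top of a chain in $C$ requires carefully arguing that the periphery structure of $G$ is rich enough, which in turn uses that $G$ is not itself a cube (equivalent to depth $\geq 2$ by Proposition~\ref{P:depth=1} transferred to the median setting).
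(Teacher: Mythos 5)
Your central construction fails on a basic example. You set $C := \bigcap\{W_{ba} : W_{ab} \text{ is a periphery of } G\}$, but this intersection is empty whenever $G$ has two complementary peripheries, which by Proposition~\ref{P:periph.p.c.} happens exactly when $G \cong G' \Box K_2$. Concretely, take $G = P_3 \Box K_2$ with $P_3 = \langle a,b,c\rangle$: this median graph has depth $2$, its maximal cubes are the two squares on $\{a,b\}\times V(K_2)$ and $\{b,c\}\times V(K_2)$, and the desired transversal is a single vertex such as $(b,0)$. But both $P_3$-layers $V(P_3)\times\{0\}$ and $V(P_3)\times\{1\}$ are peripheries, so your family of complements contains two disjoint members; it is not pairwise intersecting, the Helly argument cannot even be launched, and $C=\emptyset$. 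This is precisely the pitfall that the proof of Theorem~\ref{T:depth/subgr.} (of which the present proposition is the median special case, convex sets being gated in median graphs and the maximal cubes being the elements of $\mathbf{Cyl}^+[G]$ there) is built to avoid: one intersects the sets $A_\sigma = W_{a_2b_2} \cup co_G(U_{b_2a_2})$ indexed by \emph{maximal chains of half-spaces of length at least two}, so that a $\Theta$-class whose chains have length one --- such as the $K_2$-direction above --- contributes nothing to the intersection; those sets do pairwise intersect, and the strong Helly property of gated sets (Proposition~\ref{P:gated.sets/Helly}) then applies.

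There are further problems even where your construction is nonempty. The "sandwich by two layers" argument for the depth drop is incoherent as written: you propose to place above a chain of half-spaces of $C$ "a periphery $W_{ab}$ of $G$ which is a non-trivial half-space strictly containing $C$", but by construction $C \subseteq W_{ba}$, so a nonempty $C$ is \emph{disjoint} from every periphery; the same confusion reappears in the converse direction ("pick a periphery $W_{ab}$ containing $C'$"). Showing that the depth drops by exactly two is the delicate point and needs the chain-by-chain bookkeeping of part (a.4) of the proof of Theorem~\ref{T:depth/subgr.}, where chains of half-spaces of the transversal are lifted to chains of $G$ via Lemma~\ref{L:Fgated.G.2edges} and compared with the maximal chains that index the intersection. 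Finally, the uniqueness-up-to-isomorphism of the minimal transversals is only asserted, not argued; note that the paper itself does not reprove that part but quotes it from \cite{BV91}.
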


Next theorem generalizes the above result and the analogous one \cite[Theorem 6.4]{P07-5} about tricycle-free netlike partial cubes.  We recall that $\mathbf{Cyl}^+[G]$ denotes the set of all subgraphs of $G$ that are either elements of $\mathbf{Cyl}[G]$ or maximal hypercubes of $G$.

\begin{thm}\label{T:depth/subgr.}
Let $G$ be a compact hyper-median partial cube whose depth is finite.  Then $G$ has depth $k \geq 2$ if and only if
there is a gated subset of $V(G)$ of depth $k-2$ that meets
each element of $\mathbf{Cyl}^+[G]$ and that is minimal with respect to this property.
\end{thm}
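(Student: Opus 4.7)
The plan is to adapt the strategies of \cite[Theorem~2.4]{BV91} (median graphs) and \cite[Theorem~6.4]{P07-5} (tricycle-free netlike partial cubes) to the hyper-median setting, using that a compact hyper-median partial cube is precisely a compact tricycle-free Peano partial cube (Theorem~\ref{T:charact.hyper-median}). Two preliminaries are central: by Propositions~\ref{P:min.half-space/semi-periph.} and~\ref{P:half-spaces/compact}, the minimal non-empty half-spaces of $G$ are exactly the semi-peripheries $W_{ab}$ (those with $W_{ab}=co_G(U_{ab})$) and every chain of half-spaces of $G$ is finite; by Proposition~\ref{P:depth=1}, each element of $\mathbf{Cyl}^+[G]$ is a finite quasi-hypertorus of depth exactly $1$.

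For the necessity, assuming $\mathrm{depth}(G)=k$, I would consider
\[
C \;:=\; \bigcap\bigl\{W_{ba}\,:\, W_{ab}\text{ is a minimal non-empty half-space of }G\bigr\},
\]
and verify four properties. First, each $G[W_{ba}]$ in the intersection is gated: from $W_{ab}=co_G(U_{ab})$ one gets $G_{\overrightarrow{ba}}=G_{\overline{ab}}$, which is gated by Lemma~\ref{L:tricycle/Gab}, and $\Gamma$-closedness of $G[W_{ba}]$ then yields gatedness via Theorem~\ref{T:hypernet.=>G-closed+conv.=gated}. Second, $C$ is non-empty and gated: the family is pairwise intersecting and Proposition~\ref{P:gated.sets/Helly} delivers both properties after a reduction to finite sub-intersections using finiteness of intervals (Lemma~\ref{L:gen.propert.}(ii)). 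Third, $C$ meets every $H\in\mathbf{Cyl}^+[G]$: since $H$ has depth $1$, any minimal half-space of $G$ meets $V(H)$ in at most a semi-periphery of $H$, and these cannot cover $V(H)$. Fourth, $\mathrm{depth}(C)=k-2$: a chain of length $\ell$ in $C$ lifts to a chain of length $\ell+2$ in $G$ by prepending, at the bottom, a minimal half-space $W_{ab}$ and the larger half-space obtained by adjoining its natural extension; conversely, any chain of length $k$ in $G$ terminates in two ``peelable'' levels that collapse on $C$. Finally, minimality of $C$ is achieved by passing to a minimal gated subset of $C$ still meeting each element of $\mathbf{Cyl}^+[G]$, which by the preceding has the same depth $k-2$.

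For the sufficiency, let $C$ be a gated set of depth $k-2$ meeting every $H\in\mathbf{Cyl}^+[G]$, minimal with this property. Pick a maximal chain of non-trivial half-spaces $H_1\supsetneq\cdots\supsetneq H_{k-2}$ of $C$. Each $H_i$ lifts to a half-space $\widetilde H_i=W_{a_ib_i}^G$ of $G$ (via $H_i=W_{a_ib_i}^C=W_{a_ib_i}^G\cap V(C)$, using Lemma~\ref{L:prop.isom subgr.}), giving a chain $\widetilde H_1\supsetneq\cdots\supsetneq\widetilde H_{k-2}$ in $G$. Minimality of $C$ supplies a vertex $v\in V(C)$ and some $H^\ast\in\mathbf{Cyl}^+[G]$ such that removing $v$ from $C$ would leave no vertex of $H^\ast$; the local structure of the quasi-hypertorus $H^\ast$ around $v$ then furnishes two further nested non-trivial half-spaces $\widetilde H_{k-1}\supsetneq\widetilde H_k$ strictly inside $\widetilde H_{k-2}$, contributing the two ``layers'' of half-spaces below the level of $C$ that $H^\ast$ possesses as a depth-$1$ quasi-hypertorus. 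This yields $\mathrm{depth}(G)\geq k$, the reverse inequality coming from the necessity construction.

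The principal obstacle is the gatedness of $C$ as a possibly infinite intersection of gated sets: although Proposition~\ref{P:gated.sets/Helly} yields non-emptiness, preserving gatedness under infinite intersection requires a finiteness reduction. The finiteness of each interval of $G$ (Lemma~\ref{L:gen.propert.}(ii)) allows me to pin down the gate of a vertex $x$ in $C$ using finitely many of the $W_{ba}$'s, reducing to the finite-intersection case handled by Lemma~\ref{L:gated.inter.convex}. A secondary subtlety is the exact bookkeeping of the two ``extra'' levels in the chain correspondence, which relies on ph-homogeneity (Proposition~\ref{P:hypernet./ph1}) to ensure that a minimal half-space is strictly contained in the unique next half-space obtained by adjoining its attaching structure.
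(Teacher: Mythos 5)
There is a genuine gap at the foundation of your necessity argument. You build $C$ as the intersection of the sets $W_{ba}$ over all minimal non-empty half-spaces $W_{ab}$, and you justify their gatedness by asserting that $G[W_{ba}]$ is $\Gamma$-closed. This is false in general: if $W_{ab}$ is a semi-periphery that is not a periphery, there is a convex cycle $D\in\mathbf{C}(G,ab)$ (an $ab$-cycle) whose intersection with $W_{ba}$ is a path with at least three vertices while $D\not\subseteq G[W_{ba}]$ — already for $G$ containing a convex $6$-cycle crossing the cut, the vertex of $D$ antipodal to the middle of $D\cap W_{ab}$ has no gate in $W_{ba}$. (Your intermediate claim that $G_{\overrightarrow{ba}}=G_{\overline{ab}}$ is gated is correct, but $G_{\overline{ab}}$ is not $G[W_{ba}]$; it is $W_{ba}$ \emph{augmented} by $co_G(U_{ab})$, and that augmentation is exactly what makes it $\Gamma$-closed.) Without gatedness you cannot invoke Proposition~\ref{P:gated.sets/Helly} to get non-emptiness of the infinite intersection (mere convexity only gives Helly number $3$, and $3$-wise intersection of these complements is not automatic), and even if $C$ were non-empty it would not be gated, which is what the theorem asserts. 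The paper avoids this by intersecting, over all \emph{maximal} chains $\sigma=(a_ib_i)_{1\le i\le h}$ of half-spaces, the sets $A_\sigma:=W_{a_2b_2}\cup co_G(U_{b_2a_2})$ — i.e., it peels off the bottom \emph{two} levels of each maximal chain but keeps the attaching zone $co_G(U_{b_2a_2})$, so that each $A_\sigma$ is precisely $V(G_{\overrightarrow{b_2a_2}})$ and hence gated by Lemma~\ref{L:tricycle/Gab}.

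Two further steps are also underpowered. First, your argument that $C$ meets each $H\in\mathbf{Cyl}^+[G]$ ("the minimal half-spaces cannot cover $V(H)$") does not survive an infinite family: one must show that each individual gated set in the intersection meets $H$ (the paper does this with a nearest-point/gate argument) and then use the strong Helly property inside the gated subgraph $H$. Second, in the sufficiency direction, "minimality of $C$ supplies a vertex $v$ whose removal misses some $H^\ast$" is not a valid use of minimality among \emph{gated} subsets (deleting a vertex does not preserve gatedness), and the two extra half-spaces you extract from $H^\ast$ are not shown to nest with the lifted chain. The paper instead argues by contradiction: if $\mathrm{depth}(G)=l<k$, the canonical gated set $A$ from the necessity part forces $B\not\subseteq A$, so some $A_\sigma$ cuts $B$ down to a proper gated subset still meeting every element of $\mathbf{Cyl}^+[G]$, contradicting the minimality of $B$.
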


We need two lemmas.

\begin{lem}\label{L:Fgated.G.2edges}
Let $F$ be a gated subgraph of a partial cube $G$, and let $a_{1}b_{1}$ and $a_{2}b_{2}$ be two edges of $F$.  If $W_{b_{1}a_{1}}^{F} \subset W_{b_{2}a_{2}}^{F}$, then $W_{b_{1}a_{1}}^{G} \subset W_{b_{2}a_{2}}^{G}$.
\end{lem}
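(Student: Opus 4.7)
The main tool is Lemma~\ref{L:prop.isom subgr.}: since $F$ is gated, hence isometric, $W_{ab}^{F} = W_{ab}^{G} \cap V(F)$ for every edge $ab$ of $F$. Together with the gate map $g : V(G) \to V(F)$, which satisfies $g(x) \in I_{G}(x,y)$ for every $x \in V(G)$ and $y \in V(F)$, this reduces the problem to transferring the inclusion $W_{b_{1}a_{1}}^{F} \subset W_{b_{2}a_{2}}^{F}$ across the gate projection.

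First I would establish the non-strict inclusion $W_{b_{1}a_{1}}^{G} \subseteq W_{b_{2}a_{2}}^{G}$ as follows. Take $x \in W_{b_{1}a_{1}}^{G}$ and let $g(x)$ be its gate in $F$. Since $g(x) \in I_{G}(x,a_{1}) \cap I_{G}(x,b_{1})$,
\[
d_{G}(x,a_{i}) = d_{G}(x,g(x)) + d_{G}(g(x),a_{i}), \qquad d_{G}(x,b_{i}) = d_{G}(x,g(x)) + d_{G}(g(x),b_{i})
\]
for $i = 1,2$. The first pair of equalities (with $i=1$) shows that $d_{G}(g(x),b_{1}) < d_{G}(g(x),a_{1})$, so $g(x) \in W_{b_{1}a_{1}}^{G} \cap V(F) = W_{b_{1}a_{1}}^{F}$. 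By hypothesis $g(x) \in W_{b_{2}a_{2}}^{F} \subseteq W_{b_{2}a_{2}}^{G}$, so $d_{G}(g(x),b_{2}) < d_{G}(g(x),a_{2})$. Plugging into the $i=2$ equalities yields $d_{G}(x,b_{2}) < d_{G}(x,a_{2})$, i.e., $x \in W_{b_{2}a_{2}}^{G}$.

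For strictness, use that $G$ is a connected bipartite partial cube, so $V(G) = W_{b_{1}a_{1}}^{G} \sqcup W_{a_{1}b_{1}}^{G}$. Since $W_{b_{1}a_{1}}^{F} \subset W_{b_{2}a_{2}}^{F}$ is strict, pick $y \in W_{b_{2}a_{2}}^{F} \setminus W_{b_{1}a_{1}}^{F}$; then $y \in V(F)$ lies in $W_{a_{1}b_{1}}^{F} = W_{a_{1}b_{1}}^{G} \cap V(F)$, hence $y \notin W_{b_{1}a_{1}}^{G}$, while $y \in W_{b_{2}a_{2}}^{F} \subseteq W_{b_{2}a_{2}}^{G}$. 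This witnesses the strict inclusion $W_{b_{1}a_{1}}^{G} \subset W_{b_{2}a_{2}}^{G}$.

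There is no real obstacle here: once the identification $W_{ab}^{F} = W_{ab}^{G} \cap V(F)$ and the gate-projection distance equalities are in hand, the argument is a short chain of inequalities. The only point to be careful about is the orientation convention for the relation $W_{ab}$ versus $W_{ba}$ (Remark~\ref{R:Theta/notation}), so that the inequality $d_{G}(g(x),b_{1}) < d_{G}(g(x),a_{1})$ is read off from $x \in W_{b_{1}a_{1}}^{G}$ in the correct direction.
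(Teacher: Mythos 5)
Your proof is correct, and it takes a genuinely different route from the paper's. The paper argues at the level of $\Theta$-classes: it shows, by projecting an edge $uv$ of $G[co_{G}(U_{a_{2}b_{2}}^{G})]$ onto its gates in $F$, that $a_{1}b_{1}$ cannot be $\Theta$-equivalent to any edge of $co_{G}(U_{a_{2}b_{2}}^{G})$, then invokes Lemma~\ref{L:E(G[Wab])} to conclude that the whole $\Theta$-class of $a_{1}b_{1}$ avoids $G[W_{a_{2}b_{2}}^{G}]$, which forces $W_{b_{1}a_{1}}^{G} \subset W_{b_{2}a_{2}}^{G}$. You instead work vertex by vertex: the gate identity $d_{G}(x,z) = d_{G}(x,g(x)) + d_{G}(g(x),z)$ for all $z \in V(F)$ transfers the sign of $d_{G}(\cdot,b_{i}) - d_{G}(\cdot,a_{i})$ from $x$ to $g(x)$ and back, and Lemma~\ref{L:prop.isom subgr.} identifies $W_{ab}^{F}$ with $W_{ab}^{G} \cap V(F)$. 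Your argument is the more elementary of the two — it needs only the definition of a gated set and the isometric-subgraph identity, not the edge-$\Theta$-equivalence machinery — and it has the merit of handling the strictness of the inclusion explicitly (by exhibiting a witness $y \in W_{b_{2}a_{2}}^{F} \setminus W_{b_{1}a_{1}}^{F}$), a point the paper's proof leaves implicit. What the paper's formulation buys is a statement about where the $\Theta$-class of $a_{1}b_{1}$ can live, which is the form in which the fact gets reused elsewhere; but as a proof of the stated lemma, yours is complete and arguably cleaner.
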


\begin{proof}
Assume that $W_{b_{1}a_{1}}^{F} \subset W_{b_{2}a_{2}}^{F}$, and suppose that 
$a_{1}b_{1}$ is $\Theta$-equivalent to an edge $uv$ of $G[co_{G}(U_{a_{2}b_{2}}^{G})]$.  The gates $u'$ and $v'$ in $F$ of $u$ and $v$, respectively, clearly belong to $co_{F}(U_{a_{2}b_{2}}^{F})$.  Moreover, because $uv$ is $\Theta$-equivalent to the edge $a_{1}b_{1}$ of $F$, we infer that $u'v'$ is an edge which is $\Theta$-equivalent to $uv$, and thus to $a_{1}b_{1}$, contrary to the assumption.

Consequently $a_{1}b_{1}$ is not $\Theta$-equivalent to an edge of $co_{G}(U_{a_{2}b_{2}}^{G})$, and thus of $W_{a_{2}b_{2}}^{G}$ by Lemma~\ref{L:E(G[Wab])}, which implies that $W_{b_{1}a_{1}}^{G} \subset W_{b_{2}a_{2}}^{G}$.
\end{proof}

The following result is obvious.

\begin{lem}\label{L:inter.two.gated}
Let $A$ and $B$ be two non-disjoint sets of vertices of a graph $G$ such that $A$ is convex and $B$ is gated.  Then $A \cap B$ is the set of the gates in $B$ of all elements of $A$.
\end{lem}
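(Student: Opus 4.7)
The plan is to prove the two inclusions separately, both essentially by unwinding the definitions of ``gate'' and ``convex''.

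For the inclusion $A \cap B \subseteq \{g_B(x) : x \in A\}$, I would observe that any vertex $y \in A \cap B$ is its own gate in $B$: indeed, $y \in B$ and trivially $y \in I_G(y,z)$ for every $z \in B$. So $y = g_B(y)$ with $y \in A$, placing $y$ in the right-hand side.

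For the reverse inclusion, fix any vertex $y_0 \in A \cap B$ (which exists by the non-disjointness assumption). Given any $x \in A$, the gate $g_B(x)$ belongs to $B$ by definition, and, applying the defining property of the gate to $z = y_0 \in B$, we have $g_B(x) \in I_G(x, y_0)$. Since $x, y_0 \in A$ and $A$ is convex, this interval lies in $A$, so $g_B(x) \in A$. Hence $g_B(x) \in A \cap B$, as required.

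There is no real obstacle here: the argument is a one-line application of convexity plus the definition of gate, which is presumably why the author labels the statement ``obvious.'' The only thing worth checking is that the non-disjointness hypothesis is genuinely used — and it is, precisely to supply the vertex $y_0 \in A \cap B$ that anchors the convexity argument in the second inclusion.
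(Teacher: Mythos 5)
Your proof is correct: the paper simply declares the lemma ``obvious'' and gives no argument, and your two-inclusion unwinding of the definitions (a vertex of $A\cap B$ is its own gate; conversely $g_B(x)\in I_G(x,y_0)\subseteq A$ for an anchor $y_0\in A\cap B$ by convexity) is exactly the intended routine verification. Your remark that non-disjointness is used precisely to supply $y_0$ is also accurate.
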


\begin{proof}[\textnormal{\textbf{Proof of
Theorem~\ref{T:depth/subgr.}}}]
(a)\; Assume that the depth of $G$ is $k \geq 2$.  Note that all hypercubes are finite since $G$ contains no isometric rays by Corollary~\ref{C:comp.hyp.=no.isom.rays}.

(a.1)\; We denote by $\mathcal{S}(G)$ the set of all maximal sequences $\sigma = (a_{i}b_{i})_{1 \leq i \leq h}$ of length $h \geq 2$ of edges of $G$ such that $W_{b_{1}a_{1}}^{G} \subset \dots \subset W_{b_{h}a_{h}}^{G}$ and $V(G) = W_{b_{h}a_{h}}^{G} \cup co_{G}(U_{a_{h}b_{h}}^{G})$, and moreover we put $A_{\sigma} := W_{a_{2}b_{2}}^{G} \cup co_{G}(U_{b_{2}a_{2}}^{G})$.  Note that the length of any sequence in $\mathcal{S}(G)$ is at most $k$.  Put $$A := \bigcap_{\sigma \in \mathcal{S}(G)}A_{\sigma}.$$  

For any $\sigma \in \mathcal{S}(G)$,\; $A_{\sigma}$ is gated by Lemma~\ref{L:tricycle/Gab}, since $G$ is tricycle-free by Theorem~\ref{T:charact.hyper-median}.     
Clearly $A_{\sigma} \cap A_{\sigma'} \neq \emptyset$ for all $\sigma, \sigma' \in \mathcal{S}(G)$.  Moreover $G$ is compact.  It follows that $A$ is a non-empty gated set since the gated sets of $G$ have the strong Helly property by Proposition~\ref{P:gated.sets/Helly}, and since the intersection of gated sets is gated.

(a.2)\; Let $H \in \mathbf{Cyl}^+[G]$.  Suppose that $A_{\sigma} \cap V(H) = \emptyset$ for some $\sigma \in \mathcal{S}(G)$.  Let $x \in A_{\sigma}$ and $y \in V(H)$ be such that $d_{G}(x,y) = d_{G}(A_{\sigma},V(H))$.  Then $x$ is the gate of $y$ in $A_{\sigma}$.  Let $xx' \in \partial_{G}(A_{\sigma})$ be an edge of some $(x,y)$-geodesic.  
By Lemma~\ref{L:gen.propert.}(ix), $xx'$ is not $\Theta$-equivalent to an edge of $G[A_{\sigma}]$.  By the properties of $S$,\; $W_{x'x}^{G}$ is a semi-periphery of $G$.  Moreover $y \in W_{x'x}^{G}$.  Hence $H \in \mathbf{Cyl}[G,x'x]$.  It follows that, if $x'$ were a vertex of $H$, then $x$ would be a vertex of $H$ as well, contrary to the hypothesis.  Hence $x' \notin V(H)$, and thus $x'$ cannot belong to $I_{G}(x,V(H))$ since $H \in \mathbf{Cyl}[G,x'x]$, contrary to the choice of $x'$.  Consequently $A_{\sigma} \cap V(H)$ is non-empty.  By Lemma~\ref{L:inter.two.gated}, $A_{\sigma} \cap V(H)$ is then a gated subset of $V(H)$ which contains the gate in $H$ of each element of $A_{\sigma}$.  Since $A$ is non-empty, it follows that $A \cap V(H)$ is a non-empty gated set which contains the gate in $H$ of each element of $A$.

(a.3)\; Denote by $\mathcal{A}$ the set of all gated subset of $A$ that meet each element of $\mathbf{Cyl}^+[G]$.  Let $\mathcal{C}$ be a descending chain of elements of $\mathcal{A}$.  Then $C \cap H$ is convex for any $C \in \mathcal{C}$, since so is any element of $\mathbf{Cyl}^+[G]$ and of $\mathcal{A}$.  It follows that the intersection of $\bigcap \mathcal{C}$ with $H$ is non-empty because $G$ is compact.  Moreover $\bigcap \mathcal{C}$ is gated as an intersection of gated sets.  
Therefore $\bigcap \mathcal{C} \in \mathcal{A}$.  Consequently, by Zorn's lemma, $\mathcal{A}$ has a minimal element.  Denote by $A^{*}$ such a minimal gated subset of $A$.

(a.4)\; We now prove that both $A$ and $A^{*}$ have depth $k-2$.  First, let $\sigma = (a_{i}b_{i})_{1 \leq i \leq h}$ be a sequence of edges of $G[A]$ such that $W_{b_{1}a_{1}}^{G[A]} \subset \dots \subset W_{b_{h}a_{h}}^{G[A]}$.  Then, by Lemma~\ref{L:Fgated.G.2edges},\; $W_{b_{1}a_{1}}^{G} \subset \dots \subset W_{b_{h}a_{h}}^{G}$.  Therefore $\sigma$ is a subsequence of some element of $\mathcal{S}(G)$.  
It follows, by the construction of $A$, that $h \leq k-2$, and thus the depth of $A$ is at most $k-2$.

On the other hand, because the depth of $G$ is $k$,  there exists a sequence $(a_{i}b_{i})_{1 \leq i \leq k} \in \mathcal{S}(G)$.  Then, because $A$ meets each element of $\mathbf{Cyl}[G,a_{i}b_{i}]$ for $1 \leq i \leq k$, it follows that $W_{b_{i}a_{i}}^{G[A]}$ is non-empty for every $i$ with $2 \leq i \leq k-1$.  Therefore the depth of $A$ is at least $k-2$, and thus exactly $k-2$ by the above inequality.

By replacing $A$ by $A^{*}$ in the above proof, we would obtain that the depth of $A^{*}$ is also $k-2$.\\

(b)\;  Conversely assume that there exists a gated subset $B$ of $V(G)$ of depth $k-2$ which meets
each element of $\mathbf{Cyl}^+[G]$ and which is minimal with respect to this property.  Let $l$ be the depth of $G$.  By the first part of (a.4) and since $B$ meets
every element of $\mathbf{Cyl}^+[G]$, each sequence in $\mathcal{S}(G[B])$ of length $k-2$ is a subsequence of an element of $\mathcal{S}(G)$ of length at most $k$.  On the other hand, each sequence in $\mathcal{S}(G)$ of length $l$ gives a sequence in $\mathcal{S}(G[B])$ of length at most $k-2 \leq l$.  Hence $k-2 \leq l \leq k$.  

Suppose that $l < k$.  Let $A$ be the gated set constructed in (a).  Then $B \nsubseteq A$ since, by (a.4), any gated subset of $A$ meeting each element of $\mathbf{Cyl}^+[G]$ and minimal with respect to this property has depth $l-2 < k-2$.
Hence, by the construction of $A$, there is a sequence $(a_{i}b_{i})_{1 \leq i \leq h} \in \mathcal{S}(G)$ such that $B' := B \cap C$, where $C := W_{a_{2}b_{2}}^{G} \cup co_{G}(U_{b_{2}a_{2}}^{G})$, is a non-empty proper gated subset of $B$.  Let $H \in \mathbf{Cyl}^+[G]$.  If $V(H) \subseteq C$, then $B' \cap V(H) = B \cap V(H) \neq \emptyset$.  Suppose that $V(H) \nsubseteq C$.  Clearly $C \cap V(H) \neq \emptyset$ since $A \subseteq C$ and $A \cap V(H) \neq \emptyset$.  Let $x \in B'$, and let $y$ be the gate of $x$ in $H$.  Then, by Lemma~\ref{L:inter.two.gated}, $$y \in (B \cap V(H)) \cap (C \cap V(H)) = B' \cap V(H).$$  Hence $B' \cap V(H)$ is non-empty.  This yields a contradiction with the hypothesis that $B$ is minimal with respect to the property of meeting all element of $\mathbf{Cyl}^+[G]$.  Consequently $l = k$.
\end{proof}

Because any convex set of a median graph is gated, this result gives the main part of Proposition~\ref{P:Band.v.d.Vel}.  For a  compact Peano partial cube that is not tricycle-free, the result may be different.  Take for example the benzenoid graph $G$ in Figure~\ref{F:counter-example}.  $G$ has depth $4$, and contains no proper gated set meeting all $6$-cycles.  On the other hand there is exactly one convex set of depth $2$ meeting all $6$-cycles and which is minimal with respect to both these properties; this set is depicted by the big points.  Moreover there also exists exactly one minimal convex set meeting all $6$-cycles, it has depth $3$ and is depicted by the encircled big points in the figure.

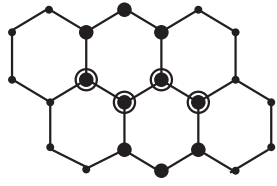
\begin{figure}[!h]
    \centering
{\tt    \setlength{\unitlength}{0.50pt}
\begin{picture}(267,152)
 \linethickness{0,3mm}    
              \put(69,140){\line(5,-3){29}}
              \put(68,139){\line(-5,-3){26}}
              \put(42,122){\line(0,-1){36}}
              \put(127,140){\line(5,-3){29}}
              \put(126,139){\line(-5,-3){26}}
              \put(98,120){\line(0,-1){36}}
              \put(155,121){\line(0,-1){36}}
              \put(41,87){\line(5,-3){29}}                                           
              \put(183,33){\line(-5,-3){26}}
              \put(183,67){\line(0,-1){36}}
              \put(128,32){\line(5,-3){29}}                                          
              \put(98,86){\line(-5,-3){27}}
              \put(71,69){\line(0,-1){36}}
              \put(72,33){\line(2,-1){27}}
              \put(97,18){\line(2,1){29}}               
              \put(127,69){\line(0,-1){36}}
              \put(183,33){\line(5,-3){29}}
              \put(183,139){\line(-5,-3){29}}
              \put(211,86){\line(-5,-3){29}}
              \put(183,139){\line(5,-3){29}}
              \put(211,122){\line(0,-1){34}}
              \put(211,86){\line(5,-3){29}}
              \put(236,33){\line(-5,-3){29}}
              \put(238,69){\line(0,-1){34}}              
              \put(99,86){\line(5,-3){29}}
              \put(156,85){\line(5,-3){29}}
              \put(155,85){\line(-5,-3){26}}             
              
              \put(238,35){\circle*{6}}
              \put(238,69){\circle*{6}}
              \put(211,86){\circle*{6}}
              \put(211,122){\circle*{6}}
              \put(183,139){\circle*{6}}
              \put(211,17){\circle*{6}}                                          
              \put(127,69){\circle*{11}}
              \put(127,69){\circle{16}}
              \put(71,69){\circle*{6}}
              \put(98,86){\circle*{11}}
              \put(98,18){\circle*{6}}
              \put(71,35){\circle*{6}}              
              \put(127,33){\circle*{11}}
              \put(42,122){\circle*{6}}
              \put(70,139){\circle*{6}}
              \put(155,122){\circle*{11}}
              \put(98,122){\circle*{11}}
              \put(98,86){\circle{16}}
              \put(127,139){\circle*{11}}
              \put(42,86){\circle*{6}}
              \put(183,69){\circle*{11}}
              \put(183,69){\circle{16}}
              \put(155,86){\circle*{11}}
              \put(155,86){\circle{16}}
              \put(183,33){\circle*{11}}
              \put(155,17){\circle*{11}}                                          
\end{picture}}
\caption{A minimal convex set meeting all $6$-cycles and a minimal convex set of depth $2$ meeting all $6$-cycles.}
\label{F:counter-example}
\end{figure}

We give a first consequence of Theorem~\ref{T:depth/subgr.}.

\begin{cor}\label{C:depth2}
The depth of a compact hyper-median partial cube $G$ is $n \leq 2$ if and
only if there exists a vertex which is common to all elements of $\mathbf{Cyl}^+[G]$.
\end{cor}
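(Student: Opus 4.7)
The plan is to derive this corollary directly from Theorem~\ref{T:depth/subgr.} (Theorem 7.4 in my numbering) by specializing to $k=2$, after noting the following key observation: a gated subset of $V(G)$ has depth $0$ if and only if it is a singleton, since any set with two or more vertices admits a non-trivial half-space in its induced convex structure (the geodesic convexity of a partial cube has the separation property $\mathrm{S}_3$), and conversely a singleton is trivially gated and has no non-trivial half-spaces.

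For the forward direction, assume $G$ has depth $n\le 2$ and split on $n$. If $n=0$, the absence of non-trivial half-spaces forces $G=K_1$, whose unique vertex lies in every element of $\mathbf{Cyl}^+[G]$. If $n=1$, Proposition~\ref{P:depth=1} identifies $G$ as a quasi-hypertorus, and then $G$ itself is a gated quasi-hypertorus lying in $\mathbf{Cyl}^+[G]$ and dominating the family; a common vertex is produced directly from the Cartesian factorization of $G$ together with the observation that every other element of $\mathbf{Cyl}^+[G]$ is a gated subgraph of $G$. If $n=2$, I would apply Theorem~\ref{T:depth/subgr.} with $k=2$ to obtain a gated subset $B\subseteq V(G)$ of depth $k-2=0$ that meets every element of $\mathbf{Cyl}^+[G]$. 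The key observation forces $B=\{v\}$ for a single vertex $v$, which is the required common vertex.

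For the converse, suppose $v\in V(G)$ is common to every element of $\mathbf{Cyl}^+[G]$. Then the singleton $\{v\}$ is a gated subset of depth $0$ meeting every element of $\mathbf{Cyl}^+[G]$, and is inclusion-minimal with that property. If the depth $n$ of $G$ were at least $3$, Theorem~\ref{T:depth/subgr.} (applied with $k=n$) would force any inclusion-minimal gated meeting set to have depth $n-2\ge 1$, contradicting the existence of the minimal meeting set $\{v\}$ of depth $0$. Hence $n\le 2$.

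The main obstacle is the $n=1$ case of the forward direction, where Theorem~\ref{T:depth/subgr.} does not literally apply: one must argue directly, using the explicit product structure of a quasi-hypertorus and the fact that $G$ itself is a gated quasi-hypertorus in $\mathbf{Cyl}^+[G]$, that the elements of $\mathbf{Cyl}^+[G]$ share a common vertex. The remaining two cases $n=0$ and $n=2$, together with the converse, fall out of a single application of Theorem~\ref{T:depth/subgr.} at the threshold value $k=2$.
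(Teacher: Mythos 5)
Your proposal is correct and takes essentially the same route as the paper: the cases $n=0$ and $n=1$ are dispatched via Proposition~\ref{P:depth=1} (a quasi-hypertorus trivially providing the common vertex), and the case $n=2$ together with the converse is read off from Theorem~\ref{T:depth/subgr.} specialized to $k=2$. You merely make explicit two points the paper leaves implicit, namely that a gated set of depth $0$ must be a singleton (any gated set with two or more vertices is connected and so carries a non-trivial half-space) and that the converse rests on the ``if'' direction of Theorem~\ref{T:depth/subgr.} applied to the singleton $\{v\}$.
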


\begin{proof}
This is obvious if $n = 0$ since $G$ has then exactly one vertex, and 
if $n = 1$ since $G$ is then either a finite hypercube or a hypertorus or a prism by Proposition~\ref{P:depth=1}.  
For $n = 2$, the result is a consequence of Theorem~\ref{T:depth/subgr.}.
\end{proof}

The following theorem shows that the gated hull of a finite set of vertices of some infinite partial cube, even if it is generally not finite, may have a finiteness property.

\begin{thm}\label{T:gated.hull(finite.set)}
Let $G$ be a compact hyper-median partial cube.  Then the gated hull of any finite set of vertices of $G$ has finite depth.
\end{thm}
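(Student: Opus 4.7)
Let $A \subseteq V(G)$ be finite and let $F$ be its gated hull. Because gated sets are convex (intersections of half-spaces) they are closed, so $V(F)$ is a closed subspace of the compact space $V(G)$ and is therefore itself compact. Since $F$ is a convex subgraph of the hyper-median partial cube $G$, Proposition~\ref{P:conv.subgr./hypermed.} gives that $F$ is again a compact hyper-median partial cube, so all tools of this section—in particular the recursive characterization of depth in Theorem~\ref{T:depth/subgr.}—may be applied \emph{inside} $F$.

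The strategy is to argue by contradiction: suppose $\mathrm{depth}(F) = \infty$ and iterate Theorem~\ref{T:depth/subgr.} to produce a strictly descending sequence of gated subgraphs $F = F_0 \supsetneq F_1 \supsetneq F_2 \supsetneq \cdots$, where at each stage $F_{n+1}$ is a minimal gated subset of $F_n$ meeting every element of $\mathbf{Cyl}^+[F_n]$, of depth $\mathrm{depth}(F_n)-2$ (still infinite, hence $\geq 2$). Each $F_n$ is gated in $G$, so compact; by the strong Helly property (Proposition~\ref{P:gated.sets/Helly}) the nested intersection $\bigcap_n F_n$ is a nonempty gated set. To extract a contradiction, the plan is to \emph{propagate the finite datum $A$ through the chain}: I would show inductively that each $F_n$ is itself the gated hull in $F_{n-1}$ of a finite set $A_n$, where $A_n$ is built from $A_{n-1}$ by taking the gates of its vertices in a finite collection $\mathcal{H}_{n-1}$ of maximal elements of $\mathbf{Cyl}^+[F_{n-1}]$ which suffice to witness the minimality clause of Theorem~\ref{T:depth/subgr.} against $A_{n-1}$. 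The finiteness of $\mathcal{H}_{n-1}$ rests on the finiteness of $co_G(A)$ (Lemma~\ref{L:gen.propert.}(iii)) combined with the tricycle-free structure (Theorem~\ref{T:charact.hyper-median}) and Lemma~\ref{L:inters.hypercyl.}, which forces two distinct maximal gated quasi-hypertori to meet in a hypercube and so bounds how many of them can be relevant to a given finite sub-configuration. Termination then comes because the descent must eventually reach a quasi-hypertorus, of depth $\leq 1$ by Proposition~\ref{P:depth=1}, contradicting infinite depth.

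The main obstacle—the technical heart of the argument—is the inductive step in which $F_{n+1}$ is identified with $\gated_{F_n}(A_{n+1})$: both inclusions must be checked. The inclusion $\gated_{F_n}(A_{n+1}) \subseteq F_{n+1}$ is straightforward because $A_{n+1} \subseteq V(F_{n+1})$ by construction and $F_{n+1}$ is gated. The reverse inclusion is delicate: one must use the minimality of $F_{n+1}$ together with the fact that the gates of $A_n$ in the finite family $\mathcal{H}_n$ already meet every maximal element of $\mathbf{Cyl}^+[F_n]$—this is where Theorems~\ref{T:C-subgr./gated} and~\ref{T:charact.hyper-median}, and the uniqueness of associated $ab$-cycles in a Peano partial cube, are invoked to show that the gate data ``remembers'' all of $F_{n+1}$. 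Once this finiteness propagation is established, the proof is complete, because the strict descent cannot continue beyond the moment when $A_n$ becomes a single vertex or generates a quasi-hypertorus.
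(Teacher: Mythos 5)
There is a genuine gap, and it sits at the heart of your argument: the descent never actually produces a contradiction. First, Theorem~\ref{T:depth/subgr.} is stated — and its proof, specifically part (a.4), only works — for a compact hyper-median partial cube \emph{whose depth is finite}; you invoke it for $F_n$ of infinite depth to obtain a proper gated subset of depth $\mathrm{depth}(F_n)-2$, which is not licensed. When the depth is infinite the construction of parts (a.1)--(a.3) may still yield a proper gated subset, but nothing forces its depth to drop, and your parenthetical ``still infinite, hence $\geq 2$'' concedes exactly this. Second, even granting every step, an infinite strictly descending chain $F_0 \supsetneq F_1 \supsetneq \cdots$ of non-empty gated sets is perfectly consistent in a compact infinite partial cube — their intersection is a non-empty gated set by Proposition~\ref{P:gated.sets/Helly}, which is not a contradiction — and your claimed termination (``the descent must eventually reach a quasi-hypertorus'') is backed by no well-founded quantity that strictly decreases: the generating sets $A_n$ stay finite but need not shrink, and their finiteness does not bound the number of iterations. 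Third, the propagation step itself (that each $F_{n+1}$ is the gated hull in $F_n$ of a finite set of gates) is asserted, acknowledged as delicate, and never proved; it does not follow from the results you cite.

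The paper's proof avoids iteration entirely. For the finite set $S$ it considers all edges $ab$ with $S \subseteq A_{ab} := W_{ab} \cup co_G(U_{ba})$; each $A_{ab}$ is gated by Lemma~\ref{L:tricycle/Gab} because $G$ is tricycle-free (Theorem~\ref{T:charact.hyper-median}), and by compactness and the strong Helly property (Proposition~\ref{P:gated.sets/Helly}) their intersection $A$ is a non-empty gated set containing $S$. If $A$ had infinite depth there would be a chain $W_{b_0a_0}^{G[A]} \subset \cdots \subset W_{b_ka_k}^{G[A]}$ with $S \cap W_{b_0a_0}^{G[A]} = \emptyset$; Lemma~\ref{L:Fgated.G.2edges} lifts this chain to $G$ and shows that $a_1b_1$ is one of the edges defining $A$, so that $A \cap A_{a_1b_1}$ is simultaneously all of $A$ and a proper subset of $A$ — a contradiction. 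The gated hull of $S$ then lies in $A$, and its depth is bounded by that of $A$ by the argument of part (a.4) of the proof of Theorem~\ref{T:depth/subgr.}. To salvage your approach you would need, at minimum, a well-founded invariant that strictly decreases along your chain; the paper's single-intersection construction is the cleaner route.
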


\begin{proof}
Let $S$ be a finite set of vertices of $G$.  We are done if the depth of $G$ is finite.  Assume that the depth of $G$ is infinite.  Then, because $S$ is finite, there exist infinitely many edges $ab$ of $G$ such that $S \subseteq W_{ab}^{G} \cup co_{G}(U_{ba}^{G}) = : A_{ab}$.  Denote by $\mathcal{E}$ the set of all these edges, and let $$A := \bigcap_{ab \in \mathcal{E}}A_{ab}.$$  For any $ab \in \mathcal{E}$, $A_{ab}$ is gated by Lemma~\ref{L:tricycle/Gab} since $G$ is tricycle-free.    
Clearly $A_{ab} \cap A_{a'b'} \neq \emptyset$ for all $ab, a'b' \in \mathcal{E}$.  Therefore, because $G$ is compact, it follows that $A$ is a non-empty gated set since the gated sets of $G$ have the strong Helly property by Proposition~\ref{P:gated.sets/Helly}.

Suppose that the depth of $A$ is infinite.  Then there exists a sequence $(a_{i}b_{i})_{0 \leq i \leq k}$ of edges of $G[A]$ such that $W_{b_{0}a_{0}}^{G[A]} \subset \dots \subset W_{b_{k}a_{k}}^{G[A]}$ and $S \cap W_{b_{0}a_{0}}^{G[A]} = \emptyset$.  Then $S \subseteq W_{a_{1}b_{1}}^{G[A]} \cup co_{G}(U_{b_{1}a_{1}}^{G[A]}) = : B$.  By Lemma~\ref{L:Fgated.G.2edges}, $W_{b_{0}a_{0}}^{G} \subset \dots \subset W_{b_{k}a_{k}}^{G}$, and $S \subseteq W_{a_{1}b_{1}}^{G} \cup co_{G}(U_{b_{1}a_{1}}^{G})$.  Therefore $a_{1}b_{1} \in \mathcal{E}$, contrary to the facts that $A \cap A_{a_{1}b_{1}} = B$ and $B$ is a proper subset of $A$.  It follows that the depth of $A$ is finite.  Let $C$ be the gated hull of $S$.  Then $C \subseteq A$ since $A$ is gated and contains $S$.  It follows, by a proof analogous to part (a.4) of the proof of Theorem~\ref{T:depth/subgr.}, that the depth of $C$ is at most that of $A$, and thus is finite.
\end{proof}

Note that the above result does not hold for a Peano graph $G$ that is not tricycle-free and not compact, such as for example the infinite hexagonal grid $H$.  Indeed the gated hull of a finite set $S$ of vertices of $H$ is either the vertex set of some $6$-cycle $C$ or $V(H)$ depending on whether $S$ is or is not a subset of some $C$.

We can now state the second consequence of Theorem~\ref{T:depth/subgr.} which generalizes \cite[Corollary 2.5]{BV91} about finite median graphs.

\begin{cor}\label{C:depth.2vert.}
Let $G$ be a compact hyper-median partial cube, and let $u, v \in V(G)$.  Then the depth of the subgraph of $G$ induced by the gated hull of the set $\{u,v\}$ is equal to the minimal length of a chain of elements of $\mathbf{Cyl}^+[G]$ joining $u$ and $v$.
\end{cor}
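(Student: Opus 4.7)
The plan is to reduce the statement to an inductive argument on depth inside the gated hull, with Theorem~\ref{T:depth/subgr.} as the workhorse. Let $C$ denote the gated hull of $\{u,v\}$. First I would verify that $G[C]$ is itself a compact hyper-median partial cube of finite depth: it is hyper-median as a convex subgraph of $G$ by Proposition~\ref{P:conv.subgr./hypermed.}, compact because a convex set of a partial cube is closed in the weak geodesic topology (being an intersection of half-spaces) and hence inherits compactness from $V(G)$, and of finite depth by Theorem~\ref{T:gated.hull(finite.set)}. Since $C$ is gated in $G$, each element $H$ of $\mathbf{Cyl}^+[G[C]]$ is a convex quasi-hypertorus of $G$, hence gated in $G$ by Theorem~\ref{T:conv.reg.H-subgr.=>gated}, and so is contained in some element of $\mathbf{Cyl}^+[G]$; conversely, every element of $\mathbf{Cyl}^+[G]$ meeting $C$ intersects $C$ in a gated quasi-hypertorus that lies in some element of $\mathbf{Cyl}^+[G[C]]$. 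Consequently the minimum chain length joining $u$ and $v$ in $\mathbf{Cyl}^+[G]$ equals the analogous quantity computed inside $G[C]$, and we may work entirely in $G[C]$.

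Write $k := \mathrm{depth}(G[C])$ and let $\ell$ be the minimum chain length from $u$ to $v$. I would prove $k = \ell$ by induction on $k$. The case $k = 1$ is immediate: Proposition~\ref{P:depth=1} identifies $G[C]$ with a single quasi-hypertorus, so $\{u,v\}$ lies in one element of $\mathbf{Cyl}^+[G[C]]$ and $\ell = 1$; conversely, if $\ell = 1$ then $C$ is contained in a single quasi-hypertorus, forcing $k \leq 1$. For the inductive step with $k \geq 2$, the inequality $\ell \leq k$ is obtained by applying Theorem~\ref{T:depth/subgr.} to $G[C]$: select a gated $A^{\ast} \subseteq C$ of depth $k - 2$ that meets every element of $\mathbf{Cyl}^+[G[C]]$, choose $H_u, H_v \in \mathbf{Cyl}^+[G[C]]$ containing $u$ and $v$ respectively, and pick vertices $u^{\ast} \in H_u \cap A^{\ast}$ and $v^{\ast} \in H_v \cap A^{\ast}$. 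Since $G[A^{\ast}]$ is itself a compact hyper-median partial cube of depth $k-2$, the inductive hypothesis applied to the gated hull of $\{u^{\ast}, v^{\ast}\}$ inside it yields a chain of length at most $k-2$ joining $u^{\ast}$ and $v^{\ast}$; prepending $H_u$ and appending $H_v$ produces a chain of length at most $k$ joining $u$ and $v$.

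The reverse inequality $k \leq \ell$ is the principal obstacle. I would argue by induction on $\ell$: given a minimum chain $H_0, \ldots, H_{\ell - 1}$ with $x_i \in H_{i-1} \cap H_i$, the gated hull of $\{u, x_1\}$ sits inside $H_0$ and has depth at most $1$ by Proposition~\ref{P:depth=1}, while by induction the gated hull $C'$ of $\{x_1, v\}$ has depth at most $\ell - 1$. The delicate point is a sub-additivity statement: the gated hull of $\{u,v\}$ lies in the gated hull $C''$ of $\{u\} \cup C'$, and one must show $\mathrm{depth}(G[C'']) \leq \ell$. The natural route is to track a maximal chain of non-trivial half-spaces in $G[C]$ and peel off its two outermost terms exactly as in part~(a) of the proof of Theorem~\ref{T:depth/subgr.}: each such pair of extremes corresponds, via the extremal sets $A_\sigma = W_{a_2 b_2}^G \cup co_G(U_{b_2 a_2}^G)$, to either leaving $H_0$ or entering $C'$, which bounds the number of levels by $\ell$. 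Making this bookkeeping precise is the main technical content of the proof and closely parallels the argument in part~(a.4) of Theorem~\ref{T:depth/subgr.}.
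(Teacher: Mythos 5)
Your overall strategy matches what the paper intends: the paper's own ``proof'' is a one-line deferral to Bandelt--van de Vel's Corollary 2.5 together with Theorems~\ref{T:depth/subgr.} and~\ref{T:gated.hull(finite.set)} and Proposition~\ref{P:depth=1}, and you invoke exactly these three results in the expected inductive pattern. However, two steps do not hold up as written. First, the reduction from $\mathbf{Cyl}^+[G]$ to $\mathbf{Cyl}^+[G[C]]$ rests on the claim that every element of $\mathbf{Cyl}^+[G]$ meeting the gated hull $C$ intersects it in a gated quasi-hypertorus lying in a single element of $\mathbf{Cyl}^+[G[C]]$. This is false: an element of $\mathbf{Cyl}[G]$ is a maximal convex hypercylinder $C'\Box A$ with $A$ an \emph{arbitrary} Peano partial cube, not a quasi-hypertorus, and its gated subgraphs include sets of the form $\{pt\}\Box A'$ for $A'$ gated in $A$. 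Concretely, take $G = C_6 \Box P$ with $P=\langle 0,1,2\rangle$ a path of length $2$, and $u=(c,0)$, $v=(c,2)$: the gated hull of $\{u,v\}$ is the path $\{c\}\Box P$ of depth $2$, while $G$ itself belongs to $\mathbf{Cyl}[G]$ and already contains both $u$ and $v$, so your two minimal chain lengths ($1$ in $\mathbf{Cyl}^+[G]$, $2$ in $\mathbf{Cyl}^+[G[C]]$) disagree. The equality you assert at the outset therefore needs a genuine argument (and this example suggests the intended reading of the chain condition deserves scrutiny before you build on it).

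Second, the inequality $\mathrm{depth}(G[C]) \leq \ell$ is only gestured at. Your sketch transplants the bookkeeping of part (a.4) of the proof of Theorem~\ref{T:depth/subgr.}, but it does not confront the feature that makes this case harder than the median one: in a median graph the gated hull of $\{u,v\}$ coincides with the interval $I_G(u,v)$, so every non-trivial half-space of it separates $u$ from $v$ and can be charged against a step of the chain. Here the gated hull can be strictly larger than $co_G(\{u,v\})$, and a half-space of $G[C]$ need not separate $u$ from $v$ at all (in the example above, the two half-spaces of the path $\{c\}\Box P$ containing only one of $(c,0),(c,1)$ do separate $u$ from $v$, but in general the $\Theta$-classes contributed by a long cycle swallowed into the gated hull do not). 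Until you say precisely how each term of a maximal chain of half-spaces of $G[C]$ is matched to a link of the chain $H_1,\dots,H_\ell$, this direction remains unproved; as you acknowledge, it is the main technical content, and the proposal does not supply it.
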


The proof is the same as that of \cite[Corollary 2.5]{BV91} except for some obvious modifications and the use of Theorems~\ref{T:depth/subgr.} and \ref{T:gated.hull(finite.set)} and Proposition~\ref{P:depth=1}.  If $G$ is a median graph, then the gated hull of the set $\{u,v\}$ is equal to $co_{G}(\{u,v\})$, and thus to the interval $I_{G}(u,v)$.  In particular, 
if $G$ is a tree, the depth of $I_{G}(u,v)$ is then equal to $d_{G}(u,v)$.

Note that the above result is not true for the graph $G$ in Figure~\ref{F:counter-example}.  For example the minimal convex set $A$ meeting all $6$-cycles, which is depicted by the encircled points, induces a path $P$ of length $3$, which is then the depth of $A$.  On the other hand, the gated hull of the set $\{u,v\}$ of endvertices of $P$ is equal to $V(G)$, and thus has depth $4$.  Hence neither the depth of the convex hull nor that of the gated hull of $\{u,v\}$ are equal to $2$, which is the minimal length of a chain of $6$-cycles joining $u$ and $v$.

\section{Euler-type properties}\label{S:Euler}

In this section we derive Euler-type formulas and formulas giving the isometric dimension of the finite  Peano partial cubes.  These results generalize analogous properties of median graphs and of some special netlike partial cubes.

\subsection{Euler characteristic of a finite Peano partial cube}\label{SS:Euler.charact.}

For a partial cube $G$ and a non-negative integer $i$, we denote by $\alpha_{i}(G)$ the number of $i$-cubes of $G$.  We will generalize the following result:

\begin{pro}\label{P:S.C.S}\textnormal{(Soltan and Chepoi~\cite{SoChe87}, \v{S}krekovski~\cite{Sk01})}
Let $G$ be a finite median graph.    Then
\begin{equation}
\label{E:Sa=1}
\sum_{i \in \mathbb{N}}(-1)^{i}\alpha_{i}(G) = 1.
\end{equation}
\end{pro}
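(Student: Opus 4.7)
The plan is to prove the formula by induction, using the structural decomposition of finite median graphs recorded in Theorem~\ref{T:med.gr./gated.amalg.}: every finite median graph is obtained from finite hypercubes by a sequence of gated amalgamations. So I would set up the induction on the number of vertices of $G$, with hypercubes as the base case and gated amalgamation providing the inductive step.

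For the base case, take $G = Q_n$. Then $\alpha_i(Q_n) = \binom{n}{i}\,2^{n-i}$, since an $i$-subcube is determined by the choice of $i$ coordinate directions to be ``active'' together with a fixing of the remaining $n-i$ coordinates. The binomial theorem then gives
\begin{equation*}
\sum_{i \in \mathbb{N}}(-1)^{i}\alpha_{i}(Q_{n}) \;=\; \sum_{i=0}^{n}\binom{n}{i}(-1)^{i}2^{n-i} \;=\; (2-1)^{n} \;=\; 1,
\end{equation*}
which establishes \eqref{E:Sa=1} on hypercubes.

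For the inductive step, assume $G$ is the gated amalgam of two proper median subgraphs $G_{0}$ and $G_{1}$ along $G_{01} := G_{0} \cap G_{1}$, where $G_{01}$ is itself a (smaller) median graph. By the induction hypothesis \eqref{E:Sa=1} holds for each of $G_{0}$, $G_{1}$, $G_{01}$. The key combinatorial identity I need is
\begin{equation*}
\alpha_{i}(G) \;=\; \alpha_{i}(G_{0}) + \alpha_{i}(G_{1}) - \alpha_{i}(G_{01}) \qquad (i \in \mathbb{N}),
\end{equation*}
after which alternating summation instantly yields $1 + 1 - 1 = 1$. To justify this inclusion-exclusion I would argue that every $i$-cube $Q$ of $G$ is a convex subgraph and hence, by the gatedness of $G_{0}$ and $G_{1}$ together with $\Gamma$-closedness (Proposition~\ref{P:gated/G-closed}; here $Q$ being a convex, hence $\Gamma$-closed, subgraph with $\geq 3$ vertices forces $Q$ to lie entirely in whichever $G_{j}$ contains three of its vertices), $Q$ is contained in $G_{0}$ or in $G_{1}$; it lies in $G_{01}$ precisely when it is counted in both $\alpha_{i}(G_{0})$ and $\alpha_{i}(G_{1})$. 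The $i=0$ case is handled by the fact that $V(G) = V(G_{0}) \cup V(G_{1})$ and $V(G_{01}) = V(G_{0}) \cap V(G_{1})$, and for $i=1$ the analogous statement for edges follows from the same vertex decomposition together with the observation that no edge of $G$ joins $V(G_{0}\setminus G_{1})$ to $V(G_{1}\setminus G_{0})$.

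The main obstacle I anticipate is the careful verification that an $i$-cube of $G$ is confined to one side of the amalgam. This is conceptually routine once one invokes $\Gamma$-closedness of gated subgraphs, but the low-dimensional cases ($i=1$, and especially a hypothetical $i$-cube straddling the amalgam) must be ruled out by a direct appeal to the ``no edges between $V(G_{0}\setminus G_{1})$ and $V(G_{1}\setminus G_{0})$'' property built into the definition of a gated amalgam. Once this combinatorial lemma is in hand, the Euler-type identity is essentially an accounting exercise, and the same inclusion-exclusion/induction template will serve as the model for the subsequent generalization to finite Peano partial cubes, where hypercubes are replaced by quasi-hypertori via Theorem~\ref{T:tricycle/decomp.fin.hypernet.}; the only new input needed there will be the corresponding Euler contribution of a single quasi-hypertorus.
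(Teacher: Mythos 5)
Your proof is correct, but it follows a genuinely different route from the one the paper relies on: the paper only cites this proposition (Soltan--Chepoi, \v{S}krekovski) and, when it proves the generalization to finite Peano partial cubes (Theorem~\ref{T:S=1}), it does \emph{not} use gated amalgamation; instead it peels off a semi-periphery $W_{ab}$ and counts via the recursion $\beta_{i}(G)=\beta_{i}(G[W_{ba}])+\beta_{i}(G[W_{ab}])+\beta_{i-1}(G_{3})$, where $G_{3}$ is obtained from $G[W_{ab}]$ by contracting the bulges. Your amalgamation/inclusion--exclusion argument is valid for median graphs because the decomposition theorem applies there, and the base case $\sum_{i}(-1)^{i}\binom{n}{i}2^{n-i}=1$ is right. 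Two remarks. First, your appeal to $\Gamma$-closedness (Proposition~\ref{P:gated/G-closed}) literally concerns convex \emph{cycles} only, so it settles $i=2$ but not $i\geq 3$ directly; for the general case the cleanest fix is to note that $V(Q)\cap V(G_{0})$ and $V(Q)\cap V(G_{1})$ are convex in the hypercube $Q$, hence faces, that a hypercube is never the union of two proper faces with non-empty intersection, and that a connected $Q$ meeting both $V(G_{0})-V(G_{1})$ and $V(G_{1})-V(G_{0})$ must meet $G_{01}$ (no edges cross between the private parts) -- so $Q$ lies wholly in $G_{0}$ or $G_{1}$, giving $\alpha_{i}(G)=\alpha_{i}(G_{0})+\alpha_{i}(G_{1})-\alpha_{i}(G_{01})$. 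Second, your closing suggestion that the same template yields the general Peano case overreaches: Theorem~\ref{T:tricycle/decomp.fin.hypernet.} decomposes only \emph{tricycle-free} (hyper-median) Peano partial cubes, and Subsection~\ref{SS:prime} exhibits prime Peano partial cubes (e.g.\ the benzenoid tricycle) that are not gated amalgams of quasi-hypertori; this is precisely why the paper's proof of Theorem~\ref{T:S=1} switches to the semi-periphery decomposition. What your approach buys is a shorter, more conceptual proof for the median (and, with quasi-hypertori as building blocks, the hyper-median) case; what the paper's approach buys is coverage of all finite Peano partial cubes.
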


\begin{pro}\label{P:reg.netl.}
Let $G$ be a finite netlike partial cube.  The following assertions are equivalent:

\textnormal{(i)}\; $G$ is a quasi-hypertorus.

\textnormal{(ii)}\; $G$ is an even cycle or a hypercube.

\textnormal{(iii)}\; $G$ is regular.
\end{pro}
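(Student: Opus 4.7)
The plan is to prove the cycle of implications $\text{(ii)} \Rightarrow \text{(i)} \Rightarrow \text{(iii)} \Rightarrow \text{(ii)}$. The first two are essentially immediate: a hypercube is a quasi-hypertorus by definition, and an even cycle $C_{2n}$ is $Q_2$ if $n=2$ or a $1$-torus if $n \geq 3$, hence always a quasi-hypertorus, yielding $\text{(ii)} \Rightarrow \text{(i)}$; and since every quasi-hypertorus is a Cartesian product of copies of $K_2$ and even cycles, each of which is regular, the product itself is regular (with degree equal to the sum of factor degrees), yielding $\text{(i)} \Rightarrow \text{(iii)}$.

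The work lies in $\text{(iii)} \Rightarrow \text{(ii)}$. Since every netlike partial cube is a Peano partial cube (Subsection~\ref{SS:applications}), Corollary~\ref{C:reg.hypernet.} applies and shows that a finite $n$-regular Peano partial cube is either an $n$-cube, a $p$-torus (for $n=2p$), or a $p$-prism (for $n=2p+1$). Hypercubes yield option (ii); and a single even cycle, which covers the case of a $1$-torus, also yields option (ii). In every remaining case---a $p$-torus with $p \geq 2$, or a $p$-prism with $p \geq 1$---the graph $G$ admits a factorization $G = C_{2m} \Box H$ with $m \geq 3$ and $H$ a partial cube containing at least one edge $uv$, obtained by splitting off a factor of length $\geq 6$. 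By the Convex Subgraph Property (Proposition~\ref{P:pro.Cartes.prod.}(v)), $C_{2m} \Box \langle u,v\rangle \cong C_{2m} \Box K_2$ is a convex subgraph of $G$, hence netlike by Proposition~\ref{P:netlike/fin.conv.subgr.}. A contradiction then follows if I can show that $C_{2m} \Box K_2$ is not netlike for any $m \geq 3$.

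To establish the latter, I would label $V(C_{2m}) = \{c_0, \ldots, c_{2m-1}\}$ and $V(K_2) = \{0,1\}$ and examine the edge $ab := (c_0,0)(c_1,0)$. A direct application of Proposition~\ref{P:pro.Cartes.prod.}(i) yields
\[
W_{ab} = \{c_0, c_{m+1}, c_{m+2}, \ldots, c_{2m-1}\} \times \{0,1\},
\]
and among the vertices of $W_{ab}$ only $(c_0,j)$ and $(c_{m+1},j)$ (for $j \in \{0,1\}$) have a neighbor in $W_{ba}$, so $U_{ab} = \{(c_0,0),(c_0,1),(c_{m+1},0),(c_{m+1},1)\}$. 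Since $\{c_0, c_{2m-1}, \ldots, c_{m+1}\}$ is a convex $m$-vertex path in $C_{2m}$, Propositions~\ref{P:pro.Cartes.prod.}(v) and~\ref{P:pro.Cartes.prod.}(vii) imply that its product with $K_2$ is convex in $G$ and coincides with $co_G(U_{ab})$. For $m \geq 3$ the vertex $(c_{2m-1},0) \in co_G(U_{ab})-U_{ab}$ has three neighbors $(c_{2m-2},0), (c_0,0), (c_{2m-1},1)$, all lying in $co_G(U_{ab})$, so its degree in $G[co_G(U_{ab})]$ is $3$. This contradicts property~(ii) of Proposition~\ref{P:charact.(netlike)}, which must hold in every netlike partial cube, yielding the desired contradiction.

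I do not anticipate any substantive obstacle; the only non-trivial step is the explicit computation of $U_{ab}$ and $co_G(U_{ab})$ in $C_{2m} \Box K_2$, which follows directly from the Distance, Interval, and Convex Subgraph Properties of the Cartesian product recorded in Proposition~\ref{P:pro.Cartes.prod.}.
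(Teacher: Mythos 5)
Your proof is correct, and for two of the three steps it coincides with the paper's: the trivial direction (ii)$\Rightarrow$(i), and the use of the regularity characterization of finite Peano partial cubes (you via Corollary~\ref{C:reg.hypernet.}, the paper via Theorem~\ref{T:reg. hypernet./str.sem.-periph./quasi-hypert.}) to pass from regularity to being a quasi-hypertorus. Where you genuinely diverge is in showing that a netlike quasi-hypertorus must be an even cycle or a hypercube. The paper argues abstractly: a quasi-hypertorus is antipodal, so by Proposition~\ref{P:TPQ/prop.isom.cycle} it contains a principal isometric cycle whose convex hull is the whole graph, and Proposition~\ref{P:hypernet./netl.} then forces that convex hull to be the cycle itself or a hypercube. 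You instead split off a factor $C_{2m}$ with $m\geq 3$ together with an edge of the complementary factor, observe that the resulting convex prism $C_{2m}\Box K_2$ would be netlike by Proposition~\ref{P:netlike/fin.conv.subgr.}, and refute this by an explicit computation of $U_{ab}$ and $co_G(U_{ab})$ against Proposition~\ref{P:charact.(netlike)}(ii); your computation of $W_{ab}$, $U_{ab}$ and the degree-$3$ vertex $(c_{2m-1},0)$ checks out. Your route is more elementary and self-contained (it needs only the Cartesian product toolkit and the basic netlike characterization), at the cost of a case analysis and an explicit calculation; the paper's route is shorter on the page but leans on the antipodality and principal-cycle machinery of Subsection~\ref{SS:isom.cycle}, which is also reused elsewhere in the paper.
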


\begin{proof}
(i) $\Rightarrow$ (ii):\; Suppose that $G$ is a quasi-hypertorus.  Then, by Proposition~\ref{P:TPQ/prop.isom.cycle}, $G$ contains an isometric cycle whose convex hull is $G$ itself.  Hence $G$ is an even cycle or a hypercube by Proposition~\ref{P:hypernet./netl.}.

(ii) $\Rightarrow$ (i) is trivial.

(i) $\Leftrightarrow$ (iii) is a consequence of Theorem~\ref{T:reg. hypernet./str.sem.-periph./quasi-hypert.} and Proposition~\ref{P:hypernet./netl.}.
\end{proof}

Let $G$ be a finite Peano partial cube.  By Theorem~\ref{T:reg. hypernet./str.sem.-periph./quasi-hypert.}, the convex regular subgraphs of $G$ are the convex quasi-hypertori of $G$.  We extend the concept of dimension of a hypercube to any quasi-hypertorus of $G$ as follows.  A hypertorus which is the Cartesian product of $n$ cycles is said to be of dimension $2n$, and a prism over a hypertorus of dimension $2n$ is said to be of dimension $(2n+1)$.

For every non-negative integer $n$, we denote by $\beta_{n}(G)$ the number of convex quasi-hypertori of dimension $n$ of $G$.  In particular, \emph{$\beta_{n}(G) = \alpha_{n}(G)$ for $n = 0, 1$}.  Moreover, \emph{if $G$ is a median graph, then $\beta_{n}(G) = \alpha_{n}(G)$ for any $n$}; and \emph{if $G$ is a netlike partial cube, then $\beta_{n}(G) = \alpha_{n}(G)$ for any $n \neq 2$} since, by Proposition~\ref{P:reg.netl.}, any quasi-hypertorus of a netlike partial cube is either an even cycle or a hypercube.

\begin{lem}\label{L:edge.C-layer}
Let $ab$ be an edge of a Peano partial cube $G$,\;  $X$ a bulge of $co_G(U_{ab})$, and $H = C \Box A_X := \mathbf{Cyl}[X]$.  Then any edge of $G_{ab}$ that is $\Theta$-equivalent to an edge of $C$ is an edge of some $C$-layer of $H$.
\end{lem}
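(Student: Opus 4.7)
Let $e=uv\in E(G_{ab})$ be $\Theta$-equivalent to an edge $c_1d_1\in E(C)$. I would first observe that the class of $c_1d_1$ cannot be that of $ab$: otherwise $e$ itself would be $\Theta$-equivalent to $ab$ and so cross the $ab$-cut, contradicting $u,v\in co_G(U_{ab})\subseteq W_{ab}$. Hence $c_1,d_1\in V(H)\cap W_{ab}=V(X)$, and I orient so that $u\in U_{c_1d_1}$ and $v=\phi_{c_1d_1}(u)$, where $\phi_{c_1d_1}$ is the isometry of Lemma~\ref{L:phi_ab}. Since $H$ is gated by Theorem~\ref{T:C-subgr./gated}, let $u'=g_H(u)$ and $v'=g_H(v)$; standard gate theory for the edge $uv$ gives that $u'\in V(X)\cap W_{c_1d_1}$, $v'\in V(X)\cap W_{d_1c_1}$, that $u'v'\in E(H)$ is $\Theta$-equivalent to $c_1d_1$, and that $k:=d_G(u,u')=d_G(v,v')$. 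Because $H=C\,\Box\,A_X$ and $u'v'$ lies in the $c_1d_1$-class, $u'v'$ is itself a $C$-layer edge, so the lemma will follow once I show $k=0$.

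The plan is to suppose $k\geq 1$ and derive a contradiction via a parallel ladder. I would pick a $(u,u')$-geodesic $P$; since $u,u'\in U_{c_1d_1}$ and $U_{c_1d_1}$ is ph-stable (Proposition~\ref{P:hypernet./ph1}), the whole geodesic lies in $\mathcal{I}_G(U_{c_1d_1})$. Applying $\phi_{c_1d_1}$ yields a parallel $(v,v')$-geodesic $Q$ in $\mathcal{I}_G(U_{d_1c_1})$, and together with the edges $uv$ and $u'v'$ these form a cycle $R$ of length $2k+2$. A short distance computation using the gate property gives $d_G(u,v')=d_G(v,u')=k+1$, so $R$ is an isometric cycle; by the Characterization Theorem~\ref{T:charact.}(vi) its convex hull $F:=co_G(R)$ is a gated quasi-hypertorus containing $u,v,u',v'$, and $F\subseteq co_G(U_{ab})$ since $R\subseteq W_{ab}$.

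In the quasi-hypertorus $F$, the edge $u'v'$ lies in a unique convex cycle $D$ whose edges are in the $c_1d_1$-direction, while in $H$ the same edge $u'v'$ lies in the $C$-layer $C_0$, also a gated convex cycle in the $c_1d_1$-direction. The key step is to invoke the uniqueness of the convex cycle associated with a given edge in a given $\Theta$-direction: by Remark~2 in the proof of Theorem~\ref{T:charact.} (or by Lemma~\ref{L:ab-cycle/cd-cycle} applied with the edge $c_1d_1$), each vertex of $X-U_{ab}$ lies in at most one convex cycle containing an edge $\Theta$-equivalent to $c_1d_1$. This forces $D=C_0$, so $u,v\in V(C_0)\subseteq V(H)$, whence $u=u'$ and $v=v'$, contradicting $k\geq 1$. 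The degenerate case $k=1$ (in which $D$ would be a 4-cycle) requires a separate argument, exploiting the fact that $u'v'$ already lies in the convex cycle $C_0$ of length $>4$, which precludes the existence of a second convex 4-cycle through $u'v'$ in the same $\Theta$-direction.

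Having $u,v\in V(H)=V(C\,\Box\,A_X)$ and $e\in E(H)$, I will conclude using the Cartesian product structure of $H$: since $H$ is isometric in $G$, each $\Theta$-class of $G$ restricted to $E(H)$ falls entirely into either the $C$-direction edges (those lying in $C$-layers) or the $A_X$-direction edges. Because $e$ is $\Theta$-equivalent to $c_1d_1\in E(C)$, the edge $e$ must be a $C$-direction edge, hence lies in some $C$-layer of $H$. The hard part will be the third paragraph's uniqueness argument: carefully matching $D$ to $C_0$ via the Peano uniqueness of convex cycles of length $>4$ through a given edge in a given $\Theta$-direction, and handling the $k=1$ case in which $D$ is a 4-cycle separately.
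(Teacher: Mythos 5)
Your overall strategy (project $uv$ onto $H$ via gates, then force the gates to coincide with $u,v$) is genuinely different from the paper's, and the gate set-up in your first paragraph is sound. The proof breaks down, however, at the pivotal claim that the cycle $R=P\cup\langle u',v'\rangle\cup Q\cup\langle v,u\rangle$ is isometric. The equalities $d_G(u,v')=d_G(v,u')=k+1$ only say that the two antipodal pairs $(u,v')$ and $(v,u')$ realize the cycle distance; by Lemma~\ref{L:gen.propert.}(vii) isometricity requires this for \emph{every} antipodal pair, equivalently that the $\Theta$-matching between $E(P)$ and $E(Q)$ be exactly the antipodal one, and nothing in your construction guarantees that ($\phi_{c_1d_1}$ does not send an edge to a $\Theta$-equivalent edge in general). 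Worse, in the principal configuration your contradiction must exclude --- the region between $uv$ and $u'v'$ being a ladder of $4$-cycles, so that $co_G(R)\cong P\Box K_2$ --- the cycle $R$ is definitely \emph{not} isometric: an interior vertex $x$ of $P$ lying in $U_{c_1d_1}$ satisfies $d_G(x,\phi_{c_1d_1}(x))=1$ while its $R$-antipode is at $R$-distance close to $k+1$. So Theorem~\ref{T:charact.}(vi) cannot be invoked, no quasi-hypertorus $F$ is produced, and the uniqueness argument of your third paragraph never gets off the ground precisely in the main case it must handle. (Even granting $F$, the step ``$D=C_0$ forces $u,v\in V(C_0)$'' needs justification: $uv$ lies in \emph{some} layer of $F$ in the $c_1d_1$-direction, not a priori in the layer through $u'v'$.)

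For comparison, the paper's proof avoids gates and isometric cycles altogether: it takes $(u,u')$- and $(v,v')$-geodesics $P_u,P_v$, lets $u'',v''$ be the vertices where they first meet the factor $A_X$, observes that $u''\in I_G(u,v')$ and $v''\in I_G(v,u')$ force $P_u[u,u'']$ and $P_v[v,v'']$ to contain no edge $\Theta$-equivalent to $uv$, so that by Lemma~\ref{L:gen.propert.}(vi) a $(u'',v'')$-geodesic --- which lies in $A_X$ by convexity --- must contain an edge $\Theta$-equivalent to $uv$, hence to an edge of $C$; this contradicts $H=C\Box A_X$, since no edge of $A_X$ is $\Theta$-equivalent to an edge of $C$. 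If you want to keep your gate-based route, you would have to replace the isometricity claim by a $\Theta$-separation argument of this kind rather than an appeal to the classification of convex hulls of isometric cycles.
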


\begin{proof}
Let $uv$ be an edge that is $\Theta$-equivalent to an edge $u'v'$ of $C$.  Denote by $P_u$ and $P_v$ a $(u,u')$-geodesic and a $(v,v')$-geodesic, respectively, and let $u''$ and $v''$ be the vertices of $P_u \cap A_X$ and $P_v \cap A_X$, respectively, that are the closest from $u$.  Let $Q$ be a $(u'',v'')$-geodesic.  
Then, because $u'' \in I_G(u,v')$ and $v'' \in I_G(v,u')$ since the edges $uv$ and $u'v'$ are $\Theta$-equivalent, we have $u \in I_G(v,u'')$ and $v \in I_G(u,v'')$.  Therefore the paths $P_u[u,u'']$ and $P_v[v,v'']$ contains no edge $\Theta$-equivalent to $uv$.  It follows that $Q$ contains an edge $cd$ that is $\Theta$-equivalent to $uv$.  This edge is an edge of $A_X$ since this subgraph is convex.  Therefore $u'v'$ and $cd$ cannot be $\Theta$-equivalent since $H = C \Box A_X$.  Consequently $uv$ cannot be $\Theta$-equivalent to an edge $u'v'$ of $C$.
\end{proof}

\begin{lem}\label{L:bulge/sep.}
Let $G$ be a Peano partial cube, $ab$ an edge of $G$, $X$ a bulge of $co_G(U_{ab})$, and $H := \mathbf{Cyl}[X] = C \Box H'$, where $C \in \mathbf{C}(G,ab)$.  Then:

\textnormal{(i)}\; Any edge of $G[W_{ab}]$ that is $\Theta$-equivalent to an edge of a $C$-layer of $H$ is also an edge of a $C$-layer of $H$.

\textnormal{(ii)}\; Let $e$ be an edge of the intersection of $X$ with some $C$-layer of $H$.  Then the set $A_{e}$ of all edges of $X$ that are $\Theta$-equivalent to $e$ is an edge cut of the subgraph $G_{ab}$.
\end{lem}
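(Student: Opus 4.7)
The two assertions call for rather different techniques, so I would prove them in turn.

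For part (i), the plan is to combine the gatedness of $H$ with the preceding Lemma~\ref{L:edge.C-layer}. By Theorem~\ref{T:C-subgr./gated}, $H$ is gated hence convex, so Lemma~\ref{L:gen.propert.}(ix) applied to $H$ rules out any edge of $\partial_G(V(H))$ being $\Theta$-equivalent to an edge of $H$; consequently, given $uv \in E(G[W_{ab}])$ $\Theta$-equivalent to some edge $e$ of a $C$-layer of $H$, either both $u,v \in V(H)$ or both lie outside $V(H)$. In the first subcase, $u,v \in V(H) \cap W_{ab} = V(X)$, so $uv \in E(X) \subseteq E(H)$, and the factorization $H = C \Box H'$ immediately places $uv$ in a $C$-layer because the $\Theta$-class of a $C$-layer edge is of ``$C$-type'' rather than ``$H'$-type''. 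The second subcase is the delicate one: here I would introduce the gates $u',v'$ of $u,v$ in $H$, note that $u',v' \in V(X)$ by convexity of $W_{ab}$, and expand the $\Theta$-equivalence of $uv$ and $e$ using the identity $d(u,x) = d(u,u') + d(u',x)$ for $x \in V(H)$. The case $u' = v'$ reduces to an impossible equation ($0 = -2$), while for $u' \neq v'$ one obtains $u'v' \in E(H)$ $\Theta$-equivalent to $uv$ and lying in a $C$-layer of $H$; tracking the $\Theta$-classes along corresponding geodesics from $u,v$ to $u',v'$ and applying Lemma~\ref{L:edge.C-layer} together with Lemma~\ref{L:gen.propert.}(ix) for the convex subgraph $G_{ab}$ then forces $uv \in E(G_{ab})$, at which point Lemma~\ref{L:edge.C-layer} concludes that $uv$ lies in a $C$-layer.

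For part (ii), the strategy is geometric and rests on the Cartesian product structure of $X$. Using Lemma~\ref{L:bulge/ab-cycle}, $X = H - W_{ba}$ identifies as $P \Box H'$, where $P = C \cap W_{ab}$ is the sub-arc of $C$ joining the two vertices of $V(C) \cap U_{ab}$ through the interior of the bulge. The edge $e$ sits in some $C$-layer and corresponds to one edge of $P$; its $\Theta$-class within $H$ contributes exactly one edge per $C$-layer contained in $X$, and this collection is precisely $A_e$. Deleting $A_e$ from $P \Box H'$ splits $V(X)$ into two sub-grids $V_1, V_2$ determined by the two sub-paths of $P$, and the two components $A_X$ and $B_X$ of $X[U_{ab}]$ sit on opposite sides. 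To promote this to a cut of the ambient graph $G_{ab}$, I would invoke property (HNB2) of the Characterization Theorem~\ref{T:charact.}, which says $X - U_{ab}$ is already a vertex separator of $G_{ab}$ between $A_X$ and $B_X$; hence every path in $G_{ab}$ joining $A_X$ to $B_X$ enters the interior of $X$, and inside $X$ must cross every $\Theta$-class carried by $P$, so in particular must use an edge of $A_e$.

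The principal obstacle will be the second subcase of part (i): unlike in median graphs, distinct geodesics connecting the gates need not share the same $\Theta$-class sequence, so the naive ``parallel ladder'' argument does not close immediately. Extracting a contradiction will require coupling the gate distance computation with the strong ph-stability of $U_{ab}$ (Theorem~\ref{T:charact.}(ii)) and with the $\Gamma$-closure of $G_{ab}$ supplied by Lemma~\ref{L:gen.propert.}(ix). Part (ii), by contrast, is essentially bookkeeping once $X = P \Box H'$ is made explicit, and the separator property (HNB2) furnishes the final step cleanly.
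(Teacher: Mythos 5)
Your part (i) has a genuine gap, and you have located it yourself: the subcase where both endvertices $u,v$ of the edge lie outside $V(H)$. Passing to the gates $u',v'$ of $u,v$ in $H$ does give an edge $u'v'$ of a $C$-layer that is $\Theta$-equivalent to $uv$, but the step you then assert — that ``tracking the $\Theta$-classes along corresponding geodesics \dots forces $uv\in E(G_{ab})$'' — is exactly the content that needs proving, and nothing you cite delivers it: $u,v\in W_{ab}$ gives no a priori reason for $u,v$ to lie in $co_G(U_{ab})$, and Lemma~\ref{L:E(G[Wab])} only yields that $uv$ is $\Theta$-equivalent to \emph{some} edge of $G_{ab}$, not that $uv$ is itself such an edge. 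Since extending Lemma~\ref{L:edge.C-layer} from $G_{ab}$ to all of $G[W_{ab}]$ is precisely the point of assertion (i), the argument is circular at this step, and your closing paragraph concedes that it ``does not close immediately.'' The paper avoids gates entirely: it runs geodesics $P_u,P_v$ from $u$ and $v$ to the endvertices $u',v'$ of an edge of the distinguished cycle $C$ itself (these geodesics must pass through one component $A_X$ of $X[U_{ab}]$), takes the first vertices $u'',v''$ of $P_u,P_v$ in $A_X$, joins them by a geodesic $Q$ inside the convex set $A_X$, and uses the $\Theta$-equivalence of $uv$ and $u'v'$ to show that neither $P_u[u,u'']$ nor $P_v[v,v'']$ carries an edge $\Theta$-equivalent to $uv$; Lemma~\ref{L:gen.propert.}(iv) then forces such an edge onto $Q\subseteq A_X$, which is impossible in the product $C\Box A_X$ by transitivity of $\Theta$. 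You would need to either adopt this mechanism or supply an independent argument for the missing implication.

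Part (ii) is a genuinely different route from the paper's and is essentially sound. The paper argues by taking a minimal path $P$ from one component of $X[U_{ab}]$ to the other avoiding $A_e$, comparing it with a geodesic $R$ in $H$, and extracting from the cycle $P\cup R$ an edge of $P$ $\Theta$-equivalent to $e$, which part (i) then places back in $H$ — a contradiction. Your argument instead combines the product structure $X=P\Box H'$ with the separator property (HNB2). It works, but the phrase ``inside $X$ must cross every $\Theta$-class carried by $P$'' needs one more observation: a path from $A_X$ to $B_X$ could in principle enter and leave $X$ repeatedly, so you must first extract a subpath lying entirely in $V(X)$ that joins $A_X$ to $B_X$. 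This follows from the definition of a bulge ($X$ is induced by $N_{G_{ab}}[V(A)]$ for the component $A=X-U_{ab}$, so every $G_{ab}$-neighbour of $X-U_{ab}$ lies in $V(X)$) together with (HNB2); once that segment is isolated, the product argument finishes as you describe. Note also that identifying $A_e$ with the product cut already uses Lemma~\ref{L:edge.C-layer} (or part (i) restricted to $X$), so (ii) is not independent of (i) in your write-up either.
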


\begin{proof}
(i):\; Let $uv$ be an edge that is $\Theta$-equivalent to an edge $u'v'$ of $C$.  Denote by $P_u$ and $P_v$ a $(u,u')$-geodesic and a $(v,v')$-geodesic, respectively.  These geodesics pass through vertices of one of the component of $X[U_{ab}]$, say $A_X$.  Let $u''$ and $v''$ be the vertices of $P_u \cap A_X$ and $P_v \cap A_X$, respectively, that are the closest from $u$.  Let $Q$ be a $(u'',v'')$-geodesic.  
Then, because $u'' \in I_G(u,v')$ and $v'' \in I_G(v,u')$ since the edges $uv$ and $u'v'$ are $\Theta$-equivalent, we have $u \in I_G(v,u'')$ and $v \in I_G(u,v'')$.  Therefore the paths $P_u[u,u'']$ and $P_v[v,v'']$ contains no edge $\Theta$-equivalent to $uv$.  It follows that $Q$ contains an edge $cd$ that is $\Theta$-equivalent to $uv$.  This edge is an edge of $A_X$ since this subgraph is convex.  Therefore $u'v'$ and $cd$ cannot be $\Theta$-equivalent since $H = C \Box A_X$.  Consequently $uv$ cannot be $\Theta$-equivalent to an edge $u'v'$ of $C$.

(ii):\; Let $H'_{0}$ and $H'_{1}$ be two disjoint $H'$-layers of $H$ whose vertex sets are contained in $U_{ab}$, i.e., $H'_{0}$ and $H'_{1}$ are the two component of $X[U_{ab}]$.  Suppose that $A_{e}$ is not an edge cut of the subgraph $G[W_{ab}]$.  Then there exists a path $P$ of minimal length which joins a vertex $x_{0}$ of $H'_{0}$ to a vertex $x_{1}$ of $H'_{1}$, such that $E(P) \cap A_{e} = \emptyset$.  Let $R$ be an $(x_{0},x_{1})$-geodesic in $H$.  By the definition of $H$,\; $A_{e}$ is an edge cut of $H$, and thus $R$ contains an edge $e' \in A_{e}$.  By the minimality of the length of $P$,\; $P \cup R$ is a cycle of $G$, and thus it contains an edge $e'' \neq e'$ which is $\Theta$-equivalent to $e'$, and thus to $e$.  Because $R$ is a geodesic in $H$ and since the length of $P$ is minimal, it follows that $e'' \in E(P-H)$.  On the other hand, $e''$ is an edge of $H$ by (i), and thus $e'' \in A_{e}$, contrary to the hypothesis on $P$.
\end{proof}

\begin{thm}\label{T:S=1}
Let $G$ be a finite Peano partial cube.    Then
\begin{equation}
\label{E:S=1}
\sum_{i \in \mathbb{N}}(-1)^{i}\beta_{i}(G) = 1.
\end{equation}
\end{thm}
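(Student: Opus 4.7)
My plan is to proceed by induction on $|V(G)|$. The base case $|V(G)| = 1$ is immediate: $\beta_0(G) = 1$ and $\beta_i(G) = 0$ for $i \geq 1$, so the alternating sum is $1$. For the inductive step, let $G$ be a finite Peano partial cube with $|V(G)| > 1$, and fix a semi-periphery $W_{ab}$ of $G$, which exists by Proposition~\ref{P:compact=>semi-periphery} since $G$ is finite (hence compact). The convex subgraphs $G[W_{ab}]$ and $G[W_{ba}]$ are proper finite Peano partial cubes, and for each bulge $X$ of $co_G(U_{ab})$ the factor $A_X$ in the decomposition $\mathbf{Cyl}[X] = C_X \Box A_X$ is a convex Peano partial cube with fewer vertices than $G$, so the induction hypothesis applies to all of these.

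Next, I would partition the convex quasi-hypertori of $G$ according to the $\Theta$-class $[ab]$: those contained in $W_{ab}$, those contained in $W_{ba}$, and those containing some edge of $[ab]$. Since any convex quasi-hypertorus $H$ is isometric in $G$ so that $\Theta_H = \Theta_G|_{E(H)}$, the intersection $E(H) \cap [ab]$ is a single $\Theta_H$-class corresponding to exactly one Cartesian factor of $H$: either a $K_2$-factor, giving $H = K_2 \Box H'$ with $H'$ a convex quasi-hypertorus of $G$ contained in $U_{ab}$ (matched to its $\phi_{ab}$-partner in $U_{ba}$), or a cycle factor of length at least $6$, in which case the Characterization Theorem~\ref{T:charact.}(iv) forces $H \subseteq \mathbf{Cyl}[X]$ for a unique bulge $X$ and $H = C_X \Box H''$ with $H''$ a convex quasi-hypertorus of $A_X$. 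This gives the recursion
\[
\beta_i(G) = \beta_i(G[W_{ab}]) + \beta_i(G[W_{ba}]) + \gamma_{i-1}(G) + \sum_X \beta_{i-2}(A_X),
\]
where $\gamma_j(G)$ counts convex quasi-hypertori of $G$ of dimension $j$ contained in $U_{ab}$. Writing $S(\,\cdot\,)$ for the alternating sum and applying the induction hypothesis to $G[W_{ab}]$, $G[W_{ba}]$, and each $A_X$ yields
\[
S(G) = 2 + |\mathbf{Cyl}[G,ab]| - S_U(G), \qquad S_U(G) := \sum_j (-1)^j \gamma_j(G),
\]
so the theorem reduces to the identity $S_U(G) = 1 + |\mathbf{Cyl}[G,ab]|$.

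For this identity, I would use the complementary partition of convex quasi-hypertori of $G[W_{ab}]$ into those contained in $U_{ab}$ and those meeting $W_{ab} - U_{ab}$, giving $S(G[W_{ab}]) = S_U(G) + S_\delta(G)$. The induction hypothesis $S(G[W_{ab}]) = 1$ then reduces the identity to $S_\delta(G) = -|\mathbf{Cyl}[G,ab]|$. The contribution from a single bulge $X$ is a direct Cartesian-product calculation: $\mathbf{Cyl}[X] \cap W_{ab}$ is isomorphic to $P_n \Box A_X$ with $2n = |C_X| \geq 6$, and the alternating count of its convex sub-quasi-hypertori meeting the interior of the $P_n$-factor equals
\[
S(P_n \Box A_X) - S(\text{two boundary } A_X\text{-layers}) = 1 \cdot 1 - 2 \cdot 1 = -1,
\]
using the product formula $S(F_1 \Box F_2) = S(F_1)S(F_2)$ together with $S(P_n) = 1$ and the induction hypothesis $S(A_X) = 1$.

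The main obstacle will be to verify that these per-bulge contributions combine additively, namely that every convex quasi-hypertorus $H$ of $G[W_{ab}]$ meeting $W_{ab} - U_{ab}$ lies in exactly one bulge $X$, so that $\delta_j = \sum_X \delta_j^X$ without overlap. To establish this, I would exploit the gated structure of $\mathbf{Cyl}[X]$ (Theorem~\ref{T:C-subgr./gated}): if $H$ contains a vertex $v$ in the interior $(W_{ab} - U_{ab}) \cap \mathbf{Cyl}[X]$, then the gate-projection of any $u \in V(H)$ onto $\mathbf{Cyl}[X]$ lies in $I_G(u,v) \subseteq V(H)$ by convexity of $H$. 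Combining this with Lemma~\ref{L:gen.propert.}(ix) (an edge of $H$ leaving $\mathbf{Cyl}[X]$ cannot be $\Theta$-equivalent to any edge inside it) and the rigid Cartesian-product structure of $H$ (forcing its $\Theta$-classes to respect the factorization of $\mathbf{Cyl}[X]$) should force $V(H) \subseteq V(\mathbf{Cyl}[X])$, completing the induction.
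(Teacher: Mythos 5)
Your proposal is correct in outline and reaches the identity by a genuinely different route from the paper's. Both arguments share the outer shell --- induction on $|V(G)|$, choice of a semi-periphery $W_{ab}$, and a split of the convex quasi-hypertori according to their position relative to the $\Theta$-class of $ab$ --- but the cores differ. The paper packages all quasi-hypertori meeting the class of $ab$ into an auxiliary graph $G_3$, obtained from $G[W_{ab}]$ by contracting each bulge path $C-W_{ba}$ to a single edge; the bulk of its proof is the verification that $G_3$ is again a Peano partial cube, after which the induction hypothesis applied to $G[W_{ba}]$, $G[W_{ab}]$ and $G_3$, together with a dimension-shifting bijection between the quasi-hypertori of $G_3$ and $\mathbf{Tor}(G,ab)$, gives $1+1-1=1$. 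You avoid $G_3$ altogether: you classify the crossing quasi-hypertori by whether their $[ab]$-factor is a $K_2$ or a cycle of length at least $6$, apply the induction hypothesis instead to the bulge factors $A_X$, and close the loop with the inclusion--exclusion $S(G[W_{ab}])=S_U+S_\delta$ and the per-bulge computation giving $S_\delta=-|\mathbf{Cyl}[G,ab]|$; the resulting arithmetic ($S(G)=2+|\mathbf{Cyl}[G,ab]|-S_U$ and $S_U=1+|\mathbf{Cyl}[G,ab]|$) is right, and it checks on small examples such as $C_6$. What your route buys is the elimination of the delicate claim that $G_3$ is a partial cube; what it costs is exactly the structural lemma you flag as the main obstacle, namely that a convex quasi-hypertorus of $G[W_{ab}]$ meeting $W_{ab}-U_{ab}$ lies inside a single $\mathbf{Cyl}[X]$ and hence factors as $P'\Box A'$. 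That lemma is true and within reach of the paper's toolkit: since $X=\mathbf{Cyl}[X]\cap G[W_{ab}]$ is convex, an edge of such an $H$ leaving $X$ would, via the product structure of $H$ and the layer through the interior vertex, be $\Theta$-equivalent to an edge of $X$, contradicting Lemma~\ref{L:gen.propert.}(ix); Lemmas~\ref{L:edge.C-layer} and~\ref{L:bulge/sep.} are close relatives and can be reused (the cycle-factor case of the escaping edge needs the most care). The one other point to nail down is that for \emph{every} convex quasi-hypertorus $H'\subseteq U_{ab}$ the doubled graph $G[V(H')\cup\phi_{ab}(V(H'))]$ is convex, so that $\gamma_{i-1}$ counts exactly the crossing quasi-hypertori whose $[ab]$-factor is a $K_2$; this follows from faithfulness and Theorem~\ref{T:faithf.hypertor.=>gated}, and the paper asserts the analogous fact with no more detail than you do.
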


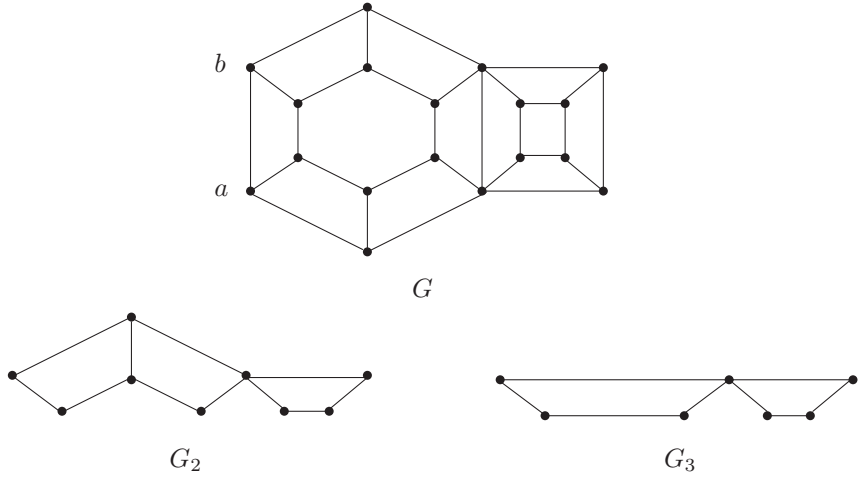
\begin{figure}[!h]
    \centering
{\tt    \setlength{\unitlength}{0.85pt}
\begin{picture}(419,236)
\thinlines    \put(116,196){\circle*{4}}
              \put(168,223){\circle*{4}}
              \put(219,196){\circle*{4}}
              \put(137,180){\circle*{4}}
              \put(168,196){\circle*{4}}
              \put(198,180){\circle*{4}}
              \put(273,196){\circle*{4}}
              \put(236,180){\circle*{4}}
              \put(256,180){\circle*{4}}
              \put(116,141){\circle*{4}}
              \put(137,156){\circle*{4}}
              \put(168,141){\circle*{4}}
              \put(198,156){\circle*{4}}
              \put(236,156){\circle*{4}}
              \put(256,156){\circle*{4}}
              \put(219,141){\circle*{4}}
              \put(273,141){\circle*{4}}
              \put(168,114){\circle*{4}}
              \put(63,85){\circle*{4}}
              \put(10,59){\circle*{4}}
              \put(63,57){\circle*{4}}
              \put(32,43){\circle*{4}}
              \put(114,59){\circle*{4}}
              \put(94,43){\circle*{4}}
              \put(168,59){\circle*{4}}
              \put(131,43){\circle*{4}}
              \put(151,43){\circle*{4}}
              \put(227,57){\circle*{4}}
              \put(329,57){\circle*{4}}
              \put(384,57){\circle*{4}}
              \put(247,41){\circle*{4}}
              \put(309,41){\circle*{4}}
              \put(346,41){\circle*{4}}
              \put(365,41){\circle*{4}}
              \put(116,197){\line(2,1){53}}
              \put(168,114){\line(2,1){53}}
              \put(219,197){\line(-2,1){51}}
              \put(166,115){\line(-2,1){51}}
              \put(116,196){\line(0,-1){55}}
              \put(219,196){\line(0,-1){55}}
              \put(137,179){\line(0,-1){23}}
              \put(198,181){\line(0,-1){24}}
              \put(168,196){\line(0,1){25}}
              \put(168,141){\line(0,-1){26}}
              \put(168,196){\line(-2,-1){31}}
              \put(168,196){\line(2,-1){30}}
              \put(168,141){\line(-2,1){31}}
              \put(168,141){\line(2,1){30}}
              \put(116,196){\line(4,-3){21}}
              \put(219,196){\line(-4,-3){21}}
              \put(219,141){\line(-4,3){21}}
              \put(116,141){\line(3,2){22}}
              \put(219,196){\line(6,-5){17}}
              \put(219,141){\line(6,5){17}}
              \put(273,196){\line(-6,-5){17}}
              \put(273,141){\line(-6,5){17}}
              \put(236,180){\line(0,-1){23}}
              \put(256,180){\line(0,-1){23}}
              \put(236,180){\line(1,0){20}}
              \put(236,157){\line(1,0){20}}
              \put(219,196){\line(1,0){54}}
              \put(219,141){\line(1,0){54}}
              \put(273,196){\line(0,-1){55}}
              \put(114,58){\line(1,0){54}}
              \put(131,43){\line(1,0){20}}
              \put(168,58){\line(-6,-5){17}}
              \put(114,58){\line(6,-5){17}}
              \put(114,58){\line(-4,-3){21}}
              \put(11,58){\line(4,-3){21}}
              \put(63,58){\line(2,-1){30}}
              \put(63,58){\line(-2,-1){31}}
              \put(63,58){\line(0,1){25}}
              \put(114,59){\line(-2,1){51}}
              \put(11,59){\line(2,1){53}}
              \put(329,57){\line(1,0){54}}
              \put(346,41){\line(1,0){20}}
              \put(383,57){\line(-6,-5){17}}
              \put(329,57){\line(6,-5){17}}
              \put(329,57){\line(-4,-3){21}}
              \put(226,57){\line(4,-3){21}}
              \put(405,57){\line(2,-3){2}}
              \put(246,41){\line(1,0){63}}
              \put(226,57){\line(1,0){105}}
              \put(188,94){$G$}
              \put(80,18){$G_{2}$}
              \put(300,18){$G_{3}$}
              \put(100,194){$b$}
              \put(100,139){$a$}
\end{picture}}
\caption{Proof of Theorem~\ref{T:S=1}.}
\label{F:n-m+c-p=1}
\end{figure}

\begin{proof}
The proof is by induction on the number of vertices of $G$.  This is
trivial if $G$ has only one vertex.  Let $r \geq 1$.  Suppose that
this is true for any Peano partial cube partial cube
having at most $r$ vertices.  Let $G$ be a Peano partial
cube partial cube having $n = r+1$ vertices.

Because $G$ is finite and ph-homogeneous, there is an edge $ab$ of $G$ such
that $W_{ab}$ is a semi-periphery.  As a convex subgraph of $G$,\; $G_{1}
:= G[W_{ba}]$ is ph-homogeneous.  Moreover it has at most
$r$ vertices.  Hence, by the induction hypothesis, we have
\begin{equation}
    \label{E:S'=1}
\sum_{i \in \mathbb{N}}(-1)^{i}\beta_{i}(G_{1}) = 1.
\end{equation}

The graph $G_{2} := G[W_{ab}]$ is also a convex subgraph of $G$.  Therefore $G_{2}$ is a Peano partial cube with at most $r$ vertices.  It follows by the induction hypothesis that
\begin{equation}
    \label{E:S"=1}
\sum_{i \in \mathbb{N}}(-1)^{i}\beta_{i}(G_{2}) = 1.
\end{equation}

Now let $G_{3}$ be the graph obtained from $G_{2}$ as follows (Figure~\ref{F:n-m+c-p=1}): for each convex path $W$ of $G_{2}$ of length greater than $1$ whose only vertices in $U_{ab}$ are its endvertices, we replace $W$ by an edge joining its endvertices.  Note that $W = C-W_{ba}$ for some convex $C \in \mathbf{C}(G,ab)$.\\

\emph{Claim.\; $G_{3}$ is a Peano partial cube.}

For each subgraph $H \in \mathbf{Cyl}[G,ab]$, let $\varphi(H) := G_{3}[V(H) \cap V(G_{3})]$, that is, the graph obtained from $H-W_{ba}$ by replacing each convex path of this graph of length greater than $1$ whose only vertices in $U_{ab}$ are its endvertices by an edge joining its endvertices.  Then denote by $\varphi_{H}(G_{2})$ the graph obtained from $G_{2}$ by replacing $H-W_{ba}$ by $\varphi(H)$.

Put $A_{H} := E(\varphi(H))-E(H)$.  Then all elements of $A_{H}$ are pairwise $\Theta$-equivalent.  Moreover, we clearly infer from the application of Lemma~\ref{L:bulge/sep.} to $H$, that $A_{H}$ is an edge cut of $\varphi_{H}(G_{2})$.  It follows that any edge of $\varphi_{H}(G_{2})$ that is $\Theta$-equivalent to an edge in $A_{H}$ also belongs to $A_{H}$ (also see Lemma~\ref{L:bulge/sep.}).  Hence $A_{H}$ is a $\Theta$-class in $E(\varphi_{H}(G_{2}))$.

The set $\mathbf{Cyl}[G,ab]$ is finite since so is $G$.  Put $$\mathbf{Cyl}[G,ab] = \{H_{i}: 1 \leq i \leq n\},$$ and let $G'_{0} := G_{2}$ and $G'_{i+1} := \varphi_{H_{i+1}}(G'_{i})$ for every $i < n$.  Then $G_{3} = G'_{n}$.  We prove by induction on $i$ that $G'_{i}$ is a Peano partial cube.  This clear if $i = 0$.  Suppose that $G'_{i}$ is a  Peano partial cube for some $i < n$.

We have $H_{i+1} = C_{i+1} \Box H'_{i+1}$, where $C_{i+1} \in \mathbf{C}(G,ab)$ and $H'_{i+1}$ is some Peano partial cube.  Then $P_{i+1} := C_{i+1}-W_{ba}$ is a geodesic of $G'_{i}$ of length $p \geq 2$, and $B := H_{i+1}-W_{ba} = P_{i+1} \Box H'_{i+1}$.  Moreover $\varphi(B) := \varphi(H_{i+1})-W_{ba} = K_{2} \Box H'_{i+1}$.

Because $W_{ab}$ is a semi-periphery of $G$, it follows that $N_{G}(x) \subseteq V(X)$ for any bulge $X$ of $W_{ab}$, and in particular for $X = B$, and every $x \in V(X)-U_{ab}$.  Moreover, by (HNB2), $V(B)-U_{ab}$ is a separator of $V(G'_{i})$.  Hence we clearly have the following properties:

\begin{enumerate}
\item[(a)]\; $I_{G'_{i+1}}(x,y) = I_{G'_{i}}(x,y) \cap V(G'_{i+1})$ for all $x, y \in V(G'_{i+1})$.

\item[(b)]\; If $X$ is a convex subset of $V(G'_{i})$, then $X \cap V(G'_{i+1})$ is convex in $G'_{i+1}$.

\item[(c)]\; For all $x, y \in V(G'_{i+1})$,\; $d_{G'_{i+1}}(x,y) = d_{G'_{i}}(x,y)$ or $d_{G'_{i}}(x,y)-p+1$ depending on whether $x$ and $y$ belong or do not belong to the same component of $V(G'_{i})-(V(B)-U_{ab})$.
\end{enumerate}

By (c), any two edges in $E(G'_{i}) \cap E(G'_{i+1})$ that are $\Theta$-equivalent in $G'_{i}$ are $\Theta$-equivalent in $G'_{i+1}$ as well.  Therefore, because $A_{H_{i+1}}$ is a $\Theta$-class, it follows that the relation $\Theta$ is transitive, and thus
 $G'_{i+1}$ is a partial cube by Theorem~\ref{T:Djokovic-Winkler}.
 
 We will show that the pre-hull number of any finite convex subgraph of $G'_{i+1}$ is at most $1$.  Let $F$ be a convex subgraph of $G'_{i+1}$.  Then $\varphi(B_{F}) := F \cap \varphi(B) = K_{2} \Box H''_{i+1}$, where $H''_{i+1}$ is a convex subgraph of $H'_{i+1}$.  It follows that $$F' := (F-\varphi(B_{F})) \cup B_{F},$$ where $B_{F} := P_{i+1} \Box H''_{i+1}$, is a convex subgraph of $G'_{i}$ such that $V(F) = V(F') \cap V(G'_{i+1})$.  By the induction hypothesis, $G'_{i}$ is ph-homogeneous.  Let $uv$ be an edge of $F$.  We will show that $U_{uv}^{F}$ is ph-stable.

Assume that $uv$ is also an edge of $F'$.  Then, by (c), we infer that:

\textbullet\; $W_{uv}^{F} = W_{uv}^{F'} \cap V(F)$\; and\; $W_{vu}^{F} = W_{vu}^{F'} \cap V(F)$

\textbullet\; $U_{uv}^{F} = U_{uv}^{F'} \cap V(F)$.

It follows, by the definition of $F'$, that:
\begin{align*}
co_{F}(U_{uv}^{F}) &= co_{F}(U_{uv}^{F-\varphi(B_{F})}) \cup co_{F}(U_{uv}^{\varphi(B_{F})})\\
 &= co_{F'}(U_{uv}^{F'-B_{F}}) \cup (co_{F'}(U_{uv}^{B_{F}}) \cap V(F))\\
&= co_{F'}(U_{uv}^{F'}) \cap V(F).
\end{align*}
Note that, if $uv$ is not $\Theta$-equivalent to an edge of some $H''_{i+1}$-layer of $B_{F}$, then $U_{uv}^{F} = U_{uv}^{F'}$ and $co_{F}(U_{uv}^{F}) = co_{F'}(U_{uv}^{F'})$.

Then, from the hypothesis that $U_{uv}^{F'}$ is ph-stable, we infer that $U_{uv}^{F}$ is also ph-stable. 

Now assume that $uv$ is not an edge of $F'$.  Then $uv \in A_{H_{i+1}}$.  Let $P$ be the $(u,v)$-geodesic of $B_F$ whose only vertices in $U_{ab}$ are its endvertices $u$ and $v$.  $P$ is then some $P_{i+1}$-layer of $B_{F}$.  Let $w$ be the neighbor of $u$ in $P$.  Then, because, by Lemma~\ref{L:bulge/sep.}(i), any edge of $G'_{i}$ that is $\Theta$-equivalent to $uw$ is an edge of $H_{i+1}-W_{ba}$, it follows that $$U_{uv}^{F} = U_{uw}^{F'}.$$  Moreover, by the properties of the hypercylinder $H_{i+1}$,\; $\mathcal{I}_{F'}(U_{uw}^{F'})$ is a periphery of $G'_i$.  Therefore $$\mathcal{I}_F(U_{uv}^{F}) = \mathcal{I}_{F'}(U_{uw}^{F'}) = U_{uw}^{F'} = U_{uv}^{F}.$$  Hence $\mathcal{I}_F(U_{uv}^{F})$ is a periphery of $G'_{i+1}$, and thus $U_{uv}^{F}$ is ph-stable.

Consequently $ph(F) \leq 1$, and thus $G'_{i+1}$ is a Peano partial cube.  Finally $G_{3}$, which is equal to $G'_{n}$, is then a Peano partial cube, which proves the claim.\\

Because $G_{3}$ has at most $r$ vertices, we have by the induction hypothesis
\begin{equation}
    \label{E:S"'=1}
\sum_{i \in \mathbb{N}}(-1)^{i}\beta_{i}(G_{3}) = 1.
\end{equation}

Let $H$ be a convex quasi-hypertorus of $G_{2}[U_{ab}]$ which is not contained in a bulge of $W_{ab}$.  Because $\mathcal{I}_{G}(U_{ba})$ is a semi-periphery of $G[\mathcal{I}_{G}(U_{ba}) \cup W_{ab}]$, it follows that the projection $H'$ of $H$ onto $G_{1}[U_{ba}]$ is also convex and is not contained in a bulge of $co_{G}(U_{ba})$.  Therefore $G[V(H \cup H')] = H \Box K_{2}$, and thus $G[V(H \cup H')]$ is a convex quasi-hypertorus of $G$ of dimension $d+1$ if $H$ is a convex quasi-hypertorus of $G_{2}[U_{ab}]$ of dimension $d$.

Consequently we infer, by the construction of $G_{3}$, that there is a bijection between the quasi-hypertori of $G_{3}$ of dimension $i$ and the elements of $\mathbf{Tor}(G,ab)$ of dimension $i+1$.  It follows that $$\beta_{i}(G) = \beta_{i}(G_{1}) + \beta_{i}(G_{2}) + \beta_{i-1}(G_{3})$$ for every non-negative integer $i > 1$.  Consequently, by \eqref{E:S'=1}, \eqref{E:S"=1} and \eqref{E:S"'=1}, we have
\begin{align*}
\sum_{i \in \mathbb{N}}(-1)^{i}\beta_{i}(G) &= \beta_{0}(G_{1}) + \beta_{0}(G_{2}) + \sum_{i \geq 1}(-1)^{i}(\beta_{i}(G_{1}) + \beta_{i}(G_{2}) + \beta_{i-1}(G_{3}))\\
&= \sum_{i \in \mathbb{N}}(-1)^{i}\beta_{i}(G_{1}) + \sum_{i \in \mathbb{N}}(-1)^{i}\beta_{i}(G_{2}) - \sum_{i \in \mathbb{N}}(-1)^{i}\beta_{i}(G_{3})\\
&= 1.
\end{align*}
\end{proof}

\subsection{Isometric dimension of a finite Peano partial cube}\label{SS:iso.dim.}

\v{S}krekovski~\cite{Sk01} also obtained the following result which gives the the number of $\Theta$-classes of a finite median graph, in other words, the isometric dimension of a finite median graph.

\begin{pro}\label{P:dimSk}\textnormal{(\v{S}krekovski~\cite{Sk01})}
The isometric dimension of a finite median graph $G$ is
\begin{equation}
\label{E:dimSk}
\mathrm{idim}(G) = -\sum_{i \in \mathbb{N}}(-1)^{i}i\alpha_{i}(G).
\end{equation}
\end{pro}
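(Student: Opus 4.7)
The plan is to prove the formula by a double-counting argument, reducing to the Euler-characteristic identity for median graphs (Proposition~\ref{P:S.C.S}). The core observation is that, in any partial cube, the edges of an $i$-cube $Q$ partition into exactly $i$ $\Theta$-classes (the $i$ ``directions'' of $Q$, each containing $2^{i-1}$ parallel edges of $Q$). So if I let $m_{i}(F)$ denote the number of $i$-cubes of $G$ having at least one edge in a given $\Theta$-class $F$, the identity
\begin{equation*}
i\,\alpha_{i}(G) \;=\; \sum_{F \in \Theta(G)} m_{i}(F)
\end{equation*}
follows by counting incident pairs $(Q,F)$ in two ways.

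For each $\Theta$-class $F$, pick an edge $ab$ representing $F$. Since $G$ is a median graph, Proposition~\ref{P:med.gr./loc.periph.} gives that $U_{ab}$ is convex; hence $G[U_{ab}]$ is itself a (finite) median graph. The key geometric step is the identification
\begin{equation*}
m_{i}(F) \;=\; \alpha_{i-1}\bigl(G[U_{ab}]\bigr) \qquad (i \geq 1),\qquad m_{0}(F)=0.
\end{equation*}
Indeed, any $i$-cube $Q$ of $G$ with direction $F$ is split by $F$ into two parallel $(i-1)$-faces, one lying in $W_{ab}$ and one in $W_{ba}$, joined by the matching edges of $F$; the face in $W_{ab}$ has all its vertices in $U_{ab}$, so it is an $(i-1)$-cube of $G[U_{ab}]$. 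Conversely, any $(i-1)$-cube $Q'$ of $G[U_{ab}]$ together with its $\phi_{ab}$-image in $U_{ba}$ (Lemma~\ref{L:phi_ab}) yields an $i$-cube of $G$ with direction $F$, because in a median graph hypercube subgraphs are convex and the $4$-cycle property of Cartesian products assembles them. This gives the bijection and the displayed equality.

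Applying Proposition~\ref{P:S.C.S} to the median graph $G[U_{ab}]$ yields $\sum_{j \geq 0}(-1)^{j}\alpha_{j}(G[U_{ab}]) = 1$, so that
\begin{equation*}
\sum_{i \geq 1}(-1)^{i}m_{i}(F) \;=\; \sum_{i \geq 1}(-1)^{i}\alpha_{i-1}(G[U_{ab}]) \;=\; -\sum_{j \geq 0}(-1)^{j}\alpha_{j}(G[U_{ab}]) \;=\; -1.
\end{equation*}
Summing over all $\Theta$-classes and using the double-counting identity gives
\begin{equation*}
\sum_{i \geq 0}(-1)^{i}\,i\,\alpha_{i}(G) \;=\; \sum_{F}\sum_{i \geq 1}(-1)^{i}m_{i}(F) \;=\; -|\Theta(G)| \;=\; -\mathrm{idim}(G),
\end{equation*}
which is the desired equality.

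The main obstacle is the bijection $m_{i}(F) = \alpha_{i-1}(G[U_{ab}])$: one must verify both that every $(i-1)$-cube of $G[U_{ab}]$ really extends to an $i$-cube of $G$ via $\phi_{ab}$ (this is where the median-graph hypothesis is essential, since $U_{ab}$ being convex forces the two parallel $(i-1)$-faces to close up into a genuine $i$-cube), and that distinct $(i-1)$-cubes of $G[U_{ab}]$ produce distinct $i$-cubes of $G$. Everything else is formal manipulation plus an appeal to the known Soltan--Chepoi--\v{S}krekovski Euler identity.
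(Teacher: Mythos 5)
The paper gives no proof of this proposition: it is quoted from \v{S}krekovski, and the nearest internal argument is the proof of its generalization, Theorem~\ref{T:dim}, which proceeds by induction on $|V(G)|$ by peeling off a semi-periphery and balancing the counts through Lemmas~\ref{L:A=B(TuP)} and~\ref{L:A=B}. Your argument is correct and follows a genuinely different, non-inductive route: you double-count incidences between $i$-cubes and $\Theta$-classes (each $i$-cube meets exactly $i$ classes, since it is isometric and its $i$ parallel classes therefore lie in pairwise distinct $\Theta$-classes of $G$), identify $m_{i}(F)$ with $\alpha_{i-1}(G[U_{ab}])$ via the splitting of a cube along its $F$-direction, and then feed the Euler identity \eqref{E:Sa=1}, applied to the median graph $G[U_{ab}]$, into the column sums. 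The bijection is sound in both directions: the face $Q\cap W_{ab}$ of an $i$-cube $Q$ with direction $F$ lies entirely in $U_{ab}$, and conversely $\phi_{ab}$ is an isomorphism of $G[U_{ab}]$ onto $G[U_{ba}]$ with no cross-edges besides the matching, so $V(Q')\cup\phi_{ab}(V(Q'))$ induces $Q'\Box K_{2}$. One small correction to your closing commentary: the median hypothesis is not what makes the two parallel faces close up into an $i$-cube (that works in any partial cube, via $\phi_{ab}$); it is what makes $U_{ab}$ convex, hence $G[U_{ab}]$ a median graph to which Proposition~\ref{P:S.C.S} applies. That is precisely the step that fails for general Peano partial cubes, and it explains why the paper's Theorem~\ref{T:dim} resorts to the inductive scheme with circumference-number bookkeeping, whereas your argument derives \eqref{E:dimSk} in one shot from \eqref{E:Sa=1}.
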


We will show that finite Peano partial cubes satisfy an analogous property which generalizes \eqref{E:dimSk}. We need some definition and notation.

We define the \emph{circumference number} $\gamma(H)$ of a finite quasi-hypertorus $H$ as follows.  If $H = K_{2}$, then $\gamma(H) := 0$.  If $H$ is a cycle of length $2n$ for some $n \geq 2$, then $\gamma(H) := n-2$.  If $H = \cartbig_{0 \leq i \leq k}H_{i}$, then $\gamma(H) := \sum_{0 \leq i \leq k}\gamma(H_{i})$.

Let $G$ be a finite Peano partial cube.  For all integers $i \geq 2$ and $j \in \mathbb{N}$, we denote by $\beta_{i}^{j}(G)$ the number of convex quasi-hypertori of $G$ whose dimension is $i$ and circumference number is $j$.  Moreover we put:

\textbullet\; $\beta_{i}^{j}(G) := 0$ if $i < 0$ or $j < 0$;

\textbullet\; for $i = 0, 1$, $\beta_{i}^{0}(G) := \alpha_{i}(G)$, and $\beta_{i}^{j}(G) := 0$ if $j \neq 0$.

\begin{thm}\label{T:dim}
The isometric dimension of a finite Peano partial cube $G$ is
\begin{equation}
\label{E:dim}
\mathrm{idim}(G) = -\sum_{i \in \mathbb{N}}(-1)^{i}(\sum_{j \in \mathbb{N}}(i+j)\beta_{i}^{j}(G)).
\end{equation}
\end{thm}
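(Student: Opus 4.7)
The plan is to reduce the identity \eqref{E:dim} to the Euler-type formula of Theorem~\ref{T:S=1} by a double-counting argument over $\Theta$-classes, entirely bypassing the induction that was used there. First I would establish the key numerical observation that for every quasi-hypertorus $H=K_2^{a}\Box C_{2n_1}\Box\cdots\Box C_{2n_k}$ (with $n_s\ge 2$), one has $\dim H+\gamma(H)=\mathrm{idim}(H)$. Indeed, by the conventions of Subsection~\ref{SS:elem.ph-homog.}, $\dim H=a+2k$ and $\gamma(H)=\sum_s(n_s-2)=\sum_s n_s-2k$, while each $K_2$-factor contributes exactly one $\Theta$-class and each $C_{2n_s}$-factor contributes exactly $n_s$ $\Theta$-classes (the antipodal pairs of edges), whence $\mathrm{idim}(H)=a+\sum_s n_s$. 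Consequently the right-hand side of \eqref{E:dim} can be rewritten as
\[
-\sum_{i\in\mathbb{N}}(-1)^{i}\sum_{j\in\mathbb{N}}(i+j)\beta_{i}^{j}(G)=\sum_{H\in\mathbf{Tor}(G)}(-1)^{\dim H+1}\,\mathrm{idim}(H).
\]

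Next I would interpret $\mathrm{idim}(H)$ combinatorially. Every convex subgraph of $G$ is isometric, so for $H\in\mathbf{Tor}(G)$ two edges of $H$ are $\Theta$-equivalent in $H$ iff they are $\Theta$-equivalent in $G$; hence $\mathrm{idim}(H)$ coincides with the number of $\Theta$-classes $E$ of $G$ such that $E\cap E(H)\ne\emptyset$. Swapping the order of summation then gives
\[
\sum_{H\in\mathbf{Tor}(G)}(-1)^{\dim H+1}\,\mathrm{idim}(H)=\sum_{E}\ \sum_{H\in\mathbf{Tor}(G,ab)}(-1)^{\dim H+1},
\]
where $ab$ is any representative edge of the $\Theta$-class $E$. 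Since $G$ has exactly $\mathrm{idim}(G)$ $\Theta$-classes, it suffices to prove that for every $\Theta$-class $E=[ab]$,
\[
\sum_{H\in\mathbf{Tor}(G,ab)}(-1)^{\dim H+1}=1.
\]

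To treat a fixed $\Theta$-class $E=[ab]$, I would partition $\mathbf{Tor}(G)$ as
\[
\mathbf{Tor}(G)=\mathbf{Tor}(G,ab)\ \sqcup\ \mathbf{Tor}(G[W_{ab}])\ \sqcup\ \mathbf{Tor}(G[W_{ba}]).
\]
This is justified by the following: any convex quasi-hypertorus $H$ of $G$ either meets both $W_{ab}$ and $W_{ba}$, in which case connectedness of $H$ together with the fact that $[ab]=\partial_{G}(W_{ab})$ forces $H$ to contain an edge of $E$; or $V(H)$ lies entirely in one of the half-spaces $W_{ab}$, $W_{ba}$, and then $H$ is a convex subgraph of the (convex, hence Peano) subgraph $G[W_{ab}]$ or $G[W_{ba}]$, so that the convex quasi-hypertori of $G$ contained in $W_{ab}$ (resp.\ $W_{ba}$) coincide with $\mathbf{Tor}(G[W_{ab}])$ (resp.\ $\mathbf{Tor}(G[W_{ba}])$). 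Since $G[W_{ab}]$ and $G[W_{ba}]$ are finite Peano partial cubes by Theorem~\ref{T:hypernet./gat.amalg.+cart.prod.}, applying Theorem~\ref{T:S=1} to each of $G$, $G[W_{ab}]$, $G[W_{ba}]$ yields $\sum_{H\in\mathbf{Tor}(G,ab)}(-1)^{\dim H}=1-1-1=-1$, whence the desired identity.

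The content of the proof is thus concentrated in the initial observation $\dim H+\gamma(H)=\mathrm{idim}(H)$ together with the partition argument; this is the only real obstacle, since once these are in hand the statement follows from three applications of the Euler-type identity of Theorem~\ref{T:S=1}. No further induction is needed, which is notably simpler than a direct extension of \v{S}krekovski's argument for median graphs.
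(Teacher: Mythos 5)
Your proof is correct, and it takes a genuinely different route from the paper's. The paper proceeds by induction on $|V(G)|$, peeling off a semi-periphery $W_{ab}$, using $\mathrm{idim}(G)=\mathrm{idim}(G[W_{ba}])+1$, and resting the technical weight on the identity $\mathcal{A}(G,ab)=\mathcal{B}(G[W_{ab}])$ (Lemma~\ref{L:A=B}), which is itself proved by a separate induction whose base case is the hypercylinder computation of Lemma~\ref{L:A=B(TuP)}. You bypass both auxiliary lemmas and the outer induction entirely: the observation $\dim H+\gamma(H)=\mathrm{idim}(H)$ (which checks out factor by factor, all three quantities being additive over Cartesian products and the prime factorization into copies of $K_2$ and long even cycles making $\gamma$ well defined) turns the right-hand side of \eqref{E:dim} into $\sum_{H\in\mathbf{Tor}(G)}(-1)^{\dim H+1}\mathrm{idim}(H)$; since a convex subgraph is isometric, $\mathrm{idim}(H)$ is indeed the number of $\Theta$-classes of $G$ meeting $E(H)$, and the exchange of summation reduces everything to the local identity $\sum_{H\in\mathbf{Tor}(G,ab)}(-1)^{\dim H}=-1$ for each $\Theta$-class. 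Your partition $\mathbf{Tor}(G)=\mathbf{Tor}(G,ab)\sqcup\mathbf{Tor}(G[W_{ab}])\sqcup\mathbf{Tor}(G[W_{ba}])$ is sound because the $\Theta$-class of $ab$ is exactly $\partial_G(W_{ab})$, a connected $H$ meeting both half-spaces must use such an edge, and convexity in $G$ of a subgraph contained in the convex set $W_{ab}$ is equivalent to convexity in $G[W_{ab}]$; the half-space subgraphs are nonempty finite Peano partial cubes, so three applications of Theorem~\ref{T:S=1} finish the argument. What your approach buys is a shorter, non-inductive proof that isolates exactly how \eqref{E:dim} follows from \eqref{E:S=1} and that works uniformly for every $\Theta$-class rather than only for semi-peripheral ones; what the paper's approach buys is a single inductive framework shared between its two Euler-type formulas, at the cost of the two computational lemmas you avoid.
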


We need two lemmas in which we will use the following notation.  Let $G$ be a finite Peano partial cube, and $ab$ an edge of $G$ such that $W_{ab}$ is a semi-periphery.  Then:

\textbullet\; $\kappa(G)$ is the greatest $k$ such that $G$ contains a quasi-hypertorus of dimension $k$.

\textbullet\; $\beta_{i}^{j}(G,ab)$ is the number of convex elements of $\mathbf{Tor}(G,ab)$ whose dimension is $i$ and circumference number is $j$.  In particular $\beta_{1}^{0}(G,ab)$ is the number of edges of $G$ that are $\Theta$-equivalent to $ab$.

\textbullet\; $\mathcal{A}(G,ab) := -\sum_{i \in \mathbb{N}}(-1)^{i}(\sum_{j \in \mathbb{N}}(i+j)\beta_{i}^{j}(G,ab))$.

\textbullet\;  $\mathcal{B}(G[W_{ab}]) := \sum_{i \in \mathbb{N}}(-1)^{i}(\sum_{j \in \mathbb{N}}(i+j+1)\beta_{i}^{j}(G[W_{ab}]))$.

\begin{lem}\label{L:A=B(TuP)}
Let $G = C \Box H$, where $C$ is a $2p$-cycle with $p \geq 3$ and $H$ is a finite Peano partial cube such that $\kappa(H) = k$ for some non-negative integer $k$, and let $ab$ be an edge of a $C$-layer of $G$.  Then
\begin{equation}
\label{E:A=B(CQ)}
\mathcal{A}(G,ab) = \mathcal{B}(G[W_{ab}]) = \sum_{i=0}^{k}(-1)^{i}(\sum_{j \in \mathbb{N}}(i+j-p+2)\beta_{i}^{j}(H)).
\end{equation}
\end{lem}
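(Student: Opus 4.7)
The plan is to reduce both $\mathcal{A}(G,ab)$ and $\mathcal{B}(G[W_{ab}])$ to the same closed form by enumerating the relevant convex quasi-hypertori via the Convex Subgraph Property of the Cartesian product (Proposition~\ref{P:pro.Cartes.prod.}(v)). The first observation is that since $ab$ lies in a $C$-layer of $G = C \Box H$, its $\Theta$-class in $G$ consists of $C$-type edges $e \Box w$ where $e$ is one of the two antipodal edges of $C$ that are $\Theta$-equivalent to $ab$ in $C$. Hence any convex quasi-hypertorus $F_1 \Box F_2$ in $\mathbf{Tor}(G,ab)$ must have $F_1$ either equal to one of these two edges (regarded as a $K_2$) or equal to $C$ itself; on the other hand, $G[W_{ab}]$ equals $P \Box H$, where $P$ is the path of $p$ vertices (since $W_{ab}^C$ has $p$ vertices when $ab$ is an edge of the $2p$-cycle $C$), and the convex quasi-hypertori of $P \Box H$ come from pairing a vertex or an edge of $P$ with a convex quasi-hypertorus of $H$.

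Converting this enumeration into counts gives the two key identities
\begin{equation*}
\beta_i^j(G,ab) \;=\; 2\,\beta_{i-1}^{j}(H) \;+\; \beta_{i-2}^{j-(p-2)}(H),
\end{equation*}
using $\dim(K_2)=1,\gamma(K_2)=0$ and $\dim(C)=2,\gamma(C)=p-2$ for the two contributing shapes, and
\begin{equation*}
\beta_i^j(G[W_{ab}]) \;=\; p\,\beta_i^j(H) \;+\; (p-1)\,\beta_{i-1}^{j}(H),
\end{equation*}
since $P$ has $p$ vertices and $p-1$ edges, both of which (as $K_1$ and $K_2$ factors) multiply dimensions by $0$ and $1$ respectively without affecting $\gamma$.

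From here the rest is bookkeeping. Substituting the first identity into the definition of $\mathcal{A}(G,ab)$ and shifting indices via $i'=i-1$, $j'=j$ in the first piece and $i'=i-2$, $j'=j-(p-2)$ in the second yields, after combining the two resulting sums,
\begin{equation*}
\mathcal{A}(G,ab)=\sum_{i=0}^{k}(-1)^{i}\sum_{j\in\mathbb{N}}\bigl[\,2(i+j+1)-(i+j+p)\bigr]\beta_{i}^{j}(H)=\sum_{i=0}^{k}(-1)^{i}\sum_{j\in\mathbb{N}}(i+j-p+2)\beta_{i}^{j}(H).
\end{equation*}
The same shift applied to $\mathcal{B}(G[W_{ab}])$ gives
\begin{equation*}
\mathcal{B}(G[W_{ab}])=\sum_{i=0}^{k}(-1)^{i}\sum_{j\in\mathbb{N}}\bigl[\,p(i+j+1)-(p-1)(i+j+2)\bigr]\beta_{i}^{j}(H),
\end{equation*}
and the inner bracket simplifies identically to $i+j-p+2$.

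The only genuinely non-routine step is the combinatorial classification of convex quasi-hypertori in the two Cartesian products, in particular the claim that the only convex quasi-hypertori in $C$ are the vertices, the edges, and $C$ itself (proper non-trivial subpaths fail, being neither hypercubes nor cycles). Once this—and the analogous characterization for $P$—is in hand, the $\Theta$-class identification pins down the factor of $2$ in the first identity, and the rest is purely an index-shifting computation with telescoping falling out as shown.
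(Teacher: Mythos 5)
Your proof is correct and follows essentially the same route as the paper: the same two counting identities $\beta_i^j(G,ab)=2\beta_{i-1}^{j}(H)+\beta_{i-2}^{j-(p-2)}(H)$ and $\beta_i^j(G[W_{ab}])=p\beta_i^j(H)+(p-1)\beta_{i-1}^{j}(H)$, followed by the same index shifts. (Your first identity is in fact the corrected form of the one the paper writes, which has a typo in the circumference superscript of the second term but is used correctly in the subsequent computation.)
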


\begin{proof}
First note that $j = 2$ or $p$.  
Let $H_{1}$ and $H_{2}$ be the two disjoint $H$-layers of $G$ whose vertex sets are contained in $U_{ab}$.  Because an element of $\mathbf{Tor}(G,ab)$ whose dimension is $i$ and circumference number is $j$ is either the Cartesian product of a quasi-hypertorus of $H_{1} \cup H_{2}$ whose dimension is $i-1$ and circumference number is $j$ with $K_{2}$ such that the edge of each of its $K_{2}$-layers is $\Theta$-equivalent to $ab$, or the Cartesian product of a quasi-hypertorus of $H_{1}$ whose dimension is $i-2$ and circumference number is $j-p+2$ by $C$, and because of the convention we made on the $\beta_{i}^{j}$'s, we clearly have:
\begin{align*}
\beta_{i}^{j}(G,ab) &= \beta_{i-1}^{j}(H_{1} \cup H_{2}) + \beta_{i-2}^{j}(H_{1})\\
&= 2\beta_{i-1}^{j}(H) + \beta_{i-2}^{j}(H).
\end{align*}

Therefore (note that $\beta_{k+1}^{j}(H) = 0$ since $\kappa(H) = k$)
\begin{align*}
\mathcal{A}(G,ab) &= -\sum_{i \in \mathbb{N}}(-1)^{i}(\sum_{j \in \mathbb{N}}(i+j)\beta_{i}^{j}(G,ab))\\
&= -\sum_{i=0}^{k+2}(-1)^{i}(\sum_{j \in \mathbb{N}}(i+j)(2\beta_{i-1}^{j}(H) + \beta_{i-2}^{j}(H)))\\
&= -\sum_{i=0}^{k+1}(-1)^{i}(\sum_{j \in \mathbb{N}}(2(i+j)-(i+1+j+p-2))\beta_{i-1}^{j}(H))\\
&= -\sum_{i=0}^{k+1}(-1)^{i}(\sum_{j \in \mathbb{N}}(i+j-p+1)\beta_{i-1}^{j}(H))\\
&= \sum_{i=0}^{k}(-1)^{i}(\sum_{j \in \mathbb{N}}(i+j-p+2)\beta_{i}^{j}(H).
\end{align*}

On the other hand, because an element of $\mathbf{Tor}(G[W_{ab}])$ whose dimension is $i$ and circumference number is $j$ is either a quasi-hypertorus of some $H$-layer of $G-W_{ba}$ whose dimension is $i$ and circumference number is $j$, or the Cartesian product of a quasi-hypertorus of $H$ whose dimension is $i-1$ and circumference number is $j$ with $K_{2}$ such that the edge of each of its $K_{2}$-layers is an edge of $C-W_{ba}$, and because of the convention we made on the $\beta_{i}^{j}$'s, we clearly have:

$\beta_{i}^{j}(G[W_{ab}]) = p\beta_{i}^{j}(H)+(p-1)\beta_{i-1}^{j}(H)$.

Then
\begin{align*}
\mathcal{B}(G[W_{ab}]) &= \sum_{i \in \mathbb{N}}(-1)^{i}(\sum_{j \in \mathbb{N}}(i+j+1)\beta_{i}^{j}(G[W_{ab}]))\\
&= \sum_{i=0}^{k+1}(-1)^{i}(\sum_{j \in \mathbb{N}}(i+j+1)(p\beta_{i}^{j}(H)+(p-1)\beta_{i-1}^{j}(H)))\\
&= \sum_{i=0}^{k+1}(-1)^{i}(\sum_{j \in \mathbb{N}}(i+j+1)p-(i+j+2)(p-1))\beta_{i}^{j}(H)))\\
&= \sum_{i=0}^{k+1}(-1)^{i}(\sum_{j \in \mathbb{N}}(i+j-p+2)\beta_{i}^{j}(H))).
\end{align*}

This proves the lemma.
\end{proof}

\begin{lem}\label{L:A=B}
Let $G$ be a finite Peano partial cube, and $ab$ an edge of $G$ such that $W_{ab}$ is a semi-periphery.  Then
\begin{equation}
\label{E:A=B}
\mathcal{A}(G,ab) = \mathcal{B}(G[W_{ab}]).
\end{equation}
\end{lem}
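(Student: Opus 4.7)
The strategy is to reduce the general identity $\mathcal{A}(G,ab)=\mathcal{B}(G[W_{ab}])$ to the product case already handled by Lemma~\ref{L:A=B(TuP)}, by partitioning both $\mathbf{Tor}(G,ab)$ and $\mathbf{Tor}(G[W_{ab}])$ according to the bulge structure. Because $W_{ab}$ is a semi-periphery, we have $W_{ab}=co_G(U_{ab})=\mathcal{I}_G(U_{ab})$ by Corollary~\ref{C:Wab=IGab}, and the Characterization Theorem attaches to each bulge $X_s$ of $co_G(U_{ab})$ a unique gated hypercylinder $H_s:=\mathbf{Cyl}[X_s]=C_s\,\Box\, F_s$ with $C_s\in\mathbf{C}(G,ab)$. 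Writing any $T\in\mathbf{Tor}(G,ab)$ as a Cartesian product of copies of $K_2$ and even cycles, the factor carrying an edge $\Theta$-equivalent to $ab$ is either a $K_2$ (\emph{crossing type}), so that $T=T_0\,\Box\,K_2$ with $T_0:=T\cap G[W_{ab}]\subseteq G[U_{ab}]$ and with the two $T_0$-layers related by $\phi_{ab}$ (Lemma~\ref{L:phi_ab}); or an even cycle $C$ of length $\geq 6$ (\emph{bulge type}), in which case $C\in\mathbf{C}(G,ab)$, and by the uniqueness in Lemma~\ref{L:bulge/ab-cycle} together with Remark at the end of Subsection~\ref{SS:(ii)<=>(iii)4.15}, $C$ is a $C_s$-layer of a single $H_s$ and $T\subseteq H_s$. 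A symmetric classification applies to $T_0\in\mathbf{Tor}(G[W_{ab}])$: either $V(T_0)\subseteq U_{ab}$ (crossing class), or $T_0$ meets some $X_s-U_{ab}$, in which case $T_0\subseteq H_s\cap G[W_{ab}]$ for a unique $s$ by Theorem~\ref{T:conv.reg.H-subgr.=>gated} and Lemma~\ref{L:inters.hypercyl.}.

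Next I would verify that these partitions match term by term. For the crossing class, the bijection $T\leftrightarrow T_0$ decreases the dimension by $1$ and preserves the circumference number $j$, and a short calculation shows the weights match:
\begin{equation*}
-(-1)^i(i+j)=(-1)^{i-1}\bigl((i-1)+j+1\bigr),
\end{equation*}
so the crossing contributions to $\mathcal{A}(G,ab)$ and $\mathcal{B}(G[W_{ab}])$ coincide. For the bulge class, fixing $s$, the $\mathcal{A}$-contribution of bulge-type $T\subseteq H_s$ equals the bulge-type part of $\mathcal{A}(H_s,ab)$, while the $\mathcal{B}$-contribution of $T_0\subseteq H_s\cap G[W_{ab}]$ meeting $X_s-U_{ab}$ equals the corresponding part of $\mathcal{B}(H_s\cap G[W_{ab}])$. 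Applying Lemma~\ref{L:A=B(TuP)} to $H_s=C_s\,\Box\, F_s$ gives $\mathcal{A}(H_s,ab)=\mathcal{B}(H_s\cap G[W_{ab}])$; subtracting from both sides the crossing contributions inside $H_s$ (which have already been matched) yields the bulge-wise identity. Summing over $s$ and combining with the crossing identity then gives $\mathcal{A}(G,ab)=\mathcal{B}(G[W_{ab}])$.

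The main obstacle is the combinatorial hygiene of the partitions. One must show that distinct bulges never share a bulge-type quasi-hypertorus, so that the sum over $s$ does not double count. This follows because two distinct elements of $\mathbf{Cyl}^+[G]$ intersect in a hypercube by Lemma~\ref{L:inters.hypercyl.}, and a hypercube has no cycle factor of length $\geq 6$. Dually, one must check that a crossing-class $T_0\subseteq U_{ab}$ lying in some $F_s$-direction of $H_s$ is counted exactly once in the global sum — this is handled by restricting the $\mathcal{B}$-contribution of $H_s$ used in the bulge step to those $T_0$ that \emph{meet} $X_s-U_{ab}$, precisely the complementary class to the crossing one. The residual edges of $\partial_G(W_{ab})$ not contained in any bulge — the purely median edges — contribute only to the crossing class, so the decomposition is exhaustive, and the identity follows.
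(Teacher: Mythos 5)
Your proof is correct in substance, but it takes a genuinely different route from the paper. The paper proceeds by induction on $|V(G)|$: when $W_{ab}$ is not a periphery it picks a single bulge $X$, uses (HNB2) to split $G$ along the separator $V(\mathbf{Cyl}[X])-(U_{ab}\cup U_{ba})$ into two overlapping convex pieces $G_0,G_1$ with intersection $F$, applies the induction hypothesis to $G_0$, $G_1$ and $F$, and concludes by inclusion--exclusion on the counts $\beta_i^j$; the only place Lemma~\ref{L:A=B(TuP)} enters is the atomic case $G=\mathbf{Cyl}[X]$. You instead avoid induction on $G$ entirely and argue globally: you partition $\mathbf{Tor}(G,ab)$ according to whether the prime factor of $T$ carrying the $\Theta$-class of $ab$ is a $K_2$ or a long cycle, partition $\mathbf{Tor}(G[W_{ab}])$ dually, match the crossing classes by an explicit weight-preserving bijection $T\leftrightarrow T\cap G[W_{ab}]$, and dispose of the bulge classes by applying Lemma~\ref{L:A=B(TuP)} once per gated hypercylinder $\mathbf{Cyl}[X_s]$. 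Your version is more transparent (it exhibits the identity as a sum of local identities and does not need the paper's ``without loss of generality $W_{ba}$ is also a semi-periphery''), at the cost of having to certify the combinatorial hygiene you flag; the paper's version trades that for a separator-based additivity of the $\beta_i^j$ that is essentially immediate.

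Two of your verification steps deserve more care than your citations suggest. First, Lemma~\ref{L:inters.hypercyl.} is stated only for bulge-regular Peano partial cubes and concerns intersections of two hypercylinders, so it is not the right tool for showing that a convex quasi-hypertorus $T_0$ of $G[W_{ab}]$ meeting $W_{ab}-U_{ab}$ lies entirely in one $X_s$; what works is that $H_s=\mathbf{Cyl}[X_s]$ is gated (Theorem~\ref{T:C-subgr./gated}), so $T_0\cap H_s$ is a gated, hence convex, subgraph of $T_0$ by Lemma~\ref{L:gated.inter.convex}(i); since it contains a vertex $v$ of $A_s:=X_s-U_{ab}$ together with every layer of $T_0$ through $v$ (each convex cycle of $T_0$ through $v$ has at least three vertices in the $\Gamma$-closed subgraph $H_s$), it projects onto every prime factor of $T_0$ and therefore equals $T_0$ by the Convex Subgraph Property (Proposition~\ref{P:pro.Cartes.prod.}(v)). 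The same propagation shows a bulge-type $T$ is contained in $H_s$. Second, the inverse of your crossing bijection needs a sentence: for $T_0$ convex with $V(T_0)\subseteq U_{ab}$, the set $V(T_0)\cup\phi_{ab}(V(T_0))$ is convex because any geodesic between $u\in V(T_0)$ and $\phi_{ab}(v)$ crosses the $\Theta$-class of $ab$ at a single edge $w\phi_{ab}(w)$ with $w\in I_G(u,v)\subseteq V(T_0)$, and $\phi_{ab}$ is an isometric isomorphism of $G[\mathcal{I}_G(U_{ab})]$ onto $G[\mathcal{I}_G(U_{ba})]$ (Lemma~\ref{L:phi_ab}), so it carries the interval $I_G(w,v)$ onto $I_G(\phi_{ab}(w),\phi_{ab}(v))$. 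With these points filled in, your argument is complete.
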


\begin{proof}
The proof will be by induction on the order of $G$.  Note that the order of $G$ is necessarily even.  
The result is obvious if the order of $G$ is $2$.  Suppose that it holds if the order of $G$ is at most $2n$ for some positive integer $n$.  Assume that the order of $G$ is $2(n+1)$.  Without loss of generality we can suppose that the edge $ab$ is such that $W_{ab}$ and $W_{ba}$ are semi-peripheries.  The proof is straightforward if $W_{ab}$ (and thus $W_{ba}$) is a periphery.  Suppose that $W_{ab}$ is not a periphery.  Let $X$ be a bulge of $W_{ab}$, and $H := \mathbf{Cyl}[X]$.  Then $H = C \Box H'$ where $C$ is a cycle of length greater than $4$ and $H'$ a finite Peano partial cube such that $\kappa(H') = k$ for some non-negative integer $k$.  We are done if $G = H$ by Lemma~\ref{L:A=B(TuP)}.  Assume that $G \neq H$.

In order to avoid the introduction of unnecessary edges, if $G'$ is a subgraph of $G$ that 
has an edge $uv$ $\Theta$-equivalent to $ab$, but not necessarily the edge $ab$ itself, we will still denote the set $W_{uv}^{G'}$ by $W_{ab}^{G'}$.

By (HNB2), $V(X)-U_{ab}$ is a separator of $G[W_{ab}]$, and thus $V(H)-(U_{ab} \cup U_{ba})$ is a separator of $G$.  Let $F$ be the connected component of $H \cap G[U_{ab} \cup U_{ba}]$  such that there exists a vertex of $G-H$ which is adjacent to some vertex of $F$.  Then $\kappa(F) = k+1$.  Denote by $G_{0}$ the component of $G-(H-U_{ab} \cup U_{ba})$  containing $F$, and let $G_{1} := G-(G_{0}-F)$.

For $i = 0, 1$, $G_{i}$ is a convex subgraph of $G$, and thus is ph-homogeneous, and also clearly so is $F$.  Moreover $W_{ab}^{G_{i}}$ and $W_{ba}^{G_{i}}$ are semi-peripheries of $G_{i}$.  Because the orders of $G_{0}$, $G_{1}$ and $F$ are less than $2(n+1)$, we have by the induction hypothesis:

\textbullet\; $\mathcal{A}(G_{i},ab) = \mathcal{B}(G_{i}[W_{ab}^{G_{i}}])$ for $i = 0, 1$

\textbullet\; $\mathcal{A}(F,ab) = \mathcal{B}(F[W_{ab}^{F}])$.

Then $\mathcal{A}(G,ab) = \mathcal{B}(G[W_{ab}])$ is a straightforward consequence of these equalities and of the obvious following facts:

\textbullet\; $\beta_{i}^{j}(G,ab) = \beta_{i}^{j}(G_{0},ab)+\beta_{i}^{j}(G_{1},ab)-\beta_{i}^{j}(F,ab)$;

\textbullet\; $\beta_{i}^{j}(G[W_{ab}]) = \beta_{i}^{j}(G_{0}[W_{ab}^{G_{0}}])+\beta_{i}^{j}(G_{1}[W_{ab}^{G_{1}}])-\beta_{i}^{j}(F[W_{ab}^{F}])$.
\end{proof}

\begin{proof}[\textnormal{\textbf{Proof of Theorem~\ref{T:dim}}}]
The proof is by induction on the number of vertices of $G$.  This is
trivial if $G$ has only one vertex.  Let $r \geq 1$.  Suppose that
this is true for any Peano partial cube partial cube
having at most $r$ vertices.  Let $G$ be a Peano partial
cube partial cube having $r+1$ vertices.  Because $G$ is finite ph-homogeneous, there is an edge $ab$ of $G$ such
that $W_{ab}$ is a semi-periphery.  As a convex subgraph of $G$,\; $G_{1}
:= G[W_{ba}]$ is also ph-homogeneous.    Moreover it has at most
$r$ vertices.  Hence, by the induction hypothesis, we have
\begin{equation}
    \label{E:dim1}
\mathrm{idim}(G_{1}) = -\sum_{i \in \mathbb{N}}(-1)^{i}(\sum_{j \in \mathbb{N}}(i+j)\beta_{i}^{j}(G_{1})).
\end{equation}

The graph $G_{2} := G[W_{ab}]$ is also a convex subgraph of $G$.  Therefore $G_{2}$ is a Peano partial cube.  Hence, by \eqref{E:S=1}
\begin{equation}
\label{E:S(G[W])}
\sum_{i \in \mathbb{N}}(-1)^{i}\beta_{i}(G_{2})=1.
\end{equation}

On the other hand the $\Theta$-classes of $G$ are the $\Theta$-classes of the edges of $G_{1}$ with the $\Theta$-class of $ab$, because any edge of $G_{2}$ is $\Theta$-equivalent to an edge of $G_{1}$.  Hence 
\begin{equation}
\label{E:dim2}
\mathrm{idim}(G) = \mathrm{idim}(G_{1})+1.
\end{equation}

Because $W_{ab}$ is a semi-periphery we have:
$$\beta_{i}^{j}(G) = \beta_{i}^{j}(G_{1})+\beta_{i}^{j}(G[W_{ab}])+\beta_{i}^{j}(G,ab).$$

Let
$$\Delta(G') := -\sum_{i \in \mathbb{N}}(-1)^{i}(\sum_{j \in \mathbb{N}}(i+j)\beta_{i}^{j}(G'))$$ where $G'$ is either $G$ or $G_{1}$ or $G[W_{ab}]$ or $(G,ab)$.

We then infer from the above equalities and from the fact that $\beta_{i}(G[W_{ab}]) = \sum_{j \in \mathbb{N}}\beta_{i}^{j}(G[W_{ab}])$, that
\begin{align*}
\Delta(G) &= \Delta(G_{1})+\Delta(G[W_{ab}])+\Delta(G,ab)\\
&= \mathrm{idim}(G_{1}) -\sum_{i \in \mathbb{N}}(-1)^{i}(\sum_{j \in \mathbb{N}}(i+j)\beta_{i}^{j}(G[W_{ab}])) + \mathcal{A}(G,ab) \text{\qquad by \eqref{E:A=B}}\\
&= \mathrm{idim}(G_{1}) -\sum_{i \in \mathbb{N}}(-1)^{i}(\sum_{j \in \mathbb{N}}(i+j)\beta_{i}^{j}(G[W_{ab}])) + \mathcal{B}(G[W_{ab}])\; \text{\quad by \eqref{E:dim1}}\\
&= \mathrm{idim}(G_{1}) -\sum_{i \in \mathbb{N}}(-1)^{i}(\sum_{j \in \mathbb{N}}(i+j)\beta_{i}^{j}(G[W_{ab}]))\\
&\quad + \sum_{i \in \mathbb{N}}(-1)^{i}(\sum_{j \in \mathbb{N}}(i+j+1)\beta_{i}^{j}(G[W_{ab}]))\\
&= \mathrm{idim}(G_{1}) -\sum_{i \in \mathbb{N}}(-1)^{i}(\sum_{j \in \mathbb{N}}\beta_{i}^{j}(G[W_{ab}]))\\
&= \mathrm{idim}(G_{1}) + \sum_{i \in \mathbb{N}}(-1)^{i}\beta_{i}(G[W_{ab}])\\
&= \mathrm{idim}(G_{1})+1= \mathrm{idim}(G)\mspace{200mu}  \text{ by \eqref{E:S(G[W])} and \eqref{E:dim2}}.
\end{align*}
\end{proof}

\subsection{Cube-free netlike partial cubes}\label{SS:cube-free}

Let $G$ be a finite partial cube, and $\mathcal{C}(G)$ the set of its convex cycles.  The sum $e(G):= \sum_{C \in \mathcal{C}(G)}\frac{\vert C\vert - 4}{2}$ is called the \emph{convex-excess} of $G$ by Klav\v{z}ar and Shpectorov~\cite{KSh08}.  Let $\phi$ be a $\Theta$-class of edges of $G$.  Then the \emph{$\phi$-zone graph} is the graph $Z_{\phi}$ whose vertex set is $\phi$, and such that $f, f' \in \phi$ are adjacent if they belong to a convex cycle of $G$.

\begin{thm}\label{T:KSh}\textnormal{(\cite[Theorem 1.1]{KSh08})}
If $G$ is a partial cube, then
\begin{equation}
\label{E:KSh}
2\alpha_{0}(G)-\alpha_{1}(G)-\mathrm{idim}(G)-e(G) \leq 2.
\end{equation}

\noindent Moreover the equality holds if and only if all zone graphs of $G$ are trees.
\end{thm}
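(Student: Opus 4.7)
The plan is to prove both the inequality and the equality case simultaneously by a careful accounting of vertices and edges inside the zone graphs, followed by an induction via Chepoi's expansion theorem (Proposition~\ref{P:exp.fin.part.cub.}).

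First I would establish two basic identities valid for every finite partial cube $G$. Since each edge of $G$ lies in exactly one $\Theta$-class, $\sum_\phi |V(Z_\phi)| = \sum_\phi |\phi| = \alpha_1(G)$. On the other hand, each convex cycle $C$ of length $2k \geq 4$ has its edges partitioned into $k$ pairs of antipodal edges, each pair being a pair of $\Theta$-equivalent edges, and thus contributes exactly one edge to each of $k$ different zone graphs. Writing $k = 2 + (|C|-4)/2$ and summing yields $\sum_\phi |E(Z_\phi)| = 2|\mathcal{C}(G)| + e(G)$, where $\mathcal{C}(G)$ is the set of convex cycles of $G$. Combining these with the elementary graph inequality $|V(H)| - |E(H)| \leq c(H)$ (with equality iff $H$ is a forest) gives
\begin{equation*}
\alpha_1(G) - e(G) - 2|\mathcal{C}(G)| \;\leq\; \sum_\phi c(Z_\phi),
\end{equation*}
with equality precisely when every zone graph is a forest.

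The core of the proof will then be the complementary inequality
\begin{equation*}
\sum_\phi c(Z_\phi) \;\leq\; 2\alpha_1(G) + \mathrm{idim}(G) - 2\alpha_0(G) + 2 - 2|\mathcal{C}(G)|,
\end{equation*}
together with a characterization of the equality case as: every zone graph is connected. Adding the two inequalities eliminates the zone-graph components and yields $\alpha_1(G) + \mathrm{idim}(G) + e(G) \geq 2\alpha_0(G) - 2$, which is the desired bound $\mu(G) := 2\alpha_0 - \alpha_1 - \mathrm{idim} - e \leq 2$. Moreover $\mu(G) = 2$ iff both inequalities are tight, i.e., every $Z_\phi$ is a forest and connected --- hence a tree.

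To establish the complementary inequality I would induct on the length of a Chepoi expansion sequence of $G$. The base case $G = K_1$ gives $\mu(K_1) = 2$, and there are no zone graphs. For the inductive step suppose $G$ is the expansion of a finite partial cube $G'$ with respect to a proper cover $(V_0,V_1)$, and let $H := G'[V_{01}]$ with $V_{01} = V_0 \cap V_1$. Using Lemma~\ref{L:interv.} and the standard inclusion--exclusion for the expansion, I would record
\begin{equation*}
\alpha_0(G) = \alpha_0(G') + |V(H)|, \quad \alpha_1(G) = \alpha_1(G') + |E(H)| + |V(H)|, \quad \mathrm{idim}(G) = \mathrm{idim}(G') + 1,
\end{equation*}
and then show that the convex cycles of $G$ split into (a) convex cycles of $G$ inherited from convex cycles of $G'$ lying on one side of the cover, each contributing the same to $e$, and (b) new convex cycles that use exactly two edges of the new $\Theta$-class $\phi_{\mathrm{new}}$; the latter are in bijection with certain convex substructures of $H$. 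The new zone graph $Z_{\phi_{\mathrm{new}}}$ is isomorphic to the subgraph of $H$ determined by this bijection, whereas each existing $Z_\phi$ of $G'$ either stays unchanged or loses/gains components as new convex cycles appear or disappear.

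The main obstacle, and where the bulk of the technical work lies, will be tracking precisely how $\sum_\phi c(Z_\phi)$ and $|\mathcal{C}(G)|$ change under the expansion in terms of the combinatorics of the cover $(V_0,V_1)$ and of $H$. One expects an identity of the shape $\Delta\bigl(\sum_\phi c(Z_\phi) + 2|\mathcal{C}|\bigr) = 2|E(H)| + |V(H)| - 2|V(H)| + 1 + (\text{something measuring cycles of } H)$, matching exactly the change in $2\alpha_1 + \mathrm{idim} - 2\alpha_0 + 2$. Verifying that this balance holds, and that the extra ``cyclomatic slack'' in $H$ goes the right way, is the delicate point; once it is in place, both the inequality and the equality characterization follow by routine induction.
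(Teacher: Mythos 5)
First, a point of comparison that matters here: the paper does not prove this statement at all; it is imported verbatim from Klav\v{z}ar and Shpectorov \cite[Theorem 1.1]{KSh08}, so there is no internal proof to measure your attempt against. Judged on its own terms, your first inequality is essentially correct, modulo one unstated fact you would need: two $\Theta$-equivalent edges lie on at most one convex cycle, which is what makes $\sum_{\phi}|E(Z_{\phi})| = 2|\mathcal{C}(G)| + e(G)$ an exact count rather than an overcount of multi-adjacencies. That fact is true (convexity forces two convex cycles sharing an antipodal pair to coincide), but it must be proved.

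The genuine gap is that your ``complementary inequality'' is false. Since every zone graph of a partial cube is connected (as the paper notes immediately after the theorem), $\sum_{\phi}c(Z_{\phi}) = \mathrm{idim}(G)$ always, and your inequality reduces to $|\mathcal{C}(G)| \leq \alpha_{1}(G) - \alpha_{0}(G) + 1$, i.e.\ to the assertion that the number of convex cycles never exceeds the cyclomatic number. Take $G = Q_{3}$: here $\alpha_{0} = 8$, $\alpha_{1} = 12$, $\mathrm{idim} = 3$, $e = 0$, and there are six convex $4$-cycles, so the right-hand side of your inequality equals $2\cdot 12 + 3 - 2\cdot 8 + 2 - 2\cdot 6 = 1$ while the left-hand side is $3$. (The paper itself exhibits the same phenomenon in the proof of Theorem~\ref{T:cube-free.netl./cycle.space}: the prism over $C_{2n}$ has cyclomatic number $2n+1$ but $2n+2$ convex cycles.) In such examples the final bound $\leq 2$ survives only because your \emph{first} inequality is strict there, so the theorem cannot be obtained by adding two separately valid inequalities of the form you propose, and no amount of bookkeeping in the expansion induction can establish a lemma that already fails for $Q_{3}$. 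The argument of \cite{KSh08} is local rather than global: one contracts a single $\Theta$-class $F$, computes that $2\alpha_{0}-\alpha_{1}-\mathrm{idim}$ changes by $|F| - |E(G[U_{ab}])| - 1$, and bounds this by the change in convex excess using the connectedness of the single zone graph $Z_{F}$ (so $|F| \leq |E(Z_{F})|+1$, with the edges of $Z_{F}$ charged either to $E(G[U_{ab}])$ or to the excess of the long convex cycles crossing $F$), tracking equality class by class.
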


Furthermore Klav\v{z}ar and Shpectorov noted that any zone graph of a partial cube is connected.  It follows that a zone graph is a tree if and only if it contains no cycle.

For a finite Peano partial cube $G$, we have $e(G) = \sum_{j \in \mathbb{N}}j\beta_{2}^{j}(G)$.  By Proposition~\ref{P:cube-free netlike/fin.conv.subgr.}, to characterize cube-free netlike partial cubes, it is sufficient to consider the case of finite partial cubes only.

\begin{thm}\label{T:cube-free.netl.p.c.}
Let $G$ be finite Peano partial cube.  The 
following assertions are equivalent:

\textnormal{(i)}\; $G$ is cube-free netlike partial cube.

\textnormal{(ii)}\; $G$ contains no convex prism over an even cycle.

\textnormal{(iii)}\; $G$ satisfies the equalities
\begin{gather}
\label{E:n-m+c=1}
\beta_{0}(G)-\beta_{1}(G)+\beta_{2}(G)=1\\
\label{E:lin/dim}
\mathrm{idim}(G)=\beta_{1}(G)-2\beta_{2}(G)-e(G).
\end{gather}

\textnormal{(iv)}\; $G$ satisfies the equality
\begin{equation}
\label{E:(iv)}
2\beta_{0}(G)-\beta_{1}(G)-\mathrm{idim}(G)-e(G) = 2.
\end{equation}

\textnormal{(v)}\; All zone graphs of $G$ are trees.
\end{thm}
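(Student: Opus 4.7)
The plan is to run the circular chain (i) $\Rightarrow$ (iii) $\Rightarrow$ (iv) $\Leftrightarrow$ (v) $\Rightarrow$ (ii) $\Rightarrow$ (i), using the Euler-type identity of Theorem~\ref{T:S=1} and the isometric-dimension formula of Theorem~\ref{T:dim} specialized to the cube-free netlike case, the Klav\v{z}ar--Shpectorov inequality of Theorem~\ref{T:KSh}, and the convex-hull characterization of netlike partial cubes given by Proposition~\ref{P:hypernet./netl.} together with Theorem~\ref{T:hypernet./conv.hull(isom.cycle)}.

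For (i) $\Rightarrow$ (iii): if $G$ is cube-free netlike then every convex quasi-hypertorus of $G$ is $K_1$, $K_2$, or an even cycle (all of dimension at most $2$), so $\beta_0(G) = \alpha_0(G)$, $\beta_1(G) = \alpha_1(G)$, $\beta_2(G)$ is the total number of convex cycles, $\beta_i(G) = 0$ for $i \geq 3$, and $\sum_j j\,\beta_2^j(G) = \sum_{C\in\mathcal{C}(G)}(|C|-4)/2 = e(G)$ since a cycle $C_{2n}$ has circumference number $n-2$. Then \eqref{E:n-m+c=1} is literally Theorem~\ref{T:S=1}, while Theorem~\ref{T:dim} collapses to $\mathrm{idim}(G) = \beta_1(G) - 2\beta_2(G) - e(G)$, which is \eqref{E:lin/dim}. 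For (iii) $\Rightarrow$ (iv), substitute $\beta_2 = 1 - \beta_0 + \beta_1$ from \eqref{E:n-m+c=1} into \eqref{E:lin/dim} to obtain $\mathrm{idim}(G) = 2\beta_0(G) - \beta_1(G) - e(G) - 2$, which rearranges to \eqref{E:(iv)}. Since $\beta_0 = \alpha_0$ and $\beta_1 = \alpha_1$ always hold, \eqref{E:(iv)} is exactly the equality case of Theorem~\ref{T:KSh}, yielding (iv) $\Leftrightarrow$ (v).

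The closing links are geometric. For (v) $\Rightarrow$ (ii): if $G$ contains a convex prism $P = C_{2n} \Box K_2$ and $\phi$ is the $\Theta$-class of any $K_2$-edge of $P$, then the $2n$ $K_2$-edges of $P$ all lie in $\phi$, and two consecutive ones (along the $C_{2n}$ factor) are joined by a convex $4$-cycle of $P$, which is convex in $G$ because $P$ is; this produces a cycle of length $2n$ in $Z_\phi$, contradicting (v). For (ii) $\Rightarrow$ (i): by Theorem~\ref{T:hypernet./conv.hull(isom.cycle)} the convex hull $H$ of any isometric cycle of $G$ is a quasi-hypertorus, so it is a hypercube, a hypertorus, or a prism over a hypertorus. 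If $H$ is a hypercube of dimension $\geq 3$, a hypertorus with at least two factors, or a prism over a hypertorus, then by the Convex Subgraph Property of the Cartesian product $H$ contains a convex $C_{2m} \Box K_2$ (take $Q_3 = C_4 \Box K_2$ in the first case, a cycle factor times a $K_2$-subedge of the remaining factor in the others), contradicting (ii). Hence $H$ is either the isometric cycle itself or the hypercube $C_4$, which is precisely the characterization of netlike partial cubes given by Proposition~\ref{P:hypernet./netl.}; and the absence of $Q_3$ forces cube-freeness.

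The main obstacle is the bookkeeping in (i) $\Rightarrow$ (iii): one must carefully track the convention $\gamma(C_{2n}) = n - 2$ to identify $\sum_j j\,\beta_2^j(G)$ with $e(G)$, and observe that cube-free netlikeness collapses every higher-dimensional term in both Theorems~\ref{T:S=1} and~\ref{T:dim}. A secondary conceptual point, in (ii) $\Rightarrow$ (i), is recognising that any ``non-trivial'' quasi-hypertorus --- a hypercube of dimension $\geq 3$, a multi-factor hypertorus, or a prism over a hypertorus --- always contains some convex $C_{2m} \Box K_2$, so that forbidding convex prisms over even cycles really does cut the structure down to the netlike, cube-free case.
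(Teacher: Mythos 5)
Your proposal is correct and takes essentially the same route as the paper: the chain (i)$\Rightarrow$(iii)$\Rightarrow$(iv)$\Rightarrow$(v)$\Rightarrow$(ii)$\Rightarrow$(i) with the same specializations of Theorems~\ref{T:S=1} and~\ref{T:dim}, the equality case of Theorem~\ref{T:KSh}, and the same zone-graph and quasi-hypertorus arguments for the last two links (the paper merely packages (ii)$\Leftrightarrow$(i) separately and cites Proposition~\ref{P:hypernet./cube-free netl.} where you combine Proposition~\ref{P:hypernet./netl.} with a direct cube-freeness remark, which amounts to the same thing).
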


\begin{proof}
(i) $\Rightarrow$ (ii) is obvious because $G$ contains no convex quasi-hypertorus of dimension greater than $2$.

(ii) $\Rightarrow$ (i) is clear since any convex quasi-hypertorus of dimension at least $3$ contains a convex prism over a cycle.  Hence any isometric cycle of $G$ is convex, and thus $G$ is a cube-free netlike partial cube by Proposition~\ref{P:hypernet./cube-free netl.}.

(i) $\Rightarrow$ (iii):\; The equalities \eqref{E:n-m+c=1} and \eqref{E:lin/dim} are the particular cases of the equalities \eqref{E:S=1} and \eqref{E:dim} with $\beta_i(G) = \beta_i^j(G) = 0$ for all $i \geq 3$.

(iii) $\Rightarrow$ (iv):\; Suppose that $G$ satisfies (iii).  Then, by \eqref{E:n-m+c=1}, $\beta_{2}(G)=-\beta_{0}(G)+\beta_{1}(G)+1$.  Hence, by \eqref{E:lin/dim}, 
\begin{equation}
\mathrm{idim}(G)=2\beta_{0}(G)-\beta_{1}(G)-e(G)-2.
\end{equation}
Therefore $G$ satisfies (iv).

(iv) $\Rightarrow$ (v) is a consequence of Theorem~\ref{T:KSh}.

(v) $\Rightarrow$ (ii):\; Suppose that $G$ contains a convex prism $P = C \Box K_2$ over an even cycle $C$.  Let $uv$ be an edge of some $K_2$-layer of $P$. Then, because any $4$-cycle of $G$ is convex, the $\Theta$-class in $P$ of the edge $uv$ induces a cycle in the zone graph $Z_{\phi}$, where $\phi$ is the $\Theta$-class in $G$ of $uv$.  Therefore $G$ does not satisfy (v).
\end{proof}

In the result below we characterize cube-free netlike partial cubes by using the properties of the cycle space of a graph, that is, of the linear space over \textbf{GF(2)} with
all finite eulerian subgraphs of this graph as elements and the
symmetric difference as addition.

\begin{thm}\label{T:cube-free.netl./cycle.space}
A finite Peano partial cube $G$ is a cube-free netlike partial cube if and only if the set of all convex cycles of any convex subgraph $F$ of $G$ is a basis of the cycle space of $F$.
\end{thm}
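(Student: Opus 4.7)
The plan is to derive both directions from the cyclomatic equation $\beta_{0}(F)-\beta_{1}(F)+\beta_{2}(F)=1$ of Theorem~\ref{T:cube-free.netl.p.c.}, combined with a direct computation in the cycle space of a convex prism. Throughout I use that for a finite connected graph $F$ the cycle space over $\mathrm{GF}(2)$ has dimension $|E(F)|-|V(F)|+1=\beta_{1}(F)-\beta_{0}(F)+1$.

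For the necessity, suppose $G$ is a cube-free netlike partial cube and let $F$ be a convex subgraph of $G$. By Proposition~\ref{P:cube-free netlike/fin.conv.subgr.}, $F$ is itself a cube-free netlike partial cube, and by Proposition~\ref{P:hypernet./cube-free netl.} every isometric cycle of $F$ is convex. The isometric cycles of any partial cube generate the cycle space: given a cycle $C$ that is not isometric, one finds $u,v\in V(C)$ with $d_{F}(u,v)<d_{C}(u,v)$, and any $(u,v)$-geodesic $P$ in $F$ splits $C$ in the cycle space as the sum $(P+C_{1})+(P+C_{2})$ of two closed walks of strictly smaller length (where $C_{1},C_{2}$ are the two $(u,v)$-arcs of $C$), so induction on length applies. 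Hence the convex cycles of $F$ span the cycle space of $F$. Since $F$ is cube-free, every convex quasi-hypertorus of $F$ of dimension $2$ is either a $4$-cycle or an even cycle of length at least $6$, so $\beta_{2}(F)$ equals the total number of convex cycles of $F$. Equation \eqref{E:n-m+c=1} then yields $\beta_{2}(F)=\beta_{1}(F)-\beta_{0}(F)+1$, which is exactly the dimension of the cycle space of $F$; a spanning set whose cardinality matches the dimension must be a basis.

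For the sufficiency I would argue by contrapositive using the equivalence (i)$\Leftrightarrow$(ii) of Theorem~\ref{T:cube-free.netl.p.c.}: if $G$ is not a cube-free netlike partial cube, then $G$ contains a convex prism $P=C\Box K_{2}$ over an even cycle $C=\langle c_{1},\dots,c_{2n},c_{1}\rangle$. Now $P$ has $|V(P)|=4n$ and $|E(P)|=6n$, so its cycle space has dimension $2n+1$. Yet $P$ already contains $2n+2$ convex cycles, namely the two $C$-layers $C_{\mathrm{bot}}$ and $C_{\mathrm{top}}$ together with the $2n$ convex $4$-cycles $Q_{i}:=\langle c_{i},c_{i+1},c_{i+1}',c_{i}',c_{i}\rangle$. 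In $\mathrm{GF}(2)$ they satisfy the nontrivial relation
\[
C_{\mathrm{bot}}+C_{\mathrm{top}}+\sum_{i=1}^{2n}Q_{i}=0,
\]
because every horizontal edge of $P$ appears in its own $C$-layer and in exactly one $Q_{i}$, while every vertical edge $c_{i}c_{i}'$ appears in exactly the two squares $Q_{i-1}$ and $Q_{i}$. Thus the convex cycles of the convex subgraph $P\subseteq G$ cannot form a basis of its cycle space, contradicting the hypothesis. The only mildly technical ingredient is the standard fact that the isometric cycles of a partial cube generate its cycle space; this is handled by the geodesic-chord decomposition sketched above and requires no structural assumption beyond bipartiteness.
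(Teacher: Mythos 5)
Your proof is correct, and its overall skeleton coincides with the paper's: in both directions the key input is the Euler-type count of Theorem~\ref{T:cube-free.netl.p.c.}, which shows that the number of convex cycles of $F$, namely $\beta_{2}(F)$, equals the cyclomatic number $\beta_{1}(F)-\beta_{0}(F)+1$, and the converse is settled in both cases by locating a convex prism $C_{2n}\Box K_{2}$ with $2n+2$ convex cycles against a cycle space of dimension $2n+1$. The one genuine difference is in the necessity direction: the paper obtains the spanning property by citing an external result (\cite[Theorem 6.15]{HLS13}, that the cycle space of a partial cube has a basis of convex cycles) and then concludes that this basis must exhaust the convex cycles, whereas you prove spanning from scratch via the standard geodesic-chord decomposition showing that isometric cycles generate the cycle space of any graph, combined with the fact (Theorem~\ref{T:cube-free netlike}) that in a cube-free netlike partial cube all isometric cycles are convex. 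Your route is more self-contained and makes visible exactly where the cube-free netlike hypothesis enters; the paper's is shorter at the cost of an external citation. Your explicit dependence relation $C_{\mathrm{bot}}+C_{\mathrm{top}}+\sum_{i}Q_{i}=0$ in the prism is also a constructive refinement of the paper's pure cardinality argument, though both suffice.
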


\begin{proof}
Suppose that $G$ is a cube-free netlike partial cube, and let $F$ be a convex subgraph of $G$.  Then $F$ is also a cube-free netlike partial cube, and thus, by Theorem~\ref{T:cube-free.netl.p.c.}, $F$ satisfies the equality (\ref{E:n-m+c=1}).  The dimension of the cycle space (the cyclomatique
number) of $F$ is $\beta_{1}(F)-\beta_{0}(F)+1$.  Hence, by \eqref{E:n-m+c=1}, this
number is $\beta_{2}(G)$, that is, the number of convex cycles of $F$.  On the other hand, by \cite[Theorem 6.15]{HLS13}, the cycle space of $F$ has a basis of convex cycles.  Therefore the set of all convex cycles of $F$ is a basis of the cycle space of $F$.

Conversely, suppose that $G$ is not a cube-free netlike partial cube.  Then, by Theorem~\ref{T:cube-free.netl.p.c.}, $G$ contains a convex prism $P $ over an even cycle $C_{2n}$.  The cyclomatique number of $P$ is $\beta_{1}(P)-\beta_{0}(P)+1 = 2n+1$, whereas the number of convex cycles of $P$ is $\beta_2(P) = 2n+2$.  This proves that the convex cycles of $P$ are not linearly 
independent, and thus they cannot constitute a basis of the 
cycle space of $P$.
\end{proof}

\end{document}